\newfont{\suet}{suet14}                            
\definecolor{darkblue}{rgb}{0,0,.7}
\definecolor{darkred}{rgb}{0.7,0,0}
\definecolor{darkgreen}{rgb}{0,0.5,0.8}
\definecolor{darkdarkgreen}{rgb}{0,0.5,0}
\def\Re{{\rm{Re} } }
\def\Im{{\rm{Im} } }
\def\d{ {\rm{d}}}
\def\meas{ {\rm{meas\ }} }
\def\dens{ {\rm{dens }} }
\def\eps{\varepsilon}
\def\R{\mathbb{R}}
\def\C{\mathbb{C}}
\def\L{\mathcal{L}}
\def\D{\mathbb{D}}
\def\N{\mathbb{N}}
\def\Z{\mathbb{Z}}
\def\Q{\mathbb{Q}}
\def\Sc{\mathcal{S}}
\def\No{\mathcal{N}} 
\def\d{\mbox{\ d}}
\def\ptau{\pmb{\tau}}
\begin{document}

\pagestyle{empty}

\begin{titlepage}
\begin{center}
\medskip
{{\LARGE {\bf Value-distribution of the  }}}\\
\medskip
{{\LARGE {\bf Riemann zeta-function and related functions}}}\\
\medskip
{{\LARGE {\bf near the critical line}}}\\[1ex]

\medskip
\end{center}
\vspace*{2cm}
\begin{center}
Dissertationsschrift zur Erlangung des
naturwissenschaftlichen\\[1ex]
Doktorgrades der Julius-Maximilians-Universit\"at W\"urzburg
\end{center}
\vspace*{1cm}
\begin{figure}[h]
\centering
\includegraphics[width=60mm]{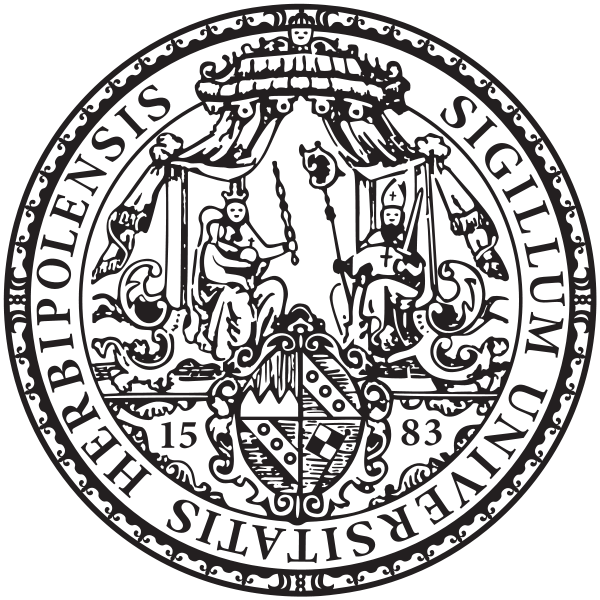}
\end{figure}
\vspace*{1cm}
\begin{center}
vorgelegt von\\[4ex]
{ {\large Thomas Christ}}\\[4ex]
aus\\[4ex]
Ansbach, Deutschland
\end{center}
\vspace*{2cm}
\begin{center}
{W\"urzburg 2013}
\end{center}
\end{titlepage}

$\mbox{ }$
\vspace{18cm}

Eingereicht am 10.12.2013\\[0.5em]
bei der Fakult\"at f\"ur Mathematik und Informatik\\[0.5em]
der Julius-Maximilians-Universit\"at W\"urzburg\\[0.5em]
1. Gutachter: Prof. Dr. J\"orn Steuding\\[0.5em]
2. Gutachter: Prof. Dr. Ramunas Garunk\v{s}tis\\[0.5em]
Tag der Disputation: 22.04.2014

\addtocontents{toc}{\protect\thispagestyle{empty}} 
\tableofcontents

\pagestyle{fancy}
\setcounter{page}{1}

\chapter*{Notations}
\addcontentsline{toc}{chapter}{Notations}
\markboth{Notations}{Notations}
We indicate some of the basic notations that we use in this thesis. 
Usually, we denote a complex variable by $s=\sigma+it$ with real part $\sigma$ and imaginary part $t$. \par
{\bf Set of numbers.}
\begin{longtable}{ll}
$\N$ & $:=\{1,2,3,...\}$, the set of positive integers\\
$\N_0$ & $:=\N\cup\{0\}$, the set of non-negative integers\\
$\mathbb{P}$ & $:=\{2,3,5,...\}$, the set of prime numbers\\
$\Z$ & $:=\{...,-1,0,1,...\}$, the set of integers\\
$\Q$ & the set of rational numbers\\
$\R$ & the set of real numbers\\
$\R^+$ & the set of positive real numbers\\
$\R^+_0$ & the set of non-negative real numbers\\
$\C$ & the set of complex numbers\\
\textcolor{white}{$(x_n)_n$} &\textcolor{white}{ $:=(x_n)_{n\in\N}:=(x_1,x_2,...)$, a sequence of elements $x_n$ from a certain set $X$.}
\end{longtable}
{\bf Subsets of the complex plane.}
\begin{longtable}{ll}
$D_r(s_0)$ & open disc with radius $r>0$ and center $s_0\in\C$\\
$\D $ & $:= D_1(0)$, unit disc\\
$\partial\Omega$ & the boundary of a domain $\Omega\subset\C$\\
$\mathcal{D}$ & $:=\{\sigma+it\in\C\, : \, 0<\sigma<1, \, t>0\}$\\
$\widehat{\C}$ & := $\C\cup\{\infty\}$, the Riemann sphere\\
\textcolor{white}{$(x_n)_n$} &\textcolor{white}{ $:=(x_n)_{n\in\N}:=(x_1,x_2,...)$, a sequence of elements $x_n$ from a certain set $X$.}
\end{longtable}
{\bf Classes of functions.}
\begin{longtable}{ll}
$\mathcal{H}(\Omega)$ & set of functions analytic in a domain $\Omega\subset\C$\\
$\mathcal{M}(\Omega)$ & set of functions meromorphic in a domain $\Omega\subset\C$\\
$\Sc$ & the Selberg class, defined in Section \ref{sec:selbergclass} \\
$\Sc^{\#}$ & the extended Selberg class, defined in Section \ref{sec:selbergclass} \\
$\Sc_R^{\#}$ & a subclass of the extended Selberg class, defined in Section \ref{sec:selbergclass}\\
$\Sc^*$ & a subclass of the Selberg class, defined in Section \ref{sec:selbergclass}\\
$\mathcal{G}$ & an extension of $\mathcal{S}^{\#}$, defined in Section \ref{sec:classG}\\
$\No$ & a class of functions, defined in Section \ref{sec:classN} \\
$\mathscr{H}^2$ & a space of Dirichlet series, defined in Section \ref{sec:DirichletH2} \\
$L_{\pmb{\sigma}}^p(K)$ & the $L^p$ space of a compact group $K$, defined in Section \ref{sec:K}\\
\textcolor{white}{$(x_n)_n$} &\textcolor{white}{ $:=(x_n)_{n\in\N}:=(x_1,x_2,...)$, a sequence of elements $x_n$ from a certain set $X$.}
\end{longtable}
{\bf Some further notations.}
\begin{longtable}{ll}
$\meas A$ & Lebesgue measure of a measurable set $A\subset \R$.\\
$\# B$ & cardinality of a finite subset $B\subset \R$.\\
$\dens^* J$ & upper density of a subset $J\subset \N$, defined in Section \ref{sec:ergodicflow}\\
$\dens_* J$ & lower density of a subset $J\subset \N$, defined in Section \ref{sec:ergodicflow}\\
$(x_n)_n$ & $:=(x_n)_{n\in\N}:=(x_1,x_2,...)$, a sequence of elements $x_n$ from a certain set $X$\\
$n|m$ & $n$ is a divisor of $m$ \\
\end{longtable}

{\bf  Landau's $O$-notation and the Vinogradov symbols.}\par
We use Landau's $O$-notation and the Vinogradov symbols in the following way. Let $f$ and $g$ be real valued functions, which are both defined on a subset of the reals.

\begin{align*}
\begin{minipage}{3cm}$f(x) = O\bigl( g(x) \bigr)$ \\or $f(x)\ll g(x)$, \\ as $x\rightarrow\infty$ \end{minipage} &: \iff \quad   \quad \exists_{C>0} \quad \mbox{s.t.}\quad \limsup_{x\rightarrow\infty} \left|\frac{f(x)}{g(x)}\right| \leq C\\[1em]
\begin{minipage}{3cm}$f(x)=o(g(x))$, \\ as $x\rightarrow\infty$ \end{minipage}
 &: \iff \quad    \limsup_{x\rightarrow\infty} \left|\frac{f(x)}{g(x)}\right| = 0\\[1em]
\begin{minipage}{3cm}$f(x) = \Omega\bigl( g(x) \bigr)$ \\or $f(x)\gg g(x)$,\\ as $x\rightarrow\infty$ \end{minipage} &: \iff \quad  \exists_{C>0} \quad \mbox{s.t.}\quad  \limsup_{x\rightarrow\infty} \left|\frac{f(x)}{g(x)}\right|\geq C\\[1em]
\begin{minipage}{3cm}$f(x)\sim g(x)$, \\ as $x\rightarrow\infty$ \end{minipage}&: \iff \quad   \lim_{x\rightarrow\infty} \left|\frac{f(x)}{g(x)}\right| = 1\\[1em]
\begin{minipage}{3cm}$f(x)\asymp g(x)$, \\ as $x\rightarrow\infty$ \end{minipage}&: \iff \quad   \exists_{A,B>0} \; \; \mbox{s.t.} \;\; \liminf_{x\rightarrow\infty} \left|\frac{f(x)}{g(x)}\right| \geq A \; \; \mbox{ and } \; \;  \limsup_{x\rightarrow\infty} \left|\frac{f(x)}{g(x)}\right| \leq B
\end{align*}
Sometimes we write $O_{\alpha}(\cdot)$, resp. $\ll_{\alpha}$, and $\Omega_{\alpha}(\cdot)$, resp. $\gg_{\alpha}$, to indicate that the implied constants depend on the parameter $\alpha$, respectively.

\chapter*{Acknowledgments}
\addcontentsline{toc}{chapter}{Acknowledgments}
First and foremost, I would like to express my deepest gratitude to my supervisor J\"orn Steuding. I am grateful for his tremendous support and insightful guidance during the last years without which this thesis would not have been completed. I appreciated the friendly and uncomplicated atmosphere in our working group and wish to thank for involving me into the academic and scientific life in such a marvelous way.\par

I would like to give my special thanks to Antanas Laurin\v{c}ikas, Ramunas Garunk\v{s}tis and Justas Kalpokas from Vilnius university for fruitful collaborations and their warm and kind hospitality during my stay in Vilnius in 2011.\par

From April 2011 to September 2013, my work was supported by a scholarship of the Hanns-Seidel-Stiftung funded by the German Federal Ministry of Education and Research (BMBF). I wish to thank the Hanns-Seidel-Stiftung for their ideational and financial support.\par 

I am grateful for the various positions that I could have at the Department of Mathematics in W\"urzburg during my doctorate studies. I would like to thank the members of chair IV and many other people from the department for their friendship, the inspiring discussions and the nice atmosphere at the department.\par

Last but not least, I owe a great debt of gratitude to my family and my dear friends for their enduring support and many unforgettable moments.

\begin{flushright}
W\"urzburg, December 2013\\
Thomas Christ
\end{flushright}

\newpage

\renewcommand{\theequation}{P.\arabic{equation}}
\renewcommand{\thesection}{P.\arabic{section}}
\renewcommand{\thetheorem}{P.\arabic{theorem}}

\chapter*{Introduction and statement of the main results}
\addcontentsline{toc}{chapter}{Introduction and statement of the main results}
\markboth{Introduction and statement of the main result}{Introduction and statement of the main result}

The Riemann zeta-function is a central object in multiplicative number theory. Its value-distribution in the complex plane encodes deep arithmetic properties of the prime numbers. In fact, many important insights into the distribution of the primes were revealed by exploring the analytic behaviour of the Riemann zeta-function.\par 

The value-distribution of the Riemann zeta-function, however, is far from being well-understood and bears many interesting analytic phenomena which are worth to be studied, independently of their arithmetical relevance. A crucial role is assigned to the analytic behaviour of the zeta-function on the so called critical line. The latter forms the background for several open conjectures; for example, the Riemann hypothesis, the Lindel\"of hypothesis and Ramachandra's denseness conjecture.\par

The scope of this thesis is to understand the behaviour of the Riemann zeta-function near and on the critical line in a better way. \par
In Section \ref{sec:riemann} of this introductory chapter, we introduce the Riemann zeta-function and expose the exceptional character of its behaviour on the critical line.  \par
To figure out which basic features of the Riemann zeta-function are responsible for certain phenomena in its value-distribution, it is reasonable to investigate the zeta-function in a broader context. In Section \ref{sec:selbergclass}, we consider the Selberg class, which was introduced by Selberg \cite{selberg:1992} as a promising attempt to gather all Dirichlet series which satisfy similar properties as the Riemann zeta-function.\par

In Section \ref{sec:outline}, we provide an outline of this thesis, state the main results and briefly report on our methods.

\section{The Riemann zeta-function}\label{sec:riemann}

In the following, let $s=\sigma+it$ denote a complex variable with real part $\sigma$ and imaginary part $t$. In the half-plane $\sigma>1$, the Riemann zeta-function is defined by an absolutely convergent Dirichlet series 
$$
\zeta(s):=\sum_{n=1}^{\infty}\frac{1}{n^s}.
$$
Euler revealed an intimate connection of $\zeta(s)$ to the prime numbers. He discovered that $\zeta(s)$ can be rewritten as an infinite product 
$$
\zeta(s)=\prod_{p\in\mathbb{P}}(1-p^{-s})^{-1}, \qquad \sigma>1,
$$
where $\mathbb{P}$ denotes the set of prime numbers.\par

In his seminal paper of 1859, Riemann \cite{riemann:1859} laid the foundations to investigate $\zeta(s)$ as a function of a complex variable $s$. He discovered that $\zeta(s)$ can be continued analytically to the whole complex plane, except for a simple pole at $s=1$ with residue $1$, and satisfies the functional equation
\begin{equation}\label{fct-eq}
\zeta(s)=\Delta(s)\zeta(1-s) \qquad \mbox{with }\qquad \Delta(s)= \pi^{s-\frac{1}{2}} \frac{\Gamma\left(\frac{1-s}{2}\right)}{\Gamma\left(\frac{s}{2}\right)},
\end{equation}
where $\Gamma$ denotes the Gamma-function. Stirling's formula allows to describe the analytic behaviour of the factor $\Delta(s)$ appearing in the functional equation in a rather precise way. As $|t|\rightarrow\infty$, the asymptotic formula 
\begin{equation}\label{Delta}
\Delta(\sigma+it) = \left( \frac{|t|}{2\pi}\right)^{\frac{1}{2}-\sigma-it} \exp\left( i(t+\tfrac{\pi}{4})\right) (1+O(|t|^{-1})) 
\end{equation}
holds uniformly for $\sigma$ from an arbitrary bounded interval. The reflection principle 
$$
\zeta(\overline{s})=\overline{\zeta(s)} \qquad\mbox{for } s\in\C
$$
provides a further functional equation for the Riemann zeta-function. Due to the latter, it is sufficient to study the value-distribution of the zeta-function in the upper half-plane $t\geq 0$.\par
The functional equation \eqref{fct-eq}, together with the reflection principle, evokes a strong symmetry of the Riemann zeta-function with respect to the so called {\it critical line} $\sigma=\frac{1}{2}$. On the latter, the value-distribution of the Riemann zeta-function is exceptional in many ways. \par

{\bf Zeros of the Riemann zeta-function.} The zeta-function has simple zeros at the negative even integers $s=-2n$, $n\in\N$. These zeros are called {\it trivial zeros}. All other zeros lie inside the so called {\it critical strip} $0\leq\sigma\leq1$. We denote these zeros by $\rho=\beta+i\gamma$ and call them {\it non-trivial zeros}. Due to the functional equation and the reflection principle, the non-trivial zeros are symmetrically distributed with respect to the critical line and the real axis. According to the Riemann-von Mangoldt formula, the number $N(T)$ of non-trivial zeros with imaginary part $\gamma\in(0,T]$ is asymptotically given by
$$
N(T)=\frac{T}{2\pi}\log\frac{T}{2\pi e} + O(\log T),
$$
as $T\rightarrow\infty$. The \textbf{\textit{ Riemann hypothesis (RH)}} states that all non-trivial zeros of the Riemann zeta-function lie on the critical line $\sigma=\frac{1}{2}$; or, equivalently, that $\zeta(s)\neq 0$ for $\sigma>\frac{1}{2}$. The Riemann hypothesis is neither proven nor disproven and is considered as a central open problem in number theory. Its arithmetic relevance lies in the impact of the non-trivial zeros on the error term in the prime number theorem. The fact that the Riemann zeta-function is non-vanishing in the half-plane $\sigma\geq 1$ leads to an asymptotic formula for the number $\pi(x)$ of primes $p\in\mathbb{P}$ with $p\leq x$. Building on ideas of Riemann, this was proved by Hadamard \cite{hadamard:1896} and de La Vall\'{e}e-Poussin \cite{vallee:1896}, independently. A zero-free region of the Riemann zeta-function to the left of $\sigma=1$ is needed in order to get an asymptotic formula for $\pi(x)$ with explicit error term. Up to now, the largest known zero-free region is due to Korobov \cite{korobov:1958} and Vinogradov \cite{vinogradov:1958}. They showed independently that, for sufficiently large $|t|$, the Riemann zeta-function has no zeros in the region defined by
$$
\sigma \geq 1 - \frac{A}{(\log |t|)^{\frac{1}{3}} (\log\log|t|)^{\frac{2}{3}}}
$$
with some constant $A>0$. Their result implies that
$$
E(x):=\pi(x) - \int_2^x \frac{\d u}{\log u} \ll x \exp\left(-B \frac{(\log x)^{\frac{3}{5}}}{(\log\log x)^{\frac{1}{5}}} \right) 
$$
with some constant $B>0$. So far, it is not known whether there exists a $\theta\in[\frac{1}{2},1)$ such that the zeta-function has no zeros in the half-plane $\sigma>\theta$. Von Koch \cite{koch:1900} showed that $E(x)\ll x^{\theta+\eps}$ holds, with any $\eps >0$, if and only if the Riemann zeta-function is non-vanishing in $\sigma>\theta$. Thus, in particular, the truth of the Riemann hypothesis would imply that $E(x)\ll x^{\frac{1}{2}+\eps}$.\par

There are some partial results supporting the Riemann hypothesis. Hardy \cite{hardy:1914} showed that there are infinitely many zeros on the critical line. 
His result was improved significantly by Selberg \cite{selberg:1942} who obtained that a positive proportion of all non-trivial zeros can be located on the critical line: let $N^{0}(T)$ denote the number of non-trivial zeros which lie on the critical line and have imaginary part $\gamma\in(0,T]$, then
$$
U:=\liminf_{T\rightarrow\infty}\frac{N^{0}(T)}{N(T)}\geq C
$$
with some (computable but very small) constant $C>0$. Selberg's lower bound for $U$ was improved considerably by Levinson \cite{levinson:1974} who obtained that $U\geq0.3437$. Later, Conrey \cite{conrey:1989} found that $U\geq 0.4088$ and, very recently, Bui, Conrey \& Young \cite{buiconreyyoung:2011} established $U\geq 0.4105$. 
\par 
Besides of measuring the number of zeros on the critical line, there are also attempts to bound the number of possible zeros off the critical line. Let $N(\sigma,T)$ denote the number of non-trivial zeros with real part $\beta>\sigma$ and imaginary part $\gamma\in(0,T]$. Due to a classical result of Selberg \cite{selberg:1946} we know that, uniformly for $\frac{1}{2}\leq \sigma \leq 1$,
\begin{equation}\label{selbergzerodensity}
N(\sigma,T)\ll T^{1-\frac{1}{4}(\sigma-\frac{1}{2})} \log T.
\footnote{For more advanced zero-density estimates for the Riemann zeta-function the reader is referred to Titchmarsh \cite[\S 9]{titchmarsh:1986} and Ivi\'{c} \cite[Chapt. 11]{ivic:1985}.}
\end{equation}

Many computer experiments were done in order to find a counterexample for the Riemann hypothesis. 
However, until now no zero off the critical line was dedected. By using the Odlyzko and Sch\"onhage algorithm, Gourdon \cite{gourdan:2004} located the first $10^{13}$ zeros of the Riemann zeta-function on the critical line.\par

According to the \textit{\textbf{simplicity hypothesis}}, one expects that all zeros of the Riemann zeta-function are simple. Indeed, no multiple zero has been found so far. It is known that at least a positive proportion of all zeros are simple. Let $N^*(T)$ denote the number of simple non-trivial zeros with imaginary part $\gamma\in(0,T]$. Levinson \cite{levinson:1974} proved that
$$
S:=\liminf_{T\rightarrow\infty} \frac{N^*(T)}{N(T)} \geq \tfrac{1}{3}.
$$
Bui, Conrey \& Young \cite{buiconreyyoung:2011} obtained that, unconditionally, $S\geq 0.4058$. Very recently, Bui \& Heath-Brown \cite{buiheathbrown:2013} proved that $S\geq \frac{19}{27}$, under the assumption of the Riemann hypothesis.\footnote{By assuming additionally the truth of the generalized Lindel\"of hypothesis, this was already known to Bui, Conrey \& Young \cite{buiconreyyoung:2011}. Bui \& Heath-Brown \cite{buiheathbrown:2013} succeeded to remove the generalized Lindel\"of hypothesis by making careful use of the generalized Vaughan identity.}\par

Whereas the Riemann hypothesis deals with the horizontal distribution of the non-trivial zeros, there are also many open questions concerning the vertical distribution. Let $(\gamma_n)_n$ denote the sequence of all positive imaginary parts of non-trivial zeros in ascending order. Littlewood \cite{littlewood:1924} showed that the gap between two consecutive ordinates $\gamma_n$, $\gamma_{n+1}$ tends to zero, as $n\rightarrow\infty$. In particular, he obtained that, as $n\rightarrow\infty$,
$$
\gamma_{n+1}-\gamma_n \ll \frac{1}{\log\log\log \gamma_n}.
$$
According to the Riemann-von Mangoldt formula the average spacing between two consecutive ordinates $\gamma_n,\gamma_{n+1}\in(T,2T]$ is given by $\frac{2\pi}{\log T}$, as $T\rightarrow\infty$. The \textbf{\textit{gap conjecture}} predicts that there appear arbitrarily small and arbitrarily large deviations from the average spacing: let
$$
\lambda:= \limsup_{n\rightarrow\infty}  \frac{(\gamma_{n+1}-\gamma_n) \log\gamma_n}{2\pi}
\qquad\mbox{and}\qquad
\mu:= \liminf_{n\rightarrow\infty}  \frac{(\gamma_{n+1}-\gamma_n) \log\gamma_n}{2\pi}.
$$
Then, one expects that $\lambda=\infty$ and $\mu=0$. It was remarked by Selberg \cite{selberg:1946-3} and proved by Fujii \cite{fujii:1975} that $\lambda>1$ and $\mu<1$. These are still the only unconditional bounds for $\lambda$ and $\mu$ which are at our disposal. On the assumption of the Riemann hypothesis, the current records in bounding $\lambda$ and $\mu$ are $\lambda>2.766$, according to Bredberg \cite{bredberg:2011}, and $\mu<0.5154$, according to Feng \& Wu \cite{fengwu:2010}.\footnote{If one is willing to assume additional conjectures, there are better results available. By assuming the generalized Riemann hypothesis, Bui \cite{bui:2011-1} obtained that $\lambda>3.033$. By assuming the Riemann hypothesis and certain moment conjectures originating from random matrix theory, Steuding \& Steuding \cite{steudingsteuding:2007} showed that $\lambda=\infty$, as predicted by the gap conjecture.}\par

Montgomery \cite{montgomery:1973} studied the pair correlation of ordinates $\gamma$, $\gamma'$ of non-trivial zeros. His investigations led him to the conjecture that, for any fixed $0<\alpha < \beta$,
$$
\lim_{T\rightarrow\infty} \frac{1}{N(T)} \# \left\{\gamma,\gamma'\in(0,T] \, : \, \alpha \leq \frac{(\gamma-\gamma')\log T}{2\pi} \leq \beta \right\}
= \int_{\alpha}^{\beta} \left( 1- \left( \frac{\sin\pi u}{\pi u}\right)^2 \right) \d u.
$$
This is known as \textit{\textbf{Montgomery's pair correlation conjecture (PCC)}}. The truth of the PCC implies that $S=1$ and $\mu=0$. Dyson pointed out to Montgomery that eigenvalues of random Hermitian matrices have exactly the same pair correlation function. This observation laid the foundation for many models for the Riemann zeta-function on the critical line by random matrix theory.\par

{\bf $a$-points of the Riemann zeta-function.} Besides the zeros, it is reasonable to study the general distribution of the roots of the equation $\zeta(s)=a$, where $a$ is an arbitrarily fixed complex number. We call these roots $a$-points and denote them by $\rho_a=\beta_a + i\gamma_a$. For sufficiently large $n\in\N$, there is an $a$-point near every trivial zero $s=-2n$. Apart from these $a$-points generated by the trivial zeros, there are only finitely many $a$-points in the half-plane $\sigma\leq 0$. We refer to the $a$-points in $\sigma\leq 0$ as {\it trivial $a$-points} and call all other $a$-points {\it non-trivial $a$-points}. The non-trivial $a$-points can be located in a vertical strip $0\leq \sigma\leq R_a$ with a certain real number $R_a\geq 1$. In analogy to the case $a=0$, Landau \cite{BohrLandauLittlewood:1913} established a Riemann-von Mangoldt-type formula for the number $N_a(T)$ of non-trivial $a$-points with imaginary part $\gamma_a\in(0,T]$: as $T\rightarrow\infty$,
$$
N_a(T)=\frac{T}{2\pi} \log\frac{T}{2\pi e c_a} + O\left( \log T\right)
$$
with $c_a=1$ if $a\neq 1$ and $c_1=2$. Levinson \cite{levinson:1975} proved that all but 
$O(N_a(T) / \log\log T)$ of the $a$-points with imaginary part $\gamma_a\in(T,2T]$ lie in the strip
\begin{equation}\label{levinsonstrip}
\frac{1}{2} - \frac{(\log\log T)^2}{\log T} < \sigma < \frac{1}{2} + \frac{(\log\log T)^2}{\log T}.
\end{equation}
Thus, almost all $a$-points are arbitrarily close to the critical line. Under the assumption of the RH, this phenomenon was already known to Landau \cite{BohrLandauLittlewood:1913}. For $a\neq 0$, Bohr \& Jessen \cite{bohrjessen:1932} showed that the number $N_a(\sigma_1,\sigma_2,T)$ of non-trivial $a$-values which lie inside the strip $\sigma_1<\sigma<\sigma_2$ with arbitrarily chosen $\frac{1}{2}<\sigma_1<\sigma_2<1$ and have imaginary part $\gamma_a \in(0,T]$ is given asymptotically by
$$
N_a(\sigma_1,\sigma_2,T) \sim c T,
$$
as $T\rightarrow\infty$, with a constant $c>0$ that depends on $\sigma_1$, $\sigma_2$ and $a$.
\par

{\bf Voronin's universality theorem.} Building on works of Bohr \cite{bohrcourant:1914,bohrjessen:1930,bohrjessen:1932} and his collaborators, Voronin \cite{voronin:1975} discovered a remarkable universality property of the Riemann zeta-function which states, roughly speaking, that every analytic, non-vanishing function on a compact set with connected complement inside the strip $\frac{1}{2}<\sigma<1$ can be approximated by vertical shifts of the Riemann zeta-function. Voronin's universality theorem was generalized by Bagchi \cite{bagchi:1982}, Reich \cite{reich:1980} and others. In its strongest formulation it can be stated as follows.
\begin{theorem}[Voronin's universality theorem] \label{th:universality}
Let $\mathcal{K}$ be a compact set in the strip $\frac{1}{2}<\sigma <1$ with connected complement. Let $g$ be a continuous, non-vanishing function on $K$, which is analytic in the interior of $K$. Then, for every $\varepsilon>0$,
$$
\liminf_{T\to\infty}\frac{1}{T}\meas\left\{\tau\in(0,T]\,:\,\max_{s\in{\mathcal{K}}}\vert\zeta(s+i\tau)-g(s)\vert<\varepsilon\right\}>0.
$$
\end{theorem}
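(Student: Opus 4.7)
The plan is to follow the classical route due to Voronin, as refined by Bagchi and Reich, which splits the proof into a probabilistic equidistribution step and an analytic approximation step via rearrangement in Hilbert space.

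First I would reduce the problem from approximating $g$ to approximating its logarithm. Since $g$ is continuous, non-vanishing on $\mathcal{K}$ and analytic in the interior, Mergelyan's theorem (applied on a slight enlargement of $\mathcal{K}$, still with connected complement inside $\frac{1}{2}<\sigma<1$) lets us replace $g$ by $e^{h}$ for some polynomial $h$. It then suffices to approximate $h$ by $\log\zeta(s+i\tau)$ uniformly on $\mathcal{K}$. The next step is to introduce, for a finite set of primes $p\leq N$ and real numbers $\theta_p$, the truncated Euler logarithm
\begin{equation*}
L_N(s;\boldsymbol\theta):=-\sum_{p\leq N}\log\bigl(1-p^{-s}e^{-2\pi i\theta_p}\bigr),
\end{equation*}
and to show that the tail $\log\zeta(s+i\tau)-L_N(s;\tau\log p/(2\pi))$ is small in mean-square on $\mathcal{K}$, uniformly for $\tau\in[0,T]$, using a standard second-moment estimate for $\zeta(s)$ in the strip $\frac{1}{2}<\sigma<1$ (this is where the hypothesis $\sigma>\frac{1}{2}$ enters crucially).

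Second, I would invoke the equidistribution of the prime phases. The numbers $\{\log p: p\in\mathbb{P}\}$ are linearly independent over $\mathbb{Q}$ (by unique factorisation), so by the Weyl/Kronecker equidistribution theorem on the infinite torus $\mathbb{T}^\infty=\prod_p\mathbb{T}$, the curve $\tau\mapsto (p^{-i\tau})_{p\in\mathbb{P}}$ is uniformly distributed with respect to Haar measure. Consequently, for every continuous functional on $\mathbb{T}^\infty$ the time average along $\tau$ equals the space average. Applied to the $L_N$ truncations one obtains: for every open neighbourhood $U$ in the torus of any prescribed point $\boldsymbol\theta^\ast=(\theta_p^\ast)_p$, the set of $\tau\in[0,T]$ with $(\tau\log p/(2\pi))_{p}\in U$ (mod $1$) has positive lower density.

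Third comes the heart of the argument, the \emph{denseness/rearrangement} step. I must exhibit some $\boldsymbol\theta^\ast$ such that $L_N(s;\boldsymbol\theta^\ast)$ approximates the target $h(s)$ uniformly on $\mathcal{K}$ when $N$ is large. Expanding $-\log(1-p^{-s}e^{-2\pi i\theta_p})\approx p^{-s}e^{-2\pi i\theta_p}$ plus a negligible remainder (since $\sigma>\frac{1}{2}$ forces the quadratic part to be absolutely convergent and small), the task reduces to approximating $h$ by $\sum_{p\leq N} p^{-s}e^{-2\pi i\theta_p^\ast}$ in an appropriate Hilbert space $H$ of analytic functions on a disc containing $\mathcal{K}$. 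The functions $f_p(s):=p^{-s}$ form a conditionally but not absolutely convergent system in $H$; by a Hilbert-space variant of the Pechersky/Riemann rearrangement theorem (as used by Bagchi), the set of all conditionally convergent sums $\sum_p c_p f_p$ with $|c_p|=1$ is dense in $H$, provided the system $(f_p)$ satisfies (i) $\sum_p\|f_p\|^2=\infty$ and (ii) density in $H$ of its linear span. Both properties rest on prime-number-theoretic facts: $\sum_p p^{-2\sigma}=\infty$ for $\sigma\in(\frac{1}{2},1)$, and the linear independence/denseness follows via a Beurling-type argument using the connectedness of the complement of $\mathcal{K}$ (so Runge/Mergelyan applies).

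Combining the three pieces finishes the proof: given $\varepsilon>0$, pick $h$ with $\|e^h-g\|_{\mathcal{K}}<\varepsilon/2$, then pick $N$ and $\boldsymbol\theta^\ast$ with $\|L_N(\cdot;\boldsymbol\theta^\ast)-h\|_{\mathcal{K}}<\varepsilon/4$, then use equidistribution to obtain a set $J\subset[0,T]$ of positive lower density on which $\|L_N(\cdot;\tau\log p/(2\pi))-L_N(\cdot;\boldsymbol\theta^\ast)\|_{\mathcal{K}}<\varepsilon/8$, and finally discard a subset of small density on which the tail $\log\zeta(s+i\tau)-L_N$ is not $<\varepsilon/8$ (possible by Chebyshev applied to the mean-square bound). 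The hard step is clearly the rearrangement/denseness argument in Hilbert space, since both the non-absolute-convergence obstruction and the need to control all target analytic functions simultaneously are the genuinely novel ingredients; the equidistribution step is classical and the reduction to $L_N$ is a routine second-moment estimate.
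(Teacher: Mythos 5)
The paper does not prove this theorem; it is cited from Voronin \cite{voronin:1975} in its strengthened form as developed by Bagchi \cite{bagchi:1982} and Reich \cite{reich:1980}, so I assess your sketch against the standard argument. Your overall architecture --- reduction to a polynomial logarithm via Mergelyan, mean-square control of the tail $\log\zeta - L_N$, Kronecker--Weyl equidistribution on the infinite torus, and a Hilbert-space rearrangement step --- is indeed the modern Bagchi/Steuding route, and the way you weave the three pieces together at the end (discarding a small-density set via Chebyshev) is correct in spirit.

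However, your statement of the rearrangement/Pechersky conditions contains a genuine error that would make the central step collapse. You write that the system $(f_p)$ should satisfy (i) $\sum_p\|f_p\|^2=\infty$ and (ii) density of the linear span, and you attribute both to the divergence ``$\sum_p p^{-2\sigma}=\infty$ for $\sigma\in(\tfrac{1}{2},1)$''. This is wrong on all three counts. For $\sigma\in(\tfrac{1}{2},1)$ we have $2\sigma>1$, so $\sum_p p^{-2\sigma}$ \emph{converges}; that convergence is precisely what breaks down at $\sigma=\tfrac{1}{2}$ and is the reason universality stops at the critical line. The rearrangement theorem requires $\sum_p\|f_p\|^2<\infty$ --- if that sum were infinite, the conditionally rearranged sums $\sum_p c_p f_p$ would simply fail to converge in $H$, and the asserted denseness would be vacuous. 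And the ``direction'' condition is not mere density of the linear span but the strictly stronger requirement that $\sum_p |\langle f_p, e\rangle| = \infty$ for every non-zero $e\in H$. Density of the span is necessary (an orthogonal $e$ would give zero inner products) but far from sufficient: e.g.\ $f_n = 2^{-n}e_n$ has dense span but $\sum_n|\langle f_n, e\rangle|<\infty$ for $e=\sum 2^{-n}e_n$. The divergence-in-every-direction condition is the genuinely hard analytic input, and it rests on $\sum_p p^{-\sigma}=\infty$ for $\sigma\le 1$ together with the prime number theorem and Borel--Carath\'eodory-type estimates, not on any property of $\sum_p p^{-2\sigma}$. As written, your conditions are internally inconsistent (condition (i) precludes convergence) and too weak (condition (ii) does not imply the required directional divergence), so the denseness step would not go through.
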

Here and in the following, $\meas X$ denotes the Lebesgue measure of a measurable set $X\subset \R$. Bagchi \cite{bagchi:1982} discovered that the Riemann hypothesis can be rephrased in terms of universality. The RH is true, if and only if, the zeta-function is {\it recurrent}, i.e., if the zeta-function can approximate itself in the sense of Voronin's universality theorem. The RH is true if and only if, for any compact subset $\mathcal{K}$ of $\frac{1}{2}<\sigma<1$ with connected complement and any $\varepsilon>0$,
$$
\liminf_{T\to\infty}\frac{1}{T}\meas \left\{\tau\in(0,T]\,:\,\max_{s\in { K}}\vert\zeta(s+i\tau)-\zeta(s)\vert<\varepsilon\right\}>0.
$$
As a direct consequence, the universality theorem implies the following denseness statement. For every $\frac{1}{2}<\sigma<1$ and $n\in\N_0$, the set
$$
V_n(\sigma):=\{(\zeta(\sigma+it),\zeta'(\sigma+it), ... , \zeta^{(n)}(\sigma+it))\ : \ t\in [0,\infty) \} 
$$
lies dense in $\C^{n+1}$. For $n=0$, this was already known to Bohr et. al \cite{bohrcourant:1914,bohrjessen:1930,bohrjessen:1932}. It follows basically from the Dirichlet representation and the functional equation that, for $\sigma< 0$ or $\sigma> 1$,
$$
\overline{V_0(\sigma)} \neq \C.
$$
On the assumption of the Riemann hypothesis, Garunk\v{s}tis \& Steuding \cite{garunkstissteuding:2010} proved that, for $\sigma<\frac{1}{2}$,
$$
\overline{V_0(\sigma)} \neq \C.
$$
However, even by assuming the truth of the Riemann hypothesis, it is not known whether the values of the zeta-function on the critical line lie dense in $\C$ or not. According to \textbf{\textit Ramachandra's denseness conjecture}, we expect that
$$
\overline{V_0(\tfrac{1}{2})} = \C.
$$
By assuming several moment conjectures arising from random matrix theory models for the Riemann zeta-function, Kowalski \& Nikeghbali \cite{kowalskinikeghbali:2012}  obtained that $\overline{V_0(\tfrac{1}{2})} = \C$. Garunk\v{s}tis \& Steuding \cite{garunkstissteuding:2010} showed that a multidimensional denseness statement for the zeta-function on the critical line does not hold. In particular, they proved that
$$
\overline{V_1(\tfrac{1}{2})} \neq \C^2.
$$

{\bf Mean-square value on vertical lines.} An essential ingredient in the proof of Bohr's denseness result and Voronin's universality theorem is the fact that 
$$
\lim_{T\rightarrow\infty}\frac{1}{T} \int_{-T}^T \left|\zeta(\sigma+it)\right|^2 \d t = \sum_{n=1}^{\infty} n^{-2\sigma}<\infty \qquad \mbox{ for }\sigma>\tfrac{1}{2}.
$$
On the critical line, the methods of Bohr and Voronin collaps, since
$$
\frac{1}{T} \int_{-T}^T \left|\zeta(\tfrac{1}{2}+it)\right|^2 \d t \sim \log T, \qquad \mbox{as }T\rightarrow\infty,
$$
according to Hardy \& Littlewood \cite{hardylittlewood:1936}.\par

{\bf Selberg's central limit law.} Due to Selberg (unpublished), the values of the Riemann zeta-function are Gaussian normally distributed, after some suitable normalization: for any measurable set $B\subset \C$ with positive Jordan content, as $T\rightarrow\infty$, 
$$
\frac{1}{T}\meas\left\{t\in (0,T]: \frac{\log\zeta\left(\frac{1}{2}+it\right)}{ \sqrt{\frac{1}{2}\log\log T}}\in {B}\right\}
\sim 
\frac{1}{2\pi}\iint_{B}\exp\left(-{\textstyle{\frac{1}{2}}}(x^2+y^2)\right)\d x\d y.
$$
For a first published proof, we refer to Joyner \cite{joyner:1986}. Note that $f(x,y):=\exp\left(-{\textstyle{\frac{1}{2}}}(x^2+y^2)\right)$ defines the density function of the bivariate Gaussian normal distribution.

{\bf Growth behaviour of the Riemann zeta-function} The Riemann zeta-function is a function of finite order. For $\sigma\in\R$ and any $\eps>0$, 
$$
\zeta(\sigma+it)\ll t^{\theta_{\zeta}(\sigma)+\eps}, \qquad\mbox{as }|t|\rightarrow\infty, 
$$
where $\theta_{\zeta}(\sigma)$ is a continuous, convex function in $\sigma$ with
$$
\theta_{\zeta}(\sigma)=\begin{cases}0 & \mbox{if }\sigma\geq 1,\\ \tfrac{1}{2}-\sigma & \mbox{if }\sigma\leq 0. \end{cases}
$$
According to the \textbf{\textit{Lindel\"of hypothesis (LH)}}, we expect that $\theta_{\zeta}(\frac{1}{2})=0$. This would imply that
$$
\theta_{\zeta}(\sigma)=\begin{cases}0 & \mbox{if }\sigma\geq \frac{1}{2},\\ \tfrac{1}{2}-\sigma & \mbox{if }\sigma<\frac{1}{2}. \end{cases}
$$
However, the Lindel\"of hypothesis is neither proven nor disproven. The best known upper bound for $\theta_{\zeta}(\frac{1}{2})$ is due to Huxley \cite{huxley:2002, huxley:2005}. He proved that
$$
\theta_{\zeta}(\tfrac{1}{2}) \leq \frac{32}{205} = 0.1560...\ .
$$
The truth of the Riemann hypothesis implies the truth of the Lindel\"of hypothesis. The Lindel\"of hypothesis can be reformulated in terms of power moments to the right of the critical line. Due to classical works of Hardy \& Littlewood \cite{hardylittlewood:1923}, the Lindel\"of hypothesis is true if and only if, for every $k\in\N$ and every $\sigma>\frac{1}{2}$,
\begin{equation}\label{Lind}
\lim_{T\rightarrow \infty}\frac{1}{T}\int_1^T \left|\zeta(\sigma+it) \right|^{2k} \d t =\sum_{n=1}^{\infty}\frac{d_k(n)^2}{n^{2\sigma}},
\end{equation}
where $d_k$ denotes the generalized divisor function appearing in the Dirichlet series expansion of $\zeta^{k}$. The latter formula is proved only in the cases $k=1,2$ by works of Hardy \& Littlewood \cite{hardylittlewood:1922} and Ingham \cite{ingham:1926}.

\section{The Selberg class and the extended Selberg class}\label{sec:selbergclass}
Selberg \cite{selberg:1992} made a promising attempt to describe axiomatically the class of all Dirichlet series for which an analogue of the Riemann hypothesis is expected to be true.\par
{\bf Definition of the Selberg class.} A function $\L$ belongs to the Selberg class $\Sc$ if it satisfies the following properties:
\begin{itemize}
 \item[(S.1)] {\it Dirichlet series representation.} In the half-plane $\sigma>1$, $\L$ is given by an absolutely convergent Dirichlet series
$$
\L(s) = \sum_{n=1}^{\infty} \frac{a(n)}{n^s}
$$
with coefficients $a(n)\in\C$.
\item[(S.2)] {\it Ramanujan hypothesis.} The Dirichlet series coefficients of $\L$ satisfy the growth condition $a(n)\ll n^{\eps}$ for any $\eps>0$, as $n\rightarrow\infty$; here, the implicit constant in the Vinogradov symbol may depend on $\eps$.
\item[(S.3)] {\it Euler product representation.} In the half-plane $\sigma>1$, $\L$ has a product representation
$$
\L(s) = \prod_{p\in\mathbb{P}} \L_p(s),
$$
where the product is taken over all prime numbers and 
$$
\L_p(s):=\exp \left(\sum_{k=1}^{\infty} \frac{b(p^k)}{p^{ks}} \right)
$$
with suitable coefficients $b(p^k)\in\C$ satisfying $b(p^k)\ll p^{k\theta}$ with some $\theta<\frac{1}{2}$.
\item[(S.4)] {\it Analytic continuation.} There exists a non-negative integer $k$ such that $(s-1)^k \L(s)$ defines an entire function of finite oder.
\item[(S.5)] {\it Riemann-type functional equation.} $\L$ satisfies a functional equation 
\begin{equation*}
\L(s) = \Delta_{\L} (s) \overline{\L(1-\overline{s})},
\end{equation*}
where 
\begin{equation*}
\Delta_{\L}(s):= \omega Q^{1-2s}\prod_{j=1}^f \frac{\Gamma\left( \lambda_j (1-s) + \overline{\mu_j}\right)}{\Gamma\left( \lambda_j s + \mu_j \right)},
\end{equation*}
with positive real numbers $Q, \lambda_1,...,\lambda_f$ and complex numbers $\mu_1,...,\mu_f, \omega$ with $\Re\ \mu_j \geq 0$ and $|\omega|=1$.
\end{itemize}
For a concise survey on the Selberg class and a motivation for the choice of the axioms, the reader is referred to Perelli \cite{perelli:2005}.\par 
An important parameter of a function $\L\in\Sc$ is its so called degree which is defined by $$d_{\L}:=2 \sum_{j=1}^f  \lambda_j$$ via the quantities $\lambda_j$ from the Riemann-type functional equation. The degree of $\L\in\Sc$ is uniquely determined. The Riemann zeta-function is an element of the Selberg class of degree one. Its $k$-th power ($k\in\N$) lies also in $\mathcal{S}$ and has degree $k$.\par 

Besides the Riemann zeta-function, the Selberg class contains many other arithmetical relevant $\L$-functions. Prominent examples of functions in $\mathcal{S}$ are Dirichlet $L$-functions attached to primitive characters, Dedekind zeta-functions, Hecke $L$-functions associated to algebraic number fields and, under appropriate normalizations, Hecke $L$-functions associated to certain modular forms.\par
The Euler product representation of these examples has a very special form:
\begin{itemize}
 \item[(S.3$^*$)] {\it Polynomial Euler product representation.} There exist an integer $m\in\N$ and $\alpha_1(p),...,\alpha_m(p)\in \C$ such that
\begin{equation*}
\L(s)=\prod_{p\in\mathbb{P}} \prod_{j=1}^m \left(1-\frac{\alpha_j(p)}{p^s}\right)^{-1}
\end{equation*} 
in the half-plane $\sigma>1$.
\end{itemize}

In the value-distribution of functions in the Selberg class, there appear similar phenomena as in the case of the Riemann zeta-function. It follows from the Euler product representation that $\L\in\Sc$ has no zeros in the half-plane $\sigma>1$. The function $\L$ has zeros which are generated by the poles of the $\Gamma$-factors appearing the functional equation. These zeros are called trivial zeros of $\L$ and are located at the points
$$
s= -\frac{\mu_j + k}{\lambda_j}, \qquad k\in\N_0, \; j=1,...,f
$$ 
All other zeros of $\L$ are said to be non-trivial zeros. According to the \textbf{\textit{Grand Riemann hypothesis}}, one expects that every function $\L\in\Sc$ satisfies an analogue of the Riemann hypothesis, i.e. the non-trivial zeros of every function $\L\in\Sc$ are located on the critical line $\sigma=\frac{1}{2}$. For general functions in $\Sc$ much less is known than for the special case of the Riemann zeta-function. For example, by now, it is not verified whether every function $\L\in\Sc$ satisfies the following zero-density estimate: 
\begin{itemize}
 \item[(DH)] {\it Selberg's zero-density estimate.} Let $\L\in\Sc$ and $N_0(\sigma,T)$ denote the number of non-trivial zeros $\rho=\beta+i\gamma$ of $\L$ with real part $\beta>\sigma$ and imaginary part $\gamma\in(0,T]$. Then, there exists a positive number $\alpha$ such that 
$$
N_{0}(\sigma, T) \ll T^{1-\alpha(\sigma-\frac{1}{2})} \log T
$$
uniformly in $\sigma>\frac{1}{2}$, as $T\rightarrow\infty$.
\end{itemize}
The \textbf{\textit{Grand density hypothesis}} asserts that (DH) is true for every $\L\in\Sc$. Besides the Riemann zeta-function, (DH) is verified for example for Dirichlet $L$-functions attached to primitive characters; see Selberg \cite{selberg:1946-2}. Certainly, the truth of the Grand Riemann hypothesis implies the truth of the Grand density hypothesis. Moreover, according to the \textbf{\textit{Grand Lindel\"of hypothesis}} we expect that every function $\L\in\Sc$ satisfies an analogue of the Lindel\"of hypothesis, i.e., for any $\eps>0$, 
$$
\L\left(\tfrac{1}{2}+it\right) \ll t^{\eps}, \qquad \mbox{as }t\rightarrow\infty.
$$

Besides many unsolved analytic questions concerning functions in $\Sc$, there are also several structural problems related to $\Sc$ as a class of Dirichlet series. For example, one expects that the Dirichlet series coefficients $a(n)$ of $\L\in\Sc$ satisfy the following prime mean-square condition; see Steuding \cite[Chapt. 6.6]{steuding:2007}:
\begin{itemize}
\item[(S.6)] {\it Prime mean-square condition.} For $\L\in\mathcal{S}$, there exist a positive constant $\kappa_{\L}$ such that
\begin{equation*} 
\lim_{x\rightarrow\infty}\frac{1}{\pi(x)} \sum_{p\leq x} |a(p)|^2 = \kappa_{\L},
\end{equation*}
here, the summation is taken over all primes $p\leq x$. 
\end{itemize}
Selberg \cite{selberg:1992} conjectured that the Dirichlet series coefficients $a(n)$ of any function $\L\in\Sc$ satisfy the following property: 
\begin{itemize}
\item[(S.6$^*$)] {\it Selberg's prime coefficient condition.} For $\L\in\Sc$, there exists a positive integer $n_{\L}$ such that
\begin{equation*}
\sum_{\begin{subarray}{c} p\in\mathbb{P} \\ p\leq x \end{subarray}} \frac{|a(p)|^2}{p} = n_{\L} \log\log x + O(1),
\end{equation*}
as $x\rightarrow\infty$.
\end{itemize}
We know that the Riemann zeta-function and Dirichlet $L$-functions attached to primitive characters satisfy (S.6) and (S.6$^*$); see Mertens \cite{mertens:1874} and Dirichlet \cite{dirichlet:1837}. The conditions (S.6) and (S.6$^*$) are closely related to one another; see Steuding \cite[Chapt. 6.6]{steuding:2007} for details. Selberg conjectured that (S.6$^*$) results from a deeper structure in $\mathcal{S}$: obviously, the Selberg class is multiplicatively closed. We call a function $\L\in\Sc$ primitive if
$$
\L = \L_1 \cdot \L_2 \qquad \mbox{ with }\L_1,\L_2\in\Sc
$$
implies that $\L_1=\L$ or $\L_2 = \L$. Roughly speaking, primitive functions $\L\in\Sc$ cannot be written as a non-trivial product of other functions in $\Sc$. According to  \textbf{\textit{Selberg's orthogonality conjecture}}, we expect that the following is true.
\begin{itemize}
\item[(S.6$^{**}$)] {\it Selberg's orthogonality conjecture.} For any primitive functions $\L_1,\L_2\in\Sc$ with Dirichlet series coefficients $a_{\L_1}(n)$, resp. $a_{\L_2}(n)$,
$$
\sum_{\begin{subarray}{c} p\in\mathbb{P} \\ p\leq x \end{subarray}} \frac{a_{\L_1}(p)\overline{a_{\L_2}(p)}}{p} = 
\begin{cases}
\log\log x + O(1) & \mbox{if }\L_1=\L_2,\\
O(1) & \mbox{otherwise.} 
\end{cases}
$$
\end{itemize}

Besides the Selberg class, we shall also work with certain subclasses or extensions of the Selberg class:

{\bf The extended Selberg class $\Sc^{\#}$.} A function $\L\not\equiv 0$ belongs to the extended Selberg class $\Sc^{\#}$ if it satisfies axioms (S.1), (S.4) and (S.5). The functions in $\mathcal{S}^{\#}$ do not ne\-cessari\-ly satisfy the Riemann hypothesis. The Davenport-Heilbronn zeta-function is an element of $\mathcal{S}^{\#}$, but not of $\mathcal{S}$ and has non-trivial zeros off the critical line. However, one expects that the Lindel\"of hypothesis remains still true for every function $\L\in\Sc^{\#}$. \par

{\bf The class $\mathcal{S}^{\#}_R$.} A function $\L\in \Sc^{\#}$ with $d_{\L}>0$ belongs to the class $\mathcal{S}^{\#}_R$ if it satisfies additionally the Ramanujan hypothesis (S.2). \par

{\bf The class  $\Sc^{*}$.} A function $\L\in\Sc$ belongs to the class $\mathcal{S}^*$ if $\L$ satisfies the zero-density estimate (DH) and Selberg's prime coefficient condition (S.6$^*$). One expects that both conditions hold for every function $\L\in\Sc$ and, thus, that $\Sc^*=\Sc$. We know that the Riemann zeta-function and Dirichlet $L$-functions attached to primitive characters are elements of $\Sc^{*}$.

\section{Statement of the main results and outline of the thesis}\label{sec:outline}

This thesis is divided into two parts. In part I we study the value-distribution of the Riemann zeta-function on and near the critical line. In particular, we focus on the collapse of the Voronin-type universality property at the critical line and the clustering of $a$-points around the critical line. We discuss the interplay of these two features and their connection to Ramachandra's denseness conjecture. In our argumentation, we shall use several results from the theory of normal families of meromorphic functions. For the convenience of the reader we summarize the results which we shall need in the appendix.\par 

The critical line is a natural boundary for the Voronin-type universality property of the Riemann zeta-function; see Section \ref{sec:failure}. In Chapter \ref{ch:conceptsuniv} we modify Voronin's universality concept. Roughly speaking, we add a scaling factor to the vertical shifts that appear in Voronin's universality theorem and regard
$$
\zeta_{\tau}(s):=\zeta\left(\tfrac{1}{2}+\lambda(\tau)s +i\tau \right),  \qquad s\in\D,
$$ 
with $\tau\in[2,\infty)$ and a positive function $\lambda$ satisfying $\lim_{\tau\rightarrow\infty}\lambda(\tau)=0$. By sending $\tau$ to infinity, this leads to a limiting process for the Riemann zeta-function in a funnel-shaped neighbourhood of the critical line, more precisely in the region
\begin{equation*}\label{region}
S_{\lambda}:=\left\{ \sigma+it\in\C \, : \,  \tfrac{1}{2}-\lambda(t)<\sigma<\tfrac{1}{2}+\lambda(t), \; \; t\geq 2\right\}.
\end{equation*}
We shall see in Proposition \ref{mainprop} that possible limit functions of this process depend on the choice of $\lambda$ and are strongly affected by the functional equation of the Riemann zeta-function. Our results do not only apply for the Riemann zeta-function but hold for meromorphic functions that satisfy a Riemann-type functional equation in general. For this purpose, we define in Chapter \ref{chapt:classG} the class $\mathcal{G}$, which generalizes the extended Selberg class $\mathcal{S}^{\#}$.\par
\begin{figure}\centering
\includegraphics[width=0.7\textwidth]{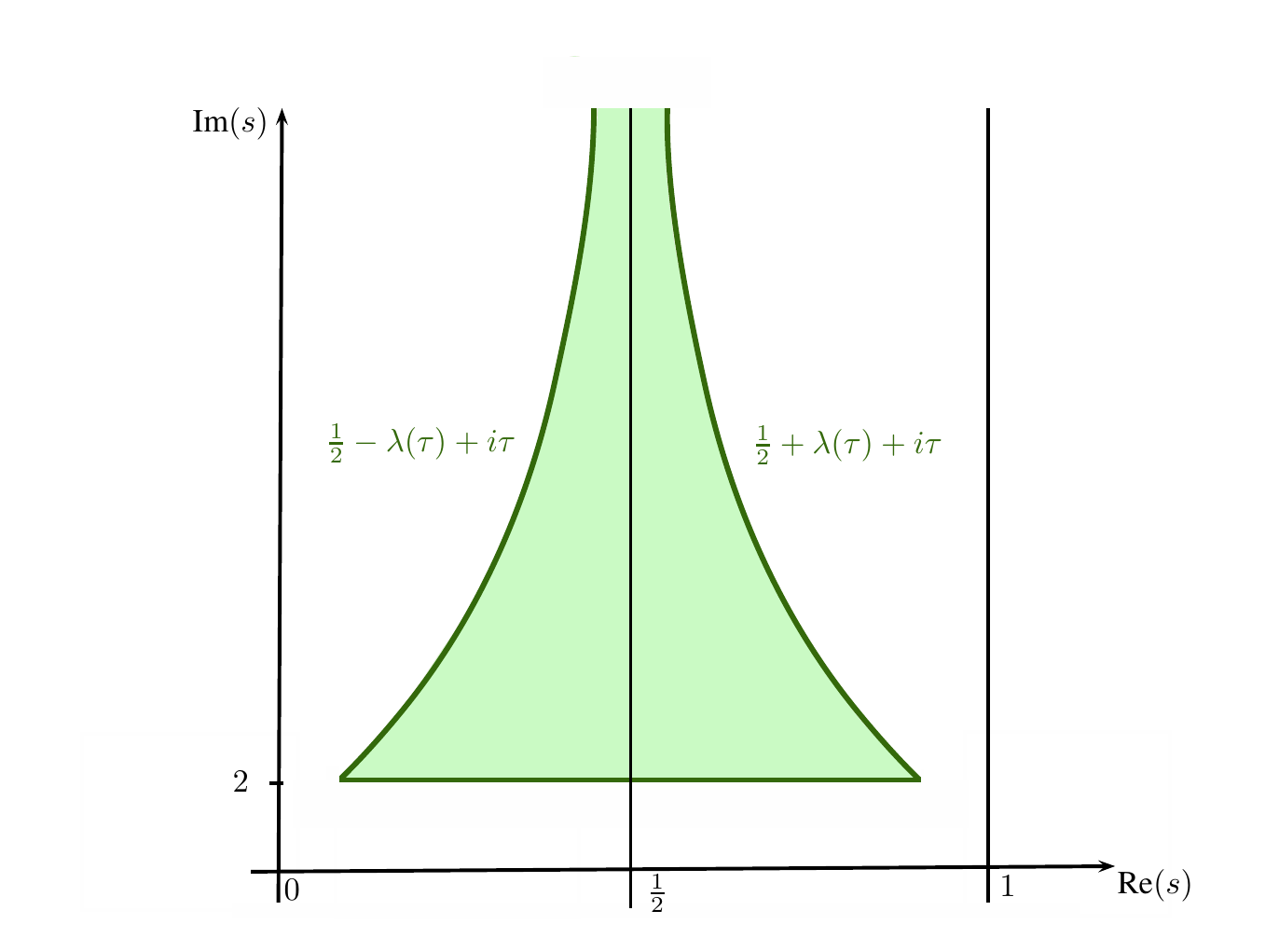}
\caption{The funnel-shaped region $S_{\lambda}$ attached to a certain positive function $\lambda$ with $\lim_{t\rightarrow\infty}\lambda(t)=0$.}
\label{fig:asymstrip} 
\end{figure}

In Chapter \ref{ch:smalllarge} we shall see that Selberg's central limit law implies that, for suitably chosen $\lambda$, the limiting process of Chapter \ref{ch:conceptsuniv} has a strong tendency to converge to $g\equiv 0$ or to $g\equiv \infty$; see Theorem \ref{th:largesmall}. This provides information on the frequency of small and large values of the Riemann zeta-function in certain regions $S_{\lambda}$ and complements recults of Laurin\v{c}ikas \cite[Chapt. 3, Theorem 3.5.1]{laurincikas:1991-2}, Bourgade \cite{bourgade:2010} and others who established certain extensions of Selberg's central limit law; see Section \ref{sec:selbergcentrallimit}.
For example, we deduce that the Riemann zeta-function assumes both arbitrarily small and arbitrarily large values on every path to infinity which lies inside $S_{\lambda}$, where the function $\lambda$ satisfies
$$
\lambda(t)=\frac{c}{\log t}, \qquad t\geq 2,
$$
with an arbitrary constant $c>0$; see Corollary \ref{cor:selbergsmalllarge} and Corollary \ref{cor:curvessmalllarge}.
Selberg's central limit law does not only apply to the Riemann zeta-function. Due to Selberg \cite{selberg:1992} it holds (with suitable adaptions) for every function in the class $\mathcal{S}^*$. Thus, most of our results in Chapter \ref{ch:smalllarge} hold for arbitrary functions $\L\in\Sc^*$. \par

Levinson \cite{levinson:1975} showed that the $a$-points of the Riemann zeta-function cluster around the critical line. In Chapter \ref{chapt:apoints} we investigate how to choose $\lambda$ such that almost all or infinitely many $a$-points of the Riemann zeta-function lie in the region $S_{\lambda}$. Levinson \cite{levinson:1975} relied essentially on a lemma of Littlewood which can be considered as an integrated version of the principle of argument. Endowed with a result of Selberg \cite{selberg:1992}, resp. Tsang \cite[\S 8]{tsang:1984}, Levinson's method yields that, for any $a\in\C$, almost all $a$-points of the Riemann zeta-function (in a certain density sense) lie inside the region $S_{\lambda}$, if $\lambda$ is chosen such that
$$
\lambda(t)=\frac{\mu(t)\sqrt{\log\log t}}{\log t}, \qquad t\geq 2,
$$
with an arbitrary positive function $\mu$ satisfying $\lim_{t\rightarrow\infty}\mu(t)=\infty$; see Theorem \ref{th:levinsonselberg}. 
Besides Levinson's method, we use certain arguments from the theory of normal families and rely on the notation of filling discs to study the $a$-point distribution of the Riemann zeta-function near the critical line. With these concepts we obtain new insights into the $a$-point distribution, complementing the observations of Levinson. In particular, we show that, for every $a\in\C$, with at most one exception, there are infinitely many $a$-points of the Riemann zeta-function inside the region $S_{\lambda}$, if $\lambda$ is chosen such that
$$
\lambda(t)=\frac{\mu(t)}{\log t}, \qquad t\geq 2,
$$
with any positive function $\mu$ satisfying $\lim_{t\rightarrow\infty}\mu(t)=\infty$; see Theorem \ref{th:levinsonselberg}. We shall see that, under quite general assumptions, the same is true for functions in $\mathcal{G}$. Beyond this, relying on a result of Ng \cite{ng:2008}, we prove that, under the assumption of the generalized Riemann hypothesis for Dirichlet $L$-functions, for every $a\in\C$, with at most one exception, there are infinitely many $a$-points of the Riemann zeta-function inside the region $S_{\lambda}$, if $\lambda$ satisfies
$$
\lambda(t) = \mu(t)\exp \left(-c_0 \frac{\log t}{\log\log t} \right), \qquad t\geq 2,
$$
where $\mu$ is any positive function satisfying $\lim_{t\rightarrow\infty}\mu(t)=\infty$ and $c_0$ any positive constant less than $\frac{1}{\sqrt{2}}$.\par

The results of Chapter \ref{ch:smalllarge} and \ref{chapt:apoints} help us to approach Ramachandra's denseness conjecture in Chapter \ref{ch:curve}. Obviously, we have
$$
0\in V_0(\tfrac{1}{2})=\left\{\zeta(\tfrac{1}{2}+it) \, : \, t\in[2,\infty)\right\}.$$ 
Ramachandra's conjecture suggests that zero is in particular an interior point of $\overline{V_0(\tfrac{1}{2})}$. This is, however, neither proven nor disproven. Relying on the results of Chapter \ref{ch:smalllarge}, we show that there is a subinterval $A\subset [0,2\pi)$ of length at least $\frac{\pi}{4}$ such that, for every $\theta\in A$, there is a sequence $(t_n)_n$ of numbers $t_n\in[2,\infty)$ with
$$
\zeta(\tfrac{1}{2}+it_n)\neq 0, \qquad \lim_{n\rightarrow\infty} \zeta(\tfrac{1}{2}+it_n) = 0 \qquad \mbox{ and } \qquad
\arg \zeta(\tfrac{1}{2}+it_n) \equiv \theta \mod 2\pi;
$$  
see Theorem \ref{th:zeroasintpoint}. This may be interpreted as a weak counterpart of a result of Kalpokas, Korolev \& Steuding \cite{kalpokaskorolevsteuding:2013} who showed that, for every $\theta\in[0,2\pi)$, there is a sequence $(t_n)_n$ of numbers $t_n\in[2,\infty)$ with $\lim_{n\rightarrow\infty} t_n = \infty$ such that, for $n\in\N$,
$$
 |\zeta(\tfrac{1}{2}+it_n)|\geq C (\log t_n)^{5/4}\qquad
\mbox{ and } \qquad
\arg \zeta(\tfrac{1}{2}+it_n) \equiv \theta \mod 2\pi
$$
with some positive constant $C$. \par
Moreover, we investigate in Chapter \ref{ch:curve} whether there are any curves 
$$
\gamma:[1,\infty)\rightarrow \C, \qquad t \mapsto \tfrac{1}{2}+\epsilon(t)+it
$$ 
with $\lim_{t\rightarrow\infty}\epsilon(t)=0$ such that the values of the Riemann zeta-function on these curves lie dense in $\C$. If we could establish a denseness result for the Riemann zeta-function on curves $\gamma$ with $\epsilon(t)$ tending to zero fast enough, then the truth of Ramachandra's conjecture would follow; see Theorem \ref{th:curvesmotivation}. In Theorem \ref{th:densenessavalues} and Theorem \ref{th:enumerationbohr} we prove that there exist certain curves $\gamma$ on which the values of the Riemann zeta-function lie dense in $\C$. We rely here both on the $a$-point results of Chapter \ref{chapt:apoints} and on Bohr's method. However, we shall not be able to derive a denseness statement for the zeta-values on the critical line.\par

In part II we study the value distribution of the Riemann zeta-function and related functions to the right of the critical line. We aim at a weak version of the Lindel\"of hypothesis. According to Hardy \& Littlewood \cite{hardylittlewood:1923}, the Lindel\"of hypothesis can be reformulated in terms of power moments to the right of the critical line. In particular, the Lindel\"of hypothesis is equivalent to statement \eqref{Lind}. Tanaka \cite{tanaka:2008} showed recently that \eqref{Lind} is true in the following measure-theoretical sense. Let $\pmb{1}_{X}$ denote the indicator function of a set $X\subset\R$ and $X^c := \R\setminus X$ its complement. Tanaka proved that there exists a subset $A\subset[1,\infty)$ of density
\begin{equation}\label{densA}
\lim_{T\rightarrow \infty} \frac{1}{T}\int_1^{T} \pmb{1}_{A}(t)\d t = 0,
\end{equation}
such that, for every $k\in\N$ and every $\sigma>\frac{1}{2}$,
\begin{equation}\label{t1}
\lim_{T\rightarrow\infty}\frac{1}{T} \int_1^T \left| \zeta(\sigma+it)\right|^{2k} \pmb{1}_{A^c}(t) \d t = \sum_{n=1}^{\infty} \frac{d_k(n)^2}{n^{2\sigma}},
\end{equation}
where $d_k$ denotes the generalized divisor function. Thus, Tanaka showed that \eqref{Lind} holds if one neglects a certain set $A\subset [1,\infty)$ of density zero from the path of integration. Tanaka used some ergodic reasoning and methods from abstract harmonic analysis to establish his results.\par

In the main theorem of Part II, Theorem \ref{th:probmom}, we extend Tanaka's result. We rely here essentially on his methods and ideas.\par

We provide an integrated and discrete version of \eqref{t1}. The discrete version, for example, implies the following:\par
\textit{Let $\alpha\in (\frac{1}{2},1]$ and $l>0$ such that $$l\notin \{2\pi k(\log\tfrac{n}{m})^{-1} \, : \, k,n,m\in\N, n\neq m\}.$$ Then, there is a subset $J\subset \N$ with
$$
\lim_{N\rightarrow\infty} \frac{1}{N}\sum_{\begin{subarray}{c}n\in J \\ n\leq N \end{subarray}} 1=0
$$ 
such that, for every $k\in\N$, uniformly for $\sigma\in[\alpha,2]$ and $\lambda\in [0,l]$,
\begin{equation*}\label{rzeta}
\lim_{N\rightarrow\infty}\frac{1}{N} \sum_{\begin{subarray}{c}n\in J \\ n\leq N \end{subarray}} \bigl|\zeta(\sigma + i\lambda + inl) \bigr|^{2k}  = \sum_{n=1}^{\infty} \frac{d_k(n)^2}{n^{2\sigma}}.
\end{equation*}}Moreover, we show that Tanaka's result holds for a large class of functions with Dirichlet series expansion in $\sigma>1$. Our result implies, for instance, the following:\par

\textit{Let $\L(s)$ be a Dirichlet series that satisfies the Ramanujan hypothesis. Suppose that $\L(s)$ extends to a meromorphic function of finite order in some half-plane $\sigma>u\geq \frac{1}{2}$ with at most finitely many poles. Suppose that
$$
\limsup_{T\rightarrow\infty} \frac{1}{T} \int_{1}^{T} \left|\L(\sigma+it)\right|^2 \d t < \infty  \qquad \mbox{for }\sigma>u.
$$
Then, for every $\alpha\in(u,1]$, there is a subset $A\subset[1,\infty)$ satisfying \eqref{densA} such that, for every $k\in\N$, uniformly for $\sigma\in[\alpha,2]$,
\begin{equation*}\label{r}
\lim_{T\rightarrow\infty} \frac{1}{T}\int_1^T \left| \L(\sigma+it)\right|^{2k} \pmb{1}_{A^c}(t) \d t = \sum_{n=1}^{\infty} \frac{|a_k(n)|^2}{n^{2\sigma}}
\end{equation*}
and
\begin{equation*}\label{rrr}
\lim_{T\rightarrow\infty} \frac{1}{T}\int_1^T  \L^k(\sigma+it) \pmb{1}_{A^c}(t) \d t = a_k(1),
\end{equation*}
where the $a_k(n)$ denote the coefficients appearing in the Dirichlet series expansion of $\L^k$. If $\L$ can be written additionally as a polynomial Euler product in $\sigma>1$, then we find a subset $A\subset[1,\infty)$ satisfying \eqref{densA} such that, for every $k\in\N$, uniformly for $\sigma\in[\alpha,2]$,
\begin{equation*}\label{rr}
\lim_{T\rightarrow\infty}\frac{1}{T} \int_1^T \left| \L(\sigma+it)\right|^{-2k} \pmb{1}_{A^c}(t) \d t = \sum_{n=1}^{\infty} \frac{|a_{-k}(n)|^2}{n^{2\sigma}},
\end{equation*}
\begin{equation*}\label{rrrr}
\lim_{T\rightarrow\infty} \frac{1}{T}\int_1^T \left| \log \L(\sigma+it)\right|^{2} \pmb{1}_{A^c}(t) \d t = \sum_{n=1}^{\infty} \frac{|a_{\log\L}(n)|^2}{n^{2\sigma}}
\end{equation*}
and
\begin{equation*}\label{rrrrr}
\lim_{T\rightarrow\infty} \frac{1}{T}\int_1^T \left| \frac{\L'(\sigma+it)}{\L(\sigma+it)}\right|^2 \pmb{1}_{A^c}(t) \d t = \sum_{n=1}^{\infty} \frac{|\Lambda_{\L}(n)|^2}{n^{2\sigma}},
\end{equation*}
where the $a_{-k}(n)$, $a_{\log\L}(n)$ and $\Lambda_{\L}(n)$ denote the coefficients of the Dirichlet series expansion of $\L^{-k}$, $\log\L$ and $\L'/\L$, respectively.}

By working with a certain normality feature we shall relax the conditions posed on $\L$ above; see Section \ref{sec:classN}. Moreover, we shall see that our results are connected to the Lindel\"of hypothesis in the extended Selberg class and complement existing mean-value results due to Carlson \cite{carlson:1922}, Potter \cite{potter:1940}, Steuding \cite{steuding:2007}, Reich \cite{reich:1980-2}, Good \cite{good:1978}, Selberg \cite{selberg:1992} and others; see Section \ref{sec:probmom}.

\newpage

\part[Value-distribution near the critical line]{Value-distribution near the critical line}

\renewcommand{\thechapter}{\arabic{chapter}}  
\renewcommand{\thesection}{\arabic{chapter}.\arabic{section}}  
\renewcommand{\theequation}{\arabic{chapter}.\arabic{equation}}
\renewcommand{\thetheorem}{\arabic{chapter}.\arabic{theorem}}
\setcounter{chapter}{0}

\chapter{A Riemann-type functional equation}\label{chapt:classG}
In this chapter we define the class $\mathcal{G}$ which gathers all meromorphic functions that satisfy a Riemann-type functional equation. The class $\mathcal{G}$ generalizes the extended Selberg class $\Sc^{\#}$. By investigating the behaviour of functions in $\mathcal{G}$ we are able to detect analytic properties of functions in $\Sc^{\#}$ which are solely induced by a Riemann-type functional equation and do not depend on the Dirichlet series representation.\par
In Section \ref{sec:Delta} we state some basic facts about the function $\Delta_p$ that characterizes a Riemann-type functional equation. In Section \ref{sec:classG} we define the class $\mathcal{G}$ and give a brief overview on its elements. 

\section{The factor \texorpdfstring{$\Delta_p$}{} of a Riemann-type functional equation}\label{sec:Delta}
{\bf Definition and basic properties of $\Delta_p$.} For a given parameter tuple
$$
p:=(\omega, Q, \lambda_1,...,\lambda_f, \mu_1,...,\mu_f), \qquad f\in\N_0,
$$ 
consisting of  positive real numbers $Q, \lambda_1,..., \lambda_f$ and complex numbers $\omega, \mu_1,...\mu_f$ with $|\omega| = 1$, we set
\begin{equation}\label{def:Delta_p}
\Delta_{p}(s):= \omega Q^{1-2s}\prod_{j=1}^f \frac{\Gamma\left( \lambda_j (1-s) + \overline{\mu_j}\right)}{\Gamma\left( \lambda_j s + \mu_j \right)},
\end{equation}
where $\Gamma$ denotes the Gamma-function. Here, in contrast to the functions $\Delta_p$ used in the definition of the (extended) Selberg class, we do not pose any restriction on the real parts of the $\mu_j$'s.\par 
If $f=0$, we read \eqref{def:Delta_p} as $\Delta_p(s):=\omega Q^{1-2s}$ and say that $\Delta_p(s)$ has degree $d_p=0$. In this degenerate case, $\Delta_p(s)$ defines an analytic, non-vanishing function in $\C$. Moreover, for every function $\Delta_p$ with $d_p=0$, the corresponding parameter tuple $p=(\omega,Q)$ is uniquely determined.\par 
If $f\geq 1$, we define the degree of $\Delta_p(s)$ by 
$$d_p := 2\sum_{j=1}^f \lambda_j.$$ 
Certainly, in this case, $d_p$ is always a positive real number. As the Gamma-function is non-vanishing and analytic in $\C$, except for simple poles at the non-positive integers, $\Delta_p(s)$ with $d_p>0$ defines a meromorphic function in $\C$ with possible poles located at 
$$
s= 1 + \frac{n+\overline{\mu_j}}{\lambda_j}, \qquad n\in\N_0, \quad j=1,...,f,
$$ 
and possible zeros located at
$$
s=\frac{-n-\mu_j}{\lambda_j}, \qquad n\in\N_0, \quad j=1,...,f.
$$
It might happen that zeros and poles arising from different Gamma-quotients cancel each other or lead to multiply zeros or poles. We observe that all poles and zeros of $\Delta_p$ lie in the horizontal strip defined by
\begin{equation}\label{stripzerospoles}
\min \left\{-\tfrac{|\Im\ \mu_j|}{\lambda_j}\, : \, j=1,...,f  \right\}\leq t \leq 
\max \left\{\tfrac{|\Im\ \mu_j|}{\lambda_j}\, : \, j=1,...,f  \right\}.
\end{equation}
For a given function $\Delta_p$ with $d_p>0$, its representation in the form \eqref{def:Delta_p} and, thus, its assigned parameter tuple $p$ is not unique: by means of the Gauss multiplication formula 
$$
(2\pi)^{(n-1)/2} n^{1/2 - n s} \Gamma\left(ns \right)
= \prod_{k=0}^{n-1} \Gamma\left( s + \frac{k}{n} \right), \qquad n\in\mathbb{N},
$$
and the factorial formula 
$$
\Gamma(s+1) = s\Gamma(s)
$$
for the Gamma-function, we can easily vary the shape of \eqref{def:Delta_p} and, thus, the data of $p$. However, as we shall see below, the degree $d_p$ of $\Delta_p$ remains invariant under these transformations.\par 
{\bf An asymptotic expansion for $\Delta_p$.} Using Stirling's formula
\begin{equation}\label{eq:stirling}
\log \Gamma(s) = \left(s-\frac{1}{2}\right) \log s  - s + \frac{1}{2}\log 2\pi + O\left(\frac{1}{|s|} \right)
\end{equation}
which is valid for $s\in\C$ satisfying $|s|\geq 1$ and $|\arg s | \leq \pi - \delta$ with any fixed $\delta>0$, uniformly in $\sigma$, as $|s|\rightarrow\infty$, we find that
\begin{align*}
\Gamma(\sigma+it) & = \sqrt{2\pi} |t|^{\sigma-1/2} \exp\left(-\frac{\pi}{2}|t| + 
i\left( t\log|t| - t + \frac{\pi t}{2|t|}\left(\sigma-\frac{1}{2} \right) \right) \right)\\
& \qquad \times \left(1 + O\left(\frac{1}{|t|}\right) \right)
\end{align*}
holds uniformly for $\sigma$ from an arbitrary bounded interval, as $|t|\rightarrow\infty$. From this, we derive by a straightforward computation the following asymptotic expansion for $\Delta_p$ and its logarithmic derivative.
\begin{lemma}\label{lem:asym_Delta_p}
Let $\Delta_p(s)$ be defined by \eqref{def:Delta_p}. Then, uniformly for $\sigma$ from an arbitrary bounded interval, as $|t|\rightarrow\infty$,
\begin{equation}\label{asymext_Delta_p}
\Delta_p (\sigma+it)  =
\omega_p \left(\lambda_p Q^2 |t|^{d_p}\right)^{1/2 - \sigma - it}
\exp\left( id_p t + i\Im \mu_p \log |t|  \right)
\left( 1+ O\left( \frac{1}{|t|} \right) \right)
\end{equation}
and
\begin{equation}\label{asymext_logdiff_Delta_p}
 \frac{\Delta_p'(\sigma+it)}{\Delta_p(\sigma+it)}  = - d_p \log |t| - \log Q^2 \lambda_p + O\left(\frac{1}{|t|} \right).
\end{equation}
Here, $d_p$ denotes the degree of $\Delta_p(s)$. The quantities $\mu_p$, $\lambda_p$ and $\omega_p$ are defined by
$$
\mu_p := \sum_{j=1}^f (1-2 \mu_j), \qquad \qquad \lambda_p := \prod_{j=1}^f \lambda_j^{2\lambda_j}
$$
and
$$
\omega_p := \omega \exp\left( i\frac{\pi}{4}(2\Re \mu_p - d_p) - i \Im \mu_p \right) \prod_{j=1}^f \lambda_j ^{-i2\Im \mu_j},
$$
if $d_p>0$; and by $\mu_p:=0$, $\lambda_p:=1$ and $\omega_p:=\omega$, if $d_p=0$.
\end{lemma}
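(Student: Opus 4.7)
The plan is a direct calculation: apply Stirling's formula, in the form recalled immediately above the statement, to each of the $2f$ Gamma-factors constituting $\Delta_p(s)$, take the quotient factor-by-factor, and then sum the resulting expansions over $j = 1,\ldots, f$. First I would introduce the abbreviation $\tau_j := \lambda_j t + \Im \mu_j$, so that
$$\lambda_j(1-s)+\overline{\mu_j} \;=\; \bigl(\lambda_j(1-\sigma)+\Re \mu_j\bigr) - i\tau_j, \qquad \lambda_j s+\mu_j \;=\; \bigl(\lambda_j\sigma+\Re \mu_j\bigr) + i\tau_j.$$
The key structural observation is that these two arguments have imaginary parts of exactly equal modulus and opposite sign; consequently the exponentially decaying factors $\exp(-\pi|\tau_j|/2)$ produced by Stirling in numerator and denominator cancel identically in the quotient, leaving only polynomial magnitudes and oscillatory phases.

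Carrying out the subtraction $\log\Gamma(\mathrm{num}) - \log\Gamma(\mathrm{den})$ factor by factor, each $j$ contributes, modulo an additive $O(1/|t|)$ error,
$$\lambda_j(1-2\sigma)\log|\tau_j| \;-\; i(\lambda_j + 2\Re \mu_j - 1)\,\mathrm{sgn}(t)\tfrac{\pi}{2} \;-\; 2i\tau_j\log|\tau_j| \;+\; 2i\tau_j.$$
I would then expand $\log|\tau_j| = \log\lambda_j + \log|t| + \Im \mu_j/(\lambda_j t) + O(1/t^2)$ and substitute this into the oscillatory piece; the crucial point is that multiplying the $O(1/|t|)$ correction $\Im \mu_j/(\lambda_j t)$ against $\tau_j \sim \lambda_j t$ produces a genuine $O(1)$ contribution, giving in particular an $\Im \mu_j\log|t|$ term. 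Summing over $j$ and appealing to the identities
$$\sum_j \lambda_j = \tfrac{d_p}{2}, \qquad 2\sum_j\lambda_j\log\lambda_j = \log\lambda_p, \qquad \sum_j(1-2\Re \mu_j) = \Re \mu_p, \qquad -2\sum_j \Im \mu_j = \Im \mu_p,$$
all $j$-dependent quantities collapse into the stated constants. Finally, incorporating the prefactor $\omega Q^{1-2s}$, the magnitude factor organises as $(\lambda_p Q^2|t|^{d_p})^{1/2-\sigma}$ and the phase rearranges into $\arg\omega_p - it\log(\lambda_p Q^2|t|^{d_p}) + d_p t + \Im \mu_p\log|t|$, which is the content of \eqref{asymext_Delta_p}.

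The main obstacle is purely combinatorial bookkeeping: one must track every contribution to the phase up to order $O(1)$ and in particular cannot discard the $\Im \mu_j/(\lambda_j t)$ correction in $\log|\tau_j|$, since its product with the leading part of $\tau_j$ furnishes the $\Im \mu_p\log|t|$ term and contributes to the overall constant $\omega_p$; matching signs between the $\mathrm{sgn}(t)$-factors and the exponential phases requires some care when passing from $t>0$ to $t<0$.

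For the second formula I would avoid differentiating the asymptotic expansion, since $\log|t|$ is not holomorphic in $s$, and instead work from the product formula for $\Delta_p$ directly. Writing $\psi = \Gamma'/\Gamma$, logarithmic differentiation yields
$$\frac{\Delta_p'(s)}{\Delta_p(s)} \;=\; -2\log Q \;-\; \sum_{j=1}^{f}\lambda_j\Bigl[\psi\bigl(\lambda_j(1-s)+\overline{\mu_j}\bigr)+\psi\bigl(\lambda_j s+\mu_j\bigr)\Bigr].$$
The standard asymptotic $\psi(z) = \log z + O(1/|z|)$ in any sector off the negative real axis, together with $\log(A+iB) = \log|B| + i\,\mathrm{sgn}(B)\tfrac{\pi}{2} + O(1/|B|)$ for $A$ bounded and $|B|\to\infty$, shows that each bracket equals $2\log|\tau_j| + O(1/|t|) = 2\log\lambda_j + 2\log|t| + O(1/|t|)$; the $\mathrm{sgn}$-contributions cancel because the imaginary parts of the two digamma arguments are $-\tau_j$ and $+\tau_j$ respectively. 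Weighting by $\lambda_j$ and summing over $j$ produces $\log\lambda_p + d_p\log|t|$, whence the claimed asymptotic \eqref{asymext_logdiff_Delta_p}.
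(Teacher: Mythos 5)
Your proposal is correct and follows essentially the same route as the paper, which merely records the Stirling asymptotic for $\Gamma(\sigma+it)$ and asserts that the lemma follows ``by a straightforward computation''; your factor-by-factor bookkeeping of the quotient $\Gamma(\lambda_j(1-s)+\overline{\mu_j})/\Gamma(\lambda_j s+\mu_j)$ is exactly that computation, and the digamma argument for \eqref{asymext_logdiff_Delta_p} is the natural way to obtain the second formula without differentiating a non-holomorphic expansion. One small bookkeeping remark: the constant $-2i\,\Im\ \mu_j$ produced by multiplying $\tau_j$ against the correction $\Im\ \mu_j/(\lambda_j t)$ in $\log|\tau_j|$ cancels against the $+2i\,\Im\ \mu_j$ coming from the $+2i\tau_j$ term, while the $\Im\ \mu_p\log|t|$ term arises from the cross product $\Im\ \mu_j\cdot\log(\lambda_j|t|)$ — so that particular correction does not survive into $\omega_p$ (this only affects the explicit form of the unimodular constant, on which nothing downstream depends).
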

We observe that the quantity $\omega_p$ in Lemma \ref{lem:asym_Delta_p} has modulus one.\par 
{\bf Invariant parameters of $\Delta_p$.} From the asymptotic expansion \eqref{asymext_Delta_p}, we deduce that, for every function $\Delta_p$, the quantities $d_p$, $Q^2\lambda_p$, $\Im \mu_p$ and $\omega_p$ are uniquely determined, although the parameter tuple $p$ is in general not unique. For a deeper understanding of the structure of parameter tuples $p$ leading to the same function $\Delta_p$, we refer to Kaczorowski \& Perelli \cite{kaczorowskiperelli:2000}.\par

{\bf The function $\Delta_p$ on the line $\sigma=\frac{1}{2}$.} By the definition of $\Delta_p(s)$, we have
$$
\Delta_p(s)\cdot \overline{\Delta_p(1-\overline{s})} = 1
$$ 
for $s\in\C$. This implies in particular that for real $t$
$$
\left|\Delta_p(\tfrac{1}{2}+it) \right| = 1.
$$ 
{\bf A logarithm and a square root function for $\Delta_p$.} For a given function $\Delta_p$, we define the slitted plane
\begin{equation}\label{CDelta}
\C_{\Delta_p} := \C \setminus \left( \bigcup_{z_0\in\mathcal{N}_{\Delta_p}} L_{z_0}\right), 
\end{equation}
where $\mathcal{N}_{\Delta_p}$ denotes the union of all zeros and poles of $\Delta_p$ and $L_{z_0}$ the vertical half-line defined by
$$
L_{z_0}:=\left\{\Re\ z_0 + it \, : \, -\infty < t\leq \Im\ z_0\right\}.
$$
As all zeros and poles of $\Delta_p$ can be located inside the strip \eqref{stripzerospoles}, we observe that $\C_{\Delta_p}$ contains the half-plane 
$$
t> \max \left\{\tfrac{|\Im\ \mu_j|}{\lambda_j}\, : \, j=1,...,f  \right\}.
$$ 
Certainly, $\C_{\Delta_p}$ is a simply connected domain on which $\Delta_p$ is analytic and free of zeros. Thus, there exists a continuous argument function of $\Delta_p$ which we denote by $\arg \Delta_p$ and normalize such that
$$
\arg \Delta_p(\tfrac{1}{2})\in[-\pi,\pi),
$$ 
provided that $\frac{1}{2}\in\C_{\Delta_p}$. If this is not the case, we normalize $\arg\Delta_p$ such that
$$
\lim_{\sigma \rightarrow \frac{1}{2}+} \arg\Delta_p(\sigma) \in [-\pi,\pi).
$$
With these conventions,
$$
\log \Delta_p(s):= \log |\Delta_p(s)| + i\arg \Delta_p(s)
$$ 
defines an analytic logarithm and 
\begin{equation}\label{Delta12}
\Delta_p(s)^{1/2}:=  |\Delta_p(s)|^{1/2}\exp\left(i\tfrac{1}{2}\arg \Delta_p(s)\right).
\end{equation}
an analytic square root function of $\Delta_p$ on $\C_{\Delta_p}$.

\section{The class \texorpdfstring{$\mathcal{G}$}{G} }\label{sec:classG}
In the following, let $\mathcal{D}$ denote the half-strip defined by
$$
0<\sigma<1, \qquad t>0.
$$

{\bf Definition of the class $\mathcal{G}$.} A function $G\not\equiv 0$ belongs to the class $\mathcal{G}$ if it is meromorphic in the half-strip $\mathcal{D}$ and if it satisfies a Riemann-type functional equation. By this, we mean that there is a parameter tuple 
$$
p:=(\omega, Q, \lambda_1,...,\lambda_f, \mu_1,...,\mu_f), \qquad f\in\N_0,
$$ 
which consists of  positive real numbers $Q, \lambda_1,..., \lambda_f$ and complex numbers $\omega, \mu_1,...\mu_f$ with $|\omega| = 1$ such that
\begin{equation}\label{fcteqG}
G(s) = \Delta_{p} (s) \overline{G(1-\overline{s})}
\end{equation}
for $s\in\mathcal{D}$, where $\Delta_p(s)$ is defined by \eqref{def:Delta_p}. \par

{\bf Uniqueness of the functional equation and invariants for $G\in\mathcal{G}$.} In the following, let $G\in\mathcal{G}$ and $p_G$ be an admissible parameter tuple for which $G$ solves the functional equation \eqref{fcteqG}. Suppose that there is a further admissible parameter tuple $p'_G\neq p_G$ for which $G$ satisfies \eqref{fcteqG}. Since $G(s)/\overline{G(1-\overline{s})}$ defines a meromorphic function in the half-strip $\mathcal{D}$, the identity principle yields that $\Delta_{p_G}(s)=\Delta_{p'_G}(s)$ for $s\in\C$. Thus, to every function $G\in\mathcal{G}$, there corresponds a unique functional equation of the form \eqref{fcteqG} with a uniquely determined function $\Delta_{p_G}$, which we denote from now on by $\Delta_G:=\Delta_{p_G}$. For $G\in\mathcal{G}$, the admissible parameter tuple $p_{G}$ leading to the function $\Delta_{G}$ is in general not uniquely determined. However, from the preceeding section we know that the quantities $d_{p_G}$, $Q^2\lambda_{p_G}$, $\Im \ \mu_{p_G}$ and $\omega_{p_G}$, defined in Lemma \ref{lem:asym_Delta_p} via the data of $p_G$, do not depend on the choice of $p_G$. As for every $G\in\mathcal{G}$ the function $\Delta_G$ of the functional equation is uniquely determined, we can understand these characteristic quantities of $\Delta_G$ also as characteristic quantities of $G$. In particular, we refer to $d_{p_G}$ not only as degree of $\Delta_G$ but also as degree of $G$ and write
$$
d_G := d_{p_G}= 2\sum_{j=1}^{f} \lambda_j .
$$

{\bf The critical line.} Due to the functional equation \eqref{fcteqG}, the elements of $\mathcal{G}$ obey a certain symmetry with respect to the line $\sigma=\frac{1}{2}$. For this reason, we refer to the line $\sigma=\frac{1}{2}$ as {\it critical line} of a function $G\in\mathcal{G}$.\par

{\bf Elements in $\mathcal{G}$.} Certainly, the class $\mathcal{G}$ contains all elements of the extended Selberg class $\mathcal{S}^{\#}$ and, thus, all elements of the Selberg class $\Sc$: 
$$
\Sc \subset \Sc^{\#} \subset \mathcal{G}.
$$ 
The set of parameter tuples $p$ for which we can actually find solutions of the functional equation \eqref{fcteqG} inside $\Sc$ or $\Sc^{\#}$ is limited. For $d\geq 0$, we set
$$
\Sc_d^{\#}:=\left\{\L\in\Sc^{\#} \, : \, d_{\L}=d \right\}\qquad \mbox{ and } \qquad
\Sc_d:=\left\{\L\in\Sc \, : \, d_{\L}=d \right\}.
$$
Obviously, $1\in\Sc_0$ and $\zeta^n \in \Sc_n$ for every $n\in\N$. The degree conjecture for the (extended) Selberg class asserts that
$$
\bigcup_{d\in\R^+_0 \setminus \N_0 } \Sc_d^{\#} = \emptyset \qquad \mbox{ and } \qquad \bigcup_{d\in\R^+_0 \setminus \N } \Sc_d = \{1\}.
$$
There are some results in support of this conjecture: Conrey \& Gosh \cite{conreygosh:1993} obtained that all functions $\L\in\Sc\setminus\{1\}$ have degree $d_{\L}\geq 1$. Essentially, it was already known to Richert \cite{richert:1957} and Bochner \cite{bochner:1958} that there are no functions $\L\in\Sc$ with degree $0<d_{\L}<1$. Using the machinery of linear and non-linear twists, Kaczorowski  \& Perelli \cite{kaczorowskiperelli:2002, kaczorowskiperelli:2005, kaczorowskiperelli:2011} succeeded to prove that there are neither functions $\L\in\Sc^{\#}$ of degree $0<d_{\L}<1$ nor of degree $1<d_{\L}<2$. Beyond the degree conjecture, one expects that $\Sc_n^{\#}$, $n\in\N_0$, does not contain `too many' elements. Hamburger's theorem for the Riemann zeta-function (see Titchmarsh \cite[\S 2.13]{titchmarsh:1986}) gives a first impression on how a Riemann-type functional equation invokes strong restrictions on the Dirichlet series coefficients of $\L\in\Sc^{\#}$. It is a challenging problem to classify all elements in $\Sc^{\#}$ of given degree $d\in\N_0$. Kaczorowski  \& Perelli \cite{kaczorowskiperelli:1999} proved that the Riemann zeta-function and shifts $L(s+i\theta,\chi)$ of Dirichlet $L$-functions attached to a primitive character with $\theta\in\R$ are the only functions in $\Sc_1$.\par
The situation changes if one is looking for solutions of the functional equation $\eqref{fcteqG}$ among generalized Dirichlet series. Let $\mathcal{C}$ denote the set of generalized Dirichlet series 
$$
A(s)=\sum_{n=1}^{\infty} a(n) e^{-\lambda_n s},
$$
which are absolutely convergent in some half-plane $\sigma\geq \sigma_0$, admit an analytic continuation to $\C$ as an entire function of finite order and satisfy 
$$
0=\lambda_0< \lambda_1 < \lambda_2 < ...\, , \qquad \lim_{n\rightarrow\infty}\lambda_n = \infty \qquad \mbox{ and } \qquad  a(n)\in\C \quad \mbox{ for }n\in\N .
$$
Kaczorowski \& Perelli \cite{kaczorowskiperelli:2004} showed that for every admissible parameter tuple $p$ with $\Re \ \mu_j \geq 0$ for $j=1,...,f$, the real vector space of all solutions $A\in\mathcal{C}$ of the functional equation
$$
A(s)=\Delta_p(s) \overline{A(1-\overline{s})}
$$
has an uncountable basis. \par

We have seen that $\mathcal{G}$ contains both, functions represented by an ordinary Dirichlet series and functions represented by a generalized Dirichlet series. Beyond this, there are also functions in $\mathcal{G}$ which cannot be written as a Dirichlet series. Let $\Delta_p(s)$ be as in \eqref{def:Delta_p} with an admissible parameter tuple $p$ and $f$ a meromorphic function in  $\mathcal{D}$, then the  function $G_{f,p}$ given by
$$
G_{f,p}(s):= f(s)+ \Delta_p(s)\overline{f(1-\overline{s})} 
$$
is meromorphic in $\mathcal{D}$ and satisfies the functional equation $$G_{f,p}(s)=\Delta_p(s)\overline{G_{f,p}(1-\overline{s})}.$$ Hence, $G_{f,p}$ is an element of $\mathcal{G}$. In general, $G_{f,p}$ does not have a Dirichlet series representation. Gonek modeled the Riemann zeta-function on the critical line by truncated Euler products. More precisely, he worked with
$$
\zeta_{X}(s):=P_X(s) + \Delta_{\zeta}(s)\overline{P_X(1-\overline{s})},
$$
where 
$$
P_X(s):= \exp\left(\sum_{n\leq X} \frac{\Lambda_X(n)}{n^s\log n} \right), \qquad X\geq 2,
$$ 
and $\Lambda_X$ is a suitably weighted version of the Riemann-von Mangoldt function. We deduce immediately that $\zeta_X$ is an element of $\mathcal{G}$. We refer to Gonek \cite{gonek:2012} and Christ, Kalpokas \& Steuding \cite{christkalpokassteuding:2010} for results on the analytic behaviour of $\zeta_X$ on the critical line and its intimate connection to the Riemann zeta-function. \par 

Other examples of functions in $\mathcal{G}$ can be constructed as follows: let $G\in\mathcal{G}$ and $f$ any meromorphic function $f$ in the strip $0<\sigma<1$. Then, it is easy to see that the function $H_{f,G}$ defined by
$$
H_{f,G}(s):= G(s)\left( f(s) + \overline{f(\overline{s})} + f(1-s) + \overline{f(1-\overline{s})} \right),
$$
is an element of $\mathcal{G}$. Functions of the latter type were used by Gauthier \& Zeron \cite{gauthierzeron:2004} to construct functions that share several properties with the Riemann zeta-function (same functional equation, simple pole at $s=1$, reflection principle) but have prescribed zeros off the critical line. \par

{\bf An analogue of Hardy's $Z$-function.} For $X\subset \C$ and $a,b\in\C$, we set
$$
aX+b := \left\{ax+b \, : \, x\in X \right\}.
$$
For $G\in\mathcal{G}$, let $\C_{\Delta_G}$ and $\Delta_G(\tfrac{1}{2}+it)^{1/2}$ be defined as in \eqref{CDelta} and \eqref{Delta12}. We set 
\begin{equation}\label{Dstar}
\mathcal{D}^* :=\mathcal{D}^*_{G}:= -i\cdot \left(\C_{\Delta_G}\cap \mathcal{D} \right) +i\tfrac{1}{2}.
\end{equation}
For $G\in\mathcal{G}$ and $t\in \mathcal{D}^*$, we define
$$
Z_G(t) := G(\tfrac{1}{2}+it) \Delta_G(\tfrac{1}{2}+it)^{-1/2}.
$$
The function $Z_G(t)$ forms the analogue for $G\in\mathcal{G}$ of Hardy's classical $Z$-function and allows us to model $G$ on the critical line as a real-valued function. 

\begin{lemma} \label{lem:hardyZ}
Let $G\in\mathcal{G}$ and $\mathcal{D}^*$ be defined by \eqref{Dstar}. Then, the function 
$$
Z_G(t) := G(\tfrac{1}{2}+it) \Delta_G(\tfrac{1}{2}+it)^{-1/2}
$$
is meromorphic on $ \mathcal{D}^*$. Moreover, for real $t\in\mathcal{D}^*$, 
$$
Z_G(t)\in\R \qquad \mbox{ and } \qquad \left| Z_G(t)\right| = \left|G(\tfrac{1}{2}+it) \right|.
$$
\end{lemma}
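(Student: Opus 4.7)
My plan is to establish meromorphicity by tracing through the domain translation in the definition of $\mathcal{D}^*$, and then to verify the reality and absolute-value assertions on the critical line using the functional equation \eqref{fcteqG} together with the identity $|\Delta_G(\tfrac{1}{2}+it)| = 1$ recalled in Section \ref{sec:Delta}.

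First I would unpack the definition $\mathcal{D}^* = -i(\C_{\Delta_G} \cap \mathcal{D}) + i\tfrac{1}{2}$. If $s = \sigma+i\tau \in \C_{\Delta_G} \cap \mathcal{D}$ and $t := -is + i\tfrac{1}{2}$, then a direct computation gives $t = \tau + i(\tfrac{1}{2}-\sigma)$, so the map $t \mapsto \tfrac{1}{2}+it$ sends $\mathcal{D}^*$ bijectively onto $\C_{\Delta_G} \cap \mathcal{D}$. Since $G$ is meromorphic on $\mathcal{D}$ by the very definition of $\mathcal{G}$, and $\Delta_G(s)^{1/2}$ was constructed in Section \ref{sec:Delta} to be analytic and non-vanishing on the simply-connected slit domain $\C_{\Delta_G}$, the composition $Z_G(t) = G(\tfrac{1}{2}+it) \, \Delta_G(\tfrac{1}{2}+it)^{-1/2}$ is a meromorphic function of $t$ on $\mathcal{D}^*$, proving the first assertion.

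For the pointwise statements, fix a real $t \in \mathcal{D}^*$ and write $s := \tfrac{1}{2}+it$. Since $t$ is real we have $1-\overline{s} = s$, so the functional equation \eqref{fcteqG} specialises to $G(s) = \Delta_G(s) \, \overline{G(s)}$. Setting $\Delta := \Delta_G(s)$ and recalling from Section \ref{sec:Delta} that $|\Delta| = 1$, the definition \eqref{Delta12} of the square root branch gives $\Delta^{1/2} \, \overline{\Delta^{1/2}} = |\Delta| = 1$, hence $\overline{\Delta^{-1/2}} = \Delta^{1/2}$. Conjugating $Z_G(t) = G(s)\,\Delta^{-1/2}$ and then substituting $\overline{G(s)} = G(s)/\Delta$ from the functional equation yields
\begin{equation*}
\overline{Z_G(t)} = \overline{G(s)} \cdot \overline{\Delta^{-1/2}} = \frac{G(s)}{\Delta} \cdot \Delta^{1/2} = G(s)\,\Delta^{-1/2} = Z_G(t),
\end{equation*}
so $Z_G(t) \in \R$. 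The modulus identity $|Z_G(t)| = |G(\tfrac{1}{2}+it)|$ then follows at once from $|\Delta^{-1/2}| = |\Delta|^{-1/2} = 1$.

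The only subtle point I anticipate is making sure that the chosen branch of $\Delta_G^{1/2}$ is compatible with complex conjugation, but because the branch fixed in Section \ref{sec:Delta} lives on the simply-connected slit plane $\C_{\Delta_G}$ via an explicit normalisation of $\arg\Delta_G$, no analytic continuation argument is needed, and the pointwise relation $\overline{\Delta^{1/2}} = \Delta^{-1/2}$ on the critical line closes the argument.
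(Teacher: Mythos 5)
Your proof is correct and follows essentially the same route as the paper's: meromorphicity comes from tracing the domain translation $t\mapsto \tfrac{1}{2}+it$ back into $\C_{\Delta_G}\cap\mathcal{D}$, and the reality and modulus statements follow from the functional equation \eqref{fcteqG} combined with the identity $|\Delta_G(\tfrac{1}{2}+it)|=1$ and the conjugation rule $\overline{\Delta_G(\tfrac{1}{2}+it)^{1/2}}=\Delta_G(\tfrac{1}{2}+it)^{-1/2}$. You simply spell out the meromorphicity bookkeeping and the branch-compatibility of the square root, which the paper treats as immediate.
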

\begin{proof}
It is immediately clear that $Z_G(t) = G(\tfrac{1}{2}+it) \Delta_G(\tfrac{1}{2}+it)^{-1/2}$ defines a meromorphic function on the domain $\mathcal{D}^*$. In the sequel, we assume that $t\in\mathcal{D}^*$ is real. Then, it follows from the functional equation that 
$$
Z(t)=G(\tfrac{1}{2}+it) \Delta_G(\tfrac{1}{2}+it)^{-1/2} = \overline{G(\tfrac{1}{2}+it) }\Delta_G(\tfrac{1}{2}+it)^{1/2}.
$$
Using the relation $\Delta(s)\overline{\Delta(1-\overline{s})}=1$, we deduce that
$$
\Delta_G(\tfrac{1}{2}+it)^{1/2} = \overline{\Delta_G(\tfrac{1}{2}+it)^{-1/2} }.
$$
It follows that $\Im\ Z_G(t)=0$ and, consequently, that $Z_G(t)\in\R$. Moreover, since $$|\Delta_G(\tfrac{1}{2}+it)| =1,$$ we obtain that $\left| Z(t)\right| = \left|G(\tfrac{1}{2}+it) \right|$. 
\end{proof}

\begin{figure}
\centering
\includegraphics[width=0.9\textwidth, height=7cm]{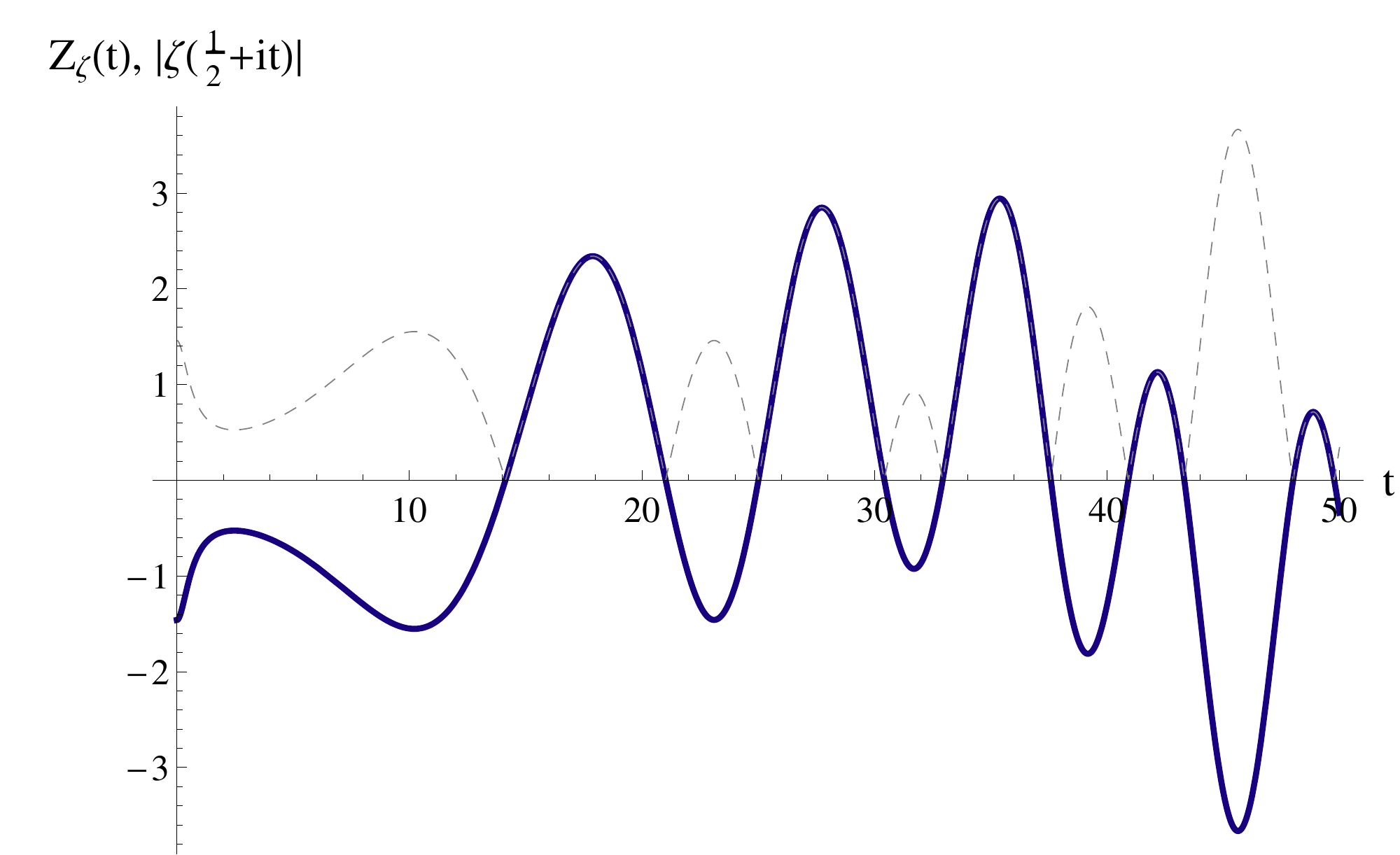}
\caption{Hardy's classical $Z$-function for the Riemann zeta-function $t\mapsto Z_{\zeta}(t)$ (blue) and $t\mapsto |\zeta(\frac{1}{2}+it)|$ (dashed), both plotted in the range $0\leq t\leq 50$. }
\end{figure}

{\bf Some special representations for $G\in\mathcal{G}$.} By Lemma \ref{lem:hardyZ}, we can write any given function $G\in\mathcal{G}$ in the form
\begin{equation}\label{repG1}
G(\tfrac{1}{2}+it) = Z_G (t) \Delta(\tfrac{1}{2}+it)^{1/2}, \qquad \mbox{}
t\in\mathcal{D}^*.
\end{equation}
There is a further possibility to represent a given function $G\in\mathcal{G}$. For $G\in\mathcal{G}$, we define
$$
\mathcal{D}_{1/2}:=\mathcal{D}_{1/2,G}:= \left(\C_{\Delta_G}\cap \mathcal{D} \right) - \tfrac{1}{2}. 
$$
Moreover, we set $f_G(z):=Z_G (-iz)$. Then,
\begin{equation}\label{repG}
G(\tfrac{1}{2}+z) = f_G (z) \Delta(\tfrac{1}{2}+z)^{1/2}\qquad \mbox{ for }z\in\mathcal{D}_{1/2}.
\end{equation}
Here, the function $f_G$ satisfies a certain reflection principle. Since $Z_G(t)$ is real for real $t\in\mathcal{D}^*$, the relation 
$$
f_G(z) = Z_G(-iz) = \overline{Z_G(\overline{-iz})}  = \overline{Z_G(i\overline{z})} = \overline{f_G(-\overline{z})}
$$
holds for all purely imaginary $z\in\mathcal{D}_{1/2}$ and, thus, by the identity principle, for all $z\in\mathcal{D}_{1/2}$. This implies that $f_G$ is real on the intersection of $\mathcal{D}_{1/2}$ with the imaginary axis. 

\chapter{A modified concept of universality near the critical line}\label{ch:conceptsuniv}
In this chapter, we study the collapse of the Voronin-type universality property of the Riemann zeta-function at the critical line and discuss a modified concept of universality.\par

In section \ref{sec:failure}, we briefly discuss for which $\L$-functions a Voronin-type universality statement is known to be true. We provide a heuristic explanation that this universality property cannot persist beyond the critical line.\par
In section \ref{sec:shiftingshrinking}, we try to maintain universality on the critical line by slightly changing the concept. Roughly speaking, we add a rescaling (or zooming) factor to the shifts that occur in Voronin's universality theorem and establish a limiting process in funnel-shaped neighbourhoods of the critical line. It will turn out that it is essentially the symmetry given by the functional equation that restricts the functions to be obtained by this process. For this reason, we investigate this process not only for the Riemann zeta-function but for all functions of the class $\mathcal{G}$, i.e. all functions that are meromorphic around the line $\sigma=\frac{1}{2}$ and satisfy a Riemann-type functional equation.\par
In section \ref{sec:convnonconv}, we discuss convergence and non-convergence issues of this limiting process.

\section{Failure of Voronin's universality theorem around the critical line}\label{sec:failure}
Building on works of Bohr \cite{bohrcourant:1914,bohrjessen:1930,bohrjessen:1932} and his collaborators, Voronin \cite{voronin:1975} established a remarkable universality theorem for the Riemann zeta-function; see Theorem \ref{th:universality}. In the meantime, similar universality properties were discovered for many other $\L$-functions. Examples include Dirichlet $L$-functions (see Voronin \cite{voronin:1977}, Gonek \cite{gonek:1979} and Bagchi \cite{bagchi:1982}), Dedekind zeta-functions (see Voronin \cite{voronin:1977}, Gonek \cite{gonek:1979} and Reich \cite{reich:1977,reich:1980}) and Hecke $L$-functions to gr\"ossencharacters (see Mishou \cite{mishou:2003}).\par
There are even $\L$-functions with a stronger universality property. For them the restriction on the target function $g$ in Voronin's universality theorem to be non-vanishing on $\mathcal{K}$ can be omitted. Here, prominent examples are Hurwitz zeta-functions whose allied parameter is either transcendental or rational but not equal to $\frac{1}{2}$ or $1$; see Bagchi \cite{bagchi:1981} and Gonek \cite{gonek:1979}. For a comprehensive account on different universal $\L$-functions, we refer the reader to Steuding \cite[Sect. 1.4-1.6]{steuding:2007}.\par
Steuding \cite[Sect. 5.6]{steuding:2007} established a universality theorem for a large class of Dirichlet series. In particular, his results imply that every function $\L\in\Sc$ which has a polynomial Euler product (S.3*) and satisfies the prime mean-square condition (S.6) is universal in the sense of Voronin inside the strip
$
\sigma_m<\sigma<1,
$
where $\sigma_m$ denotes the abscissa of bounded mean-square of $\L$ which we define rigorously in Section \ref{subsec:meansquare}. For $\L\in\Sc$, we know that
$ \sigma_m \leq \max\{\tfrac{1}{2},1-\tfrac{1}{d_{\L}}\}<1$, and under the assumption of the Lindel\"of hypothesis, that $\sigma_m\leq \frac{1}{2}$. 
\footnote{For details we refer to Section \ref{sec:unboundedness}}

The critical line is a natural boundary for universality in the Selberg class. For a heuristic explanation, we restrict to the Riemann zeta-function and assume the truth of the Riemann hypothesis. Firstly, we observe that Voronin's universality theorem for the Riemann zeta-function implies that, for any compact set $\mathcal{K}$ inside the strip $\frac{1}{2}<\sigma<1$ with connected complement and any continuous, non-vanishing function $g$ (resp. $g\equiv\infty$) on $\mathcal{K}$ which is analytic in the interior of $\mathcal{K}$, there is a sequence $(\tau_k)_k$ of positive real numbers tending to infinity such that 
\begin{equation}\label{eq:limitshifts}
\zeta(s+i\tau_k) \rightarrow g(s)
\end{equation}
uniformly on $\mathcal{K}$, as $k\rightarrow\infty$. This phenomenon collapses around the critical line $\sigma=\frac{1}{2}$.  Let $D_r(\frac{1}{2})$ be the open disc with center $\frac{1}{2}$ and radius $0<r<\frac{1}{4}$. Assume that there is a sequence  $(\tau_k)_k$ of positive real numbers tending to infinity such that $\zeta(s+i\tau_k)$ converges locally uniformly on $D_r(\frac{1}{2})$ to an analytic function $g\not\equiv 0$ (resp. to $g\equiv \infty$). As $g$ is analytic on $D_r(\frac{1}{2})$, it follows that $g$ has at most finitely many zeros in $D_r(\frac{1}{2})$. According to a result of Littlewood \cite{littlewood:1924}, there is a positive constant $A$ such that, for every sufficiently large $t$, the interval 
$$\left(t- \frac{A}{\log\log\log t}, t + \frac{A}{\log\log\log t}\right)$$  contains at least one ordinate of a non-trivial zero of the Riemann zeta-function. As we assumed the truth of the Riemann hypothesis, all non-trivial zeros can be located on the critical line. Thus, the number of zeros of $\zeta(s+i\tau_k)$ in $D_{r/2}(\frac{1}{2})\subset D_r(\frac{1}{2})$ tends to infinity. By the theorem of Hurwitz (Theorem \ref{th:hurwitz} in the appendix), we can conclude that $g\equiv 0$ is the only possible limit function that can be obtained in this case.\par
 
Proposition \ref{mainprop} (a) of the next section provides an unconditional proof for the failure of Voronin's universality theorem around the critical line. We shall see that it is essentially the functional equation that is responsible for the collapse of universality.

\section{A limiting process in neighbourhoods of the critical line}\label{sec:shiftingshrinking}

In this section, we try to `rescue' universality on $\sigma=\frac{1}{2}$ by slightly changing the concept.\par
Concepts of universality appear in various areas of analysis. There are real universal functions due to Fekete (see Steuding \cite[Appendix]{steuding:2007}), entire universal functions of Birkhoff-type due to Birkhoff \cite{birkhoff:1929} and entire universal functions of MacLane-type due to MacLane \cite{maclane:1952}. There are universal Taylor series due to Luh \cite{luh:1986} and a concept of universality for Fourier series of continuous functions on $\partial\D$ due to M\"uller \cite{mueller:2010}. Rubel \cite{rubel:1981} discovered a universal differential equation; see Elsner \& Stein \cite{elsnerstein:2011} for an overview on recent developments. There is a concept of differential universality for the Riemann zeta-function for which we refer to Christ, Steuding \& Vlachou \cite{christsteudingvlachou:2013}. The theory of hypercyclic operators investigates universality phenomena in a rather abstract topological and functional analytic setting; see the textbook of Bayart \& Matheron \cite{bayartmatheron:2009}. For a comprehensive survey on different concepts of universality the reader is referred to Grosse-Erdmann \cite{grosseerdmann:1999} and Steuding \cite[Appendix]{steuding:2007}.\par

Andersson \cite{andersson:arxiv-1} discussed a universality property for the Riemann zeta-function on the critical line. He restricted the target functions $g$ to compact line segments $L_{\alpha}:=[\frac{1}{2}-i\alpha, \frac{1}{2}+i\alpha]$ with $\alpha>0$, i.e. connected compact sets without interior points, and asked for approximating $g$ by vertical shifts of the zeta-function on $\sigma=\frac{1}{2}$. He found out that among all continuous functions on $L_{\alpha}$ only the function $g\equiv 0$ (resp. $g\equiv\infty$) might possibly be approximated in this way. \par

We shall persue the following approach: we add a scaling factor to the vertical shifts in \eqref{eq:limitshifts} and try to figure out which target functions $g$ are to be obtained by this modified limiting process. We investigate the latter not only for the Riemann zeta-function but for general functions of the class $\mathcal{G}$. \par

In the sequel, let $\mathcal{M}(\Omega)$ denote the set of meromorphic functions on a domain $\Omega\subset \C$. Moreover, let $\mathcal{H}(\Omega)\subset\mathcal{M}(\Omega)$ denote the set of analytic functions on $\Omega$. For families $\mathcal{F}\subset \mathcal{M}(\Omega)$ there is a notion of normality. We refer to the appendix for basic definitions and fundamental results of this concept.\par

Let $\mu:[2,\infty) \rightarrow\R^+ $ be a positive function satisfying $\mu(\tau)\leq \frac{1}{2}\log\tau$ for all $\tau\in[2,\infty)$. Every function $\mu$ induces a corresponding family $\{\varphi_{\mu(\tau)}\}_{\tau\in[2,\infty)}$ of linear conformal mappings 
\begin{equation}\label{eq:varphidef}
\varphi_{\mu(\tau)}:\mathbb{D}\rightarrow \C \qquad \mbox{ by } \qquad
s:=\varphi_{\mu(\tau)} (z):=\frac{1}{2} + \frac{\mu(\tau)}{\log\tau} z + i\tau .
\end{equation}
We observe that $\varphi_{\mu(\tau)}$ maps the center of the unit disc $\D$ to $\frac{1}{2}+i\tau$ and shrinks its radius linearly by a factor 
$$\lambda(\tau):=\frac{\mu(\tau)}{\log \tau}.$$
We call $\lambda(\tau)$ the scaling (or zooming) factor of $\varphi_{\mu(\tau)}$. The action of the map $\varphi_{\mu(\tau)}$ is illustrated in Figure \ref{fig:phi}. \par
\begin{figure}\centering
\includegraphics[width=0.9\textwidth]{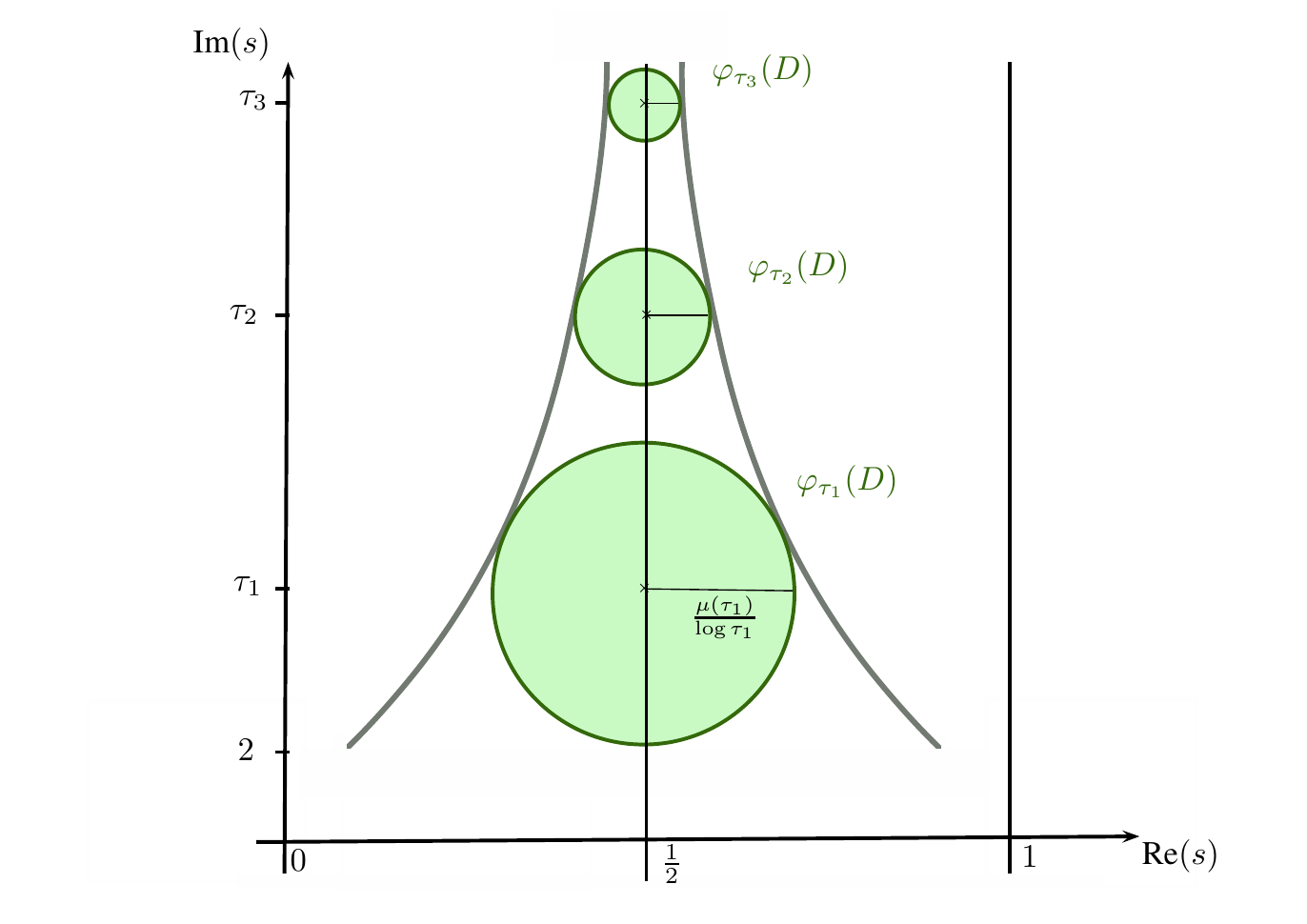}
\caption{The action of the conformal maps $\varphi_{\tau}:=\varphi_{\mu(\tau)}$ with $\tau\in[2,\infty)$ on $\D$. The function $\varphi_{\tau}$ maps the center of the unit disc $\D$ to $\frac{1}{2}+i\tau$ and shrinks its radius linearly by the factor $\lambda(\tau)=\frac{\mu(\tau)}{\log \tau}$.}
\label{fig:phi} 
\end{figure}

The condition $\mu(\tau)\leq \frac{1}{2}\log\tau$ assures that, for any $\tau\in[2,\infty)$, the image domain $\varphi_{\mu(\tau)}(\D)$ lies  completely inside the half-strip $\mathcal{D}$. Thus, for any function $G\in\mathcal{G}$ and any $\tau\in[2,\infty)$,
\begin{equation}\label{Gtau}
G_{\mu(\tau)}(z):=(G\circ \varphi_{\mu(\tau)}) (z) = G( \varphi_{\mu(\tau)} (z) ) = G\left(\frac{1}{2}+\frac{\mu(\tau)}{\log\tau} z + i\tau \right)
\end{equation}
defines a meromorphic function on $\D$. For sake of simplicity, we usually write $\varphi_{\tau}$ instead of $\varphi_{\mu(\tau)}$ and $G_{\tau}$ instead of $G_{\mu(\tau)}$.\par 

In the following we regard, for a given function $G\in\mathcal{G}$ and a given conformal mapping $\varphi_{\tau}$, the family $\mathcal{F}:=\{G_{\tau}\}_{\tau\in[2,\infty)}\subset\mathcal{M}(\D)$. We try to figure out which functions $g\in\mathcal{M}(\D)$ appear as limit functions of convergent sequences in $\mathcal{F}$. The following proposition shows that the set of possible limit functions depend essentially on the speed with which the scaling factor $\lambda(\tau)$ tends to zero as $\tau\rightarrow\infty$ and that the shape of the limiting functions is strongly affected by the functional equation of $G$.

\begin{proposition}\label{mainprop}
Let $G\in\mathcal{G}$ with degree $d_G>0$ and $\mu:[2,\infty) \rightarrow\R^+ $ be a positive function satisfying $\mu(\tau)\leq \frac{1}{2}\log\tau$ for $\tau\in[2,\infty)$. Let $\{G_{\tau}\}_{\tau\in[2,\infty)}$ be the family of functions on $\D$, generated by $G$ and $\mu$ via \eqref{Gtau}.\\ Assume that there is a sequence $(\tau_k)_k$ of real numbers $\tau_k\in[2,\infty)$ with $\lim_{k\rightarrow \infty} \tau_k = \infty$ such that $(G_{\tau_k})_k $ converges locally uniformly on $\mathbb{D}$ to a limit function $g$. 
\begin{itemize}
 \item[(a)] If $\lim_{k\rightarrow\infty}\mu(\tau_k) = \infty$, then 
$
g\equiv 0$ or $ g\equiv \infty.
$
\item[(b)] If $\lim_{k\rightarrow\infty}\mu(\tau_k) = c$ with some $c\in(0,\infty)$, then $g\equiv\infty$ or $g$ is of the form
\begin{equation}\label{shape1}
g(z)=f_g(z)\exp\left(-\tfrac{cd_G}{2}z + i \ell\right) 
\end{equation}
with some $\ell \in [0,\pi)$ and some meromorphic function $f_g$ on $\D$ satisfying $$f_g(z)=\overline{f_g(-\overline{z})} \qquad \mbox{ for } z\in\D.$$
\item[(c)] If $\lim_{k\rightarrow\infty}\mu(\tau_k) = 0$, then $g\equiv \infty$ or $g$ is of the form
\begin{equation}\label{shape2}
g(z)=f_g(z)\exp\left( i \ell\right) 
\end{equation}
with some $\ell \in [0,\pi)$ and some meromorphic function $f_g$ on $\D$ satisfying $$f_g(z)=\overline{f_g(-\overline{z})}\qquad \mbox{ for } z\in\D.$$
\end{itemize}
\end{proposition}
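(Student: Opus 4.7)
The plan is to exploit the Riemann-type functional equation \eqref{fcteqG}. A direct substitution of $s=\tfrac12+\lambda(\tau)z+i\tau$, where $\lambda(\tau)=\mu(\tau)/\log\tau$, yields the rescaled identity
\begin{equation*}
G_\tau(z)\;=\;\Delta_G\bigl(\tfrac12+\lambda(\tau)z+i\tau\bigr)\,\overline{G_\tau(-\overline z)},\qquad z\in\mathbb D,
\end{equation*}
valid for every $\tau\ge 2$ (since $\varphi_\tau(\mathbb D)$ and its reflection across the imaginary axis both lie inside $\mathcal D$ whenever $\lambda(\tau)\le\tfrac12$). This identity is the backbone of the proof. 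Combined with Lemma \ref{lem:asym_Delta_p}, whose underlying Stirling estimate \eqref{eq:stirling} is valid for complex arguments in any sector $|\arg s|\le\pi-\delta$, a direct computation yields
\begin{equation*}
\Delta_G\bigl(\tfrac12+\lambda(\tau)z+i\tau\bigr)\;=\;D_\tau\,\exp\!\bigl(-\mu(\tau)\,d_G\,z\bigr)\,R_\tau(z),
\end{equation*}
where $|D_\tau|=1$ depends only on $\tau$, and $R_\tau(z)$ is bounded above and below by positive constants on each compact subset of $\mathbb D$, uniformly in $\tau$; moreover $R_\tau(z)\to 1$ locally uniformly on $\mathbb D$ whenever $\lambda(\tau)\to 0$, as the residual exponents $\lambda(\tau)z\log(\lambda_G Q^2)$ and $d_G\lambda(\tau)\Im z$ then vanish.

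For case (a), I argue by contradiction. Assume $g\not\equiv 0$ and $g\not\equiv\infty$. Then $g$ is a non-trivial meromorphic function on $\mathbb D$ with discrete zero and pole sets, so I can fix a point $z_0\in\mathbb D$ with $\Re z_0>0$ such that $g(z_0)$ and $g(-\overline{z_0})$ both lie in $\mathbb C\setminus\{0\}$. Rearranging the rescaled functional equation at $z_0$ along $\tau=\tau_k$ gives
\begin{equation*}
\Delta_G\bigl(\tfrac12+\lambda(\tau_k)z_0+i\tau_k\bigr)\;=\;\frac{G_{\tau_k}(z_0)}{\overline{G_{\tau_k}(-\overline{z_0})}},
\end{equation*}
whose right-hand side converges to the finite, nonzero complex number $g(z_0)/\overline{g(-\overline{z_0})}$. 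However, the expansion above forces $|\Delta_G(\tfrac12+\lambda(\tau_k)z_0+i\tau_k)|$ to be of order $\exp(-\mu(\tau_k)\,d_G\,\Re z_0)$, which tends to $0$ because $d_G>0$, $\Re z_0>0$ and $\mu(\tau_k)\to\infty$. This contradiction yields $g\equiv 0$ or $g\equiv\infty$.

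For cases (b) and (c), $\mu(\tau_k)$ is bounded, so $\lambda(\tau_k)\to 0$ and $R_{\tau_k}(z)\to 1$ locally uniformly. By compactness of the unit circle I pass to a subsequence (not renamed) along which $D_{\tau_k}\to D^\ast$ with $|D^\ast|=1$, and hence
\begin{equation*}
\Delta_G\bigl(\tfrac12+\lambda(\tau_k)z+i\tau_k\bigr)\;\longrightarrow\;D^\ast\exp(-c\,d_G\,z)
\end{equation*}
locally uniformly on $\mathbb D$, where $c=\lim\mu(\tau_k)$ is the positive constant $c$ in case (b) and $0$ in case (c). The function $z\mapsto\overline{G_{\tau_k}(-\overline z)}$ is meromorphic in $z$ and converges locally uniformly to $\overline{g(-\overline z)}$. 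Passing to the limit in the rescaled functional equation and assuming $g\not\equiv\infty$, I obtain
\begin{equation*}
g(z)\;=\;D^\ast\exp(-c\,d_G\,z)\,\overline{g(-\overline z)}.
\end{equation*}
Writing $D^\ast=\exp(2i\ell)$ with the unique $\ell\in[0,\pi)$ and setting $f_g(z):=g(z)\exp\bigl(\tfrac{c d_G}{2}z-i\ell\bigr)$, a one-line verification using the identity above shows $f_g(z)=\overline{f_g(-\overline z)}$ together with $g(z)=f_g(z)\exp\bigl(-\tfrac{c d_G}{2}z+i\ell\bigr)$, which is precisely \eqref{shape1} in case (b); the degenerate value $c=0$ yields \eqref{shape2} in case (c). The trivial alternative $g\equiv 0$ is subsumed by $f_g\equiv 0$.

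The main technical obstacle is justifying the asymptotic expansion of $\Delta_G$ along the complex arguments $\tfrac12+\lambda(\tau)z+i\tau$ uniformly for $z$ in compact subsets of $\mathbb D$, since Lemma \ref{lem:asym_Delta_p} is phrased for real $\sigma$ and $t$. This requires invoking Stirling's formula in its complex form and carefully separating the $z$-dependent factors of size $\mu(\tau)$ from the subdominant ones of size $\lambda(\tau)$: in case (a) the latter are merely bounded, which suffices for the contradiction, whereas in cases (b) and (c) they vanish in the limit and are absorbed into the $R_\tau(z)\to 1$ correction.
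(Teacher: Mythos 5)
Your proof is correct and follows essentially the same route as the paper: rescale the Riemann-type functional equation to $G_\tau(z)=\Delta_G(\varphi_\tau(z))\,\overline{G_\tau(-\overline z)}$, feed in the Stirling asymptotics for $\Delta_G$ along $\varphi_\tau(\D)$ (the paper's Lemma \ref{lem:Delta_p_phi}), and pass to the limit. The only cosmetic differences are that in case (a) you derive the contradiction at a single well-chosen point rather than on a compact set via the identity principle, and in cases (b)--(c) you read off the unimodular phase factor directly from the expansion and use compactness of the unit circle, where the paper instead invokes Montel's theorem together with the Cauchy--Riemann equations (its Lemma \ref{mainlemma}).
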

The condition $f_g(z)=\overline{f_g(-\overline{z})}$ implies that $f_g$ is real on the intersection of the unit disc with the imaginary axis. As we shall see from the proof of Proposition \ref{mainprop}, the shapes \eqref{shape1} and \eqref{shape2} of the limit functions $g$ actually result from the representation of $G\in\mathcal{G}$ as 
$$
G(\tfrac{1}{2}+z) = f_G (z) \Delta(\tfrac{1}{2}+z)^{1/2}\qquad \mbox{ for every }z\in\mathcal{D}_{1/2}
$$
with a certain function $f_G$ satisfying $f_G(z)=\overline{f_G(-\overline{z})}$; see \eqref{repG}.\par
Proposition \ref{mainprop} is just hypothetical: we assume the convergence of the limiting process. In general, however, it seems very difficult to verify that a given sequence  $(G_{\tau_k})_k$ converges locally uniformly on $\mathbb{D}$ or not. We will postpone convergence, resp. non-convergence issues to Section \ref{sec:convnonconv}. \par
If we restrict in Proposition \ref{mainprop} to the Riemann zeta-function and assume the truth of the Riemann hypothesis, we get additional constraints on the shape of the possible limit functions:
\begin{itemize}
\item[(i)] According to Littlewood \cite{littlewood:1924}, there is a constant $A>0$ such that, for every sufficiently large $t$, the interval $$\left(t-\frac{A}{\log\log\log t}, t + \frac{A}{\log\log\log t}\right)$$ contains at least one imaginary part of a non-trivial zero of the Riemann zeta-function. Under the assumption of the Riemann hypothesis, all non-trivial zeros lie on the critical line. Thus, if 
$$\lambda(\tau)=\frac{\mu(\tau)}{\log\tau}>\frac{2A}{\log\log \log \tau}$$ 
for sufficiently large $\tau$, we can exclude the case $g\equiv\infty$ in Proposition \ref{mainprop} (a). 
\item[(ii)] The truth of the Riemann hypothesis implies that the logarithmic derivative of Hardy's $Z$-function $Z_{\zeta}(t)$ is monotonically decreasing between two sufficiently large consecutive zeros of $Z_{\zeta}(t)$; see Ivi\'{c} \cite[Chapt. 2.3]{ivic:2013}. By \eqref{repG1} and \eqref{repG}, this has effects on the shape of the limit functions $g$ in Proposition \ref{mainprop}.\par
\end{itemize}

With suitable adaptions it is also possible to state Proposition \eqref{mainprop} for functions $G\in\mathcal{G}$ with degree $d_G =0$.
\begin{proposition}\label{mainpropzero}
Let $G\in\mathcal{G}$ with degree $d_G=0$ and $\mu:[2,\infty) \rightarrow\R^+ $ be a positive function satisfying $\mu(\tau)\leq \frac{1}{2}\log\tau$ for $\tau\in[2,\infty)$. Let $\{G_{\tau}\}_{\tau\in[2,\infty)}$ be the family of functions on $\D$, generated by $G$ and $\mu$ via \eqref{Gtau}.\\ Assume that there is a sequence $(\tau_k)_k$ of real numbers $\tau_k\in[2,\infty)$ with $\lim_{k\rightarrow \infty} \tau_k = \infty$ such that $(G_{\tau_k})_k $ converges locally uniformly on $\mathbb{D}$ to a limit function $g$. 
\begin{itemize}
 \item[(a)] If $\lim_{k\rightarrow\infty}\frac{\mu(\tau_k)}{\log\tau_k} =c$ with some $c\in(0,\frac{1}{2}]$, then $g\equiv\infty$ or $g$ is of the form
\begin{equation*}
g(z)=f_g(z)\exp\left(-\tfrac{c \log Q^2}{2}z + i \ell\right) 
\end{equation*}
with some $\ell \in [0,\pi)$ and some meromorphic function $f_g$ on $\D$ satisfying $$f_g(z)=\overline{f_g(-\overline{z})} \qquad \mbox{ for } z\in\D.$$
\item[(c)] If $\lim_{k\rightarrow\infty}\frac{\mu(\tau_k)}{\log\tau_k} = 0$, then $g\equiv \infty$ or $g$ is of the form
\begin{equation*}
g(z)=f_g(z)\exp\left( i \ell\right) 
\end{equation*}
with some $\ell \in [0,\pi)$ and some meromorphic function $f$ on $\D$ satisfying $$f_g(z)=\overline{f_g(-\overline{z})}\qquad \mbox{ for } z\in\D.$$
\end{itemize}
\end{proposition}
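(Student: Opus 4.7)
The plan is to follow the strategy of Proposition \ref{mainprop}, exploiting the substantial simplification available for $d_G=0$: the factor $\Delta_G(s)=\omega Q^{1-2s}$ is entire, non-vanishing, and admits a globally defined square root. Starting from the representation $G(\tfrac{1}{2}+z)=f_G(z)\Delta_G(\tfrac{1}{2}+z)^{1/2}$ of \eqref{repG}, in which $f_G$ satisfies $f_G(z)=\overline{f_G(-\overline{z})}$, and using the normalization $\arg\Delta_G(\tfrac{1}{2})\in[-\pi,\pi)$, the entire logarithm $\log\Delta_G(\tfrac{1}{2}+z)=i\arg\omega-2z\log Q$ yields
$$\Delta_G(\tfrac{1}{2}+z)^{1/2}=\omega^{1/2}Q^{-z},\qquad \omega^{1/2}:=\exp\bigl(\tfrac{i}{2}\arg\omega\bigr).$$
With $\lambda(\tau):=\mu(\tau)/\log\tau$ and $\omega_\tau:=\omega^{1/2}Q^{-i\tau}$ (so $|\omega_\tau|=1$), substituting $z\mapsto\lambda(\tau)z+i\tau$ gives the factorization
$$G_\tau(z)=\omega_\tau\,f_G\bigl(\lambda(\tau)z+i\tau\bigr)\,Q^{-\lambda(\tau)z}.$$

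Next I would pass to a subsequence. By Bolzano--Weierstrass there is a subsequence (still denoted $\tau_k$) along which $\omega_{\tau_k}\to\omega_\infty$ with $|\omega_\infty|=1$; the assumed convergence $G_{\tau_k}\to g$ is not affected. In case~(a), $Q^{-\lambda(\tau_k)z}\to Q^{-cz}=\exp(-\tfrac{c\log Q^2}{2}z)$ locally uniformly on $\D$; in case~(c), $Q^{-\lambda(\tau_k)z}\to 1$. If $g\not\equiv\infty$, then since the cofactor $\omega_{\tau_k}Q^{-\lambda(\tau_k)z}$ is entire and nowhere vanishing with an entire non-vanishing limit, dividing through preserves locally uniform (meromorphic) convergence and produces
$$h_k(z):=f_G\bigl(\lambda(\tau_k)z+i\tau_k\bigr)\;\longrightarrow\;\tilde f(z):=g(z)\,\omega_\infty^{-1}\exp\bigl(\tfrac{c\log Q^2}{2}z\bigr)$$
(the exponential factor being $1$ in case~(c)). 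Consequently $g(z)=\omega_\infty\,\tilde f(z)\,\exp(-\tfrac{c\log Q^2}{2}z)$.

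A direct calculation from $f_G(w)=\overline{f_G(-\overline{w})}$ shows that each $h_k$ inherits the reflection identity $h_k(z)=\overline{h_k(-\overline{z})}$: the affine map $z\mapsto\lambda(\tau_k)z+i\tau_k$ sends the imaginary axis to itself with positive real scaling factor, so $-\overline{\lambda(\tau_k)z+i\tau_k}=\lambda(\tau_k)(-\overline{z})+i\tau_k$ and the reflection formula for $f_G$ applies. This property is preserved under locally uniform convergence, hence $\tilde f(z)=\overline{\tilde f(-\overline{z})}$. To normalize the phase, write $\omega_\infty=e^{i\alpha}$ with $\alpha\in[0,2\pi)$ and set $(f_g,\ell):=(\tilde f,\alpha)$ if $\alpha\in[0,\pi)$ and $(f_g,\ell):=(-\tilde f,\alpha-\pi)$ otherwise. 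Since negation preserves the reflection symmetry, $f_g$ has the stated form and $\ell\in[0,\pi)$.

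The only genuine technical point is the meromorphic convergence $h_k\to\tilde f$ when $g$ has poles in $\D$. Since the family of cofactors $\omega_{\tau_k}Q^{-\lambda(\tau_k)z}$ and their reciprocals are entire, non-vanishing, and converge locally uniformly to an entire non-vanishing limit, multiplication by this family transports convergence in the chordal metric without difficulty, so the argument goes through with no obstacle beyond bookkeeping of branches and signs.
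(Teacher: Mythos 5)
Your argument is correct, and it is a valid alternative to the route the paper sketches for Proposition~\ref{mainprop} (the paper omits the proof of the $d_G=0$ case, saying only that the same method applies). The paper's proof of Proposition~\ref{mainprop} takes the functional equation $G(\varphi_\tau(z))=\Delta_G(\varphi_\tau(z))\,\overline{G(\varphi_\tau(-\overline z))}$, passes to a subsequence via Lemma~\ref{mainlemma} to control the limit of $\Delta_{G,\tau}$, and arrives at a functional relation $g(z)=\overline{g(-\overline z)}\exp(\cdots)$ from which it \emph{defines} $f_g$ by a change of variables. You instead start from the Hardy-type factorization $G(\tfrac12+z)=f_G(z)\Delta_G(\tfrac12+z)^{1/2}$ of~\eqref{repG}, which for $d_G=0$ is globally valid since $\Delta_G(s)=\omega Q^{1-2s}$ is entire and zero-free, so the square root is explicit. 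That gives you $f_g$ \emph{constructively} as a locally uniform limit of translates of $f_G=Z_G(-i\,\cdot)$ and you inherit the reflection identity rather than re-deriving it from $g$. Both proofs need the same subsequence extraction to pin down the unimodular phase; you get it from Bolzano--Weierstrass applied to $\omega_{\tau_k}=\omega^{1/2}Q^{-i\tau_k}$, the paper gets it from normality of $\{\Delta_{G,\tau}\}$ on $\D$ (Montel). The paper itself remarks that the shapes in Proposition~\ref{mainprop} ``actually result from'' exactly the representation~\eqref{repG}, so your proof makes that remark literal for the degree-zero case. All the small points you flag --- the affine map $z\mapsto\lambda(\tau)z+i\tau$ commuting with $w\mapsto-\overline w$ because $\lambda(\tau)$ and $\tau$ are real, the sign adjustment $(f_g,\ell)\mapsto(-f_g,\ell-\pi)$ to land $\ell$ in $[0,\pi)$, and the stability of chordal locally uniform convergence under division by a family of entire cofactors that is uniformly bounded above and below on compacta --- check out. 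Nothing is missing.
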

As Proposition \ref{mainpropzero} can be proved by essentially the same method as Proposition \ref{mainpropzero} and as we are not too much interested in functions of degree zero in the further course of our investigations, we omit a proof.\par

According to Kaczorowski \& Perelli \cite{kaczorowskiperelli:1999},  every function  $\L\in\Sc^{\#}$ of degree $d_{\L}=0$ is given by a certain Dirichlet polynomial. This implies that $\L\in\Sc^{\#}$ with $d_{\L}=0$ is bounded in the half-strip $\mathcal{D}$. Thus, if we restrict ourselves in Proposition \ref{mainpropzero} to functions $\L\in\Sc^{\#}$ with $d_{\L}=0$, Montel's theorem assures that  the family $\{\L_{\tau}\}_{\tau\in[2,\infty)}$ is normal for any admissible function $\mu$.

{\bf Proof of Proposition \ref{mainprop}:} Before proving Proposition \ref{mainprop}, we start with some lemmas for the function $\Delta_p$. Recall that $\Delta_p$ depends on the parameter tuple $p=(\omega, Q,\lambda_1,...,\lambda_f,\mu_1,...,\mu_f)$ for which we defined the quantities $d_p$, $\omega_p$, $\mu_p$ and $\lambda_p$; see Lemma \ref{lem:asym_Delta_p}.\par 
The following lemma provides an asymptotic expansion for 
$$
\Delta_{p,\tau}(z):=\Delta_p(\varphi_{\tau}(z)) = \Delta_p\left(\frac{1}{2}+\frac{\mu(\tau)}{\log\tau}z + i\tau\right)
$$ 
on $\D$, as $\tau\rightarrow\infty$.
\begin{lemma}\label{lem:Delta_p_phi} 
Let $\Delta_p$ be defined by \eqref{def:Delta_p} and suppose that $d_p>0$. Let $\mu:[2,\infty)\rightarrow \R^+$ be a positive function with $\mu(\tau)\leq \frac{1}{2}\log\tau$ for $\tau\in[2,\infty)$. Let $\{\varphi_{\tau}\}_{\tau\in[2,\infty)}$ be the family of conformal mappings generated by $\mu$ via \eqref{eq:varphidef}. Then, uniformly for $z\in\mathbb{D}$, as $\tau\rightarrow\infty$,
\begin{align*}
\Delta_{p,\tau}(z):=\Delta_p(\varphi_{\tau}(z)) &= \omega_p \exp \left(-d_p\mu(\tau) z -i\nu_p(\tau) \right) \left(1+O\left(\frac{\mu(\tau)}{\log\tau} \right) +O\left(\frac{1}{\tau} \right) \right)
\end{align*}
with 
$$
\nu_p(\tau):= d_p \tau\log\tau + \tau\log(\lambda_p Q^2) -d_p \tau - \Im\ \mu_p \log\tau.
$$
\end{lemma}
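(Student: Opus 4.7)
The strategy is to substitute $s=\varphi_\tau(z)=\tfrac12+\lambda_\tau z+i\tau$, with $\lambda_\tau:=\mu(\tau)/\log\tau$, directly into the asymptotic expansion \eqref{asymext_Delta_p} supplied by Lemma \ref{lem:asym_Delta_p} and simplify. Writing $z=x+iy$ with $|z|<1$, one reads off
$$
\sigma=\tfrac12+\lambda_\tau x,\qquad t=\tau+\lambda_\tau y,\qquad \tfrac12-\sigma-it=-\lambda_\tau z-i\tau.
$$
Since $\mu(\tau)\leq\tfrac12\log\tau$, one has $\lambda_\tau\leq\tfrac12$, so $\sigma\in[0,1]$ stays in a bounded interval and $t=\tau+O(1)$; in particular $|t|=t$ for all large $\tau$, and the hypotheses of Lemma \ref{lem:asym_Delta_p} apply uniformly in $z\in\D$.

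The first routine step is to record the expansion
$$
\log|t|=\log\tau+\frac{\lambda_\tau y}{\tau}+O\!\left(\frac1{\tau^2}\right)=\log\tau+O\!\left(\frac1\tau\right),
$$
which is uniform for $z\in\D$. I then substitute this and the expression for $\tfrac12-\sigma-it$ into the exponent
$$
E:=\bigl(\tfrac12-\sigma-it\bigr)\bigl(\log(\lambda_pQ^2)+d_p\log|t|\bigr)+id_pt+i\,\Im\mu_p\log|t|
$$
coming from \eqref{asymext_Delta_p}, and expand term by term. The dominant pieces collect to
$$
-d_p\lambda_\tau z\log\tau=-d_p\mu(\tau)z,\qquad -i\tau\bigl(\log(\lambda_pQ^2)+d_p\log\tau\bigr)+id_p\tau+i\,\Im\mu_p\log\tau=-i\nu_p(\tau),
$$
and the residual contributions are $O(\lambda_\tau)=O(\mu(\tau)/\log\tau)$ together with $O(1/\tau)$.

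The one point that genuinely needs care — and which I would flag as the ``main obstacle'' — is the term $-id_p\tau\log|t|$. A naive bound $\log|t|=\log\tau+O(1/\tau)$ produces an error of size $O(1)$, which is not negligible. The cure is to use the sharper expansion above: the contribution of the correction $\lambda_\tau y/\tau$ to $-id_p\tau\log|t|$ is exactly $-id_p\lambda_\tau y$, and this cancels precisely against the $+id_p\lambda_\tau y$ produced by $id_pt=id_p(\tau+\lambda_\tau y)$. Once this cancellation is observed, every remaining error is of the asserted size. Analogous but easier considerations handle $-\lambda_\tau z\log(\lambda_pQ^2)=O(\lambda_\tau)$ and $i\,\Im\mu_p\log|t|=i\,\Im\mu_p\log\tau+O(1/\tau)$.

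Putting everything together,
$$
E=-d_p\mu(\tau)z-i\nu_p(\tau)+O\!\left(\frac{\mu(\tau)}{\log\tau}\right)+O\!\left(\frac1\tau\right),
$$
uniformly for $z\in\D$. Since the error term is bounded, $\exp$ of it equals $1$ plus an error of the same order, and the factor $(1+O(1/|t|))=(1+O(1/\tau))$ from Lemma \ref{lem:asym_Delta_p} is absorbed. Multiplying by the prefactor $\omega_p$ yields the claimed formula.
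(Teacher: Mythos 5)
Your proof is correct and takes the same route the paper intends (the paper's own proof is just the one-liner "follows directly from the asymptotic expansion in Lemma \ref{lem:asym_Delta_p} and a short computation"). You supply the one non-obvious detail that the terse version glosses over — the cancellation of the $id_p\lambda_\tau y$ pieces coming from $-id_p\tau\log|t|$ and $id_pt$, without which one would only get an $O(1)$ error in the exponent — which is exactly the step that needs care.
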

\begin{proof}
Respecting the conditions posed on the function $\mu$, the assertion follows directly from the asymptotic expansion for $\Delta_p(s)$ in Lemma \ref{lem:asym_Delta_p} and a short computation.
\end{proof}

By means of the asymptotic expansion for $\Delta_{p,\tau}$ on $\D$, as $\tau\rightarrow\infty$, we are now able to describe the limit behaviour of sequences in $\{ \Delta_{p,\tau}\}_{\tau\in[2,\infty)}$.

\begin{lemma}\label{mainlemma}
Let $\Delta_p$ be defined by \eqref{def:Delta_p} and suppose that $d_p>0$. Let $\mu:[2,\infty)\rightarrow \R^+$ be a positive, (not necessarily strictly) monotonically decreasing or increasing function with $\mu(\tau)\leq \frac{1}{2}\log\tau$ for $\tau\in[2,\infty)$. Let $\{\varphi_{\tau}\}_{\tau\in[2,\infty)}$ be the family of conformal mappings generated by $\mu$  via \eqref{eq:varphidef} and set $\Delta_{p,\tau}(z):=\Delta_p(\varphi_{\tau}(z))$ for $z\in\D$ and $\tau\geq 2$.

\begin{itemize}
\item[(a)] If $\lim_{\tau\rightarrow\infty}\mu(\tau) = \infty$, then there is no sequence $(\tau_k)_k$ of real numbers $\tau_k\in[2,\infty)$ with $\lim_{k\rightarrow\infty}\tau_k = \infty$ such that $(\Delta_{p,\tau_k})_k$ converges locally uniformly in some neighbourhood of zero.
\item[(b)]  If $\lim_{\tau\rightarrow\infty}\mu(\tau) = c$ with some $c\in(0,\infty)$,
then, for every unbounded subset $A\subset[2,\infty)$, there exists a sequence $(\tau_k)_k$ of real numbers $\tau_k\in A$ with $\lim_{k\rightarrow\infty}\tau_k = \infty$ such that $(\Delta_{p,\tau_{k}})_k$ converges uniformly on $\mathbb{D}$ to a limit function $g$ given by
$$
g(z)=a\exp(-cd_p z) \qquad \mbox{ for }z\in\D
$$
with some $a\in\mathbb{C}$ satisfying $|a|=1$.\\
Conversely, if $\lim_{\tau\rightarrow\infty}\mu(\tau) = c$ with some $c\in(0,\infty)$, then, for every function $g$ of the form above, there exists a sequence $(\tau_k)_k$ of real numbers $\tau_k\in[2,\infty)$ with $\lim_{k\rightarrow\infty}\tau_k = \infty$ such that $(\Delta_{p,\tau_k})_k$ converges uniformly on $\mathbb{D}$ to $g$.  
\item[(c)]  If $\lim_{\tau\rightarrow\infty}\mu(\tau) = 0$, then, for every unbounded subset $A\subset[2,\infty)$, there exists a sequence $(\tau_k)_k$ of real numbers $\tau_k\in A$ with $\lim_{k\rightarrow\infty}\tau_k = \infty$ such that $(\Delta_{p,\tau_{k}})_k$ converges uniformly on $\mathbb{D}$ to a constant limit function 
$$
g\equiv a
$$
with some $a\in\C$ satisfying $|a|=1$.\\
Conversely, if $\lim_{\tau\rightarrow\infty}\mu(\tau) = 0$, then, for every constant function $g\equiv a$ with $|a|=1$,  there exists a sequence $(\tau_k)_k$ of real numbers $\tau_k\in[2,\infty)$ with $\lim_{k\rightarrow\infty}\tau_k = \infty$ such that 
$(\Delta_{p,\tau_k})_k$ converges uniformly on $\mathbb{D}$ to $g$.  
\end{itemize}
\end{lemma}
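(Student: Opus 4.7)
My plan is to reduce everything to the explicit asymptotic
$$\Delta_{p,\tau}(z) = \omega_p\,\exp\bigl(-d_p\mu(\tau)\,z\bigr)\,\exp\bigl(-i\nu_p(\tau)\bigr)\Bigl(1+O\bigl(\tfrac{\mu(\tau)}{\log\tau}\bigr)+O\bigl(\tfrac{1}{\tau}\bigr)\Bigr)$$
provided by Lemma \ref{lem:Delta_p_phi}, which holds uniformly for $z\in\D$. The first factor $\omega_p$ is a fixed constant of modulus one, the middle factor $\exp(-d_p\mu(\tau)z)$ carries the entire $z$-dependence, and the oscillating factor $\exp(-i\nu_p(\tau))$ always lies on the unit circle. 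All three parts (a), (b), (c) then reduce to controlling the interplay of these three factors according to the speed of $\mu(\tau)$.

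For part (a), suppose $\mu(\tau_k)\to\infty$ along some sequence $\tau_k\to\infty$ and that $(\Delta_{p,\tau_k})_k$ converges locally uniformly (in $\C$ or in $\widehat{\C}$) on some disc $U\ni 0$. Evaluating at a point $\epsilon>0$ in $U\cap\R$ gives $|\Delta_{p,\tau_k}(\epsilon)| = \exp(-d_p\mu(\tau_k)\epsilon)(1+o(1))\to 0$, while evaluating at $-\epsilon$ gives $|\Delta_{p,\tau_k}(-\epsilon)|\to\infty$. A locally uniform limit cannot simultaneously vanish on a set with an accumulation point and be unbounded nearby, so no such limit exists.

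For part (b), in the forward direction, pick any $\tau_k\in A$ with $\tau_k\to\infty$; then $\mu(\tau_k)\to c$ and the error factor tends to $1$. Since $\exp(-i\nu_p(\tau_k))$ lies in the compact unit circle, we may pass to a subsequence on which it converges to some $b$ with $|b|=1$, yielding uniform convergence of $\Delta_{p,\tau_k}$ on $\D$ to $a\exp(-c d_p z)$ with $a:=\omega_p b$. For the converse, given a prescribed $a$ with $|a|=1$, write $a/\omega_p=e^{-i\theta}$. A direct computation using the explicit formula for $\nu_p$ gives $\nu_p'(\tau)=d_p\log\tau+\log(\lambda_p Q^2)-\Im\mu_p/\tau$, which tends to $+\infty$ because $d_p>0$. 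Hence $\nu_p$ is eventually strictly increasing and unbounded, and the intermediate value theorem produces $\tau_k\to\infty$ with $\nu_p(\tau_k)=\theta+2\pi k$ for every sufficiently large integer $k$. Combined with $\mu(\tau_k)\to c$, the asymptotic formula then yields $\Delta_{p,\tau_k}\to\omega_p e^{-i\theta}\exp(-c d_p z)=a\exp(-c d_p z)$ uniformly on $\D$.

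Part (c) follows by the same two arguments, the only change being that now $\mu(\tau_k)\to 0$ forces $\exp(-d_p\mu(\tau_k)z)\to 1$ uniformly on $\D$, so the $z$-dependent factor degenerates and the limit is a constant of modulus one. The only subtle point in the whole proof is the converse direction, which requires producing prescribed values of $\exp(-i\nu_p(\tau_k))$ on the unit circle; this is the step where the explicit shape of $\nu_p$ (continuity plus eventual strict monotonicity coming from $d_p>0$) is essential. The monotonicity hypothesis on $\mu$ is used only to guarantee that the limit of $\mu(\tau)$ along the unbounded set $A$ coincides with the global limit at infinity, and plays no further role.
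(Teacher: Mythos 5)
Your proof is correct, and for the forward direction of parts (b) and (c) it takes a more direct route than the paper's. The paper establishes the shape of the limit by first deducing from the asymptotic formula only the modulus $|g(z)|=|\omega_p|\exp(-cd_px)$, then invoking Montel's theorem to extract a locally uniformly convergent subsequence, and finally solving a pair of Cauchy--Riemann equations to reconstruct the phase of $g$. You instead read everything off the asymptotic formula directly: since the oscillating factor $\exp(-i\nu_p(\tau_k))$ lives in the compact unit circle, a convergent subsequence already gives uniform convergence of the whole expression, with no appeal to normal families or to differential equations. This is genuinely simpler and also yields uniform (not just locally uniform) convergence on $\D$ immediately, whereas the paper needs a small rescaling remark to upgrade to uniform convergence. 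Part (a) and the converse directions of (b), (c) coincide with the paper's argument (the key observation in the converse being that $\nu_p'(\tau)=d_p\log\tau+\log(\lambda_pQ^2)-\Im\mu_p/\tau\to+\infty$, so $\nu_p$ is eventually strictly increasing and unbounded, letting the intermediate value theorem hit any prescribed congruence class of $\nu_p$ modulo $2\pi$).

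Two small quibbles. In part (a) you evaluate at ``a point $\epsilon>0$''; to conclude via the identity principle that the limit is $\equiv 0$ you need the vanishing at \emph{all} small $\epsilon>0$ (a set with an accumulation point), which is what the asymptotic formula gives you but should be said explicitly. Your closing remark about the role of the monotonicity hypothesis on $\mu$ is off: once one assumes $\lim_{\tau\to\infty}\mu(\tau)$ exists and equals $c$ (resp.\ $0$, $\infty$), the convergence $\mu(\tau_k)\to c$ along any $\tau_k\to\infty$ with $\tau_k\in A$ is automatic and needs no monotonicity; in fact monotonicity is not used anywhere in the lemma's proof (nor in the paper's) and appears to be a carry-over hypothesis from the surrounding applications. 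Neither point affects the validity of your argument.
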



Lemma \ref{mainlemma} (a) follows immediately from the asymptotic expansion for $\Delta_{p,\tau}$ on $\D$ and implies that, for any unbounded subset $A\subset[2,\infty)$, the family $\{\Delta_{p,\tau}\}_{\tau\in A}$ is not normal in any neighbourhood $U\subset \D$ of zero. Thus, by the rescaling lemma of Zalcman (Theorem \ref{th:zalcman}), we find a sequence $(\tau_k)_k$ of real numbers $\tau_k\in A$ with $\lim_{k\rightarrow\infty}\tau_k = \infty$, a sequence $(z_k)_k$ of complex numbers $z_k\in\D$ with $\lim_{k\rightarrow\infty}z_k=0$ and a sequence $(\rho_k)_k$ of positive real numbers $\rho_k$ with $\lim_{k\rightarrow\infty} \rho_k = 0 $ such that 
$$
h_k(z):= \Delta_{p,\tau_{k}}(z_k+\rho_k z) = \Delta\left(\tfrac{1}{2} + \frac{\mu(\tau_{k})}{\log\tau_k}(z_k+\rho_k z) + i\tau_{k}\right)
$$
converges locally uniformly on $\C$ to a non-constant entire function. Having this in mind, the statement of Lemma (b) \ref{mainlemma} might not be too surprising. As the functions $\Delta_p$ are rather smooth, it makes sense that the limit functions of $(\Delta_{p,\tau_k})_k$ are constant, if the underlying scaling factor $\lambda(\tau_k)$ tends to zero fast enough.

\begin{proof}[Proof of Lemma \ref{mainlemma}]
As all poles and zeros of $\Delta_p$ are located in some horizontal strip, it follows that $\Delta_{p,\tau}$ is analytic and non-vanishing on $\D$ for  sufficiently large $\tau$.\par
{\bf Case (a): } Let $U\subset \D$ be an arbitrary neighbourhood of zero. Taking into account that $\lim_{\tau\rightarrow\infty} \mu(\tau) = \infty$, the asymptotic expansion of Lemma \ref{lem:Delta_p_phi} yields that, for $z=x+iy\in U$ with real part $x\neq 0$,
\begin{align}\label{p1}
 \lim_{\tau\rightarrow\infty}  \Delta_{p,\tau} (z)  = \begin{cases} 0  & \mbox{if } x>0,\\  \infty  & \mbox{if } x<0.\end{cases}
\end{align}
If there is a sequence $(\tau_k)_k$ of real numbers $\tau_k\in[2,\infty)$ with $\lim_{k\rightarrow\infty}\tau_k = \infty$ such that $(\Delta_{p,\tau_{k}})_k$ converges locally uniformly in $U$, then, according to the theorem of Weierstrass (Theorem \ref{th:weierstrass}), either
$$
g(z):=\lim_{k\rightarrow\infty}  \Delta_{p,\tau_k} (z) , \qquad z\in U,
$$
defines an analytic function in $U$ or $g\equiv \infty$ in $U$. Both cases, however, are in contradiction to \eqref{p1}.\par

{\bf Case (b): } According to our assumption on $\mu$ and the asymptotic expansion for $\Delta_{p,\tau}(z)$ of Lemma \ref{lem:Delta_p_phi}, we have, uniformly for $z=x+iy\in \mathbb{D}$,
\begin{align}\label{p2}
\lim_{\tau\rightarrow\infty} \left|\Delta_{p,\tau} (z) \right| =|\omega_p|\exp(-cd_px) \leq \exp(cd_p). 
\end{align}
This implies that the family $\mathcal{F}:=\{\Delta_{p,\tau}\}_{\tau\in[2,\infty)}$ 
is uniformly bounded on $\mathbb{D}$. According to the theorem of Montel, every sequence in  $\mathcal{F}$ has a subsequence that converges locally uniformly on $\mathbb{D}$ to an analytic function. Thus, we can extract from any given unbounded set $A\subset[2,\infty)$ a sequence $(\tau_k)_k$ with $\lim_{k\rightarrow\infty}\tau_k = \infty$ such that $(\Delta_{p,\tau_{k}})_k$ converges locally uniformly on $\D$ to an analytic function $g$. Next we shall figure out the shape of $g$: By means of \eqref{p2}, we have, uniformly for $z=x+iy\in\mathbb{D}$,
$$
|g(z)| = \lim_{k\rightarrow\infty} \left|\Delta_{p,\tau_{k}} (z)  \right|= |\omega_p| \exp(-cd_px) = \exp(-cd_px).
$$
Obviously, $g$ is non-vanishing on $\mathbb{D}$. This allows us to write 
$$
g(z)=\eta_p \exp(-cd_px + i f(x,y))  \qquad \mbox{ for }z=x+iy\in\D
$$
with some continuously differentiable function $f:\Omega \rightarrow\R$, where 
$$\Omega=\{(x,y)\in\R^2 \ : \ x+iy\in\D\}.$$ 
The Cauchy-Riemann differential equations,
\begin{align*}
 \tfrac{\partial}{\partial x}\ \Re\ g &= \tfrac{\partial}{\partial y}\ \Im\ g,\\
 \tfrac{\partial}{\partial y}\ \Re\ g &= -  \tfrac{\partial}{\partial x}\ \Im\ g,
\end{align*}
 yield that, for $(x,y)\in\Omega$,  
\begin{align*}
\tfrac{\partial}{\partial x}f+\tfrac{\partial}{\partial y}f &=- cd_p,\\
\tfrac{\partial}{\partial x}f-\tfrac{\partial}{\partial y}f &=  cd_p.
\end{align*}
Hence, $f(x,y)= - cy + \ell $ with some constant $\ell\in\R$. By setting $a:=\omega_p e^{i\ell}$, we get
$$
g(z)=a\exp(-cd_p z) \qquad \mbox{ for }z\in\mathbb{D}.
$$ 
By rescaling $\varphi_{\tau}$ in a suitable manner, we deduce from Lemma \ref{lem:Delta_p_phi}, that $(\Delta_{p,\tau_k})_k$ converges not only locally uniformly, but even uniformly on $\D$.\par

Conversely, let $g:\mathbb{D}\rightarrow\C$ be a function of the form $g(z)=a\exp(-cd_p z)$ with arbitrary $a\in\C$ satisfying $|a|=|1|$. We choose $\ell\in[0,2\pi)$ such that $\omega_p e^{i\ell} = a$ and regard the function
$$
\nu_p(\tau):= d_p \tau\log\tau + \tau\left(\log(\lambda_p Q^2)-d_p \right) - \Im\ \mu_p \log \tau
$$
which is monotonically increasing for sufficiently large $\tau$. We define $(\tau_k)_k$ to be the sequence of all solutions $\tau\geq 2$ of 
$$
-\nu_p(\tau) \equiv \ell\mod 2\pi
$$
in ascending order. As the sequence $(\tau_k)_k$ tends to infinity, the asymptotic expansion for $\Delta_{p,\tau}$ in Lemma \ref{lem:Delta_p_phi} yields that, uniformly for $z\in\mathbb{D}$,
\begin{align*}
\lim_{k\rightarrow\infty}\Delta_{p,\tau_k}(z) 
& = \omega_p\exp\left(-cd_p z + i\ell \right)= g(z).
\end{align*}
The assertion is proved.\par
{\bf Case (c):} According to our assumption on $\mu$ and the asymptotic expansion for $\Delta_{p,\tau}$ of Lemma \ref{lem:Delta_p_phi}, we have, uniformly for $z=x+iy\in \mathbb{D}$,
\begin{align}\label{p3}
\lim_{\tau\rightarrow\infty} \left|\Delta_{p,\tau} (z)  \right| =|\omega_p| =1. 
\end{align}
This implies that the family $\{\Delta_{p,\tau}\}_{\tau\in[2,\infty)}$ is uniformly bounded on $\mathbb{D}$. Thus, Montel's theorem assures that, from every unbounded subset $A\subset[2,\infty)$,  we can extract a sequence $(\tau_k)_k$ of real numbers $\tau_k\in A$ with $\lim_{k\rightarrow\infty}\tau_k = \infty$ such that $(\Delta_{p,\tau_k})_k$ converges locally uniformly  on $\mathbb{D}$ to an analytic function $g$. By means of \eqref{p3}, we have, uniformly for $z\in\mathbb{D}$,
$$
|g(z)| = \lim_{k\rightarrow\infty} \left|\Delta_{p,\tau_{k}} (z) ) \right|=  1 
$$
Thus, it follows from the open mapping theorem that $g\equiv a$ with some $a\in\C$ satisfying $|a|=1$.\par
Conversely, for a given function $g\equiv a$ with $a\in\C$ satisfying $|a|=1$, we can construct a sequence $(\tau_k)_k$ such that $(\Delta_{p,\tau_k})_k$ converges uniformly on $\D$ to $g$  in the same manner as in case (b).
\end{proof}

\begin{proof}[Proof of Proposition \ref{mainprop}] For $G\in\mathcal{G}$, let $\Delta_G$ denote the factor of the functional equation for $G$. In the following we set $\Delta_{G,\tau}(z):= \Delta_G (\varphi_{\tau}(z))$.\par 
Note that the locally uniform convergence of a sequence $(G_{\tau_k})_{k}$ on $\mathbb{D}$ implies that its limit function $g$ is either meromorphic on $\D$ or $g\equiv \infty$ on $\D$.

{\bf Case (a):} Suppose that $(G_{\tau_k})_{k}$ converges locally uniformly on $\D$ to a meromorphic limit function $g\not\equiv 0$. Then, we find a compact set $\mathcal{K}\subset \D$ with non-empty interior which lies completely to the right of the imaginary axis, i.e. $\Re\ z >0$ for $z\in\mathcal{K}$, and a constant $m>0$ such that $|g(z)|\geq m$ for $z\in\mathcal{K}$. In particular, we have
$$
\left|\lim_{k\rightarrow\infty} G(\varphi_{\tau_k}(z))\right| = \left|g(z)\right| \geq m \qquad \mbox{for }z\in\mathcal{K}.
$$
If $z\in\D$ with $\Re\ z >0$, then the point $-\overline{z}$ lies also in $\D$ and satisfies $\Re (-\overline{z}) < 0$. Respecting the growth condition posed on $\mu$, the asymptotic expansion of Lemma \ref{lem:asym_Delta_p} yields that
$$
\lim_{k\rightarrow\infty}\Delta_{G,\tau_k} (-\overline{z}) = \infty\qquad \mbox{for }z\in\mathcal{K}.
$$
According to the functional equation of $G$, we get
\begin{align*}
g(-\overline{z}) & = \lim_{k\rightarrow\infty} G\left(\varphi_{\tau_k}(-\overline{z})\right) = \lim_{k\rightarrow\infty}\Delta_G (\varphi_{\tau_k}(-\overline{z})) \overline{G\left( 1- \overline{\varphi_{\tau_k}(-\overline{z})}\right)} \\
& = \lim_{k\rightarrow\infty}\Delta_p (\varphi_{\tau_k}(-\overline{z})) \overline{G\left( \varphi_{\tau_k}(z)\right)}\\
& = \infty
\end{align*}
for $z\in\mathcal{K}$. Thus, it follows from the identity principle, applied to the meromorphic function defined by $\overline{g(-\overline{z})}$ on $\D$, that $g\equiv \infty$ in $\D$. This contradicts our assumption. 

{\bf Case (b):} Suppose that $(G_{\tau_k})_k$ converges locally uniformly on $\D$ to a meromorphic limit function $g$. Certainly, we have for an arbitrary $z \in\mathbb{D}$,
\begin{align}\label{eq:lim1}
\lim_{k\rightarrow\infty} G(\varphi_{\tau_k}(z)) = g(z).
\end{align}
Moreover, according to the functional equation, we get that
\begin{align*} 
G(\varphi_{\tau_k }(z)) & = \Delta_p(\varphi_{\tau_k}(z)) \overline{G(1-\overline{\varphi_{\tau_k}(z)})} = \Delta_p(\varphi_{\tau_k}(z)) \overline{G(\varphi_{\tau_k}(-\overline{z}))}.
\end{align*}
If $z\in\mathbb{D}$, then the point $-\overline{z}$ is also in $\mathbb{D}$. Thus, we obtain that
$$
\lim_{k\rightarrow\infty} \overline{ G (\varphi_{\tau_k}(-\overline{z})) } = \overline{ g(-\overline{z})}.
$$
Since $(G(\varphi_{\tau_k}(z)))_k$ converges locally uniformly on $\mathbb{D}$, this is also true for $(\overline{G(\varphi_{\tau_k}(-\overline{z}))})_k$. Consequently, $\overline{g(-\overline{z})}$ defines an analytic function on $\mathbb{D}$. According to case (b) of Lemma \ref{mainlemma} we find a subsequence $(\tau_{k_j})_j$ of $(\tau_k)_k$ such that, uniformly for $z\in\mathbb{D}$,
$$
\lim_{j\rightarrow\infty} \Delta(\varphi_{\tau_{k_j}}(z)) = \exp(-cd_p z + i\ell)
$$
with some $\ell\in[0,2\pi)$. Thus,
\begin{equation}\label{eq:lim2}
\lim_{j\rightarrow\infty} G(\varphi_{\tau_{k_j}}(z)) = \overline{g(-\overline{z})} \exp(-cd_p z + i\ell)
\end{equation}
holds locally uniformly for $z\in\mathbb{D}$. By the uniqueness of the limit function, we deduce from \eqref{eq:lim1} and \eqref{eq:lim2} that $g$ satisfies the  functional equation
$$
g(z)=\overline{g(-\overline{z})} \exp(-cd_p z + i\ell), \qquad \mbox{ }z\in\mathbb{D}.
$$
By setting
$$
f_g(z):= g(z)\exp\left(\frac{cd_p}{2} z - i\frac{\ell}{2}\right)
$$
the functional equation translates to
$$
f_g(z)=\overline{f_g(-\overline{z})}, \qquad \mbox{}z\in\D.
$$
It follows that $g(z)=f_g(z)\exp\left(-\frac{cd_p}{2} z + i\frac{\ell}{2}\right) $ for $z\in\D$. The assertion follows by substituting $\ell/2 \mapsto \ell'$.\par
{\bf Case (c):} To prove case (c), we follow the lines of the proof for case (b). This time, however, according to Lemma \ref{mainlemma} (c), we find a subsequence $(\tau_{k_j})_j$ of $(\tau_k)_k$ such that, uniformly for $z\in\mathbb{D}$, 
$$
\lim_{j\rightarrow\infty} \Delta_{\tau_{k_j}}(z) = \exp( i\ell)
$$
with some $\ell \in[0,2\pi)$. This leads to the functional equation 
$$
g(z)=\overline{g(-\overline{z})} \exp( i\ell), \qquad \mbox{}z\in\mathbb{D}.
$$
Again, by setting
$$
f_g(z):= g(z)\exp\left(- i\frac{\ell}{2}\right),
$$
this translates to
$$
f_g(z)=\overline{f_g(-\overline{z})} \qquad \mbox{ }z\in\D,
$$
and yields the representation $g(z)=f_g(z)\exp\left( i\frac{\ell}{2}\right)$. The assertion follows.
\end{proof}

\section{Convergence and non-convergence of the limiting process}\label{sec:convnonconv}
In this section we describe natural mechanisms that enforce the limiting process introduced in the preceeding section to converge or not to converge.  

\subsection{Non-convergence of the limiting process}\label{subsec:noncon}
Suppose that, for a given function $G\in\mathcal{G}$ and a given family  $\{\varphi_{\tau}\}_{\tau\in[2,\infty)}$ of conformal mappings on $\D$, there is an unbounded subset $A \subset[2,\infty)$ such that the corresponding family $\{G_{\tau}\}_{\tau\in A}$ is not normal in $\D$. By Montel's fundamental normality test (Theorem \ref{th:FNT1}), this bears information on the $a$-point-distribution of $G$. Thus, certain non-convergence statements for the limiting process of Section \ref{sec:shiftingshrinking} are of equal interest as convergence statements. We will study non-convergence statements and their connection to the $a$-point-distribution of $G\in\mathcal{G}$ in Chapter \ref{chapt:apoints} in details. Here, we only state the following observation which follows immediately from Proposition \ref{mainprop} (a).

\begin{lemma}\label{lem:nonconvergence}
Let $G\in\mathcal{G}$ and $\mu:[2,\infty) \rightarrow\R^+$ be a positive function with $\lim_{\tau\rightarrow\infty}\mu(\tau) = \infty$. For $\tau\in[2,\infty)$ and $z\in\D$, let $\varphi_{\tau}(z):=\tfrac{1}{2}+\frac{\mu(\tau)}{\log\tau}\cdot z + i\tau$ and $G_{\tau}(z):=G(\varphi_{\tau}(z))$.
Suppose that there is an $\alpha\in(0,\infty)$ and a sequence $(\tau_k)_k$ of real numbers $\tau_k\in[2,\infty)$ with $\lim_{k\rightarrow\infty}\tau_k = \infty$ such that
\begin{equation}\label{ass:nonconvergence}
\lim_{k\rightarrow\infty} \left|G(\tfrac{1}{2}+i\tau_k)\right| = \alpha.
\end{equation}
Then, the sequence $(G_{\tau_{k}})_k$ has no subsequence which converges locally uniformly on $\D$.
\end{lemma}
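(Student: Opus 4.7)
The plan is to argue by contradiction and appeal directly to Proposition \ref{mainprop} (a). Suppose, contrary to the claim, that some subsequence $(G_{\tau_{k_j}})_j$ converges locally uniformly on $\D$ (in the spherical sense, so that the limit is either a meromorphic function on $\D$ or the constant $\infty$). Since $\lim_{\tau\to\infty}\mu(\tau)=\infty$, we also have $\lim_{j\to\infty}\mu(\tau_{k_j})=\infty$, so the hypothesis of Proposition \ref{mainprop} (a) is satisfied by the subsequence. That proposition then forces the limit function $g$ to be either $g\equiv 0$ or $g\equiv\infty$.

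Next I would exploit the value at the single point $z=0$. By definition of the conformal maps $\varphi_\tau$, we have $\varphi_{\tau_{k_j}}(0)=\tfrac{1}{2}+i\tau_{k_j}$ and therefore
\[
G_{\tau_{k_j}}(0) \;=\; G\bigl(\tfrac{1}{2}+i\tau_{k_j}\bigr).
\]
Locally uniform convergence in the spherical metric implies convergence at the point $z=0$ in the spherical metric, so $G_{\tau_{k_j}}(0)\to g(0)$ spherically. In the case $g\equiv 0$ this forces $|G(\tfrac{1}{2}+i\tau_{k_j})|\to 0$, whereas in the case $g\equiv\infty$ it forces $|G(\tfrac{1}{2}+i\tau_{k_j})|\to\infty$. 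Both options contradict the hypothesis $\lim_{k\to\infty}|G(\tfrac{1}{2}+i\tau_k)|=\alpha\in(0,\infty)$, which of course carries over to every subsequence.

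There is no real obstacle here once Proposition \ref{mainprop} (a) is available; the only point requiring a touch of care is that the limit $g$ may be meromorphic rather than holomorphic, so one should interpret ``locally uniform convergence'' via the chordal/spherical metric on $\widehat{\C}$ so that the dichotomy $g\equiv 0$ or $g\equiv\infty$ coming from Proposition \ref{mainprop} (a) genuinely yields the two cases $|G_{\tau_{k_j}}(0)|\to 0$ or $|G_{\tau_{k_j}}(0)|\to\infty$ at the center of the disc. Since both alternatives are ruled out by the assumption \eqref{ass:nonconvergence}, the supposed convergent subsequence cannot exist, completing the proof.
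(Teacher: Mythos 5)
Your proof is correct and follows essentially the same route as the paper: assume a convergent subsequence exists, invoke Proposition \ref{mainprop}~(a) to force the limit to be $g\equiv 0$ or $g\equiv\infty$, and then contradict this by evaluating at $z=0$ where the hypothesis $|G(\tfrac12+i\tau_k)|\to\alpha\in(0,\infty)$ rules out both alternatives. Your extra remark about interpreting the convergence spherically is a sensible clarification that the paper leaves implicit.
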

\begin{proof}
Assume that $(G_{\tau_k})_k$ has a subsequence $(G_{\tau_{k_j}})_j$ that converges locally uniformly on $\D$ to a function $g$. Then, it follows from Proposition \ref{mainprop} (a) that $g\equiv 0$ or $g\equiv \infty$. Our assumption \eqref{ass:nonconvergence}, however, yields that
$$
g(0)= \lim_{j\rightarrow\infty} G_{\tau_{k_j}}(0) = \lim_{j\rightarrow\infty} G(\tfrac{1}{2}+\tau_{k_j}) = a 
$$
with some $a\in\C$ satisfying $|a|=\alpha\in(0,\infty)$. This gives a contradiction and the lemma is proved.
\end{proof}

Let $G\in\mathcal{G}$. Due to the condition \eqref{ass:nonconvergence} in Lemma \ref{lem:nonconvergence}, it seems reasonable to determine the quantities
\begin{equation}\label{def:alpha_inf}
\alpha_{G,\scalebox{0.8}{\mbox{inf}}}:=\liminf_{\tau\rightarrow\infty} \left|G(\tfrac{1}{2}+i\tau)\right|
\quad\mbox{and}\quad
\alpha_{G,\scalebox{0.8}{\mbox{sup}}}:=\limsup_{\tau\rightarrow\infty} \left|G(\tfrac{1}{2}+i\tau)\right|.
\end{equation}
The intermediate value theorem for continuous functions implies that, for every $\alpha\in[\alpha_{G,\scalebox{0.8}{\mbox{inf}}},\alpha_{G,\scalebox{0.8}{\mbox{sup}}}]$, we find a sequence $(\tau_k)_k$ of real numbers $\tau_k\in[2,\infty)$ with $\lim_{k\rightarrow\infty}\tau_k=\infty$ such that
$$
\lim_{k\rightarrow\infty} \left| G(\tfrac{1}{2}+it)\right| = \alpha.
$$
For $\L\in\Sc^{\#}$ with $d_{\L}>0$, we expect that $\alpha_{\L,\scalebox{0.8}{\mbox{inf}}}=0$ and $\alpha_{\L,\scalebox{0.8}{\mbox{sup}}}=\infty$. However, it appears to be quite challenging to prove this for functions $\L\in\Sc^{\#}$ in general. We tackle this problem later on in Chapter~\ref{ch:smalllarge}.\par

In the class $\mathcal{G}$, things are qualitatively different. Here, we find, for every $\alpha_1,\alpha_2\in[0,\infty)$ with $\alpha_1\leq \alpha_2$, a function $G\in\mathcal{G}$ such that $\alpha_{G,\scalebox{0.8}{\mbox{inf}}}=\alpha_1$ and $\alpha_{G,\scalebox{0.8}{\mbox{sup}}}=\alpha_2$: let $\Delta_{\zeta}(s)$ be the factor of the functional equation for the Riemann zeta-function. We recall that 
\begin{equation}\label{tools}
\Delta_{\zeta}(s)\Delta_{\zeta}(1-s) = 1 \quad\mbox{and}\quad \overline{\Delta_{\zeta}(\overline{s})} = \Delta_{\zeta}(s)\quad \mbox{for }s\in\C 
\end{equation}
and observe that $\Delta_{\zeta}(s)$ defines an analytic non-vanishing function in the half-strip $\mathcal{D}$. Let the square-root function $\Delta_{\zeta}(s)^{1/2}$ be defined according to Section \ref{sec:Delta} and let $\alpha_1,\alpha_2\in[0,\infty)$ with $\alpha_1\leq \alpha_2$. Suppose that $\pmb{\alpha}:=(\alpha_1,\alpha_2)\neq (0,0)$. Then, by means of \eqref{tools}, we verify easily that
$$
G_{\pmb{\alpha}}(s):=\left(\frac{\alpha_1+\alpha_2}{2} + \frac{\alpha_2-\alpha_1}{2}\cdot\sin\left(-i(s-\tfrac{1}{2}) \right) \right) \Delta_{\zeta}(s)^{1/2}
$$ 
defines a function in $\mathcal{G}$ which fulfills the functional equation $G_{\pmb{\alpha}}(s)=\Delta_{\zeta}(s)\overline{G_{\pmb{\alpha}}(1-\overline{s})}$ and satisfies $\alpha_{G_{\pmb{\alpha}},\scalebox{0.8}{\mbox{inf}}}=\alpha_1$ and $\alpha_{G_{\pmb{\alpha}},\scalebox{0.8}{\mbox{sup}}}=\alpha_2$. If $\pmb{\alpha}:=(\alpha_1,\alpha_2)=(0,0)$, then the function 
$$
G_{\pmb{0}}(s):= \exp(-s(1-s)) \Delta_{\zeta}(s)^{1/2}
$$
lies in $\mathcal{G}$ and satisfies $\alpha_{G_{\pmb{0}},\scalebox{0.8}{\mbox{inf}}}=\alpha_{G_{\pmb{0}},\scalebox{0.8}{\mbox{sup}}}=0$. We observe further that, uniformly for $0<\sigma<1$, as $t\rightarrow\infty$,
\begin{equation}\label{eq:Galphaabsch}
G_{\pmb{0}}(\sigma+it) \ll \exp\left(-Ct^2 \right)
\end{equation}
with some constant $C>0$. Let $\mu:[2,\infty)\rightarrow\R^+$ be a positive function with $\mu(\tau)< \frac{1}{2}\log\tau$ for $\tau\geq 2$ and set $G_{\pmb{0},\tau}(z):=G(\frac{1}{2}+\frac{\mu(\tau)}{\log\tau}z+i\tau)$ for $\tau\geq 2$ and $z\in\D$. Then, \eqref{eq:Galphaabsch} implies that, for every sequence $(\tau_k)_k$ of real numbers $\tau_k\in[2,\infty)$ with $\lim_{k\rightarrow\infty}\tau_k = \infty$, the corresponding sequence $(G_{\pmb{0},\tau_k})_k$ converges locally uniformly on $\D$ to the function $g\equiv 0$. This yields a very first example that convergence of the limiting process in Proposition \ref{mainprop} is actually possible.\par

With a little more effort we can adjust the function $G_{\pmb{\alpha}}$ above such that analogous results hold if we admit $\alpha_1,\alpha_2\in[0,\infty)\cup\{\infty\}$.

\subsection{Convergence via the growth behaviour in \texorpdfstring{$\Sc^{\#}$}{}  }
\label{sec:orderofgrowth}

For $\L\in\Sc^{\#}$, we can enforce the limiting process introduced in the Section \ref{sec:shiftingshrinking} to converge by adjusting the underlying conformal mapping $\varphi_{\tau}$ according to the growth behaviour of $\L$.\par

{\bf Dirichlet series of finite order.} We say that a Dirichlet series $A(s)$, resp. its meromorphic continuation which we suppose to have only finitely many poles, is of finite order in the strip $-\infty \leq \sigma_1 \leq \sigma \leq \sigma_2 \leq \infty$, if there exists a non-negative real number $c$ such that for all $\sigma_1\leq \sigma \leq \sigma_2$
\begin{equation}\label{finiteorder}
A(\sigma+it) \ll |t|^c \qquad \mbox{ as } |t|\rightarrow\infty .
\end{equation}
For a given $\sigma_1 \leq \sigma \leq \sigma_2$, we define $\theta_A(\sigma)$ to be the infimum of all $c\geq 0$ such that \eqref{finiteorder} holds; see Steuding \cite[Chapt. 2.1]{steuding:2007} and Titchmarsh \cite[\S 9.4]{titchmarsh:1939}.  It follows from a Phragm\'{e}n-Lindel\"of type argument that the function $\theta_A(\sigma)$ is continuous, non-decreasing and convex-downwards; see Titchmarsh \cite[\S 9.41]{titchmarsh:1939}.\par

{\bf The growth behaviour in the extended Selberg class.} Let $\L\in\Sc^{\#}$. Then, $\L$ is of finite order in every strip $-\infty < \sigma_1 \leq \sigma \leq \sigma_2 < \infty$. By the absolute convergence of $\L\in\Sc^{\#}$ in $\sigma>1$, we have 
$$
\theta_{\L}(\sigma)=0 \qquad  \mbox{for} \qquad \sigma>1.
$$ 
It follows then basically from the functional equation together with the asymptotic estimate $\Delta_{\L}(s)\asymp \left(|t|/2\pi  \right)^{d_{\L}(1/2 - \sigma)}$, as $|t|\rightarrow\infty$, that 
$$
\theta_{\L}(\sigma)=d_{\L}\left(\tfrac{1}{2}-\sigma \right) \qquad  \mbox{for} \qquad \sigma<0.
$$ 
The convexity of the function $\theta_{\L}(\sigma)$ implies that
$$
\max\left\{0,\, d_{\L}\left(\tfrac{1}{2}-\sigma \right) \right\}\leq \theta_{\L}(\sigma)\leq  \frac{d_{\L}}{2}\left(1- \sigma \right) \qquad \mbox{for} \qquad 0\leq \sigma \leq 1.
$$ 
For the peculiar case $\sigma=\frac{1}{2}$, this yields the bounds $0\leq \theta_{\L}(\frac{1}{2})\leq \frac{d_{\L}}{4}$. According to the Lindel\"of hypothesis, we expect that $\theta_{\L}(\frac{1}{2})=0$. \par

{\bf The growth-behaviour of the derivatives.} For a non-negative integer $\ell$, let $\L^{(\ell)}$ denote the $\ell$-th derivative of $\L\in\Sc^{\#}$. Then, Cauchy's integral formula, applied to discs with center $\sigma+it$ and radius $1/\log |t|$, assures that, as $|t|\rightarrow\infty$,
\begin{equation}\label{eq:finiteorderderivative}
\L^{(\ell)}(\sigma+it) \ll_{\ell, \eps} |t|^{\theta_{\L}(\sigma) + \eps}
\end{equation}
with any $\eps>0$. Thus, $\L$ and $\L^{(\ell)}$ have the same order of growth.\par 

{\bf Convergence of the limiting process.} First, we establish the following lemma which results from a `smoothness' argument.
\begin{lemma}\label{lem:growth}
Let $\L\in\Sc^{\#}$, $\ell\in\N_0$ and $\delta>0$. For $\tau\geq 2$, let $D_{\tau}$ be the disc defined by
\begin{equation}\label{def:disclemgrowth}
|s-\tfrac{1}{2}-i\tau| < \tau^{-\theta_{\L}(\frac{1}{2})-\delta}.
\end{equation}
Then, as $\tau\rightarrow\infty$,
$$
\max_{s_1,s_2 \in D_{\tau}} \left|\L^{(\ell)}(s_1) - \L^{(\ell)}(s_2) \right| \ll_{\delta, \ell} \tau^{-\delta/2}.
$$
\end{lemma}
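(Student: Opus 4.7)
The plan is to bound the oscillation of $\L^{(\ell)}$ on the tiny disc $D_\tau$ by integrating the next derivative $\L^{(\ell+1)}$ along the straight line segment connecting $s_1$ and $s_2$, so that
$$
\bigl| \L^{(\ell)}(s_1) - \L^{(\ell)}(s_2) \bigr| \leq |s_1 - s_2| \cdot \sup_{w \in D_\tau} \bigl| \L^{(\ell+1)}(w) \bigr|.
$$
Since $|s_1 - s_2| < 2\tau^{-\theta_{\L}(1/2) - \delta}$ by the definition of $D_\tau$, the problem reduces to establishing the supremum bound
$$
\sup_{w \in D_\tau} \bigl| \L^{(\ell+1)}(w) \bigr| \ll_{\ell,\delta} \tau^{\theta_{\L}(1/2) + \delta/2},
$$
so that the product of the two factors is $\ll_{\ell,\delta} \tau^{-\delta/2}$, as required.

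To prove this supremum bound, I would apply the finite-order estimate for derivatives, namely \eqref{eq:finiteorderderivative} with $\eps = \delta/4$ applied to $\L^{(\ell+1)}$: this gives $|\L^{(\ell+1)}(\sigma + it)| \ll_{\ell,\delta} |t|^{\theta_{\L}(\sigma) + \delta/4}$ as $|t| \to \infty$. The point to be checked is the uniformity in $\sigma$ on the shrinking disc $D_\tau$. Because the radius $\tau^{-\theta_{\L}(1/2) - \delta}$ tends to zero, for $\tau$ sufficiently large every point $w = \sigma + it \in D_\tau$ has $\sigma$ contained in any prescribed neighbourhood of $1/2$ and $|t|$ comparable to $\tau$. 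Continuity of $\theta_{\L}$ at $\sigma = 1/2$ then allows one to choose a neighbourhood on which $\theta_{\L}(\sigma) \leq \theta_{\L}(1/2) + \delta/4$. Combining with the $\delta/4$ slack already picked up from \eqref{eq:finiteorderderivative}, the supremum bound follows.

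I do not anticipate any serious obstacle: the argument is a routine quantification of the local smoothness of $\L^{(\ell)}$ dictated by its growth rate, with the factor $\tau^{-\delta/2}$ on the right-hand side accounting precisely for the $\delta/2$ of slack absorbed by the continuity of $\theta_{\L}$ and the $\eps$-term of the finite-order estimate. The only minor point to be careful about is that $\theta_{\L}$ is continuous (convex and non-decreasing by \textsection\,2.3.2), so that the variation of $\theta_{\L}(\sigma)$ across the tiny disc $D_\tau$ is genuinely negligible for large $\tau$. Putting these pieces together yields the claimed uniform estimate.
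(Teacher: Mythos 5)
Your proposal is correct and follows essentially the same route as the paper's own proof: integrate $\L^{(\ell+1)}$ along the segment $[s_1,s_2]$, bound $|\L^{(\ell+1)}|$ on $D_\tau$ via \eqref{eq:finiteorderderivative} together with the continuity of $\theta_\L$ near $\sigma=\tfrac12$, and observe that the radius $\tau^{-\theta_\L(1/2)-\delta}$ supplies exactly enough decay to cancel the growth and leave $\tau^{-\delta/2}$. Your explicit split of the $\delta/2$ slack into a $\delta/4$ contribution from the $\eps$-term of the finite-order estimate and a $\delta/4$ contribution from the continuity of $\theta_\L$ is a clean way to make the bookkeeping transparent, but it is the same argument.
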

\begin{proof}
Let $\L\in\Sc^{\#}$, $\ell\in\N_0$ and $\delta>0$. By \eqref{eq:finiteorderderivative} and the continuity of the function $\theta_{\L}(\sigma)$, the estimate
\begin{equation}\label{eq:sebicu}
 \left| \L^{(\ell + 1)} (s)\right| \ll_{\delta,\ell} \tau^{\theta_{\L}(\sigma) + \delta/2}
\end{equation}
holds uniformly for $s\in D_{\tau}$, as $\tau\rightarrow\infty$. For $s_1,s_2\in D_{\tau}$, we denote the line segment connecting $s_1$ and $s_2$ by $[s_1,s_2]$. By \eqref{eq:sebicu} and a trivial estimation, we obtain that, uniformly for $s_1,s_2\in D_{\tau}$, as $\tau\rightarrow\infty$,
$$
\left|\L^{(\ell)}(s_1) - \L^{(\ell)}(s_2) \right| = \left|\int_{[s_1,s_2]} \L^{(\ell + 1)}(s)\d s \right| \ll_{\delta,\ell} \ \tau^{-\theta_{\L}(\frac{1}{2})-\delta} \ \cdot \ \tau^{\theta_{\L}(\sigma) + \delta/2} = \tau^{-\delta/2}.
$$
This proves the lemma.
\end{proof}

Now, we are ready to give a very first convergent statement for the limiting process introduced in Section \ref{sec:shiftingshrinking}. 

\begin{theorem}\label{th:growth-conceptofuniv}
Let $\L\in\Sc^{\#}$ and $\delta>0$. For $\tau\in[2,\infty)$ and $z\in\D$, let $$\varphi_{\tau}(z):=\tfrac{1}{2}+\tau^{-\theta_{\L}(\frac{1}{2})-\delta}\cdot z + i\tau \qquad \mbox{and}\qquad \L_{\tau}(z):=\L(\varphi_{\tau}(z)).$$
\begin{itemize}
 \item[(a)] Suppose that there is an $\alpha\in[0,\infty)$ and a sequence $(\tau_k)_k$ of real numbers $\tau_k\in[2,\infty)$ with $\lim_{k\rightarrow\infty}\tau_k = \infty$ such that
\begin{equation}\label{9o}
\lim_{k\rightarrow\infty} \left|\L(\tfrac{1}{2}+i\tau_k)\right| = \alpha.
\end{equation}
Then, there is an $a\in\C$ with $|a|=\alpha$ and a subsequence of $(\L_{\tau_{k}})_k$ which converges locally uniformly on $\D$ to $g\equiv a$.
 \item[(b)] Suppose that there is a sequence $(\tau_k)_k$ of real numbers $\tau_k\in[2,\infty)$ with $\lim_{k\rightarrow\infty}\tau_k = \infty$ such that
\begin{equation}\label{9oo}
\lim_{k\rightarrow\infty} \left|\L(\tfrac{1}{2}+i\tau_k)\right| = \infty.
\end{equation}
Then, there is a subsequence of $(\L_{\tau_{k}})_k$ which converges locally uniformly on $\D$ to $g\equiv \infty$.
\end{itemize}
\end{theorem}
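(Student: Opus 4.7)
The plan is to derive this as a fairly immediate consequence of Lemma \ref{lem:growth}. The key observation is that the scaling factor $\lambda(\tau) = \tau^{-\theta_{\L}(1/2)-\delta}$ of $\varphi_{\tau}$ is chosen precisely so that the image $\varphi_{\tau}(\D)$ sits inside the disc $D_{\tau}$ from \eqref{def:disclemgrowth}. Indeed, for every $z\in\D$ we have
$$
\left|\varphi_{\tau}(z) - (\tfrac{1}{2} + i\tau)\right| \;=\; \tau^{-\theta_{\L}(1/2)-\delta}\,|z| \;<\; \tau^{-\theta_{\L}(1/2)-\delta},
$$
so $\varphi_{\tau}(\D)\subset D_{\tau}$. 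Since $\L\in\Sc^{\#}$ has at most a single pole at $s=1$, the disc $D_{\tau}$ avoids all poles of $\L$ for sufficiently large $\tau$, and Lemma \ref{lem:growth} (with $\ell=0$) applies. This yields, uniformly for $z\in\D$, as $\tau\to\infty$,
\begin{equation}\label{eq:proofplankey}
\bigl|\L_{\tau}(z) - \L(\tfrac{1}{2}+i\tau)\bigr| \;=\; \bigl|\L(\varphi_{\tau}(z)) - \L(\tfrac{1}{2}+i\tau)\bigr| \;\ll_{\delta}\; \tau^{-\delta/2}.
\end{equation}

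For part (a), I would argue as follows. Since $(\L(\tfrac{1}{2}+i\tau_k))_k$ is a bounded sequence in $\C$ by hypothesis \eqref{9o}, Bolzano-Weierstrass yields a subsequence $(\tau_{k_j})_j$ along which $\L(\tfrac{1}{2}+i\tau_{k_j})\to a$ for some $a\in\C$ with $|a|=\alpha$. Plugging this into \eqref{eq:proofplankey} and using the triangle inequality gives
$$
\sup_{z\in\D} \bigl|\L_{\tau_{k_j}}(z) - a\bigr| \;\leq\; \bigl|\L(\tfrac{1}{2}+i\tau_{k_j}) - a\bigr| + O_{\delta}\bigl(\tau_{k_j}^{-\delta/2}\bigr) \;\longrightarrow\; 0,
$$
as $j\to\infty$. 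Hence $(\L_{\tau_{k_j}})_j$ converges uniformly on $\D$ (hence locally uniformly) to the constant function $g\equiv a$, which is the desired conclusion.

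For part (b), the estimate \eqref{eq:proofplankey} gives, uniformly for $z\in\D$,
$$
|\L_{\tau_k}(z)| \;\geq\; |\L(\tfrac{1}{2}+i\tau_k)| - O_{\delta}\bigl(\tau_k^{-\delta/2}\bigr),
$$
and the right-hand side tends to $\infty$ by hypothesis \eqref{9oo}. Thus $|\L_{\tau_k}(z)|\to\infty$ uniformly on $\D$, i.e.\ $1/\L_{\tau_k}\to 0$ uniformly on $\D$, which means that $(\L_{\tau_k})_k$ itself converges locally uniformly on $\D$ (in the spherical sense) to $g\equiv\infty$; no passage to a subsequence is needed in this case.

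There is essentially no obstacle here: all the analytic content sits in Lemma \ref{lem:growth}, which encapsulates the quantitative smoothness of $\L$ on discs of radius $\tau^{-\theta_{\L}(1/2)-\delta}$ around $\tfrac{1}{2}+i\tau$. The only minor point to check is that $\L$ is analytic on $D_{\tau}$ for all sufficiently large $\tau$, which follows at once from the Selberg-class axiom (S.4) localizing poles of $\L$ at $s=1$. The choice of scaling factor in $\varphi_{\tau}$ is what makes this theorem a clean corollary rather than something requiring genuine work.
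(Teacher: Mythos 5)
Your proposal is correct and rests on the same central ingredient as the paper, namely Lemma \ref{lem:growth}. The routes differ slightly in part (a): the paper first uses Lemma \ref{lem:growth} to show the family $\{\L_{\tau_k}\}_k$ is uniformly bounded on $\D$, then invokes Montel's theorem to extract a locally uniformly convergent subsequence with analytic limit $g$, and finally feeds Lemma \ref{lem:growth} back in to conclude that $g$ is constant. You instead apply Bolzano--Weierstrass directly to the bounded complex sequence $(\L(\tfrac{1}{2}+i\tau_k))_k$ and then deduce uniform convergence of $(\L_{\tau_{k_j}})_j$ on $\D$ from the oscillation bound of Lemma \ref{lem:growth} alone, bypassing the normal-family machinery entirely. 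Both arguments are valid; yours is marginally more elementary, and your remark in part (b) that no subsequence is actually needed is an honest mild sharpening of the conclusion as stated.
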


\begin{proof}
We observe that $\varphi_{\tau_k}(\D) = D_{\tau_k}$, where $D_{\tau_k}$ is defined by \eqref{def:disclemgrowth} in Lemma \ref{lem:growth}. Suppose that there exists a sequence $(\tau_k)_k$ of real numbers $\tau_k\in[2,\infty)$ with $\lim_{k\rightarrow\infty}\tau_k = \infty$ such that
$$
\lim_{k\rightarrow\infty} \left|\L(\tfrac{1}{2}+i\tau_k)\right| = \infty,\qquad
\mbox{resp. }\lim_{k\rightarrow\infty} \left|\L(\tfrac{1}{2}+i\tau_k)\right| = 0.
$$
Then, Lemma \ref{lem:growth} yields immediately that $(\L_{\tau_k})_k$ converges locally uniformly on $\D$ to $g\equiv \infty$, resp. $g\equiv 0$. Let $(\tau_k)_k$ be a sequence of real numbers $\tau_k\in[2,\infty)$ with $\lim_{k\rightarrow\infty}\tau_k = \infty$ such that
\begin{equation}\label{limk}
\lim_{k\rightarrow\infty} \left|\L(\tfrac{1}{2}+i\tau_k)\right| = \alpha
\end{equation}
with some $\alpha\in(0,\infty)$. Lemma \ref{lem:growth} assures that, for sufficiently large $k$, 
$$
\left|\L_{\tau_k}(z)\right|< \alpha + 1, \qquad \mbox{for }z\in\D.
$$
Thus, the family $\{\L_{\tau_k}\}_k$ is bounded on $\D$ and, consequently, by Montel's theorem, normal in $\D$. This means that there is a subsequence $(\tau_{k_j})_j$ of $(\tau_k)_k$ such that $(\L_{\tau_{k_j}})_j$ converges locally uniformly on $\D$ to an analytic function $g$. Due to \eqref{limk} we have 
$$
g(0)= \lim_{j\rightarrow\infty} \L_{\tau_{k_j}}(0) =\lim_{j\rightarrow\infty} \L(\tfrac{1}{2}+i\tau_{k_j}) = a
$$
with some $a\in\C$ satisfying $|a|=\alpha$. From Lemma \ref{lem:growth} we deduce that 
$$
g(z)= \lim_{j\rightarrow\infty} \L_{\tau_{k_j}}(z) = a
$$
for $z\in\D$. Hence, $g\equiv a$ on $\D$. 
\end{proof}

With confinements on the scaling factor of the mapping $\varphi_{\tau}$, an analogous statement of Theorem \ref{th:growth-conceptofuniv} can be made for every function $G\in\mathcal{G}$ by means of certain continuity arguments for which we refer the reader to Christ \cite[Lemma 2]{christ:2012}.\par

In view of the conditions \eqref{9o} and \eqref{9oo} in Theorem \ref{th:growth-conceptofuniv}, it seems again worth to determine, for given function $G\in\mathcal{G}$, the quantities $\alpha_{G,\scalebox{0.8}{\mbox{inf}}}$ and $\alpha_{G,\scalebox{0.8}{\mbox{sup}}} $ defined by \eqref{def:alpha_inf}.

\subsection{Convergence via the a-point-distribution in \texorpdfstring{$\Sc^{\#}_R$}{} } \label{subsec:convapoints}
\begin{figure}\centering
\includegraphics[width=0.9\textwidth]{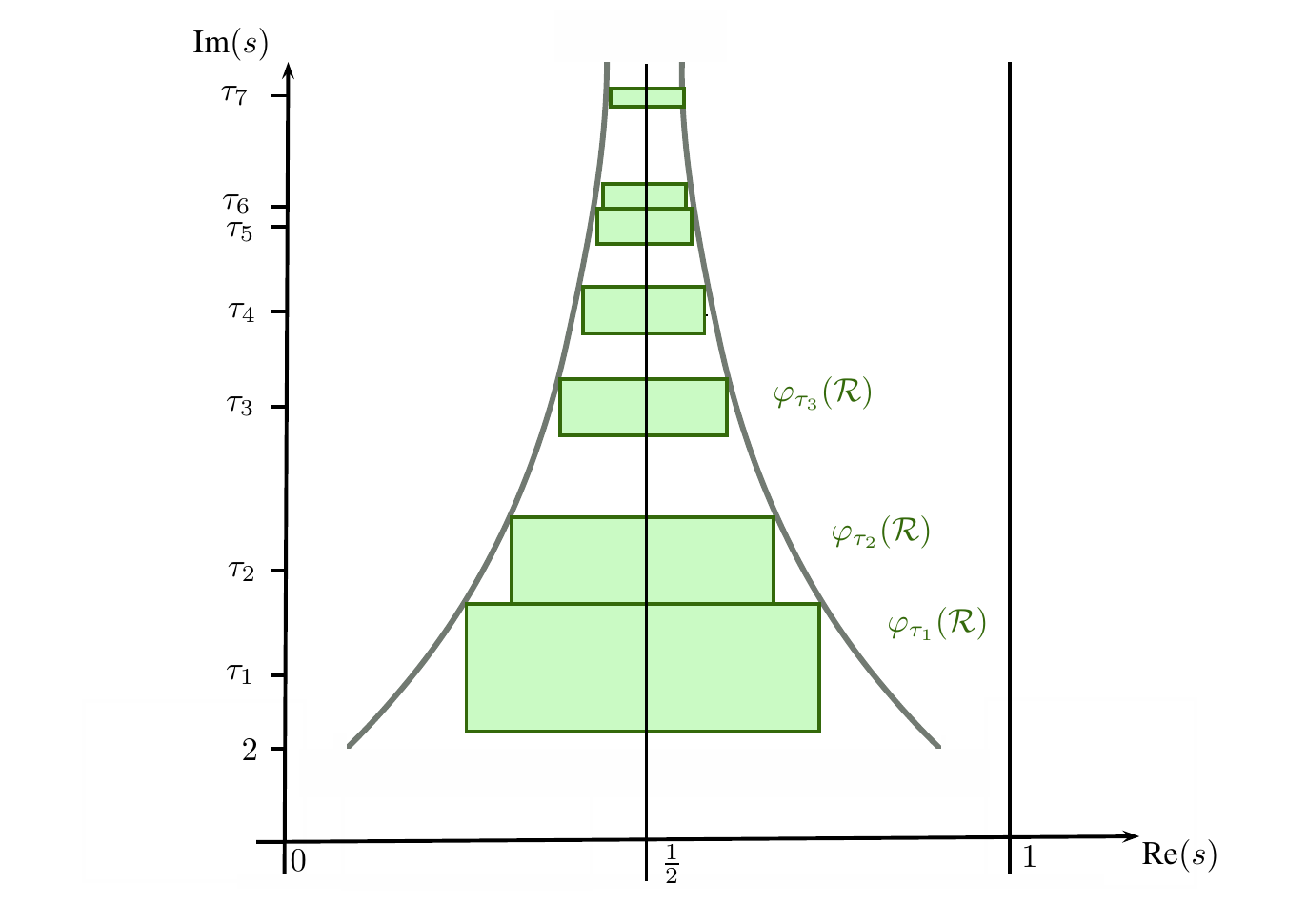}
\caption{The images of the rectangular domain $\mathcal{R}:=\mathcal{R}(3,1)$ under a certain conformal mapping $\varphi_{\tau}$ for some $\tau\in[2,\infty)$.}
\label{fig:phirect} 
\end{figure}

There is a further possibility to enforce convergence of the limiting process introduced in Section \ref{sec:shiftingshrinking}, namely by adjusting the underlying conformal mapping $\varphi_{\tau}$ according to the $a$-point-distribution of $\L\in\mathcal{S}^{\#}$.\par

We shall study the $a$-point-distribution of functions in the extended Selberg class in details later on in Chapter \ref{chapt:apoints}. Here, we anticipate only some very basic observations. Let $\L\in\Sc^{\#}$ and $a\in\C$. A complex number $\rho_a$ is said to be an $a$-point of $\L$ if $\L(\rho_a)=a$. We distinguish between trivial and non-trivial $a$-points. For a given function $\L\in\Sc^{\#}$, we can fix real numbers $R_a> 1$ and $L<0$ such that there are no $a$-points of $\L$ in the half-plane $\sigma>R_a$, only trivial ones in the half-plane $\sigma< L$ and only non-trivial ones in the strip $L \leq \sigma\leq R_a$. \par

Steuding \cite[Theorem 7.7]{steuding:2007} established a Riemann von-Mangoldt formula for a rather large subclass of the extended Selberg class. Let $\Sc^{\#}_R$ be the set of all functions $\L\in\Sc^{\#}$ with $d_{\L}>0$ which satisfy the Ramanujan hypothesis; see Section \ref{sec:selbergclass}. Obviously, we have
$$
\Sc\setminus\{1\} \subset  \Sc^{\#}_R\subset \Sc^{\#}.
$$ 
For given $\L\in\Sc^{\#}_R$, let $N_a(T)$ denote the number of non-trivial $a$-points with imaginary part $0<\gamma_a\leq T$. Then, according to Steuding \cite[Theorem 7.7]{steuding:2007},   
\begin{equation}\label{eq:RvMextS}
N_{a}(T) \sim  \frac{d_{\L}}{2\pi} T \log T + O(T), \qquad\mbox{ as $T\rightarrow\infty$.}
\end{equation}

For a subset $X\subset \R^+$, we define a density function by
$$
\nu_{T}(X) := \frac{1}{T} \meas \left( X \cap (T, 2T] \right),\qquad T>0.
$$
Further, let $\mathcal{R}(x,y)$ with $x,y> 0$ denote the rectangular domain defined by the vertices $\pm x\pm iy$. Now, we are ready to prove the following convergence statement.
\begin{theorem}\label{th:convergenceviaapoints}
Let $\L\in\Sc^{\#}_R$ and $a,b\in\C$ with $a\neq b$. Let $c>0$ and $\mathcal{R}:=\mathcal{R}(c,1)$. Further, let $\mu:[2,\infty) \rightarrow\R^+$ be monotonically decreasing such that
$$
\lim_{\tau\rightarrow\infty}\mu(\tau) < \kappa_{\L} \qquad \mbox{with}\qquad \kappa_{\L}:= \frac{\pi }{2 d_{\L}}.
$$
For $\tau\geq 2$ and $z\in\mathcal{R}$, let $\varphi_{\tau}(z):= \frac{1}{2}+\frac{\mu(\tau)}{\log \tau} z + i\tau$ and $\L_{\tau}(z):= \L\left(\varphi_{\tau}(z)\right)$.
Then, for every $0<q<1$ with $\lim_{\tau\rightarrow\infty}\mu(\tau) < \kappa_{\L} q$, there exists a subset $\mathcal{W}\subset[2,\infty)$ with 
$$
\liminf_{T\rightarrow\infty}\ \nu_T(\mathcal{W})\geq 1-q
$$
such that the family $\mathcal{F}:=\{\L_{\tau}\}_{\tau\in\mathcal{W}}\subset\mathcal{H}(\mathcal{R})$ omits the values $a$ and $b$ on $\mathcal{R}$. In particular, $\mathcal{F}$ is normal in $\mathcal{R}$.
\end{theorem}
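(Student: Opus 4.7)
I would begin by defining
\[
\mathcal{W} := \bigl\{\tau \in [2,\infty) \, : \, \varphi_{\tau}(\mathcal{R}) \text{ contains no } a\text{-point, no } b\text{-point, and no pole of } \L \bigr\}.
\]
For every $\tau \in \mathcal{W}$, the function $\L_\tau$ is analytic on $\mathcal{R}$ and takes neither the value $a$ nor the value $b$. Together with the omitted value $\infty$, the family $\{\L_\tau\}_{\tau \in \mathcal{W}}$ omits three distinct values on the Riemann sphere, so Montel's fundamental normality test (Theorem \ref{th:FNT1}) would immediately give normality in $\mathcal{R}$. Thus the entire content of the theorem reduces to verifying the density bound $\liminf_{T\to\infty}\nu_T(\mathcal{W}) \geq 1-q$.

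For the density estimate I would proceed dyadically. Fix $T$ large and a non-trivial $a$-point $\rho_a=\beta_a+i\gamma_a$ of $\L$. The condition $\rho_a \in \varphi_\tau(\mathcal{R})$ forces in particular $|\tau-\gamma_a|\leq \mu(\tau)/\log\tau$. Since $\mu$ is monotonically decreasing and $\log$ is increasing, the quotient $\mu(\tau)/\log\tau$ is decreasing, and so for $\tau\in(T,2T]$ it is bounded above by $\mu(T)/\log T$. Hence the set of $\tau\in(T,2T]$ for which $\rho_a$ lies in $\varphi_\tau(\mathcal{R})$ is contained in an interval of length at most $2\mu(T)/\log T$, and only $a$-points with $\gamma_a$ in the enlargement $(T-\mu(T)/\log T,\,2T+\mu(T)/\log T]$ can contribute at all.

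Invoking the Riemann--von Mangoldt formula \eqref{eq:RvMextS}, the number of such relevant $a$-points equals $\frac{d_{\L}}{2\pi}T\log T+O(T)$, and likewise for $b$-points; the set of $\tau$ whose image meets a pole of $\L$ is bounded. Summing the per-point bad interval lengths yields a total bad measure in $(T,2T]$ of at most
\[
2\cdot\frac{2\mu(T)}{\log T}\cdot\left(\frac{d_{\L}}{2\pi}\,T\log T+O(T)\right)
=\frac{2\,d_{\L}\,T\,\mu(T)}{\pi}+O\!\left(\frac{T\mu(T)}{\log T}\right)
=\frac{T\mu(T)}{\kappa_{\L}}\bigl(1+o(1)\bigr),
\]
where the identity $\kappa_{\L}=\pi/(2d_{\L})$ was applied. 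Because $\mu$ is monotonically decreasing with $\lim_{\tau\to\infty}\mu(\tau)<\kappa_{\L}q$, I can fix some $\mu^\ast$ with $\lim\mu\leq\mu^\ast<\kappa_{\L}q$ and $\mu(\tau)\leq\mu^\ast$ for all sufficiently large $\tau$. Then the bad measure in $(T,2T]$ is eventually bounded by $(\mu^\ast/\kappa_{\L})\,T\,(1+o(1))<qT$, giving $\nu_T(\mathcal{W})\geq 1-q-o(1)$ and hence the required $\liminf$.

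The only technically delicate point is the bookkeeping: ensuring that the slight enlargement of $(T,2T]$ absorbed into the $O(T)$ term of \eqref{eq:RvMextS} does not spoil the constant, and that the factor $2d_{\L}/\pi=1/\kappa_{\L}$ really results from combining the horizontal density $d_{\L}/(2\pi)$ of $a$-points, the vertical bad-interval length $2\mu(T)/\log T$, and the factor $2$ coming from treating $a$ and $b$ simultaneously. The rest is a clean two-value-omitted application of Montel via the theory of normal families summarized in the appendix; there is no deeper analytic input beyond the Riemann--von Mangoldt counting and the monotonicity of $\mu$.
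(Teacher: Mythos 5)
Your proposal is correct and follows essentially the same strategy as the paper's proof: define $\mathcal{W}$ by deleting small intervals around the ordinates of $a$- and $b$-points, bound the deleted measure dyadically using the Riemann--von Mangoldt formula \eqref{eq:RvMextS}, and apply Montel's fundamental normality test. Your arithmetic also agrees: the per-point bad-interval length $2\mu(T)/\log T$, times $\frac{d_{\L}}{2\pi}T\log T+O(T)$ ordinates of each colour in $(T,2T]$, times $2$ for the two colours, gives $\frac{2d_{\L}}{\pi}T\mu(T)+o(T)=T\mu(T)/\kappa_{\L}+o(T)$, which matches the paper's $\frac{2d_{\L}}{\pi}\mu(T-1)+o(1)$ estimate for $\nu_T(\Lambda)$.

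A small difference worth noting: your version bookkeeps the ordinate count over the actual dyadic window $(T,2T]$ slightly enlarged, whereas the paper writes its intermediate bound in terms of $N_{\Gamma_{a,b}}(T)$ (counting ordinates in $(0,T]$) after first restricting $\Gamma_a,\Gamma_b$ to ordinates in a fixed funnel-shaped region; since $N_{\Gamma_{a,b}}(2T)-N_{\Gamma_{a,b}}(T)$ and $N_{\Gamma_{a,b}}(T)$ have the same leading asymptotics, both land on the identical final constant. Your route avoids introducing the auxiliary funnel set $\Gamma_{a,b}$ and the $t-1$ shift entirely, simply using the trivial inclusion of all $a$- and $b$-points; this is slightly more economical and equally rigorous. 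Your treatment of poles (contributing $O(1)$ bad $\tau$, hence negligible) likewise mirrors the paper's choice of the starting point $\tau_0$.
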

\begin{proof}
Let $\L\in\Sc^{\#}_R$ and let $\mu:[2,\infty)\rightarrow\R^+$ satisfy the conditions of Theorem \ref{th:convergenceviaapoints}. For any $a\in\C$, let $\Gamma_{a}$ denote the set of all imaginary parts $\gamma_a$ of $a$-points $\rho_a = \beta_a + i\gamma_a$ of $\L$ which lie in the region defined by
\begin{equation}\label{apointstrip}
\tfrac{1}{2}-\frac{\mu(t-1)}{\log (t-1)} \leq \sigma \leq \tfrac{1}{2}+\frac{\mu(t-1)}{\log (t-1)} , \qquad t> 2.
\end{equation}
Furthermore, for any two distinct $a,b\in\C$, we set $\Gamma_{a,b} := \Gamma_a \cup \Gamma_b$ and denote the number of elements $\gamma\in\Gamma_{a,b}$ with $0<\gamma \leq T$ by $N_{\Gamma_{a,b}}(T)$.\par
Now, we fix two distinct $a,b\in\C$ as required in the assumptions of Theorem \ref{th:convergenceviaapoints}. Certainly, the inequality 
$$
N_{\Gamma_{a,b}}(T) \leq N_a(T) + N_b(T).
$$
holds for $T>0$; here, $N_a(T)$ denotes the number of non-trivial $a$-points of $\L$ with imaginary part $0<\gamma_a \leq T$. The Riemann-von Mangoldt formula \eqref{eq:RvMextS} for functions in $\Sc_R^{\#}$ yields that, as $T\rightarrow\infty$,
\begin{equation}\label{NGamma-1}
N_{\Gamma_{a,b}}(T)\leq  \frac{d_{\L}}{\pi} T \log T + o(T\log T). 
\end{equation}
This asymptotic bound for $N_{\Gamma_{a,b}}$ seems to be very rough. However, as we shall briefly discuss at the end of this section, in most cases this bound might not be too far from the truth. \par
For $\gamma\in\R$, we set $\gamma':= \gamma-1$ and define
$$
\Lambda := \bigcup_{\gamma\in\Gamma_{a,b}} \left[\gamma - \frac{\mu(\gamma')}{\log \gamma'},\ \gamma + \frac{\mu(\gamma')}{\log \gamma'} \right];
$$
By means of \eqref{NGamma-1}, we obtain that
$$
\nu_T(\Lambda) \leq \frac{1}{T}\cdot \frac{2\mu(T-1)}{\log (T-1)} \cdot N_{\Gamma_{a,b}}(T) \leq \frac{2d_{\L}}{\pi}\cdot \mu(T-1) + o(1),
$$
as $T\rightarrow\infty$. Let $0<q<1$ be such that $\lim_{\tau\rightarrow\infty} \mu(\tau) < \kappa_{\L}q$. Then, we have
\begin{equation}\label{eq:limsupconvergenceapoints}
\limsup_{T\rightarrow\infty} \nu_T(\Lambda) \leq q .
\end{equation}
We choose $\tau_0\geq 2$ large enough such that $0<\frac{\mu(\tau)}{\log \tau}<\frac{1}{2}$ holds for $\tau\geq \tau_0$ and set $\mathcal{W}:= [\tau_0,\infty) \setminus \Lambda$. It follows immediately from \eqref{eq:limsupconvergenceapoints} that
\begin{equation}\label{sel:measA}
\liminf_{T\rightarrow\infty}\nu_T( \mathcal{W}) \geq 1-q.
\end{equation}
By the construction of the set $\mathcal{W}$ and the monotonicity of $\mu$, the family 
$
\{\L_{\tau}\}_{\tau\in\mathcal{W}}
$
omits the values $a$ and $b$ on $\mathcal{R}$. Our choice of $\tau_0$ assures that a possible pole of $\L$ at $s=1$ does not generate a pole for any function $\L_{\tau}$ with $\tau\in[\tau_0,\infty)$ on $\mathcal{R}$. Thus, the functions $\L_{\tau}$ with $\tau\in[\tau_0,\infty)$ are analytic on $\mathcal{R}$. By Montel's fundamental normality test (Theorem \ref{th:FNT1}) we conclude that the family $\{\L_{\tau}\}_{\tau\in\mathcal{W}}$ is normal in $\mathcal{R}$. 
\end{proof}
It would be interesting to know whether one can replace the constant $\kappa_{\L}$ in Theorem \ref{th:convergenceviaapoints} by a larger one. And, in fact, there are two estimates in our proof which appear to be quite rough.

\begin{itemize}
\item[(i)] We used the very rough bound 
\begin{equation*}\label{sebastian}
N_{\Gamma_{a,b}}(T) \leq N_a(T) + N_b(T).
\end{equation*}
We could improve this bound if we knew more about the $a$-point-distribution of $\L$ in the funnel-shaped strip \eqref{apointstrip}. As we shall see in Chapter \ref{chapt:apoints}, we expect that, for $\L\in\Sc^{\#}_R$, almost all of its $a$-points lie arbitrarily close to the critical line. However, it seems to be difficult to get detailed information on how the $a$-points cluster around the critical line. Due to works of Levinson \cite{levinson:1975}, Selberg \cite{selberg:1992} and Tsang \cite{tsang:1984}, there is slightly more information at our disposal if a function $\L\in\Sc$ possesses a rich arithmetical structure. But, in most cases, our knowledge is not sufficient to estimate the number of $a$- and $b$-points in the domain \eqref{apointstrip} better than above. If we restrict to the Riemann zeta-function and assume the Riemann hypothesis, then we deduce from a conditional $a$-point result of Selberg (see Section \ref{apointslittlewood}) that, for $a\neq 0$, about half of the $a$-points lie outside the domain \eqref{apointstrip}, provided that $\lim_{t\rightarrow\infty}\mu(t)>0$. This allows us to replace the constant $\kappa_{\L}$ in Theorem \ref{th:convergenceviaapoints} by $2\kappa_{\L}$, provided that $a,b\neq 0$.

\item[(ii)] In bounding $\nu_{T}(\Lambda)$ we did not respect that there might be quite many intervals
$$
\left[\gamma - \frac{\mu(\gamma')}{\log \gamma'},\ \gamma + \frac{\mu(\gamma')}{\log \gamma'} \right]\qquad \mbox{with }\gamma':=\gamma-1\;\mbox{and}\;\gamma\in\Gamma_{a,b}
$$
which overlap. However, to deal with this overlapping seems to be out of reach. We refer here to the many obstacles that prevent us from getting control over the gap conjecture and Montgomery's pair correlation conjecture in the case of the Riemann zeta-function.
\end{itemize} 

\newpage

\chapter{Small and Large values near the critical line} \label{ch:smalllarge}
For functions $G\in\mathcal{G}$, we introduced in Chapter \ref{ch:conceptsuniv} a limiting process in neighbourhoods of the critical line and found out that the functional equation has strong effects on the shape of possible limit functions. In Theorem \ref{th:growth-conceptofuniv} and \ref{th:convergenceviaapoints} we investigated two natural mechanisms by which we can enforce the limiting process to converge. For a given $\L\in\Sc^{\#}$, the limit functions to be obtained by such a convergent process are connected with the quantities
$$
\alpha_{\L,\scalebox{0.8}{\mbox{inf}}}:=\liminf_{\tau\rightarrow\infty} \left|\L(\tfrac{1}{2}+i\tau) \right|\qquad \mbox{and}\qquad
\alpha_{\L,\scalebox{0.8}{\mbox{sup}}}:=\limsup_{\tau\rightarrow\infty} \left|\L(\tfrac{1}{2}+i\tau) \right|.
$$
It appears to be quite challenging to determine $\alpha_{\L,\scalebox{0.8}{\mbox{inf}}}$ and $\alpha_{\L,\scalebox{0.8}{\mbox{sup}}}$ for general functions $\L\in\Sc^{\#}$. We postpone this problem to the end of this chapter and tackle it in Section \ref{sec:unboundedness}.\par 
If a function $\L\in\Sc$ has a sufficiently rich arithmetic structure in its Dirichlet series coefficients, Selberg's central limit implies that $\alpha_{\L,\scalebox{0.8}{\mbox{inf}}}=0$ and $\alpha_{\L,\scalebox{0.8}{\mbox{sup}}}=\infty$ and, thus, provides a more satisfactory answer than we can state for general functions $\L\in\Sc^{\#}$. We present Selberg's central limit law and several of its extensions in Section \ref{sec:selbergcentrallimit}. 
For suitable functions $\L\in\Sc$, we deduce from Selberg's central limit law in Section \ref{sec:smalllargeONcritline} information on the frequency of small and large values on the critical line.\par
In Section \ref{sec:largesmall}, we discover that Selberg's central limit law implies that, for suitable functions $\L\in\Sc$, the limiting process of Theorem \ref{th:growth-conceptofuniv} and \ref{th:convergenceviaapoints} has a strong tendency to converge either to $g\equiv 0$ or to $g\equiv \infty$.

\section{Selberg's central limit law} \label{sec:selbergcentrallimit}
In the class $\Sc^*$ we gather all functions from the Selberg class for which both Selberg's prime coefficient condition (S.6$^*$) and Selberg's zero-density estimate (DH) are true; see Section \ref{sec:selbergclass}. \par

{\bf Selberg's central limit law.} Selberg \cite{selberg:1992} derived that, for $\L\in\Sc^*$, the values of $\log \L(\frac{1}{2} + it)$ are Gaussian normally distributed after some suitable normalization. We set
$$
\kappa_{\L,T}(\sigma,t):= \frac{\log \L(\sigma+it)}{\sqrt{\frac{1}{2}n_{\L} \log \log T}},
$$ 
where $n_{\L}$ is defined by Selberg's prime coefficient condition (S.6$^*$).
Then, for any measurable set $B\subset \C$ with positive Jordan content, we have  
\begin{equation*}
\frac{1}{T}\ \meas \left\{ t\in (T,2T]\ : \ \kappa_{\L,T}(\tfrac{1}{2},t) \in B \right\} \sim \frac{1}{2\pi} \iint_{B} e^{-\frac{1}{2}(x^2+y^2)}\d x \d y,
\end{equation*}
as $T\rightarrow\infty$. Note that $\varphi(x,y):= \frac{1}{2\pi}e^{-\frac{1}{2}(x^2+y^2)}$ defines the density function of the bivariate Gaussian normal distribution. Moreover, for any real numbers $\alpha$ and $\beta$ with $\alpha<\beta$, as $T\rightarrow\infty$,
\begin{equation}\label{centrallimitlaw}
\frac{1}{T}\ \meas \left\{ t\in (T,2T]\ : \  \alpha \leq  \Re \left(\kappa_{\L,T}(\tfrac{1}{2},t)\right) \leq  \beta  \right\} \qquad\qquad\qquad\mbox{ }
\end{equation}
$$\qquad\qquad\qquad\qquad\qquad\qquad
= \frac{1}{\sqrt{2\pi}}  \int_{\alpha}^{\beta}e^{-\frac{1}{2}x^2 } \d x + O\left( \frac{(\log\log\log T)^2}{\sqrt{\log\log T}} \right)
$$
and, similarly, with a slightly better error term,
$$
\frac{1}{T}\ \meas \left\{ t\in (T,2T]\ : \  \alpha \leq  \Im\left(\kappa_{\L,T}(\tfrac{1}{2},t)\right)\leq \beta \right\}  \qquad\qquad\qquad\mbox{ }
$$
$$\qquad\qquad\qquad\qquad\qquad\qquad
= \frac{1}{\sqrt{2\pi}}  \int_{\alpha}^{\beta}e^{-\frac{1}{2}x^2} \d x + O\left( \frac{\log\log\log T}{\sqrt{\log\log T}} \right).
$$

In the case of the Riemann zeta-function, the asymptotic of Selberg's limit law for $\Re \left( \kappa_{\zeta,T}(\frac{1}{2},t)\right)$ was also discovered by Laurin\v{c}ikas \cite{laurincikas:1987}, independently of Selberg's work; however, without explicit error term.\footnote{We refer to Ivi\'{c} \cite{ivic:2002} for a short historical overview on preliminary works leading to Selberg's limit law.} Selberg himself never published a rigorous proof of his limit theorems. For a precise description of Selberg's method, we refer to Tsang \cite{tsang:1984}, who carried out all details in the case of the Riemann zeta-function. Joyner \cite{joyner:1986} proved Selberg's central limit law for a large class of Dirichlet series. Laurin\v{c}ikas \cite{laurincikas:1991-2} provided proofs for various limit laws connected with the Riemann zeta-function. Hejhal \cite{hejhal:2000} sketches the proof of Selberg's central limit law for linear combinations of functions in the class $\Sc^*$ with polynomial Euler product (S.3$^*$).\par

{\bf Extensions of Selberg's central limit law.} There are several directions to extend Selberg's central limit law. In the following, we mainly restrict to the Riemann zeta-function and to limit laws with respect to $\Re\left(\kappa_{\zeta,T}(\frac{1}{2},t) \right)$. \par 

Laurin\v{c}ikas proved that, for the Riemann zeta-function, Selberg's central limit theorem is also valid on certain line segments to the right of the critical line. Let
$$
0\leq \epsilon(T)\leq \frac{\mu(T)\sqrt{\log\log T}}{\log T}, \qquad T\geq 2,
$$
with an arbitrary positive function $\mu$ satisfying $ \mu(T) \rightarrow \infty$ and $\mu(T)=o(\log\log T)$, as $T\rightarrow\infty$. Then, 
$$
\frac{1}{T}\ \meas \left\{ t\in (T,2T]\ : \ \alpha \leq  \Re \left(\kappa_{\zeta,T}(\tfrac{1}{2}+\epsilon(T),t)\right) \leq \beta  \right\} \sim
\frac{1}{\sqrt{2\pi}}  \int_{\alpha}^{\beta}e^{-\frac{1}{2}x^2} \d x ,
$$
as $T\rightarrow\infty$; see Laurin\v{c}ikas \cite[Chapt. 3, Theorem 3.5.1]{laurincikas:1991-2}. It is also possible to obtain normal distribution results if 
$$
\epsilon(T)>\frac{\mu(T)\sqrt{\log\log T}}{\log T}, \qquad T\geq 2.
$$
In this case, however, a change of normalization is necessary: one has to work with
$$
\kappa'_{\zeta,T} (\tfrac{1}{2} +\epsilon(T) , t) : = \frac{\log| \zeta(\tfrac{1}{2}+\epsilon(T)+i t)|}{\sqrt{-\log\epsilon(T)}}.
$$
For details we refer to Laurin\v{c}ikas \cite[Chapt. 3.4, Theorem 3.4.1 and Corollary 3.4.2]{laurincikas:1991-2}.\par

Under the assumption of the Riemann hypothesis, Hejhal \cite{hejhal:1989} established a central limit law for the modulus of the first derivative of the Riemann zeta-function on the critical line. Let $A(t):=(t/2\pi)\log (t/2\pi e)$. If the Riemann hypothesis is true, then
$$
\frac{1}{T} \meas \left\{ t\in (T,2T]\ : \ \alpha \leq  
\frac{\log\left|\zeta'(\frac{1}{2}+it)/A'(\frac{1}{2}+it)\right|}{\sqrt{\frac{1}{2} \log\log T}}
\leq \beta  \right\} \sim
\frac{1}{\sqrt{2\pi}}  \int_{\alpha}^{\beta}e^{-\frac{1}{2}x^2} \d x,
$$
as $T\rightarrow\infty$.\par

Hughes, Nikeghbali \& Yor \cite{hughesnikeghbaleyour:2008} and Bourgade \cite{bourgade:2010} gave multidimensional extensions of Selberg' central limit law. In the following, let $\omega$ be a random variable which is uniformly distributed on the interval $(0,1)$. In the terminology of probability theory, Selberg's central limit law states that
$$
\frac{\log| \zeta(\tfrac{1}{2}+i\omega t)|}{\sqrt{\log\log t}}
$$
converges in distribution, as $t\rightarrow\infty$, to a Gaussian normally-distributed random variable. Hughes, Nikeghbali \& Yor \cite{hughesnikeghbaleyour:2008} gave a multidimensional extension of Selberg's central limit law by showing that, for any $0<\lambda_1<...<\lambda_n$, the vector
$$
\frac{1}{\sqrt{\log\log t}} \left(\log| \zeta(\tfrac{1}{2}+i\omega e^{(\log t)^{\lambda_1}}),...,\log| \zeta(\tfrac{1}{2}+i\omega e^{(\log t)^{\lambda_n}})| \right)
$$
converges in distribution, as $t\rightarrow\infty$, to $(\lambda_1 \mathcal{N}_1, ..., \lambda_n \mathcal{N}_n)$, where $\mathcal{N}_1,...,\mathcal{N}_n$ are independent Gaussian normally-distributed random variables. Bourgade \cite{bourgade:2010} investigated vectors with respect to smaller shifts and revealed some interesting correlation structure. His results imply, for instance, that, for any $0\leq \delta\leq 1$, 
$$
\frac{1}{\sqrt{\log\log t}} \left(\log| \zeta(\tfrac{1}{2}+i\omega t),\log\left| \zeta\left(\tfrac{1}{2}+i\omega t +i\tfrac{1}{(\log t)^{\delta}}\right)\right| \right)
$$
converges in distribution, as $t\rightarrow\infty$, to $(\mathcal{N}_1 , \delta \mathcal{N}_1  + \sqrt{1-\delta^2}\mathcal{N}_2)$, where $\mathcal{N}_1$ and $\mathcal{N}_2$ are independent Gaussian normally-distributed random variables.\par

\section{Small and large values on the critical line}\label{sec:smalllargeONcritline}
Selberg's central limit law provides information on the frequency of small and large values of the Riemann zeta-function on the critical line. \par
For $\L\in\Sc$ and any two positive real functions $l,u:[2,\infty)\rightarrow\R^+$ satisfying $l(t)\leq u(t)$ for $t\in[2,\infty)$, we define
\begin{equation*}\label{W_lu}
W_{l(t),u(t)} := \left\{ t\in [2,\infty)\ : \  l(t) \leq |\L(\tfrac{1}{2}+it)| \leq  u(t) \right\}.
\end{equation*}
In consistency with this notation, we set
\begin{align*}
W_{-\infty,u(t)} &:= \left\{ t\in [2,\infty)\ : \   |\L(\tfrac{1}{2}+it)| \leq  u(t) \right\},\\
W_{l(t),+\infty} &:= \left\{ t\in [2,\infty)\ : \   |\L(\tfrac{1}{2}+it)| \geq  l(t) \right\}.
\end{align*}
Recall that, for a given subset $X\subset \R^+$, we defined a density function by
$$
\nu_{T}(X) := \frac{1}{T} \meas \left( X \cap (T, 2T] \right),\qquad T>0.
$$
The subsequent theorem is an immediate consequence of Selberg's central limit law.
\begin{theorem}\label{th:measselberglimitlaw}
Let $\L\in\Sc^{*}$. Furthermore, let
\begin{equation}\label{gx}
g_{u}(t):= \exp\left( u \sqrt{\tfrac{1}{2} n_{\L} \log \log t} \right),\qquad t\geq 2,
\end{equation}
where $u$ is a real parameter and $n_{\L}$ is defined by Selberg's prime coefficient condition (S.3$^*$), and
\begin{equation}\label{ET}
E(T):= \frac{(\log\log\log T)^2}{\sqrt{\log\log T}}, \qquad T>1.
\end{equation}
\begin{itemize}
 \item[(a)] Let $\alpha,\beta\in\R$ with $\alpha<\beta$. Then, as $T\rightarrow\infty$,
\begin{align*}\label{meas:normal1}
\nu_{T}\left(W_{-\infty, g_{\alpha}(t)}\right)  &= \frac{1}{\sqrt{2\pi}}  \int_{-\infty}^{\alpha}e^{-x^2/2 } \d x + O\left( E(T) \right), \\
\nu_{T}\left(W_{g_{\alpha}(t),g_{\beta}(t)}\right)  &= \frac{1}{\sqrt{2\pi}}  \int_{\alpha}^{\beta}e^{-x^2/2 } \d x + O\left( E(T) \right),\\
\nu_{T}\left(W_{g_{\beta}(t),+\infty}\right)  &= \frac{1}{\sqrt{2\pi}}  \int_{\beta}^{+\infty}e^{-x^2/2 } \d x + O\left( E(T)\right).
\end{align*}
\item[(b)] Let $m$ be a fixed positive real number. Then, as $T\rightarrow\infty$,
\begin{align*}
\nu_{T}\left(W_{-\infty,\frac{1}{m}}\right) & = \frac{1}{2}  + O\left( E(T) \right),\\
\nu_{T}\left(W_{\frac{1}{m}, m}\right) & = O\left( E(T) \right),\\
\nu_{T}\left(W_{m,\infty}\right) & = \frac{1}{2}  + O\left( E(T) \right).
\end{align*}
\item[(c)] Let $m:[2,\infty)\rightarrow\R^+$ be a positive function with $\lim_{t\rightarrow\infty}m(t) = \infty$ and such that, for any $\eps>0$, the inequality $m(t)\leq g_{\eps}(t)$ holds for sufficiently large $t$. Then, as $T\rightarrow\infty$,
\begin{align*}
\nu_{T}\left(W_{-\infty,\frac{1}{m(t)}}\right) &= \frac{1}{2}  + o(1),\\
\nu_{T}\left(W_{\frac{1}{m(t)},m(t)}\right) &= o(1),\\
\nu_{T}\left(W_{m(t),\infty}\right) &= \frac{1}{2}  + o(1).
\end{align*}
\end{itemize}
\end{theorem}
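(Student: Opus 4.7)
The plan is to deduce all three parts directly from Selberg's central limit law \eqref{centrallimitlaw}, which governs, for $t\in(T,2T]$, the density of the random variable $\Re\bigl(\kappa_{\L,T}(\tfrac{1}{2},t)\bigr)$. Everything reduces to feeding the correct thresholds into \eqref{centrallimitlaw}.

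For part (a), I would rewrite the inequality $g_{\alpha}(t)\leq |\L(\tfrac{1}{2}+it)|\leq g_{\beta}(t)$ by taking logarithms and dividing through by $\sqrt{\tfrac{1}{2}n_{\L}\log\log T}$. Since $\sqrt{\log\log t/\log\log T}=1+O\bigl(1/(\log T\log\log T)\bigr)$ uniformly on $(T,2T]$, the condition becomes $\alpha(1+\delta_T)\leq \Re\bigl(\kappa_{\L,T}(\tfrac{1}{2},t)\bigr)\leq \beta(1+\delta_T)$ with $\delta_T=O\bigl(1/(\log T\log\log T)\bigr)$. Plugging these perturbed thresholds into \eqref{centrallimitlaw} and using that the Gaussian density is bounded by $1$ (so the integral is Lipschitz in the endpoints) absorbs the perturbation into $O(E(T))$. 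The one-sided statements for $W_{-\infty,g_{\alpha}(t)}$ and $W_{g_{\beta}(t),+\infty}$ follow by the standard monotone-convergence argument, i.e.\ letting $\alpha\to-\infty$, respectively $\beta\to+\infty$, and using that the tail of the standard Gaussian vanishes.

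For part (b), with $m>0$ fixed, the condition $|\L(\tfrac{1}{2}+it)|\leq 1/m$ is equivalent to $\Re\bigl(\kappa_{\L,T}(\tfrac{1}{2},t)\bigr)\leq -\log m/\sqrt{\tfrac{1}{2}n_{\L}\log\log T}$. The right-hand side is $O\bigl(1/\sqrt{\log\log T}\bigr)=O(E(T))$, so \eqref{centrallimitlaw} applied to this shrinking threshold, together with the mean-value theorem for the Gaussian CDF at the origin, gives $\nu_T\bigl(W_{-\infty,1/m}\bigr)=\tfrac{1}{2}+O(E(T))$; the bound for $W_{m,+\infty}$ is symmetric, and $W_{1/m,m}$ is the essentially disjoint complement in $(T,2T]$ of these two sets, hence of density $O(E(T))$.

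For part (c), the hypothesis $m(t)\leq g_{\epsilon}(t)$ valid for all sufficiently large $t$ yields $1/m(t)\geq g_{-\epsilon}(t)$ in the same range; hence $W_{-\infty,g_{-\epsilon}(t)}\cap(T,2T]\subseteq W_{-\infty,1/m(t)}\cap(T,2T]$ once $T$ is large. Applying part (a) with $\alpha=-\epsilon$ and letting $\epsilon\to 0^{+}$ gives $\liminf_{T}\nu_T\bigl(W_{-\infty,1/m(t)}\bigr)\geq \tfrac{1}{2}$. Conversely, $m(t)\to\infty$ forces $W_{-\infty,1/m(t)}\subseteq W_{-\infty,1}$ eventually, so part (b) with $m=1$ yields $\limsup_T\nu_T\bigl(W_{-\infty,1/m(t)}\bigr)\leq \tfrac{1}{2}$. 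Combining gives the limit $\tfrac{1}{2}$; the claim for $W_{m(t),+\infty}$ is symmetric, and the middle strip $W_{1/m(t),m(t)}$ is the essentially disjoint complement in $(T,2T]$ of the two outer sets, so its density tends to $0$. The only technical point needing care is the uniformity of \eqref{centrallimitlaw} in the threshold under the small perturbations used above; this is implicit in Selberg's proof, whose error term is uniform for thresholds in any compact set, and our perturbations stay in such a set. I expect no deeper obstacle.
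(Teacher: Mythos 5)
Your proposal is correct and follows the same route as the paper, which simply notes that (a) is Selberg's central limit law applied to $\Re\log\L=\log|\L|$ and that (b) and (c) follow by coupling $\alpha(T),\beta(T)$ to $T$; you fill in the details the paper leaves implicit, in particular the $\sqrt{\log\log t/\log\log T}=1+O\bigl(1/(\log T\log\log T)\bigr)$ adjustment needed because $g_u$ is normalised with $t$ while $\kappa_{\L,T}$ is normalised with $T$, and you correctly flag that the only extra ingredient is uniformity of the error term in \eqref{centrallimitlaw} for thresholds ranging in a compact set.
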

\begin{proof} Statement (a) follows immediately from Selberg's central limit law by noticing that $\Re (\log \L(\frac{1}{2}+it)) = \log |\L(\frac{1}{2}+it)|$. The statements (b) and (c) can be deduced from (a) by coupling the parameters $\alpha:=\alpha(T)$ and $\beta:=\beta(T)$ with $T$ in a suitable manner and  by observing that
$$
\frac{1}{\sqrt{2\pi}}  \int_{-\infty}^{0}e^{-x^2/2 } \d x = \frac{1}{\sqrt{2\pi}}  \int_{0}^{\infty}e^{-x^2/2 } \d x =\frac{1}{2}.
$$
\end{proof}
We close this section with a brief remark on large deviations in Selberg's central limit law. Let $\L\in\Sc^*$ and $\alpha:[2,\infty)\rightarrow\R^+$ be a positive function with $\lim_{T\rightarrow\infty}\alpha(T)=\infty$. We set 
$$
l(T):=\exp\left( \alpha(T) \sqrt{\tfrac{1}{2} n_{\L} \log \log T} \right),\qquad T\geq 2.
$$
Selberg's central limit law implies that the asymptotic
 $$
\nu_T (W_{l(T),\infty}) \sim
\frac{1}{\sqrt{2\pi}}  \int_{\alpha(T)}^{\infty}e^{-\frac{1}{2}x^2} \d x, \qquad\mbox{as }T\rightarrow\infty ,
 $$
holds whenever $\alpha(T)\leq (\log\log\log T)^{\frac{1}{2}-\eps}$ for sufficiently large $T$ with an arbitrary fixed $\eps>0$. For larger deviations, i.e. if 
$$
\alpha(T)=\Omega\left( (\log\log\log T)^{\frac{1}{2}} \right), \qquad\mbox{ as } T\rightarrow\infty,
$$ 
we obtain from Selberg's central limit law only the trivial bound
\begin{equation*}\label{123}
\nu_T (W_{l(T),\infty}) \ll E(T), \qquad\mbox{as }T\rightarrow\infty,
\end{equation*}
where $E(T)$ is defined by \eqref{ET}. In the case of the Riemann zeta-function, there are unconditional results due to Jutila \cite{jutila:1983}, Soundararajan \cite{soundararajan:2008} and  Radziwi\l\l \  \cite{radziwill:2011} and conditional results (on the assumption of the Riemann hypothesis) due to Soundararajan \cite{soundararajan:2009} at our disposal which allow to describe the frequency of large deviations in Selberg's central limit law in a more precise manner than we get from the trivial bound \eqref{123}.

\section{Small and large values near the critical line}\label{sec:largesmall}
Relying on Selberg's central limit law, in particular on Theorem \ref{th:measselberglimitlaw}, we shall deduce that, for $\L\in\Sc^{*}$, the limiting processes of Theorem \ref{th:growth-conceptofuniv} and \ref{th:convergenceviaapoints} have a strong tendency to converge either to $g\equiv 0$ or to $g\equiv \infty$. \par

\begin{theorem}\label{th:largesmall}
Let $\L\in\Sc^{*}$ and $a\in\C\setminus\{0\}$. Let $c>0$ and $\mathcal{R}:=\mathcal{R}(c,1)$ be the rectangular domain defined by the vertices $\pm c \pm i$. Let $\mu:[2,\infty) \rightarrow\R^+$ be monotonically decreasing such that
$$
\lim_{\tau\rightarrow\infty}\mu(\tau) < \tfrac{1}{2}\kappa_{\L} \qquad \mbox{with}\qquad \kappa_{\L}:= \frac{\pi }{2 d_{\L}}.
$$
For $\tau\geq 2$ and $z\in\mathcal{R}$, we set $\varphi_{\tau}(z):= \frac{1}{2}+\frac{\mu(\tau)}{\log \tau} z + i\tau$ and $ \L_{\tau}(z):= \L\left(\varphi_{\tau}(z)\right)$. Then, for every $0<q<\frac{1}{2}$ with $\lim_{\tau\rightarrow\infty}\mu(\tau) < \kappa_{\L} q$, there exist subsets $\mathcal{W}_0,\mathcal{W_{\infty}}\subset[2,\infty)$ with 
\begin{align*}
\liminf_{T\rightarrow\infty}\nu_T(\mathcal{W}_0) & \geq \tfrac{1}{2}-q, \\
\liminf_{T\rightarrow\infty}\nu_T(\mathcal{W}_{\infty}) & \geq \tfrac{1}{2}-q,\\
\liminf_{T\rightarrow\infty}\nu_T(\mathcal{W}_0\cup\mathcal{W}_{\infty}) & \geq 1-q,
\end{align*}
such that the following holds: 
\begin{itemize}
\item[(a)] The family $\mathcal{F}:=\{\L_{\tau}\}_{\tau\in\mathcal{W}_0\cup\mathcal{W}_{\infty}}\subset\mathcal{H}(R)$ omits the two values $0$ and $a$ on $\mathcal{R}$. In particular, $\mathcal{F}$ is normal in $\mathcal{R}$.
\item[(b)] For every sequence $(\tau_k)_k$ with $\tau_k \in \mathcal{W}_0$ and $\lim_{k\rightarrow\infty} \tau_k =\infty$, the sequence $(\L_{\tau_k})_k$ converges locally uniformly on $\mathcal{R}$ to $g\equiv 0$. 
\item[(c)] For every sequence $(\tau_k)_k$ with $\tau_k \in \mathcal{W}_0$ and $\lim_{k\rightarrow\infty} \tau_k =\infty$, the sequence $(\L_{\tau_k})_k$ converges locally uniformly on $\mathcal{R}$ to $g\equiv \infty$. 
\end{itemize}
\end{theorem}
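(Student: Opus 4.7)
The plan combines two ingredients: the normality coming from Theorem \ref{th:convergenceviaapoints} applied to the pair of target values $0$ and $a$, and the density of extreme values of $|\L(\tfrac{1}{2}+i\tau)|$ on the critical line provided by Selberg's central limit law in the form of Theorem \ref{th:measselberglimitlaw}(c). First I would apply Theorem \ref{th:convergenceviaapoints} with the two distinct values $0$ and $a$; since the hypothesis $\lim_{\tau\to\infty}\mu(\tau)<\kappa_\L q$ is in force, it produces a subset $\mathcal{A}\subset[2,\infty)$ with $\liminf_{T\to\infty}\nu_T(\mathcal{A})\geq 1-q$ on which every $\L_\tau$ is analytic on $\mathcal{R}$ and omits both $0$ and $a$ there, whence $\{\L_\tau\}_{\tau\in\mathcal{A}}$ is normal in $\mathcal{R}$ by Montel's fundamental normality test.

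Second, I would invoke Theorem \ref{th:measselberglimitlaw}(c) with a slowly divergent function such as $m(t):=\log\log t$, which satisfies $m(t)\to\infty$ and $m(t)\leq g_\eps(t)$ eventually for every $\eps>0$. Setting $\mathcal{V}_0:=W_{-\infty,1/m(t)}$ and $\mathcal{V}_\infty:=W_{m(t),+\infty}$, we then obtain $\nu_T(\mathcal{V}_0),\nu_T(\mathcal{V}_\infty)\to\tfrac{1}{2}$, and since $\mathcal{V}_0\cap\mathcal{V}_\infty=\emptyset$ for large $t$, also $\nu_T(\mathcal{V}_0\cup\mathcal{V}_\infty)\to 1$. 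Defining $\mathcal{W}_0:=\mathcal{A}\cap\mathcal{V}_0$ and $\mathcal{W}_\infty:=\mathcal{A}\cap\mathcal{V}_\infty$, the elementary bound $\nu_T(A\cap B)\geq \nu_T(A)+\nu_T(B)-1$ then yields the three advertised density estimates $\liminf\nu_T(\mathcal{W}_0)\geq\tfrac{1}{2}-q$, $\liminf\nu_T(\mathcal{W}_\infty)\geq\tfrac{1}{2}-q$ and $\liminf\nu_T(\mathcal{W}_0\cup\mathcal{W}_\infty)\geq 1-q$.

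Part (a) is then immediate since $\mathcal{W}_0\cup\mathcal{W}_\infty\subset\mathcal{A}$. For part (b), given $\tau_k\in\mathcal{W}_0$ with $\tau_k\to\infty$, I would use normality to extract from any subsequence a locally uniformly convergent sub-subsequence $\L_{\tau_{k_j}}\to g$. Since each $\L_{\tau_{k_j}}$ omits $0$ on $\mathcal{R}$, Hurwitz's theorem (Theorem \ref{th:hurwitz}) leaves only the alternatives $g\equiv 0$ or $g$ omits $0$; but $\tau_{k_j}\in\mathcal{V}_0$ forces $|\L(\tfrac{1}{2}+i\tau_{k_j})|\leq 1/m(\tau_{k_j})\to 0$, so $g(0)=0$ and hence $g\equiv 0$. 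Independence of the limit from the chosen subsequence yields local uniform convergence of the entire sequence. Part (c) is handled symmetrically via reciprocals: $\{1/\L_\tau\}_{\tau\in\mathcal{W}_\infty}$ is analytic on $\mathcal{R}$ and omits the value $1/a$, hence is normal; any subsequential limit $h$ satisfies $h\equiv 0$ or omits $0$, and $|\L(\tfrac{1}{2}+i\tau_k)|\geq m(\tau_k)\to\infty$ forces $h(0)=0$, so $h\equiv 0$ and $\L_{\tau_k}\to\infty$ locally uniformly on $\mathcal{R}$ in the spherical sense.

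The main difficulty is essentially bookkeeping rather than analysis: ensuring that the two density inputs are combined consistently with the same choice of $q$ and that the functions $m$ and $g_\eps$ are compatible. The decisive structural point in parts (b) and (c) is that the upgrade from the pointwise statement $g(0)=0$ (respectively $h(0)=0$) to the functional conclusion $g\equiv 0$ (respectively $h\equiv 0$) rests squarely on the omission of $0$ (respectively $\infty$) provided by Theorem \ref{th:convergenceviaapoints}; without this omission, Hurwitz alone would not suffice. All of the hard analytic work has already been done upstream: what remains reduces to routine applications of Montel, Hurwitz, and Selberg's limit law.
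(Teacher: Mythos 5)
Your proof is correct and follows essentially the same route as the paper: Theorem~\ref{th:convergenceviaapoints} supplies the normal family omitting $0$ and $a$ on a set of lower density $\geq 1-q$, Theorem~\ref{th:measselberglimitlaw}(c) supplies the small-value and large-value sets of density $\tfrac12$, and the conclusion follows by intersecting and applying Montel together with Hurwitz exactly as in the paper's argument. The only cosmetic deviation is in part (c), where you pass to reciprocals $1/\L_\tau$ to reuse the Hurwitz step from (b), whereas the paper argues directly that the normal family $\{\L_\tau\}_{\tau\in\mathcal{W}_\infty}$ omits $\infty$ and $\L_{\tau_k}(0)\to\infty$ forces the limit $g\equiv\infty$; both formulations are equivalent.
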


\begin{proof}
Let $\L\in\Sc^{*}$ and $a\in\C\setminus\{0\}$. Furthermore, let the function $\mu:[2,\infty)\rightarrow\R^+$ satisfy the conditions of the theorem.\par 
Then, according to Theorem \ref{th:convergenceviaapoints}, we find, for every $0<q<\frac{1}{2}$ with $\lim_{\tau\rightarrow\infty}\mu(\tau) < \kappa_{\L} q$, a subset $\mathcal{W}\subset[2,\infty)$ with 
\begin{equation*}
\liminf_{T\rightarrow\infty} \nu_T(\mathcal{W})\geq 1-q
\end{equation*}
such that every function $\L_{\tau}$ with $\tau\in\mathcal{W}$ is analytic on $\mathcal{R}$ and omits there the values $0$ and $a$. It follows from Montel's fundamental normality test (Theorem \ref{th:FNT1}) that the family $\{\L_{\tau}\}_{\tau\in\mathcal{W}}$ is normal in $\mathcal{R}$. \par
Let $g_x(t)$ be defined by \eqref{gx} and let $m:[2,\infty)\rightarrow\R^+$ be a positive function with $\lim_{t\rightarrow\infty}m(t) = \infty$ such that, for any $\eps>0$, the inequality $m(t)\leq g_{\eps}(t)$ holds for sufficiently large $t$. Furthermore, let  the sets $ W_{-\infty,1/m(t)} , W_{m(t), +\infty}\subset[2,\infty)$ be defined by 
\begin{align*}
W_{-\infty,\frac{1}{m(t)}} &:= \left\{ t\in [2,\infty)\ : \   |\L(\tfrac{1}{2}+it)| \leq  \tfrac{1}{m(t)} \right\},
\shortintertext{and}
W_{m(t),+\infty} &:= \left\{ t\in [2,\infty)\ : \   |\L(\tfrac{1}{2}+it)| \geq  m(t) \right\}.
\end{align*}
According to Theorem \ref{th:measselberglimitlaw} (c),
\begin{align*}
\liminf_{T\rightarrow\infty} \nu_T\left( W_{-\infty, \frac{1}{m(t)}} \right) &\geq \frac{1}{2},\\
 \liminf_{T\rightarrow\infty} \nu_T\left( W_{m(t), +\infty} \right) &\geq \frac{1}{2}
\end{align*}
\vspace{-0.5cm} 
and
$$
\liminf_{T\rightarrow\infty} \nu_T\left( W_{-\infty, \frac{1}{m(t)}}\cup W_{m(t), +\infty} \right) =1.
$$

We set 
$$
\mathcal{W}_0:=\mathcal{W}\cap W_{-\infty, \frac{1}{m(t)}} \qquad \mbox{and}\qquad
\mathcal{W}_{\infty}:= \mathcal{W}\cap  W_{m(t), +\infty}.
$$
The lower density estimates for the sets $\mathcal{W}$, $ W_{-\infty,1/m(t)}$  and $W_{m(t), +\infty}$ above imply that
\begin{align*}
\liminf_{T\rightarrow\infty} \nu_T\left( \mathcal{W}_{0} \right) & \geq \frac{1}{2}-q,\\
 \liminf_{T\rightarrow\infty} \nu_T\left( W_{\infty} \right) & \geq \frac{1}{2}-q
\end{align*}
\vspace{-0.5cm} 
and
$$
\liminf_{T\rightarrow\infty} \nu_T\left( \mathcal{W}_{0}\cup \mathcal{W}_{\infty} \right) \geq 1-q.
$$
As $\mathcal{W}_0\cup \mathcal{W}_{\infty}\subset \mathcal{W}$, statement (a) of the theorem follows immediately from the construction of the set $\mathcal{W}$.\par

Now, let $(\tau_k)_k$ be a sequence with $\tau_k\in \mathcal{W}_0$ and $\lim_{k\rightarrow\infty} \tau_k =\infty$. The observations that
$$
\left| \L_{\tau}(0)\right| = \left|\L(\tfrac{1}{2}+i\tau) \right|\leq \frac{1}{m(\tau)} \quad\mbox{ for }\tau\in\mathcal{W}_0 
\qquad \mbox{ and } \lim_{\tau\rightarrow\infty} \frac{1}{m(\tau)}=0,
$$
yield that 
\begin{equation}\label{sel:cond1}
\lim_{k\rightarrow\infty}\L_{\tau_k}(0)=0.
\end{equation}
By the normality of $\{\L_{\tau}\}_{\tau\in\mathcal{W}_0}$ in $\mathcal{R}$, every subsequence of $(\L_{\tau_k})_{k}$ converges locally uniformly on $\mathcal{R}$. Assume that there is a subsequence $(\L_{\tau_{k_j}})_{j}$ of $ (\L_{\tau_k})_{k}$ that converges locally uniformly on $\mathcal{R}$ to a limit function $g\not\equiv 0$. Then, due to \eqref{sel:cond1}, the function $g$ has a zero at $z=0$. Thus, by the theorem of Hurwitz, every function $\L_{\tau_{k_j}}$ with sufficiently large $j$ has at least one zero in $\mathcal{R}$. This, however, contradicts the assumption that $\{\L_{\tau}\}_{\tau\in\mathcal{W}_0}$ omits the value $0$. Statement (b) of our theorem is proved.\par

Now, let $(\tau_k)_k$ be a sequence with $\tau_k\in \mathcal{W}_{\infty}$ and $\lim_{k\rightarrow\infty}\tau_k = \infty$. Then, it follows from
$$
\left| \L_{\tau}(0)\right| = \left|\L(\tfrac{1}{2}+i\tau) \right|\geq m(\tau) \quad\mbox{ for }\tau\in\mathcal{W}_{\infty} 
\qquad \mbox{ and } \lim_{\tau\rightarrow\infty} m(\tau)=\infty,
$$
that
$$
\lim_{k\rightarrow\infty} \L_{\tau_k} (0) = \infty.
$$
As $\{\L_{\tau}\}_{\tau\in\mathcal{W}_{\infty}}$ is normal in $\mathcal{R}$ and omits the value `$\infty$', we conclude that $(\L_{\tau_k})_k$ converges locally uniformly on $\mathcal{R}$ to $g\equiv \infty$. This proves statement (c) of the theorem.
\end{proof}

Theorem \ref{th:largesmall} provides information on the frequency of small and large values of $\L\in\Sc^{*}$ in funnel-shaped neighbourhoods of the critical line. This information complements Selberg's central limit law and its extensions stated in Section \ref{sec:selbergcentrallimit}: Selberg's central limit law measures the number of points $\frac{1}{2}+it$ on the critical line for which a given function $\L\in\Sc^*$ takes small or large values. Laurin\v{c}ikas' extension allows us to do the same for points on certain line segments, which lie close to the right of the critical line at distances of order not exceeding $1/\log t$. By Bourgade's multidimensional extension we can measure the number of points $\frac{1}{2}+it$  on the critical line such that both $\L(\tfrac{1}{2}+it)$ and $\L(\tfrac{1}{2}+it + i \frac{1}{(\log t)^{\delta}})$, $0\leq\delta\leq 1$ take either small or large values. By means of Theorem \ref{th:largesmall} we can measure the number of certain rectangular subsets of the region
$$
\frac{1}{2} - \frac{c}{\log t} \leq \sigma \leq \frac{1}{2}+ \frac{c}{\log t}, \qquad t\geq 2, 
$$
with $c>0$, on which a given function $\L\in\Sc^{*}$ assumes small or large values:

\begin{corollary}\label{cor:selbergsmalllarge}
Let $\L\in\Sc^{*}$. Let $c>0$ and $\mathcal{R}:=\mathcal{R}(c,1)$ be the rectangular domain defined by the vertices $\pm c \pm i$. Let $\mu:[2,\infty) \rightarrow\R^+$ be monotonically decreasing such that
$$
\lim_{\tau\rightarrow\infty}\mu(\tau) < \tfrac{1}{2}\kappa_{\L} \qquad \mbox{with}\qquad \kappa_{\L}:= \frac{\pi }{2 d_{\L}}.
$$
For $\tau\geq 2$, we set $\varphi_{\tau}(z):= \frac{1}{2}+\frac{\mu(\tau)}{\log \tau} z + i\tau$ and denote by $\mathcal{R}_{\tau}$ the image of the rectangle $\mathcal{R}$ under the mapping $\varphi_{\tau}$, i.e. 
$
\mathcal{R}_{\tau}:= \varphi_{\tau}\bigl(\mathcal{R}\bigr).
$ 
Let $\overline{\mathcal{R}}_{\tau}$ denote the closure of $\mathcal{R}_{\tau}$.
Then, for every real number $m>1$, every positive integer $\ell$ and every $0<q<\frac{1}{2}$ with $\lim_{\tau\rightarrow\infty}\mu(\tau) < \kappa_{\L} q$, there exist subsets $\mathcal{W}_{1/m},\mathcal{W}_{m}\subset[2,\infty)$ with 
\begin{align*}
\liminf_{T\rightarrow\infty}\nu_T(\mathcal{W}_{\frac{1}{m}}) &\geq \tfrac{1}{2}-q,\\ 
\liminf_{T\rightarrow\infty}\nu_T(\mathcal{W}_{m}) &\geq \tfrac{1}{2}-q,\\
\liminf_{T\rightarrow\infty}\nu_T(\mathcal{W}_{\frac{1}{m}}\cup\mathcal{W}_{m}) &  \geq 1-q,
\end{align*}
such that the following holds. 
\begin{itemize}
\item[(a)] For any $s\in\overline{\mathcal{R}}_{\tau}$ with $\tau\in\mathcal{W}_{1/m}$ and any integer $0\leq l\leq \ell$
$$
|\L^{(l)}(s)|\leq \frac{1}{m}\cdot  \left(\frac{\log\tau }{\mu(\tau)} \right)^{l}   \qquad \mbox{ and } \qquad \L(s)\neq 0 .
$$
\item[(b)] For any $s\in\overline{\mathcal{R}}_{\tau}$ with $\tau\in\mathcal{W}_m$ 
$$|\L(s)|\geq m \qquad \mbox{and}\qquad \left|  \L' (s) \right| \leq  \frac{1}{m} \cdot   \frac{\log\tau}{  \mu(\tau)}\cdot \left|  \L(s)\right|^2 . $$
\end{itemize}
\end{corollary}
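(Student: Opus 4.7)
The plan is to derive the corollary from Theorem \ref{th:largesmall} by enlarging the rectangle, invoking locally uniform convergence together with the Weierstrass theorem on derivatives, and translating via the chain rule. The only mild technical point is passing from locally uniform convergence on an open rectangle to uniform convergence (including of derivatives) on the closed rectangle $\overline{\mathcal{R}}$; we handle it by introducing a slightly larger open rectangle at the outset.

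First, fix an open rectangle $\mathcal{R}' \supset \overline{\mathcal{R}}$, for instance $\mathcal{R}' := \mathcal{R}(2c,2)$, and apply Theorem \ref{th:largesmall} to $\mathcal{R}'$ with the same $\L$, $\mu$, $a$, and $q$. This produces sets $\mathcal{W}_0', \mathcal{W}_\infty' \subset [2,\infty)$ with $\liminf_T \nu_T(\mathcal{W}_0') \geq \tfrac{1}{2}-q$, $\liminf_T \nu_T(\mathcal{W}_\infty') \geq \tfrac{1}{2}-q$, and $\liminf_T \nu_T(\mathcal{W}_0' \cup \mathcal{W}_\infty') \geq 1-q$, such that along any sequence $\tau_k \to \infty$ with $\tau_k \in \mathcal{W}_0'$ (respectively $\tau_k \in \mathcal{W}_\infty'$) the functions $\L_{\tau_k}$ converge to $0$ (respectively to $\infty$) locally uniformly on $\mathcal{R}'$. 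As $\overline{\mathcal{R}}$ is a compact subset of $\mathcal{R}'$, the convergence is uniform on $\overline{\mathcal{R}}$. By the Weierstrass theorem on derivatives of locally uniformly convergent sequences of analytic functions, for each fixed $l \in \N_0$ the derivatives $\L_{\tau_k}^{(l)}$ converge uniformly to $0$ on $\overline{\mathcal{R}}$ along $\mathcal{W}_0'$, while $(1/\L_{\tau_k})^{(l)}$ converge uniformly to $0$ on $\overline{\mathcal{R}}$ along $\mathcal{W}_\infty'$ (noting that each $\L_\tau$ is non-vanishing on $\mathcal{R}'$ by Theorem \ref{th:largesmall}(a)).

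Given $m > 1$ and $\ell \in \N$, I can therefore choose $\tau_0 = \tau_0(m,\ell)$ so large that
$$\max_{0 \leq l \leq \ell}\ \sup_{z \in \overline{\mathcal{R}}} \bigl|\L_\tau^{(l)}(z)\bigr| \leq \tfrac{1}{m} \qquad \text{for all } \tau \in \mathcal{W}_0' \cap [\tau_0,\infty),$$
and
$$\inf_{z \in \overline{\mathcal{R}}} \bigl|\L_\tau(z)\bigr| \geq m, \qquad \sup_{z \in \overline{\mathcal{R}}} \frac{|\L_\tau'(z)|}{|\L_\tau(z)|^2} \leq \tfrac{1}{m} \qquad \text{for all } \tau \in \mathcal{W}_\infty' \cap [\tau_0,\infty).$$
Setting $\mathcal{W}_{1/m} := \mathcal{W}_0' \cap [\tau_0,\infty)$ and $\mathcal{W}_m := \mathcal{W}_\infty' \cap [\tau_0,\infty)$, removal of a bounded initial segment does not affect $\liminf_{T \to \infty} \nu_T(\cdot)$, so the three density inequalities claimed in the corollary follow from those above.

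Finally, I translate the bounds on $z \in \overline{\mathcal{R}}$ to bounds on $s = \varphi_\tau(z) \in \overline{\mathcal{R}}_\tau$ via the chain rule. Since $\varphi_\tau$ is affine with derivative $\mu(\tau)/\log\tau$, we have
$$\L^{(l)}(s) = \left(\frac{\log\tau}{\mu(\tau)}\right)^{\!l} \L_\tau^{(l)}(z), \qquad \frac{\L'(s)}{\L(s)^2} = \frac{\log\tau}{\mu(\tau)} \cdot \frac{\L_\tau'(z)}{\L_\tau(z)^2},$$
which, together with the estimates above, yield the inequalities in (a) and (b). The non-vanishing of $\L$ on $\overline{\mathcal{R}}_\tau$ for $\tau \in \mathcal{W}_{1/m}$ follows from Theorem \ref{th:largesmall}(a): the family $\{\L_\tau\}_{\tau \in \mathcal{W}_0'}$ omits the value $0$ on $\mathcal{R}' \supset \overline{\mathcal{R}}$, hence $\L(s) \neq 0$ for $s \in \overline{\mathcal{R}}_\tau$.
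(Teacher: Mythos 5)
There is a gap at the very first step. Theorem \ref{th:largesmall} is stated only for rectangles of the specific form $\mathcal{R}(c,1)$ with vertices $\pm c\pm i$, so you cannot apply it directly to $\mathcal{R}'=\mathcal{R}(2c,2)$ (height $2$). And the height is not a cosmetic parameter: tracing back to Theorem \ref{th:convergenceviaapoints}, the set $\Lambda$ is built from intervals of half-length $\mu(\gamma')/\log\gamma'$, which is precisely the half-height of $\varphi_\tau(\mathcal{R}(c,1))$ in the $t$-direction. Doubling the height of the rectangle doubles those interval lengths, which turns the estimate $\limsup_T\nu_T(\Lambda)\le q$ into $\limsup_T\nu_T(\Lambda)\le 2q$; to preserve the density conclusions you would then need the strictly stronger hypothesis $\lim_{\tau\to\infty}\mu(\tau)<\kappa_{\L}q/2$. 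Equivalently, unfolding $\L(\varphi_\tau(2z))$ for $z\in\mathcal{R}(c,1)$ shows that what you really do is apply the theorem with scaling factor $2\mu(\tau)/\log\tau$ in place of $\mu(\tau)/\log\tau$, so the factor of $2$ re-enters the constraint.

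The repair is exactly the paper's move: because the inequality $\lim_{\tau\to\infty}\mu(\tau)<\kappa_{\L}q$ is strict, one can choose $\eta>1$ so close to $1$ that $\lim_{\tau\to\infty}\eta\mu(\tau)<\kappa_{\L}q$ still holds, then apply Theorem \ref{th:largesmall} (to the standard rectangle $\mathcal{R}(c,1)$, as stated) to the reparametrized family $\L_\tau^*(z):=\L\bigl(\tfrac12+\tfrac{\eta\mu(\tau)}{\log\tau}z+i\tau\bigr)$. The compact set to evaluate on is then $\mathcal{K}:=\overline{\mathcal{R}(c/\eta,1/\eta)}\subset\mathcal{R}(c,1)$, which satisfies $\varphi_\tau^*(\mathcal{K})=\overline{\mathcal{R}}_\tau$. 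Everything else in your outline — locally uniform convergence, the Weierstrass theorem for derivatives, choosing $\tau_0$, discarding a bounded initial segment, the chain-rule translation of the estimates, and the non-vanishing from Theorem \ref{th:largesmall}(a) — is correct and coincides with the paper's argument. So the approach is essentially right; the only genuine defect is applying the theorem to a rectangle it does not cover, with an enlargement factor that is too large for the given hypothesis on $\mu$.
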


\begin{proof}
Corollary \ref{cor:selbergsmalllarge} is a direct consequence of Theorem \ref{th:largesmall}. However, to deduce the statements rigorously from Theorem \ref{th:largesmall}, some rescaling is necessary. First, we choose a sufficiently small $\eta>1$ such that $\lim_{t\rightarrow\infty} \eta \mu(t) < \kappa_{\L}$. Then, we set $\varphi_{\tau}^*(z) = \frac{1}{2}+\frac{ \eta\mu(t)}{\log \tau} z +i\tau$ and define
$\L_{\tau}^*(z):= \L(\varphi_{\tau}^*(z))$. For the family $\{\L^*_{\tau}\}_{\tau\in[2,\infty)}$, the rectangle $\mathcal{R}:=\mathcal{R}(c,1)$ and an arbitrary $0<q<\frac{1}{2}$ with $\lim_{\tau\rightarrow\infty} \mu^*(\tau)< \kappa_{\L}q$, we choose the sets $\mathcal{W}_0, \mathcal{W}_{\infty}\subset[2,\infty)$ according to Theorem \ref{th:largesmall}. Let 
$$\mathcal{K}:=\overline{\mathcal{R}(\tfrac{c}{\eta}, \tfrac{1}{\eta})}$$ be the compact rectangular domain defined by the vertices $\pm \frac{c}{\eta}\pm i \frac{1}{\eta}$. As $\eta>1$, $\mathcal{K}$ is a compact subset of $\mathcal{R}$. Moreover, we have 
$$
\varphi_{\tau}^* (\mathcal{K}) =\overline{\varphi_{\tau}(\mathcal{R})} =\overline{\mathcal{R}}_{\tau}
$$ for every $\tau\in[2,\infty)$. Let $(f_k)_k$ be a sequence of functions $f_k\in \mathcal{H}(\mathcal{R})$ which converges locally uniformly on $\mathcal{R}$ to $f\equiv 0$. Then, according to the theorem of Weierstrass, the corresponding sequence $(f_k^{(l)})_k$ of $l$-th derivatives with $l\in\N_0$ converges also locally uniformly on $\mathcal{R}$ to $f\equiv 0$. Thus, it follows from Theorem \ref{th:largesmall} (b) and (c) that, for arbitrary $m>1$ and $\ell\in\N$, we find a number $\tau_0\geq 2$ such that both
$$
\left| \frac{\d^{l}}{\d z^{l}}\ \L^*_{\tau}(z)\right| \leq \frac{1}{m}
$$
for every $z\in\mathcal{K}$, every $\tau\in\mathcal{W}_{\frac{1}{m}}:=\mathcal{W}_0\cap[\tau_0,\infty)$ and every integer $0\leq l\leq \ell$, and 
$$
\left| \frac{\d^{l}}{\d z^{l}}\ \frac{1}{\L^*_{\tau}(z)}\right| \leq \frac{1}{m}
$$ 
for every $z\in\mathcal{K}$, every $\tau\in\mathcal{W}_{m}:=\mathcal{W}_{\infty}\cap[\tau_0,\infty)$ and every integer $0\leq l\leq \ell$. We observe that
$$
\frac{\d^{l}}{\d z^{l}}\ \L^*_{\tau}(z) = \left( \frac{\eta \mu(\tau)}{\log\tau}\right)^l \cdot \L_{\tau}^{*(l)} (z) 
$$
for $0\leq l \leq \ell$ and
$$
\frac{\d^{l}}{\d z^{l}}\ \frac{1}{\L^*_{\tau}(z)} =  \left( \frac{\eta \mu(\tau)}{\log\tau}\right)^l \cdot \frac{ \L_{\tau}^{*(l)}(z)}{(\L_{\tau}^{*}(z))^2}.
$$
for $l=0$ and $l=1$. For sake of simplicity, we omit here to evaluate analogous expressions for the $l$-th derivative if $l\geq 2$. Consequently, we obtain that
$$
  \left| \L_{\tau}^{*(l)} (z) \right|\leq\frac{1}{m} \cdot  \left( \frac{\log\tau}{ \eta \mu(\tau)}\right)^l \leq \frac{1}{m} \cdot  \left( \frac{ \mu(\tau)}{\log\tau}\right)^l 
$$
for every $z\in\mathcal{K}$, every $\tau\in\mathcal{W}_{\frac{1}{m}}$ and every integer $0\leq l\leq \ell$ and  
$$
\left|  \frac{\L_{\tau}^{*(l)} (z)}{ (\L_{\tau}^{*}(z))^2} \right| \leq \frac{1}{m} \cdot   \left( \frac{\log\tau}{ \eta \mu(\tau)}\right)^l\leq  \frac{1}{m} \cdot   \left( \frac{\log\tau}{  \mu(\tau)}\right)^l 
$$ 
for every $z\in\mathcal{K}$, every $\tau\in\mathcal{W}_{m}$ and $l=0,1$. Note that, for the choice $l=0$, the latter inequality implies that
$$
 \left|\L_{\tau}^{*}(z)\right| \geq m.
$$
By setting $s:=\varphi^*_{\tau}(z)$, we can identify $\L_{\tau}^{*(l)}(z)$ on $K$  with $\L^{(l)}(s)$ on $\varphi_{\tau}^*(\mathcal{K}) = \overline{\mathcal{R}}_{\tau}$. The estimates
\begin{align*}
\liminf_{T\rightarrow\infty}\nu_T(\mathcal{W}_{\frac{1}{m}}) &\geq \tfrac{1}{2}-q,\\ 
\liminf_{T\rightarrow\infty}\nu_T(\mathcal{W}_{m}) &\geq \tfrac{1}{2}-q,\\
\liminf_{T\rightarrow\infty}\nu_T(\mathcal{W}_{\frac{1}{m}}\cup\mathcal{W}_{m}) &  \geq 1-q,
\end{align*}
hold due to the definition of the sets $\mathcal{W}_{1/m}\subset \mathcal{W}_0$, $\mathcal{W}_m\subset \mathcal{W}_{\infty}$ and the choice of $\mathcal{W}_0$ and $\mathcal{W}_{\infty}$. The corollary is proved.
\end{proof}

Corollary \ref{cor:selbergsmalllarge} has some nice applications in the further course of our investigations. We shall deduce the following: 
\begin{itemize}
\item[(i)] The Riemann zeta-function assumes both arbitrarily small and arbitrarily large values on every path to infinity which lies inside the region defined by
$$
\tfrac{1}{2}-\frac{c}{ \log t}<\sigma < \tfrac{1}{2}+\frac{c}{ \log t}, \qquad t\geq 2
$$ 
with any fixed $c>0$; see Corollary \ref{cor:curvessmalllarge}.
\item[(ii)] Let $\alpha,c>0$. Then, there is an $a\in\C$ with $|a|=\alpha$ and a sequence $(t_n)_n$ of numbers $t_n\in[2,\infty)$ such that
$$
\lim_{n\rightarrow\infty} \zeta\left(\tfrac{1}{2} - \frac{c}{\log t_n} +it_n \right) = a;
$$ 
see Corollary \ref{cor:apointsleft}.
\item[(iii)] There is a subinterval $A\subset [0,2\pi)$ of length at least $\frac{\pi}{4}$ such that, for every $\theta\in A$, there is a sequence $(t_n)_n$ of numbers $t_n\in[2,\infty)$ with 
$$
\zeta(\tfrac{1}{2}+it_n)\neq 0, \qquad \lim_{n\rightarrow\infty} \zeta(\tfrac{1}{2}+it_n) = 0 \qquad \mbox{ and } \qquad
\arg \zeta(\tfrac{1}{2}+it_n) \equiv \theta \mod 2\pi;
$$  
see Theorem \ref{th:zeroasintpoint}.
\end{itemize}

\section{Unboundedness on the critical line in the extended Selberg class}\label{sec:unboundedness}
Selberg's central limit law implies that, for $\L\in\Sc^{*}$,
\begin{equation}\label{eq:alphainfsup}
\alpha_{\L,\scalebox{0.8}{\mbox{inf}}}:=\liminf_{\tau\rightarrow\infty} \left|\L(\tfrac{1}{2}+i\tau) \right|=0\quad \mbox{and}\quad
\alpha_{\L,\scalebox{0.8}{\mbox{sup}}}:=\limsup_{\tau\rightarrow\infty} \left|\L(\tfrac{1}{2}+i\tau) \right|=\infty.
\end{equation}
Roughly speaking, every $\L\in\Sc^{*}$ assumes both arbitrarily small and arbitrarily large values on the critical line. It seems reasonable to expect that \eqref{eq:alphainfsup} holds for every function $\L\in\Sc^{\#}$ with $d_{\L}>0$. However, it turns out to be quite challenging to prove \eqref{eq:alphainfsup} for a general function $\L\in\Sc^{\#}$. In this section, we derive some sufficient conditions for a function $\L\in\Sc^{\#}$ to be unbounded on the critical line. Besides some fundamental insights in the extended Selberg class due to Kaczorowski \& Perelli \cite{kaczorowskiperelli:2002, kaczorowskiperelli:2005, kaczorowskiperelli:2011}, we rely here basically on the general theory of ordinary Dirichlet series, for which the reader is referred to the textbook of Titchmarsh \cite[Chapter 9]{titchmarsh:1939}. At the end of this section, we give some specific examples of functions in $\Sc$ for which our considerations imply that \eqref{eq:alphainfsup} is true.

\subsection{Characteristic convergence abscissae in the extended Selberg class}\label{sec:charconvabs}
For an ordinary Dirichlet series
\begin{equation}\label{dirichletseries1}
A(s) = \sum_{n=1}^{\infty} \frac{a(n)}{n^s}
\end{equation}
with coefficients $a(n)\in\C$, we can define certain characteristic convergence abscissae. If a Dirichlet series converges in a point $s_0\in\C$, then it converges uniformly in any angular domain 
$$
A_{\delta}(s_0):=\left\{ s\in\C\, : \, \left| \arg (s-s_0) \right| \leq \frac{\pi}{2} - \delta \right\}
$$
with an arbitrary real number $0<\delta<\frac{\pi}{2}$. Consequently, the region of convergence of a Dirichlet series is always a half-plane and it is reasonable to define its abscissa of convergence as the real number $\sigma_c\in\R\cup\{\pm\infty\}$ such that the Dirichlet series converges in the half-plane $\sigma>\sigma_c$ and diverges in the half-plane $\sigma<\sigma_c$. It follows essentially from Abel's summation formula that the abscissa of convergence is given by
\begin{equation}\label{eq:sigmac}
\sigma_c = \limsup_{x\rightarrow\infty} \frac{\log \left|\sum_{n\leq x} a(n) \right|}{\log x} \qquad \mbox{or} \qquad \sigma_c = \limsup_{x\rightarrow\infty} \frac{\log \left|\sum_{n> x} a(n) \right|}{\log x},
\end{equation}
according to whether $\sum_{n=1}^{\infty}a(n)$ diverges or converges.\par
By a similar argument, the region of absolute convergence of a Dirichlet series is also a half-plane. For a Dirichlet series, we define the abscissa of absolute convergence as the real number $\sigma_a\in\R\cup\{\pm\infty\}$ such that the Dirichlet series converges absolutely in the half-plane $\sigma>\sigma_a$, but does not converge absolutely in the half-plane $\sigma<\sigma_a$. Abel's summation formula yields that
$$
\sigma_a = \limsup_{x\rightarrow\infty} \frac{\log \sum_{n\leq x} \left| a(n) \right|}{\log x} \qquad \mbox{or} \qquad \sigma_a = \limsup_{x\rightarrow\infty} \frac{\log \sum_{n> x} \left| a(n) \right|}{\log x},
$$
according to whether $\sum_{n=1}^{\infty}\left| a(n) \right|$ diverges or converges.\par
Besides $\sigma_c$ and $\sigma_a$, we define the abscissa of uniform convergence $\sigma_u$ as the infimum of all $\sigma^* \in\R \cup\{\pm\infty\}$ for which the Dirichlet series converges uniformly in the half-plane $\sigma\geq \sigma^*$.\par
The abscissae $\sigma_c$, $\sigma_u$ and $\sigma_a$ of a given Dirichlet series do not necessarily coincide. Trivially, one has
\begin{equation}\label{eq:abs1}
-\infty\leq\sigma_c \leq \sigma_u \leq \sigma_a \leq \infty.
\end{equation}
It can be shown that
\begin{equation}\label{eq:abs2}
\sigma_a - \sigma_c \leq 1
\end{equation}
if at least one of the two abscissae $\sigma_c$ and $\sigma_a$ is finite. In particular, the latter inequality is sharp; equality holds, for example, for Dirichlet $L$-functions with non-principle characters. Moreover, one has
\begin{equation}\label{eq:abs3}
\sigma_a - \sigma_u \leq \frac{1}{2},
\end{equation}
if at least one of the two abscissae $\sigma_u$ and $\sigma_a$ is finite.
According to a result of Bohnenblust \& Hille \cite{bohnenblusthille:1931}, this inequality is also sharp.\par

{\bf The analytic character of Dirichlet series.} We suppose in the following that $A(s)$ is an ordinary Dirichlet series with finite convergence abscissae $\sigma_a$, $\sigma_c$ and $\sigma_u$. As a consequence of the theorem of Weierstrass, the Dirichlet series $A(s)$ defines an analytic function in its half-plane of convergence $\sigma>\sigma_c$. Possibly, this function may be continued meromorphically to a larger half-plane $\sigma > \sigma_0$ with $\sigma_0\leq \sigma_c$. If existent, we denote this meromorphic extension also by $A(s)$.\par 

{\bf Boundedness in the half-plane of uniform convergence.} Bohr \cite{bohr:1913} proved that a Dirichlet series $A(s)$ is bounded in every half-plane $\sigma\geq \sigma^*$ with $\sigma^*>\sigma_u$ and that $A(s)$ is unbounded in every half-plane $\sigma\geq \sigma^*$ with $\sigma^* <\sigma_u$ to which $A(s)$ can be continued meromorphically. \par

In view of Bohr's result, for given $\L\in\Sc^{\#}$, it makes sense to localize the abscissa $\sigma_u$ in order to retrieve information on the boundedness and unboundedness of $\L$.\par

{\bf The abscissae of convergence and absolute convergence for functions in the extended Selberg class.}
The definition of the extended Selberg class implies that $\sigma_a\leq 1$ for every $\L\in\Sc^{\#}$. According to Kaczorowski \& Perelli \cite{kaczorowskiperelli:1999}, the elements in the extended Selberg class of degree $d_{\L}=0$ are given by certain Dirichlet polynomials. Thus, in this case, we have $\sigma_c=\sigma_u=\sigma_a = -\infty$. For all functions $\L\in\Sc^{\#}$ with non-zero degree, we expect that $\sigma_a=1$. However, it seems difficult to prove this in general.\par
Of course, if $\L\in\Sc^{\#}$ has a pole at $s=1$, then $\sigma_c = \sigma_u=\sigma_a = 1$. Perelli \cite{perelli:2007} states that $\sigma_a=1$ holds for all $\L\in\Sc$ if Selberg's orthonormality conjecture (S.6$^{**}$) is true.\par 
Relying on non-linear twists of functions in the extended Selberg class, we are able to deduce lower bounds for $\sigma_a$ and $\sigma_c$ which depend on the degree $d_{\L}$ of $\L\in\Sc^{\#}$. Kaczorowski \& Perelli \cite{kaczorowskiperelli:2002, kaczorowskiperelli:2005, kaczorowskiperelli:2011} introduced linear and non-linear twists of functions $\L\in\Sc^{\#}$ to study the structure of the Selberg class, resp. the extended Selberg class. Using this machinery, they were able to obtain partial results towards the degree conjecture; see Section \ref{sec:classG}. In the sequel, we do not want to go too deep into the theory of non-linear twists. Thus, we state the results of Kaczorowski \& Perelli \cite{kaczorowskiperelli:2005} only in a very weak form which is sufficient for our purpose.\par
Let $\L\in\Sc^{\#}$ with $d_{\L}>0$ and Dirichlet series representation 
\begin{equation}\label{eq:dirichletreprtwists}
\L(s)=\sum_{n=1}^{\infty}\frac{a(n)}{n^s}, \qquad\sigma>1.
\end{equation}
For a parameter $\alpha>0$, the standard non-linear twist of $\L$ is defined by
$$
\L(s,\alpha) = \sum_{n=1}^{\infty} \frac{a(n)}{n^s} \exp\left( -2\pi i \alpha n^{1/d_{\L}}\right), \qquad \sigma>1.
$$ 
It follows from Kaczorowski \& Perelli \cite[Theorem 1 and 2]{kaczorowskiperelli:2005} that, for every parameter $\alpha>0$, the function $\L(s,\alpha)$ can be continued meromorphically to the whole complex plane and that there exists an $\alpha^*>0$ such that $\L(s,\alpha^*)$ has a simple pole at 
$$s=s_0:= \frac{d_{\L}+1}{2d_{\L}} + i\frac{\Im \mu_{\L}}{d_{\L}},$$ 
where $\mu_{\L}:=\sum_{j=1}^f (1-2 \mu_j)$ is defined by the data of the functional equation of $\L$; see Chapter \ref{chapt:classG}. It is essentially the pole of $\L(s,\alpha^*)$ at $s=s_0$ which gives us a lower bound for the abscissa of absolute convergence of the Dirichlet series \eqref{eq:dirichletreprtwists}.
\begin{corollary}\label{cor:twists}
Let $\L\in\Sc^{\#}$ with $d_{\L}>0$. Then, the abscissa of absolute convergence $\sigma_a$ of the Dirichlet series defining $\L$ is bounded by
\begin{equation}\label{eq:abs4}
\frac{1}{2} + \frac{1}{2 d_{\L}} \leq \sigma_a \leq 1.
\end{equation}
\end{corollary}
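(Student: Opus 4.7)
The plan is to establish the two inequalities separately. The upper bound $\sigma_a \leq 1$ is immediate from axiom (S.1) in the definition of the extended Selberg class: by assumption the Dirichlet series of $\L$ converges absolutely in the half-plane $\sigma>1$, so $\sigma_a \leq 1$ by the very definition of the abscissa of absolute convergence.

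For the lower bound $\sigma_a \geq \tfrac{1}{2}+\tfrac{1}{2d_{\L}}$, I will invoke the non-linear twist machinery of Kaczorowski \& Perelli cited in the text. Given $\L\in\Sc^{\#}$ with $d_{\L}>0$ and Dirichlet coefficients $a(n)$, form the standard non-linear twist
$$
\L(s,\alpha) = \sum_{n=1}^{\infty} \frac{a(n)}{n^s} \exp\!\left( -2\pi i \alpha n^{1/d_{\L}}\right),\qquad \sigma>1.
$$
The key input, quoted verbatim from the preceding paragraph, is that there exists $\alpha^*>0$ such that the meromorphic continuation of $\L(s,\alpha^*)$ to $\C$ has a (simple) pole at
$$
s_0 = \frac{d_{\L}+1}{2d_{\L}} + i\,\frac{\Im\mu_{\L}}{d_{\L}},
$$
whose real part equals $\tfrac{1}{2}+\tfrac{1}{2d_{\L}}$.

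Now I exploit the trivial observation $\bigl|\exp(-2\pi i\alpha^* n^{1/d_{\L}})\bigr|=1$, which gives the pointwise identity
$$
\left|\frac{a(n)\exp(-2\pi i \alpha^* n^{1/d_{\L}})}{n^s}\right| = \left|\frac{a(n)}{n^s}\right|
$$
for every $n\in\N$ and every $s\in\C$. Consequently, the two Dirichlet series defining $\L(s)$ and $\L(s,\alpha^*)$ share the same abscissa of absolute convergence. If $\sigma_a$ denotes this common abscissa, then $\L(s,\alpha^*)$ is analytic in the half-plane $\sigma>\sigma_a$ (here I use the analytic character of absolutely convergent Dirichlet series noted at the start of Section \ref{sec:charconvabs}). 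The presence of the pole at $s_0$ forces $\sigma_a \geq \Re s_0 = \tfrac{1}{2}+\tfrac{1}{2d_{\L}}$, which is the desired inequality.

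The only genuinely delicate step is the appeal to the Kaczorowski--Perelli pole result for $\L(s,\alpha^*)$, but the excerpt permits me to quote it as a black box; everything else is a one-line comparison of moduli and a standard fact about abscissae of Dirichlet series.
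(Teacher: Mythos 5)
Your proof is correct and follows essentially the same route as the paper: bound $\sigma_a\le 1$ trivially, use $|e^{-2\pi i\alpha^* n^{1/d_{\L}}}|=1$ to conclude that the twisted Dirichlet series converges absolutely exactly where the original one does, then observe that the pole of $\L(s,\alpha^*)$ at $s_0$ must lie outside the half-plane of absolute convergence. The only place you are slightly terser than the paper is the passage from ``the twisted Dirichlet series converges absolutely in $\sigma>\sigma_a$'' to ``$\L(s,\alpha^*)$ is analytic in $\sigma>\sigma_a$'': since $\L(s,\alpha^*)$ denotes the Kaczorowski--Perelli meromorphic continuation rather than the series itself, one needs the identity principle (as the paper invokes explicitly) to conclude that this continuation agrees with, and hence inherits the analyticity of, the absolutely convergent Dirichlet series in that half-plane.
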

\begin{proof}
Let $\L\in\Sc^{\#}$ with $d_{\L}>0$ and Dirichlet series representation \eqref{eq:dirichletreprtwists}. Let $\sigma_a$ denote the abscissa of absolute convergence of the Dirichlet series defining $\L$. The upper bound $\sigma_a \leq 1$ follows directly from axiom (S.1) in the definition of the extended Selberg class. Suppose that $\sigma_a < \frac{1}{2} + \frac{1}{2 d_{\L}} $. Since 
$$
 \left| \frac{a(n)}{n^{s}} \exp\left( -2\pi i \alpha n^{1/d_{\L}}\right) \right|
= \left| \frac{a(n)}{n^{s}} \right|
$$
for every $n\in\N$ and every $\alpha>0$, the Dirichlet series
$$
\sum_{n=1}^{\infty}   \frac{a(n)}{n^{s}} \exp\left( -2\pi i \alpha n^{1/d_{\L}}\right) 
$$
converges also absolutely in the half-plane $\sigma>\sigma_a$. By the identity principle, we conclude that, for every parameter $\alpha>0$,
$$
\L(s,\alpha) = \sum_{n=1}^{\infty}   \frac{a(n)}{n^{s}} \exp\left( -2\pi i \alpha n^{1/d_{\L}}\right) ,\qquad \sigma>\sigma_a.
$$ 
In particular, $\L(s,\alpha)$ with $\alpha>0$ is analytic in the half-pane $\sigma>\sigma_a$. However, due to \cite[Theorem 1 and 2]{kaczorowskiperelli:2005}, there exists an $\alpha^*>0$ such that $\L(s,\alpha^*)$ has a simple pole at the point $s=s_0: =  \frac{1}{2} + \frac{1}{2 d_{\L}}  + i\frac{\Im \mu_{\L}}{d_{\L}}$ which lies in the half-plane $\sigma>\sigma_a$ according to our assumption on $\sigma_a < \frac{1}{2} + \frac{1}{2 d_{\L}}$. This yields a contradiction and the corollary is proved.
\end{proof}
As a byproduct of Corollary \ref{cor:twists}, we get that there are no functions $\L\in\Sc^{\#}$ of degree $0<d_{\L}<1$.\par

From the simple pole of $\L(s,\alpha^*)$ at the point $s=s_0$, Kaczorowski \& Perelli \cite{kaczorowskiperelli:2005} deduced an $\Omega$-result for truncated sums of the Dirichlet coefficients of $\L$. They showed that, for $\L\in\Sc^{\#}$ with $d_{\L}\geq 1$ and Dirichlet series representation \eqref{eq:dirichletreprtwists},
\begin{equation}\label{eq:omegacoeffsum}
\sum_{n\leq x} a(n) = x \cdot \mbox{res}_{s=1}\ \L(s) + \Omega\left(x^{\frac{d_{\L}-1}{2d_{\L}}} \right).
\end{equation}
By means of \eqref{eq:sigmac}, this yields a lower bound for the abscissa of convergence of $\sum_{n=1}^{\infty} \frac{a(n)}{n^s}$.
\begin{corollary}\label{cor:sigmac}
Let $\L\in\Sc^{\#}$ with $d_{\L}>0$. Then, the abscissa of convergence $\sigma_c$ of the Dirichlet series defining $\L$ is bounded by
$$
\frac{1}{2} - \frac{1}{2 d_{\L}} \leq \sigma_c \leq 1.
$$
\end{corollary}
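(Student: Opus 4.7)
The plan is to deduce the corollary directly from the $\Omega$-result \eqref{eq:omegacoeffsum} and the Cauchy-type formula \eqref{eq:sigmac}, with the borderline case $d_{\L}=1$ handled separately via Corollary \ref{cor:twists} and inequality \eqref{eq:abs2}. The upper bound $\sigma_c \leq 1$ is immediate from axiom (S.1), which gives absolute (and hence ordinary) convergence of the defining Dirichlet series in $\sigma>1$. So the content is in the lower bound.

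First I would note that, by Corollary \ref{cor:twists} and its byproduct, any $\L\in\Sc^{\#}$ with $d_{\L}>0$ actually satisfies $d_{\L}\geq 1$, so the hypothesis $d_{\L}\geq 1$ required by \eqref{eq:omegacoeffsum} is automatic. I would then split into two cases according to whether $\L$ has a pole at $s=1$ or is entire. If $\L$ has a pole at $s=1$ with residue $r:=\operatorname{res}_{s=1}\L(s)\neq 0$, then \eqref{eq:omegacoeffsum} gives
$$
\sum_{n\leq x}a(n) = rx + O\!\left(x^{(d_{\L}-1)/(2d_{\L})}\right) \sim rx, \qquad x\rightarrow\infty.
$$
In particular $\sum a(n)$ diverges, so the first formula in \eqref{eq:sigmac} applies and gives $\sigma_c=1\geq \frac{1}{2}-\frac{1}{2d_{\L}}$, which is even stronger than required.

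If instead $\L$ is entire, then $\operatorname{res}_{s=1}\L(s)=0$ and \eqref{eq:omegacoeffsum} reduces to
$$
\sum_{n\leq x}a(n) = \Omega\!\left(x^{(d_{\L}-1)/(2d_{\L})}\right).
$$
When $d_{\L}>1$, the exponent $\alpha:=(d_{\L}-1)/(2d_{\L})$ is strictly positive, so the partial sums cannot stay bounded; consequently $\sum a(n)$ diverges and the first formula of \eqref{eq:sigmac} is applicable. Picking a sequence $x_k\to\infty$ along which $|\sum_{n\leq x_k}a(n)|\geq C x_k^{\alpha}$ realizes the $\Omega$-bound, one finds
$$
\sigma_c \;=\; \limsup_{x\rightarrow\infty}\frac{\log\bigl|\sum_{n\leq x}a(n)\bigr|}{\log x} \;\geq\; \alpha \;=\; \frac{1}{2}-\frac{1}{2d_{\L}},
$$
which is exactly the claimed lower bound.

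The only remaining case is $d_{\L}=1$ with $\L$ entire, where the target inequality degenerates to $\sigma_c\geq 0$ and the $\Omega$-result above only gives $\Omega(1)$, which is not strong enough on its own. Here I would argue differently: Corollary \ref{cor:twists} yields $\sigma_a\geq \tfrac{1}{2}+\tfrac{1}{2d_{\L}}=1$, and combined with the trivial bound $\sigma_a\leq 1$ from (S.1) this forces $\sigma_a=1$. Inequality \eqref{eq:abs2} then gives $\sigma_c\geq \sigma_a-1=0$, completing the proof. The routine point to double-check will be the passage from the $\Omega$-statement to the $\limsup$ lower bound, but this is essentially tautological from the definition of the $\Omega$-symbol used in the paper; the genuine analytic input (the non-linear twist and its pole) has already been absorbed into \eqref{eq:omegacoeffsum} and Corollary \ref{cor:twists}.
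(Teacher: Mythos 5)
Your proof is correct and follows the same overall strategy as the paper's: use the bound $\sigma_a\leq 1$ from (S.1) for the upper bound, the Kaczorowski--Perelli $\Omega$-estimate \eqref{eq:omegacoeffsum} combined with the Cauchy-type formula \eqref{eq:sigmac} for the lower bound, and split according to whether $\L$ has a pole at $s=1$.

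Where you go beyond the paper is in the boundary case $d_{\L}=1$, $\L$ entire. The paper applies the first formula of \eqref{eq:sigmac} together with \eqref{eq:omegacoeffsum} without distinguishing between $d_{\L}=1$ and $d_{\L}>1$. For $d_{\L}>1$ this is airtight: $\Omega(x^{\alpha})$ with $\alpha>0$ forces the partial sums $\sum_{n\leq x}a(n)$ to be unbounded, hence $\sum a(n)$ diverges and the first (divergent-case) formula of \eqref{eq:sigmac} is applicable. For $d_{\L}=1$, however, \eqref{eq:omegacoeffsum} only gives $\sum_{n\leq x}a(n)=\Omega(1)$, which means $\limsup_x \bigl|\sum_{n\leq x}a(n)\bigr|>0$; this is compatible with $\sum a(n)$ converging to a nonzero limit $L$, in which case the second formula of \eqref{eq:sigmac} is the relevant one and the $\Omega$-estimate says nothing about the rate of decay of the tails $\sum_{n>x}a(n)$. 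So the step as written in the paper does not by itself yield $\sigma_c\geq 0$ in that case. Your alternative argument --- Corollary \ref{cor:twists} forcing $\sigma_a=1$ when $d_{\L}=1$, then $\sigma_c\geq\sigma_a-1=0$ from \eqref{eq:abs2} --- is a clean way to close this. Note it is genuinely tied to $d_{\L}=1$: for $d_{\L}>1$ the bound $\sigma_c\geq\sigma_a-1\geq -\tfrac{1}{2}+\tfrac{1}{2d_{\L}}$ obtained this way is strictly weaker than the target $\tfrac{1}{2}-\tfrac{1}{2d_{\L}}$, so your case split is not only careful but necessary. In the pole case, deriving $\sigma_c=1$ via $\sum_{n\leq x}a(n)\sim rx$ and \eqref{eq:sigmac} is fine; the paper states the same conclusion directly from the pole (the Dirichlet series would represent an analytic function in its half-plane of convergence, so $\sigma_c<1$ is impossible), which is marginally shorter but equivalent.
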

\begin{proof} Let $\L\in\Sc^{\#}$ with $d_{\L}>0$ and Dirichlet series representation $\L(s)=\sum_{n=1}^{\infty}\frac{a(n)}{n^s}$ in $\sigma>1$. The assumption $d_{\L}>0$ implies that $d_{\L}\geq 1$; see Section \ref{sec:classG}. Let $\sigma_c$ denote the abscissa of convergence of the Dirichlet series defining $\L$. The upper bound $\sigma_c\leq 1$ follows directly from axiom (S.1) in the definition of the extended Selberg class. If $\L$ has a pole at $s=1$, then we conclude immediately that $\sigma_c =1$. Thus, we may suppose that $\L$ has no pole at $s=1$. In this case, $ \mbox{res}_{s=1}\ \L(s) = 0$ and we get by combining \eqref{eq:sigmac} with \eqref{eq:omegacoeffsum} that 
$$
\sigma_c = \limsup_{x\rightarrow\infty} \frac{\log \left|\sum_{n\leq x} a(n) \right|}{\log x}  \geq \frac{1}{2} - \frac{1}{2 d_{\L}}.
$$
The assertion is proved.
\end{proof}
By means of \eqref{eq:abs1} and \eqref{eq:abs3}, we deduce from Corollary \ref{cor:sigmac} the following bounds for the abscissa of uniform convergence $\sigma_u$.
\begin{corollary}
Let $\L\in\Sc^{\#}$ with $d_{\L}>0 $. Then, the abscissa of uniform convergence $\sigma_u$ of the Dirichlet series defining $\L$ is bounded by
$$
\max\left\{\sigma_a - \frac{1}{2}, \frac{1}{2}-\frac{1}{2d_{\L}} \right\}\leq \sigma_u \leq 1 .
$$
\end{corollary}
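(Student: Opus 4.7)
The plan is to read off this final corollary as an immediate synthesis of the preceding results, since all the real work has already been done in Corollary~\ref{cor:sigmac} and in the general comparison inequalities \eqref{eq:abs1} and \eqref{eq:abs3}. There will be no serious obstacle; the task is essentially bookkeeping.

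First I would handle the upper bound. By \eqref{eq:abs1} one has $\sigma_u \leq \sigma_a$, and axiom (S.1) in the definition of $\Sc^{\#}$ forces $\sigma_a \leq 1$. Chaining these gives $\sigma_u \leq 1$.

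For the lower bound I would split into the two quantities in the maximum. On the one hand, \eqref{eq:abs3} (the Bohnenblust--Hille-type inequality $\sigma_a - \sigma_u \leq \tfrac{1}{2}$), which applies because $\sigma_a \leq 1$ is finite, immediately rearranges to $\sigma_u \geq \sigma_a - \tfrac{1}{2}$. On the other hand, \eqref{eq:abs1} gives $\sigma_u \geq \sigma_c$, and the preceding Corollary~\ref{cor:sigmac} (deduced from the $\Omega$-result for partial sums of the coefficients of $\L$ via the standard non-linear twist) yields $\sigma_c \geq \tfrac{1}{2} - \tfrac{1}{2d_{\L}}$. Combining these two lower bounds produces
\[
\sigma_u \;\geq\; \max\!\left\{\sigma_a - \tfrac{1}{2},\; \tfrac{1}{2} - \tfrac{1}{2d_{\L}}\right\},
\]
which together with the upper bound established above is exactly the claimed inequality. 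Since every ingredient has already been recorded earlier in the section, the argument is just a one-line concatenation and there is no hard step to single out.
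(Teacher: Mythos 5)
Your argument is correct and matches the paper's own (unwritten but indicated) derivation: the paper introduces the corollary with the sentence ``By means of \eqref{eq:abs1} and \eqref{eq:abs3}, we deduce from Corollary \ref{cor:sigmac} the following bounds,'' which is precisely your concatenation of $\sigma_u \leq \sigma_a \leq 1$, the Bohnenblust--Hille bound $\sigma_u \geq \sigma_a - \tfrac{1}{2}$, and $\sigma_u \geq \sigma_c \geq \tfrac{1}{2}-\tfrac{1}{2d_{\L}}$. You also correctly note the finiteness hypothesis needed for \eqref{eq:abs3}, which is guaranteed here by $\sigma_a \leq 1$ together with $\sigma_a \geq \tfrac{1}{2}+\tfrac{1}{2d_{\L}}$ from Corollary \ref{cor:twists}.
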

We expect that $\sigma_u = 1$ for all $\L\in\Sc^{\#}$. If $\L\in\Sc$ has a polynomial Euler product representation (S.3$^*$) and satisfies the prime mean-square condition (S.6), Steuding \cite[Chapt. 5]{steuding:2007} showed that $\L$ is universal in the sense of Voronin at least inside the strip 
$$
\max\left\{\frac{1}{2},\, 1-\frac{1}{d_{\L}}\right\}<\sigma<1.
$$ 
This implies, in particular, that $\L$ is unbounded on every vertical line inside this strip and, consequently, by Bohr's fundamental observation stated above, that $\sigma_u = 1$.

{\bf Problem.} Is it possible to prove that, for every $\L\in\Sc^{\#}$ with $d_{\L}>0$,
$$
\sigma_u = \sigma_a = 1?
$$ 

\subsection{Almost periodicity and a Phragm\'{e}n-Lindel\"of argument}
According to Bohr \cite{bohr:1913}, we know that $\L\in\Sc^{\#}$ is unbounded in every open half-plane containing the line $\sigma=\sigma_u$, where $\sigma_u$ is the abscissa of uniform convergence of the Dirichlet series defining $\L$. Almost periodicity and a Phragm\'{e}n-Lindel\"of argument allow us to make statements about unboundedness on vertical half-lines to the left of $\sigma_u$.

{\bf Almost periodicity in $\sigma>\sigma_u$.} Bohr \cite{bohr:1922} revealed that every Dirichlet series is almost periodic in its half-plane of uniform convergence. 

\begin{theorem}[Bohr, 1922]\label{th:almostperiod}
Let $A(s)$ be an ordinary Dirichlet series and $\sigma_u$ its abscissa of uniform convergence. Then, for every $\sigma>\sigma_u$ and every $\varepsilon>0$, there exists a positive real number $\ell:=\ell(\sigma,\varepsilon)$ such that every interval $[t_0,t_0+\ell]\subset\R$ of length $\ell$ contains at least one number $\tau$ with the property that
$$
\left|A(\sigma+it) - A(\sigma + i(t+\tau)) \right|< \varepsilon \qquad \mbox{ for all }t\in\mathbb{R}.
$$  
\end{theorem}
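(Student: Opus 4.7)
The plan is a classical three-term splitting: truncate the Dirichlet series to a finite head, control the head by simultaneous Diophantine approximation of the logarithms of integers, and absorb the two tails using uniform convergence. Fix $\sigma>\sigma_u$ and $\varepsilon>0$. First I would choose $\sigma'$ with $\sigma_u<\sigma'<\sigma$; by the very definition of $\sigma_u$, the Dirichlet series
$$
A(s)=\sum_{n=1}^{\infty}\frac{a(n)}{n^s}
$$
converges uniformly on the closed half-plane $\Re s\geq\sigma'$. Hence there exists $N=N(\varepsilon,\sigma')$ such that
$$
\Bigl|\sum_{n>N}\frac{a(n)}{n^s}\Bigr|<\frac{\varepsilon}{3}\qquad\text{whenever }\Re s\geq\sigma'.
$$
For any real $\tau$ and any $t\in\mathbb{R}$, writing $s=\sigma+it$, we have $\Re(s+i\tau)=\sigma\geq\sigma'$, so both tails (at $s$ and at $s+i\tau$) are bounded in absolute value by $\varepsilon/3$.

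Next, on the head we use the identity
$$
\sum_{n=1}^{N}\frac{a(n)}{n^{s}}-\sum_{n=1}^{N}\frac{a(n)}{n^{s+i\tau}}=\sum_{n=1}^{N}\frac{a(n)}{n^{\sigma+it}}\bigl(1-n^{-i\tau}\bigr),
$$
whose modulus is bounded, uniformly in $t$, by
$$
M(\tau):=\sum_{n=1}^{N}\frac{|a(n)|}{n^{\sigma}}\bigl|1-e^{-i\tau\log n}\bigr|.
$$
The crucial gain is that this bound no longer depends on $t$. It therefore suffices to exhibit a relatively dense set of $\tau\in\mathbb{R}$ for which $M(\tau)<\varepsilon/3$. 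Using $|1-e^{i\theta}|\leq|\theta|$, this in turn is implied by a simultaneous approximation
$$
\Bigl\|\frac{\tau\log n}{2\pi}\Bigr\|<\delta\qquad(n=2,3,\dots,N),
$$
where $\|\cdot\|$ denotes distance to the nearest integer and $\delta=\delta(\varepsilon,N,\sigma)$ is chosen so that $2\pi\delta\sum_{n\leq N}|a(n)|n^{-\sigma}<\varepsilon/3$.

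To produce such $\tau$ with relative density I would invoke Kronecker's approximation theorem. Let $p_1<\dots<p_r$ be the primes up to $N$; the numbers $\log p_1,\dots,\log p_r$ are linearly independent over $\mathbb{Q}$ (so $\log p_1/(2\pi),\dots,\log p_r/(2\pi)$ are linearly independent over $\mathbb{Q}$ together with $1$). Kronecker's theorem then guarantees that the one-parameter flow $\tau\mapsto(\tau\log p_1,\dots,\tau\log p_r)$ on the torus $\mathbb{R}^r/(2\pi\mathbb{Z})^r$ has dense, and indeed equidistributed, orbit; in particular the set of $\tau$ with $\|\tau\log p_j/(2\pi)\|<\delta/\log N$ for all $j=1,\dots,r$ is relatively dense, i.e.\ there exists $\ell=\ell(\varepsilon,\sigma)>0$ so that every interval of length $\ell$ contains such a $\tau$. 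For such a $\tau$ and any composite $n=\prod p_j^{e_j}\leq N$, one has $\sum_j e_j\leq\log_2 N\leq\log N$, whence
$$
\Bigl\|\frac{\tau\log n}{2\pi}\Bigr\|\leq\sum_{j}e_j\Bigl\|\frac{\tau\log p_j}{2\pi}\Bigr\|<\delta,
$$
and the desired bound $M(\tau)<\varepsilon/3$ follows. Combining the head and tail estimates via the triangle inequality yields the theorem.

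The main obstacle is the appeal to Kronecker's theorem together with the passage from primes to composites; every other step is a straightforward estimate. An alternative, somewhat softer route would replace Kronecker by Bohr's own theory of almost periodic functions on $\mathbb{R}$, using the fact that the partial sums $\sum_{n\leq N}a(n)n^{-s}$ are exponential polynomials and hence Bohr almost periodic on each vertical line, together with the fact that a uniform limit of almost periodic functions is almost periodic; this would eliminate the explicit Diophantine argument but effectively replaces it with the closure property of the almost periodic functions under uniform convergence.
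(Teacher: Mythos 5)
The paper does not prove this theorem: it states it as Bohr's result and refers to \cite{bohr:1922} (and to Bohr's and Besicovitch's monographs for the general theory). So there is no in-paper proof to compare against; your proposal supplies the classical argument, and it is essentially correct. The head/tail decomposition, the use of uniform convergence on $\Re s\geq\sigma'$ to kill the tails simultaneously at $s$ and $s+i\tau$, the reduction of the head to simultaneous Diophantine approximation of $\tau\log n/(2\pi)$, and the passage from primes to composites via subadditivity of $\|\cdot\|$ are all the right steps.

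A few small remarks. First, the inequality $\sum_j e_j\leq\log_2 N\leq\log N$ is reversed ($\log_2 N=\log N/\log 2>\log N$); this is harmless, since $\delta$ is free, but you should divide by $\log_2 N$ rather than $\log N$. Second, your aside that $1,\log p_1/(2\pi),\dots,\log p_r/(2\pi)$ are $\mathbb{Q}$-linearly independent is both unnecessary and nontrivial (it would need Baker-type transcendence results): the one-parameter (continuous) Kronecker--Weyl theorem for the flow $\tau\mapsto(\tau\log p_1,\dots,\tau\log p_r)\bmod 2\pi$ requires only $\mathbb{Q}$-linear independence of $\log p_1,\dots,\log p_r$, which follows from unique factorization. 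Third, the step from ``the orbit is dense/equidistributed'' to ``the set of $\tau$ returning to a prescribed neighbourhood of $0$ is relatively dense'' is correct but glossed; equidistribution alone gives positive density, not syndeticity. The standard remedy is a short compactness argument: pick a symmetric open $W\ni 0$ with $W+W$ contained in the target box, cover the torus (or the orbit closure) by finitely many translates $\phi(s_1)+W,\dots,\phi(s_m)+W$ using density of the orbit, and set $\ell:=2\max_i|s_i|$; then for any $\tau_0$ one has $\phi(\tau_0)\in\phi(s_i)+W$ for some $i$, whence $\phi(\tau_0-s_i)\in W$ and $\tau_0-s_i\in[\tau_0-\ell/2,\tau_0+\ell/2]$. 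Including this step would make the argument complete. Your concluding observation — that one may instead note each partial sum is a finite exponential sum, hence Bohr almost periodic on vertical lines, and that uniform limits of almost periodic functions are almost periodic — is indeed the softer route Bohr himself emphasized.
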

For the general theory of almost periodic functions, the reader is referred to Bohr \cite{bohr:1924, bohr:1925, bohr:1926} and Besicovitch \cite{besicovitch:1932}.\par

{\bf Unboundedness on vertical half-lines in $\sigma<\sigma_u$.} We shall prove the following lemma.
\begin{lemma}
\label{lem:unbounded}
Let $\L\in\Sc^{\#}$. \begin{itemize}
 \item[(a)] Let $\sigma_u$ denote the abscissa of uniform convergence of the Dirichlet series defining $\L$. Suppose, that $\L$ is unbounded in the half-plane $\sigma> \sigma_u$. Then, for every $t_0>0$, the function $\L$ is unbounded both in the region 
defined by
$$\sigma> \sigma_u, \qquad t\geq t_0,$$
and in the region defined by
$$\sigma> \sigma_u, \qquad t\leq -t_0.$$
\item[(b)] Let $\sigma_0\leq 1$ and $t_0>0$. Suppose that $\L$ is unbounded in the region defined by 
$$
\sigma> \sigma_0,\qquad t\geq t_0.
$$ 
Then, for every $\sigma^* \leq \sigma_0$, the function $\L$ is unbounded on the vertical half-line $L_{\sigma^*}:=\{\sigma^* + it\, :\, t\geq t_0\}$.
\end{itemize}
\end{lemma}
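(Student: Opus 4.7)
I would combine Bohr's boundedness statement (which forces $|\L|$ to be bounded on every half-plane $\sigma \geq \sigma_u + \eta$ with $\eta > 0$) with his almost periodicity theorem (Theorem~\ref{th:almostperiod}). Starting from any sequence $s_n = \sigma_n + it_n$ in $\{\sigma > \sigma_u\}$ with $|\L(s_n)| \to \infty$, the boundedness statement forces $\sigma_n \to \sigma_u^+$, while the imaginary parts $t_n$ are a priori unconstrained and in particular could be bounded. For each individual $n$, I would then apply Theorem~\ref{th:almostperiod} at the specific level $\sigma = \sigma_n > \sigma_u$ with $\varepsilon = 1$, obtaining a finite length $\ell_n := \ell(\sigma_n, 1)$ such that every real interval of length $\ell_n$ contains a $1$-almost-period of $t \mapsto \L(\sigma_n + it)$. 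Selecting such a $\tau_n$ in the interval $[t_0 - t_n,\, t_0 - t_n + \ell_n]$ yields a shifted ordinate $t_n' := t_n + \tau_n \in [t_0,\, t_0 + \ell_n]$ with $|\L(\sigma_n + it_n')| \geq |\L(s_n)| - 1 \to \infty$, proving unboundedness in $\{\sigma > \sigma_u,\, t \geq t_0\}$. The case $t \leq -t_0$ is handled symmetrically by drawing $\tau_n$ from the interval $[-t_0 - \ell_n - t_n,\, -t_0 - t_n]$.

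\textbf{Approach for part (b).} Here the plan is a Phragm\'en--Lindel\"of convexity argument in a semi-infinite strip. Arguing by contradiction, I would assume $|\L| \leq M_1$ on $L_{\sigma^*}$, fix any $\sigma^{**} > \max\{\sigma_0, \sigma_u\}$, and invoke Bohr's boundedness result once more to obtain $|\L(s)| \leq M_2$ for $\sigma \geq \sigma^{**}$. In the closed semi-infinite strip $R := \{\sigma^* \leq \sigma \leq \sigma^{**},\, t \geq t_0\}$ the function $\L$ is analytic (the pole of $\L$ at $s = 1$ being excluded by $t_0 > 0$), is continuous on the horizontal base $\{\sigma^* \leq \sigma \leq \sigma^{**},\, t = t_0\}$ where it inherits a further bound $M_3$, and satisfies $|\L(\sigma + it)| \ll |t|^{c}$ uniformly in $R$ for some $c\geq 0$ because $\L \in \Sc^{\#}$ has finite order. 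These three ingredients are precisely the hypotheses of the Phragm\'en--Lindel\"of principle for a semi-infinite strip (polynomial growth is negligible against the exponential growth that this principle tolerates), which then forces $\L$ to be bounded throughout $R$. Together with the bound $M_2$ on $\{\sigma \geq \sigma^{**},\, t \geq t_0\}$, this yields boundedness of $\L$ on all of $\{\sigma > \sigma_0,\, t \geq t_0\}$, contradicting the hypothesis of part (b).

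\textbf{Expected main obstacle.} The delicate aspect of part (a) is that the almost-period length $\ell_n = \ell(\sigma_n, 1)$ furnished by Theorem~\ref{th:almostperiod} typically blows up as $\sigma_n \to \sigma_u^+$; this is, however, harmless because only the finiteness of each individual $\ell_n$ is needed and the chosen almost period $\tau_n$ is allowed to depend on $n$. For part (b) the main care is in verifying that $\L$ is analytic and of polynomial order throughout the semi-infinite strip $R$: this is precisely where the restriction $t_0 > 0$ is essential, since otherwise the pole of $\L$ at $s = 1$ could lie inside $R$ in the typical case $\sigma_0 < 1 < \sigma^{**}$ and destroy both analyticity and the boundedness on the horizontal base.
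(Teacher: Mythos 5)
Your proposal is correct and follows essentially the same route as the paper: part (a) via Bohr's almost periodicity theorem (which the paper invokes without spelling out the translation-number argument you supply), and part (b) via a Phragm\'en--Lindel\"of bound on a semi-infinite strip whose three boundary components are controlled by the assumed bound on $L_{\sigma^*}$, the absolute convergence/Bohr boundedness on the right (the paper uses $\sigma=2$ where you use $\sigma^{**}$, an immaterial difference), and continuity on the compact horizontal base. Your remark that $t_0>0$ is what keeps the possible pole at $s=1$ outside the strip matches the role it plays in the paper's argument.
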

\begin{proof}
Statement (a) follows directly from the almost periodicity of $\L$ in the half-plane $\sigma>\sigma_u$.\par
To prove statement (b) we shall apply a Phragm\'{e}n-Lindel\"of theorem for half-strips. Let $\L\in\Sc^{\#}$ and let $\sigma_0\leq 1$ and $t_0>0$ such that $\L$ is unbounded in the region defined by $\sigma> \sigma_0$, $t\geq t_0$. Note that this necessarily implies that $\L$ is unbounded in the half-strip
$$
S_1:= \{\sigma+it \, :\, \sigma_0\leq \sigma\leq 2, \, t\geq t_0\}.
$$ 
Suppose that there is a $\sigma^*\leq \sigma_0$ such that $\L$ is bounded on the half-line $L_{\sigma^*}$. By the absolute convergence, we know that $\L\in\Sc^{\#}$ is bounded on the half-line $L_2:=\{2+it\,:\,t\geq t_0\}$. Certainly, $\L$ is also bounded on the horizontal line segment $\{\sigma+it_0\, :\, \sigma^*\leq\sigma\leq 2 \}$. Altogether, we obtain that $\L$ is bounded on the boundary $\partial S_2$ of the half-strip
$$
S_2:= \{\sigma+it \, :\, \sigma^*\leq \sigma\leq 2, \, t\geq t_0\}.
$$
Thus, we can find a constant $M>0$ such that $|\L(s)|\leq M$ for all $s\in\partial S_2$. As $\L$ is analytic and of finite order in $S_2$, it follows from a Phragm\'{e}n-Lindel\"of theorem (see for example Levin \cite[Chapt. I, \S 14]{levin:1964}) that $|\L(s)|\leq M$ for all $s\in S_2$. This is a contradiction to our assumption that the function $\L$ is unbounded in $S_1\subset S_2$.
\end{proof}
In Theorem \ref{lem:unbounded} (a) we demand that $\L\in\Sc^{\#}$ is unbounded in its half-plane of uniform convergence $\sigma>\sigma_u$. We know that there are functions  $\L\in\Sc^{\#}$ which are unbounded in the half-plane $\sigma>\sigma_u$. For example, if $\L$ has a pole at $s=1$, then $\L$ is necessarily unbounded in the half-plane $\sigma>\sigma_u=1$. In general, however, we cannot exclude that there are functions $\L\in\Sc^{\#}$ which are bounded in $\sigma>\sigma_u$. \par

\subsection{Mean-square values in the extended Selberg class}\label{subsec:meansquare}
In the theory of Dirichlet series, mean values on vertical lines play an important role. The following fundamental result goes back to Carlson \cite{carlson:1922}. 
\begin{theorem}[Carlson's theorem, 1922]\label{th:carlson}
Let the function $A(s)$ be defined by a Dirichlet series of the form \eqref{dirichletseries1}. Suppose that, for $\sigma\geq\sigma_0$, the function $A(s)$ is analytic except for finitely many poles, of finite order and satisfies
$$
\limsup_{T\rightarrow\infty} \frac{1}{T} \int_{-T}^T \left|A (\sigma_0+it) \right|^2 \d t <\infty.
$$
Then, for all $\sigma>\sigma_0$,
$$
\lim_{T\rightarrow\infty}\frac{1}{T} \int_{-T}^T \left|A (\sigma+it) \right|^2 \d t = \sum_{n=1}^{\infty} \frac{|a_n|^2}{n^{2\sigma}}.
$$
\end{theorem}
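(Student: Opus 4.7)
The plan is to verify the identity directly for $\sigma$ to the right of the abscissa of absolute convergence $\sigma_a$ of $A$, and then propagate it down to $\sigma>\sigma_0$ by a Hadamard three-lines argument applied to the tail of the Dirichlet series in mean square. As a preliminary reduction I would multiply $A$ by a polynomial $P(s)=\prod_j(s-s_j)^{m_j}$ vanishing at the finitely many poles of $A$ in $\sigma\geq\sigma_0$; both the mean-square hypothesis and the Dirichlet-series asymptotics are preserved up to a tracked modification, so one may assume $A$ is entire and of finite order on $\sigma\geq\sigma_0$.

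For $\sigma>\sigma_a$ the identity is elementary: expanding $|A(\sigma+it)|^2$ as an absolutely convergent double sum $\sum_{m,n}a_m\overline{a_n}(mn)^{-\sigma}(n/m)^{it}$ allows termwise integration; the diagonal $m=n$ contributes $\sum_n |a_n|^2/n^{2\sigma}$, and each off-diagonal term contributes $(a_m\overline{a_n}/(mn)^{\sigma})\cdot 2\sin(T\log(n/m))/(T\log(n/m))=O_{m,n}(T^{-1})$; dominated convergence in the $(m,n)$-sum kills the off-diagonal contribution as $T\to\infty$.

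The crux is the extension to $\sigma_0<\sigma\leq\sigma_a$. Put $A_N(s):=\sum_{n\leq N}a_n/n^s$ and $R_N(s):=A(s)-A_N(s)$. By the first step, on any fixed line $\sigma=\sigma_2>\sigma_a$
\[
\limsup_{T\to\infty}\frac{1}{T}\int_{-T}^{T}|R_N(\sigma_2+it)|^2\,dt=\sum_{n>N}\frac{|a_n|^2}{n^{2\sigma_2}}=:\varepsilon_N,
\]
and $\varepsilon_N$ decays polynomially in $N$ since $|a_n|\ll n^{\sigma_a-1+\eta}$ for every $\eta>0$. On $\sigma=\sigma_0$, the hypothesis on $A$ together with the trivial mean-square estimate for the Dirichlet polynomial $A_N$ yields $\limsup_{T\to\infty}(1/T)\int_{-T}^{T}|R_N(\sigma_0+it)|^2\,dt\leq C_N$ with $C_N$ at most polynomial in $N$. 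Since $R_N$ inherits the finite-order property of $A$, I would invoke a Hadamard three-lines theorem for $L^2$-means in the strip $\sigma_0\leq\sigma\leq\sigma_2$: for $\sigma\in(\sigma_0,\sigma_2)$ and $\theta=(\sigma_2-\sigma)/(\sigma_2-\sigma_0)\in(0,1)$,
\[
\limsup_{T\to\infty}\frac{1}{T}\int_{-T}^{T}|R_N(\sigma+it)|^2\,dt\leq C_N^{\theta}\varepsilon_N^{1-\theta}.
\]
Picking $\sigma_2$ just above $\sigma_a$ fixes $\theta\in(0,1)$ independently of $N$, so the polynomial decay of $\varepsilon_N$ beats the polynomial growth of $C_N^{\theta}$ and the right-hand side tends to $0$ as $N\to\infty$.

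Finally, I would expand $|A|^2=|A_N|^2+2\Re(\overline{A_N}R_N)+|R_N|^2$, integrate, and let $T\to\infty$ followed by $N\to\infty$. The polynomial piece gives $\sum_{n\leq N}|a_n|^2/n^{2\sigma}$, the cross term is dominated via Cauchy–Schwarz by $(\sum_{n\leq N}|a_n|^2/n^{2\sigma})^{1/2}(C_N^{\theta}\varepsilon_N^{1-\theta})^{1/2}$, and the remainder mean-square is bounded by $C_N^{\theta}\varepsilon_N^{1-\theta}$; both latter quantities vanish in $N$, yielding the identity. The main obstacle is the mean-square three-lines inequality itself: establishing convexity of the logarithm of the $L^2$-mean along vertical lines requires a Phragmén–Lindelöf argument for the auxiliary function $R_N(s)^2$ (or a smoothed average thereof) against the finite-order growth bounds, and tracking the polynomial size of $C_N$ finely enough to secure $C_N^{\theta}\varepsilon_N^{1-\theta}\to 0$ for the chosen $\sigma_2$.
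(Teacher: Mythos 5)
The paper does not supply a proof of Carlson's theorem; it simply refers to Carlson's original 1922 article and to Titchmarsh, \emph{Theory of Functions}, \S 9.51, so there is no in-text argument to compare against. On its own terms your plan has two misfires and one decisive gap. The misfires: (i) multiplying $A$ by the polynomial $P(s)=\prod_j(s-s_j)^{m_j}$ does \emph{not} preserve the mean-square hypothesis on $\sigma=\sigma_0$, since $|P(\sigma_0+it)|\asymp|t|^{\deg P}$ makes $T^{-1}\int_{-T}^T|PA|^2\,dt$ grow like $T^{2\deg P}$; the right reduction is to \emph{subtract} the finite sum of principal parts, whose $L^2$-mean on vertical lines vanishes. (ii) The coefficient bound $|a_n|\ll n^{\sigma_a-1+\eta}$ is false in general (take $a_n=1$ at $n=2^k$ and $0$ otherwise: then $\sigma_a=0$ while $|a_{2^k}|=1=(2^k)^{\sigma_a}$); from absolute convergence one only gets $|a_n|\ll_\eta n^{\sigma_a+\eta}$.

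The decisive gap is in the three-lines step. With the correct a priori bound, for $\sigma_2>\sigma_a+\tfrac12+\eta$ one has $\varepsilon_N\ll N^{2\sigma_a-2\sigma_2+1+2\eta}$ and $C_N\ll N^{\max(0,\,2\sigma_a-2\sigma_0+1+2\eta)}$. With $\theta=(\sigma_2-\sigma)/(\sigma_2-\sigma_0)$ the exponent of $N$ in $C_N^{\theta}\varepsilon_N^{1-\theta}$ is therefore
$$
\theta\bigl(2\sigma_a-2\sigma_0+1+2\eta\bigr)+(1-\theta)\bigl(2\sigma_a-2\sigma_2+1+2\eta\bigr)=2\sigma_a+1+2\eta-2\sigma,
$$
which is negative only for $\sigma>\sigma_a+\tfrac12+\eta$ — a range where the conclusion is already trivial — and positive for $\sigma_0<\sigma\le\sigma_a+\tfrac12$, so $C_N^\theta\varepsilon_N^{1-\theta}\not\to 0$ there. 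Bootstrapping (feeding back $|a_n|\ll n^{\sigma_*+\eta}$ from a putative proved range $\sigma>\sigma_*$) just reproduces $2\sigma_*+1-2\sigma$ and never moves the threshold below $\sigma_*+\tfrac12$. The obstruction is that your argument has no control over $\sum_{n\le N}|a_n|^2n^{-2\sigma_0}$ beyond the coefficient bound; but it is precisely this quantity that the convexity inequality must dominate. The classical route (Carlson, Titchmarsh) avoids truncation entirely: one smooths by $e^{-\delta n}$, writes $A_\delta(s)=\sum_na_ne^{-\delta n}n^{-s}=\frac{1}{2\pi i}\int_{(c)}A(s+w)\Gamma(w)\delta^{-w}\,dw$, shifts the contour to $\Re w=\sigma_0-\sigma<0$ to collect the residue $A(s)$ plus a remainder bounded by the $\sigma_0$-mean-square hypothesis and the decay of $\Gamma$, and lets $\delta\to 0$. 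This passes the limit inside the mean-square without needing the Ramanujan-type bound on which your interpolation founders.
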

Carlson's theorem may be interpreted as a special case of Parseval's theorem in the theory of Hilbert spaces. For a proof, we refer to the original paper of Carlson \cite{carlson:1922} or to the textbook of Titchmarsh \cite[\S 9.51]{titchmarsh:1939}.\par
Let $A(s)$ be a Dirichlet series which can be continued meromorphically to the half-plane $\sigma>\sigma_0$ with some $\sigma_0\in\R$ such that $A(s)$ is of finite order in $\sigma>\sigma_0$. In view of Carlson's theorem, it makes sense to define for $A(s)$ the abscissa of bounded mean-square $\sigma_m$ by taking $\sigma_m$ as the infimum of all $\sigma^*\in(\sigma_0,\infty)$ for which
\begin{equation}\label{limsup}
\limsup_{T\rightarrow\infty} \frac{1}{T} \int_{-T}^T \left|A (\sigma^*+it) \right|^2 \d t < \infty.
\end{equation}
In the following, we call the half-plane $\sigma>\sigma_m$ the mean-square half-plane of $A(s)$. According to Titchmarsh \cite[\S 9.52]{titchmarsh:1939}, the abscissae $\sigma_m$ and $\sigma_a$ are related as follows: 
\begin{equation}\label{eq:sigmama}
\sigma_m \geq \max \{\sigma_a - \tfrac{1}{2}, \sigma_0\}.
\end{equation}
Landau \cite[\S 226, Theorem 41]{landau:1953} showed that
$$
\sigma_m \leq \tfrac{1}{2}\left(\sigma_a + \sigma_c\right).
$$
According to Bohr, $A(s)$ is bounded on every vertical line in the half-plane $\sigma>\sigma_u$. This implies that the inequality
$$
\sigma_m \leq \sigma_u
$$ 
holds.\par
Let $\theta_A(\sigma)$ denote the growth order of $A(s)$ as defined in Section \ref{sec:orderofgrowth}. Then, we have $\theta_A(\sigma)\leq \frac{1}{2}$ for all $\sigma>\sigma_m$; see Titchmarsh \cite[\S 9.55]{titchmarsh:1939}.\par

{\bf Mean-square value in the extended Selberg class.} Relying on a result of Potter \cite{potter:1940}, who studied the mean-square value for Dirichlet series satisfying a quite general functional equation, Steuding \cite[Chapt. 6, Corollary 6.11\,]{steuding:2007} deduced that, for every function $\L\in\Sc$ with $d_{\L}>0$,
\begin{equation}\label{eq:absmean}
\sigma_m \leq \max \left\{\frac{1}{2},\, 1-\frac{1}{d_{\L}}\right\}.
\end{equation}
We easily deduce from Potter's result that \eqref{eq:absmean} holds not only for every function in $\Sc$ but also for every function in $\Sc^{\#}_R$. \par

If $\L\in\Sc^{\#}_R$ has degree $d_{\L}=1$, then the inequalities \eqref{eq:abs4}, \eqref{eq:sigmama} and \eqref{eq:absmean} assure that $\sigma_m=\frac{1}{2}$. We expect that $\sigma_m = \frac{1}{2}$ for every function $\L\in\Sc^{\#}_R$. However, to prove this in general seems to be very difficult. If we assume that $\L\in\Sc^{\#}_R$ satisfies the Lindel\"of hypothesis, then a result of Steuding \cite[Chapt. 2.4, set $\sigma_{\L}=\mu_{\L}=0$ in Lemma 2.4]{steuding:2007} implies that $\sigma_m\leq\frac{1}{2}$. If we assume that $\L\in \Sc^{\#}_R$ has abscissa of absolute convergence $\sigma_a = 1$, then we deduce from \eqref{eq:sigmama} that $\sigma_m\geq \frac{1}{2}$. \par

{\bf A sufficient condition for unboundedness on the critical line.} Let $\L\in\mathcal{\Sc}^{\#}$. If
$$
\limsup_{T\rightarrow\infty}\frac{1}{2T} \int_{-T}^T \left|\L(\sigma+it) \right|^2 \d t =\infty,
$$
then it follows immediately that 
$$
\alpha_{\L,\scalebox{0.8}{\mbox{sup}}}(\sigma):=\limsup_{t\rightarrow\infty} \left|\L(\tfrac{1}{2}+it)\right| = \infty \qquad\mbox{or}\qquad
\alpha^{-}_{\L,\scalebox{0.8}{\mbox{sup}}}(\sigma):=\limsup_{t\rightarrow-\infty} \left|\L(\tfrac{1}{2}+it)\right|.
$$
This observation allows us to formulate sufficient conditions for $\L\in\Sc^{\#}$ to be unbounded on certain lines by relying on mean-square value results.
\begin{lemma}\label{lem:unboundmeanvalue}
Let $\L\in\Sc^{\#}$ with Dirichlet series representation $\L(s)=\sum_{n=1}^{\infty} \frac{a(n)}{n^s}$ in $\sigma>1$. Suppose that $\sum_{n=1}^{\infty}\frac{|a(n)|^2}{n}$ is divergent, then
$$
\limsup_{T\rightarrow\infty} \frac{1}{2T} \int_{-T}^{T} \left|\L(\tfrac{1}{2}+it) \right|^2 \d t = \infty;
$$
and, in particular, 
$$
\alpha_{\L,\scalebox{0.8}{\mbox{sup}}}:=
\alpha_{\L,\scalebox{0.8}{\mbox{sup}}}(\tfrac{1}{2})=\infty 
\qquad \mbox{or} \qquad \alpha^{-}_{\L,\scalebox{0.8}{\mbox{sup}}}:=
\alpha^{-}_{\L,\scalebox{0.8}{\mbox{sup}}}(\tfrac{1}{2})=\infty.
$$
\end{lemma}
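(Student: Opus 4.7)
The plan is to argue by contradiction using Carlson's theorem together with a log-convexity step. Set $\phi_T(\sigma):=\frac{1}{2T}\int_{-T}^T|\L(\sigma+it)|^2\,\d t$ and suppose for contradiction that $M:=\limsup_{T\to\infty}\phi_T(\tfrac12)<\infty$. Since $\L\in\Sc^\#$ is meromorphic of finite order on $\sigma\ge\tfrac12$ with at most a simple pole at $s=1$, Carlson's theorem (Theorem \ref{th:carlson}) applies with $\sigma_0=\tfrac12$ and yields, for every $\sigma>\tfrac12$,
$$F(\sigma):=\lim_{T\to\infty}\phi_T(\sigma)=\tfrac{1}{2}\sum_{n=1}^{\infty}\frac{|a(n)|^2}{n^{2\sigma}}.$$

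Next I would fix $\sigma_1>1$, so that by absolute convergence $\phi_T(\sigma_1)\to C_1:=\tfrac12\sum_n|a(n)|^2 n^{-2\sigma_1}<\infty$ as $T\to\infty$. The key claim is the log-convexity of $\sigma\mapsto\phi_T(\sigma)$ on $[\tfrac12,\sigma_1]$ for each fixed $T$, which follows from Hadamard's three-lines theorem applied to the analytic continuation
$$G(s):=\frac{1}{2T}\int_{-T}^T\L(s+it)\,\overline{\L(\overline{s}+it)}\,\d t,$$
which agrees with $\phi_T(\sigma)$ on the real axis. With $\lambda=(\sigma-\tfrac12)/(\sigma_1-\tfrac12)$ this would give
$$\phi_T(\sigma)\le \phi_T(\tfrac12)^{1-\lambda}\,\phi_T(\sigma_1)^{\lambda},\qquad \tfrac12\le\sigma\le\sigma_1.$$
Passing to $\limsup$ in $T$ along a subsequence realising $\limsup_T\phi_T(\tfrac12)=M$ yields $F(\sigma)\le M^{1-\lambda}C_1^{\lambda}$, which remains bounded by $M+o(1)$ as $\sigma\to\tfrac12^+$.

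On the other hand, the positive terms $|a(n)|^2/n^{2\sigma}$ increase monotonically as $\sigma\searrow\tfrac12$, so the monotone convergence theorem gives
$$\lim_{\sigma\to 1/2^+}F(\sigma)=\tfrac{1}{2}\sum_{n=1}^{\infty}\frac{|a(n)|^2}{n}=+\infty$$
by the divergence hypothesis, contradicting the bound $F(\sigma)\le M+o(1)$. Hence $M=\infty$, proving the first assertion. The last statement of the lemma follows immediately: if $|\L(\tfrac12+it)|$ were bounded for both $t\to+\infty$ and $t\to-\infty$, it would be bounded on $|t|\ge t_0$ for some $t_0>0$, keeping $\phi_T(\tfrac12)$ bounded and contradicting what was just proven.

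The main obstacle is to make the log-convexity step fully rigorous. A direct Cauchy–Schwarz estimate bounds $|G(\sigma+i\tau)|$ only by the $L^2$-norm of $\L(\sigma+\cdot)$ over the \emph{shifted} interval $[-T-|\tau|,T+|\tau|]$, so $G$ need not be bounded on the infinite vertical lines $\Re s=\tfrac12$ and $\Re s=\sigma_1$, and Hadamard's three-lines theorem does not apply verbatim. I plan to circumvent this by applying Hadamard's theorem inside the finite rectangle $\{\tfrac12\le\Re s\le\sigma_1,\,|\Im s|\le T_0\}$, where the polynomial growth of $G$ in $|\Im s|$ is controlled by the finite-order hypothesis on $\L$, and then letting $T_0\to 0^+$ after taking $\limsup$ in $T$; alternatively, replacing the hard window $\pmb{1}_{[-T,T]}$ by a smooth analytic weight of unit mass and passing to the limit accomplishes the same end.
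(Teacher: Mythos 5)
Your strategy matches the paper's in broad outline: assume the limsup mean-square at $\sigma=\tfrac12$ is finite, apply Carlson's theorem for $\sigma>\tfrac12$, combine with the divergence of $\sum|a(n)|^2/n$ to force $J(\sigma)\to\infty$ as $\sigma\to\tfrac12^{+}$, and derive a contradiction from a log-convexity estimate for $\sigma\mapsto\frac{1}{2T}\int_{-T}^{T}|\L(\sigma+it)|^2\,\d t$. Where you differ, and where your proposal has a real gap, is that you try to re-derive the convexity step from Hadamard's three-lines theorem applied to
$$
G(s)=\frac{1}{2T}\int_{-T}^{T}\L(s+it)\,\overline{\L(\overline{s}+it)}\,\d t,
$$
and you correctly observe that the hypotheses fail: $G$ need not be bounded on the vertical boundary lines of the strip. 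There is in fact a further obstruction you do not mention: even where three-lines does apply it controls $\sup_{\theta}|G(\sigma+i\theta)|$ on each vertical line, not the single value $G(\sigma)=\phi_T(\sigma)$ at $\theta=0$, so extracting $\phi_T(\sigma)\le\phi_T(\tfrac12)^{1-\lambda}\phi_T(\sigma_1)^{\lambda}$ requires more than the bare three-lines statement. The two workarounds you sketch are not executed, and the finite-rectangle idea with $T_0\to 0^{+}$ collapses the rectangle to a horizontal segment, which destroys precisely the boundary comparison that the three-lines theorem provides.

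The paper avoids all of this by citing a result that is already stated a few paragraphs earlier: Theorem~\ref{th:convexity_meanvalue} (Hardy, Ingham \& Polya, 1927), which says exactly that for a function analytic and of controlled growth in a strip, the mean-square $\frac{1}{2T}\int_{-T}^{T}|f(\sigma+it)|^{p}\,\d t$ interpolates log-convexly between its values on the boundary lines, uniformly in $T$. Applying that theorem to $\L$ on the strip $\tfrac12\le\sigma\le\tfrac34$ (which avoids the possible pole at $s=1$) yields a bound $J(\sigma)\le A\cdot J(\tfrac34)$ independent of $\sigma$, contradicting $\lim_{\sigma\to1/2^{+}}J(\sigma)=\infty$. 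Your final deduction of $\alpha_{\L,\mathrm{sup}}=\infty$ or $\alpha^{-}_{\L,\mathrm{sup}}=\infty$ is the same as the paper's. So the plan is structurally right, but the convexity step must be discharged by invoking Theorem~\ref{th:convexity_meanvalue} rather than by reproving it from scratch; that citation is the ingredient your proposal is missing.
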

The statement follows from Carlson's theorem (Theorem \ref{th:carlson}) and a convexity theorem for the mean-square value which goes back to Hardy, Ingham \& Polya \cite{hardyinghampolya:1927}. 
\begin{theorem}[Hardy, Ingham \& Polya, 1927] \label{th:convexity_meanvalue}
Let the function $f$ be analytic in the strip $\sigma_1<\sigma<\sigma_2$ and such that $|f|$ is continuous on the closure of the strip. Suppose that $f$ satisfies
$$
f(\sigma+it) \ll e^{e^{k|t|}} \qquad \mbox{with } \qquad 0<k<\tfrac{\pi}{\sigma_2 - \sigma_1}
$$
uniformly in $\sigma_1<\sigma<\sigma_2$. If, for arbitrary $p>0$, there are constants $A,B>0$ such that for every $T\geq 0$
$$
\frac{1}{2T} \int_{-T}^{T} \left| f(\sigma_1+it) \right|^{p} \d t \leq A \qquad \mbox{and} \qquad  \frac{1}{2T} \int_{-T}^{T} \left| f(\sigma_2+it) \right|^{p} \d t \leq B ,
$$ 
then
$$
\frac{1}{2T} \int_{-T}^{T} \left| f(\sigma+it) \right|^{p} dt \leq A^{\frac{\sigma_2 - \sigma}{\sigma_2 - \sigma_1}}B^{\frac{\sigma - \sigma_1}{\sigma_2 - \sigma_1}}
$$
for every $\sigma_1\leq \sigma \leq \sigma_2$ and $T\geq 0$.
\end{theorem}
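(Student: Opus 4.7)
The plan is to derive the assertion from Hadamard's three-lines theorem (Phragm\'{e}n-Lindel\"of in a strip) applied to a suitable analytic auxiliary function built from $f$ by $L^{2}$-duality. The double-exponential growth hypothesis $f(\sigma+it)\ll \exp(\exp(k|t|))$ with $k<\pi/(\sigma_{2}-\sigma_{1})$ is precisely the assumption needed to apply the Phragm\'{e}n-Lindel\"of principle in a strip of width $\sigma_{2}-\sigma_{1}$, and will be used decisively.

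First, I would treat the crucial case $p=2$. Fix $T>0$ and, for each test function $\phi\in L^{2}([-T,T])$ with $\|\phi\|_{2}=1$, introduce the auxiliary function
$$G_{\phi}(s):=\int_{-T}^{T} f(s+iu)\,\overline{\phi(u)}\,\d u,$$
which is analytic in the strip $\sigma_{1}\leq\Re s\leq\sigma_{2}$. By the Cauchy-Schwarz inequality and the hypothesis applied on an interval of length $2(|t|+T)$,
$$|G_{\phi}(\sigma_{j}+it)|^{2}\leq \int_{t-T}^{t+T} |f(\sigma_{j}+iv)|^{2}\,\d v\leq 2(|t|+T)\cdot C_{j},\qquad j=1,2,$$
with $C_{1}=A$ and $C_{2}=B$. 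Combined with the double-exponential bound inherited from $f$, the Phragm\'{e}n-Lindel\"of principle in the strip then yields the three-lines-type inequality
$$|G_{\phi}(\sigma+it_{0})|\leq (2(|t_{0}|+T)A)^{(\sigma_{2}-\sigma)/(2(\sigma_{2}-\sigma_{1}))}\,(2(|t_{0}|+T)B)^{(\sigma-\sigma_{1})/(2(\sigma_{2}-\sigma_{1}))}$$
for all $\sigma_{1}\leq\sigma\leq\sigma_{2}$ and $t_{0}\in\R$. Setting $t_{0}=0$, squaring, taking the supremum over $\|\phi\|_{2}=1$ (which by $L^{2}$-duality equals $\int_{-T}^{T}|f(\sigma+iu)|^{2}\,\d u$), and dividing by $2T$, one obtains the claim for $p=2$.

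For general $p>0$, I would extend the argument either by Riesz-Thorin complex interpolation between the boundary lines $\Re s=\sigma_{1}$ and $\Re s=\sigma_{2}$, viewing $s\mapsto f(s+i\cdot)$ as an analytic family of multiplication operators on $L^{p}([-T,T])$, or directly through the subharmonicity of $\log|f|$: the Poisson representation for the strip bounds $|f(\sigma+it)|^{p}$ by boundary averages, and after integration in $t\in[-T,T]$ and an application of Fubini together with the mass-preservation of the Poisson kernel on each vertical line, one obtains the desired log-convex bound.

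The main technical obstacle is the factor $2(|t|+T)$ in the boundary bounds on $G_{\phi}$: they are not uniform in $t$, so the plain three-lines theorem cannot be applied directly, and one must invoke its Phragm\'{e}n-Lindel\"of-type generalisation accommodating polynomially-growing majorants. This is exactly the point at which the restriction $k<\pi/(\sigma_{2}-\sigma_{1})$ on the growth of $f$ is consumed, since the standard Phragm\'{e}n-Lindel\"of principle breaks down at the critical width $k=\pi/(\sigma_{2}-\sigma_{1})$. A minor secondary subtlety, relevant only for $p\neq 2$, is that $f$ may have zeros, so $|f|^{p/2}$ need not be the modulus of a single-valued analytic function; this is circumvented by the subharmonicity argument, which handles $|f|^{p}$ as a whole without ever extracting fractional powers of $f$ itself.
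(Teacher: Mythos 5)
The paper does not prove this theorem; it is quoted as a classical result and the proof is delegated to the cited 1927 paper of Hardy, Ingham and Polya, so there is nothing in the text to compare your write-up against. Evaluated on its own terms, your argument has a genuine gap at the Phragm\'{e}n--Lindel\"of step, and the gap is not just bookkeeping. The inequality you invoke,
$$
|G_{\phi}(\sigma+it_{0})|\;\leq\; \bigl(2(|t_{0}|+T)A\bigr)^{(\sigma_{2}-\sigma)/(2(\sigma_{2}-\sigma_{1}))}\bigl(2(|t_{0}|+T)B\bigr)^{(\sigma-\sigma_{1})/(2(\sigma_{2}-\sigma_{1}))},
$$
is a \emph{pointwise} interpolation in which the boundary majorants are evaluated at the same height $t_0$ as the interior point. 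No version of the three-lines theorem or of Phragm\'{e}n--Lindel\"of produces this: those results bound the supremum over a vertical line by suprema on the boundary lines, and with $t$-dependent majorants they produce (after dividing by a suitable outer function) a bound at $t_0=0$ involving a Poisson average of $\log\sqrt{|t'|+T}$ over all $t'$, which strictly exceeds $\log\sqrt{T}$. In fact the pointwise statement is simply false, even for polynomially growing boundary data and entire functions. Take $G(s)=1+\epsilon(s-s_{*})$ with $s_{*}$ real in the interior of the strip and $\epsilon>0$ small. Then $M_j(t):=|G(\sigma_j+it)|=\sqrt{(1+\epsilon(\sigma_j-s_{*}))^2+\epsilon^2t^2}$ grows only linearly, yet with $L=\sigma_2-\sigma_1$, $\alpha=(\sigma_2-s_{*})/L$, $\beta=(s_{*}-\sigma_1)/L$ one computes
$$
M_1(0)^{\alpha}M_2(0)^{\beta}=(1-\epsilon\beta L)^{\alpha}(1+\epsilon\alpha L)^{\beta}=1-\tfrac{1}{2}\alpha\beta(\epsilon L)^2+O(\epsilon^3)\;<\;1=|G(s_{*})|,
$$
so the pointwise interpolation already fails at the origin. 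The best your duality reduction can yield along these lines is $\frac{1}{2T}\int_{-T}^{T}|f(\sigma+it)|^2\,\d t\leq C\,A^{\alpha}B^{\beta}$ with a constant $C>1$ that does not go away, which is strictly weaker than the theorem.

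Your alternative sketch via subharmonicity of $\log|f|$ and the Poisson kernel of the strip runs into the same obstruction: the kernel places positive mass at all heights $|t'|$, so after integrating $t\in[-T,T]$ you pick up contributions from $|t'|>T$ that again spoil the exact constant. To get the sharp inequality one must argue on a bounded rectangle rather than the infinite strip. The classical route is to first prove, by conformal mapping and an exact harmonic-measure computation, a Gabriel-type rectangle inequality bounding $\int_{-T}^T|f(\sigma+it)|^p\,\d t$ by the $\alpha$- and $\beta$-powers of the $|f|^p$-masses of the left and right halves of the rectangle boundary (vertical sides plus portions of the horizontal sides), and only then to use the hypothesis $k<\pi/(\sigma_2-\sigma_1)$ to damp $f$ by a factor $\exp(-\eps\cos(\lambda(s-c)))$ with $k<\lambda<\pi/(\sigma_2-\sigma_1)$, kill the horizontal contributions along a suitable sequence $T'\to\infty$, and let $\eps\to 0$. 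The sharp constant comes from the rectangle lemma, not from a three-lines argument in the strip.
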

We shall now proceed to prove Lemma \ref{lem:unboundmeanvalue}.

\begin{proof}[Proof of Lemma \ref{lem:unboundmeanvalue}]
For $\sigma\in\R$, we define
$$
J(\sigma):=\limsup_{T\rightarrow\infty} \frac{1}{2T} \int_{-T}^{T} \left|\L(\sigma+it) \right|^2 \d t .
$$
Suppose that there exists a constant $A>0$ such that $J(\frac{1}{2})\leq A$. Then, according to Carlson's theorem, 
$$
J(\sigma)=\sum_{n=1}^{\infty}\frac{|a(n)|^2}{n^{2\sigma}}\qquad\mbox{ for } \sigma>\tfrac{1}{2}.
$$ 
Due to the divergence of $\sum_{n=1}^{\infty} |a(n)|^2 n^{-1}$, we have
\begin{equation}\label{J12}
\lim_{\sigma\rightarrow \frac{1}{2} +}J(\sigma) = \infty.
\end{equation}
Theorem \ref{th:convexity_meanvalue}, however, implies that 
$$
J(\sigma) \leq A \cdot J(\tfrac{3}{4}) \qquad \mbox{ for } \tfrac{1}{2}\leq \sigma \leq \tfrac{3}{4}. 
$$
This yields a contradiction to \eqref{J12}. Hence, $J(\frac{1}{2})=\infty$ and, consequently, $\alpha_{\L,\scalebox{0.8}{\mbox{sup}}}=\infty$ or $\alpha^{-}_{\L,\scalebox{0.8}{\mbox{sup}}}=\infty$.
\end{proof}

\subsection{Summary: The quantities \texorpdfstring{$\alpha_{\L ,inf}$}{} and \texorpdfstring{$\alpha_{\L ,sup}$}{} for \texorpdfstring{$\L\in\Sc^{\#}$}{}} 
\label{subsec:summaryunboundedness}
In the following corollary we gather sufficient conditions which assure that, for a given function $\L\in\Sc^{\#}$, 
$$
\alpha_{\L,\scalebox{0.8}{\mbox{sup}}}:=\limsup_{t\rightarrow\infty} \left|\L(\tfrac{1}{2}+it)\right| = \infty.
$$

\begin{corollary}\label{cor:unbound1}
Let $\L\in\Sc^{\#}$ with $d_{\L}>0$. Suppose that $\L$ satisfies at least one of the following conditions.
\begin{itemize}
\item[(a)] $\L$ is unbounded in some region defined by $\sigma> \sigma_0$, $t\geq t_0$ with some $\sigma_0\geq \frac{1}{2}$ and $t_0>0$.
\item[(b)] $\L$ has a pole at $s=1$.
\item[(c)] $\L$ is universal in the sense of Voronin in some strip $\frac{1}{2}\leq\sigma_1<\sigma<\sigma_2\leq 1$.
\item[(d)] $\L\in\Sc^*$.
\end{itemize}
Then, $\alpha_{\L,\scalebox{0.8}{\mbox{sup}}} = \infty.$
\end{corollary}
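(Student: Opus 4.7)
The plan is to reduce cases (b), (c), and (d) to hypothesis (a) and then to dispatch (a) by a single application of Lemma~\ref{lem:unbounded}~(b). First, in (a) we necessarily have $\sigma_0\leq 1$, since the Dirichlet series $\sum a(n) n^{-s}$ converges absolutely in $\sigma>1$ and so $\L$ is bounded in every half-plane $\sigma\geq 1+\eps$. With $\tfrac{1}{2}\leq\sigma_0\leq 1$ and $\sigma^*=\tfrac{1}{2}\leq\sigma_0$, Lemma~\ref{lem:unbounded}~(b) delivers the unboundedness of $\L$ on the critical half-line $L_{1/2}=\{\tfrac{1}{2}+it\,:\,t\geq t_0\}$, i.e.\ $\alpha_{\L,\scalebox{0.8}{\mbox{sup}}}=\infty$.

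To reduce (b) to (a), I would observe that a pole at $s=1$ forces $\L$ to be unbounded in every neighbourhood of $s=1$, and hence in the open half-plane $\sigma>1$ (any such neighbourhood meets $\{\sigma>1\}$). A pole at $s=1$ also rules out absolute convergence of the Dirichlet series at $s=1$, so $\sigma_a=1$; and Bohr's boundedness criterion rules out $\sigma_u<1$ (else $\L$ would be bounded in some half-plane $\sigma\geq\sigma^*<1$ that contains the pole), forcing $\sigma_u=1$. Lemma~\ref{lem:unbounded}~(a) then propagates the unboundedness in $\sigma>1=\sigma_u$ to the region $\sigma>1$, $t\geq t_0$, for any $t_0>0$. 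This is precisely hypothesis (a) with $\sigma_0=1$.

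To reduce (c) to (a), I would use that Voronin-type universality of $\L$ in the strip $\sigma_1<\sigma<\sigma_2$ permits the approximation of an arbitrarily large constant target $g\equiv M$ uniformly on a small closed disc $K\subset\{\sigma_1<\sigma<\sigma_2\}$; since the set of admissible shifts $\tau\geq 0$ has positive lower density, such shifts occur with $\tau$ arbitrarily large, whence $\L$ is unbounded in the region $\sigma>\sigma_1$, $t\geq t_0$, for every $t_0>0$. This is hypothesis (a) with $\sigma_0=\sigma_1\geq\tfrac{1}{2}$. Case (d) is the most direct: applying Theorem~\ref{th:measselberglimitlaw}~(c) with, e.g., $m(t)=\log\log t$ (which satisfies $m(t)\to\infty$ and $m(t)\leq g_{\eps}(t)$ for any $\eps>0$) yields $\nu_{T}(W_{m(t),\infty})\to\tfrac{1}{2}$, so there exists a sequence $t_n\to\infty$ along which $|\L(\tfrac{1}{2}+it_n)|\geq m(t_n)\to\infty$; hence $\alpha_{\L,\scalebox{0.8}{\mbox{sup}}}=\infty$ follows immediately, without passing through (a).

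The main obstacle I anticipate is case (b): Lemma~\ref{lem:unbounded}~(a) requires unboundedness of $\L$ in the \emph{open} half-plane $\sigma>\sigma_u$, whereas the pole at $s=1$ lies on the boundary $\sigma=\sigma_u=1$. The resolution is the elementary but crucial observation that every neighbourhood of $s=1$ has non-empty intersection with the open set $\{\sigma>1\}$, so that unboundedness at the pole transfers at once to unboundedness in the open half-plane; combined with the identification $\sigma_u=1$ via Bohr's criterion, the almost-periodicity of $\L$ in $\sigma>\sigma_u$ then does the rest.
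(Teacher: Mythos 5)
Your proposal is correct and follows essentially the same route as the paper: reduce (b) and (c) to hypothesis (a), settle (a) by Lemma~\ref{lem:unbounded}~(b), and dispatch (d) via Selberg's central limit law. You are slightly more explicit than the paper on a couple of points that are worth keeping in mind — namely, that $\sigma_0\leq 1$ automatically in (a), and that a pole on the boundary line $\sigma=1$ indeed gives unboundedness in the open half-plane $\sigma>1$ — and in case (d) you cite Theorem~\ref{th:measselberglimitlaw}~(c) directly rather than the paper's pointer to Theorem~\ref{th:largesmall}~(b), but both are equivalent applications of the same central limit law.
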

\begin{proof}
If $\L\in\mathcal{S}^{\#}$ satisfies property (a), then the statement of the corollary follows from Lemma \ref{lem:unbounded} (b). If $\L\in\mathcal{S}^{\#}$ has a pole at $s=1$, then $\L$ is unbounded in the half-plane $\sigma>\sigma_u=1$. By Lemma \ref{lem:unbounded} (a), $\L$ is unbounded in the region defined by $\sigma>1$, $t\geq 1$. Thus, property (b) is a special case of property (a). Similarly, property (c) is also a special case of (a): a Voronin-type universality property for a given function $\L\in\Sc^{\#}$ implies that $\L$ is unbounded in the region defined by $\sigma>\frac{1}{2}$, $t\geq 1$. If $\L$ satisfies property (d), the statement follows from Selberg's central limit law; see for example Theorem \ref{th:largesmall} (b).
\end{proof}

In the following corollary we gather sufficient conditions which assure that, for a given function $\L\in\Sc^{\#}$, 
$$
\alpha_{\L,\scalebox{0.8}{\mbox{sup}}}:=\limsup_{t\rightarrow\infty} \left|\L(\tfrac{1}{2}+it)\right| = \infty \qquad\mbox{or}\qquad
\alpha^{-}_{\L,\scalebox{0.8}{\mbox{sup}}}:=\limsup_{t\rightarrow-\infty} \left|\L(\tfrac{1}{2}+it)\right|=\infty.
$$
\begin{corollary}
Let $\L\in\Sc^{\#}$ with $d_{\L}>0$. Suppose that $\L$ satisfies at least one of the following conditions.
\begin{itemize}
\item[(a)] The series $\sum_{n=1}^{\infty}\frac{|a(n)|^2}{n}$ is divergent. 
\item[(b)] $\L$ satisfies Selberg's prime coefficient condition (S.6$^*$).
\end{itemize}
Then, $\alpha_{\L,\scalebox{0.8}{\mbox{sup}}}=\infty$ or $\alpha^{-}_{\L,\scalebox{0.8}{\mbox{sup}}}=\infty$.
\end{corollary}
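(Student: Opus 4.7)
The plan is to observe that both hypotheses (a) and (b) ultimately produce the divergence of $\sum_{n=1}^{\infty}|a(n)|^2/n$, and then to invoke Lemma~\ref{lem:unboundmeanvalue} directly. Under hypothesis (a) this divergence is assumed by definition, so the lemma immediately yields
$$\limsup_{T\to\infty}\frac{1}{2T}\int_{-T}^{T}\left|\L(\tfrac{1}{2}+it)\right|^2\d t = \infty,$$
from which $\alpha_{\L,\scalebox{0.8}{\mbox{sup}}}=\infty$ or $\alpha^{-}_{\L,\scalebox{0.8}{\mbox{sup}}}=\infty$ follows. Nothing else is needed in this case.

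Under hypothesis (b), I would deduce (a) as follows. Selberg's prime coefficient condition (S.6$^*$), as quoted in Section~\ref{sec:selbergclass}, guarantees the existence of a positive integer $n_{\L}\geq 1$ such that
$$\sum_{\substack{p\in\mathbb{P}\\ p\leq x}}\frac{|a(p)|^2}{p} = n_{\L}\log\log x + O(1),\qquad\text{as } x\to\infty.$$
Since $n_{\L}\geq 1$, the right-hand side tends to $+\infty$ with $x$, and the prime-indexed subseries diverges. Because the terms $|a(p)|^2/p$ are non-negative and form a subsum of the full series $\sum_{n=1}^{\infty}|a(n)|^2/n$, the latter series diverges as well. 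We are thus reduced to case (a), and Lemma~\ref{lem:unboundmeanvalue} completes the argument.

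There is no genuine obstacle here: all the analytic content has been packaged into Lemma~\ref{lem:unboundmeanvalue} (whose proof relies on Carlson's theorem together with the Hardy--Ingham--Polya convexity of mean squares, Theorem~\ref{th:convexity_meanvalue}), and the role of (S.6$^*$) in this corollary is only to supply a quantitative lower bound on the prime contribution to $\sum_{n}|a(n)|^2/n$. The integrality, or at least the positivity, of $n_{\L}$ in (S.6$^*$) is essential; without it the $O(1)$ error term could absorb the main term and the divergence argument would collapse. Observe finally that the disjunction $\alpha_{\L,\scalebox{0.8}{\mbox{sup}}}=\infty$ \emph{or} $\alpha^{-}_{\L,\scalebox{0.8}{\mbox{sup}}}=\infty$ cannot be sharpened to a conjunction by this method alone, since mean-square unboundedness on $\sigma=\tfrac12$ only forces $|\L(\tfrac12+it)|$ to be unbounded on at least one of the two half-lines $t\geq t_0$ or $t\leq -t_0$.
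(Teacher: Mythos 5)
Your proof is correct and follows exactly the same route as the paper: case (a) is an immediate application of Lemma~\ref{lem:unboundmeanvalue}, and case (b) reduces to (a) because (S.6$^*$) forces $\sum_p |a(p)|^2/p$, and hence the non-negative series $\sum_n |a(n)|^2/n$, to diverge. The paper simply states that this reduction is ``immediately clear,'' whereas you spell out the details; the content is identical.
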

\begin{proof}
If $\L\in\Sc^{\#}$ satisfies property (a), then the statement of the theorem follows directly from Lemma \ref{lem:unboundmeanvalue}. If $\L\in\Sc^{\#}$ satisfies property (b), then it is immediately clear that $\L$ also satisfies (a).
\end{proof}

To prove that, for given $\L\in\Sc^{\#}$,
$$
\alpha_{\L,\scalebox{0.8}{\mbox{inf}}} :=\limsup_{t\rightarrow\infty} \left|\L(\tfrac{1}{2}+it)\right| = 0,
$$
seems to be even harder than to show that $\alpha_{\L,\scalebox{0.8}{\mbox{sup}}} = \infty$. In the subsequent corollary we gather some more or less trivial conditions which assure that $\alpha_{\L,\scalebox{0.8}{\mbox{inf}}} = 0$.
\begin{corollary}
Let $\L\in\Sc^{\#}$ with $d_{\L}>0$. Suppose that $\L$ satisfies at least one of the 
following conditions.
\begin{itemize}
\item[(a)] There are infinitely many zeros of $\L$ with positive imaginary parts located on the critical line. 
\item[(b)] $\L\in\Sc^*$.
\end{itemize}
Then, $\alpha_{\L,\scalebox{0.8}{\mbox{inf}}} =0$.
\end{corollary}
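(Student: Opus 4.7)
The statement is that $\alpha_{\L,\scalebox{0.8}{\mbox{inf}}}=0$, so it suffices in each case to exhibit a sequence $(t_n)_n$ of positive real numbers with $t_n\to\infty$ along which $|\L(\tfrac{1}{2}+it_n)|\to 0$.

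First I would handle case (a). Since $\L\in\Sc^{\#}$ is meromorphic in $\C$ with at most a single pole at $s=1$ (by (S.4)), its zero set is discrete. In particular, for every bounded interval $I\subset \R$ there are only finitely many zeros of $\L$ on the line segment $\{\tfrac{1}{2}+it\,:\,t\in I\}$. Hence, if there are infinitely many zeros of $\L$ with positive imaginary parts lying on the critical line, their imaginary parts form an unbounded sequence $(\gamma_n)_n$ with $\gamma_n\to\infty$. By the defining property of a zero, $\L(\tfrac{1}{2}+i\gamma_n)=0$ for every $n\in\N$, so
$$\alpha_{\L,\scalebox{0.8}{\mbox{inf}}}=\liminf_{\tau\to\infty}\bigl|\L(\tfrac{1}{2}+i\tau)\bigr|\leq \lim_{n\to\infty}\bigl|\L(\tfrac{1}{2}+i\gamma_n)\bigr|=0,$$
and since the liminf is always non-negative, equality holds.

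Next I would handle case (b). Here I would simply invoke Selberg's central limit law, which is available since $\L\in\Sc^*$. The most direct route is via Theorem \ref{th:measselberglimitlaw}(b): for any fixed $m>1$,
$$\nu_T\bigl(W_{-\infty,1/m}\bigr)=\tfrac{1}{2}+O(E(T))\qquad\text{as }T\to\infty,$$
where $E(T)\to 0$. In particular, for every integer $m\geq 2$ there exists $T_m\geq 2$ such that the measurable set
$$W_{-\infty,1/m}\cap (T_m,2T_m]=\bigl\{t\in(T_m,2T_m]\,:\,|\L(\tfrac{1}{2}+it)|\leq \tfrac{1}{m}\bigr\}$$
has positive Lebesgue measure and is, in particular, non-empty. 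Choosing $T_m$ so large that $T_m>m$ and picking some $t_m$ from this set yields a sequence with $t_m\to\infty$ and $|\L(\tfrac{1}{2}+it_m)|\leq 1/m\to 0$, hence $\alpha_{\L,\scalebox{0.8}{\mbox{inf}}}=0$.

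Both cases are short and I do not anticipate any real obstacle; the content of the result is entirely absorbed by the results already proved in the excerpt. If one wished to avoid invoking the measure-theoretic version of Selberg's central limit law in case (b), an alternative for that case would be to quote Theorem \ref{th:largesmall}(b) directly: any sequence $(\tau_k)_k$ drawn from the set $\mathcal{W}_0$ constructed there satisfies $\L_{\tau_k}(0)=\L(\tfrac{1}{2}+i\tau_k)\to 0$, which again gives $\alpha_{\L,\scalebox{0.8}{\mbox{inf}}}=0$.
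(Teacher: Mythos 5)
Your proof is correct and takes essentially the same route as the paper: case (a) is handled by the trivial observation that the zeros on the critical line have imaginary parts tending to infinity, and case (b) invokes Selberg's central limit law (the paper cites Theorem \ref{th:largesmall}, which you also mention as an alternative to Theorem \ref{th:measselberglimitlaw}). The only difference is that you spell out details the paper treats as obvious.
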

\begin{proof}
If $\L\in\Sc^{\#}$ satisfies property (a), then it is trivially clear that $\alpha_{\L,\scalebox{0.8}{\mbox{inf}}} =0$. If $\L\in\Sc^{\#}$ satisfies property (b), then the statement follows from Selberg's central limit law; see Theorem \ref{th:largesmall}.
\end{proof}

To close this section, we give some specific examples of functions in $\mathcal{S}$ for which we know that \eqref{eq:alphainfsup} holds. According to Hardy \cite{hardy:1914}, the Riemann zeta-function has infinitely many zeros on the critical line with positive imaginary part. Moreover, the zeta-function is unbounded on the critical line as follows for example from the mean-value result of Hardy \& Littlewood \cite{hardylittlewood:1936}:
$$
\frac{1}{T}\int_{1}^{T} \left|\zeta(\tfrac{1}{2}+it) \right|^2 dt \sim \log T , \qquad \mbox{as }T\rightarrow\infty. 
$$
Thus, the Riemann zeta-function satisfies \eqref{eq:alphainfsup}.\par

We can partially transfer this reasoning to the Selberg class. Although the Grand Riemann hypothesis asserts that every functions $\L\in\Sc$ has all its non-trivial zeros on the critical line, very few can be verified about zeros located on $\sigma=\frac{1}{2}$. There are partial results only for some $\L\in\Sc$ of small degree, say $d_{\L}\leq 2$. Besides the Riemann zeta-function, it is known for Dirichlet $L$-functions with primitive character that a positive proportion of their non-trivial zeros lie on the critical line; see Zuravlev \cite{zuravlev:1978}. Moreover, Chandrasekharan \& Narasimhan \cite{chandrasekharannarasimhan:1968}, resp. Berndt \cite{berndt:1969}, proved that infinitely many non-trivial zeros of a Dedekind zeta-function $\zeta_K (s)$ associated to a quadratic field $K$ lie on the critical line. Recently, Mukhopadhyay, Srinivas \& Rajkumar \cite{mukhopadhyay:2008} showed that all functions in the Selberg class of degree $d_{\L} \leq 2$ satisfying some rather general conditions\footnote{For $\L\in\Sc$ with $d_{\L}=2$, the functional equation has to be such that the quantity $(2\pi)^{d_{\L}/2} Q \lambda^{1/2}$ is irrational and such that the quantity $\mu_{\L}:=\mu_{p_{\L}}$ is real. Moreover, the Dirichlet coefficients of $\L$ have to satisfy $\sum_{n\leq x} |a(n)|^2 = O(x)$. } have infinitely many zeros on the critical line. Mukhopadhyay et al. rely on a method due to Landau,\footnote{For a description of the method, see Titchmarsh \cite[\S 10.5]{titchmarsh:1986}.} which is based on the different asymptotic behaviour of the integrals
$$
\int_T^{2T} Z_{\L}(t)\ \d t \qquad \mbox{ and } \int_T^{2T} |Z_{\L}(t)|\ \d t,\qquad \mbox{as } T\rightarrow\infty,
$$
where $Z_{\L}(t)$ is the analogue of Hardy's $Z$-function for $\L\in\Sc$. For all these $\L$-functions mentioned above, the respective results on their zeros assure that $\alpha_{\L,\scalebox{0.8}{\mbox{inf}}}=0$. \par 
Both Dirichlet $L$-functions with primitive character and Dedekind zeta-functions of quadratic fields have a sufficiently `nice' behaving Euler product such that a Voronin-type universality theorem can be verified for them in the strip $\frac{1}{2}<\sigma<1$ (see Bagchi \cite{bagchi:1982} and Reich \cite{reich:1980, reich:1982}, respectively). By Corollary \ref{cor:unbound1}, this implies that they satisfy $\alpha_{\L,\scalebox{0.8}{\mbox{sup}}}=\infty$.

\chapter{a-point-distribution near the critical line} \label{chapt:apoints}
Let $\L\in\Sc^{\#}$. For given $a\in\C$, we refer to the roots of the equation $\L(s)=a$ as $a$-points of $\L$ and denote them by $\rho_a = \beta_a + i\gamma_a$. In view of the Riemann hypothesis, the case $a=0$ is of special interest. Nevertheless, it is reasonable to study the distribution of the $a$-points for general $a\in\C$. For the Riemann zeta-function, the distribution of $a$-points was studied, amongst others, by Bohr, Landau \& Littlewood \cite{BohrLandauLittlewood:1913}, Bohr \& Jessen \cite{bohrjessen:1932}, Levinson \cite{levinson:1974, levinson:1975}, Levinson \& Montgomery \cite{levinsonmontgomery:1974} and Tsang \cite{tsang:1984}.  Selberg \cite{selberg:1992} discussed the distribution of $a$-points in the Selberg class. Steuding \cite[Chapt. 7]{steuding:2003, steuding:2007} investigated to which extent these methods, in particular the ones initiated by Levinson \cite{levinson:1975}, can be transferred to functions of the extended Selberg class. Their methods rely essentially on a lemma of Littlewood which may be interpreted as an integrated version of the principle of argument or as an analogue of Jensen's formula for rectangular domains.
\begin{lemma}[Lemma of Littlewood, 1924]
Let $b<c$ and $T>0$. Let $f$ be an analytic function on the rectangular region
$$
\mathcal{R}:= \left\{s=\sigma+it\in\C \, : \, b\leq\sigma \leq c, \, T\leq t \leq 2T\right\}.
$$
and denote the zeros of $f$ in $\mathcal{R}$ by $\rho=\beta+i\gamma$. Suppose that $f$ does not vanish on the right edge $\sigma = c$ of $\mathcal{R}$. Let $\mathcal{R}'$ be $\mathcal{R}$ minus the union of the horizontal cuts from the zeros of $f$ in $\mathcal{R}$ to the left edge of $\mathcal{R}$, and choose an analytic branch of $\log f(s)$ in the interior of $\mathcal{R}'$. Then,
$$
- \frac{1}{2\pi i} \int_{\partial \mathcal{R}} \log f(s) \d s = \sum_{\begin{subarray}{c} b<\beta<c \\ T<\gamma\leq 2T \end{subarray}} (\beta-b),
$$
where the integral on the lefthand-side is taken over the counterclockwise orientated rectangular contour $\partial\mathcal{R}$.
\end{lemma}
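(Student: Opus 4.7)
The plan is to apply Cauchy's theorem to an analytic branch of $\log f$ on the simply connected cut region $\mathcal{R}'$, and to extract the sum $\sum(\beta-b)$ from the boundary values that $\log f$ takes on the two sides of each horizontal slit.

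Concretely, I would first orient $\partial \mathcal{R}'$ so that $\mathcal{R}'$ lies on the left of the path. This boundary decomposes into the outer rectangular contour $\partial \mathcal{R}$, traversed counterclockwise, plus, for each zero $\rho=\beta+i\gamma$ of $f$ in the interior of $\mathcal{R}$, a detour along the horizontal slit at height $\gamma$: travel along the upper side of the cut from $b+i\gamma$ rightward to $\beta+i\gamma$, go around $\rho$ on a small semicircle of radius $\varepsilon>0$, and return along the lower side from $\beta+i\gamma$ leftward to $b+i\gamma$. Since the chosen branch of $\log f$ is analytic on the interior of $\mathcal{R}'$ and continuous on the boundary away from the zeros themselves (where it has merely an integrable logarithmic singularity), Cauchy's theorem yields $\int_{\partial\mathcal{R}'}\log f(s)\,ds=0$.

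Next I would compute the contribution of each slit. Near a zero $\rho$ of multiplicity $m_\rho$ one has $\log f(s)=m_\rho\log(s-\rho)+h_\rho(s)$ with $h_\rho$ analytic, so crossing the cut upward increases $\arg f$ by $2\pi m_\rho$; hence the boundary values satisfy $\log^{+}f-\log^{-}f=2\pi i\,m_\rho$ along the cut. The semicircular loop around $\rho$ contributes $O(\varepsilon\log\varepsilon)\to 0$, while the two straight portions combine to give
$$\int_b^{\beta}\bigl(\log^{+}f(\sigma+i\gamma)-\log^{-}f(\sigma+i\gamma)\bigr)\,d\sigma \;=\; 2\pi i\,m_\rho\,(\beta-b).$$
Summing over all interior zeros and invoking $\int_{\partial\mathcal{R}'}\log f\,ds=0$ gives
$$0 \;=\; \int_{\partial\mathcal{R}}\log f(s)\,ds \;+\; \sum_{\rho} 2\pi i\,m_\rho\,(\beta-b),$$
and division by $-2\pi i$ yields the claimed identity (with zeros counted with multiplicity).

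The main obstacle is essentially bookkeeping rather than analysis: one must fix the orientation of the detours consistently so that the jump $\log^{+}f-\log^{-}f=+2\pi i\,m_\rho$ produces the correct overall sign, since a reversed convention flips it. A minor subtlety arises when several zeros share the same imaginary part $\gamma$, so that cuts overlap; one handles this by observing that a segment of the slit lying to the left of $k$ zeros with real parts $\beta_1<\dots<\beta_k$ carries the cumulative argument jump $2\pi(m_1+\dots+m_k)$, and telescoping these contributions reproduces $\sum_{\rho} m_\rho(\beta-b)$ as before. The hypothesis that $f$ does not vanish on the right edge $\sigma=c$ guarantees that every cut terminates strictly inside $\mathcal{R}$ and that $\log f$ is well defined on a neighbourhood of that edge, so the whole construction is unambiguous.
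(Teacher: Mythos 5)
The paper does not give its own proof of this lemma; it simply refers the reader to Littlewood's original paper and to Titchmarsh~\cite[\S 9.9]{titchmarsh:1986}. Your argument is exactly the standard contour-integration proof found in those references — applying Cauchy's theorem to $\log f$ on the slit domain $\mathcal{R}'$ and extracting the contribution $2\pi i\,m_\rho(\beta-b)$ from the jump in $\log f$ across each horizontal cut — and the reasoning, including the orientation bookkeeping, the vanishing of the small semicircular detours, and the telescoping treatment of collinear zeros, is correct.
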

With slight deviations, we took the formulation of Littlewood's lemma from Steuding \cite[Lemma 7.2]{steuding:2007}. For a proof, the reader is referred to the original paper of Littlewood \cite{littlewood:1924-2} or to Titchmarsh \cite[\S 9.9]{titchmarsh:1986}.\par

In Section \ref{sec:apointsgeneral} we summarize some general results on the $a$-point distribution of functions in the extended Selberg class.\par

In section \ref{apointslittlewood} we study in detail the $a$-point distribution of $\L\in\Sc^{\#}$ near the critical line. Levinson \cite{levinson:1975} (and conditionally under the Riemann hypothesis also Landau \cite{BohrLandauLittlewood:1913}) revealed an interesting feature of the Riemann zeta-function in its $a$-point distribution: almost all $a$-points of the Riemann zeta-function are located arbitrarily close to the critical line. Levinson's method builds essentially on the lemma of Littlewood and can be used to detected similar properties in the $a$-point distribution of many functions from the extended Selberg class; see Steuding \cite[Chapt. 7]{steuding:2003, steuding:2007}. By using a result of Selberg, we shall refine the statement of Levinson's theorem for functions in $\Sc^*$

In Section \ref{sec:apointsnormality} we use the notation of filling discs and certain arguments of the theory of normal families to describe the clustering of $a$-points near the critical line. As far as the author knows, the concept of filling discs was not yet used to study the value-distribution of $\L$-functions and yields some new insights in their analytic behaviour near the critical line. In fact, we shall see that the existence of filling discs for $\L\in\Sc^{\#}$ near the critical line is strongly connected to the non-convergence of the limiting process introduced in Section \ref{sec:shiftingshrinking}.\par

\section{General results on the a-point-distribution in the extended Selberg class}
\label{sec:apointsgeneral}

{\bf Trivial $a$-points and half-planes free of non-trivial $a$-points.} Let $\L\in\Sc^{\#}$. Suppose that $\L$ has positive degree and Dirichlet series expansion
\begin{equation}\label{di}
\L(s)=\sum_{n=1}^{\infty} \frac{a(n)}{n^s}, \qquad \sigma>1,
\end{equation}
with leading coefficient $a(1)=1$. By the definition of the extended Selberg class, the Dirichlet series \eqref{di} converges absolutely in $\sigma>1$. If $\L\in\Sc$, the normalization $a(1)=1$ holds trivially due to the Euler product representation. Let $q>1$ denote the least integer such that the coefficient $a(q)$ of the Dirichlet expansion of $\L$ is not equal to zero. Then, due to the absolute convergence of \eqref{di} in $\sigma>1$, we obtain that
\begin{equation}\label{eq:expansionL}
\L(\sigma+it) = 1 +\frac{a(q)}{q^{\sigma+it}} + O\left(\frac{1}{(q+1)^{\sigma}}\right), \qquad \mbox{as }\sigma\rightarrow\infty.
\end{equation}
From this, we derive that, for every $a\in\C$, there exists a real number $R_a\geq 1$ such that $\L$ is free of $a$-points in the half-plane $\sigma>R_a$.\par 

Besides the right half-plane $\sigma> R_a$, which is free of $a$-points of $\L$, there is also a left half-plane which contains not too many $a$-points of $\L$. In the particular case of the Riemann zeta-function this observation is due to Landau \cite{BohrLandauLittlewood:1913} and in the general setting of the extended Selberg class due to Steuding \cite[Chapt. VII]{steuding:2007}. Their results rely basically on the functional equation and the principle of argument: let $M$ be the set of all $\sigma^*>1$ for which we find a constant $m(\sigma^*)>0$ such that 
\begin{equation}\label{bo}
\left|\L(\sigma+it)\right| \geq m(\sigma^*) \qquad \mbox{ for }\sigma\geq \sigma^*.
\end{equation}
It follows from \eqref{eq:expansionL} that $M\neq\emptyset$. We define $L:=1-\inf M<0$. In the half-plane $\sigma<L$, there are $a$-points connected to the trivial zeros of $\L$. The number of these $a$-points with real part $-R < \beta_a < L$ coincides asymptotically, as $R\rightarrow\infty$, with the number of non-trivial zeros with real part $-R<\beta_0<L$ and, thus, grows linear in $R$; see Steuding \cite[Chapt. VII]{steuding:2007}. It follows from \eqref{bo} and the functional equation that, apart from the $a$-points generated by the non-trivial zeros of $\L$, there are at most finitely many other $a$-points in the half-plane $\sigma<L$. We call the $a$-points in the half-plane $\sigma<L$ {\it trivial $a$-points} and refer to all other $a$-points as {\it non-trivial $a$-points} of $\L$. We observe that all non-trivial $a$-points of $\L$ are located in the strip $L\leq\sigma\leq R_a$. \par

{\bf Counting non-trivial $a$-points.} From now on, we assume additionally that $\L$ satisfies the Ramanujan hypothesis. Recall that we defined the class $\Sc^{\#}_R$ to contain all elements of the extended Selberg class which have positive degree and satisfy the Ramanujan hypothesis. Steuding \cite[Chapt. 7]{steuding:2007}  generalized a result of Levinson \cite{levinson:1975} to the class $\Sc^{\#}_R$, which the latter established for the Riemann zeta-function:  
\begin{lemma}[Steuding, 2003]\label{lem:steuding1}
Let $\L\in\Sc^{\#}_R$ with $a(1)=1$. Then, for $a\in\C\setminus\{1\}$ and sufficiently large negative $b$, as $T\rightarrow\infty$,
$$
\sum_{T<\gamma_a \leq 2T} \left(\beta_a - b \right) = \left(\frac{1}{2}-b\right)\left(\frac{d_{\L}}{2\pi}T\log \frac{4T}{e} + T\log(\lambda Q^2) \right) - T \log |1-a| + O(\log T).
$$
\end{lemma}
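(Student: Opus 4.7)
The plan is to apply Littlewood's lemma to $f(s) := \L(s) - a$ on a rectangle $\mathcal{R}$ with vertices $b+iT$, $c+iT$, $c+2iT$, $b+2iT$. Fix $c > R_a$ large enough that $|\L(c+it) - 1| < |1-a|/2$ uniformly in $t$; this is possible by the normalisation $a(1)=1$, the Ramanujan hypothesis, and $a \neq 1$. The result is that $f$ is zero-free on $\sigma=c$, and by an $O(1)$ adjustment of the heights $T$ and $2T$ we may ensure no $a$-points sit on the horizontal edges, changing the sum by at most $O(\log T)$ by the Riemann--von Mangoldt-type formula \eqref{eq:RvMextS} for $\Sc^{\#}_R$. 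Since all non-trivial $a$-points lie in $L \leq \beta_a \leq R_a < c$ and the trivial ones lie in $\sigma < L$, for $b < L$ Littlewood's lemma produces
\[
2\pi \sum_{T<\gamma_a\leq 2T}(\beta_a - b) = \int_T^{2T}\log|f(b+it)|\,\d t - \int_T^{2T}\log|f(c+it)|\,\d t + \int_b^c\bigl[\arg f(\sigma+2iT) - \arg f(\sigma+iT)\bigr]\d\sigma.
\]

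I would evaluate the right edge by expanding $\log f(c+it) = \log(1-a) + \log\bigl(1 + (\L(c+it)-1)/(1-a)\bigr)$ and Taylor-expanding; the series converges absolutely and uniformly, giving $\int_T^{2T}\log|f(c+it)|\,\d t = T\log|1-a| + O(1)$. For the left edge I would apply the functional equation to write
\[
f(b+it) = \Delta_\L(b+it)\bigl[\overline{\L(1-b+it)} - a\,\Delta_\L(b+it)^{-1}\bigr].
\]
For $b$ sufficiently negative, $\overline{\L(1-b+it)} = 1 + O(q^{b-1})$ by absolute convergence of the Dirichlet series at real part $1-b$, while Lemma~\ref{lem:asym_Delta_p} shows $|\Delta_\L(b+it)|^{-1}$ is small uniformly for $t \geq T$. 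Therefore
\[
\log|f(b+it)| = \log|\Delta_\L(b+it)| + O(t^{-1}) + O(q^{b-1}).
\]
Substituting $\log|\Delta_\L(b+it)| = (1/2-b)\bigl(d_\L\log t + \log(\lambda_\L Q^2)\bigr) + O(t^{-1})$ from Lemma~\ref{lem:asym_Delta_p} and using $\int_T^{2T}\log t\,\d t = T\log(4T/e)$ produces the expected main term $(1/2-b)\bigl(d_\L T\log(4T/e) + T\log(\lambda_\L Q^2)\bigr) + O(1)$.

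The horizontal edges contribute the error $O(\log T)$. I would bound $\arg f(\sigma+i\tau)$ uniformly in $\sigma \in [b,c]$ for $\tau \in \{T, 2T\}$ by the classical Backlund-type argument: on the right edge $|f(c+i\tau)| \asymp |1-a|$ is bounded below, and $\L \in \Sc^{\#}_R$ has finite order, so Jensen's formula applied to discs centred at $c+i\tau$ of radius $\asymp c-b$ shows that the number of zeros of $\Re f$ on the horizontal segment is $O(\log\tau)$; the variation of $\arg f$ along the segment is then $O(\log\tau)$. This step requires $\tau$ to avoid a thin neighbourhood of the ordinates $\gamma_a$, which is arranged by the initial $O(1)$ adjustment of $T$ and $2T$. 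Assembling the three contributions and dividing by $2\pi$ yields the stated asymptotic.

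The routine work lies in the two vertical integrals, where the functional equation and the asymptotic expansion of $\Delta_\L$ do everything for us. The genuinely delicate step is the uniform $O(\log T)$ control of the argument on the horizontal segments: one has to exclude heights at which $f$ comes close to a zero, and one has to push the Backlund estimate through in the generality of $\Sc^{\#}_R$ rather than just for $\zeta$. Once this is done, the proof for general $\L \in \Sc^{\#}_R$ runs in parallel to Levinson's treatment of the zeta-function, with the parameters of the Riemann-type functional equation entering through the invariants $d_\L$, $\lambda_\L$, $Q$ supplied by Lemma~\ref{lem:asym_Delta_p}.
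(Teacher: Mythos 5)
Your overall strategy — Littlewood's lemma applied to $f = \L - a$ on a rectangle $[b,c]\times[T,2T]$, the functional equation together with Lemma~\ref{lem:asym_Delta_p} on the left edge, a Dirichlet-series expansion on the right edge, and a Backlund-type argument bound on the horizontal edges — is exactly the route Steuding takes, and the decomposition of $f$ on $\sigma=b$ via $f(b+it) = \Delta_\L(b+it)\bigl[\overline{\L(1-b+it)} - a\,\Delta_\L(b+it)^{-1}\bigr]$ is the right factorisation.

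However, your error analysis on the left edge has a genuine gap. You assert $\log|f(b+it)| = \log|\Delta_\L(b+it)| + O(t^{-1}) + O(q^{b-1})$ and then integrate this over $t\in[T,2T]$, claiming the contribution of the last term is $O(1)$. For fixed $b$ this is false: $\int_T^{2T}O(q^{b-1})\,\d t = O(T\,q^{b-1})$, which is $O(T)$, not $O(\log T)$. Since the secondary main terms $(1/2-b)\,T\log(\lambda Q^2)$ and $T\log|1-a|$ are themselves of order $T$, an $O(T)$ error would obliterate them and the lemma would fail. The cure is the very same device you already use on the right edge: for $b$ sufficiently negative, $\log\L(1-b+it)$ has an absolutely convergent Dirichlet-series expansion $\sum_{n\geq q}c(n)n^{-(1-b)}n^{-it}$ (obtained by composing with the power series of $\log(1+z)$, which is legitimate since $\L\in\Sc^\#_R$ need not have an Euler product), and then
\[
\int_T^{2T}\log\bigl|\L(1-b+it)\bigr|\,\d t
= \Re\sum_{n\geq q}c(n)\,n^{-(1-b)}\int_T^{2T}n^{-it}\,\d t
= O(1),
\]
because $\int_T^{2T}n^{-it}\,\d t = O(1/\log n)$ and the resulting series converges absolutely. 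The remaining factor $1 - a\,\Delta_\L(b+it)^{-1}/\overline{\L(1-b+it)}$ contributes, after taking $\log|\cdot|$ and integrating, $O\bigl(\int_T^{2T}t^{d_\L(b-1/2)}\,\d t\bigr) = O(1)$ once $b < \tfrac12 - 1/d_\L$. With these two corrections the left vertical integral has error $O(\log T)$ as required, and the rest of your proof goes through. You should also note explicitly that the trivial $a$-points all lie in a bounded horizontal strip, so they do not enter the count for $T$ large — this is tacit in your setup but worth a sentence.
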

The proof of Lemma \ref{lem:steuding1} relies essentially on Littlewood's lemma. 
Steuding \cite[Chapt. 7]{steuding:2007} deduced from Lemma \ref{lem:steuding1} a precise Riemann-von Mangoldt type formula for the number of $a$-points of $\L\in\Sc^{\#}_R$. Let $N_a (T)$ denote the number of non-trivial $a$-points of $\L$ with imaginary part $0<\gamma_a \leq T$. 
\begin{theorem}[Steuding, 2003]\label{th:riemannmangoldt}
Let $\L\in\Sc^{\#}_R$ with $a(1)=1$. Then, for any $a\in\C\setminus\{1\}$, as $T\rightarrow\infty$,
\begin{equation}\label{eq:riemannmangoldt}
N_a(T) = \frac{d_{\L}}{2\pi} T \log \frac{T}{e} + \frac{T}{\pi} \log(\lambda Q^2) + O(\log T).
\end{equation}
\end{theorem}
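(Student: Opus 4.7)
The strategy is a single-rectangle application of the argument principle. Fix $c > R_a$, $b < L$ sufficiently negative, and $t_0 > 0$ below the smallest positive imaginary part of any non-trivial $a$-point of $\L$. On the rectangle $R_T$ with vertices $b+it_0$, $c+it_0$, $c+iT$, $b+iT$, the number of $a$-points inside equals $N_a(T) + O(1)$, and the argument principle gives
\[
2\pi\,N_a(T) \;=\; \Delta_{\partial R_T}\,\arg\bigl(\L(s)-a\bigr) + O(1).
\]
A naive dyadic iteration of Lemma \ref{lem:steuding1} would accumulate an $O(\log T)$ error on each of the $\asymp \log T$ dyadic pieces, giving only $O((\log T)^2)$, so the sharp error demands the single-rectangle treatment.

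The right edge is easy: by the expansion \eqref{eq:expansionL}, $\L(c+it) - a = (1-a) + O(q^{-c})$ uniformly in $t$, so the argument change is $O(1)$. The bottom edge contributes $O(1)$ likewise. The left edge carries the main term: writing
\[
\L(b+it) - a \;=\; \Delta_\L(b+it)\Bigl[\,\overline{\L(1-b-it)} \;-\; a\,\Delta_\L(b+it)^{-1}\,\Bigr],
\]
one checks, using the Ramanujan hypothesis to bound $\overline{\L(1-b-it)} = 1 + O((q+1)^{-(1-b)})$ and Lemma \ref{lem:asym_Delta_p} to bound $|\Delta_\L(b+it)|^{-1} \ll t^{-d_\L(1/2-b)}$, that the bracket is bounded away from zero and has argument of bounded total variation. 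Consequently the argument change along the left edge equals that of $\Delta_\L(b+it)$ up to $O(1)$, and the asymptotic for $\arg\Delta_\L$ from Lemma \ref{lem:asym_Delta_p} delivers the main terms of the claim together with an $O(\log T)$ remainder.

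The genuine obstacle is the top edge, where one must show
\[
\int_b^c \arg\bigl(\L(\sigma+iT) - a\bigr)\,\d \sigma \;=\; O(\log T).
\]
Here I would follow Backlund's classical approach for the zeta function: bound the argument variation by $\pi$ times the number of sign changes of $\Re(\L(\sigma+iT)-a)$ on $[b,c]$, and control this count by applying Jensen's formula to a disc centred at $c+iT$ of radius slightly larger than $c-b$. Finite order of $\L$ in vertical strips---which follows from the Ramanujan hypothesis together with convexity of $\theta_\L(\sigma)$ (Section \ref{sec:orderofgrowth})---furnishes a polynomial bound $|\L(s) - a| \ll T^{C}$ on that disc, whereupon Jensen yields $O(\log T)$ zeros, hence $O(\log T)$ sign changes, hence the desired bound. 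This is precisely why the theorem is stated in $\Sc^{\#}_R$: without the Ramanujan hypothesis the polynomial bound, and hence the $O(\log T)$ error term, need not persist. Assembling the four contributions and dividing by $2\pi$ yields the asserted asymptotic.
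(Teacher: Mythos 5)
Your argument-principle-plus-Backlund scheme is a valid classical alternative; the paper instead obtains Theorem \ref{th:riemannmangoldt} by deducing it from Lemma \ref{lem:steuding1}, i.e.\ from Littlewood's lemma (an integrated argument principle) followed by a comparison of two values of $b$. Both routes ultimately hinge on the same Jensen/Backlund count for $\arg(\L(\sigma+iT)-a)$ on the horizontal segment, so the genuine analytic content overlaps; the Littlewood route has the advantage of also delivering the weighted sum $\sum_{T<\gamma_a\leq 2T}(\beta_a-b)$, which feeds Theorems \ref{th:steuding3} and \ref{th:levinsonselberg}, while yours is leaner if one only wants the count. Your remark that a naive dyadic stacking of Lemma \ref{lem:steuding1} would lose a factor $\log T$ is correct, but the remedy implicit in the paper is simply to run Littlewood's lemma on the full range $[1,T]$ rather than on dyadic blocks, which is no harder than your single rectangle.

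Two points need fixing. First, the functional equation at $s=b+it$ gives $\L(b+it)=\Delta_\L(b+it)\,\overline{\L(1-b+it)}$, so your bracket should contain $\overline{\L(1-b+it)}$, not $\overline{\L(1-b-it)}$; this is a slip, not a gap. Second, and more substantively, your explanation of the role of the Ramanujan hypothesis is wrong: the polynomial bound $\L(\sigma+it)\ll_{\sigma,\eps}|t|^{\theta_{\L}(\sigma)+\eps}$ on vertical strips holds for \emph{every} $\L\in\Sc^{\#}$; it follows from (S.1), (S.4), (S.5) via the functional equation and a Phragm\'{e}n--Lindel\"of argument (this is stated explicitly in Section \ref{sec:orderofgrowth} for all of $\Sc^{\#}$, with no input from (S.2)). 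So the Backlund step would go through without the Ramanujan hypothesis; that hypothesis enters Steuding's treatment elsewhere (in the Littlewood-lemma evaluation and the supporting mean-square theory) and is simply built into the class $\Sc^{\#}_R$. Finally, you should arrange $t_0$ to exclude the \emph{trivial} $a$-points as well, which cluster near the finitely many ordinates $-\Im\,\mu_j/\lambda_j$, not only the non-trivial ones; and if $\Re\,a$ is close to $1$ you should rotate $\L-a$ by a fixed phase before counting sign changes, so that the value at the Jensen centre $c+iT$ stays bounded away from zero.
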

For special functions in $\Sc^{\#}_R$, asymptotic extensions for $N_a(T)$, in particular in the case $a=0$, were obtained already before. Exemplarily, we discuss the case of the Riemann zeta-function. Here, it was Riemann \cite{riemann:1859} who stated the asymptotic formula \eqref{th:riemannmangoldt} for $a=0$. A rigorous proof was given by von Mangoldt \cite{mangoldt:1895}. The case $a\neq 0$ was first established by Landau \cite[Chapt. II, \S 4]{BohrLandauLittlewood:1913}. Their original methods were based on contour integration with respect to the logarithmic derivative of $\zeta(s)$.\par

{\bf $a$-points in the mean-square half-plane.} Let $\L\in\Sc^{\#}_R$ and $\sigma_m$ denote its abscissa of bounded mean-square. Let $N_{a}(\sigma,T)$ denote the number of $a$-points of $\L$ with real part $\beta_a >\sigma$ and imaginary part $0<\gamma_a\leq T$. As $\L$ is of finite order in any strip $-\infty < \sigma_1 \leq \sigma \leq \sigma_2 < \infty$, it follows from the general theory of Dirichlet series that, for every $a\in\C$ and every $\sigma>\sigma_m$,
\begin{equation}\label{eq:NaT_meansquare}
N_a(\sigma, T) \ll T;
\end{equation}
see for example Titchmarsh \cite[\S 9.622]{titchmarsh:1939}.
According to the mean-square results due to Steuding \cite[Chapt. 4 \& 6]{steuding:2007}, we know that unconditionally $\sigma_m \leq \max\{\tfrac{1}{2},1-\tfrac{1}{d_{\L}}\}$ and that $\sigma_m\leq \frac{1}{2}$, if $\L$ satisfies the Lindel\"of hypothesis; see Section \ref{subsec:meansquare} for details.\par

{\bf $a$-points in the strip of universality.} Suppose that $\L$ is an element of the Selberg class, is represented by a polynomial Euler product in $\sigma>1$ and satisfies the prime mean-square condition (S.6). Under these assumptions, Steuding \cite[Chapt. 5, Theorem 5.14]{steuding:2007} verified a Voronin-type universality property for $\L$ in the intersection of its mean-square half-plane $\sigma>\sigma_m$ with the strip $\frac{1}{2}<\sigma<1$. Let $N_{a}(\sigma_1,\sigma_2,T)$ denote the number of $a$-points of $\L$ with real part $\sigma_1<\beta_a <\sigma_2$ and imaginary part $0<\gamma_a\leq T$. As an immediate consequence of the universality property, we obtain that
$$
\liminf_{T\rightarrow\infty} \frac{1}{T} N_a(\sigma_1,\sigma_2,T) >0
$$
for every $a\in\C\setminus\{0\}$ and every $\sigma_m <\sigma_1<\sigma_2<1$. Taking into account the upper bound \eqref{eq:NaT_meansquare}, this implies that,
\begin{equation}\label{ud}
N_a(\sigma_1,\sigma_2,T) \asymp T,
\end{equation}
for every $a\in\C\setminus\{0\}$ and every  $\sigma_m<\sigma_1<\sigma_2<1$; see Steuding \cite{steuding:2007}. Kaczorowski \& Perelli \cite{kaczorowskiperelli:2003} established a zero-density estimate for functions in the Selberg class. They showed that, for any $\eps>0$, uniformly for $\frac{1}{2}\leq \sigma\leq 1$, 
$$N_0(\sigma,T)\ll T^{4(d_{\L}+3)(1-\sigma)+\eps}.$$ Thus, in particular, $N_0(\sigma,T)=o(T)$, if $1-\frac{1}{4(d_{\L}+3)}<\sigma\leq 1.$ Together with \eqref{ud}, this reveals a quantitative difference in the $a$-point distribution of $\L$ between $a=0$ and $a\neq 0$.\par 

For the Riemann zeta-function, Bohr \& Jessen \cite{bohrjessen:1932} obtained that 
$$
N_a(\sigma_1,\sigma_2,T)\sim c  T
$$
for every $a\in\C\setminus\{0\}$ and every $\frac{1}{2}<\sigma<1$ with a positive constant $c:=c(\sigma_1,\sigma_2,a)$ depending on $\sigma_1$, $\sigma_2$ and $a$. Moreover, in the case of the Riemann zeta function there are much more precise zero-density estimates at our disposal than the ones provided by Kaczorowski \& Perelli \cite{kaczorowskiperelli:2003}. We mention here a result of Selberg \cite{selberg:1946} who obtained that, uniformly for $\frac{1}{2}\leq \sigma \leq 1$,
\begin{equation}\label{eq:Selbergzerodensityriemann}
N_0(\sigma,T)\ll T^{1-\frac{1}{4}(\sigma-\frac{1}{2})} \log T.
\end{equation}
For more advanced results on zero-density estimates for the Riemann zeta-function the reader is referred to Titchmarsh \cite[\S 9]{titchmarsh:1986} and Ivi\'{c} \cite[Chapt. 11]{ivic:1985}.
\par

{\bf The special case $a=1$.} The case $a=1$ is special, as our assumption $a(1)=1$ yields that $\lim_{\sigma\rightarrow\infty} \L(s)=1$. This leads to some technical problems in the proofs of Lemma \ref{lem:steuding1} and Theorem \ref{th:riemannmangoldt}. However, one can easily overcome these obstacles by working with
$$
\frac{q^s}{a(q)}(\L(s)-1),
$$
where $q> 1$ is the least integer such that $a(q)\neq 0$; see Steuding \cite[Chapt. 7]{steuding:2007} for details. In this way, we get analogous results in Lemma \ref{lem:steuding1} and Theorem \ref{th:riemannmangoldt} for the case $a=1$ with a minor change in the asymptotic extensions of magnitude $O(T)$, respectively.\par

{\bf $a$-points in the lower half-plane.} All the results stated in this section with respect to $a$-points in the upper half-plane hold in an analogous manner for $a$-points in the lower half-plane.

\section{a-points near the critical line - approach via Littlewood's lemma}
\label{apointslittlewood}

In this section we study in detail the $a$-point distribution of functions in the extended Selberg class near the critical line. Under quite general assumptions on $\L\in\Sc^{\#}$, it is known that the $a$-points of $\L$ cluster around the critical line. We give an overview on existing results and provide a refinement of a theorem of Levinson \cite{levinson:1975}.\par 
Let $\L\in\Sc_R^{\#}$. By assuming a certain growth condition for the mean-value of $\L$ on the critical line, Steuding \cite[Chapt. 7.2]{steuding:2007} showed that almost all $a$-points lie arbitrarily close to the critical line. His methods build on works of Levinson \cite{levinson:1975}, who investigated the particular case of the Riemann zeta-function.
\begin{theorem}[Steuding, 2003]\label{th:steuding3}
Let $\L\in\Sc^{\#}_R$ with $a(1)=1$ and let $a\in\C$. Suppose that, for any $\eps>0$, as $T\rightarrow\infty$, 
\begin{equation}\label{eq:LHgrowth}
\frac{1}{T}\int_{T}^{2T} \left| \L\left(\tfrac{1}{2}+it\right) \right|^2 dt \ll T^{\eps}.
\end{equation}
Then, for any $\delta>0$, all but $O(\delta T\log T)$ of the $a$-points of $\L$ with imaginary part $T<\gamma_a\leq 2T$ lie inside the strip
$$
\tfrac{1}{2}-\delta < \sigma < \tfrac{1}{2}+\delta.
$$ 
\end{theorem}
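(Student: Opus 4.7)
The plan is to isolate $a$-points off the critical line by combining Littlewood's lemma with the mean-square hypothesis \eqref{eq:LHgrowth}. First, combining Lemma \ref{lem:steuding1} (with $b$ fixed and sufficiently negative) with the Riemann--von Mangoldt formula from Theorem \ref{th:riemannmangoldt} yields, after the leading $T\log T$ contributions cancel,
\[
\sum_{T < \gamma_a \le 2T}\bigl(\beta_a - \tfrac{1}{2}\bigr) \;=\; O(T).
\]
Splitting the sum according to the sign of $\beta_a - \tfrac12$, set
$P := \sum_{T<\gamma_a\le 2T,\,\beta_a > 1/2}(\beta_a - \tfrac12)$ and $M := \sum_{T<\gamma_a\le 2T,\,\beta_a < 1/2}(\tfrac12 - \beta_a)$. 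The identity above reads $P = M + O(T)$, so it suffices to produce a small upper bound for $P$ alone.

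To that end, I would apply Littlewood's lemma to the rectangle $\mathcal{R} = [\tfrac12, c]\times[T, 2T]$, where $c > R_a$ is fixed large enough that $\L - a$ has no zeros for $\sigma \ge c$. Taking real parts in the contour integral of $\log(\L(s) - a)$ around $\partial\mathcal{R}$ gives
\[
P \;=\; \frac{1}{2\pi}\int_T^{2T}\log\bigl|\L(\tfrac{1}{2}+it)-a\bigr|\,dt \;-\; \frac{1}{2\pi}\int_T^{2T}\log\bigl|\L(c+it)-a\bigr|\,dt \;+\; O(\log T),
\]
where the last term absorbs the argument variation along the two horizontal edges. For $c$ fixed, $\L(c+it) - a$ is bounded above and below in modulus, so the middle integral is $O(T)$. (The exceptional value $a = 1$ is handled by the standard trick of replacing $\L(s) - 1$ with $q^{s}(\L(s) - 1)/a(q)$, where $q$ is the smallest integer with $a(q) \neq 0$, as explained in Steuding \cite[Chapt.~7]{steuding:2007}.)

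The crucial step is bounding the integral on $\sigma = \tfrac12$. By Jensen's inequality applied to $|\L(\tfrac12+it) - a|^2$, together with \eqref{eq:LHgrowth} and the trivial bound $\int_T^{2T}|a|^{2}\,dt = O(T)$,
\[
\frac{1}{T}\int_T^{2T}\log\bigl|\L(\tfrac{1}{2}+it)-a\bigr|\,dt \;\le\; \frac{1}{2}\log\!\left(\frac{1}{T}\int_T^{2T}\bigl|\L(\tfrac{1}{2}+it)-a\bigr|^2\,dt\right) \;\le\; \frac{\eps}{2}\log T + O(1)
\]
for any fixed $\eps > 0$ and all sufficiently large $T$ (with implicit dependence on $\eps$). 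Hence $P \le \tfrac{\eps}{4\pi}\,T\log T + O(T)$, and by the first step the same estimate (up to an $O(T)$ remainder) holds for $M$. Letting $N^\pm_\delta(T)$ denote the number of $a$-points with $T < \gamma_a \le 2T$ and $\pm(\beta_a - \tfrac12) > \delta$, every term in $P$ with $\beta_a > \tfrac12 + \delta$ contributes at least $\delta$ (and similarly for $M$), so
\[
\delta\bigl(N^+_\delta(T) + N^-_\delta(T)\bigr) \;\le\; P + M \;\ll\; \eps\,T\log T.
\]
Choosing $\eps = \delta^{2}$ (permissible because \eqref{eq:LHgrowth} is assumed for every $\eps > 0$) delivers $N^+_\delta(T) + N^-_\delta(T) \ll \delta\,T\log T$, as asserted.

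The main technical point to pin down carefully is the $O(\log T)$ bound on the argument variation along the horizontal edges of $\mathcal{R}$: the count of sign changes of $\Re(\L - a)$ on the segment $[\tfrac12 + iT, c + iT]$ must be controlled uniformly in the specific height $T$. This is standard for functions of finite order in vertical strips, but it requires a Jensen-type comparison with the zero count of $\L - a$ in an auxiliary disc centred at $c + iT$, and is the step where the hypothesis \eqref{eq:LHgrowth} (through the implied finite-order estimates for $\L$ in a strip straddling the critical line) enters a second time.
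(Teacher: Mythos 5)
Your proposal is correct and recovers, in all essentials, the route the paper attributes to Steuding \cite[Chapt.~7.2]{steuding:2007}: apply Littlewood's lemma to a rectangle with left edge on $\sigma=\tfrac12$, use Jensen's inequality together with \eqref{eq:LHgrowth} to bound $P = \sum_{\beta_a>1/2}(\beta_a-\tfrac12)$ by roughly $\frac{\eps}{4\pi}T\log T$, control the negative contributions $M$ via $P-M=O(T)$ (obtained by combining Lemma~\ref{lem:steuding1} with the Riemann--von Mangoldt formula~\eqref{eq:riemannmangoldt}), and then count. Your derivation $P-M=O(T)$ from cancellation of the $T\log T$ main terms is a slightly cleaner packaging of what Levinson and Steuding do, and the choice $\eps=\delta^2$ at the end is the correct way to absorb the constants. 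Note in passing that the paper's display \eqref{eq:momentselberg} has $\int_0^T$ where your $\int_T^{2T}$ is the correct range for the sum over $T<\gamma_a\le 2T$.

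Your closing caveat about the $O(\log T)$ bound for the horizontal-edge term is right to flag but slightly misattributed: that estimate is the standard argument-variation bound valid for any function of finite order in a vertical strip, and finite order in strips is already guaranteed for every $\L\in\Sc^{\#}_R$ by the functional equation and Phragm\'{e}n--Lindel\"of (see Section~\ref{sec:orderofgrowth}); it does not require the mean-square hypothesis. So \eqref{eq:LHgrowth} enters the argument exactly once, at the Jensen step — which is precisely why the paper can relax Steuding's original assumption of the full Lindel\"of hypothesis to the weaker mean-square bound.
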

In his original formulation of Theorem \ref{th:steuding3}, Steuding \cite{steuding:2007} demands that $\L\in\Sc^{\#}_R$ satisfies the Lindel\"of hypothesis. However, a close look at his proof reveals that only the somehow weaker condition \eqref{eq:LHgrowth} is needed. We expect that all functions in $\Sc^{\#}_R$ satisfy the Lindel\"of hypothesis. Thus, all functions $\L\in\Sc^{\#}_R$ should in particular satisfy the growth condition \eqref{eq:LHgrowth}.\par

In the case of the Riemann zeta-function, Landau \cite[Chapt. II, \S 5]{BohrLandauLittlewood:1913} was the first who noticed that, for general $a\in\C$, almost all $a$-points lie arbitrarily close to the critical line. However, for his reasoning, he had to assume the Riemann hypothesis. Levinson \cite{levinson:1975} provided unconditional results for the Riemann zeta-function, exceeding both Landau's conditional observations and the information that may be retrieved from the general situation of Theorem \ref{th:steuding3}.
\begin{theorem}[Levinson, 1975]\label{th:levinson}
Let $a\in\C$. Then, as $T\rightarrow\infty$, all but\\ $O(T\log T / \log\log T)$ of the $a$-points of the Riemann zeta-function with imaginary part $T<\gamma_a\leq 2T$ lie inside the strip
$$
\tfrac{1}{2}-\frac{(\log\log T)^2}{\log T} < \sigma < \tfrac{1}{2}+\frac{(\log\log T)^2}{\log T}.
$$ 
\end{theorem}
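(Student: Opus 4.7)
The plan is to follow Levinson's original 1975 strategy, refining the argument behind Theorem~\ref{th:steuding3} by using the sharp Hardy--Littlewood mean-value $\frac{1}{T}\int_T^{2T}|\zeta(\tfrac{1}{2}+it)|^2\,\d t\sim\log T$ in place of the weaker bound \eqref{eq:LHgrowth}. Set $\delta_T:=(\log\log T)^2/\log T$ and $F(s):=\zeta(s)-a$ (with the usual adjustment $F(s)=(q^s/a(q))(\zeta(s)-1)$ in case $a=1$), and write
\begin{equation*}
M^{+}(T) := \#\bigl\{\rho_a : T<\gamma_a\leq 2T,\ \beta_a>\tfrac{1}{2}+\delta_T\bigr\},\quad M^{-}(T) := \#\bigl\{\rho_a : T<\gamma_a\leq 2T,\ \beta_a<\tfrac{1}{2}-\delta_T\bigr\}.
\end{equation*}
The task is to show $M^{+}(T)+M^{-}(T)\ll T\log T/\log\log T$.

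First I would bound $M^{+}(T)$. Applying Littlewood's lemma to $F$ on the rectangle $[\tfrac{1}{2}+\tfrac{1}{2}\delta_T,\,c]\times[T,2T]$, with $c>R_a$ fixed so that $F$ is bounded above and below on $\sigma=c$, yields (after absorbing the horizontal $\arg$-integrals into an $O(\log T)$ by a careful choice of $T,2T$ missing the ordinates of $a$-points) an identity of the form
\begin{equation*}
2\pi\!\!\sum_{\substack{T<\gamma_a\leq 2T\\ \beta_a>\tfrac{1}{2}+\tfrac{1}{2}\delta_T}}\!\!\bigl(\beta_a-\tfrac{1}{2}-\tfrac{1}{2}\delta_T\bigr) = \int_T^{2T}\log|F(\tfrac{1}{2}+\tfrac{1}{2}\delta_T+it)|\,\d t + O(T).
\end{equation*}
Jensen's inequality in the form $\int_T^{2T}\log|F|\,\d t\leq\tfrac{T}{2}\log\bigl(\tfrac{1}{T}\int_T^{2T}|F|^2\,\d t\bigr)$, combined with the Hardy--Littlewood mean-square, bounds the right-hand side by $O(T\log\log T)$. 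Since every summand with $\beta_a\geq\tfrac{1}{2}+\delta_T$ contributes at least $\tfrac{1}{2}\delta_T$, a Chebyshev-type argument produces $M^{+}(T)\ll T\log\log T/\delta_T=T\log T/\log\log T$.

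For $M^{-}(T)$, I would transfer the analysis to the right of the critical line by means of the functional equation $\zeta(s)=\Delta(s)\zeta(1-s)$ together with Stirling's asymptotic $|\Delta(\tfrac{1}{2}-U+it)|\asymp(t/2\pi)^{U}$. Writing
\begin{equation*}
\zeta(\tfrac{1}{2}-U+it)-a=\Delta(\tfrac{1}{2}-U+it)\bigl[\zeta(\tfrac{1}{2}+U-it)-a'(t)\bigr],\qquad |a'(t)|\ll|a|\,t^{-U},
\end{equation*}
one obtains $\tfrac{1}{T}\int_T^{2T}|F(\tfrac{1}{2}-U+it)|^2\,\d t\ll T^{2U}\log T$, and Jensen then gives $\int_T^{2T}\log|F(\tfrac{1}{2}-\tfrac{1}{2}\delta_T+it)|\,\d t\leq\tfrac{1}{2}\delta_T\cdot T\log T+O(T\log\log T)$. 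Applying Littlewood's lemma to the rectangle $[b,\tfrac{1}{2}-\tfrac{1}{2}\delta_T]\times[T,2T]$ with a fixed $b<L$, and comparing against the Riemann--von Mangoldt formula~\eqref{eq:riemannmangoldt} for $N_a(T,2T)$, should isolate the excess contribution of the $a$-points lying further to the left and deliver the matching bound $M^{-}(T)\ll T\log T/\log\log T$.

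The hard part will be the control of $\int\log|F(\tfrac{1}{2}-U+it)|\,\d t$ in the second step: the shifting target $a'(t)$ created by the functional equation prevents a direct reduction to a classical $a$-point count, and one has to rule out large negative contributions from the exceptional set of $t$ where $|\zeta(\tfrac{1}{2}+U-it)|$ drops to order $t^{-U}$. Handling this set will require either a weak quantitative form of the central-limit behaviour of $\log|\zeta|$ or a direct mean-value estimate for $\zeta-a'(t)$ on vertical lines slightly right of the critical line. The precise strip width $(\log\log T)^2/\log T$ in the conclusion is exactly the scale at which Jensen's inequality, fed with the sharp logarithm $\log T$ from the second moment, balances the $\delta T\log T$ contribution coming from the gamma factor.
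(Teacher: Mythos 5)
Your treatment of $M^{+}(T)$ is correct and matches Levinson's argument as reproduced in the paper: Littlewood's lemma on $[\tfrac{1}{2},c]\times[T,2T]$ identifies $\sum_{\beta_a>1/2}(\beta_a-\tfrac{1}{2})$ with $\tfrac{1}{2\pi}\int_T^{2T}\log|\zeta(\tfrac{1}{2}+it)-a|\,\d t$ up to $O(T)$, Jensen with the Hardy--Littlewood mean square bounds this by $O(T\log\log T)$, and Chebyshev gives $M^{+}(T)\ll T\log\log T/\delta_T=T\log T/\log\log T$.

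Your route for $M^{-}(T)$, however, does not work, and this is a genuine gap. Applying Littlewood's lemma to $[b,\tfrac{1}{2}-\tfrac{1}{2}\delta_T]\times[T,2T]$ gives $2\pi\sum_{b<\beta_a<1/2-\delta_T/2}(\beta_a-b)=\int\log|F(b+it)|\,\d t-\int\log|F(\tfrac{1}{2}-\tfrac{1}{2}\delta_T+it)|\,\d t+O(\log T)$. A second Littlewood application on $[\tfrac{1}{2}-\tfrac{1}{2}\delta_T,\,c]$ shows that the integral on $\sigma=\tfrac{1}{2}-\tfrac{1}{2}\delta_T$ equals $2\pi\sum_{\beta_a>1/2-\delta_T/2}(\beta_a-\tfrac{1}{2}+\tfrac{1}{2}\delta_T)+O(T)$, which is dominated by the contribution of the many $a$-points clustered near $\sigma=\tfrac{1}{2}$ (each contributes $\approx\tfrac{1}{2}\delta_T$, there are $\asymp T\log T$ of them, so the total is $\asymp\delta_T T\log T=T(\log\log T)^2$). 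Jensen reproduces exactly this order, so the identity gives nothing new, and the Littlewood sum over the shifted rectangle cannot isolate $M^{-}(T)$ because each $a$-point to the left is weighted by $\beta_a-b\asymp 1$ rather than by its distance to the critical line. The difficulty you flag at the end is real but misdiagnosed: it is not resolved by central-limit inputs but avoided entirely. Levinson's actual argument (and the paper's proof of Theorem~\ref{th:levinsonselberg}, from which Theorem~\ref{th:levinson} follows by replacing Selberg's estimate with the Jensen bound) never integrates $\log|\zeta-a|$ to the left of $\sigma=\tfrac{1}{2}$. Instead one combines the \emph{exact} asymptotic for the full sum $\sum_{T<\gamma_a\leq 2T}(\beta_a-b)$ from Lemma~\ref{lem:steuding1}, the Riemann--von~Mangoldt count~\eqref{eq:riemannmangoldt}, and the Jensen bound for $\sum_{\beta_a>1/2}(\beta_a-\tfrac{1}{2})$; the main terms of order $T\log T$ cancel, leaving $\sum_{\beta_a\leq 1/2}|\beta_a-\tfrac{1}{2}|\ll T\log\log T$, from which $M^{-}(T)\ll T\log\log T/\delta_T$ follows by the same Chebyshev step as on the right.
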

In the special situation of $a=0$, there are certain zero-density estimates for the Riemann zeta-function at our disposal which allow a more precise statement than the one provided by Theorem \ref{th:levinson} for general $a\in\C$. The following theorem is an immediate consequence of Selberg's zero-density estimate \eqref{eq:Selbergzerodensityriemann}.
\begin{theorem}[Selberg, 1946] \label{th:selbergzero}
Let $a\in\C$ and $\mu:[2,\infty)\rightarrow\R^+$ be a positive function with $\lim_{t\rightarrow\infty}\mu(t)=\infty$. Then, as $T\rightarrow\infty$, all but $O(T\log T \exp(-\mu(T)/4))$ of the zeros of the Riemann zeta-function with imaginary part $T<\gamma\leq 2T$ lie inside the strip
$$
\tfrac{1}{2}-\frac{\mu(T)}{\log T} < \sigma < \tfrac{1}{2}+\frac{\mu(T)}{\log T}.
$$ 
\end{theorem}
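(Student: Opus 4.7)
The plan is to deduce the statement directly from Selberg's zero-density estimate \eqref{eq:Selbergzerodensityriemann} combined with the symmetry of the non-trivial zeros of $\zeta$ with respect to the critical line. We may assume $\mu(T)/\log T < \tfrac{1}{2}$ for large $T$, otherwise the strip already covers all of the critical strip and the assertion is trivial.

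Set $\sigma^* := \tfrac{1}{2}+\mu(T)/\log T$, and denote by $M(T)$ the number of non-trivial zeros $\rho=\beta+i\gamma$ of $\zeta$ with $T<\gamma\leq 2T$ and $|\beta-\tfrac{1}{2}|\geq \mu(T)/\log T$. The functional equation \eqref{fct-eq} combined with the reflection principle $\zeta(\overline{s})=\overline{\zeta(s)}$ shows that $\beta+i\gamma$ is a non-trivial zero of $\zeta$ if and only if $(1-\beta)+i\gamma$ is, so the non-trivial zeros are symmetric about the critical line. Consequently
\begin{equation*}
M(T) \;\leq\; 2\bigl(N_0(\sigma^*,2T)-N_0(\sigma^*,T)\bigr) \;\leq\; 2\,N_0(\sigma^*,2T).
\end{equation*}

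Applying \eqref{eq:Selbergzerodensityriemann} with $\sigma=\sigma^*$ and with $T$ replaced by $2T$, and noting that $\sigma^*-\tfrac{1}{2}=\mu(T)/\log T$, gives
\begin{equation*}
N_0(\sigma^*,2T) \;\ll\; (2T)^{1-\mu(T)/(4\log T)}\,\log(2T) \;=\; 2T\log(2T)\,\exp\!\left(-\tfrac{\mu(T)}{4}\cdot\tfrac{\log(2T)}{\log T}\right).
\end{equation*}
Since $\log(2T)/\log T \geq 1$ for $T\geq 2$, the exponential factor is bounded above by $\exp(-\mu(T)/4)$, and $\log(2T)\ll \log T$. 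Combining these bounds yields $M(T) \ll T\log T \,\exp(-\mu(T)/4)$, which is precisely the asserted estimate.

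As the author notes, the result is essentially immediate from \eqref{eq:Selbergzerodensityriemann}, so no genuine obstacle arises; the only points that require minor attention are the use of the zero-symmetry to reduce to counting zeros to the right of the critical line, and the clean absorption of the ratio $\log(2T)/\log T$ (which is at least one, and therefore only helps in the exponent because of its negative sign) into the implicit constants.
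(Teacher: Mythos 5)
Your proof is correct and is precisely the expansion the paper intends when it says the result "is an immediate consequence of Selberg's zero-density estimate \eqref{eq:Selbergzerodensityriemann}"; the paper offers no further argument. The symmetry reduction to zeros with $\beta\geq\sigma^*$, the substitution $\sigma=\sigma^*$ and $T\mapsto 2T$ in Selberg's bound, and the absorption of $\log(2T)/\log T\geq 1$ into the exponent are exactly the right steps, and the uniformity of \eqref{eq:Selbergzerodensityriemann} in $\sigma$ guarantees the implied constant is independent of $\mu$.
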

Leaning on a result of Selberg \cite{selberg:1992}, we can slightly refine Levinson's result of Theorem \ref{th:levinson} and extend it to the class $\Sc^*$. We are pretty sure that, at least in the case of the Riemann zeta-function, both Levinson,\footnote{This is suggested by Levinson's remark at the end of his paper {\it Almost all roots of $\zeta(s)=a$ are arbitrarily close to $\sigma=\frac{1}{2}$}, \cite{levinson:1975}.} Selberg and Tsang\footnote{Tsang \cite{tsang:1984} states in his corollary after Theorem 8.2 that almost all $a$-points lie to the left of the line $\sigma = \mu(t)\sqrt{\log\log t}/\log t$ with any function $\lim_{t\rightarrow\infty}\mu(t)=\infty$. This is a one-sided version of Theorem \ref{th:levinsonselberg} in the special case of the Riemann zeta-function.} were aware of this refinement. However, apart from a brief hint by Heath-Brown in Titchmarsh \cite[\S 11.12]{titchmarsh:1986}, we could not find the following theorem stated in the literature explicitly. We recall that both the Riemann zeta-function and Dirichlet $L$-functions attached to primitive characters lie in $\Sc^*$ and that we expect that $\Sc^*=\Sc$; see Section \ref{sec:selbergclass}.
\begin{theorem}\label{th:levinsonselberg}
Let $\L\in\Sc^*$ and $a\in\C$. Let $\mu:[2,\infty)\rightarrow\R^+$ be a positive function with $\lim_{t\rightarrow\infty}\mu(t)=\infty$. Then, as $T\rightarrow\infty$, all but $O(T\log T / \mu(T))$ of the $a$-points of $\L$ with imaginary part $T<\gamma_a\leq 2T$ lie inside the strip defined by
\begin{equation}\label{eq:striplevinsonselberg}
\frac{1}{2}-\frac{\mu(T)\sqrt{\log\log T}}{\log T} <\sigma < 
\frac{1}{2}+\frac{\mu(T)\sqrt{\log\log T}}{\log T} .
\end{equation}
\end{theorem}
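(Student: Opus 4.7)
The strategy is to refine Levinson's classical argument (which underlies Theorem \ref{th:levinson} and Theorem \ref{th:steuding3}) by replacing the crude second-moment bound $\int_T^{2T}|\L(\tfrac12+it)|^2\d t$ — which only yields $\log\log T$ via Jensen's inequality — with the much sharper second-moment estimate $\int_T^{2T}|\log|\L(\tfrac12+it)||^2\d t\ll T\log\log T$, available for $\L\in\Sc^*$ as part of Selberg's machinery (the same machinery underlying \eqref{centrallimitlaw}). The improvement from $\log\log T$ to $\sqrt{\log\log T}$ (gotten via Cauchy-Schwarz applied to the variance rather than Jensen applied to the second moment) is precisely what accounts for the refinement from the $(\log\log T)^2$ in \eqref{levinsonstrip} to the $\sqrt{\log\log T}$ in \eqref{eq:striplevinsonselberg}.

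Set $U:=\mu(T)\sqrt{\log\log T}/\log T$ and let $N^\pm(T)$ be the number of $a$-points with $T<\gamma_a\leq 2T$ and $\beta_a\geq\tfrac12+U$ (respectively $\beta_a\leq\tfrac12-U$). Define also
$$
S_+:=\sum_{\substack{\beta_a>\frac12\\T<\gamma_a\leq 2T}}(\beta_a-\tfrac12),\qquad S_-:=\sum_{\substack{\beta_a<\frac12\\T<\gamma_a\leq 2T}}(\tfrac12-\beta_a).
$$
Trivially $S_+\geq UN^+(T)$ and $S_-\geq UN^-(T)$. Applying Lemma \ref{lem:steuding1} with an arbitrary large negative $b$ and subtracting $(\tfrac12-b)$ times the increment $N_a(2T)-N_a(T)$ supplied by Theorem \ref{th:riemannmangoldt} yields
$$
\sum_{T<\gamma_a\leq 2T}(\beta_a-\tfrac12)=S_+-S_-=O(T),
$$
so it suffices to establish the bound $S_+\ll T\sqrt{\log\log T}$, since then $S_-\ll T\sqrt{\log\log T}$ follows automatically.

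To bound $S_+$, I would apply Littlewood's lemma to the rectangle $\tfrac12\leq\sigma\leq c$, $T<t\leq 2T$, choosing $c$ large and perturbing the horizontal edges by $O(1)$ so that $\L-a$ is zero-free on the boundary (a standard technicality, with the resulting error absorbed in $O(\log T)$). The right edge integral equals $T\log|1-a|+O(1)$ thanks to the absolute convergence of the Dirichlet series (S.1), while the horizontal edges contribute $O(\log T)$ by trivial estimates on $\L$. This gives
$$
2\pi S_+ = \int_T^{2T}\log|\L(\tfrac12+it)-a|\d t + O(T).
$$
The elementary inequality $\log|\L-a|\leq|\log|\L||+C_a$ (split into $|\L|\leq 2|a|$ and $|\L|>2|a|$) reduces the matter to controlling $\int_T^{2T}|\log|\L(\tfrac12+it)||\d t$, which by Cauchy-Schwarz is $\ll(T\int_T^{2T}|\log|\L(\tfrac12+it)||^2\d t)^{1/2}\ll T\sqrt{\log\log T}$. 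Combining this with $S_+\geq UN^+(T)$ yields $N^+(T)\ll T\sqrt{\log\log T}/U=T\log T/\mu(T)$, and the same bound for $N^-(T)$ follows from $S_-\leq S_++O(T)$.

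The only substantive input outside the scope of what has already been recalled in the excerpt is the variance estimate $\int_T^{2T}|\log|\L(\tfrac12+it)||^2\d t\ll T\log\log T$ for $\L\in\Sc^*$; this is the \emph{main obstacle} in the sense that the whole refinement hinges upon it, but it is precisely the content of Selberg's moment computations whose culmination is the central limit law \eqref{centrallimitlaw}, and is valid exactly because membership in $\Sc^*$ enforces both the zero-density estimate (DH) and the prime coefficient condition (S.6$^*$). (For $\L=\zeta$ this is made explicit in Tsang \cite{tsang:1984}; for the general case in $\Sc^*$ it is part of Selberg's original treatment \cite{selberg:1992} and is reproduced, for instance, in Hejhal's exposition \cite{hejhal:2000}.) With this bound in hand the rest of the argument, as outlined above, is a straightforward book-keeping exercise; no further structural hypothesis on $\L$ is needed.
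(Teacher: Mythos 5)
Your proposal is correct and its skeleton is the same as the paper's: Littlewood's lemma converts $S_+ := \sum_{\beta_a>1/2,\,T<\gamma_a\leq 2T}(\beta_a-\tfrac12)$ into the integral $\int_T^{2T}\log|\L(\tfrac12+it)-a|\,\d t$ (up to $-T\log|1-a|+O(\log T)$), the trivial estimate $S_+\geq U\,N^+(T)$ handles the $a$-points to the right of the strip, and Lemma \ref{lem:steuding1} together with Theorem \ref{th:riemannmangoldt} pins down $\sum_{T<\gamma_a\leq 2T}(\beta_a-\tfrac12)=O(T)$, whence $N^-(T)$ is controlled symmetrically. The one genuine difference is the moment ingredient you feed into this scheme. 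The paper invokes Selberg's two-sided asymptotic \eqref{eq:momentselberg2}, namely $\int_T^{2T}\log|\L(\tfrac12+it)-a|\,\d t = \frac{\sqrt{n_\L}}{2\sqrt{\pi}}T\sqrt{\log\log T}+O_{|a|}(T)$, whereas you only need a one-sided upper bound, which you extract from the variance estimate $\int_T^{2T}|\log|\L(\tfrac12+it)||^2\,\d t\ll T\log\log T$ via the pointwise inequality $\log|\L-a|\leq|\log|\L||+C_a$ and Cauchy--Schwarz. Both inputs come from the same Selberg--Tsang moment machinery, so the saving is modest, but your route is a clean modularization: it isolates the $a$-dependence into an elementary inequality and asks for only an upper bound on a moment of $\log|\L|$ rather than a precise asymptotic for $\log|\L-a|$.

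Two small nits. The right-edge contribution for fixed large $c$ is $T\log|1-a|+O(T\cdot 2^{-c})$, which is $O(T)$, not $O(1)$; this is harmless here since your error budget is $O(T)$ anyway. And, like the paper, you implicitly assume $a\neq 1$ (so that $\log|1-a|$ is finite); the case $a=1$ requires the adjusted formulas the paper describes at the end of Section \ref{sec:apointsgeneral}, working with $q^s(\L(s)-1)/a(q)$ in place of $\L(s)-a$.
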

In the proof of Theorem \ref{th:levinsonselberg}, we follow strongly the ideas of Levinson \cite{levinson:1975}.
\begin{proof}
Let $\L\in\Sc$ and let $a\in\C\setminus\{1\}$. It follows from Littlewood's lemma that
\begin{equation}\label{eq:momentselberg}
\sum_{\begin{subarray}{c} T< \gamma_a \leq 2T \\ \beta_a>\frac{1}{2} \end{subarray} } \left(\beta_a - \frac{1}{2} \right) = \frac{1}{2\pi}\int_0^T \log \left| \L(\tfrac{1}{2}+it) - a\right| \d t - \frac{T}{2\pi}\log \left|1-a \right| + O(\log T).
\end{equation}
For details we refer to Levinson \cite[Lemma 2]{levinson:1975}, Steuding \cite[Chapt. VII, proof of Theorem 7.1]{steuding:2007} or Selberg \cite[eq. (3.5)]{selberg:1992}. In the case of the Riemann zeta-function, Levinson \cite{levinson:1975} obtained the assertion of Theorem \ref{th:levinson} by bounding the left-hand side of \eqref{eq:momentselberg} by $O(T\log\log T)$; here, he basically used Jensen's inequality in combination with the asymptotic formula $\int_0^T |\zeta(\frac{1}{2}+it)|^2\d t\sim T\log T$, as $T\rightarrow\infty$. Similarly, Steuding \cite{steuding:2007} obtained the assertion of Theorem \ref{th:steuding3} by bounding the left-hand side of \eqref{eq:momentselberg} by  $O(\eps T\log T)$ and then following basically Levinson's ideas. In our case, we brush up Levinson's approach, by using a precise asymptotic expansion for the integral on the right-hand side of \eqref{eq:momentselberg}, which is due to Selberg \cite{selberg:1992}. The latter obtained that, for any $\L\in\Sc^{*}$,
\begin{equation}\label{eq:momentselberg2}
\int_{T}^{2T} \log \left| \L(\tfrac{1}{2}+it) - a\right| \d t = \frac{\sqrt{n_{\L}}}{2\sqrt{\pi}} T \sqrt{\log\log T} + O_{|a|}(T);
\end{equation}
where the quantity $n_{\L}$ is defined by Selberg's prime coefficient condition (S.6$^*$). In some places, Selberg's proof seems a bit sketchy. We refer to Tsang \cite[\S 8]{tsang:1984}, who carried out all details in the case of the Riemann zeta-function, and to Hejhal \cite[\S 4]{hejhal:2000}, who provided a thorough description of Selberg's method.\footnote{Hejhal \cite{hejhal:2000} assumes additionally that $\L\in\Sc^*$ has a polynomial Euler product (S.3$^*$). However, this assumption is only needed for later purposes and not for the results in \S 4.}\par 
Suppose that $\L\in\Sc^{*}$. Then, by combining \eqref{eq:momentselberg} with \eqref{eq:momentselberg2}, we get that
\begin{equation}\label{eq:apointslittlewood}
\sum_{\begin{subarray}{c} \beta_a>\frac{1}{2} \\ T<\gamma_a\leq 2T \end{subarray}} \left(\beta_a-\frac{1}{2}\right)
= \frac{\sqrt{n_{\L}}}{4\pi^{3/2}} T \sqrt{\log\log T} + O_{|a|}(T).
\end{equation}
Let $\mu:[2,\infty)\rightarrow\R^+$ be a positive function with $\lim_{t\rightarrow\infty}\mu(t) = \infty$. For positive $T$, we denote by $N_{a}^{(1)}(T)$ the number of $a$-points $\rho_a = \beta_a + i\gamma_a$ of $\L$ with
$$
\beta_a > \frac{1}{2}+ \frac{\mu(T)\sqrt{\log\log T}}{\log T}, \qquad T<\gamma_a\leq 2T; 
$$ 
by $N_{a}^{(2)}(T)$ the number of $a$-points with 
$$
 \frac{1}{2}- \frac{\mu(T)\sqrt{\log\log T}}{\log T} \leq \beta_a \leq \frac{1}{2}+ \frac{\mu(T)\sqrt{\log\log T}}{\log T}, \qquad T<\gamma_a\leq 2T; 
$$ 
and by $N_{a}^{(3)}(T)$ the number of $a$-points with
$$
 \beta_a < \frac{1}{2} -\frac{\mu(T)\sqrt{\log\log T}}{\log T}, \qquad T<\gamma_a\leq 2T.
$$ 
The trivial estimate
$$
\sum_{\begin{subarray}{c} \beta_a>\frac{1}{2} \\ T<\gamma_a\leq 2T \end{subarray}} \left(\beta_a-\frac{1}{2}\right) \geq \frac{\mu(T)\sqrt{\log\log T}}{\log T} N_{a}^{(1)}(T)
$$
yields in combination with \eqref{eq:apointslittlewood} that, for sufficiently large $T$,
$$
N_{a}^{(1)}(T) \ll \frac{T\log T}{\mu(T)}.
$$
Moreover, for any real $b$,
\begin{align*}
\sum_{T<\gamma_a \leq 2T} \left(\beta_a + b \right) & = 
\sum_{\begin{subarray}{c} \beta_a>\frac{1}{2} \\ T<\gamma_a\leq 2T \end{subarray}} \left(\beta_a - \frac{1}{2} \right) + \sum_{\begin{subarray}{c} \beta_a\leq \frac{1}{2} \\ T<\gamma_a\leq 2T \end{subarray}}\left(\beta_a - \frac{1}{2} \right)  + \sum_{T<\gamma_a \leq 2T} \left(\frac{1}{2} + b \right) \\
&\leq \sum_{\begin{subarray}{c} \beta_a>\frac{1}{2} \\ T<\gamma_a\leq 2T \end{subarray}} \left(\beta_a - \frac{1}{2} \right) - \frac{\mu(T)\sqrt{\log\log T}}{\log T} N_a^{(3)}(T) + \\
&\qquad \qquad\qquad +\left(\frac{1}{2} + b \right)\left( N_{a}^{(1)}(T)+N_{a}^{(2)}(T)+N_{a}^{(3)}(T)\right).
\end{align*}
If we take $b$ sufficiently large, we get by means of the asymptotic extensions \eqref{lem:steuding1} and \eqref{eq:apointslittlewood} and the Riemann-von Mangoldt  formula \eqref{eq:riemannmangoldt} that, for sufficiently large $T$,
$$
N_{a}^{(3)}(T) \ll \frac{T\log T}{\mu(T)}.
$$
This proves the assertion for $a\in\C\setminus\{1\}$. The case $a=1$ can be treated in a similar manner by relying on suitably adjusted formulas; see the remark at the end of the preceeding section.
\end{proof}
Theorems \ref{th:steuding3}, \ref{th:levinson} and \ref{th:levinsonselberg} hold in an analogous manner for $a$-points in the lower half-plane.\par

{\bf $a$-points close to the critical line with real part $\beta_a<\frac{1}{2}$.} Suppose that $\L\in\Sc^*$. Unconditionally, almost nothing is known about how the $a$-points of Theorem \ref{th:levinsonselberg}, which lie in the strip \eqref{eq:striplevinsonselberg}, are distributed to the left and to the right of the critical line. Under the assumption of the Riemann hypothesis, Selberg \cite{selberg:1992} obtained the following: for $T\geq 2$ and $\mu>0$, let
$$
\sigma(T,\mu):= \frac{1}{2}-\mu\frac{\sqrt{\log\log T}}{\log T}.
$$
Then, for every $a\in\C\setminus\{0\}$, as $T\rightarrow\infty$,
\begin{equation}\label{eq:selberg_aleft}
N_a(\sigma(T,\mu),T) \sim N_a (T) \cdot \frac{1}{\sqrt{2\pi}}\int_{-\mu'}^{\infty}e^{-x^2/2} \d x
\end{equation}
with
$$
\mu':= \frac{\sqrt{2}d_{\L}}{n_{\L}}\mu.
$$
Recall that $\varphi(x)= \frac{1}{\sqrt{2\pi}}e^{-x^2/2}$ defines the density function of the Gaussian normal distribution. Thus, roughly speaking, Selberg's result states that about half of the $a$-points of $\L$ lie to the left of the critical line and are statistically well distributed at distances of order $\sqrt{\log\log T}/\log T$. In particular, this means that the left bound of the strip \eqref{eq:striplevinsonselberg} seems to be best possible. Moreover, by assuming the Riemann hypothesis, Selberg deduced, that, as $T\rightarrow\infty$, most of the other $a$-points with $T<\gamma_a\leq 2T$ in the strip \eqref{eq:striplevinsonselberg} lie quite close to the critical line at distances of order not exceeding
$$
O\left(\frac{(\log\log\log T)^3}{\log T \sqrt{\log\log T}} \right).
$$
The proof presented by Selberg \cite{selberg:1992} is quite sketchy. For a rigorous proof in the special case of the Riemann zeta-function, we refer to Tsang \cite[Theorem 8.3]{tsang:1984}. Selberg \cite{selberg:1992} also provides some heuristic reason in support of the following conjecture.\par
{\bf Selberg's $a$-point conjecture.} {\it
Let $\L\in\Sc$. For any $a\in\C\setminus\{0\}$, as $T\rightarrow\infty$, about $3/4$-th of all non-trivial $a$-points with $T<\gamma_a\leq 2T$ lie to the left of the critical line, and about $1/4$-th of all non-trivial $a$-points with $T<\gamma_a\leq 2T$ to its right.}

\newpage

\section{a-points near the critical line - approach via normality theory}\label{sec:apointsnormality}
In this section we use the concept of filling discs to investigate the $a$-point distribution of functions $\L\in\Sc^{\#}$ near the critical line. Many of our results hold even for functions in the more general class $\mathcal{G}$, introduced in Chapter \ref{chapt:classG}. 
\subsection[Filling discs, Julia directions and Julia lines - Basic properties]{Filling discs, Julia directions and Julia lines - Definitions and basic properties}\label{sec:fillingdiscs_basics}
We introduce the notion of filling discs, Julia directions and Julia lines. Roughly speaking, these concepts allow a more precise formulation of Picard's great theorem.\par 
Recall that $\mathcal{M}(\Omega)$, resp. $\mathcal{H}(\Omega)$, denotes the set of all functions which are meromorphic, resp. analytic, on a domain $\Omega\subset\C$. We say that $f\in\mathcal{M}(\Omega)$ satisfies the {\it Picard-property} on $E\subset \Omega$ if $f$ assumes on $E$ every value $a\in\widehat{\C}$, with at most two exceptions.\par
{\bf Filling discs.} We call a sequence of discs $D_{\lambda_n}(z_n) \subset \Omega$, $n\in\N$, a sequence of {\it filling discs for $f\in\mathcal{M}(\Omega)$} if, for arbitrary $0<\eps\leq 1$, the function $f$ satisfies the Picard-property on every infinite union of the discs $D_{\eps\lambda_n}(z_n)$, i.e. on every set of the form
$$
E:= \bigcup_{k\in\N} D_{\eps \lambda_{n_k}} (z_{n_k}) \qquad \mbox{with }0<\eps\leq1 .
\footnote{In the literature it is convenient to demand additionally the growth condition $\lambda_n\leq |z_n|$ for the radii of the filling discs. For our purpose, however, this restriction is not relevant.}
$$
We have necessarily that $\lim_{n\rightarrow\infty} z_n \in \partial \Omega$. Due to rich contributions by French mathematical schools,\footnote{Amongst others, the names of Julia, Milloux and Valiron are here to mention.} filling discs are sometimes referred to as {\it cercles de remplissage}.\par 
By Montel's fundamental normality test (Theorem \ref{th:FNT1}) and basic convergence properties, the existence of a sequence of filling discs is strongly connected to the non-normality of a certain family. In fact it is Montel's fundamental normality test that motivates the definition of filling discs and provides a first characterization of the latter.
\begin{proposition}\label{prop:charfilldiscs}
Let $f\in\mathcal{M}(\Omega)$. Suppose that $(z_n)_n$ is a sequence of points $z_n\in \Omega$ and $(\lambda_n)_n$ a sequence of positive real numbers such that $D_{\lambda_n} (z_n) \subset \Omega$ for $n\in\N$. Let $f_n:\D\rightarrow\C$ be defined by $f_n(z):=f(z_n +\lambda_n z)$.\\
Then, the discs $D_{\lambda_n}(z_n)$, $n\in\N$, form a sequence of filling discs for $f$ if and only if every infinite subset of the family $\mathcal{F}:=\{f_n\}_n$ is not normal at zero.
\end{proposition}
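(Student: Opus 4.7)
The statement is a direct translation of Montel's fundamental normality test (FNT) into the language of filling discs, so my proof would consist of one direction that is essentially FNT and one direction that is an unpacking of what locally uniform spherical convergence means. I will show the equivalence by establishing the contrapositive of each implication.

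\textbf{From non-normality to the filling property.} Suppose that every infinite subfamily of $\mathcal{F}$ is non-normal at zero. Fix $0<\eps\leq 1$ and an arbitrary infinite subsequence $(n_k)_k$, and set $E:=\bigcup_k D_{\eps\lambda_{n_k}}(z_{n_k})$. The substitution $w = z_{n_k}+\lambda_{n_k}z$ identifies $f_{n_k}(D_\eps(0))$ with $f(D_{\eps\lambda_{n_k}}(z_{n_k}))$, so that $f(E)=\bigcup_k f_{n_k}(D_\eps(0))$. If $f$ were to omit three distinct values $a,b,c\in\widehat{\C}$ on $E$, then every member of the family $\{f_{n_k}|_{D_\eps(0)}\}_k$ would omit $a,b,c$. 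By Montel's fundamental normality test this family would be normal on $D_\eps(0)$, in particular at the origin, contradicting our assumption. Hence $f$ omits at most two values on $E$, which is the Picard property.

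\textbf{From the filling property to non-normality.} Conversely, suppose that $(D_{\lambda_n}(z_n))_n$ is a sequence of filling discs and, for contradiction, that some infinite subfamily $(f_{n_k})_k$ is normal at $0$. Then there exists $r\in(0,1]$ and a subsequence $(f_{n_{k_j}})_j$ converging locally uniformly in the spherical metric on $D_r(0)$ to a limit $g$ which is either meromorphic on $D_r(0)$ or identically $\infty$. I would split into three cases and in each produce an $\eps\in(0,1]$ such that $g(\overline{D_\eps(0)})$ is contained in a proper compact subset of $\widehat{\C}$, whose complement contains a full open disc in the spherical metric:
\begin{itemize}
\item If $g\equiv\infty$ on $D_r(0)$, pick $\eps=\min(1,r/2)$ and use that $|f_{n_{k_j}}|\to\infty$ uniformly on $\overline{D_\eps(0)}$.
\item If $g\equiv c\in\C$, pick $\eps=\min(1,r/2)$ and use uniform convergence to confine $f_{n_{k_j}}$ to a bounded disc around $c$.
\item If $g$ is non-constant meromorphic, choose $\eps$ so small that the Taylor (or Laurent) expansion of $g$ at $0$ forces $g(\overline{D_\eps(0)})$ to lie either in a bounded disc around $g(0)$ (when $g(0)\in\C$) or in $\{|w|>M\}\cup\{\infty\}$ (when $0$ is a pole); in both sub-cases, for small $\eps$, $g(\overline{D_\eps(0)})$ omits an open set of values.
\end{itemize}
In each case, by uniform spherical convergence on $\overline{D_\eps(0)}$, the images $f_{n_{k_j}}(\overline{D_\eps(0)})$ stay (for all large $j$) in a slightly enlarged compact set that still omits an infinite set $V\subset\widehat{\C}$. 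Passing back through the substitution, $f$ omits every value in $V$ on the infinite union $\bigcup_{j\geq j_0}D_{\eps\lambda_{n_{k_j}}}(z_{n_{k_j}})$, contradicting the filling property (which allows at most two omitted values).

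\textbf{Main obstacle.} The only subtle point is the last case: one must check that for a non-constant meromorphic $g$ the image of a sufficiently small disc around $0$ really is a proper subset of $\widehat{\C}$. This is where it is essential to view $g$ as a continuous map into the Riemann sphere, so that the existence of a Laurent expansion at $0$ (finite value or pole) localises the image near $g(0)\in\widehat{\C}$ as $\eps\to 0$. Everything else is bookkeeping: ensuring $\eps\leq 1$, rewriting discs via the substitution, and invoking FNT.
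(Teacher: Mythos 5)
Your proof is correct and takes essentially the same route as the paper: the forward direction is a direct application of Montel's fundamental normality test under the change of variables $w=z_n+\lambda_n z$, and the converse passes to a locally uniformly (spherically) convergent subsequence and argues that the limit's image of a sufficiently small closed disc omits an open set of values, contradicting the Picard property on the corresponding infinite union. The only difference is cosmetic: you spell out the case distinction (limit $\equiv\infty$, constant, or non-constant meromorphic) that the paper compresses into one sentence about the limit being meromorphic and its image of a small disc omitting a non-empty open subset of $\C$.
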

The assertion of Proposition \ref{prop:charfilldiscs} was observed by many people and, essentially, goes back to Montel \cite{montel:1912} and Julia \cite{julia:1919}.
\begin{proof}
Certainly, the functions $f_n$ are well-defined on $\D$.
Suppose that every infinite subset of the family $\mathcal{F}:=\{f_n\}_n$ is not normal at zero. Then, Montel's fundamental normality test (Theorem \ref{th:FNT1}) implies that every infinite subset of $\mathcal{F}$ can omit at most two values $a,b\in\widehat{\C}$ on any  neighbourhood $D_{\eps}(0)$ of zero with $0<\eps\leq 1$. By observing that $f_n(D_{\eps}(0)) = D_{\eps\lambda_n}(z_n)$, this is equivalent to the statement that the discs $D_{\lambda_n}(z_n)$, $n\in\N$, form a sequence of filling discs for $f$.\par
Now, suppose that the discs $D_{\lambda_n}(z_n)$, $n\in\N$, form a sequence of filling discs for $f$. Assume that there exists a subsequence $(f_{n_k})_k$ of $(f_n)_n$ which converges locally uniformly on some disc $D_{\eps}(0)$ with $0<\eps\leq 1$. Then, by the theorem of Weierstrass (Theorem \ref{th:weierstrass}), its limit function $\phi$ is meromorphic on $D_{\eps}(0)$. Consequently, taking $0<\eps'<\eps$ small enough, $\phi(\overline{D_{\eps'}(0)})$ omits a non-empty open subset of $\C$. By uniform convergence, this implies that $\bigcup_{k\geq K} f_{n_k}(\overline{D_{\eps'}(0)})$ omits more than three values for any sufficiently large $K\in\N$, contradicting the definition of filling discs. Consequently, every infinite subset of $\mathcal{F}$ is not normal at zero.
\end{proof}
By combining Proposition \ref{prop:charfilldiscs} with Marty's theorem (Theorem \ref{th:marty}), Lehto  \cite{lehto:1958}
derived a powerful characterization of filling discs; we refer here also to Clunie \& Hayman \cite{cluniehayman:1966} and Sauer \cite{sauer:2002} who pointed out a slight inexactness in Lehto's original formulation. For $f\in\mathcal{M}(\Omega)$, the spherical derivative $f^{\#}$ is defined by
$$
f^{\#} (z) :=  \frac{|f'(z)|}{1+|f(z)|^2}, \qquad z\in\Omega;
$$
for details we refer to the appendix.
\begin{theorem}[Lehto's criterion, 1958]\label{th:lehto}
Let $f\in\mathcal{M}(\Omega)$. Suppose that $(z_n)_n$ is a sequence of points $z_n\in \Omega$ and $(\lambda_n)_n$ a sequence of positive real numbers such that $D_{\lambda_n} (z_n) \subset \Omega$ for $n\in\N$. Then, the discs $D_{\lambda_n}(z_n)$, $n\in\N$, form a sequence of filling discs for $f$ if and only if there is a sequence $(w_n)_n$ of points $w_n\in \Omega$ such that
\begin{equation}\label{crit_fillingdiscs}
\lim_{n\rightarrow\infty}\lambda_n f^{\#}(w_n) = \infty
\qquad \mbox{and} \qquad
|z_n-w_n|=o(\lambda_n).
\end{equation}
\end{theorem}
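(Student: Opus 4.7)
The natural approach is to reduce the criterion, via Proposition~\ref{prop:charfilldiscs}, to a statement about non-normality of the rescaled family $\mathcal{F}=\{f_n\}$ with $f_n(z)=f(z_n+\lambda_n z)$ at the origin, and then to translate non-normality into the blow-up of spherical derivatives using Marty's theorem. The key computation is the chain rule identity
$$
f_n^{\#}(\zeta)=\lambda_n\,f^{\#}(z_n+\lambda_n\zeta),\qquad \zeta\in\D,
$$
which shows that finding $w_n$ satisfying \eqref{crit_fillingdiscs} is equivalent to finding $\zeta_n\in\D$ with $\zeta_n\to 0$ and $f_n^{\#}(\zeta_n)\to\infty$, via the substitution $\zeta_n:=(w_n-z_n)/\lambda_n$.

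\textbf{Sufficiency.} Suppose such a sequence $(\zeta_n)$ exists. For any infinite subset $(f_{n_k})_k\subset\mathcal{F}$, the subsequence $(\zeta_{n_k})_k$ still tends to zero while $f_{n_k}^{\#}(\zeta_{n_k})\to\infty$. Hence the spherical derivatives of $(f_{n_k})_k$ are not locally bounded on any neighbourhood of zero, so by Marty's theorem the subfamily fails to be normal at zero. Applying Proposition~\ref{prop:charfilldiscs}, the discs $D_{\lambda_n}(z_n)$ form a sequence of filling discs for $f$.

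\textbf{Necessity.} Assume the discs are filling. For each integer $m\geq 1$ set
$$
S_m:=\bigl\{n\in\mathbb{N}:\exists\,\zeta\in\overline{D_{1/m}(0)}\text{ with }f_n^{\#}(\zeta)\geq m\bigr\}.
$$
The plan is to show that every $S_m$ is cofinite. Indeed, if $\mathbb{N}\setminus S_m$ were infinite, the subfamily $\{f_n:n\notin S_m\}$ would satisfy $\sup_{|\zeta|\leq 1/m}f_n^{\#}(\zeta)< m$, so by Marty's theorem it would be normal on $D_{1/m}(0)$; but by Proposition~\ref{prop:charfilldiscs} every infinite subset of $\mathcal{F}$ is non-normal at zero, giving a contradiction. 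With all $S_m$ cofinite, define
$$
M_n:=\max\bigl\{m\leq n:n\in S_m\bigr\}
$$
(setting $M_n:=0$ if the set is empty). For each fixed $m$, cofiniteness of $S_m$ forces $M_n\geq m$ for all sufficiently large $n$, so $M_n\to\infty$. For large $n$ we may therefore pick $\zeta_n$ with $|\zeta_n|\leq 1/M_n$ and $f_n^{\#}(\zeta_n)\geq M_n$; setting $w_n:=z_n+\lambda_n\zeta_n$ yields $|w_n-z_n|\leq\lambda_n/M_n=o(\lambda_n)$ and $\lambda_n f^{\#}(w_n)=f_n^{\#}(\zeta_n)\to\infty$.

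\textbf{Main obstacle.} The delicate point is the necessity direction: the filling-disc hypothesis provides non-normality at zero for the family and for every infinite subfamily, but a priori only yields, for each $\eps>0$, a subsequence along which the spherical derivatives blow up in $D_{\eps}(0)$; one needs a single sequence $(w_n)$ indexed by all (or cofinitely many) $n\in\mathbb{N}$. Exploiting non-normality along \emph{every} subsequence rather than just the full sequence is precisely what promotes the sets $S_m$ to being cofinite, and this is the step where the full strength of the definition of filling discs is used.
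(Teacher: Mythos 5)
Your proof is correct, and it realizes precisely the strategy the paper sketches but does not carry out: the paper itself gives no proof of Theorem~\ref{th:lehto}, merely remarking that it follows ``by combining Proposition~\ref{prop:charfilldiscs} with Marty's theorem'' and referring to Lehto, Clunie--Hayman and Sauer for details. Your chain-rule identity $f_n^{\#}(\zeta)=\lambda_n f^{\#}(z_n+\lambda_n\zeta)$ is the right bridge, the sufficiency direction via Marty applied to subsequences is fine, and the necessity argument correctly exploits the full force of the filling-disc definition (non-normality of \emph{every} infinite subfamily) through the cofiniteness of the sets $S_m$, which is exactly the delicate step that separates ``a subsequence of blow-up points'' from ``a single sequence $(w_n)$ indexed by all large $n$.'' The only cosmetic adjustments are to start $m$ at $2$ so that $\overline{D_{1/m}(0)}\subset\D$, and to note that $\{m:n\in S_m\}$ is automatically finite because $f_n^{\#}$ is continuous (hence bounded on compact sets), so the artificial cap $m\le n$ is unnecessary; neither affects the argument.
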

Lehto's criterion will play a central role in our further investigations.\par

In a half-strip setting, we equip sequences of filling discs with certain counting functions. Let $S$ be a vertical half-strip in the upper half-plane defined by
\begin{equation}\label{halfstrip}
-\infty< x_1 < \Re \ z < x_2 <+\infty, \qquad \Im\ z >0
\end{equation}
For a sequence $(z_n)_n$ of points $z_n\in S$ with $\lim_{n\rightarrow\infty} z_n = \infty$, we define $N_{\{z_n\}_n} (T) $ to be the number of elements in $\{z_n\}_n$ with imaginary part less than $T$. Similarly, for $f\in\mathcal{M}(S)$, any subset $E\subset S$ and any $a\in\widehat{\C}$, let $N_{a}(E,T)$ denote the number of $a$-points of $f$ on $E$ with imaginary part less than $T$. 

\begin{lemma}\label{lem:countingfillingdiscs}
Let $S$ be a half-strip defined by \eqref{halfstrip} and $f\in\mathcal{H}(S)$.
Let $D_{\lambda_n}(z_n)$, $n\in\N$, be a sequence of filling discs for $f$ in $S$ such that $\lim_{n\rightarrow\infty} z_n = \infty$ and $D_{\lambda_n}(z_n)\cap D_{\lambda_m}(z_m)=\emptyset$ for $n\neq m$. Let $E:=\bigcup_{n\in\N} D_{\lambda_n}(z_n)$. Then, for all but at most one $a\in\C$,
$$
\limsup_{T\rightarrow\infty} \frac{N_{a}(E,T)}{N_{\{z_n\}_n}(T)} > 0.
$$
\end{lemma}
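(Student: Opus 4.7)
The plan is to argue by contradiction. Suppose that two distinct values $a_1,a_2\in\C$ satisfy
$$\limsup_{T\to\infty}\frac{N_{a_j}(E,T)}{N_{\{z_n\}_n}(T)}=0,\qquad j=1,2;$$
by non-negativity these $\limsup$'s are in fact limits. Define
$$I_j:=\{n\in\N : D_{\lambda_n}(z_n)\text{ contains no }a_j\text{-point of }f\},\qquad j=1,2,$$
and $J_j:=\N\setminus I_j$. The strategy is to reduce the lemma to showing $I_1\cap I_2$ is infinite. Once this is achieved, one applies the filling-disc property (with $\varepsilon=1$) to the subsequence indexed by $I_1\cap I_2$: on the infinite subunion $E':=\bigcup_{n\in I_1\cap I_2}D_{\lambda_n}(z_n)$ the function $f$ must take every value in $\widehat{\C}$ with at most two exceptions. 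But $f$ omits $a_1$ and $a_2$ on $E'$ by the very definition of $I_1\cap I_2$, and since $f\in\mathcal{H}(S)$ has no poles it also omits $\infty$ there. These three distinct exceptional values contradict the Picard property guaranteed by the filling discs, and the lemma follows.

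The heart of the argument is therefore the infinitude of $I_1\cap I_2$. Here the disjointness of the discs together with the bounded width of the half-strip play the central role. Every radius satisfies $\lambda_n\le C:=(x_2-x_1)/2$, and any disc $D_{\lambda_n}(z_n)$ with $\Im z_n+\lambda_n<T$ lies entirely below the line $\Im w=T$. Each such disc whose index belongs to $J_j$ contains at least one $a_j$-point with imaginary part less than $T$; disjointness forces the contributions coming from different discs to be distinct, which gives
$$N_{a_1}(E,T)+N_{a_2}(E,T)\ \geq\ \bigl|(J_1\cup J_2)\cap\{n:\Im z_n+\lambda_n<T\}\bigr|\ \geq\ \bigl|\{n:\Im z_n+\lambda_n<T\}\bigr|-|I_1\cap I_2|.$$
Were $|I_1\cap I_2|<\infty$, dividing by $N_{\{z_n\}_n}(T)$ and invoking the hypothesis would force
$$\limsup_{T\to\infty}\frac{|\{n:\Im z_n+\lambda_n<T\}|}{N_{\{z_n\}_n}(T)}=0,$$
that is, the number of discs entirely below the line $\Im w=T$ would be asymptotically negligible compared to the number of discs whose centre lies below $\Im w=T$.

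The step that requires the most care, and which I view as the main obstacle, is excluding this last equality by exploiting the geometry of disjoint discs in the finite-width strip $S$. The plan is to select the test parameter along a well-chosen sequence $T_k\to\infty$ — for example values lying between consecutive imaginary levels at which discs accumulate, or numbers of the form $\Im z_{n_k}+\lambda_{n_k}+\delta_k$ extracted from a suitably sparse subsequence of indices — so that along $T_k$ the discs that straddle the line $\Im w=T_k$ make up at most a proper fraction of $N_{\{z_n\}_n}(T_k)$. Because the discs are pairwise disjoint inside a strip of finite width $x_2-x_1$, such `quiet' sequences always exist, and along any one of them the number of discs lying fully below $\Im w=T_k$ stays a positive proportion of $N_{\{z_n\}_n}(T_k)$; combined with the displayed inequality this contradicts the supposed decay of both $N_{a_j}(E,T)/N_{\{z_n\}_n}(T)$, forcing $I_1\cap I_2$ to be infinite and closing the proof.
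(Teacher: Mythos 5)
Your route is genuinely different from the paper's. The paper assumes a single exceptional value $a$, extracts the subfamily of discs that omit $a$ (and $\infty$), and applies Carath\'eodory's extension of Montel's fundamental normality test (two omitted values plus a bounded count of $c$-points) to conclude that every other value $b$ is assumed with multiplicity tending to infinity, which yields the stronger conclusion $\liminf_{T\to\infty} N_b(E,T)/N_{\{z_n\}_n}(T) = \infty$ for every $b\neq a$. You instead postulate two exceptional values, reduce the lemma to showing $I_1\cap I_2$ is infinite, and close with the plain Picard property that is built into the filling-disc definition (with $\varepsilon=1$). Your version is conceptually lighter, needing only the basic Montel FNT rather than the Carath\'eodory refinement, but it only proves the bare $\limsup>0$ statement rather than the quantitative blow-up the paper's argument delivers.

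The step you yourself flag as the main obstacle, namely the infinitude of $I_1\cap I_2$, equivalently $\limsup_T M(T)/N_{\{z_n\}_n}(T)>0$, is not actually established in the proposal. The suggested device of choosing $T_k$ "between consecutive accumulation levels" or as $\Im z_{n_k}+\lambda_{n_k}+\delta_k$ is not the right mechanism: disjointness alone does not prevent the counting function $N(T):=N_{\{z_n\}_n}(T)$ from growing so violently (with $\lambda_n$ shrinking fast) that no pointwise choice of $T_k$ controls the straddling discs. What does work is an \emph{averaged} estimate. Integrating the straddling count over height gives $\int_{T_0}^{T}(N(t)-M(t))\,dt\le\sum_{\Im z_n<T}\lambda_n$; by Cauchy--Schwarz and the disjointness-area bound $\sum_{\Im z_n<T}\pi\lambda_n^2\le(x_2-x_1)\,(T+O(1))$ this is $\le\sqrt{N(T)\,(x_2-x_1)(T+O(1))/\pi}$. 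If $M(T)/N(T)\to0$ then also $\int_{T_0}^{T}(N-M)\ge\tfrac12\int_{T_0}^{T}N$, and writing $g(T):=\int_{T_0}^{T}N$ one obtains the Riccati-type inequality $g(T)^2\ll T\,g'(T)$; integrating $g'/g^2\gg 1/T$ forces $1/g(T_1)-1/g(T)\to\infty$, which is impossible since the left-hand side is bounded by $1/g(T_1)$. So your contradiction can indeed be reached, but that integral estimate is the genuine content of the step, and as written the proposal leaves it unproved; the heuristic about "quiet sequences" is not a substitute for it.
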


\begin{proof}\label{lem:fill_counting}
Assume that there is an $a\in\C$ such that 
$$
\limsup_{T\rightarrow\infty} \frac{N_{a}(E,T)}{N_{\{z_n\}_n}(T)} = 0.
$$
Then, since the discs $D_{\lambda_n}(z_n)$, $n\in\N$, are pairwise disjoint, there exists a subsequence $(z_{n_k})_k$ of $(z_n)_n$ with 
\begin{equation}\label{eq:countingsequence}
\lim_{T\rightarrow\infty}\frac{N_{\{z_{n_k}\}_k}(T)}{N_{\{z_n\}_n}(T)} =1
\end{equation}
such that $f$ omits the values $a$ and $\infty$ on $\bigcup_{k\in\N} D_{\lambda_{n_k}}(z_{n_k})$. Let $b\in\C\setminus\{a\}$. Then, we deduce from Proposition \ref{prop:charfilldiscs} and an extended version of Montel's fundamental normality test (Theorem \ref{th:FNTextension} (b)), for any $m\in\N$ there is an integer $K$ such that $f$ assumes the value $b$ more than $m$-times on every disc  $D_{\lambda_{n_k}}(z_{n_k})$ with $k\geq K$. Thus, since the discs $D_{\lambda_{n_k}}(z_{n_k})$, $k\in\N$, are pairwise disjoint, we obtain that for every $b\in\C\setminus\{a\}$ and every $m\in\N$
$$
\liminf_{T\rightarrow\infty} \frac{N_{b}(E,T)}{N_{\{z_{n_k}\}_k}(T)} > m.
$$
Together with \eqref{eq:countingsequence}, this implies that
$$
\liminf_{T\rightarrow\infty} \frac{N_{b}(E,T)}{N_{\{z_{n}\}_n}(T)} = \infty.
$$
As this holds for any $b\in\C\setminus\{a\}$, the assertion is proved.
\end{proof}

{\bf Julia directions.} A complex number $e^{i\theta_0}$ with $\theta_0\in\R$ is called {\it Julia direction for $f\in\mathcal{M}(\Omega)$ at $z_0\in\partial \Omega$} if $f$ satisfies the Picard-property in every sectorial domain
$$
E_{R,\eps}:=\left\{z_0 + re^{i\theta}\in\C \; : \; 0< r \leq R, \; |\theta-\theta_0|<\eps   \right\} \subset \Omega
$$
with any $\eps>0$ and any $R>0$. To define a Julia direction for $f$ at $z_0 = \infty$, one has to replace $E_{R,\eps}$ by
$$
E'_{R,\eps}:=\left\{re^{i\theta}\in\C \; : \;  r \geq R, \; |\theta-\theta_0|<\eps   \right\} \subset \Omega .
$$ 
If $e^{i\theta_0}$ is a Julia direction for $f$ at $z_0$, then $f$ assumes every  value $a\in\widehat{\C}$, with at most two exceptions, {\it infinitely often} on every set $E_{R,\eps}$, resp. $E'_{R,\eps}$. This follows immediately from the definition of a Julia direction.\par
Let $f$ be a function that is analytic in a punctured neighbourhood of an essential singularity $z_0 \in \C$. Lehto \cite{lehto:1958} proved that 
\begin{equation}\label{limsup_esssing}
\limsup_{z\rightarrow z_0} |z-z_0| f^{\#}(z) = \infty;
\end{equation}
here, one has to replace \eqref{limsup_esssing} by 
$
\limsup_{z\rightarrow z_0} |z| f^{\#}(z) = \infty
$
if the essential singularity $z_0$ lies at infinity. By means of Lehto's criterion, we deduce that there is a sequence of points $z_n\in\Omega$ with $\lim_{n\rightarrow\infty}z_n = z_0$ such that the discs $D_{\lambda_n}(z_n)$, $n\in\N$, with $\lambda_n= |z_n-z_0|$ form a sequence of filling discs for $f$. This implies, in particular, that $f$ satisfies the Picard-property in every punctured neighbourhood of $z_0$. Thus, we have reproduced the analytic version of Picard's classical theorem. Moreover, by choosing $e^{i\theta_0}$ as an accumulating point of the set $\{e^{i\arg z_n}\, :\, n\in\N\} \subset \partial\D$, we deduce that their exists a Julia direction for $f$ at $z_0$. Thus, we have reproduced the statement of Julia's classical result on Julia directions (see Burckel \cite[Theorem 12.27]{burckel:1979} and Julia \cite{julia:1919}).

{\bf Julia lines.} We call
$$
L := \{z_0 + re^{i\theta_0} \; : \; r\in\R \} \qquad \mbox{ with fixed } z_0\in\C, \; \theta_0\in[0,2\pi)
$$
a {\it Julia line} for $f\in\mathcal{M}(\C)$ if $f$ satisfies the Picard-property in every open strip containing the line $L$.\footnote{In some literature, the ray $re^{i\theta_0}$, $r\in\R^+$, connected to a Julia direction $e^{i\theta_0}$ is also called Julia line.} Certainly, one can use Lehto's criterion to detect Julia lines. The following corollary is a direct consequence thereof.
\begin{corollary}\label{cor:julia}
Let $f\in\mathcal{M}(\C)$. Suppose that, for a fixed $x\in\R$, there exists a sequence $(y_n)_n$ of positive real numbers $y_n$ with $\lim_{n\rightarrow\infty}y_n=\infty$ and a positive real number $\alpha$ such that
$$
\lim_{n\rightarrow\infty} |f(x+iy_n)| = \alpha  \qquad
\mbox{ and } \qquad \lim_{n\rightarrow\infty} |f'(x+iy_n)| = \infty. 
$$
Then, $\Re\ z = x$ defines a Julia line for $f$.
\end{corollary}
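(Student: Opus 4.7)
The plan is to deduce Corollary \ref{cor:julia} as a direct consequence of Lehto's criterion (Theorem \ref{th:lehto}) by producing, along the candidate Julia line, a sequence of filling discs whose radii can be made arbitrarily small.

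First, I would evaluate the spherical derivative at the given sequence. Since $|f(x+iy_n)|\to\alpha\in(0,\infty)$, the denominator satisfies $1+|f(x+iy_n)|^2\to 1+\alpha^2$, a finite positive limit. Combined with the hypothesis $|f'(x+iy_n)|\to\infty$, this gives
$$
f^{\#}(x+iy_n) \;=\; \frac{|f'(x+iy_n)|}{1+|f(x+iy_n)|^2}\;\longrightarrow\;\infty.
$$
Set $z_n:=x+iy_n$. Choose $\lambda_n:=\bigl(f^{\#}(z_n)\bigr)^{-1/2}$, so that $\lambda_n\to 0$ while
$$
\lambda_n f^{\#}(z_n) \;=\; \bigl(f^{\#}(z_n)\bigr)^{1/2}\;\longrightarrow\;\infty.
$$
Taking $w_n:=z_n$, the conditions \eqref{crit_fillingdiscs} of Lehto's criterion are trivially fulfilled (as $|z_n-w_n|=0=o(\lambda_n)$), and since $\Omega=\C$ the inclusions $D_{\lambda_n}(z_n)\subset\C$ are automatic. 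Thus the discs $D_{\lambda_n}(z_n)$, $n\in\N$, form a sequence of filling discs for $f$; note that $z_n\to\infty=\partial\C$, so they are admissible in the sense of the definition.

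Now let $L=\{x+it\,:\,t\in\R\}$ and let $S_\delta:=\{z\in\C\,:\,x-\delta<\Re\, z<x+\delta\}$ be an arbitrary open strip containing $L$. Since $\lambda_n\to 0$, there exists $n_0$ such that $\lambda_n<\delta$ for all $n\geq n_0$; in particular $D_{\lambda_n}(z_n)\subset S_\delta$ for every $n\geq n_0$. By the definition of filling discs (applied with $\eps=1$), the function $f$ assumes every value $a\in\widehat{\C}$, with at most two exceptions, on the infinite union
$$
E \;:=\; \bigcup_{n\geq n_0} D_{\lambda_n}(z_n)\;\subset\; S_\delta.
$$
A fortiori, $f$ satisfies the Picard property on the larger set $S_\delta$. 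As $\delta>0$ was arbitrary, $L$ is a Julia line for $f$.

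The only subtlety, and thus the main thing to get right, is the choice of radii: they must shrink fast enough to lie eventually inside every prescribed strip around $L$, yet slowly enough that $\lambda_n f^{\#}(z_n)\to\infty$ so that Lehto's criterion applies. The choice $\lambda_n=(f^{\#}(z_n))^{-1/2}$ balances both requirements, and no further computation is required.
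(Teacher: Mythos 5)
Your proof is correct and follows essentially the same route as the paper's: compute that $f^{\#}(x+iy_n)\to\infty$, invoke Lehto's criterion to produce filling discs centred on the line, and then read off the Julia-line property. The only difference is cosmetic: the paper exploits the fact that for \emph{any fixed} $\lambda>0$ one has $\lambda f^{\#}(x+iy_n)\to\infty$, hence for every $\lambda>0$ the discs $D_{\lambda}(x+iy_n)$, $n\in\N$, already form a sequence of filling discs; given a strip of half-width $\delta$ one simply takes $\lambda=\delta$. Your choice of shrinking radii $\lambda_n=(f^{\#}(z_n))^{-1/2}$ and the subsequent ``eventually inside the strip'' step therefore do the same work by a slightly longer path, but there is nothing wrong with it.
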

\begin{proof}
For arbitrary $\lambda>0$,
$$
\lim_{n\rightarrow\infty} \lambda f^{\#}(x_0+iy_n) =  \lim_{n\rightarrow\infty}  \lambda \ \frac{|f'(x_0+iy_n)|}{1+|f(x_0+iy_n)|^2} = \infty.
$$
By Lehto's criterion (Theorem \ref{th:lehto}), the discs $D_{\lambda}(x_0 + iy_n)$, $n\in\N$, form a sequence of filling discs for $f$ and the assertion follows.
\end{proof}

Mandelbrojt \& Gergen \cite{mandelbrojtgergen:1931} investigated Julia lines of entire functions defined by a generalized Dirichlet series
$$
A(s)=\sum_{n=0}^{\infty} a(n) e^{-\lambda_n s},
$$
which is absolutely convergent in $\C$ and satisfies 
$$
0=\lambda_0< \lambda_1 < \lambda_2 < ...\, , \qquad \lim_{n\rightarrow\infty}\lambda_n = \infty, \qquad a(n)\neq 0 \quad \mbox{ for all }n>0. 
$$
They derived conditions on the exponents $\lambda_n$ and on the growth behaviour of $A(s)$ which lead to the existence of horizontal Julia lines. Their results mainly rely on the fact that the quantities
$$
\limsup_{N\rightarrow\infty}\frac{\#\{\lambda_n \, : \, n\leq N\}}{N}, \qquad \mbox{ resp. } \quad \liminf_{n\rightarrow\infty}  (\lambda_{n+1} - \lambda_n), 
$$
are, at least in some mean sense, bounded from above, resp. from below. Thus, their results do not apply to ordinary Dirchlet series  with non-vanishing coefficients $a(n)$. For extended results in this direction, we refer to the monographies of Mandelbrojt \cite{mandelbrojt:1952, mandelbrojt:1969}; in \cite{mandelbrojt:1952} particularly to Chapter II for results on analytic functions in strips and Chapter VII for Picard-type results on generalized Dirichlet series in strips, in \cite{mandelbrojt:1969} to Chapter VI again for Picard-type results. One might interpret the theory developed by Mandelbrojt as a counterpart of lacunary power series for Dirichlet series. It seems that his theory is not appropriate to describe the value-distribution of functions in the extended Selberg class.

\subsection{Julia directions and Julia lines for the Riemann zeta-function}
We investigate Julia directions and Julia lines of the Riemann-zeta function. With suitable adaptions, similar results can be stated for many functions in the extended Selberg class. Using another terminology, the following theorem was basically stated by Garunkt\v{s}tis \& Steuding \cite{garunkstissteuding:2010}.
\begin{theorem}
The Julia directions of the Riemann zeta-function are given by $e^{i\pi/2}$, $e^{i\pi}$ and $e^{i 3\pi/2}$. \\
All Julia lines of the Riemann zeta-function are vertical lines. Every line defined by $\Re\ z =\sigma \in [\frac{1}{2},1]$ is a Julia line. There are no Julia lines among the lines $\Re \ z = \sigma \in (-\infty,0) \cup (1,\infty)$.  Assuming the Riemann hypothesis, the lines $\Re\ z =\sigma \in [0,\frac{1}{2})$ are no Julia lines, either.
\end{theorem}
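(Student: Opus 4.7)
The plan is to break the claim into three pieces: the three Julia directions at infinity, the vertical Julia lines in $[\tfrac{1}{2},1]$, and the lines which are \emph{not} Julia lines (both the non-vertical lines and the vertical ones with $\sigma\notin[\tfrac{1}{2},1]$). The principal tools will be Voronin's universality (Theorem \ref{th:universality}) in the critical strip, the functional equation together with Stirling's asymptotic for $\Delta$ (Lemma \ref{lem:asym_Delta_p}) in the left half-plane, and absolute convergence of the defining Dirichlet series in the right half-plane.

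I first treat the Julia directions. For $\theta_{0}=\pi/2$, any sector $E'_{R,\varepsilon}$ contains, for $R$ large, a compact subset of $\tfrac{1}{2}<\sigma<1$; Voronin's universality applied to constant targets $g\equiv c$ with $c\in\C\setminus\{0\}$ shows that $\zeta$ attains every nonzero value there, and Hardy's theorem supplies the value $0$, so $\zeta$ omits only $\infty$ and the Picard property holds. The case $\theta_{0}=3\pi/2$ follows by the reflection principle. For $\theta_{0}=\pi$, the key is that the trivial zeros $-2n$ lie in $E'_{R,\varepsilon}$ for all $n\geq R/2$ and that, by the functional equation,
$$
\zeta'(-2n)=\Delta'(-2n)\,\zeta(2n+1)=(-1)^{n}\frac{(2n)!}{2(2\pi)^{2n}}\bigl(1+O(2^{-2n})\bigr)
$$
grows super-exponentially. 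Writing $\zeta(s)=\zeta'(-2n)(s+2n)+O((s+2n)^{2})$ and choosing $\rho_{n}=2|w|/|\zeta'(-2n)|$, an application of Rouch\'e to $\zeta(s)-w$ against $\zeta'(-2n)(s+2n)-w$ on $\partial D_{\rho_{n}}(-2n)$ produces a preimage of $w$ in $D_{\rho_{n}}(-2n)\subset E'_{R,\varepsilon}$ for every $n$ large enough relative to $|w|$; hence every $w\in\C$ is attained. For any other $\theta_{0}$, I shrink $\varepsilon$ so that $E'_{R,\varepsilon}$ is eventually contained either in $\{\sigma>1+c\}$, where $|\zeta-1|\leq\sum_{n\geq 2}n^{-1-c}$ is uniformly bounded and $\zeta$ omits an unbounded set, or in a sector of $\{\sigma<-c\}$ bounded away from $\{t=0\}$, where the functional equation combined with $|\zeta(1-s)|\to 1$ and $|\Delta(\sigma+it)|\asymp(|t|/2\pi)^{1/2-\sigma}$ forces $|\zeta|\to\infty$ uniformly, so $\zeta$ omits a neighbourhood of $0$.

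For the vertical Julia lines I reuse the universality argument: any open strip around $\sigma=\sigma_{0}\in[\tfrac{1}{2},1]$ meets $\tfrac{1}{2}<\sigma<1$, so Voronin's theorem plus Hardy's theorem deliver the Picard property with only $\{0,\infty\}$ possibly missed. The exclusions $\sigma_{0}>1$ and $\sigma_{0}<0$ use the same right/left dichotomy as above: a thin strip in $\sigma>1+c$ has image contained in $D_{M_{c}}(1)$, missing uncountably many values; while for $\sigma_{0}<0$ (after avoiding the isolated trivial zeros) the function $1/\zeta$ is bounded holomorphic on the strip with $|1/\zeta|\to 0$ as $|t|\to\infty$, so its image lies in some bounded open set $S$ and $\zeta$ avoids the open disc $\{w:1/w\notin\overline{S}\}$ around $0$. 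That no non-vertical line is a Julia line follows by the same two-sided boundedness-versus-growth argument: any strip around such a line is eventually contained in $\sigma>1+c$ on one end (image near $1$) and in $\sigma<-c$ on the other (image of large modulus in a narrow argument range, since for $t$ close to a fixed $t_{0}$ the principal term of $\arg\Delta$ is essentially constant in $\sigma$); for $\delta$ sufficiently small, an open annular region of values near $0$ but away from $1$ is then missed by all three pieces of the image simultaneously, giving at least three omitted values.

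For the conditional statement I invoke the theorem of Garunk\v{s}tis \& Steuding quoted in the introduction: under RH, $\overline{V_{0}(\sigma)}\neq\C$ for every $\sigma<\tfrac{1}{2}$. Their proof, which rests on the truncated Euler product valid under RH together with the mean-value bound $|\zeta(\sigma+it)|\ll|t|^{1/2-\sigma+\varepsilon}$, extends with only cosmetic changes to a uniform version in a small $\sigma$-neighbourhood, yielding an open subset of $\C$ avoided by $\zeta$ on any thin strip around $\sigma=\sigma_{0}\in[0,\tfrac{1}{2})$; so such a line is not Julia. The main technical difficulties I anticipate are (i) the Rouch\'e step for $\theta_{0}=\pi$, where the radii $\rho_{n}$ must be controlled uniformly in $w$ against the explicit growth of $\zeta'(-2n)$, and (ii) upgrading the Garunk\v{s}tis--Steuding denseness result from a single vertical line to a whole strip, where one must check that their conditional bounds survive uniformly over a small $\sigma$-interval.
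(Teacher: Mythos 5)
Your overall decomposition is sensible, and in places it is actually cleaner than the paper's route: for the vertical Julia lines with $\sigma_0\in[\tfrac{1}{2},1]$ the paper uses Voronin's multidimensional denseness on vertical lines together with Corollary~\ref{cor:julia} for $\sigma\in(\tfrac{1}{2},1]$, and separately appeals to the clustering of $a$-points (Theorem~\ref{th:levinsonselberg}) for $\sigma=\tfrac{1}{2}$; your one-step observation that any open strip about a line $\Re z=\sigma_0$ with $\sigma_0\in[\tfrac{1}{2},1]$ meets $\tfrac{1}{2}<\sigma<1$, where universality applies, handles the whole interval uniformly. Similarly, the paper settles the direction $e^{i\pi}$ by citing Bohr--Landau--Littlewood's localisation of the trivial $a$-points, whereas your explicit Rouch\'e computation with $\zeta'(-2n)=(-1)^n\frac{(2n)!}{2(2\pi)^{2n}}(1+O(4^{-n}))$ makes the mechanism visible; that step is correct, and the uniformity of the error term does go through because $\zeta^{(k)}(-2n)/k!$ is controlled by Cauchy's estimates on a circle of fixed radius, yielding a quantitative loss which is beaten by the super-exponential growth of $\zeta'(-2n)$.

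However, two parts of your argument have genuine gaps. First, your blanket claim that no non-vertical line is a Julia line is false for the real axis $\Im z=0$: a strip $|t|<\delta$ contains the pole $s=1$, all trivial zeros, and by exactly the Rouch\'e argument you give for $\theta_0=\pi$ it contains the trivial $a$-point near $-2n$ for every large $n$, so $\zeta$ assumes every value of $\widehat{\C}$ there and the Picard property holds. Indeed the paper's own proof states explicitly that $\Im z=0$ defines a Julia line, and then asserts that all \emph{further} Julia lines are vertical; the theorem's opening sentence inadvertently drops this exception, and your argument happens to reproduce the slip. Your growth-versus-boundedness trichotomy is fine for lines with nonzero slope and for horizontal lines $\Im z=t_0$ with $t_0\neq0$, since those strips stay away from the negative real axis where $\sigma\to-\infty$; it is only the line through the origin that breaks, and your write-up should single it out as a (genuine) Julia line or at least as an exception.

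Second, your exclusion of vertical lines with $\sigma_0<0$ uses that ``$1/\zeta$ is bounded holomorphic on the strip,'' and the parenthetical ``after avoiding the isolated trivial zeros'' is not available: the Julia-line property quantifies over \emph{every} open strip containing the line, and for $\sigma_0=-2n$ every such strip contains the trivial zero $-2n$, so $1/\zeta$ has a pole there and the argument collapses. Near a trivial zero, $\zeta$ attains all sufficiently small values (open mapping theorem), so the image cannot miss a punctured disc around $0$, and one needs a different argument for these $\sigma_0$. The paper sidesteps this by transferring the two-sided bound $\zeta(2\sigma)/\zeta(\sigma)\le|\zeta(s)|\le\zeta(\sigma)$ through the functional equation (so the variation in modulus across the thin strip is controlled, trapping the image in a thin annular spiral that must omit an open set), and this is the step your proposal should replace the $1/\zeta$-boundedness claim by. Finally, a cosmetic point: applying Voronin's universality to a \emph{constant} target $g\equiv c$ gives approximation, not attainment; to conclude $\zeta$ actually takes the value $c$ you should approximate a non-constant target such as $g(s)=c+(s-s_0)$ on a small disc and then invoke Rouch\'e, as you do in the $\theta_0=\pi$ case.
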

\begin{proof}
For a given $a\in\C$, the discs $D_{\lambda}(-2n)$, $n\in\N$, with any $\lambda>0$, cover all but finitely many trivial $a$-points of the Riemann zeta-function. This follows by a slight refinement of Landau's proof of Lemma 1 and Lemma 2 in Bohr, Landau \& Littlewood \cite[Chapt. II]{BohrLandauLittlewood:1913}.  Thus, they form a sequence of filling discs. Consequently, $e^{i\pi}$ is a Julia direction and $\Im \ z = 0$ defines a Julia line for the zeta-function. As there are left and right half-planes free of non-trivial $a$-points, the only further Julia directions are given by $e^{i\pi/2}$ and $e^{i 3\pi/2}$ and all further Julia lines are vertical lines. Bohr's and Voronin's denseness results in combination with Lemma \ref{cor:julia} yield that every line $\Re \ z = \sigma \in(\frac{1}{2},1]$ is a Julia line. The clustering of $a$-points around the critical line (see Theorem \ref{th:levinsonselberg}) implies that the same is true for $\Re \ z = \frac{1}{2}$. By the inequality
$$
\frac{\zeta(2\sigma)}{\zeta(\sigma)} \leq |\zeta(s)|\leq \zeta(\sigma),
$$
which is valid in the half-plane $\sigma>1$, and the functional equation, there is no Julia direction among $\Re\ z \in (-\infty, 0) \cup (1,\infty)$. Assuming the Riemann hypothesis, we know that, for $\sigma>\frac{1}{2}$, as $t\rightarrow\infty$, 
$$\zeta(\sigma+it)\gg t^{-\eps}$$ with any $\eps>0$; see Titchmarsh \cite[\S 14.2]{titchmarsh:1986}. This in combination with the functional equation yields that, under assumption of the the Riemann hypothesis, the lines $\Re\ z = \sigma\in[0,\frac{1}{2})$ are no Julia lines either (see Garunk\v{s}tis \& Steuding \cite[Lemma 4 and Proposition 5]{garunkstissteuding:2010}). 
\end{proof}
In the next section, we shall see that, in main parts, the functional equation is responsible for $\sigma=\frac{1}{2}$ being a Julia line. 

\newpage

\subsection{Filling discs induced by a Riemann-type functional equation}
{\bf A Riemann-type functional equation and its symmetry line $\sigma=\frac{1}{2}$.} A Riemann-type functional equation for a function $G\in\mathcal{G}$ invokes a strong connection between $G$ and $G'$ on its symmetry line $\sigma=\frac{1}{2}$. 
\begin{lemma}\label{lem:connectionGG'}
Let $G\in\mathcal{G}$ with $d_G>0$. Then, for $t\in\R$ with $G(\frac{1}{2}+it)\neq 0$ and $G(\frac{1}{2}+it)\neq \infty$, as $|t|\rightarrow\infty$,
$$
 \left| \frac{G'(\frac{1}{2}+it)}{G(\frac{1}{2}+it)} \right| \geq  \frac{d_G}{2}\log |t|-\frac{1}{2}\log (Q^2 \lambda) + O\left( \frac{1}{|t|}\right).
$$
\end{lemma}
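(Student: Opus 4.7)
The plan is to exploit the functional equation $G(s) = \Delta_G(s)\overline{G(1-\overline{s})}$ by taking its logarithmic derivative and specializing to the critical line. Setting $H(s) := \overline{G(1-\overline{s})}$, a direct differentiation gives $H'(s) = -\overline{G'(1-\overline{s})}$, so
$$
\frac{G'(s)}{G(s)} \;=\; \frac{\Delta_G'(s)}{\Delta_G(s)} \;-\; \overline{\frac{G'(1-\overline{s})}{G(1-\overline{s})}},
$$
valid at any point where the quotients make sense.

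At a point $s=\tfrac{1}{2}+it$ with $t\in\R$, the involution $s \mapsto 1-\overline{s}$ fixes $s$, i.e.\ $1-\overline{s}=s$. Substituting this into the identity above and rearranging yields
$$
\frac{G'(s)}{G(s)} + \overline{\frac{G'(s)}{G(s)}} \;=\; \frac{\Delta_G'(s)}{\Delta_G(s)},
$$
or equivalently
$$
2\,\Re\!\left(\frac{G'(\tfrac{1}{2}+it)}{G(\tfrac{1}{2}+it)}\right) \;=\; \frac{\Delta_G'(\tfrac{1}{2}+it)}{\Delta_G(\tfrac{1}{2}+it)}.
$$
(In particular the right-hand side must be real up to the error inherent in the asymptotic description of $\Delta_G'/\Delta_G$.) This is the crucial structural input supplied by the functional equation: on the critical line, the real part of the logarithmic derivative of $G$ is pinned down, up to a factor of $2$, by the universal factor $\Delta_G$.

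Now I plug in the asymptotic expansion \eqref{asymext_logdiff_Delta_p} from Lemma \ref{lem:asym_Delta_p}, which states that uniformly for $\sigma$ in a bounded interval,
$$
\frac{\Delta_G'(\sigma+it)}{\Delta_G(\sigma+it)} \;=\; -d_G\log|t| - \log(Q^2\lambda) + O\!\left(\tfrac{1}{|t|}\right), \qquad |t|\to\infty.
$$
Applied at $\sigma=\tfrac{1}{2}$, this gives
$$
\Re\!\left(\frac{G'(\tfrac{1}{2}+it)}{G(\tfrac{1}{2}+it)}\right) \;=\; -\tfrac{d_G}{2}\log|t| - \tfrac{1}{2}\log(Q^2\lambda) + O\!\left(\tfrac{1}{|t|}\right).
$$
Since $|z|\geq|\Re z|$ for any $z\in\C$ and $d_G>0$ ensures that the leading term on the right tends to $-\infty$, the estimate
$$
\left|\frac{G'(\tfrac{1}{2}+it)}{G(\tfrac{1}{2}+it)}\right| \;\geq\; \tfrac{d_G}{2}\log|t| - \tfrac{1}{2}\log(Q^2\lambda) + O\!\left(\tfrac{1}{|t|}\right)
$$
follows for $|t|\to\infty$, which is the claimed bound.

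There is essentially no obstacle beyond bookkeeping: the whole proof is a short computation once one has the conjugation trick yielding that $\tfrac{\Delta_G'}{\Delta_G}$ equals twice the real part of $\tfrac{G'}{G}$ on the critical line, together with Lemma \ref{lem:asym_Delta_p}. The only mild point to watch is that the identity is meaningful precisely under the hypothesis $G(\tfrac{1}{2}+it)\neq 0,\infty$ imposed in the statement, so $G'/G$ is well defined at the point in question.
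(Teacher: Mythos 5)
Your proof is correct and follows exactly the same route as the paper: logarithmic differentiation of the functional equation, specialization to $s=\tfrac12+it$ where $1-\overline{s}=s$ to obtain $2\,\Re\bigl(G'/G\bigr)=\Delta_G'/\Delta_G$, the asymptotic expansion \eqref{asymext_logdiff_Delta_p}, and the inequality $|z|\geq|\Re\,z|$. (One minor remark: the parenthetical claim that $\Delta_G'/\Delta_G$ is real on the critical line only up to the asymptotic error is unnecessarily weak — it is exactly real there, as follows from $\Delta_G(s)\overline{\Delta_G(1-\overline{s})}=1$.)
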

\begin{proof}
Logarithmic differentiation of the functional equation yields that
$$
\frac{G'(s)}{G(s)} = \frac{\Delta'(s)}{\Delta(s)} - \overline{\left(\frac{G'(1-\overline{s})}{G(1-\overline{s})}\right)}.
$$
For $s=\frac{1}{2}+it$, this and the asymptotic extension \eqref{asymext_logdiff_Delta_p} of $\Delta'/\Delta$  imply that 
\begin{equation}\label{Re_logdiff_Delta}
2 \ \Re \left( \frac{G'(\frac{1}{2}+it)}{G(\frac{1}{2}+it)} \right) = \frac{\Delta'(\frac{1}{2}+it)}{\Delta(\frac{1}{2}+it)} = -d_G\log |t|-\log (Q^2 \lambda) + O\left( \frac{1}{|t|}\right),
\end{equation}
provided that $|t|$ is sufficiently large and that $s=\frac{1}{2}+it$ is not a pole of $G'(s)/G(s)$. The assertion follows from \eqref{Re_logdiff_Delta} by using the relation $|z|\geq |\Re \ z|$.
\end{proof}

{\bf Filling discs induced by a Riemann-type functional equation on $\sigma=\frac{1}{2}$.} By Lehto's criterion, Lemma \ref{lem:connectionGG'} implies that, in most cases, a Riemann-type functional equation turns $\sigma=\frac{1}{2}$  into a Julia line. The following theorem is a special case of Corollary \ref{cor:julia} for functions satisfying a Riemann-type functional equation. 
\begin{theorem}\label{th:avalues-case1}
Let $G\in\mathcal{G}$ with $d_G>0$. Suppose that there exists a sequence $(\tau_k)_k$ with $\tau_k\in[2,\infty)$ and $\lim_{k\rightarrow\infty} \tau_k = \infty$ and an $\alpha\in(0,\infty)$ such that 
\begin{equation}\label{cond1}
\lim_{k\rightarrow \infty} |G(\tfrac{1}{2}+i\tau_k)| = \alpha .
\end{equation}
Then, for any positive function $\mu:[2,\infty)\rightarrow\R^+$ with $\lim_{\tau\rightarrow\infty}\mu(\tau) = \infty$, the discs defined by
$$
|s-\tfrac{1}{2}-i\tau_k|< \frac{\mu(\tau_k)}{\log\tau_k}, \qquad k\in\N,
$$
form a sequence of filling discs for $G$. In particular, $\sigma=\frac{1}{2}$ is a Julia line of $G$; and $G$ assumes every value $a\in\widehat{\C}$, with at most two exceptions, infinitely often inside the region defined by
\begin{equation}\label{eq:strip-case1}
\frac{1}{2}-\frac{\mu(t)}{\log t} < \sigma < \frac{1}{2}+\frac{\mu(t)}{\log t}, \qquad t\geq 2.
\end{equation}

\end{theorem}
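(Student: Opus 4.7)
The plan is to apply Lehto's criterion (Theorem \ref{th:lehto}) directly, in the spirit of Corollary \ref{cor:julia}, with the extra information supplied by the functional equation. Write $z_k := \tfrac{1}{2}+i\tau_k$ and $\lambda_k := \mu(\tau_k)/\log\tau_k$. First I would observe that, since $|G(z_k)|\to\alpha\in(0,\infty)$, for all sufficiently large $k$ we have $G(z_k)\neq 0,\infty$ and $|G(z_k)|$ is bounded away from $0$ and $\infty$. Lemma \ref{lem:connectionGG'} then gives
$$
|G'(z_k)| \;=\; |G(z_k)|\cdot \left|\frac{G'(z_k)}{G(z_k)}\right| \;\geq\; |G(z_k)|\left(\tfrac{d_G}{2}\log\tau_k-\tfrac{1}{2}\log(Q^2\lambda)+O(\tau_k^{-1})\right),
$$
so $|G'(z_k)|\to\infty$ at a rate $\gg \log\tau_k$.

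Next I would compute the spherical derivative at $z_k$. As $|G(z_k)|^2 \to \alpha^2$, the denominator $1+|G(z_k)|^2$ tends to the finite positive limit $1+\alpha^2$, while the numerator $|G'(z_k)|$ blows up at rate $\log\tau_k$. Hence
$$
\lambda_k\, G^{\#}(z_k) \;=\; \frac{\mu(\tau_k)}{\log\tau_k}\cdot\frac{|G'(z_k)|}{1+|G(z_k)|^2} \;\gtrsim\; \frac{\alpha\, d_G}{2(1+\alpha^2)}\,\mu(\tau_k) \;\longrightarrow\;\infty,
$$
since $\mu(\tau_k)\to\infty$. Taking $w_k := z_k$ so that trivially $|z_k-w_k|=0=o(\lambda_k)$, both hypotheses of Lehto's criterion are met, and the discs $D_{\lambda_k}(z_k)$ form a sequence of filling discs for $G$.

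From here the two remaining statements are essentially formal. Since $\lambda_k=\mu(\tau_k)/\log\tau_k$ and the disc $D_{\lambda_k}(z_k)$ is contained in the region \eqref{eq:strip-case1} (evaluated at heights near $\tau_k$), for any open vertical strip $S$ containing the line $\sigma=\tfrac12$, all but finitely many filling discs lie in $S$. The defining property of filling discs then yields the Picard property on $S$, so $\sigma=\tfrac12$ is a Julia line. For the "infinitely often" assertion, I would invoke Proposition \ref{prop:charfilldiscs} together with the extended fundamental normality test (Theorem \ref{th:FNTextension}(b), as already used in the proof of Lemma \ref{lem:countingfillingdiscs}): the non-normality of every infinite subfamily of $\{G(z_k+\lambda_k\cdot)\}$ at $0$ forces, for every $a\in\widehat{\C}$ with at most two exceptions and every $m\in\N$, the function $G$ to take the value $a$ more than $m$ times on $D_{\lambda_k}(z_k)$ for all large $k$. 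Summing over $k$ gives infinitely many $a$-points inside the region \eqref{eq:strip-case1}.

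I do not anticipate a significant obstacle: the mechanism is entirely encapsulated in Lemma \ref{lem:connectionGG'} (which supplies the blow-up of $|G'|$ on the critical line) and Lehto's criterion. The one small point to be careful about is verifying that $G(z_k)$ is actually finite and non-zero for large $k$ so that the logarithmic-derivative bound of Lemma \ref{lem:connectionGG'} may be applied; this is immediate from the hypothesis $\alpha\in(0,\infty)$.
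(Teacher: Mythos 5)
Your proposal is correct and follows essentially the same route as the paper: it derives the lower bound on $|G'(\tfrac12+i\tau_k)|$ from Lemma \ref{lem:connectionGG'}, shows via the hypothesis $|G(\tfrac12+i\tau_k)|\to\alpha\in(0,\infty)$ that the factor $|G|/(1+|G|^2)$ stabilizes to the positive constant $\alpha/(1+\alpha^2)$, concludes $\lambda_k G^{\#}(\tfrac12+i\tau_k)\to\infty$, and invokes Lehto's criterion with $w_k=z_k$. The closing remarks about the Julia line and the ``infinitely often'' claim are the same formal consequences the paper leaves implicit.
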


\begin{proof}
 Due to our conditions, we can assume without loss of generality that $G(\frac{1}{2}+i\tau_k)\neq 0,\infty$ for $k\in\N$. We deduce from Lemma \ref{lem:connectionGG'} that
$$
\left| G'(\tfrac{1}{2}+i\tau_k) \right| \geq  \frac{d_G}{4}  \log \tau_k
\left|G(\tfrac{1}{2}+i\tau_k )\right|,
$$ 
provided that $k$ is large enough.
Hence, for sufficiently large $k$,
$$
G^{\#}(\tfrac{1}{2}+i\tau_k) = \frac{|G'(\frac{1}{2}+i\tau_k)|}{1 + |G(\frac{1}{2}+i\tau_k)|^2} \geq \frac{d_G}{4}\ \log\tau_k \ \frac{|G(\frac{1}{2}+i\tau_k)|}{1 + |G(\frac{1}{2}+i\tau_k)|^2}
$$
Together with
$$
\lim_{k\rightarrow\infty} \frac{|G(\frac{1}{2}+i\tau_k)|}{1 + |G(\frac{1}{2}+i\tau_k)|^2} = \frac{\alpha}{1+\alpha^2}\in (0,\tfrac{1}{2}],
$$
which is true according to our assumption, this yields that
$$
\lim_{k\rightarrow\infty }\frac{\mu(\tau_k)}{\log \tau_k} G^{\#}(\tfrac{1}{2}+i\tau_k) = \infty
$$
for any positive function $\mu$ with $\lim_{\tau\rightarrow\infty} \mu(\tau) = \infty$. The assertion of the theorem follows immediately from Theorem \ref{th:lehto}.
\end{proof}

For general functions $G\in\mathcal{G}$, the conditions posed on $G$ in Theorem \ref{th:avalues-case1} are best possible:\par
Theorem \ref{th:avalues-case1} does not necessarily apply to functions in $\mathcal{G}$ of degree zero: according to Kaczorowski \& Perelli \cite{kaczorowskiperelli:1999} any function $\L\in\Sc^{\#}\subset\mathcal{G}$ with $d_{\L}=0$ is given by a Dirichlet polynomial. Hence, $\L$ is bounded in any vertical strip around the critical line. \par

Condition \eqref{cond1} cannot be removed in general as the following two examples show: Let
the function 
$$
G_{\pmb{0}}(s):= \exp(-s(1-s)) \Delta_{\zeta}(s)^{1/2}, \qquad s\in\C_{\Delta_{G_{\pmb{0}}}}
$$
be defined as in Section \ref{subsec:noncon}. Then, $G_{\pmb{0}}\in\mathcal{G}$ with $d_{G_{\pmb{0}}}=1$ and $\lim_{t\rightarrow\infty}G_{\pmb{0}}(\frac{1}{2}+it)=0$.  Since
\begin{equation*}
G_{\pmb{0}}(\sigma+it) \ll \exp\left(-Ct^2 \right)
\end{equation*}
holds uniformly for $0<\sigma<1$, as $t\rightarrow\infty$, with a certain constant $C>0$, the function $G_{\pmb{0}}\in\mathcal{G}$ assumes any given $a\in\C\setminus\{0\}$ at most finitely often inside the half-strip $0<\sigma<1$, $t\geq 2$. Thus, the assertion of Theorem \ref{th:avalues-case1} does not hold for $G_{\pmb{0}}$. Similarly, the function $G_{\pmb \infty}$ defined by
$$
G_{\pmb \infty}(s):=\Delta(s)^{\frac{1}{2}}\exp(s(1-s)),\qquad s\in\C_{\Delta_{G_{\pmb{0}}}}
$$
yields an example for a function in $\mathcal{G}$ with $d_{G_{\pmb{\infty}}}=1$ and $\lim_{\tau\rightarrow\infty} |G_2(\frac{1}{2}+i\tau)|= \infty$, but not satisfying the assertion of Theorem \ref{th:avalues-case1}.\par
In the general setting of the class $\mathcal{G}$, the radii $\frac{\mu(\tau_k)}{\log\tau_k}$ of the filling discs are best possible. This can be seen by considering the function
$$
G_{1,\zeta}(s):=1+\Delta_{\zeta}(s).
$$
Due to the asymptotic estimate $\Delta_{\zeta} (s) \asymp \left(|t|/2\pi \right)^{1/2 - \sigma}$, which holds uniformly for $0\leq \sigma\leq 1$, as $t\rightarrow\infty$, the function $G_{1,\zeta}(s)$ is bounded in any region defined by
$$
\frac{1}{2} - \frac{c}{\log t} \leq \sigma \leq \frac{1}{2} + \frac{c}{\log t}, \qquad t>2, 
$$
with an arbitrary constant $c>0$. \par
It seems reasonable to determine the quantities
$\alpha_{G,\scalebox{0.8}{\mbox{inf}}}$ and $\alpha_{G,\scalebox{0.8}{\mbox{sup}}} $ defined by \eqref{def:alpha_inf} to check whether a given function $G\in\mathcal{G}$ satisfies condition \eqref{cond1}. By the intermediate value theorem, we find a sequence $(\tau_k)_k$ with $\tau_k\in[2,\infty)$ and $\lim_{k\rightarrow\infty} \tau_k = \infty$ and an $\alpha\in(0,\infty)$ such that \eqref{cond1} holds whenever
$$
\alpha_{G,\scalebox{0.8}{\mbox{inf}}}\neq \alpha_{G,\scalebox{0.8}{\mbox{sup}}} 
\qquad \qquad \mbox{or}\qquad \qquad
0<\alpha_{G,\scalebox{0.8}{\mbox{inf}}}=\alpha_{G,\scalebox{0.8}{\mbox{sup}}}<\infty
$$
For $\L\in\Sc^{*}$ we know that $\alpha_{\L,\scalebox{0.8}{\mbox{inf}}}=0$ and $\alpha_{\L,\scalebox{0.8}{\mbox{sup}}}=\infty$. We expect that the same is true for every function $\L\in\Sc^{\#}$. We refer to Section \ref{subsec:summaryunboundedness} for partial results.

{\bf Normality approach vs. Levinson's method.} We shall compare the $a$-point result of Theorem \ref{th:avalues-case1}, given in terms of filling discs, with the $a$-point results, obtained by Levinson's method (Theorems \ref{th:steuding3}, \ref{th:levinson} and \ref{th:levinsonselberg}). Both concepts yield qualitatively and quantitatively different information and complement each other: Levinson's method is appropriate to count the number of $a$-values in neighbourhoods of line segments $\frac{1}{2}+it$, $t\in[T,2T]$ in a very good manner. Filling discs provide locally more precise information on the location of some of the $a$-points (combine for example Theorem \ref{th:FNTextension} with Proposition \ref{prop:charfilldiscs}). Moreover, the conditions in Theorem \ref{th:avalues-case1} for the existence of filling discs are rather weak compared to the conditions needed to evaluate the integrals involved in Levinson's method in a precise manner.\par 
In the case of the Riemann zeta-function, Theorem \ref{th:levinsonselberg} provides that almost all non-trivial $a$-values with ordinates $0<\gamma_a\leq T$ have a distance less than $$\frac{\mu(\gamma_a) \sqrt{\log\log \gamma_a}}{\log \gamma_a}$$ to the critical line; where $\mu$ is any positive function tending to infinity. Theorem \ref{th:avalues-case1} yields that, among them, there are infinitely many at distance less than $$\frac{\mu(\gamma_a)}{\log \gamma_a}$$ to the critical line.\par
By assuming certain conjectures, we find filling discs for the Riemann zeta-function whose radii are significantly smaller than the ones provided by Theorem \ref{th:avalues-case1}. We refer to Section \ref{sec:filling_zeta}. However, for $\L\in\Sc^{\#}$ the radii of the filling discs cannot shrink arbitrarily fast, as $\tau\rightarrow\infty$.\par

{\bf Lower bound for the radii of filling discs in $\Sc^{\#}$.} 
The radii $\lambda_k$ of a sequence of filling discs $D_{\lambda_k}(\frac{1}{2}+i\tau_k)$, $k\in\N$, for $\L\in\Sc^{\#}$ are trivially bounded from below by $|\tau_k|^{-\theta_{\L}(\frac{1}{2})-\delta}$, where $\delta>0$ is an arbitrary positive real number and $\theta_{\L}(\frac{1}{2})$ defined as in Section \ref{sec:orderofgrowth}. This observation follows from the subsequent lemma.
\begin{lemma}\label{lem:lowerboundrad}
Let $\L\in\Sc^{\#}$. Suppose that there are sequences  $(\lambda_k)_k$, $(\tau_k)_k$ with $\lambda_k,\tau_k \in \R^+$ and $\lim_{k\rightarrow\infty}\tau_k = \infty$ such that $D_{\lambda_k}(\frac{1}{2}+i\tau_k)$, $k\in\N$, forms a sequence of filling discs for $\L$. Then, for any $\delta>0$,
$$
\lim_{k\rightarrow\infty}
\left( \lambda_k\tau_k^{\theta_{\L}(\frac{1}{2})+\delta}\right) =\infty.
$$
\end{lemma}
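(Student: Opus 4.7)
The plan is to argue by contradiction, combining Lehto's criterion (Theorem \ref{th:lehto}) with the standard growth estimate $|\L^{(\ell)}(\sigma+it)|\ll_{\ell,\eps}|t|^{\theta_{\L}(\sigma)+\eps}$ from \eqref{eq:finiteorderderivative}. The guiding idea is that the filling-disc property forces the spherical derivative of $\L$ to blow up on the scale $1/\lambda_k$, while the order-of-growth bound caps $\L^{\#}\leq |\L'|$ near the critical line by $\tau_k^{\theta_{\L}(1/2)+o(1)}$; comparing the two forces $\lambda_k$ to be at least of order $\tau_k^{-\theta_{\L}(1/2)-\delta}$.

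\textbf{Step 1 (reduction and application of Lehto).} Fix $\delta>0$ and suppose, for contradiction, that $\lambda_k\tau_k^{\theta_{\L}(1/2)+\delta}$ does not tend to $\infty$. Passing to a subsequence, we may assume that $\lambda_k\tau_k^{\theta_{\L}(1/2)+\delta}\leq C$ for all $k$ and some constant $C>0$. Since $\theta_{\L}(1/2)\geq 0$ and $\tau_k\to\infty$, this forces $\lambda_k\to 0$. By Lehto's criterion applied to the filling-disc sequence, there exist points $w_k=\sigma_k+it_k$ with
\[
|w_k-(\tfrac{1}{2}+i\tau_k)|=o(\lambda_k) \qquad\text{and}\qquad \lambda_k\L^{\#}(w_k)\longrightarrow\infty.
\]
In particular $\sigma_k\to\tfrac{1}{2}$ and $t_k=\tau_k+o(1)$, so $t_k\to\infty$ and $t_k\asymp\tau_k$.

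\textbf{Step 2 (upper bound for the spherical derivative).} By the continuity of $\sigma\mapsto\theta_{\L}(\sigma)$, there is a neighbourhood $I$ of $\tfrac{1}{2}$ such that $\theta_{\L}(\sigma)\leq\theta_{\L}(\tfrac{1}{2})+\delta/4$ for $\sigma\in I$. For sufficiently large $k$ we have $\sigma_k\in I$, and \eqref{eq:finiteorderderivative} (with $\ell=1$ and $\eps=\delta/4$) yields
\[
\L^{\#}(w_k)\leq |\L'(w_k)|\ll_{\delta} t_k^{\theta_{\L}(\sigma_k)+\delta/4}\ll_{\delta} \tau_k^{\theta_{\L}(1/2)+\delta/2},
\]
where in the last estimate we used $t_k\asymp\tau_k$ and absorbed the bounded ratio $(t_k/\tau_k)^{\theta_{\L}(1/2)+\delta/2}$ into the implicit constant.

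\textbf{Step 3 (contradiction).} Combining Steps 1 and 2,
\[
\lambda_k\L^{\#}(w_k)\ll_{\delta}\lambda_k\tau_k^{\theta_{\L}(1/2)+\delta/2}=\bigl(\lambda_k\tau_k^{\theta_{\L}(1/2)+\delta}\bigr)\cdot\tau_k^{-\delta/2}\leq C\,\tau_k^{-\delta/2}\longrightarrow 0,
\]
which contradicts $\lambda_k\L^{\#}(w_k)\to\infty$. Hence $\lambda_k\tau_k^{\theta_{\L}(1/2)+\delta}\to\infty$, as claimed.

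\textbf{Main obstacle.} The only subtle point is obtaining the derivative bound uniformly over the shifted abscissae $\sigma_k$ rather than at a single $\sigma$. This is not a genuine difficulty: the continuity of $\theta_{\L}$, together with the derivation of \eqref{eq:finiteorderderivative} via Cauchy's integral formula on discs of radius $1/\log|t|$, automatically delivers the uniformity on any sufficiently small neighbourhood $I$ of $\tfrac{1}{2}$. Once that is granted, the argument is essentially a direct matching of two orders of magnitude.
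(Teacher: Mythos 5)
Your proof is correct and follows essentially the same route as the paper: assume $\lambda_k \ll \tau_k^{-\theta_{\L}(\frac{1}{2})-\delta}$ along a subsequence, use the growth estimate \eqref{eq:finiteorderderivative} together with the continuity of $\theta_{\L}$ to bound the spherical derivative near the critical line, and derive a contradiction from the filling-disc hypothesis. The only cosmetic difference is that you invoke Lehto's criterion (Theorem \ref{th:lehto}) directly, whereas the paper combines Proposition \ref{prop:charfilldiscs} with Marty's theorem to show the rescaled family $\{f_k\}_k$ is normal on $\D$ --- but these are equivalent formulations, since Lehto's criterion is proved precisely from that pair.
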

\begin{proof}
Assume that there exists a $\delta>0$ and a subsequence of $(\lambda_k)_k$, which we assume to be $(\lambda_k)_k$, such that
$$
\lambda_k \ll \tau_k^{-\theta_{\L}(\frac{1}{2})-\delta}
$$
as $k\rightarrow\infty$. We set $f_k(z):=\L(\frac{1}{2} + i\tau_k + \lambda_k z)$ and regard the family $\mathcal{F}:=\{f_k\}_k$ of functions on $\D$. Then, \eqref{eq:finiteorderderivative} and the continuity of the function $\theta_{\L}(\sigma)$ assure that 
\begin{equation}
 \left| \L' (\tfrac{1}{2}+i\tau_k + \lambda_k z)\right| \ll_{\delta,\ell} \tau_k^{\theta_{\L}(\sigma) + \delta/2},
\end{equation}
uniformly for $z\in \D$, as $k\rightarrow\infty$. Consequently, we obtain for the spherical derivatives
$$
f^{\#}_k(z) \leq \lambda_k   \left| \L' (\tfrac{1}{2}+i\tau_k + \lambda_k z)\right| \ll \tau_k^{-\theta_{\L}(\frac{1}{2})-\delta} \ \cdot \ \tau_k^{\theta_{\L}(\sigma) + \delta/2} = \tau_k^{-\delta/2}
$$
uniformly for $z\in \D$, as $k\rightarrow\infty$. Hence, the family $\mathcal{F}^{\#}:= \{f_k^{\#}\}_k$ is bounded on $\D$. According to Marty's theorem, $\mathcal{F}$ is normal in $\D$. This is in contradiction to Proposition \ref{prop:charfilldiscs} which states that $\mathcal{F}$ is not normal in $\D$.
\end{proof}

{\bf Generalizations of Theorem \ref{th:avalues-case1}.} In Theorem \ref{th:avalues-case1} we studied filling discs induced by a Riemann-type functional equation on $\sigma=\frac{1}{2}$. It would be interesting to investigate whether similar statements can be retrieved for functions satisfying a different type of functional equation, for example the Selberg zeta-function.

\subsection{Filling discs induced by Selberg's central limit law}\label{sec:Selberglimitlaw}

In this section, we consider filling discs for functions $\L\in\Sc^{*}\subset\Sc^{\#}$ which satisfy Selberg's prime coefficient condition (S.6$^*$) and the zero-density estimate (DH). Let $n_{\L}$ be defined by (S.6$^*$) and set
$$
g_{u}(t):= \exp\left( u \sqrt{\tfrac{1}{2} n_{\L} \log \log t} \right),\qquad t\geq 2,\qquad u\in\R.
$$ 
For $\L\in\Sc^*$ and $\alpha,\beta\in\R$ with $\alpha<\beta$, we set
$$
W_{g_{\alpha}(t),g_{\beta}(t)}:= \left\{ t\in [2,\infty)\ : \ g_{\alpha}(t) \leq  \left|\L(\tfrac{1}{2}+it) \right| \leq g_{\alpha}(t)\right\},
$$
as in Section \ref{sec:smalllargeONcritline}. 

\begin{theorem}\label{th:selberg1}
Let $\L\in\Sc^*$ and $\alpha,\beta\in\R$ with $\alpha<\beta$. Then, for any sequence  $(\tau_k)_k$ with $\tau_k\in W_{g_{\alpha}(t),g_{\beta}(t)}\subset[2,\infty)$ and $\lim_{k\rightarrow\infty}\tau_k = \infty$, the discs defined by
$$
|s-\tfrac{1}{2}-i\tau_k|< \frac{\mu(\tau_k)\lambda_{\alpha,\beta}(\tau_k)}{\log \tau_k},\qquad k\in\N,
$$
where $\mu:[2,\infty)\rightarrow\R^+$ is any positive function with $\lim_{t\rightarrow\infty}\mu(t)=\infty$ and
\begin{equation}\label{def:lambda}
\lambda_{\alpha,\beta}(t):= \exp( (-\alpha+\max\{0,2\beta\}) \sqrt{\tfrac{1}{2} n_{\L} \log \log t} ),
\end{equation}
form a sequence of filling discs for $\L$.
\end{theorem}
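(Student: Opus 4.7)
The natural strategy is to verify Lehto's criterion (Theorem~\ref{th:lehto}) with the trivial choice $w_k := \tfrac{1}{2}+i\tau_k$, so that the condition $|w_k - z_k| = o(\lambda_k)$ holds automatically. The work is then reduced to showing that
$$
\frac{\mu(\tau_k)\lambda_{\alpha,\beta}(\tau_k)}{\log\tau_k} \cdot \L^{\#}\!\left(\tfrac{1}{2}+i\tau_k\right) \longrightarrow \infty, \qquad k\to\infty.
$$
Since $\L\in\Sc^*\subset\Sc^{\#}$ has degree $d_\L\geq 1>0$ and since the membership $\tau_k\in W_{g_\alpha(t),g_\beta(t)}$ ensures $\L(\tfrac{1}{2}+i\tau_k)\neq 0,\infty$, Lemma~\ref{lem:connectionGG'} applies and gives, for all sufficiently large $k$,
$$
\left| \L'\!\left(\tfrac{1}{2}+i\tau_k\right) \right| \;\geq\; \tfrac{d_\L}{3}\,(\log\tau_k)\,M_k, \qquad M_k := \left|\L\!\left(\tfrac{1}{2}+i\tau_k\right)\right|.
$$
Consequently
$$
\L^{\#}\!\left(\tfrac{1}{2}+i\tau_k\right) \;\geq\; \frac{d_\L\log\tau_k}{3}\cdot\frac{M_k}{1+M_k^2}.
$$

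The second step is a case analysis that reduces the factor $M_k/(1+M_k^2)$ to the constraints $g_\alpha(\tau_k)\leq M_k\leq g_\beta(\tau_k)$. Using the elementary bound $M/(1+M^2)\geq 1/\bigl(2\max(M,1/M)\bigr)$, together with $\max(M_k,1/M_k)\leq g_\beta(\tau_k)+g_{-\alpha}(\tau_k)\leq 2 g_{\max(-\alpha,\beta)}(\tau_k)$, one obtains
$$
\frac{M_k}{1+M_k^2}\;\geq\;\frac{1}{4\,g_{\max(-\alpha,\beta)}(\tau_k)}.
$$
Inserting this into the product $r_k \L^{\#}(w_k)$, with $r_k = \mu(\tau_k)\lambda_{\alpha,\beta}(\tau_k)/\log\tau_k$, the factors $\log\tau_k$ cancel and one is left with $\mu(\tau_k)$ multiplied by the ratio $\lambda_{\alpha,\beta}(\tau_k)/g_{\max(-\alpha,\beta)}(\tau_k)$. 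A short calculation in the two cases $\beta\leq 0$ and $\beta>0$, using the definition $\lambda_{\alpha,\beta}(t)=g_{-\alpha+\max\{0,2\beta\}}(t)$, shows that this ratio is bounded below either by a constant (when $\beta\leq 0$ or when $-\alpha\leq \beta$) or by $g_{2\beta}(\tau_k)$ (when $-\alpha>\beta$ and $\beta>0$); in each instance the result tends to infinity as $\tau_k\to\infty$ (since $\alpha<\beta$ always forces at least one of $-\alpha+\beta>0$ or $\beta>0$ to do the job). Hence $r_k \L^{\#}(w_k)\gtrsim \mu(\tau_k)\to\infty$.

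The final step is to apply Lehto's criterion: with $z_k=w_k=\tfrac{1}{2}+i\tau_k$ and $\lambda_k=r_k$, both conditions of \eqref{crit_fillingdiscs} are met, so $\{D_{r_k}(\tfrac{1}{2}+i\tau_k)\}_{k\in\N}$ is a sequence of filling discs for $\L$. The discs lie in the domain of meromorphy of $\L$ for large $k$ since $\tau_k\to\infty$ separates them from the potential pole at $s=1$. The only genuine obstacle in this argument is the case analysis bounding $M_k/(1+M_k^2)$ from below; the exponent $\max\{0,2\beta\}$ in $\lambda_{\alpha,\beta}$ is tailored precisely so that the ``worst case'' $M_k=g_\beta(\tau_k)$ (where $M_k/(1+M_k^2)$ behaves like $1/g_\beta$) is absorbed, while the remaining cases yield an even larger reserve.
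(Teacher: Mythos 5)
Your proof is correct and follows essentially the same approach as the paper's: apply Lehto's criterion with $w_k=z_k=\tfrac{1}{2}+i\tau_k$, invoke Lemma~\ref{lem:connectionGG'} to bound the spherical derivative from below by $\asymp d_{\L}(\log\tau_k)\,M_k/(1+M_k^2)$, and then feed in the constraints $g_{\alpha}(\tau_k)\leq M_k\leq g_{\beta}(\tau_k)$ so that the $\log\tau_k$ factors cancel and $\mu(\tau_k)\to\infty$ takes over. The only cosmetic difference is that the paper bounds $M_k/(1+M_k^2)\geq g_{\alpha}(\tau_k)/(1+g_{\beta}(\tau_k)^2)$ directly, whereas you route through $M/(1+M^2)\geq 1/(2\max\{M,1/M\})$; both lead to the same cancellation with $\lambda_{\alpha,\beta}$.
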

\begin{proof}
The existence of a sequence  $(\tau_k)_k$ with $\tau_k\in W_{g_{\alpha}(t),g_{\beta}(t)}\subset[2,\infty)$ and $\lim_{k\rightarrow\infty}\tau_k = \infty$ is assured by  Theorem \ref{th:measselberglimitlaw} (a). By Lemma \ref{lem:connectionGG'}, we know that, for sufficiently large $k\in\N$,
$$
\left| \L^{\#}(\tfrac{1}{2}+i\tau_k)\right| >\tfrac{1}{3} d_{\L} \log \tau_k \ \frac{|\L(\frac{1}{2}+i\tau_k)|}{1+|\L(\frac{1}{2}+i\tau_k)|^2}.
$$ 
Due to the definition of $W_{g_{\alpha}(t),g_{\beta}(t)}\subset[2,\infty)$, this implies that, for sufficiently large $k\in\N$,
$$
\left|\L^{\#}(\tfrac{1}{2}+i\tau_k) \right|>\tfrac{1}{4} d_{\L} \log \tau_k \cdot \frac{ \exp( \alpha \sqrt{\tfrac{1}{2} n_{\L} \log \log \tau_k} ) }{1+  \exp( 2\beta \sqrt{\tfrac{1}{2} n_{\L} \log \log \tau_k} )} .
$$
Thus, we obtain that
$$
\lim_{k\rightarrow\infty}  \frac{\mu(\tau_k)\lambda_{\alpha,\beta}(\tau_k)}{\log \tau_k } \ \L^{\#}(\tfrac{1}{2}+i\tau_k) = \infty 
$$
for any positive function $\mu$  satisfying $\lim_{t\rightarrow\infty}\mu(t)=\infty$ and $\lambda_{\alpha,\beta}(t)$ defined by \eqref{def:lambda}. The assertion follows by Lehto's criterion (Theorem \ref{th:lehto}).
\end{proof}
Theorem \ref{th:measselberglimitlaw} (a) provides a quantitative description of the set $W_{g_{\alpha}(t),g_{\beta}(t)}\subset[2,\infty)$, which allows us to estimate the number of pairwise disjoint filling discs for $\L\in\Sc^{*}$ in Theorem  \ref{th:selberg1}.\par
Recall the definition of the counting functions $N_{\{z_n\}_n}(T)$ and $N_a(E,T)$ introduced in Section \ref{sec:fillingdiscs_basics}. Let $\L\in\Sc^*$ and $\alpha,\beta\in\R$ with $\alpha<\beta$. Further, let $\lambda_{\alpha,\beta}$ be defined by \eqref{def:lambda} and $\mu:[2,\infty)$ be a positive function with  $\lim_{t\rightarrow\infty}\mu(t)=\infty$. According to Theorem \ref{th:measselberglimitlaw} and Theorem \ref{th:selberg1}, we find a sequence $(\tau_k)_k$ of positive real numbers tending to infinity with 
$$
N_{\{\frac{1}{2}+i\tau_k\}_k}(T) \geq   N_{\{\frac{1}{2}+i\tau_k\}_k}(T) - N_{\{\frac{1}{2}+i\tau_k\}_k}\left(\tfrac{T}{2}\right) \gg \frac{T\log T}{\mu\left(\frac{T}{2}\right) \lambda_{\alpha,\beta}(T)}
$$
such that the discs defined by
$$
\left|s-\tfrac{1}{2}-i\tau_k\right| <\frac{\mu(\tau_k)\lambda_{\alpha,\beta}(\tau_k)}{\log \tau_k}, \qquad k\in\N,
$$
form a sequence of {\it pairwise disjoint} filling discs for $\L$. By Lemma \ref{lem:countingfillingdiscs}, this implies that, for every $a\in\C$ with at most one exception, the number $N_a(E,T)$ of $a$-points with $0<\gamma_a\leq T$ inside the region $E$ defined by
$$
\tfrac{1}{2} -   \frac{\mu(t)\lambda_{\alpha,\beta}(t)}{\log t} < \sigma < \frac{1}{2} +   \frac{\mu(t)\lambda_{\alpha,\beta}(t)}{\log t}, \qquad t\geq 2.
$$
satisfies
$$
N_a(E,T) \gg  \frac{T}{\mu\left(\frac{T}{2}\right) \lambda_{\alpha,\beta}(T)}.
$$
However, by noticing that, for any real $\alpha<\beta$ and sufficiently large $t$
$$
\sqrt{\log\log t} < \lambda_{\alpha,\beta}(t),
$$
we deduce from Theorem \ref{th:levinsonselberg} that $N_a(E,T)\sim \frac{d_{\L}}{2\pi} T \log T$, as $T\rightarrow\infty$. Thus, the filling disc setting of Theorem \ref{th:selberg1} does not beat Theorem \ref{th:levinsonselberg} in counting the $a$-points inside the region $E$.\par

Nevertheless, as the information retrieved from filling discs is qualitatively different than the one provided by Levinson's method in Theorem \ref{th:levinsonselberg}, it is worth stating Theorem \ref{th:selberg1}.

\subsection{Filling discs for the Riemann zeta-function via \texorpdfstring{ $\Omega$}{Omega}-results \texorpdfstring{for $\zeta'(\rho_a)$}{}}\label{sec:filling_zeta}
In the case of the Riemann zeta-function there are (conditional) $\Omega$-results for $\zeta'(\rho_a)$ at our disposal which enable us to detect sequences of filling discs whose radii are significantly smaller than the ones in the general setting of Theorem \ref{th:avalues-case1}.

{\bf Discrete moments of $\zeta'(s)$ with respect to non-trivial $a$-points.} Information about large values of $|\zeta'(\rho_a)|$ can be retrieved from asymptotic estimates for discrete moments 
\begin{equation*}
I_{a}(T):=\frac{1}{N_a(T)} \sum_{0<\gamma_a \leq T} \zeta'(\rho_a), \quad \mbox{resp.} \quad J_{a,k}(T):=\frac{1}{N_a(T)} \sum_{0<\gamma_a \leq T} \left| \zeta'(\rho_a) \right|^{2k};
\end{equation*}
here $N_a(T)$ denotes as usual the number of non-trivial $a$-points $\rho_a$ of the zeta-function with imaginary part $0<\gamma_a \leq T$. Asymptotic estimates for $I_{a}(T)$ and $J_{a,k}(T)$ can be established by residue methods. Naturally, these asymptotics are powerful tools to estimate the number of simple $a$-points, in particular the number of simple zeros of the Riemann zeta-function. For works in this direction, we refer to Garunk\v{s}tis \& Steuding \cite{garunkstissteuding:2010} concerning simple $a$-points and exemplarily to Conrey, Gosh \& Gonek \cite{conreygoshgonek:1988, conreygoshgonek:1998}, Bui, Conrey \& Young \cite{buiconreyyoung:2011} and Bui \& Heath-Brown \cite{buiheathbrown:2013} concerning simple zeros.\par
Garunk\v{s}tis \&  Steuding \cite{garunkstissteuding:2010} proved that for any fixed $a\in\C$, as $T\rightarrow\infty$,
\begin{equation}\label{eq:asympIak}
I_{a}(T) = \frac{1}{N_a(T)}\sum_{0<\gamma_a \leq T} \zeta'(\rho_a) \sim 
(\tfrac{1}{2}-a) \log\frac{T}{2\pi}  
+ c(a)
\end{equation}
with some computable complex constant $c(a)$ depending on $a$.\footnote{Actually, Garunk\v{s}tis \& Steuding \cite{garunkstissteuding:2010} obtained even a more precise asymptotic formula. They also give a heuristic explanation why the leading term $\log T$ vanishes in the case $a=\frac{1}{2}$.} For $a=0$, this asymptotic was already established by Conrey, Gosh \& Gonek \cite{conreygoshgonek:1988} and later refined by Fujii \cite{fujii:1994}. From the asymptotic extension \eqref{eq:asympIak} of $I_{a}(T)$, we can immediately deduce the following corollary.

\begin{corollary}
Let $a\in\C\setminus\{\frac{1}{2}\}$. Then, for every constant $0<c< \left|\frac{1}{2}-a\right|$, there are infinitely many non-trivial $a$-points $\rho_a = \beta_a + i\gamma_a$ of the Riemann zeta-function such that
\begin{equation}\label{zetaprime-large1}
\left|\zeta'(\rho_a) \right| \geq c \log |\gamma_a|.
\end{equation}
\end{corollary}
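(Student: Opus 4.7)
The plan is to argue by contradiction, using the asymptotic formula \eqref{eq:asympIak} as the main analytic input. Fix $a\in\C\setminus\{\tfrac{1}{2}\}$ and $0<c<|\tfrac{1}{2}-a|$, and suppose that only finitely many non-trivial $a$-points $\rho_a=\beta_a+i\gamma_a$ of $\zeta$ satisfy $|\zeta'(\rho_a)|\geq c\log|\gamma_a|$. Then for all but finitely many non-trivial $a$-points we would have $|\zeta'(\rho_a)|<c\log|\gamma_a|\leq c\log T$ whenever $0<\gamma_a\leq T$. Estimating trivially by absolute values, this gives
\[
\Bigl|\sum_{0<\gamma_a\leq T}\zeta'(\rho_a)\Bigr|
\leq \sum_{0<\gamma_a\leq T}|\zeta'(\rho_a)|
\leq c\,(\log T)\cdot N_a(T) + O(1),
\]
where the $O(1)$ absorbs the finitely many exceptional $a$-points. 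Dividing by $N_a(T)$, this would yield
\[
|I_a(T)|\leq c\log T + o(1), \qquad T\to\infty.
\]

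On the other hand, by \eqref{eq:asympIak},
\[
|I_a(T)| = \bigl|\tfrac{1}{2}-a\bigr|\log\tfrac{T}{2\pi} + O(1)
= \bigl|\tfrac{1}{2}-a\bigr|\log T + O(1).
\]
Setting $\delta:=|\tfrac{1}{2}-a|-c>0$, the difference between the lower bound from \eqref{eq:asympIak} and the conjectural upper bound is
\[
|I_a(T)|-c\log T \geq \delta\log T + O(1),
\]
which tends to $+\infty$ as $T\to\infty$. This contradicts the assumed bound $|I_a(T)|\leq c\log T + o(1)$ and proves that infinitely many non-trivial $a$-points satisfy $|\zeta'(\rho_a)|\geq c\log|\gamma_a|$.

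There is no real obstacle here, since all the work is contained in the asymptotic \eqref{eq:asympIak}, which has already been recalled from Garunk\v{s}tis \& Steuding. The only point that requires a moment's care is the book-keeping of the constants: one must observe that $\log T$ and $\log(T/2\pi)$ differ only by $\log 2\pi=O(1)$, so the strict inequality $c<|\tfrac{1}{2}-a|$ is indeed enough to overpower the $O(1)$ discrepancies coming both from the asymptotic formula and from the finitely many exceptional $a$-points.
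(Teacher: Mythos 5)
Your proof is correct and follows the same route as the paper's own argument: assume for contradiction that all but finitely many non-trivial $a$-points satisfy $|\zeta'(\rho_a)|<c\log|\gamma_a|$, bound $|I_a(T)|$ by the average of $|\zeta'(\rho_a)|$ to get $|I_a(T)|\leq c\log T + o(1)$, and contradict the asymptotic \eqref{eq:asympIak} via the strict inequality $c<|\tfrac{1}{2}-a|$. The only difference is cosmetic: the paper compresses your two displayed steps into a single inequality chain and leaves the bookkeeping of the $o(1)$ and the $\log(T/2\pi)$ vs.\ $\log T$ adjustment implicit, whereas you spell them out.
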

\begin{proof}
Let $a\in\C\setminus\{\frac{1}{2}\}$ and $0<c< \left|\frac{1}{2}-a\right|$. Assume that for all but finitely many non-trivial $a$-points $\rho_a$, we have
$$
\left|\zeta'(\rho_a) \right| < c \log |\gamma_a|.
$$
Then, as $T\rightarrow\infty$,
\begin{equation}
|I_{a}(T)|\leq \frac{1}{N_a(T)} \sum_{0<\gamma_a \leq T} \left| \zeta'(\rho_a) \right| < c \log T + o(1).
\end{equation}
This is, however, in contradiction to \eqref{eq:asympIak}.
\end{proof}

By Lehto's criterion (Theorem \ref{th:lehto}), we are now able to show the following.
\begin{theorem}\label{th:avalues-case1-zeta}
For $a\in\C\setminus\{\frac{1}{2}\}$, there exists a sequence $(\rho_{a,k})_k$ of non-trivial $a$-points of the Riemann zeta-function with ordinates $\gamma_{a,k}>0$ such that the discs defined by
$$
|s-\rho_{a,k}|< \frac{\mu(\gamma_{a,k})}{ \log \gamma_{a,k} },\qquad k\in\N,
$$
with any positive function $\mu:[2,\infty)\rightarrow\R^+$ satisfying $\lim_{t\rightarrow\infty}\mu(t)=\infty$, form a sequence of filling discs for $\zeta$.
\end{theorem}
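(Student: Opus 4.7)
The theorem is essentially an immediate consequence of Lehto's criterion (Theorem~\ref{th:lehto}) combined with the $\Omega$-result \eqref{zetaprime-large1} deduced from the first discrete moment asymptotic of Garunk\v{s}tis \& Steuding. The plan is to take the sequence $(\rho_{a,k})_k$ of non-trivial $a$-points directly furnished by that $\Omega$-result and verify that it satisfies Lehto's condition \eqref{crit_fillingdiscs} with $z_k = w_k = \rho_{a,k}$ and $\lambda_k = \mu(\gamma_{a,k})/\log\gamma_{a,k}$.

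First I would fix any $a\in\C\setminus\{\tfrac12\}$ and any constant $c$ with $0<c<|\tfrac12-a|$. By the corollary preceding the theorem, there exist infinitely many non-trivial $a$-points $\rho_{a,k}=\beta_{a,k}+i\gamma_{a,k}$ (which, after relabeling, we may assume to have $\gamma_{a,k}>0$ and $\gamma_{a,k}\to\infty$) such that
$$
|\zeta'(\rho_{a,k})|\geq c\,\log\gamma_{a,k}.
$$
Since $\zeta(\rho_{a,k})=a$ by definition, the spherical derivative at these points is
$$
\zeta^{\#}(\rho_{a,k})=\frac{|\zeta'(\rho_{a,k})|}{1+|a|^2}\;\geq\;\frac{c}{1+|a|^2}\,\log\gamma_{a,k}.
$$

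Next, let $\mu:[2,\infty)\to\R^+$ be any positive function with $\lim_{t\to\infty}\mu(t)=\infty$, and set $\lambda_k:=\mu(\gamma_{a,k})/\log\gamma_{a,k}$. Using the estimate above,
$$
\lambda_k\,\zeta^{\#}(\rho_{a,k})\;\geq\;\frac{c}{1+|a|^2}\,\mu(\gamma_{a,k})\;\longrightarrow\;\infty,
$$
as $k\to\infty$. Choosing $w_k=z_k=\rho_{a,k}$, the second requirement $|z_k-w_k|=o(\lambda_k)$ in Lehto's criterion is trivially satisfied. Consequently, Theorem~\ref{th:lehto} applies and the discs $D_{\lambda_k}(\rho_{a,k})$ form a sequence of filling discs for $\zeta$.

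\textbf{Remarks on potential obstacles.} There is essentially no obstacle here, as all the analytic heavy lifting is hidden in the first moment asymptotic \eqref{eq:asympIak}, which has already been stated. One minor point to watch: the corollary only guarantees the lower bound $|\zeta'(\rho_a)|\geq c\log|\gamma_a|$ for infinitely many non-trivial $a$-points, without a sign restriction on $\gamma_a$; but by the reflection principle $\zeta(\overline{s})=\overline{\zeta(s)}$ the $a$-points with $\gamma_a<0$ correspond to $\overline{a}$-points with $\gamma_a>0$, so one can always extract a subsequence lying in the upper half-plane. A second remark: the radius $\mu(\gamma_{a,k})/\log\gamma_{a,k}$ is, up to the slowly growing factor $\mu$, of the same order of magnitude as the Riemann-von Mangoldt spacing $2\pi/\log\gamma_{a,k}$ of $a$-points on the critical line, so this result is essentially optimal among sequences centered exactly at the $a$-points produced by the first moment method.
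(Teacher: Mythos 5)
Your proof is correct and follows precisely the same route as the paper's: take the $a$-points with $|\zeta'(\rho_{a,k})|\geq c\log\gamma_{a,k}$ from the preceding corollary, note that $\zeta^{\#}(\rho_{a,k})=|\zeta'(\rho_{a,k})|/(1+|a|^2)$ since $\zeta(\rho_{a,k})=a$, and apply Lehto's criterion with $w_k=z_k=\rho_{a,k}$. (The reflection-principle remark is superfluous: the corollary's proof already works with the sum $I_a(T)$ over $0<\gamma_a\le T$, so the points it produces lie in the upper half-plane.)
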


\begin{proof}[Proof of Theorem \ref{th:avalues-case1-zeta}] Let $(\rho_{a,k})_k$ be a sequence of non-trivial $a$-points with positive imaginary parts such that \eqref{zetaprime-large1} holds. Then, for any positive function with $\lim_{t\rightarrow\infty}\mu(t)$, we have
$$
\lim_{k\rightarrow\infty} \frac{\mu(\gamma_{a,k})}{ \log \gamma_{a,k} } \zeta^{\#}(\rho_{a,k}) = \lim_{k\rightarrow\infty}  \frac{\mu(\gamma_{a,k})}{ \log \gamma_{a,k} } \frac{|\zeta'(\rho_{a,k})|}{1+|a|^2} = \infty.
$$
The theorem follows by Lehto's criterion (Theorem \ref{th:lehto}).
\end{proof}
As far as the author knows, non-trivial conditional or unconditional asymptotic estimates for $J_{a,k}(T)$ are only known in the case  $a= 0$.\par

{\bf Discrete moments of $\zeta'(s)$ with respect to non-trivial zeros.}
Gonek \cite{gonek:1989} and Heijhal \cite{hejhal:1989} conjectured independently that $J_{0,k}(T)\asymp (\log T) ^{k^2 +2k}$ for $k\in\R$, as $T\rightarrow\infty$. Hughes, Keating \& O'Connell \cite{hugheskeatingoconnell:2000} conjectured that $J_{0,k}(T)\sim C_k (\log T) ^{k^2 +2k}$ for $k>-3/2$, as $T\rightarrow\infty$, with an explicit computable constant $C_k$ derived from models for the Riemann zeta-function by random matrix theory. Assuming the Riemann hypothesis, Gonek \cite{gonek:1984} obtained that $J_{0,1}\sim \frac{1}{12}(\log T)^3$ and Ng \cite{ng:2004} that $J_{0,2}\asymp (\log T)^8$. 
Using a method developed by Rudnick \& Soundararajan \cite{rudnicksoundararajan:2005} and assuming the generalized Riemann hypothesis (GRH) for Dirichlet $L$-functions, Milinovich \& Ng \cite{milinovichng:2013} could derive lower bounds for the discrete moments $J_{0,k}(T)$. For $k\in\mathbb{N}$, as $T\rightarrow\infty$,
\begin{equation}\label{eq:asym_JkT}
J_{0,k}(T)=\frac{1}{N(T)} \sum_{0<\gamma\leq T} \left|\zeta'(\rho) \right|^{2k} \gg (\log T) ^{k^2 +2k}. \;\;\; 
\end{equation}
where $N(T):=N_0(T)$ denotes, as usual, the number of non-trivial zeros of $\zeta$ with imaginary part $0<\gamma\leq T$. The assumption of the GRH is required to derive a suitable asymptotic formula for sums of the shape
$$
\sum_{0<\gamma\leq T} \zeta'(\rho)A_X(\rho)^{k-1}A_X(1-\rho)^k 
$$
with $A_X(s)=\sum_{n\leq X}n^{-s}$. Originally, Milinovich \cite{milinovich:phd} deduced this asymptotic formula from the main theorem of Ng \cite{ng:2007} where it was enough to assume the Riemann hypothesis. Later, they discovered a serious mistake in the error term of the asymptotic expansion in the main theorem of Ng \cite{ng:2007}. However, as they indicate in \cite{milinovichng:2013}, it appears that this mistake may be fixed with additional effort. Under the assumption of the Riemann hypothesis, Milinovich \cite{milinovich:2010} could bound $J_{0,k}(T)$ from above by $\ll (\log T) ^{k^2 +2k+\eps}$ with an arbitrary $\eps>0$. This upper bound together with the lower bound \eqref{eq:asym_JkT} supports the conjecture by Gonek \cite{gonek:1989}, Heijhal \cite{hejhal:1989} and Hughes, Keating \& O'Connell \cite{hugheskeatingoconnell:2000}. The following corollary is an immediate consequence of Milinovich's \& Ng's lower bound \eqref{eq:asym_JkT}.

\begin{corollary}\label{cor:lowerboundzetaprimerho}
Assume the GRH for Dirichlet $L$-functions. For $\delta>0$, let $A$ be the set of all non-trivial zeros $\rho$ of the Riemann zeta function satisfying
$$
\left|\zeta'(\rho)\right| \geq (\log |\gamma|)^{\delta}.
$$
Then, for any $\eta>0$, as $T\rightarrow\infty$,
$N_{A}(T) \gg T^{1-\eta}.$ \,  \footnote{The function $N_A(T)$ counts all elements in $A$ with positive imaginary part less or equal to $T$. For a precise definition, we refer to Section \ref{sec:fillingdiscs_basics}.}
\end{corollary}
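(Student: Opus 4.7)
The plan is to combine the conditional lower bound \eqref{eq:asym_JkT} of Milinovich \& Ng with an essentially trivial upper bound on $|\zeta'(\rho)|$ coming from the Lindel\"of hypothesis (which is implied by the Riemann hypothesis, itself a consequence of the assumed GRH). The idea is to split the discrete moment $\sum_{0<\gamma\leq T}|\zeta'(\rho)|^{2k}$ into the contribution of $A$ and of its complement, trivially bounding each piece, and then to pick the integer $k$ large compared with $\delta$ in order to make the contribution outside $A$ negligible.

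First I would fix $\eta>0$ and choose an integer $k=k(\delta)$ large enough that $k^{2}+2k > 2k\delta$; for example $k>2\delta$ will do. Under GRH for Dirichlet $L$-functions, \eqref{eq:asym_JkT} gives
\begin{equation*}
\sum_{0<\gamma\leq T}|\zeta'(\rho)|^{2k} \gg N(T)(\log T)^{k^{2}+2k},
\qquad T\to\infty,
\end{equation*}
while splitting the sum yields
\begin{equation*}
\sum_{0<\gamma\leq T}|\zeta'(\rho)|^{2k}
= \sum_{\substack{\rho\in A\\ 0<\gamma\leq T}}|\zeta'(\rho)|^{2k}
+ \sum_{\substack{\rho\notin A\\ 0<\gamma\leq T}}|\zeta'(\rho)|^{2k}.
\end{equation*}
The complementary sum is, by the very definition of $A$, bounded trivially by $N(T)(\log T)^{2k\delta}$, which by the choice of $k$ is of smaller order than $N(T)(\log T)^{k^{2}+2k}$.

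Next I would bound $|\zeta'(\rho)|^{2k}$ for $\rho\in A$ from above. The Riemann hypothesis implies the Lindel\"of hypothesis, and hence $\zeta(\tfrac12+it)\ll t^{\varepsilon}$ for every $\varepsilon>0$, which by \eqref{eq:finiteorderderivative} transfers to $\zeta'(\tfrac12+it)\ll t^{\varepsilon}$. Consequently
\begin{equation*}
\sum_{\substack{\rho\in A\\ 0<\gamma\leq T}}|\zeta'(\rho)|^{2k}
\ll N_{A}(T)\cdot T^{2k\varepsilon}.
\end{equation*}
Combining the two estimates and absorbing the negligible term,
\begin{equation*}
N_{A}(T)\cdot T^{2k\varepsilon}
\gg N(T)(\log T)^{k^{2}+2k},
\end{equation*}
so that $N_{A}(T) \gg T^{\,1-2k\varepsilon}\log T$ by the Riemann--von Mangoldt formula. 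Choosing $\varepsilon = \eta/(2k)$ then yields $N_{A}(T)\gg T^{1-\eta}$, which is the required bound.

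The argument is, as one sees, essentially a bookkeeping exercise; the only delicate point is that the integer $k$ must be taken large enough to beat $\delta$ \emph{and} small enough that the Lindel\"of exponent $2k\varepsilon$ stays below $\eta$. Since both conditions can be satisfied simultaneously (fix $k$ first from $\delta$, then $\varepsilon$ from $k$ and $\eta$), no real obstacle arises once the two inputs---\eqref{eq:asym_JkT} and the Lindel\"of-type bound for $\zeta'$---are in place. The conceptually important step, and the one that genuinely uses the hypothesis GRH for Dirichlet $L$-functions, is the invocation of \eqref{eq:asym_JkT}; the rest is a H\"older-style extraction of a lower bound on the cardinality of a large-value set from a lower bound on a high moment.
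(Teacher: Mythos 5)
Your proof is correct and, up to presenting the argument directly rather than by contradiction, it is the same as the paper's: the same choice of $k$ with $k^2+2k>2k\delta$, the same decomposition of the discrete moment $J_{0,k}(T)$ into the contributions of $A$ and its complement, the same Lindel\"of-type bound $\zeta'(\tfrac12+i\gamma)\ll T^{\eps}$ on $A$, and the same comparison with the Milinovich--Ng lower bound \eqref{eq:asym_JkT}. The paper phrases this as a contradiction argument starting from the assumption $N_A(T)\ll T^{1-\eta}$, whereas you extract the lower bound on $N_A(T)$ directly; both are mere reformulations of one another.
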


\begin{proof} For a given $\delta>0$, we choose $K\in\N$ such that $K^2 + 2K > 2K\delta$. We denote by $A$ the set of all non-trivial zeros $\rho$ of the Riemann zeta function satisfying $\left|\zeta'(\rho)\right| \geq (\log |\gamma|)^{\delta}
$. Analogously, we denote by $B$ the set of all non-trivial zeros $\rho$ for which $\left|\zeta'(\rho)\right| < (\log |\gamma|)^{\delta}$ is true. Assume that there exists a real number $\eta>0$ such that $N_A(T)\ll T^{1-\eta}$, as $T\rightarrow\infty$. Then, it follows from the Riemann-von Mangoldt formula that $N_B(T)\sim N(T)$, as $T\rightarrow\infty$. Under the assumption of the Riemann hypothesis, we know that all non-trivial zeros are on the line $\sigma=\frac{1}{2}$ and that $\zeta(\frac{1}{2}+it)\ll t^{\eps}$ for any $\eps>0$, as $t\rightarrow\infty$. Taking $\eps=\eta/2K$ and putting everything together, we obtain that, as $T\rightarrow\infty$,
\begin{align*}
J_{0,K}(T)=&\frac{1}{N(T)} \sum_{0<\gamma\leq T} \left|\zeta'(\rho) \right|^{2K} \\
& \ll \frac{1}{N(T)} \left( N_A (T)\cdot T^{2K \cdot \frac{\eta}{2K}} + N_{B}(T)\cdot (\log T)^{2K\delta}  \right)\\
& \ll (\log T)^{2K\delta}.
\end{align*}
By our choice of $K$, this is in contradiction to \eqref{eq:asym_JkT}. The assertion is proved.
\end{proof}

By Lehto's criterion (Theorem \ref{th:lehto}), we are now able to show the following.
\begin{theorem}\label{th:avalues-case3}
Assume the GRH for Dirichlet $L$-functions. Then, for every $\delta> 0$, there is a set $A$ of non-trivial zeros $\rho = \frac{1}{2}+i\gamma$ of the Riemann zeta-function with $N_A(T)\gg T^{1-\eta}$ for any $\eta>0$, as $T\rightarrow\infty$, such that the discs defined by 
$$
|s-\rho|<\frac{1}{(\log|\gamma|)^{\delta}}, \qquad \rho\in A ,
$$ 
form a sequence of filling discs for $\zeta(s)$. 
\end{theorem}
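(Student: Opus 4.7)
The plan is to deduce this directly from Lehto's criterion (Theorem \ref{th:lehto}), taking the zeros themselves as centers of the discs and using Corollary \ref{cor:lowerboundzetaprimerho} to produce a sufficiently rich set of zeros at which $|\zeta'(\rho)|$ is comfortably large. Concretely, given $\delta>0$, I would fix any $\delta'>\delta$ (for definiteness $\delta':=2\delta$) and apply Corollary \ref{cor:lowerboundzetaprimerho} with this $\delta'$: under the GRH for Dirichlet $L$-functions, the set
$$
A:=\left\{\rho=\tfrac{1}{2}+i\gamma\,:\,\zeta(\rho)=0,\ |\zeta'(\rho)|\geq(\log|\gamma|)^{\delta'}\right\}
$$
satisfies $N_A(T)\gg T^{1-\eta}$ for every $\eta>0$, as $T\to\infty$. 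This is the only place where GRH enters.

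Next I would enumerate $A$ as a sequence $(\rho_k)_k$ arranged by increasing imaginary part $\gamma_k>0$ (which is possible since $N_A(T)\to\infty$), and verify Lehto's criterion for the choices
$$
z_k:=\rho_k,\qquad w_k:=\rho_k,\qquad \lambda_k:=\frac{1}{(\log\gamma_k)^{\delta}}.
$$
Trivially $|z_k-w_k|=0=o(\lambda_k)$. Since each $\rho_k$ is a zero of $\zeta$, one has $\zeta(\rho_k)=0$ and hence
$$
\zeta^{\#}(\rho_k)=\frac{|\zeta'(\rho_k)|}{1+|\zeta(\rho_k)|^2}=|\zeta'(\rho_k)|\geq(\log\gamma_k)^{\delta'},
$$
so that
$$
\lambda_k\cdot \zeta^{\#}(\rho_k)\geq (\log\gamma_k)^{\delta'-\delta}\longrightarrow\infty,\qquad k\to\infty.
$$
Theorem \ref{th:lehto} then immediately yields that the discs $|s-\rho_k|<\lambda_k$, $k\in\N$, form a sequence of filling discs for $\zeta$, which is exactly the claim (after re-indexing over $\rho\in A$).

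The argument is essentially mechanical: Corollary \ref{cor:lowerboundzetaprimerho}, which rests on the conditional lower bound \eqref{eq:asym_JkT} of Milinovich \& Ng for the discrete moments $J_{0,k}(T)$, does all of the heavy lifting, while Lehto's criterion serves as a convenient translation device. There is no genuine obstacle in this final step. The only point that requires care is the strict inequality $\delta'>\delta$: taking $\delta'=\delta$ would only give $\lambda_k\zeta^{\#}(\rho_k)\gtrsim 1$, which is insufficient for Lehto's criterion, whereas any positive gap $\delta'-\delta>0$ forces the divergence $\lambda_k\zeta^{\#}(\rho_k)\to\infty$. The density estimate $N_A(T)\gg T^{1-\eta}$ is preserved since Corollary \ref{cor:lowerboundzetaprimerho} holds for the chosen $\delta'$ in place of $\delta$.
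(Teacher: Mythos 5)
Your proof is correct and follows the same route as the paper: apply Corollary \ref{cor:lowerboundzetaprimerho} with a threshold exponent strictly larger than $\delta$ to get a dense set $A$ of zeros, note that $\zeta^{\#}(\rho)=|\zeta'(\rho)|$ at a zero, and invoke Lehto's criterion with $w_k=z_k=\rho_k$. The paper uses the exponent $\delta+1$ where you use $2\delta$; both choices work since the only requirement is a positive gap above $\delta$.
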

\begin{proof}
For a given $\delta>0$, we define $A$ to be the set of all non-trivial zeros $\rho=\frac{1}{2}+i\gamma$ of the Riemann zeta-function satisfying
$$
 |\zeta'(\tfrac{1}{2}+i\gamma)| \geq (\log|\gamma|)^{\delta + 1}.
$$
Then, Corollary \ref{cor:lowerboundzetaprimerho} assures that $N_{A}(T)\gg T^{1-\eta}$ for any $\eta>0$, as $T\rightarrow\infty$. Moreover,
$$
\lim_{\begin{subarray}{c} \, |\gamma|\rightarrow\infty \\ \frac{1}{2}+i\gamma\in A \end{subarray}} \frac{1}{(\log |\gamma|)^{\delta}} \, \zeta^{\#}(\tfrac{1}{2}+i\gamma) 
= \lim_{\begin{subarray}{c} \, |\gamma|\rightarrow\infty \\ \frac{1}{2}+i\gamma\in A \end{subarray}} \frac{1}{(\log |\gamma|)^{\delta}} \, \left|\zeta'(\tfrac{1}{2}+i\gamma) \right| = \infty.
$$
and the theorem follows by Lehto's criterion (Theorem \ref{th:lehto}).
\end{proof}

{\bf Soundararajan's resonance method.} A resonance method developed by Sounda\-rara\-jan \cite{soundararajan:2008} yields another approach to retrieve information about large values of $\zeta'(\rho)$. Ng \cite{ng:2008} showed that, under assumption of the GRH for Dirichlet $L$-functions, there are infinitely many non-trivial zeros such that
$$
\zeta'(\rho) \gg \exp \left(c_0 \frac{\log |\gamma|}{\log\log |\gamma|} \right)
$$ 
where $c_0 = \frac{1}{\sqrt{2}} - \eps$ with any $\eps>0$. By Lehto's criterion, we deduce the following.
\begin{theorem}\label{th:resonance}
Assume the GRH for Dirichlet $L$-functions. Then, there exists a sequence $(\rho_k)_k$ of non-trivial zeros $\rho_k = \frac{1}{2}+i\gamma_k$ of the Riemann zeta-function with $\gamma_k>0$ such that the discs defined by 
$$
|s-\rho_k|<\mu(\gamma_k)\exp \left(-c_0 \frac{\log \gamma_k}{\log\log \gamma_k} \right), \qquad k\in\N,
$$ 
form a sequence of filling discs for $\zeta(s)$, where $\mu:[2,\infty)\rightarrow\R^+$ is any positive function satisfying $\lim_{t\rightarrow\infty}\mu(t)=\infty$.
\end{theorem}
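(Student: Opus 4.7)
The plan is to mimic the proof of Theorem \ref{th:avalues-case3} verbatim, but with the Milinovich--Ng moment lower bound replaced by Ng's sharper resonance-method $\Omega$-result quoted just above the statement. The whole argument reduces to a single application of Lehto's criterion (Theorem \ref{th:lehto}).

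First, given $c_0\in(0,\tfrac{1}{\sqrt{2}})$ as in the statement, fix $\eps>0$ small enough that $c_0 < \tfrac{1}{\sqrt{2}}-\eps =: c_0'$. By Ng's theorem (under GRH for Dirichlet $L$-functions), there exists an infinite sequence $(\rho_k)_k$ of non-trivial zeros $\rho_k=\tfrac{1}{2}+i\gamma_k$ with $\gamma_k>0$, $\gamma_k\to\infty$, and
$$
|\zeta'(\rho_k)|\;\gg\;\exp\!\left(c_0'\,\frac{\log\gamma_k}{\log\log\gamma_k}\right).
$$

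Second, since $\zeta(\rho_k)=0$ for every $k$, the spherical derivative simplifies to
$$
\zeta^{\#}(\rho_k)=\frac{|\zeta'(\rho_k)|}{1+|\zeta(\rho_k)|^2}=|\zeta'(\rho_k)|.
$$
Writing $\lambda_k:=\mu(\gamma_k)\exp\!\left(-c_0\,\tfrac{\log\gamma_k}{\log\log\gamma_k}\right)$ for the radii appearing in the theorem, the previous two displays combine to give
$$
\lambda_k\,\zeta^{\#}(\rho_k)\;\gg\;\mu(\gamma_k)\exp\!\left((c_0'-c_0)\,\frac{\log\gamma_k}{\log\log\gamma_k}\right)\;\longrightarrow\;\infty,
$$
because $c_0'-c_0>0$ and $\mu(\gamma_k)\to\infty$ by assumption on $\mu$.

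Third, apply Lehto's criterion (Theorem \ref{th:lehto}) with the trivial choice $w_k=z_k=\rho_k$. Then $|z_k-w_k|=0=o(\lambda_k)$ is automatic, while the second condition in \eqref{crit_fillingdiscs}, namely $\lim_{k\to\infty}\lambda_k\zeta^{\#}(w_k)=\infty$, is exactly what we established. Hence the discs $\{s\in\C:|s-\rho_k|<\lambda_k\}$ form a sequence of filling discs for $\zeta$, which is the assertion of the theorem. There is no real obstacle here: the deep analytic input (Soundararajan's resonance method as extended by Ng) is already packaged in the cited $\Omega$-bound, and the filling-disc conclusion is a direct consequence of Lehto's criterion, precisely parallel to the derivation of Theorem \ref{th:avalues-case3} from the Milinovich--Ng estimate.
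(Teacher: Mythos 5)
Your proposal is correct and follows essentially the same route as the paper's proof: invoke Ng's conditional lower bound for $|\zeta'(\rho)|$, note that at a zero $\zeta^{\#}(\rho_k)=|\zeta'(\rho_k)|$, and apply Lehto's criterion with $w_k=z_k=\rho_k$. The only cosmetic difference is that you absorb the exponential gap by choosing $c_0'>c_0$ in Ng's estimate, whereas the paper uses the same $c_0$ and lets the factor $\mu(\gamma_k)\to\infty$ alone drive the divergence; both are valid.
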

\begin{proof}
According to Ng \cite{ng:2008}, for any fixed $0<c_0<\sqrt{2}$, there exist a constant $C>0$ and a sequence $(\rho_k)_k$ of non-trivial zeros $\rho_k = \frac{1}{2}+i\gamma_k$ of the Riemann zeta-function with $\gamma_k>0$ such that
$$
\left| \zeta'(\rho_k) \right| \geq C \exp \left(c_0 \frac{\log \gamma_k}{\log\log \gamma_k} \right)
$$
for every $k\in\N$. Hence, for any positive function $\mu$ satisfying $\lim_{t\rightarrow\infty}\mu(t)=\infty$,
\begin{align*}
&\lim_{k\rightarrow\infty}\, \mu(\gamma_k)\exp \left(-c_0 \frac{\log |\gamma_k|}{\log\log |\gamma_k|} \right)\, \zeta^{\#}(\rho_k)\\
&= \lim_{k\rightarrow\infty}\, \mu(\gamma_k)\exp \left(-c_0 \frac{\log |\gamma_k|}{\log\log |\gamma_k|} \right)\, |\zeta'(\rho_k)|\\
&=\infty.
\end{align*}
The assertion follows by Lehto's criterion (Theorem \ref{th:lehto}).
\end{proof}

\section{Non-trivial a-points to the left of the critical line}
According to Selberg's $a$-point conjecture, we expect that, for any $a\in\C\setminus\{0\}$, about $3/4$-th of the non-trivial $a$-points of the Riemann zeta-function lie to the left of the critical line. However, unconditionally, it is not even known whether there are {\it infinitely many} non-trivial $a$-points to the left of the critical line. This motivated Steuding to raise the following question at the problem session of the 2011 Palanga conference in honour of Jonas Kubilius. Is it possible to find for every $a\in\C$ a sequence of points $s_k=\sigma_k + it_k$, $k\in\N$, with $\sigma_k<\frac{1}{2}$ and $\lim_{k\rightarrow\infty} t_k = \infty$ such that
$$
\lim_{k\rightarrow\infty} \zeta(s_k) = a.
$$
By means of our results in Section \ref{sec:largesmall} concerning small and large values in a neighbourhood of the critical line, we can give a very first answer to this question.
\begin{corollary}\label{cor:apointsleft}
Let $c>0$ and let $\epsilon:[2,\infty)\rightarrow\R$ be a function satisfying
$$
0<\epsilon(t)\leq \frac{c}{ \log t}\qquad \mbox{for }t\geq 2.
$$ 
Then, for every $\alpha>0$, there exists an $a\in\C$ with $|a|=\alpha$ and a sequence $(t_k)$ with $t_k\in[1,\infty)$ such that
$$
\lim_{k\rightarrow\infty} \zeta(\tfrac{1}{2}-\epsilon(t_k)+it_k) = a.
$$
\end{corollary}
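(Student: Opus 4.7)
The plan is to derive the corollary from Corollary \ref{cor:selbergsmalllarge} via an intermediate-value argument on the shifted curve. Since $d_\zeta = 1$, we have $\kappa_\zeta = \pi/2$, so I would first fix a constant $\mu_0$ with $0 < \mu_0 < \pi/4$ and set $\mu(\tau) \equiv \mu_0$, then choose $c' > c/\mu_0$ so that $c'\mu_0 > c$, and let $\mathcal{R} := \mathcal{R}(c',1)$. These choices make Corollary \ref{cor:selbergsmalllarge} applicable to $\zeta$: for every positive integer $m$ it yields sets $\mathcal{W}_{1/m},\mathcal{W}_m \subset [2,\infty)$ of positive lower density such that, for every $\tau$ in the respective set, $|\zeta(s)| \leq 1/m$ or $|\zeta(s)| \geq m$ uniformly on $\overline{\mathcal{R}}_\tau$. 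The key observation is that $\epsilon(\tau) \leq c/\log\tau < c'\mu_0/\log\tau$ forces $\tfrac{1}{2} - \epsilon(\tau) + i\tau \in \overline{\mathcal{R}}_\tau$, so that $|\zeta(\tfrac{1}{2}-\epsilon(\tau)+i\tau)| \leq 1/m$ for $\tau \in \mathcal{W}_{1/m}$ and $|\zeta(\tfrac{1}{2}-\epsilon(\tau)+i\tau)| \geq m$ for $\tau \in \mathcal{W}_m$.

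Now fix $\alpha > 0$. For each sufficiently large integer $m$ with $1/m < \alpha < m$, I would pick a threshold $T_m$ with $T_m \to \infty$ and use the positive lower densities of $\mathcal{W}_{1/m}$ and $\mathcal{W}_m$ to locate points $\tau_1^{(m)} \in \mathcal{W}_{1/m}$ and $\tau_2^{(m)} \in \mathcal{W}_m$ both inside $[T_m, 2T_m]$; without loss of generality $\tau_1^{(m)} < \tau_2^{(m)}$. Assuming $\epsilon$ is continuous (which appears to be the natural setting of the corollary), the function
\[
G(t) := \bigl|\zeta\bigl(\tfrac{1}{2} - \epsilon(t) + it\bigr)\bigr|
\]
is continuous on $[\tau_1^{(m)}, \tau_2^{(m)}]$ with $G(\tau_1^{(m)}) \leq 1/m < \alpha < m \leq G(\tau_2^{(m)})$. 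The intermediate value theorem then provides some $t_m^\ast \in (\tau_1^{(m)}, \tau_2^{(m)})$ satisfying $G(t_m^\ast) = \alpha$.

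Writing $\zeta(\tfrac{1}{2} - \epsilon(t_m^\ast) + it_m^\ast) = \alpha e^{i\theta_m}$ with $\theta_m \in [0, 2\pi)$, compactness of the unit circle delivers a subsequence $\theta_{m_k} \to \theta$. Setting $t_k := t_{m_k}^\ast$ and $a := \alpha e^{i\theta}$, we have $t_k \to \infty$ (since $t_k \geq T_{m_k} \to \infty$), $|a| = \alpha$, and $\zeta(\tfrac{1}{2} - \epsilon(t_k) + it_k) \to a$, as required.

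The main obstacle is the continuity hypothesis on $\epsilon$ needed to invoke the intermediate value theorem on $G$; a direct attempt to transfer an IVT from the critical line to the shifted curve fails because $|\zeta'(\sigma + it)|$ cannot be made $o(\log t)$ unconditionally near $\sigma = \tfrac{1}{2}$, so the error $\epsilon(t)\,|\zeta'|$ need not be small. For merely measurable $\epsilon$ one would need a substitute, for instance exploiting that $\overline{\mathcal{R}}_\tau$ already contains the entire segment $\{\tfrac{1}{2} - \epsilon(t) + it : |t - \tau| \leq \mu_0/\log\tau\}$ on which $|\zeta|$ is uniformly controlled, thus refining the argument to a covering of $[\tau_1^{(m)}, \tau_2^{(m)}]$ by such overlapping pieces and producing a crossing point of $G$ with level $\alpha$ from the analytic continuity of $\zeta$ itself rather than from continuity of $G$.
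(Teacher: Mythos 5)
Your proof is correct and takes essentially the same route as the paper: derive the existence of points where $|\zeta(\tfrac{1}{2}-\epsilon(t)+it)|$ equals $\alpha$ from Corollary \ref{cor:selbergsmalllarge} via the intermediate value theorem, then pass to a convergent subsequence using compactness of $\partial D_\alpha(0)$. Your observation that the IVT step tacitly requires continuity of $\epsilon$ is a fair one; the paper invokes the IVT without addressing this point, and your suggested workaround for merely measurable $\epsilon$ goes beyond what the paper records.
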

\begin{proof} 
The case $\alpha=0$ follows directly from Corollary \ref{cor:selbergsmalllarge}.
Thus, suppose that $\alpha>0$. If we take $m=\alpha+1$ in Corollary \ref{cor:selbergsmalllarge}, the intermediate value theorem assures the existence of a sequence $(t_k)_k$ with $t_k\in[1,\infty)$ and $\lim_{k\rightarrow\infty} t_k = \infty$ such that
$$
\zeta(\tfrac{1}{2}-\epsilon(t_k)+it_k) \in \partial D_{\alpha}(0)
$$
for all $k\in\N$. As the circle $\partial D_{\alpha}(0)$ is compact, the set $\{\zeta(\tfrac{1}{2}-\epsilon(t_k)+it_k)\}_{k\in\N}\subset \partial D_{\alpha}(0)$ has at least one accumulation point $a\in \partial D_{\alpha}(0)$. Thus, there is a subsequence $(t_{k_j})_j$ of $(t_k)_k$ such that 
$$
\lim_{j\rightarrow\infty} \zeta(\tfrac{1}{2}-\epsilon(t_{k_j})+it_{k_j}) = a.
$$
\end{proof}

\section{Summary: a-points of the Riemann zeta-function near the critical line}
Let $\lambda$ be a positive function on $\R$ with $\lim_{t\rightarrow\infty} \lambda(t)=0$ and $S_{\lambda}$ be the region defined by
$$
 \tfrac{1}{2} - \lambda(t) <\sigma < \tfrac{1}{2} + \lambda(t), \qquad t\geq 2.
$$
In the following corollary, we summarize our knowledge on how fast $\lambda(t)$ can tend to zero, as $t\rightarrow\infty$, such that, for a given $a\in\C$, there are almost all, a positive proportion, resp. infinitely many of all non-trivial $a$-points of the Riemann zeta-function inside $S_{\lambda}$. For more detailed information the reader is referred to the corresponding theorems of the preceeding sections.
\begin{corollary}[Levinson, Selberg, Tsang, Christ]\label{cor:summaryapoints}
$\mbox{ }$
\begin{itemize}
 \item[(a)] Unconditionally, for every $a\in\C$, there are almost all $a$-points of the Riemann zeta-function (in the sense of density) inside the region $S_{\lambda}$ if one chooses
$$
\lambda(t)=\frac{\mu(t)\sqrt{\log\log t}}{\log t}, \qquad t\geq 2,
$$
with any positive function $\mu$ satisfying $\lim_{t\rightarrow\infty}\mu(t)=\infty$.\footnote{This follows from Theorem \ref{th:levinsonselberg} which was proved in the framework of the class $\Sc^*$ by refining a method developed by Levinson \cite{levinson:1975} with results of Selberg \cite{selberg:1992}, Tsang \cite{tsang:1984} and Steuding \cite{steuding:2007}.}
\item[(b)] Under the assumption of the Riemann hypothesis, for every $a\in\C$, there is a positive proportion of all $a$-points of the Riemann zeta-function (in the sense of density) inside the region $S_{\lambda}$ if one chooses
$$
\lambda(t)=\frac{c\sqrt{\log\log t}}{\log t}, \qquad t\geq 2,
$$
with any constant $c>0$.\footnote{This is due to Selberg \cite{selberg:1992}; see Section \ref{sec:apointsgeneral}.}
 \item[(c)] Unconditionally, for every $a\in\C$, with at most one exception, there are infinitely many $a$-points of the Riemann zeta-function inside the region $S_{\lambda}$ if one chooses
$$
\lambda(t)=\frac{\mu(t)}{\log t}, \qquad t\geq 2,
$$
with any positive function $\mu$ satisfying $\lim_{t\rightarrow\infty}\mu(t)=\infty$.\footnote{This follows from Theorem \ref{th:avalues-case1} which was proved in the framework of the class $\mathcal{G}$ by relying on normality arguments.}\\
Moreover, one knows unconditionally that almost all zeros of the Riemann zeta-function (in the sense of density) lie in this region.\footnote{This is due to Selberg \cite{selberg:1946}; see Theorem \ref{th:selbergzero}.}
\item[(d)] Under the assumption of the Riemann hypothesis, for every $a\in\C$, there are infinitely many $a$-points of the Riemann zeta-function inside the region $S_{\lambda}$ if one chooses
$$
\lambda(t)=\frac{\mu(t)(\log\log\log t)^3}{\log t \sqrt{\log\log t}}, \qquad t\geq 2,
$$
with any positive function $\mu$ satisfying $\lim_{t\rightarrow\infty}\mu(t)=\infty$.\footnote{This is due to Selberg \cite{selberg:1992}; see Section \ref{sec:apointsgeneral}.}
\item[(e)] Under the assumption of the GRH for Dirichlet $L$-functions,\footnote{Very likely this can be also obtained by only assuming RH; see the remarks in Section \ref{sec:filling_zeta}.} for every $a\in\C$, with at most one exception, there are infinitely many $a$-points of the Riemann zeta-function inside the region $S_{\lambda}$ if one chooses
$$
\lambda(t)= \frac{1}{(\log t)^{\delta}}, \qquad t\geq 2,
$$
with any $\delta>0$.\footnote{This follows from Theorem \ref{th:avalues-case3}which was deduced by certain normality arguments from a result of Milinovich \& Ng \cite{milinovichng:2013}.}
 \item[(f)] Under the assumption of the GRH for Dirichlet $L$-functions, for every $a\in\C$, with at most one exception, there are infinitely many $a$-points of the Riemann zeta-function inside the region $S_{\lambda}$ if one chooses
$$
\lambda(t) = \mu(t)\exp \left(-c_0 \frac{\log t}{\log\log t} \right), \qquad t\geq 2
$$
with any positive function $\mu$ satisfying $\lim_{t\rightarrow\infty}\mu(t)=\infty$ and any constant $0<c_0 < \frac{1}{\sqrt{2}}$.\footnote{This follows from Theorem \ref{th:resonance} which was deduced by certain normality arguments from a result of Ng \cite{ng:2008}.}
\end{itemize}
\end{corollary}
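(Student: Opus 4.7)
The plan is to assemble the six assertions directly from results already established (or cited) earlier in the chapter, since Corollary \ref{cor:summaryapoints} is by design a compendium. In each case the main work is to read off the particular funnel region $S_{\lambda}$ corresponding to the scaling function $\lambda(t)$, and verify that the Riemann zeta-function (which belongs to $\Sc^{*}\subset \Sc^{\#}_R\subset \mathcal{G}$) meets all the hypotheses of the relevant theorem.

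For \textbf{(a)} I would specialise Theorem \ref{th:levinsonselberg} to $\L=\zeta$: since $\zeta\in\Sc^{*}$, the theorem says that, for any positive function $\mu$ tending to infinity, all but $O(T\log T/\mu(T))$ of the $a$-points in $T<\gamma_a\leq 2T$ lie in the strip \eqref{eq:striplevinsonselberg}. Using the Riemann--von Mangoldt formula \eqref{eq:riemannmangoldt} (which gives $N_a(T)\sim \frac{1}{2\pi}T\log T$ for $\zeta$), one obtains that the exceptional set has density zero among all $a$-points, which is exactly the ``almost all'' statement in (a). Part \textbf{(b)} is read off from Selberg's conditional asymptotic \eqref{eq:selberg_aleft}: under RH, for every $\mu>0$ the count $N_a(\sigma(T,\mu),T)$ is asymptotic to a positive proportion of $N_a(T)$. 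Choosing $\mu=c\sqrt{2}\,d_{\zeta}/n_{\zeta}$ yields the region with $\lambda(t)=c\sqrt{\log\log t}/\log t$ and a positive density of $a$-points to its left; combining with the symmetric argument on the other side gives the claim.

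For \textbf{(c)} I would apply Theorem \ref{th:avalues-case1} to $G=\zeta\in\mathcal{G}$ with $d_G=1$. The hypothesis \eqref{cond1} is satisfied because, by Section \ref{sec:unboundedness}, $\zeta$ has $\alpha_{\zeta,\mathrm{inf}}=0$ and $\alpha_{\zeta,\mathrm{sup}}=\infty$, so by the intermediate value theorem one finds a sequence $(\tau_k)$ along which $|\zeta(\tfrac12+i\tau_k)|\to\alpha$ for any prescribed $\alpha\in(0,\infty)$. The theorem then produces filling discs of radius $\mu(\tau_k)/\log\tau_k$; since $\zeta$ satisfies the Picard property on each disc, all $a\in\widehat{\C}$ with at most two exceptions are assumed infinitely often inside $S_{\lambda}$. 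The second half of (c), about almost all zeros, is a direct restatement of Theorem \ref{th:selbergzero}. Part \textbf{(d)} is the finer conditional $a$-point statement from Section \ref{apointslittlewood} (Selberg 1992, rigorously proved by Tsang in the zeta-case), which says that under RH most $a$-points with $T<\gamma_a\le 2T$ already lie within distance $O((\log\log\log T)^3/(\log T\sqrt{\log\log T}))$ of the critical line; multiplying by a function $\mu(t)\to\infty$ yields the displayed strip.

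Finally, \textbf{(e)} is Theorem \ref{th:avalues-case3} together with Proposition \ref{prop:charfilldiscs}: the filling discs of radius $1/(\log|\gamma|)^{\delta}$ centred at zeros $\rho\in A$ force every value $a\in\widehat{\C}$, save at most two exceptions, to be assumed infinitely often in the union of those discs, which is contained in $S_{\lambda}$ for $\lambda(t)=1/(\log t)^{\delta}$. Similarly, \textbf{(f)} is immediate from Theorem \ref{th:resonance}, whose filling discs of radius $\mu(\gamma_k)\exp(-c_0\log\gamma_k/\log\log\gamma_k)$ lie inside the region $S_{\lambda}$ corresponding to the stated $\lambda$. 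The only subtlety in (e) and (f) is keeping track of the Picard exception: in each case Lemma \ref{lem:countingfillingdiscs} together with Theorem \ref{th:FNTextension} guarantees that at most one value $a\in\C$ can be missed. Since every component of the corollary is an application of a theorem already proved (or invoked from the literature) in the chapter, there is no genuinely new obstacle; the only care needed is in matching up the functions $\mu$ and the constants $c$, $c_0$, $\delta$ with the parameters appearing in the source theorems.
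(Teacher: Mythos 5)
Your proposal is correct and follows precisely the approach the paper itself takes: the corollary is a compendium whose proof consists of reading off each item from the theorem cited in its footnote (Theorem \ref{th:levinsonselberg} for (a), Selberg's conditional asymptotic \eqref{eq:selberg_aleft} for (b) and (d), Theorem \ref{th:avalues-case1} for (c), Theorem \ref{th:selbergzero} for the zero statement in (c), Theorem \ref{th:avalues-case3} for (e), and Theorem \ref{th:resonance} for (f)), with the only bookkeeping being the match-up of $\lambda(t)$ with the disc radii and the observation that $\infty$ absorbs one of the two Picard exceptions because $\zeta$ is pole-free in $S_{\lambda}$ for $t\geq 2$. Your handling of all six items is sound; I would only note that the reduction from ``two exceptions in $\widehat{\C}$'' to ``one exception in $\C$'' that you make explicit for (e) and (f) applies equally to (c) and would be worth stating there as well.
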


\chapter{Denseness results for the Riemann zeta-function in the critical strip}\label{ch:curve}
The critical line is a natural boundary for the universality property of the Riemann zeta-function. Even if we slightly change the concept, the functional equation strongly restricts the set of approximable functions; see Chapter \ref{ch:conceptsuniv}. 

In this chapter we investigate a concept of universality for the Riemann zeta-function on the critical line that is significantly weaker than the one regarded in Chapter \ref{ch:conceptsuniv}. Roughly speaking, we set the scaling factor of the limiting process introduced in Section \ref{sec:shiftingshrinking} to be constantly equal to zero and drop our request to approximate analytic functions, but restrict to the approximation of complex points $a\in\C$ by shifts of the Riemann zeta-function on the critical line.\par 

We know that, for every $\alpha\in[0,\infty)$, there exist a sequence $(\tau_k)_k$ of real numbers $\tau_k\in[2,\infty)$ with $\lim_{k\rightarrow\infty}\tau_k=\infty$ such that
$$
\lim_{k\rightarrow\infty} \left| \zeta(\tfrac{1}{2}+i\tau_k) \right| = \alpha;
$$
see Section \ref{subsec:summaryunboundedness}. However, except for $a=0$, we do not know explicitly any other example of an accumulation point $a\in\C$ of the set $$V(\tfrac{1}{2}):=\{\zeta(\tfrac{1}{2}+it) \, : \, t\in[1,\infty)\}.$$ According to a conjecture of Ra\-ma\-chandra, we expect that 
$$
\overline{V(\tfrac{1}{2})} =\C,
$$ 
i.e., we conjecture that the values of the Riemann zeta-function on the critical lie dense in $\C$. However, to prove or disprove this conjecture with present day's methods seems to be out of reach.\par

Bohr \cite{bohrcourant:1914, bohr:1915, bohrjessen:1930, bohrjessen:1932} and his collaborators established denseness results for the zeta-values on vertical lines inside the strip $\frac{1}{2}<\sigma\leq 1$. It were their pioneering works that directed Voronin towards his universality theorem. In Section \ref{sec:bohr}, we state the main results of Bohr and his collaborators on the value-distribution of the Riemann zeta-function to the right of the critical line. We briefly report on their methods and discuss why these methods fail to obtain a denseness statement for the zeta-values on the critical line.\par

In Section \ref{sec:qualdiff}, we point out that a denseness result for the critical line is qualitatively different from the one on vertical lines in $\frac{1}{2}<\sigma\leq 1$. \par

We know that $0\in V(\tfrac{1}{2})$. According to Ramachandra's conjecture, we expect that $0$ is in particular an interior point of $\overline{V(\tfrac{1}{2})}$. In Section \ref{sec:appr}, we show that there is a subinterval $A\subset [0,2\pi)$ of length at least $\frac{\pi}{4}$ such that, for every $\theta\in A$, there is a sequence $(t_n)_n$ of numbers $t_n\in[2,\infty)$ with 
$$
\zeta(\tfrac{1}{2}+it_n)\neq 0, \qquad \lim_{n\rightarrow\infty} \zeta(\tfrac{1}{2}+it_n) = 0 \qquad \mbox{ and } \qquad
\arg \zeta(\tfrac{1}{2}+it_n) \equiv \theta \mod 2\pi.
$$  

In Section \ref{sec:curves}, we approach Ramachandra's conjecture by asking whether there are curves $[1,\infty)\ni t \mapsto \tfrac{1}{2}+\epsilon(t)+it$ with $\lim_{t\rightarrow\infty}\epsilon(t)=0$ such that the values of the Riemann zeta-function on these curves lie dense in $\C$. We obtain some positive answers both by relying on the $a$-point results of Section \ref{sec:apointsnormality} and by relying on Bohr's work.\par

In Section \ref{sec:densesigma1}, we see that the latter question is much easier to handle if we regard curves which do not approach the left but the right boundary line of the strip of universality, i.e. the line $\sigma=1$. \par

Finally, in Section \ref{sec:universalityoncurves}, we briefly indicate what happens to the limiting process of Section \ref{sec:shiftingshrinking} if we adjust the underlying conformal mappings such that they map $\D$ to discs which lie completely inside the strip $\frac{1}{2}<\sigma<1$, but arbitrarily close to the critical line. \par

Although we mainly restrict to the Riemann zeta-function in this chapter, most of the results are true for related functions from quite general classes.

\section{The works of Bohr and Voronin}\label{sec:bohr}
At the beginning of the 20th century, Bohr and his collaborators studied the value-distribution of the Riemann zeta-function to the right of the critical line.

{\bf Value-distribution on vertical lines in $\sigma>1$.} Due to the pole of the Riemann zeta-function at $s=1$, the characteristic convergence abscissae of the Dirichlet series expansion of the Riemann zeta-function are given by  $\sigma_c=\sigma_u=\sigma_a=1$. Thus, according to Bohr \cite{bohr:1922}, the behaviour of the zeta-function on vertical lines in the half-plane $\sigma>1$ is ruled by almost periodicity; see Theorem \ref{th:almostperiod}. For every fixed $\sigma>1$, we know that
$$
\frac{\zeta(2\sigma)}{\zeta(\sigma)} \leq |\zeta(\sigma+it)| \leq \zeta(\sigma) \qquad \mbox{ for }t\in\R.
$$
and that these inequalities are sharp; see Apostol \cite[Chapt. 7.6]{apostol:1990}. As
$$
\lim_{\sigma\rightarrow\infty} \frac{\zeta(2\sigma)}{\zeta(\sigma)} = \lim_{\sigma\rightarrow\infty} \zeta(\sigma) = 1, \qquad \lim_{\sigma\rightarrow 1+} \frac{\zeta(2\sigma)}{\zeta(\sigma)} = 0 \qquad \mbox{ and }\qquad \lim_{\sigma\rightarrow 1+} \zeta(\sigma) = \infty,
$$ 
the set
$$
V(\sigma):=\left\{ \zeta(\sigma+it) \, : \, t\in[1,\infty) \right\}
$$
contracts to $1$, as $\sigma\rightarrow\infty$, and contains both arbitrarily small and arbitrarily large values, as $\sigma\rightarrow 1$. Beyond this observation, we know that, for every $\sigma>1$, the set 
$$
\left\{\log\zeta(\sigma + it) \, : \, t\in\R \right\}
$$
lies dense in an area of $\C$ which is either simply connected and bounded by a convex curve or which is ring-shaped and bounded by two convex curves; see Bohr \& Jessen \cite{bohrjessen:1930} or Titchmarsh \cite[\S 11.6]{titchmarsh:1986}. Essential ingredients in the proof are the Euler product representation of the Riemann zeta-function and diophantine approximation.\par

{\bf Bohr's denseness results on vertical lines in $\frac{1}{2}<\sigma\leq 1$.} Bohr \& Courant \cite{bohrcourant:1914} proved that the values taken by the Riemann zeta-function on an arbitrary vertical line inside the strip $\frac{1}{2}<\sigma \leq 1$ form a dense set in $\mathbb{C}$, i.e.
$$
\overline{V(\sigma)} = \C \qquad \mbox{ for every }\sigma\in(\tfrac{1}{2},1].
$$
In fact, their proof yields the stronger statement that, for every $\sigma\in(\frac{1}{2},1]$, every $a\in\C$ and every $\eps>0$,
\begin{equation}\label{eq:denseBohr}
\liminf_{T\rightarrow\infty} \frac{1}{T} \meas\left\{ t\in[1,\infty) \, : \, \left|\zeta(\sigma+it)-a \right|<\eps \right\}>0.
\end{equation}
Even more precise results were obtained by Bohr \& Jessen \cite{bohrjessen:1932} and Laurin\v{c}ikas \cite[Chapt. 4]{laurincikas:1991-2}, who established probabilistic limit theorems for the values of the logarithm of the zeta-function on vertical lines. In particular, Laurin\v{c}ikas \cite[Chapt. 4, Theorem 4.1]{laurincikas:1991-2} showed that, for every $\sigma>\frac{1}{2}$, there exists a Borel probability measure $\mu_{\sigma}$ such that, for every continuous and bounded function $f:\C\rightarrow\C$,
$$
\lim_{T\rightarrow\infty} \ \frac{1}{2T} \int_{-T}^{T} f\bigl( \log \zeta(\sigma+it)\bigr) \d t = \int_{\C} f(z) \d\mu(z).
$$
If $\sigma\in(\frac{1}{2},1]$ the support of $\mu_{\sigma}$ is the whole complex plane.\par

{\bf Voronin's denseness results on vertical lines in $\frac{1}{2}<\sigma\leq 1$.} Voronin \cite{voronin:1972} established multidimensional extensions of Bohr's denseness result. For any $n\in\N_0$ and any function $f\in\mathcal{H}(\Omega)$, we define
$$
\Delta_n f(s):=\left( f (s), f'(s),..., f^{(n)}(s)\right)
$$ 
to be the $(n+1)$-dimensional vector consisting of the values of $f$ and its first $n$ derivatives evaluated at the point $s\in\Omega$. Among other things, Voronin \cite{voronin:1972} obtained that
\begin{equation}\label{eq:multdimVoronin}
\overline{\left\{ \Delta_n\zeta(\sigma+it) \, : \, t\in[1,\infty) \right\} }= \C^{n+1} \qquad \mbox{ for every }\sigma\in(\tfrac{1}{2},1] 
\end{equation}
and every $n\in\N_0$. By a slight refinement of Voronin's proof, one obtains that, for every $\sigma\in(\frac{1}{2},1]$, every $a\in\C^{n+1}$ and every $\eps>0$
\begin{equation}\label{eq:denseVoronin}
\liminf_{T\rightarrow \infty}\frac{1}{T}\meas\left\{t\in (0,T]: \left\| \Delta_n\zeta(\sigma+it) -a \right\|<\varepsilon \right\}  >0;
\end{equation}
here and in the following $\|\cdot\|$ denotes the maximum-norm in the complex vector space $\mathbb{C}^{n+1}$. It was this multidimensional extension of Bohr's result that inspired Voronin \cite{voronin:1975} to prove his universality theorem (Theorem \ref{th:universality}). In fact, for $\frac{1}{2}<\sigma<1$, the denseness results \eqref{eq:denseVoronin} and \eqref{eq:denseBohr} are direct consequences of Voronin's universality theorem.

{\bf Bohr's method.} To prove his denseness result, Bohr modeled the Riemann zeta-function by truncated Euler products
$$
\zeta_N(\sigma+it) = \prod_{p\leq N} \left(1-p^{-\sigma-it} \right)^{-1};
$$
here the product is taken over all prime numbers $p\in\mathbb{P}$ with $p\leq N$. We fix $\frac{1}{2}<\sigma\leq 1$. Bohr noticed that the quantities $p^{-it}=e^{-it \log p}$ with $p\in\mathbb{P}$, $p\leq N$, behave like independent random variables, although they all depend on the common variable $t$.\footnote{see Bohr \& Jessen \cite[p. 6]{bohrjessen:1930}.} Relying on the theory of convex curves and diophantine approximation, in particular a theorem of Kronecker and Weyl, he was able to prove that, for given $a\in\C$, there exists a large subset $A\subset[1,\infty)$ such that, for all $t\in A$, the truncated Euler product $\zeta_N(\sigma+it)$ is quite close to $a$. 
Although $\zeta_N(\sigma+it)$ does not converge in $\frac{1}{2}<\sigma\leq 1$, as $N\rightarrow\infty$, Bohr showed that the truncated Euler product approximates the zeta-function in mean-square. Here, the essential tool is the existence of the mean-square value
$$
\lim_{T\rightarrow\infty }\int_1^T \left|\zeta(\sigma+it)\right|^2 \d t < \infty
$$ 
for $\frac{1}{2}<\sigma\leq 1$ in combination with Carlson's theorem. From the approximation in mean-square, Bohr deduced that there exists a large subset $B\subset [1,\infty)$ such that, for all $t\in B$, the truncated Euler product $\zeta_N(\sigma+it)$ is close to $\zeta(\sigma+it)$. Finally, certain density estimates assure that $A\cap B \neq \emptyset$ and the result follows. Voronin obtained his denseness statement \eqref{eq:multdimVoronin} basically by refining the ideas of Bohr.

{\bf Non-denseness results on vertical lines in $\sigma<\frac{1}{2}$.} On vertical lines in the half-plane $\sigma<\frac{1}{2}$, we expect that the zeta-function grows too fast than that its values could lie dense in $\C$. It follows essentially from the functional equation that 
\begin{equation}\label{nondense}
\overline{V(\sigma)} \neq \C \qquad \mbox{ for }\sigma\leq 0;
\end{equation}
see Garunk\v{s}tis \& Steuding \cite{garunkstissteuding:2010}. By assuming the Riemann hypothesis, Garunk\v{s}tis \& Steuding \cite{garunkstissteuding:2010} proved that \eqref{nondense} persists for all $\sigma<\frac{1}{2}$. Their proof uses a conditional $\Omega$-result for the zeta-function on vertical lines in $\sigma>\frac{1}{2}$ together with the functional equation.\par 

We know that $\overline{V(\sigma)}=\C$ for $\frac{1}{2}<\sigma\leq 1$ and we expect that $\overline{V(\sigma)}\neq \C$ for $0<\sigma<\frac{1}{2}$. But how is the situation if $\sigma=\frac{1}{2}$?

{\bf Is $\overline{V(\frac{1}{2})}=\C$?} During the 1979 Durham conference, Ramachandra formulated the conjecture that the values of the Riemann zeta-function on the critical line lie dense in $\C$. Until now, this could not be proved or disproved. 
The difficulty in handling the values of the zeta-function on the critical line is that, due to Hardy \& Littlewood \cite{hardylittlewood:1936}, 
$$
\frac{1}{T} \int_{-T}^T \left|\zeta(\tfrac{1}{2}+it)\right|^2 dt \sim \log T,
$$ 
as $T\rightarrow\infty$. Hence, Bohr's method collapses on the critical line.\par

Jacod, Kowalski \& Nikeghbali \cite{jacodkowalksinikeghbali:2011} introduced a new type of convergence in probability theory, which they called `mod-Gaussian convergence'. For a given sequence $(Z_n)_n$ of real-valued random variables $Z_n$, this concept builds basically on working with the corresponding sequence of characteristic functions $(\mathbb{E}[e^{iuZ_n}])_n$. Leaning on this type of convergence, Kowalski \& Nikeghbali \cite{kowalskinikeghbali:2012} were able to show that $$\overline{V(\tfrac{1}{2})}=\C$$ would follow rather directly from a suitable version of the Keating-Snaith moment conjectures. By generalizing the notion of `mod-Gaussian convergence', Delbaen, Kowalski \& Nikeghbali \cite{delbaenkowalskinikeghbali:2011} obtained the following quantitative result.
\begin{theorem}[Delbaen, Kowalski \& Nikeghbali, 2011]
If for any $k>0$ there exist a real number $C_k\geq 0$ such that
\begin{equation}\label{eq:momentconjecturedenseness}
\left|\frac{1}{T}\int_0^T \exp\left(it\cdot \log\zeta(\tfrac{1}{2}+iu) \right)\d u \right|\leq
\frac{C_k}{1+|t|^4 (\log\log T)^2}
\end{equation}
holds for $T\geq 1$ and $t\in\R$ with $|t|\leq k$, then, for any bounded Jordan measurable subset $B\subset \C$,
$$
\lim_{T\rightarrow\infty} \frac{\frac{1}{2}\log\log T}{T} \,
\meas\left\{t\in (0,T] \, : \, \log\zeta(\tfrac{1}{2}+it)\in B \right\} = \tfrac{1}{2\pi} \meas B .
$$
\end{theorem}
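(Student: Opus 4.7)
The plan is to prove this as a local limit theorem for the complex random variable $L_U := \log\zeta(\tfrac{1}{2}+iU)$, where $U$ is drawn uniformly on $(0,T]$, by Fourier inversion against the twisted characteristic function supplied by the hypothesis. Set $\sigma_T^2 := \tfrac{1}{2}\log\log T$ and
\[
\Phi_T(t) := \mathbb{E}\bigl[\exp(itL_U)\bigr] = \frac{1}{T}\int_0^T \zeta(\tfrac{1}{2}+iu)^{it}\,\d u,
\]
so that the hypothesis reads $|\Phi_T(t)|\leq C_k(\log\log T)^{-2}(1+|t|^4)^{-1}$ for real $|t|\leq k$. The driving observation is that for a centred isotropic two-dimensional Gaussian $G$ with independent normal real and imaginary parts of common variance $\sigma^2$, a direct calculation gives $\mathbb{E}[\exp(itG)]\equiv 1$ for all real $t$: the Fourier transform along the real axis contributes $\exp(-t^2\sigma^2/2)$ and the Laplace transform along the imaginary axis contributes $\exp(+t^2\sigma^2/2)$, and the factors cancel. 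The hypothesis thus literally quantifies the mod-Gaussian deviation of $L_U$ from its Gaussian phantom at variance $\sigma_T^2$, and the target asymptotic $T^{-1}\meas\{u\in(0,T]:L_U\in B\}\sim \meas(B)/(2\pi\sigma_T^2)$ is a local central limit theorem at that scale.

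First I would sandwich the indicator $\mathbf{1}_B$ above and below by smooth compactly supported $\varphi_\pm\colon \C\to [0,1]$ with $\int_\C (\varphi_+-\varphi_-)\,\d A<\varepsilon$; this is exactly what Jordan measurability of $B$ permits. For each $\varphi = \varphi_\pm$ the task reduces to the asymptotic
\[
\mathbb{E}\bigl[\varphi(L_U)\bigr] = \frac{1}{2\pi\sigma_T^2}\int_\C \varphi(z)\,\d A(z) + o\bigl(\sigma_T^{-2}\bigr),
\]
after which sending $\varepsilon\to 0$ completes the argument. Starting from the two-dimensional Fourier representation
\[
\mathbb{E}\bigl[\varphi(L_U)\bigr] = \frac{1}{(2\pi)^2}\int_{\R^2}\widehat\varphi(\xi_1,\xi_2)\,\chi_T(\xi_1,\xi_2)\,\d\xi_1\,\d\xi_2,
\]
where $\chi_T(\xi_1,\xi_2):=\mathbb{E}[\exp(i\xi_1\Re L_U+i\xi_2\Im L_U)]$ extends holomorphically in a bidisc of radius $k$ by classical moment bounds on $\log\zeta(\tfrac{1}{2}+iu)$, I would apply Cauchy's theorem to shift the $\xi_2$-contour from $\R$ to $i\R$. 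On the resulting slice $(\xi_1,\xi_2)=(t,it)$ the joint characteristic function $\chi_T$ reduces to $\Phi_T(t)$, and so the Gaussian contribution recovered from the small-$\xi$ expansion of $\chi_T$ yields the main term $\meas(B)/(2\pi\sigma_T^2)$, while the remainder is majorised by $(\log\log T)^{-2}\int_{-k}^k (1+|t|^4)^{-1}\d t = O(\sigma_T^{-4})$.

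The chief obstacle is executing the contour shift rigorously: one needs uniform holomorphicity and quantitative growth bounds on $\chi_T$ in a full two-dimensional complex neighbourhood of $[-k,k]^2$, not only on the one-dimensional slice $(t,it)$ prescribed by the hypothesis. A cleaner route, closer to the Delbaen--Kowalski--Nikeghbali framework, circumvents this by selecting $\varphi_\pm$ from a Paley--Wiener class of entire functions admitting a one-dimensional reproducing identity $\varphi(z)=\int_{-k}^k \widetilde\varphi(t)\,e^{itz}\,\d t$ valid on the range of $L_U$; then $\mathbb{E}[\varphi(L_U)] = \int_{-k}^k \widetilde\varphi(t)\Phi_T(t)\,\d t$ is immediate, and comparison with the analogous identity for the Gaussian phantom---in which the Gaussian value $\chi_G(t,it)\equiv 1$ reproduces the Gaussian main term---completes the proof. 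In either execution the hypothesis is exactly calibrated: its decay $(\log\log T)^{-2}$ makes the remainder genuinely smaller than $\sigma_T^{-2}$, and normalising by $\tfrac{1}{2}\log\log T/T$ converts the Gaussian main term into the claimed limit $\meas(B)/(2\pi)$.
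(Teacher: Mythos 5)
Your proposal has several genuine gaps. The central one is that the hypothesis controls only a one-parameter Fourier--Laplace quantity $\Phi_T(t)=\frac{1}{T}\int_0^T\zeta(\tfrac{1}{2}+iu)^{it}\,\d u$, which is the restriction to the complex line $\xi_2=i\xi_1$ of the joint characteristic function $\chi_T(\xi_1,\xi_2)=\frac{1}{T}\int_0^T\exp\bigl(i\xi_1\Re\log\zeta(\tfrac{1}{2}+iu)+i\xi_2\Im\log\zeta(\tfrac{1}{2}+iu)\bigr)\,\d u$, whereas a two-dimensional local limit theorem for $\log\zeta(\tfrac{1}{2}+iU)$ requires control of $\chi_T$ on a two-real-dimensional region. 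Your first route does not bridge this: shifting the $\xi_2$-contour from $\R$ to $i\R$ still leaves a genuinely two-dimensional integral over $\R\times i\R$, and the hypothesis only bounds $\chi_T$ on the diagonal $\xi_2=i\xi_1$ therein, a set of measure zero in the deformed domain of integration. Your fallback route is worse off: a function $\varphi(z)=\int_{-k}^k\widetilde\varphi(t)e^{itz}\,\d t$ with integrable $\widetilde\varphi$ is entire of exponential type $k$, so $|\varphi(x+iy)|\ll e^{k|y|}$, and by a Liouville argument the only bounded such functions are constants; this class therefore cannot sandwich the indicator of a bounded Jordan set between $0$ and $1$. Your two routes are also mutually incompatible (compactly supported on the one hand, exponential type on the other) rather than the second being a cleaner version of the first.

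There is in addition a quantitative slip in the remainder estimate. You factor the hypothesis bound as $(\log\log T)^{-2}(1+|t|^4)^{-1}$, but $\frac{1}{1+|t|^4(\log\log T)^2}$ and $\frac{(\log\log T)^{-2}}{1+|t|^4}$ already disagree at $t=0$ (the former equals $1$, the latter is $o(1)$). The correct computation, by the substitution $s=t\sqrt{\log\log T}$, gives
$$
\int_{-k}^k\frac{C_k\,\d t}{1+|t|^4(\log\log T)^2}\asymp(\log\log T)^{-1/2}\asymp\sigma_T^{-1},
$$
not $O(\sigma_T^{-4})$. Since the target main term is of size $\tfrac{1}{2\pi}\meas(B)\,\sigma_T^{-2}$, a crude majorisation of the inversion integral by $\int_{-k}^k|\widetilde\varphi(t)|\,|\Phi_T(t)|\,\d t\ll\sigma_T^{-1}$ would dominate it rather than be subordinate to it, so even after repairing the geometric difficulties above the bookkeeping as written cannot isolate the main term. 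The Delbaen--Kowalski--Nikeghbali argument must extract the Gaussian main term from a finer decomposition of the inversion formula than a blanket $L^1$ bound on $\Phi_T$, and, more fundamentally, must use a genuinely two-dimensional input (a quantitative Selberg central limit theorem for the vector $(\log|\zeta|,\arg\zeta)$, or a $\C$-valued rather than purely imaginary-$t$ version of the moment hypothesis) to control $\chi_T$ away from the line $\xi_2=i\xi_1$.
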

Models for the zeta-function based on links to random matrix theory suggest that \eqref{eq:momentconjecturedenseness} should hold; see Keating \& Snaith \cite{keatingsnaith:2000-1, keatingsnaith:2000-2}.

\begin{figure}%
\centering
\subfigure[$V(\frac{3}{2})$]{
\centering
\includegraphics[width=0.3\columnwidth]{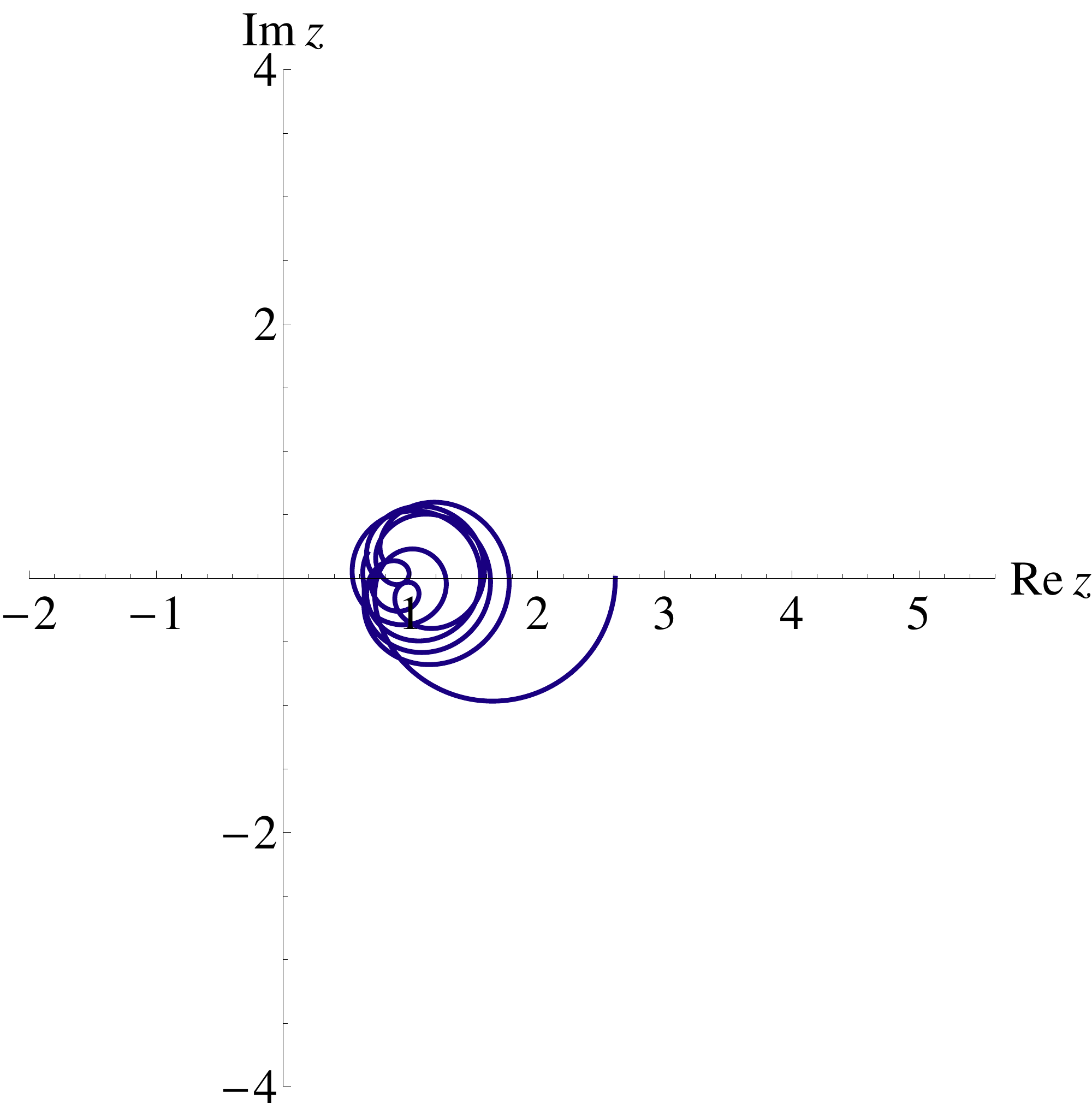}\label{fig:zeta32}%
}
\subfigure[$V(1)$]{
\centering
\includegraphics[width=0.3\columnwidth]{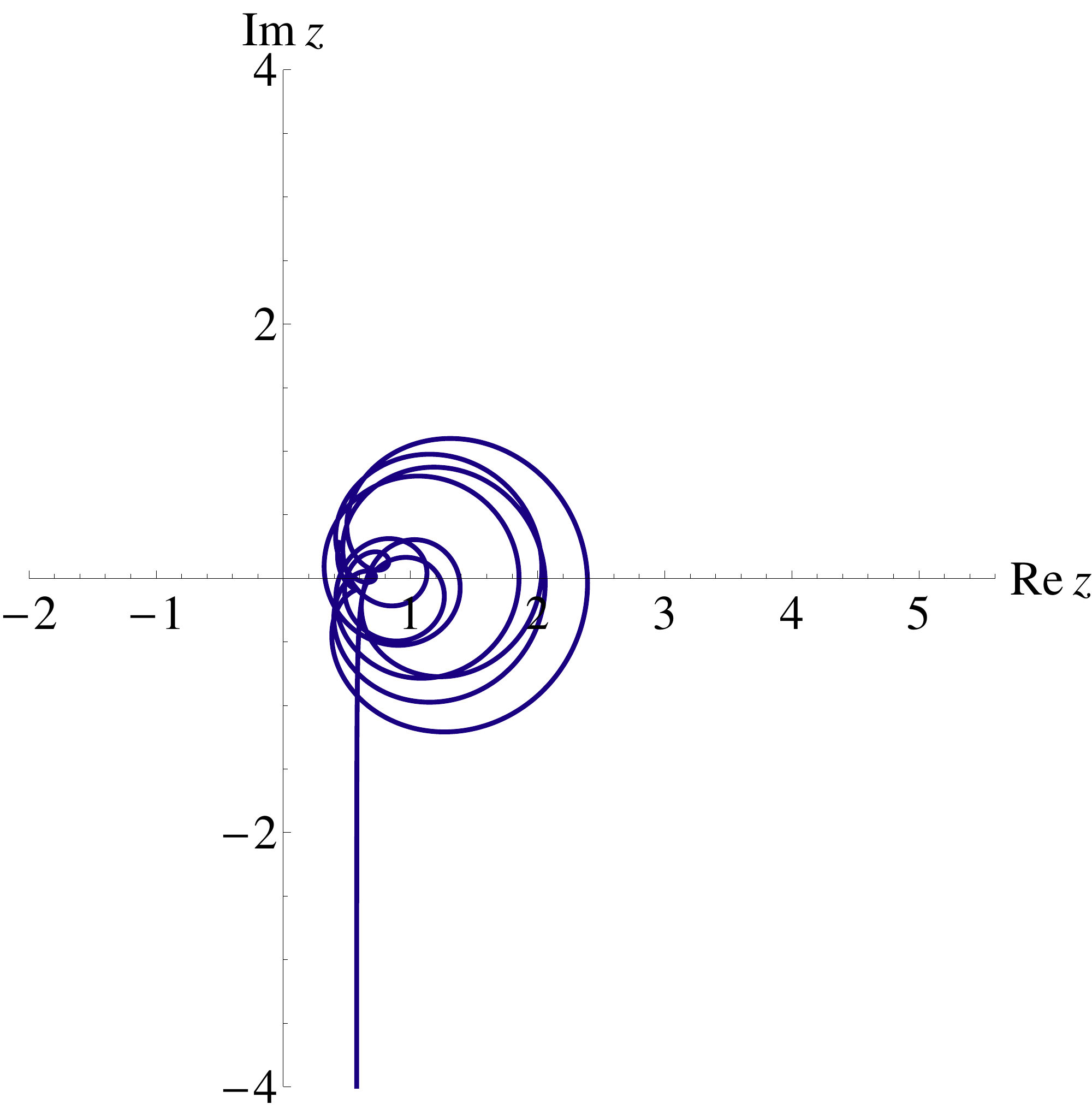}\label{fig:zeta88}%
}
\subfigure[$V(\frac{7}{8})$]{
\includegraphics[width=0.3\columnwidth]{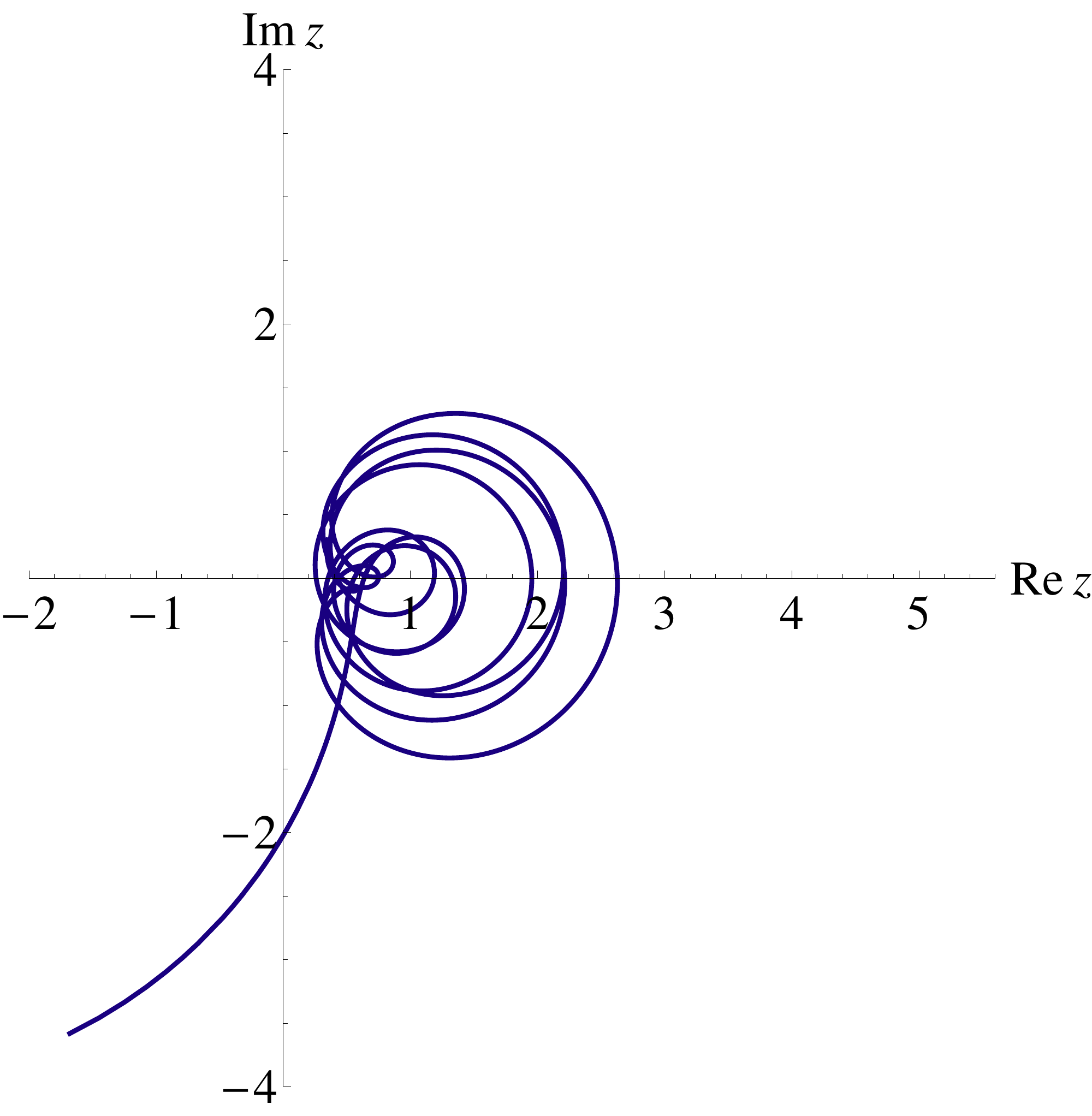}\label{fig:zeta78}%
}\\
\subfigure[$V(\frac{3}{4})$]{
\includegraphics[width=0.3\columnwidth]{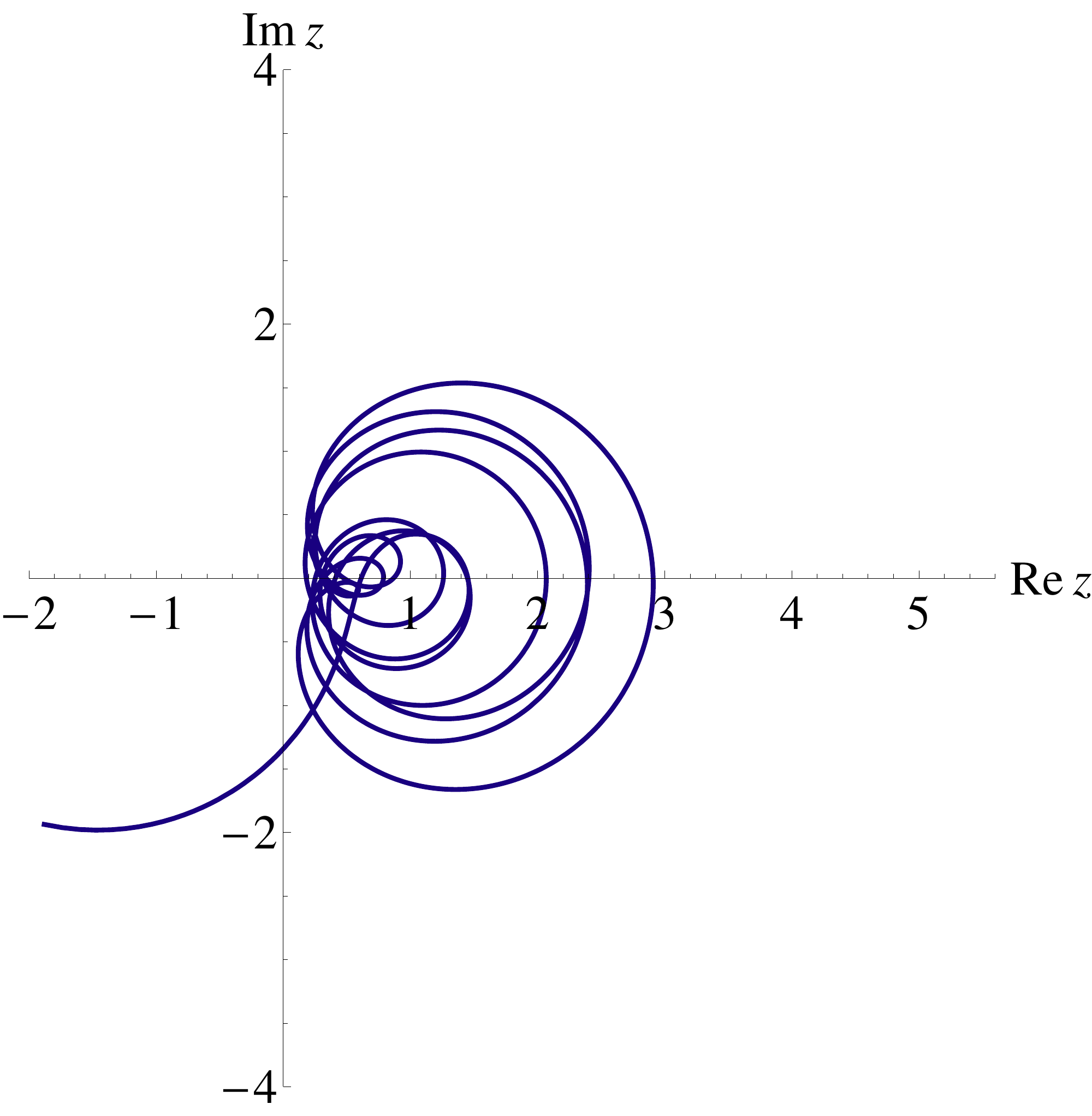}\label{fig:zeta68}%
}
\subfigure[$V(\frac{5}{8})$]{
\includegraphics[width=0.3\columnwidth]{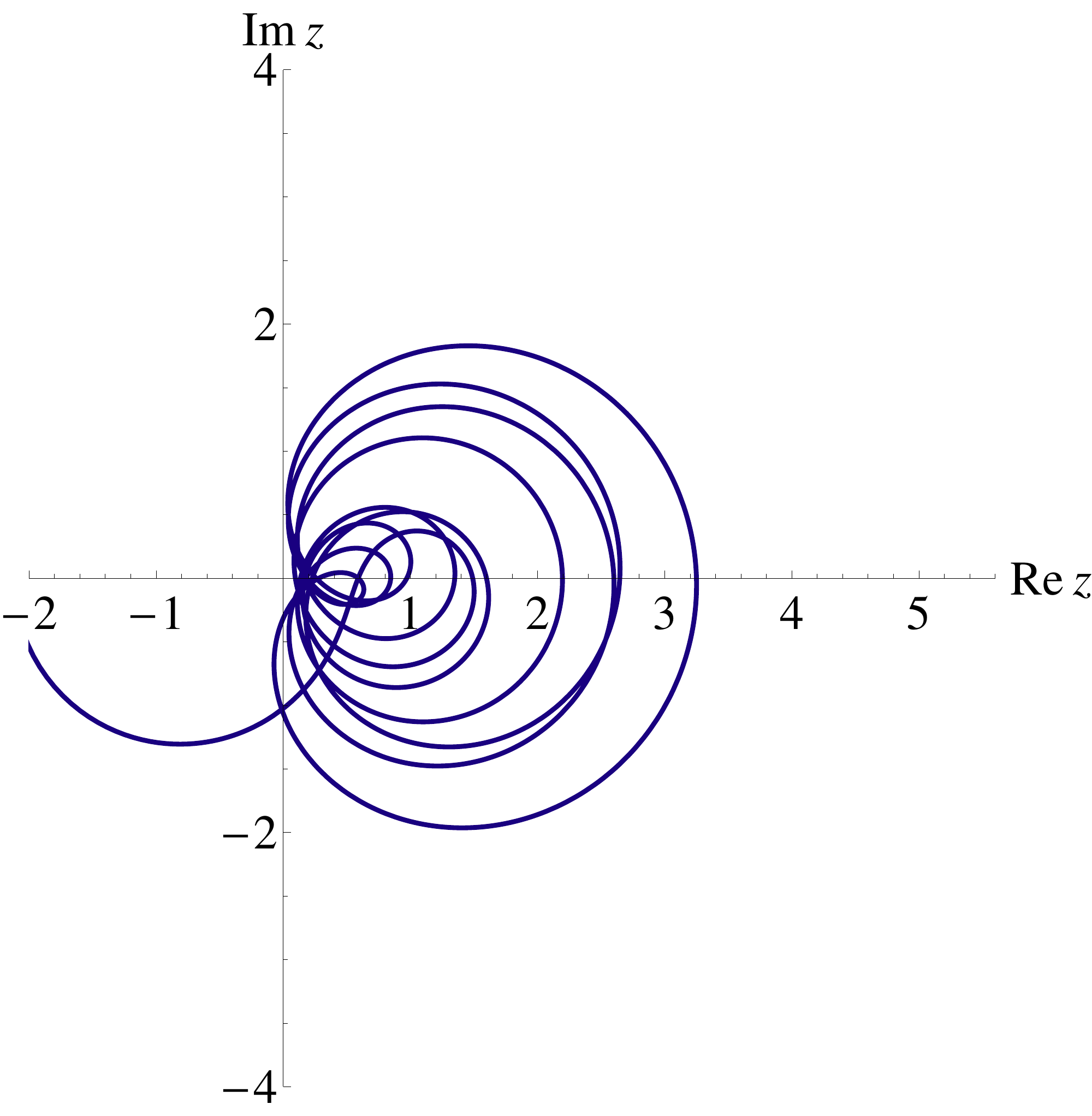}\label{fig:zeta58}%
}
\subfigure[$V(\frac{1}{2})$]{
\includegraphics[width=0.3\columnwidth]{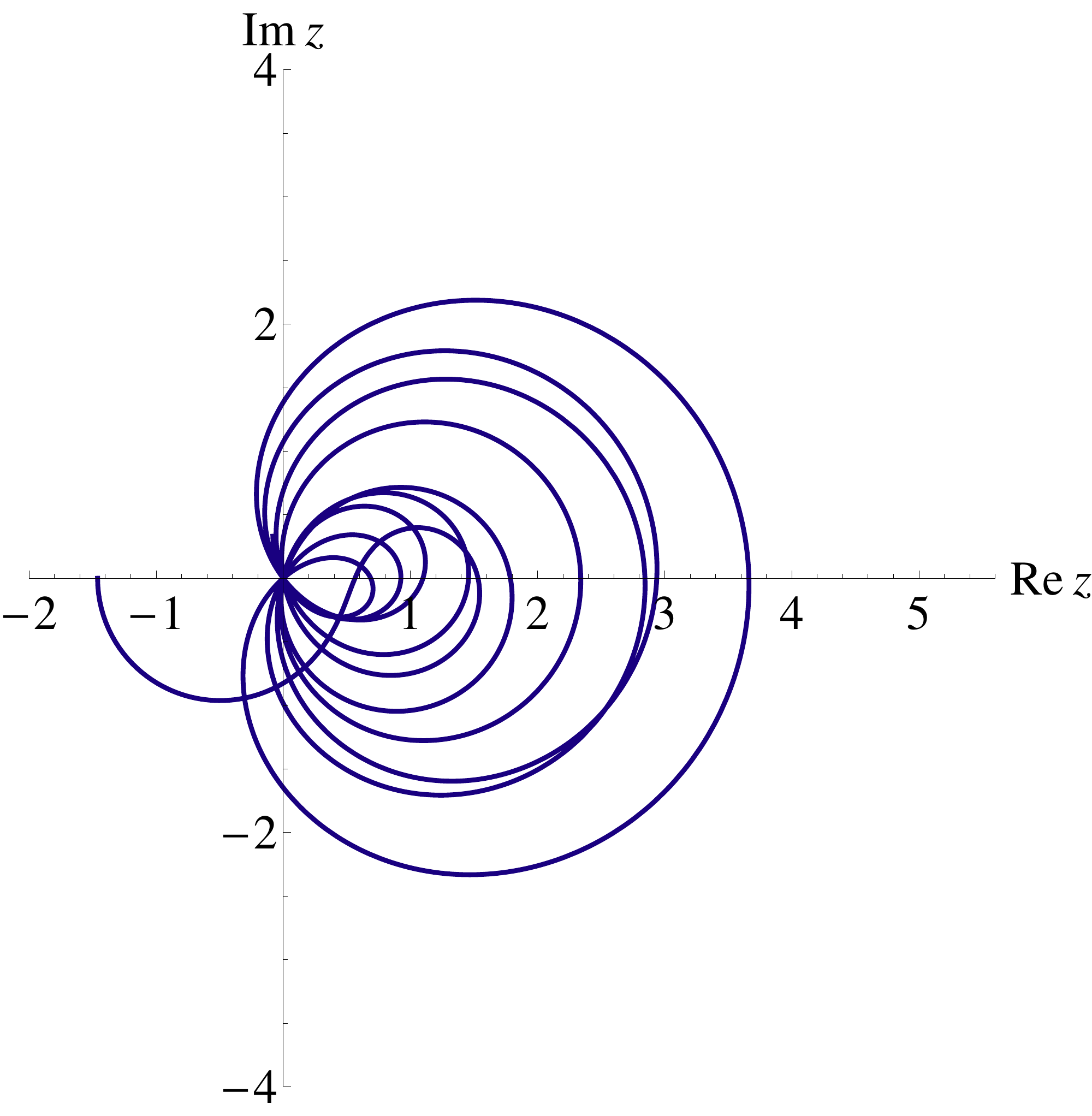}\label{fig:zeta48}%
}\\
\subfigure[$V(\frac{3}{8})$]{
\includegraphics[width=0.3\columnwidth]{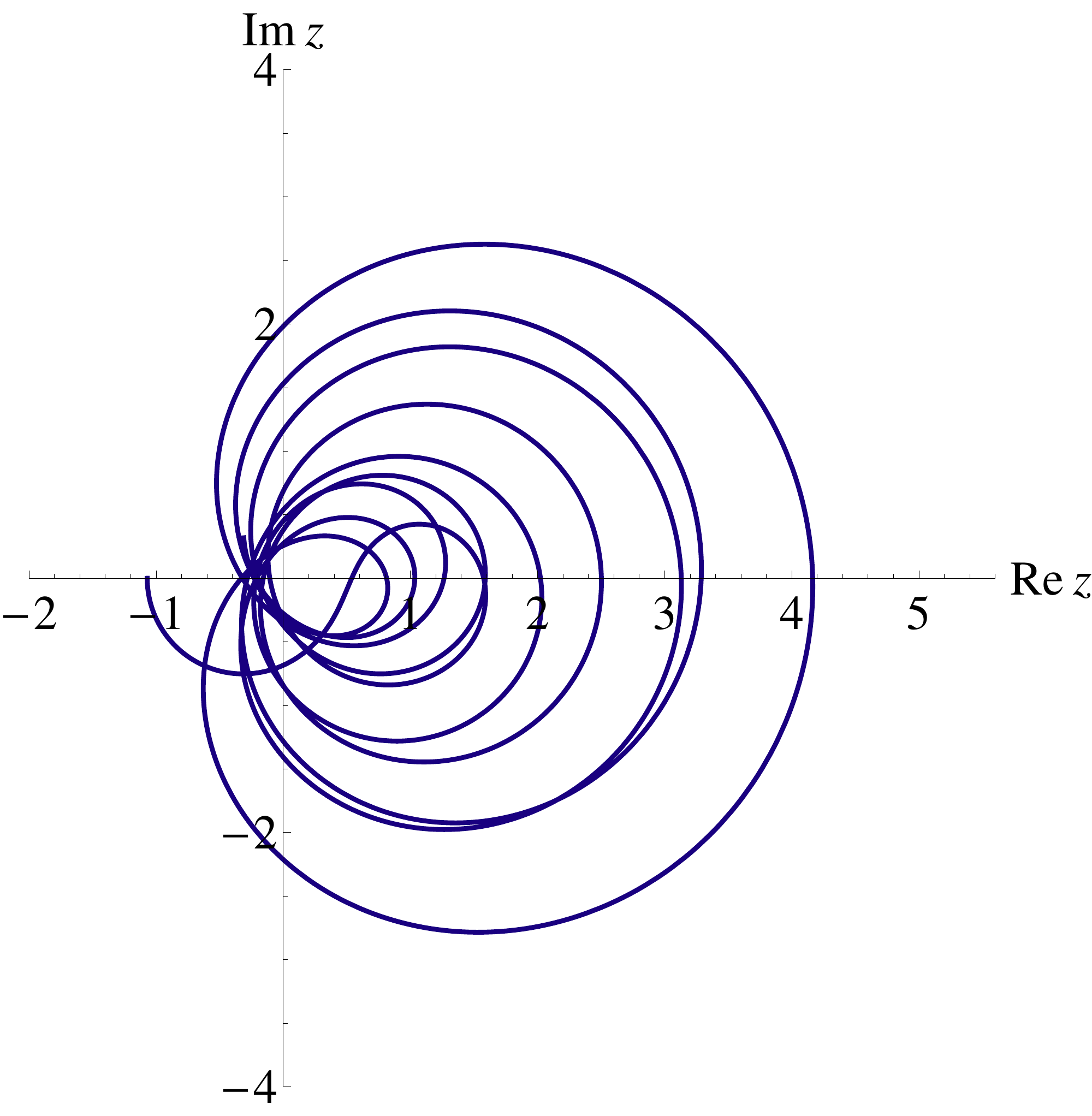}\label{fig:zeta38}%
}
\subfigure[$V(\frac{1}{4})$]{
\includegraphics[width=0.3\columnwidth]{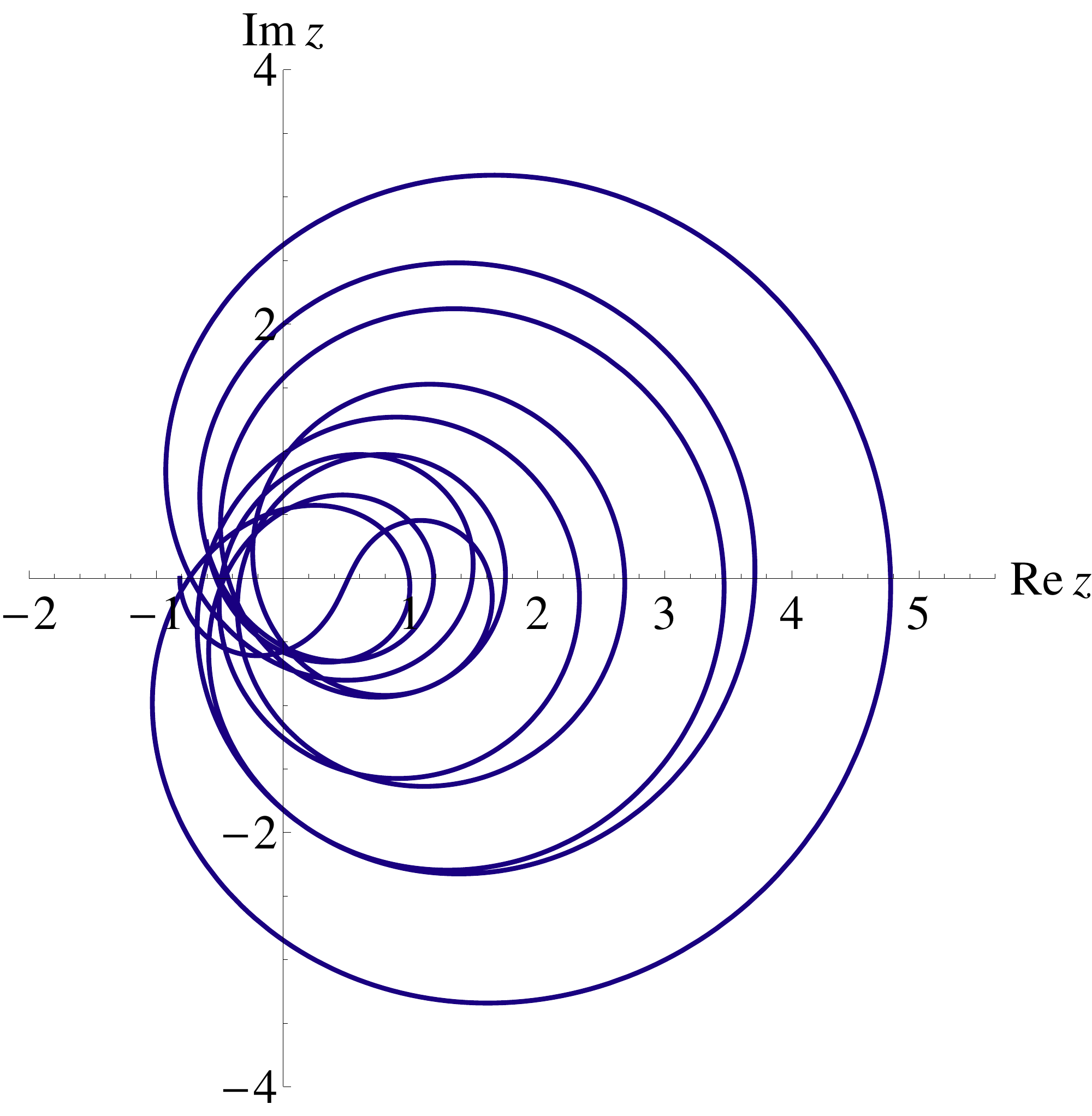}\label{fig:zeta28}%
}
\subfigure[$V(\frac{1}{8})$]{
\includegraphics[width=0.3\columnwidth]{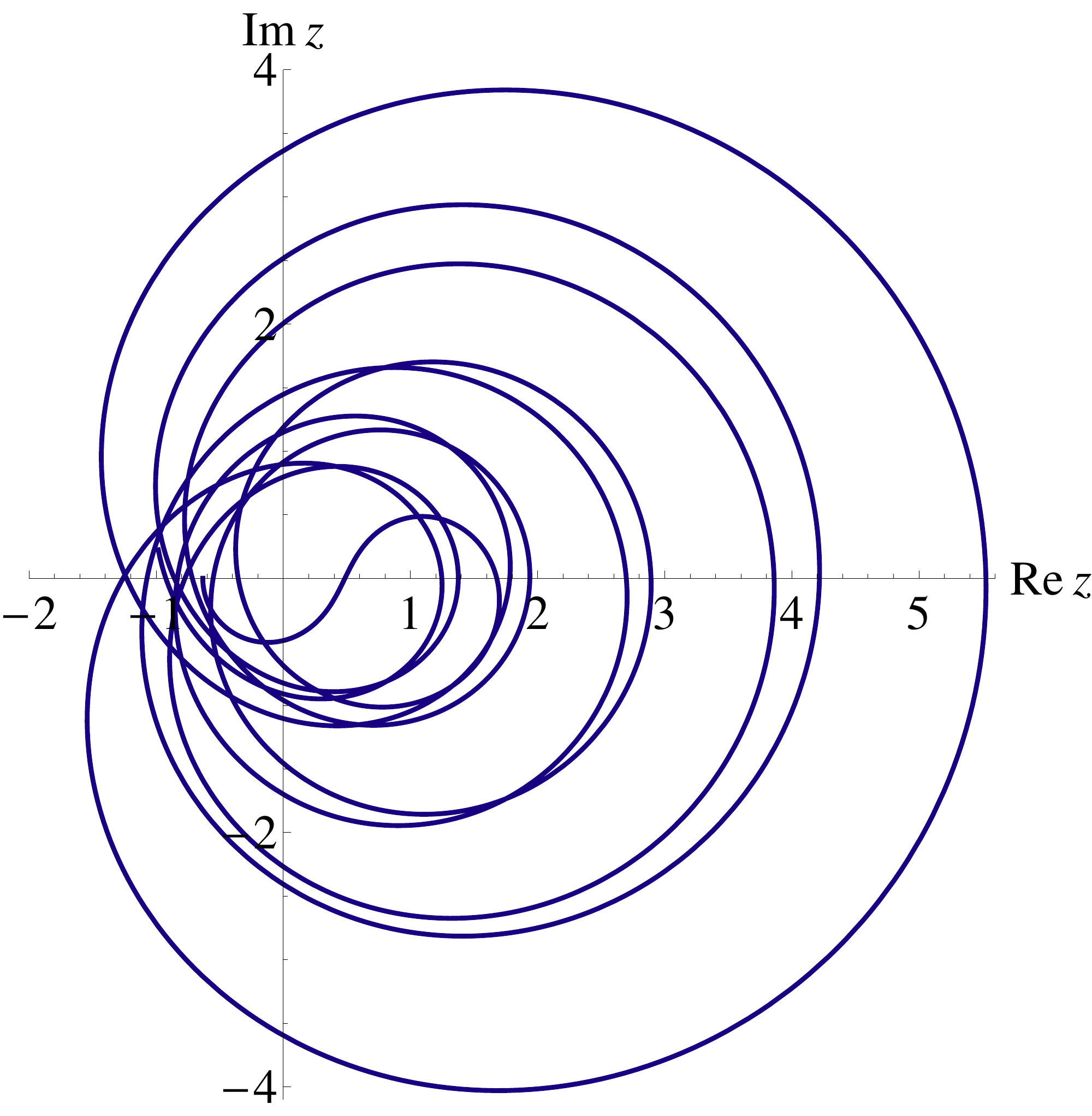}\label{fig:zeta18}%
}
\caption{For some $\sigma\in\R$, we plotted the set $V_{50}(\sigma):=\{\zeta(\sigma+it)\,:\, t\in[0,50]\}$ 
restricted to the range $-2 \leq \Re z \leq 6$, $-4 \leq \Im z \leq 4$.}
\end{figure}

\section{Qualitative difference of the value-distribution on the critical line}\label{sec:qualdiff}
A possible denseness statement for the values of the Riemann zeta-function on the critical line is qualitatively different from the one on vertical lines in $\frac{1}{2}<\sigma\leq 1$.\par
Garunk\v{s}tis \& Steuding \cite{garunkstissteuding:2010} showed that, due to the functional equation, a multidimensional denseness result in the sense of \eqref{eq:multdimVoronin} does not hold on the critical line. We can easily generalize their result to the class $\mathcal{G}$.
\begin{theorem}\label{th:nondensenesscritlineG}
Let $G\in\mathcal{G}$. Then,
$$
\overline{\left\{ (G(\tfrac{1}{2}+it), G'(\tfrac{1}{2}+it)) \; : \; t\in[1,\infty) \right\}} \neq \widehat{\C}^2.
$$
\end{theorem}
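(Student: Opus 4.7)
The plan is to generalize the proof of Garunk\v{s}tis \& Steuding for the Riemann zeta-function: the functional equation for $G$ forces an algebraic constraint between $G$ and $G'$ at every regular point of the critical line, and this constraint excludes an open set of pairs in $\widehat{\C}^{2}$. The crucial arithmetic observation is that on $\sigma=\tfrac12$ the reflection $s\mapsto 1-\overline{s}$ is the identity. Logarithmic differentiation of \eqref{fcteqG} at $s=\tfrac12+it$, exactly as in the proof of Lemma \ref{lem:connectionGG'}, gives
$$
\frac{G'(\tfrac12+it)}{G(\tfrac12+it)}+\overline{\left(\frac{G'(\tfrac12+it)}{G(\tfrac12+it)}\right)} \;=\; \frac{\Delta'_{G}(\tfrac12+it)}{\Delta_{G}(\tfrac12+it)},
$$
or equivalently
$$
2\,\mathrm{Re}\,\frac{G'(\tfrac12+it)}{G(\tfrac12+it)} \;=\; \frac{\Delta'_{G}(\tfrac12+it)}{\Delta_{G}(\tfrac12+it)}
$$
at every real $t\geq 1$ for which both sides are defined.

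\textbf{Main case $d_G>0$.} I would pick the explicit target pair $(a,b)=(1,0)\in\widehat{\C}^{2}$ and show that it is not a limit point of the image. The asymptotic expansion \eqref{asymext_logdiff_Delta_p} turns the identity above into
$$
2\,\mathrm{Re}\,\frac{G'(\tfrac12+it)}{G(\tfrac12+it)} \;=\; -d_{G}\log|t|-\log(Q^{2}\lambda_{G})+O(|t|^{-1}),
$$
which tends to $-\infty$ as $|t|\to\infty$. Suppose, for contradiction, that there is a sequence $t_{n}\to\infty$ with $G(\tfrac12+it_{n})\to 1$ and $G'(\tfrac12+it_{n})\to 0$. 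For $n$ sufficiently large one has $|G(\tfrac12+it_{n})|>\tfrac12$, so $\tfrac12+it_{n}$ is neither a pole nor a zero of $G$ and the identity applies. Then $G'(\tfrac12+it_n)/G(\tfrac12+it_n)\to 0$, and in particular its real part tends to $0$, contradicting its divergence to $-\infty$. Hence $(1,0)$ lies outside the closure, and the theorem follows whenever $d_{G}>0$.

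\textbf{Degenerate case and expected obstacle.} If $d_{G}=0$ then $\Delta_{G}(s)=\omega Q^{1-2s}$ and the identity specialises to $2\,\mathrm{Re}(G'/G)(\tfrac12+it)\equiv -2\log Q$. For $Q\neq 1$ the same argument still rules out $(1,0)$ because the constant right-hand side is nonzero while $G'/G\to 0$ would force its real part to vanish. For $Q=1$ the functional equation reduces to $G(\tfrac12+it)=\omega\,\overline{G(\tfrac12+it)}$, which confines $G(\tfrac12+it)$ to a single line through the origin in $\C$; the first coordinate alone then already misses an open subset of $\widehat{\C}$ and the conclusion follows \emph{a fortiori}. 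I do not foresee a substantive obstacle: the only bookkeeping required is to avoid the (at most discrete) set of zeros and poles of $G$ on $\sigma=\tfrac12$, which is automatic as soon as one insists on approaching a pair with first coordinate equal to $1$.
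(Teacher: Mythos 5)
Your proof follows essentially the same route as the paper's, and the core mechanism is identical: logarithmic differentiation of the functional equation on $\sigma=\tfrac12$, use of the fact that $1-\overline{s}=s$ there, and the resulting identity $2\,\Re\bigl(G'/G\bigr)(\tfrac12+it)=\Delta_G'/\Delta_G(\tfrac12+it)$, which diverges to $-\infty$ when $d_G>0$. The paper fixes an arbitrary pair $(a,b)$ with $a,b\neq 0$, uses Lemma \ref{lem:connectionGG'} to find $t_0$ so that the values for $t\geq t_0$ avoid $D_\eps(a,b)$, and then notes that the closure of the remaining bounded piece is a compact curve and therefore cannot contain the open box $D_\eps(a,b)$. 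You instead fix the single target $(1,0)$ and argue by direct contradiction with the divergence of $\Re(G'/G)$. The arguments are the same in substance.

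Two remarks. First, you add something the paper's proof quietly leaves out: its argument rests on Lemma \ref{lem:connectionGG'}, which is only stated and proved under the hypothesis $d_G>0$. For $d_G=0$ with $Q\neq 1$ the same inequality still yields a positive constant lower bound on $|G'/G|$, so the paper's method survives with an appropriate choice of $(a,b)$; but for $d_G=0$ and $Q=1$ the bound degenerates to $0$ and the paper's mechanism fails entirely. Your observation that in that sub-case the functional equation $G(\tfrac12+it)=\omega\,\overline{G(\tfrac12+it)}$ pins the first coordinate to a line through the origin (union $\{0,\infty\}$), and hence already misses an open subset of $\widehat{\C}$, is exactly the missing supplement, and the theorem is then trivial in two dimensions. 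This is a genuine improvement in completeness over the paper.

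Second, a small but real gap: from ``no sequence $t_n\to\infty$ with values tending to $(1,0)$'' you conclude ``$(1,0)$ lies outside the closure.'' That step is not quite valid as stated, because $(1,0)$ might actually be \emph{attained} at some finite $t_0\in[1,T_0]$ where the asymptotic for $\Delta_G'/\Delta_G$ has not yet taken over, and an attained value belongs to the closure. What you have shown is that the values for $t\geq T_0$ stay a positive distance away from $(1,0)$, while the values for $t\in[1,T_0]$ form (the closure of) a compact one-real-parameter curve in $\widehat{\C}^2$, which has empty interior. Therefore some point in any neighbourhood of $(1,0)$ is outside the closure, and that is what the theorem requires. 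The paper sidesteps this by working with the open box $D_\eps(a,b)$ from the start; you should do the same, or insert the curve-has-empty-interior remark explicitly. With that one sentence added, your proof is complete and slightly sharper than the paper's.
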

\begin{proof} We follow the ideas of Garunk\v{s}tis \& Steuding \cite{garunkstissteuding:2010}. The basic ingredient in the proof is the observation that for real $t$ of sufficiently large modulus with $G(\frac{1}{2}+it)\neq 0$ and $G(\frac{1}{2}+it)\neq \infty$,
\begin{equation}\label{eq:lindau}
 \left| \frac{G'(\frac{1}{2}+it)}{G(\frac{1}{2}+it)} \right| \geq  \frac{d_G}{2}\log |t|-\frac{1}{2}\log (Q^2 \lambda) + O\left( \frac{1}{|t|}\right);
\end{equation}
see Lemma \ref{lem:connectionGG'}. We fix $(a,b)\in\C^2$ with $a,b\neq 0$ and set, for a given $0<\eps<\min\{|a|,|b|\}$,
$$
D_{\eps}(a,b) := \left\{ (s_1,s_2)\in\C^2 \, : \, |s_1-a|<\eps \, \mbox{ and } |s_2-b|<\eps \right\}
$$
Assume that
$$
W:=\overline{\left\{ (G(\tfrac{1}{2}+it), G'(\tfrac{1}{2}+it)) \; : \; t\in[1,\infty) \right\}} = \widehat{\C}^2.
$$
Then, in particular, $D_{\eps}(a,b) \subset W$. Due to \eqref{eq:lindau}, we can fix $t_0>1$ such that
$$
\left| \frac{G'(\frac{1}{2}+it)}{G(\frac{1}{2}+it)} \right| > \frac{|b|+\eps}{|a|-\eps}
$$
for all $t\geq t_0$ with $G(\frac{1}{2}+it)\neq 0$ and $G(\frac{1}{2}+it)\neq \infty$.
By the choice of $t_0$, it follows that
$$
\overline{\left\{ (G(\tfrac{1}{2}+it), G'(\tfrac{1}{2}+it)) \; : \; t\in[t_0,\infty) \right\}} \subset \widehat{\C}\setminus D_{\eps}(a,b).
$$
Consequently, we have
$$
D_{\eps}(a,b) \subset \overline{\left\{ (G(\tfrac{1}{2}+it), G'(\tfrac{1}{2}+it)) \; : \; t\in[1,t_0] \right\}}=:W^*.
$$
This, however, yields a contradiction.
\end{proof}
Moreover, according to Selberg's central limit law, the curve $t\mapsto \zeta(\frac{1}{2}+it)$ has a preference to visit arbitrarily small neighbourhoods of zero than the ones of any other complex value.
\begin{theorem}\label{th:limitlawoncritline} 
Let $a\in\mathbb{C}$ and $0<\varepsilon<|a|$. Then,
$$
\lim_{T\rightarrow \infty}\frac{1}{T}\meas\left\{t\in (0,T]: \left| \zeta(\tfrac{1}{2}+it) -a \right|<\varepsilon \right\}  = 
\left\{\mbox{
\begin{tabular}{ll}
$0$ & if $a\neq 0$, \\ 
$\frac{1}{2}$ & if $a=0$.
\end{tabular}}\right.
$$
\end{theorem}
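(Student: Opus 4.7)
The plan is to deduce both cases from Selberg's central limit law, in the packaged form of Theorem~\ref{th:measselberglimitlaw}(b), exploiting that $\zeta\in\Sc^*$. Bohr's classical denseness technique collapses on the critical line because of the $\sim\log T$ growth of the mean-square, so Selberg's statistical input must replace it.

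For $a\neq 0$ with $0<\varepsilon<|a|$, the triangle inequality gives the inclusion
$$
\bigl\{t\in[2,\infty): |\zeta(\tfrac{1}{2}+it)-a|<\varepsilon\bigr\}\subset W_{|a|-\varepsilon,\,|a|+\varepsilon}\subset W_{1/m,\,m},
$$
with $m:=\max\{|a|+\varepsilon,\,1/(|a|-\varepsilon)\}\geq 1$, and Theorem~\ref{th:measselberglimitlaw}(b) yields $\nu_T(W_{1/m,m})=O(E(T))\rightarrow 0$. For $a=0$ with arbitrary $\varepsilon>0$, the set $\{t\in[2,\infty):|\zeta(\tfrac{1}{2}+it)|<\varepsilon\}$ equals $[2,\infty)\setminus W_{\varepsilon,\infty}$, and Theorem~\ref{th:measselberglimitlaw}(b) applied with $m:=\varepsilon$ gives $\nu_T(W_{\varepsilon,\infty})=\tfrac{1}{2}+O(E(T))$, so the dyadic density of the complement tends to $\tfrac{1}{2}$.

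It remains to transfer the dyadic density $\nu_T$ to the ordinary density $T^{-1}\meas(\cdot\cap(0,T])$ appearing in the statement. Writing $A$ for the set under consideration and decomposing $(0,T]=\bigsqcup_{k\geq 0}(T/2^{k+1},T/2^k]$, one obtains
$$
\meas\bigl(A\cap(0,T]\bigr)=\sum_{k=0}^{\infty}\tfrac{T}{2^{k+1}}\,\nu_{T/2^{k+1}}(A).
$$
For any fixed $K\in\N$, the tail $\sum_{k\geq K}T/2^{k+1}=T/2^K$ is trivially bounded, while for $k<K$ the convergence $\nu_{T/2^{k+1}}(A)=\ell+o(1)$ (with $\ell\in\{0,\tfrac{1}{2}\}$) as $T\rightarrow\infty$ gives a head contribution $\ell\,T(1-2^{-K})+o(T)$. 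Sending first $T\rightarrow\infty$ and then $K\rightarrow\infty$ yields the required limit. The argument is essentially a bookkeeping exercise once Theorem~\ref{th:measselberglimitlaw} has been invoked; the only mildly delicate step is this dyadic-to-continuous passage, which however is entirely standard.
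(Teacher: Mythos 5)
Your argument is correct and follows the same route as the paper: both reductions (via the reverse triangle inequality for $a\neq 0$, and via the complement of $W_{\varepsilon,\infty}$ for $a=0$) go straight to Theorem~\ref{th:measselberglimitlaw}(b). The only addition is that you spell out the dyadic-to-ordinary-density conversion from $\nu_T$ to $T^{-1}\meas(\cdot\cap(0,T])$, which the paper invokes without comment; your telescoping decomposition of $(0,T]$ and the two-parameter limit (first $T\to\infty$, then $K\to\infty$) is a valid and tidy way to discharge that step.
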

Comparing this with \eqref{eq:denseBohr}, we see once more that the value-distribution on the critical line is qualitatively different.
\begin{proof} According to Theorem \ref{th:measselberglimitlaw} (b), Selberg's central limit law implies that, for arbitrary $\varepsilon>0$, as $T\rightarrow\infty$,
$$ 
 \frac{1}{T} \meas\left\{t\in(0,T]\, : \, |\zeta({\textstyle\frac{1}{2}+it})|< \varepsilon \right\} = \tfrac{1}{2} + o(1).
$$
This proves the case $a=0$. Moreover, according to Theorem \ref{th:measselberglimitlaw} (b), Selberg's central limit law yields that, for arbitrary $a\in\C$, $a\neq 0$ and any $0<\varepsilon^*<|a|$,
$$
 \frac{1}{T} \meas\left\{t\in(0,T]\, : \, |a|-\varepsilon^*<|\zeta({\textstyle\frac{1}{2}+it})|\leq |a|+\varepsilon^* \right\}, = o(1),
$$
as $T\rightarrow\infty$. Due to the observation that, for any $0<\eps<\eps^*$ and any $T>0$,
\begin{align*}
0 \; \leq \; & 
\frac{1}{T} \meas\left\{t\in(0,T] : |\zeta({\textstyle\frac{1}{2}+it})-a|< \varepsilon \right\} \\
& \qquad \qquad \leq  \frac{1}{T} \meas\left\{t\in(0,T] :  |a|-\varepsilon^* <|\zeta({\tfrac{1}{2}+it})|\leq |a|+\varepsilon^* \right\} ,
\end{align*}
the statement for $a\neq 0$ follows. 
\end{proof}

\section{Approaching zero and infinity from different directions}\label{sec:appr}
Although the curve $t\mapsto \zeta(\tfrac{1}{2}+it)$ has a preference to be either close to zero or to infinity, it is neither known whether zero is an interior point of the set 
$$
\overline{V(\tfrac{1}{2})}=\overline{\left\{ \zeta(\tfrac{1}{2}+it) \, : \, t\in[1,\infty) \right\}} \subset\C
$$ 
nor whether zero is an interior point of the set
$$
\overline{\left\{ \zeta(\tfrac{1}{2}+it)^{-1} \, : \, t\in[1,\infty) \right\}}\subset\widehat{\C}.
$$ 
Relying on Theorem \ref{th:largesmall}, resp. Corollary \ref{cor:selbergsmalllarge}, we prove that we can approximate zero and infinity with non-zero values in $V(\frac{1}{2})$ from quite many directions.
\begin{theorem}\label{th:zeroasintpoint}
Let $\L\in\Sc^*$. Then, there exist real numbers $\theta_0,\theta_{\infty}\in[0,2\pi)$ with the following properties.
\begin{itemize}
\item[(a)] For every $\theta\in(\theta_0-\frac{\pi}{8},\theta_0+\frac{\pi}{8})$, there exist a sequence $(t_k)_k$ with $t_k\subset (1,\infty]$ and $\lim_{k\rightarrow\infty} t_k = \infty$ such that
$$
\L(\tfrac{1}{2}+it_k) \in e^{i\theta}\R^+ := \left\{re^{i\theta}\, :\, r\in\R^+ \right\}
$$
for $k\in\N$ and 
$$
\lim_{k\rightarrow\infty} \L(\tfrac{1}{2}+it_k) = 0.
$$
\item[(b)] For every $\theta\in(\theta_{\infty}-\frac{\pi}{8},\theta_{\infty}+\frac{\pi}{8})$, there exist a sequence $(t_k)_k$ with $t_k\subset (1,\infty]$ and $\lim_{k\rightarrow\infty} t_k = \infty$ such that
$$
\L(\tfrac{1}{2}+it_k) \in e^{i\theta}\R^+ 
$$
for $k\in\N$ and 
$$
\lim_{k\rightarrow\infty} \L(\tfrac{1}{2}+it_k) = \infty.
$$
\end{itemize}
\end{theorem}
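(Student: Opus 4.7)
The plan is to prove part~(a) only; part~(b) will follow from an entirely analogous argument applied to the large-value sets $\mathcal{W}_m$ furnished by Corollary~\ref{cor:selbergsmalllarge}(b), since there too $\L$ does not vanish on the rectangles in question. The main ingredient beyond Corollary~\ref{cor:selbergsmalllarge} will be the observation that, wherever $\L$ is non-vanishing on the critical line, one has $\arg\L(\tfrac{1}{2}+it)\equiv \tfrac{1}{2}\arg\Delta_\L(\tfrac{1}{2}+it)+\chi\pi\bmod 2\pi$ with some locally constant $\chi\in\{0,1\}$; this is immediate from the representation $\L(\tfrac{1}{2}+it)=Z_\L(t)\,\Delta_\L(\tfrac{1}{2}+it)^{1/2}$ with real-valued $Z_\L$ (Lemma~\ref{lem:hardyZ}). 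Combined with the asymptotic \eqref{asymext_logdiff_Delta_p} for $\Delta_\L'/\Delta_\L$, this will show that $\tfrac{d}{dt}\arg\L(\tfrac{1}{2}+it)=-\tfrac{d_\L}{2}\log t+O(1)$ uniformly wherever $\L\neq 0$ on $\sigma=\tfrac{1}{2}$, which is the rapid-rotation mechanism that drives the whole proof.

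First, for each sufficiently large $n\in\N$ I would apply Corollary~\ref{cor:selbergsmalllarge} with $c=1$, the constant scaling function $\mu\equiv\mu_n:=\kappa_\L/2-1/n$, and parameter $m=n$. This yields a set $\mathcal{W}_{1/n}\subset[2,\infty)$ of positive lower density on which $0<|\L(s)|\leq 1/n$ for every $s$ in the closed rectangle $\overline{\mathcal{R}}_\tau$ of height $2\mu_n/\log\tau$ centred at $\tfrac{1}{2}+i\tau$. I then pick $\tau_n\in\mathcal{W}_{1/n}$ with $\tau_n\to\infty$; compactness of the circle allows passage to a subsequence along which $\arg\L(\tfrac{1}{2}+i\tau_n)$ converges to some $\theta_0\in[0,2\pi)$, and this $\theta_0$ will be the one asserted by the theorem.

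The central step is to analyse the arc traced out by $t\mapsto\arg\L(\tfrac{1}{2}+it)$ on the central vertical segment $I_n:=\{\tfrac{1}{2}+it : t\in[\tau_n-\mu_n/\log\tau_n,\tau_n+\mu_n/\log\tau_n]\}\subset\overline{\mathcal{R}}_{\tau_n}$. Since $\L$ has no zero on $I_n$, the identity above applies with a single $\chi_n$, so the rate-of-rotation estimate is valid throughout $I_n$; integrating and noting that the leading term $-\tfrac{d_\L}{2}\log t$ varies by only $o(1)$ across the tiny interval $[\tau_n-\mu_n/\log\tau_n,\tau_n+\mu_n/\log\tau_n]$, one obtains that the image arc $J_n$ of $\arg\L$ on $I_n$ has length $\mu_n d_\L+o(1)=\pi/4-d_\L/n+o(1)$ and is essentially centred at $\arg\L(\tfrac{1}{2}+i\tau_n)$, with endpoints at distance $\mu_n d_\L/2+o(1)$ from this centre.

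Finally, fix $\theta\in(\theta_0-\pi/8,\theta_0+\pi/8)$ and put $\delta:=\pi/8-|\theta-\theta_0|>0$. For $n$ large both $|\arg\L(\tfrac{1}{2}+i\tau_n)-\theta_0|$ and $\pi/8-\mu_n d_\L/2$ will be smaller than $\delta/3$ (plus the vanishing error), forcing $\theta\in J_n$; the intermediate value theorem then produces $t_n\in[\tau_n-\mu_n/\log\tau_n,\tau_n+\mu_n/\log\tau_n]$ with $\arg\L(\tfrac{1}{2}+it_n)\equiv\theta\bmod 2\pi$. Together with $0<|\L(\tfrac{1}{2}+it_n)|\leq 1/n\to 0$ this yields the required sequence. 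The main technical hurdle will be to justify the derivative formula and to track the $o(1)$ errors carefully enough to pin down the position and length of the arcs $J_n$; note that the upper bound $\mu_n<\kappa_\L/2=\pi/(4d_\L)$ built into Corollary~\ref{cor:selbergsmalllarge} is precisely what caps $|J_n|$ at values strictly below $\pi/4$, which is the origin of the half-width $\pi/8$ appearing in the statement.
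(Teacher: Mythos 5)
Your proposal is correct and follows essentially the same route as the paper's proof. Both arguments rest on the same two pillars: the small/large-value rectangles near $\sigma=\tfrac12$ furnished by Selberg's central limit law (via Theorem~\ref{th:largesmall}, resp.\ Corollary~\ref{cor:selbergsmalllarge}), and the rapid rotation of the phase factor $\Delta_\L^{1/2}$ coming from Lemma~\ref{lem:asym_Delta_p}. The paper organizes the phase analysis by explicitly writing $\L_{\tau_k}(y)=f_k(y)\,h_k(y)$ with $f_k>0$ real (via $Z_\L$) and $|h_k|=1$, and then computes $h_k(y)\approx\exp(i\theta_k - i\tfrac{\pi}{4}y)$ from Lemma~\ref{lem:Delta_p_phi}; you instead differentiate $\arg\L(\tfrac12+it)$ and use \eqref{asymext_logdiff_Delta_p} to obtain the rotation rate $-\tfrac{d_\L}{2}\log t + O(1)$, which is the same computation in infinitesimal form. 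Your bookkeeping of the arc length $\mu_n d_\L + o(1) \to \pi/4$ and of the half-width $\pi/8$ matches the paper's bound $q\pi/4$ with $q\uparrow\tfrac12$; the only cosmetic difference is that you let the scaling constant $\mu_n$ approach $\kappa_\L/2$ with $n$ while the paper fixes $q<\tfrac12$ throughout and lets it vary at the end.
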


\begin{proof} We only prove statement (a). Statement (b) can be proved  by essentially the same method.\par
For $\L\in\Sc^{*}$, there is an analogue of Hardy's $Z$-function at our disposal which allows us to write for sufficiently large real $t$, say $t\geq t_0$,
$$
\L(\tfrac{1}{2}+it) = Z_{\L}(t)\Delta_{\L}(\tfrac{1}{2}+it)^{1/2}.
$$
For the definition and basic properties of $Z_{\L}(t)$ the reader is referred to Section \ref{sec:classG}. Recall here, in particular, that for $t\geq t_0$,
$$
Z_{\L}(t)\in\R \qquad \mbox{ and }\qquad  \Delta_{\L}(\tfrac{1}{2}+it)^{1/2} \in\partial\D.
$$
We fix an arbitrary $0<q<\frac{1}{2}$ and set $\kappa_{\L}:=\frac{\pi}{2d_{\L}}$. Then, according to Theorem \ref{th:largesmall} (b), resp. Corollary \ref{cor:selbergsmalllarge} (b), we find a sequence $(\tau_k)_k$ with $\tau_k\in[t_0,\infty)$ and $\lim_{k\rightarrow\infty}\tau_k = \infty$ such that the functions 
\begin{align*}
\L_{\tau_k}(y)&:=\L\left(\tfrac{1}{2} + i \frac{\kappa_{\L}}{\log \tau _k}\, y +i\tau_k \right)\\
&= Z_{\L}\left(\frac{\kappa_{\L}}{\log \tau _k}\, y + \tau_k \right)\Delta_{\L}\left(\tfrac{1}{2}+i\frac{\kappa_{\L}}{\log \tau _k}\, y + i\tau_k\right)^{1/2}
\end{align*}
do not vanish on the interval $[-q,q]$ for $k\in\N$ and converge uniformly on $[-q,q]$ to zero, as $k\rightarrow\infty$.\par

As the functions $\L_{\tau_{k}}(y)$, $k\in\N$, are real and non-vanishing on the interval $[-q,q]$, we find, for every $k\in\N$, an integer $\eta_k\in\{-1,+1\}$ such that
$$
f_k(y):= \eta_k \cdot Z_{\L}\left(\frac{\kappa_{\L}}{\log \tau _k}\,y + \tau_{k} \right) >0 \qquad \mbox{ for } y\in\left[-q,q\right] .
$$
The factors $\eta_k$ assure that $f_k(y)=\left| \L_{\tau_k}(y) \right|$ for $k\in\N$ and $y\in[-q,q]$. Further, we observe that, uniformly for $y\in[-q,q]$, 
\begin{equation}\label{kw1}
 f_k(y) \rightarrow 0, \qquad \mbox{as }k\rightarrow\infty.
\end{equation}
Now, we set
$$
h_k(y):= \eta_k^{-1} \cdot \Delta_{\L}\left(\tfrac{1}{2}+i\frac{\kappa_{\L}}{\log \tau _k}\, y + i\tau_k\right)^{1/2} .
$$
Observe that $\left|h_k(y)\right|=1$ and that 
\begin{equation}\label{Zhf}
f_k(y)\cdot h_k(y)=\L_{\tau_k}(y)
\end{equation}
for $k\in\N$ and $y\in[-q,q]$. Since $h_k(0)\in\partial\D$, it follows from the compactness of $\partial\D$ that the set $\{h_k(0)\}_k$ has at least one accumulation point $e^{i\theta_0}\in\partial\D$ with $\theta_0\in [0,2\pi)$. Without loss of generality, we may suppose that 
$$
\lim_{k\rightarrow\infty} h_{k}(0) = e^{i\theta_0}.
$$ 
Otherwise, we work with a suitable subsequence of $(h_k(0))_k$. For $k\in\N$, we define $\theta_{k}\in[0,2\pi)$ such that $ e^{i\theta_{k}} = h_{k}(0)$. Then, according to Lemma \ref{lem:Delta_p_phi}, we have, uniformly for $y\in[-q,q]$,
\begin{align*}
h_{k}(y) &= h_k(0)\cdot \exp\left(- i\tfrac{1}{2}d_{\L}\kappa_{\L} y \right) \left(1 + O\left(\frac{1}{\log \tau_{k}} \right) \right)\\
&= \exp\left(i\theta_k - i\tfrac{\pi}{4} y \right) \left(1 + O\left(\frac{1}{\log \tau_{k}} \right) \right).
\end{align*}
as $k\rightarrow\infty$. Thus, for any $\theta\in (\theta_0-\frac{q\pi}{4}, \theta_0+\frac{q\pi}{4})$, we find for sufficiently large $k \in\N$, a real number $y_{k}\in [-q,q]$ with
\begin{equation}\label{kw2}
h_{k}(y_{k})= \exp(i\theta).
\end{equation}
Now, we set 
$$t_k := \tau_{k} + \frac{\kappa_{\L}}{\log \tau_k}\,y_k.$$ Then, according to \eqref{kw1}, \eqref{Zhf} and \eqref{kw2}, we have $\L(\tfrac{1}{2}+it_k ) \in e^{i\theta}\R^+$ for all sufficiently large $k\in\N$ and $\lim_{k\rightarrow\infty} \L(\tfrac{1}{2}+it_k) = 0$. As we can choose arbitrary $0<q<\frac{1}{2}$, the assertion is proved. 
\end{proof}
For the Riemann zeta-function, there are results of Kalpokas, Korolev \& Steuding \cite{kalpokaskorolevsteuding:2013} at our disposal which exceed the statement of Theorem \ref{th:zeroasintpoint} (b) by far. Kalpokas \& Steuding \cite{kalpokassteuding:2011} investigated intersection points of the curve $\R \ni t\mapsto \zeta(\frac{1}{2}+it)$ with straight lines $e^{i\theta}\R := \{re^{i\theta}\, : \, r\in\R\}$ through the origin. In particular, they observed that
$$
\zeta(\tfrac{1}{2}+it)\in e^{i\theta}\R \qquad \Longleftrightarrow \qquad \zeta(\tfrac{1}{2}+it) = 0 \quad \mbox{or}\quad \Delta_{\zeta}(\tfrac{1}{2}+it) = e^{i2\phi}.
$$
Their works were extended by Christ \& Kalpokas \cite{christkalpokas:2012, christkalpokas:2013} and Kalpokas, Korolev \& Steuding \cite{kalpokaskorolevsteuding:2013}. The latter showed that, for every $\theta\in[0,2\pi)$, there is a sequence $(t_k)_k$ with $t_k\in\R$ and $\lim_{k\rightarrow\infty} t_k = \infty$ such that, for all $k\in\N$,
$$
\zeta(\tfrac{1}{2}+it_k) \in e^{i\theta}\R^+ \qquad \mbox{and}\quad |\zeta(\tfrac{1}{2}+it_k)|\geq C (\log t_k)^{5/4}
$$
with some positive constant $C$. Thus, roughly speaking, the values of the Riemann zeta-function on the critical line expand in every direction. \par

Korolev showed in a talk at the ELAZ conference 2012 at Schloss Schney that there exists a sequence $(t_k)_k$ with $t_k\in \R$ and $\lim_{k\rightarrow\infty} t_k = \infty$ such that, for every $k\in\N$,
$$
\zeta(\tfrac{1}{2}+it_k) \in\R\setminus\{0\} \qquad \mbox{and} \qquad \lim_{k\rightarrow\infty} \zeta(\tfrac{1}{2}+it_k) = 0.
$$
Korolev's result is not published yet.

\section{Denseness results on curves approaching the critical line}\label{sec:curves}
In the following, let $\epsilon:[1,\infty)\rightarrow\R$ be a function with $\lim_{t\rightarrow\infty} \epsilon(t) = 0$. For a humble attack on Ramachandra's conjecture, we investigate the value-distribution of the Riemann zeta-function on curves $t\mapsto \frac{1}{2}+\epsilon(t)+it$ which approach the critical line asymptotically as $t\rightarrow\infty$.\par
If we could establish a denseness statement for the zeta-values on curves approaching the critical line sufficiently fast, then the denseness of the zeta-values on the critical line would follow; see the subsequent theorem. Recall the definition of the function $\theta_{\L}(\sigma)$ from Section \ref{sec:orderofgrowth}, which indicates the order of growth of a given function $\L\in\Sc^{\#}$.

\begin{theorem}\label{th:curvesmotivation}
Let $\L\in\Sc^{\#}$ and $n\in\N_0$. Let $\epsilon:[1,\infty) \rightarrow \R$ be a function such that $\epsilon(t)\ll t^{-\theta_{\L}(\frac{1}{2})-\delta}$, as $t\rightarrow\infty$, with some $\delta>0$. Then,
$$
\overline{\left\{ \Delta_n \L(\tfrac{1}{2}+it) \, : \, t\in[1,\infty) \right\}} = \C^{n+1}
$$
if and only if
$$
\overline{\left\{ \Delta_n \L(\tfrac{1}{2}+\epsilon(t)+it) \, : \, t\in[1,\infty) \right\}} = \C^{n+1}.
$$
\end{theorem}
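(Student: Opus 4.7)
My plan is to reduce the theorem to the smoothness estimate of Lemma \ref{lem:growth}. The hypothesis $\epsilon(t)\ll t^{-\theta_{\L}(1/2)-\delta}$ guarantees that, for every sufficiently large $t$, both points $\tfrac{1}{2}+it$ and $\tfrac{1}{2}+\epsilon(t)+it$ lie inside the disc $D_t$ appearing in that lemma. Applying Lemma \ref{lem:growth} to each derivative $\L^{(l)}$ with $0\leq l\leq n$ then yields
$$\bigl|\L^{(l)}(\tfrac{1}{2}+it)-\L^{(l)}(\tfrac{1}{2}+\epsilon(t)+it)\bigr|\ll_{\delta,n} t^{-\delta/2}\qquad \text{as } t\to\infty,$$
and consequently $\|\Delta_n\L(\tfrac{1}{2}+it)-\Delta_n\L(\tfrac{1}{2}+\epsilon(t)+it)\|\to 0$. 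Thus the two parametrized curves become uniformly close at infinity.

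For the non-trivial implication, assume $A:=\{\Delta_n\L(\tfrac{1}{2}+it):t\geq 1\}$ is dense in $\C^{n+1}$ and write $B:=\{\Delta_n\L(\tfrac{1}{2}+\epsilon(t)+it):t\geq 1\}$. Given $a\in\C^{n+1}$ and $\eps>0$, I would fix $T_0$ so that the displayed difference above is smaller than $\eps/2$ for every $t\geq T_0$. It then suffices to produce some $t_1\geq T_0$ with $\|\Delta_n\L(\tfrac{1}{2}+it_1)-a\|<\eps/2$, because the triangle inequality immediately furnishes $\|\Delta_n\L(\tfrac{1}{2}+\epsilon(t_1)+it_1)-a\|<\eps$, whence $a\in\overline{B}$.

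Such a $t_1$ exists provided the tail $A_{T_0}:=\{\Delta_n\L(\tfrac{1}{2}+it):t\geq T_0\}$ is itself dense in $\C^{n+1}$. The complementary compact arc $K:=\{\Delta_n\L(\tfrac{1}{2}+it):t\in[1,T_0]\}$ is the $C^1$ image of a compact interval in $\R^{2n+2}$ with $2n+2\geq 2$, hence has Lebesgue measure zero and in particular empty interior. Therefore, for any non-empty open $U\subset\C^{n+1}$, the set $U\setminus K$ is still non-empty and open, meets the dense set $A$ by hypothesis, and necessarily meets it inside $A_{T_0}$. The converse implication is proved by interchanging the roles of $A$ and $B$. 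There is no real obstacle in the argument; the only delicate point is the negligibility of the initial compact arc, which is an elementary consequence of the smoothness of the curve, so the theorem is in essence a direct corollary of Lemma \ref{lem:growth}.
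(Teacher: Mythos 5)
Your proof is correct and follows essentially the same route as the paper: Lemma \ref{lem:growth} shows the two curves become uniformly close as $t\to\infty$, and the triangle inequality then transfers denseness in either direction. Your extra observation---that the compact initial arc $K$ is the $C^1$ image of an interval in $\C^{n+1}$ and hence has empty interior, so the tail of the first curve is still dense---fills in a detail the paper leaves implicit when it simply asserts that a suitable $t^*\geq t_0$ exists.
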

\begin{proof}
Let $a\in\C^{n+1}$ and $\eps>0$. Let $\|\cdot\|$ denote the maximum-norm in the complex vector space $\mathbb{C}^{n+1}$. Then, according to Lemma \ref{lem:growth}, there exist a constant $t_0>1$ such that 
$$
\bigl\|\Delta_n \L(\tfrac{1}{2}+\epsilon(t)+it)-\Delta_n \L(\tfrac{1}{2}+it)\bigr\| < \eps/2
$$
for $t\geq t_0$. Assume that $\overline{\left\{ \Delta_n \L(\tfrac{1}{2}+\epsilon(t)+it) \, : \, t\in[1,\infty) \right\}} = \C^{n+1}$. Then, we find a $t^*\geq t_0$ such that
$$
\bigl\|\Delta_n\L(\tfrac{1}{2}+\epsilon(t^*)+it^*)-a\bigr\|<\eps/2.
$$
By means of the triangle inequality, we obtain that
$$
\bigl\|\Delta_n \L(\tfrac{1}{2}+it^*)-a\bigl\|<\eps.
$$
As we can choose $a\in\C^{n+1}$ and $\eps>0$ arbitrarily in the argumentation above, we deduce that
$$
\overline{\left\{ \Delta_n \L(\tfrac{1}{2}+it) \, : \, t\in[1,\infty) \right\}} = \C^{n+1}.
$$
The other implication of the theorem can be proved by the same argument. 
\end{proof}

{\bf A negative denseness results on curves approaching the critical.} The property of the Riemann zeta-function that a multidimensional denseness result does not hold on the critical line carries over to certain curves approaching the critical line.
\begin{corollary}\label{cor:nondensenesscurves}
Let $\L\in\Sc^{\#}$ and $n\in\N$. Let $\epsilon:[1,\infty) \rightarrow \R$ be a function such that $\epsilon(t)\ll t^{-\theta_{\L}(\frac{1}{2}) - \delta}$, as $t\rightarrow\infty$, with some $\delta>0$. Then,
$$
\overline{\left\{ \Delta_n \L(\tfrac{1}{2}+\epsilon(t)+it) \, : \, t\in[1,\infty) \right\}} \neq \C^{n+1}.
$$
\end{corollary}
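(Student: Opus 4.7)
The corollary should follow by combining Theorem~\ref{th:nondensenesscritlineG} with Theorem~\ref{th:curvesmotivation}, plus a trivial projection to handle exponents $n\geq 2$. First, since $\L\in\Sc^{\#}\subset\mathcal{G}$, I would invoke Theorem~\ref{th:nondensenesscritlineG} to obtain
$$
\overline{\{(\L(\tfrac{1}{2}+it),\L'(\tfrac{1}{2}+it))\,:\,t\in[1,\infty)\}}\neq\widehat{\C}^2.
$$
A glance at the proof of that theorem shows in fact something stronger: a disc $D_{\eps}(a,b)\subset\C^2$ with $a,b\in\C\setminus\{0\}$ and $0<\eps<\min\{|a|,|b|\}$ is disjoint from the closure. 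In particular, already the set
$$
\overline{\{\Delta_1\L(\tfrac{1}{2}+it)\,:\,t\in[1,\infty)\}}
$$
is not equal to $\C^2$.

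Next, I would apply Theorem~\ref{th:curvesmotivation} for the particular choice $n=1$ (which is allowed since the hypothesis on $\epsilon$ is precisely the one required by Lemma~\ref{lem:growth}, which is the sole analytic ingredient in Theorem~\ref{th:curvesmotivation}). The ``if and only if'' statement then transfers non\-denseness from the critical line to the curve $t\mapsto \tfrac{1}{2}+\epsilon(t)+it$, giving
$$
\overline{\{\Delta_1\L(\tfrac{1}{2}+\epsilon(t)+it)\,:\,t\in[1,\infty)\}}\neq\C^2.
$$

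Finally, for $n\geq 2$ I would use a projection argument. Let $\pi\colon\C^{n+1}\rightarrow\C^2$ be the canonical projection onto the first two coordinates; clearly $\pi$ is continuous, open, and surjective. Setting
$$
S_n:=\{\Delta_n\L(\tfrac{1}{2}+\epsilon(t)+it)\,:\,t\in[1,\infty)\},
$$
we have $\pi(S_n)=S_1$. If $\overline{S_n}$ were equal to $\C^{n+1}$, then by continuity of $\pi$,
$$
\C^2=\pi(\C^{n+1})=\pi(\overline{S_n})\subseteq\overline{\pi(S_n)}=\overline{S_1},
$$
contradicting the conclusion of the preceding step. Hence $\overline{S_n}\neq\C^{n+1}$, completing the proof.

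\textbf{Main obstacle.} Essentially none of this is hard; the only subtle point is making sure that Theorem~\ref{th:nondensenesscritlineG}, whose stated conclusion is phrased in $\widehat{\C}^2$, actually yields non\-denseness in $\C^2$. This must be extracted from its proof (where the forbidden disc is explicitly chosen in $\C^2$) rather than from its statement. Everything else is bookkeeping.
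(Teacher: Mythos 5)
Your proposal is correct and follows essentially the same route as the paper, whose own proof is literally a one-liner citing Theorem~\ref{th:nondensenesscritlineG} and Theorem~\ref{th:curvesmotivation} and noting $n\neq 0$; the projection argument for $n\geq 2$ is exactly the bookkeeping the paper leaves implicit, and it is good that you spelled it out. The one thing worth correcting is your ``main obstacle'': it is not actually an obstacle. If a set $S\subset\C^2$ had $\overline{S}=\C^2$ in the topology of $\C^2$, then its closure in $\widehat{\C}^2$ would be a closed set containing $\C^2$, hence equal to $\widehat{\C}^2$, since $\C^2$ is dense in $\widehat{\C}^2$. Taking the contrapositive, the conclusion $\overline{S}\neq\widehat{\C}^2$ of Theorem~\ref{th:nondensenesscritlineG} already yields $\overline{S}\neq\C^2$, so you may use that theorem exactly as stated, without reaching into its proof for the explicitly excluded disc $D_{\eps}(a,b)$.
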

\begin{proof}
Recalling that $\Sc^{\#}\subset \mathcal{G}$ and noticing that $n\neq 0$, the corollary follows directly by combining Theorem \ref{th:nondensenesscritlineG} with Theorem \ref{th:curvesmotivation}.  
\end{proof}

{\bf Denseness results on curves approaching the critical line.}
\begin{figure}
\centering
\includegraphics[width=0.9\textwidth]{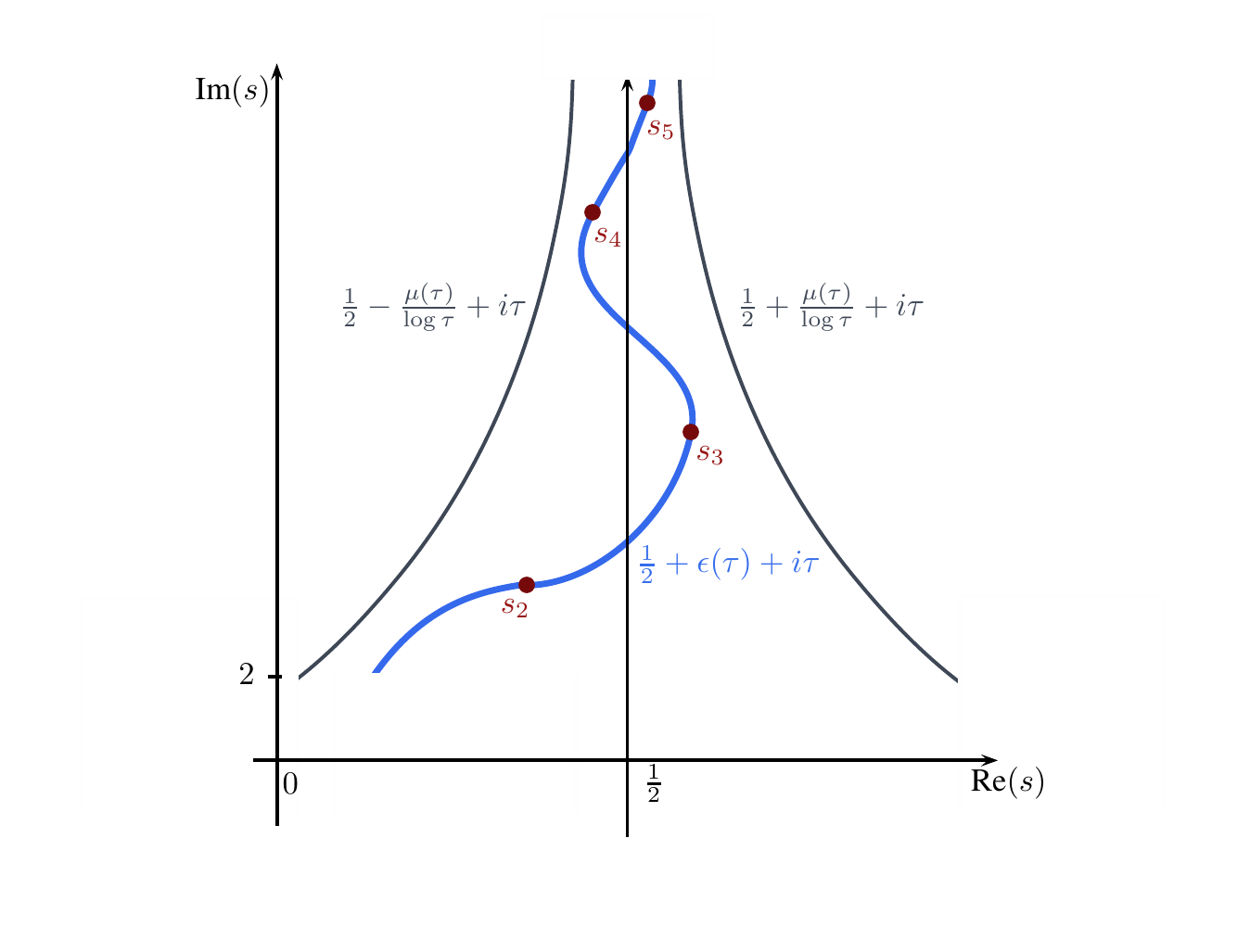}\\[-1cm]
\caption{The construction of the curve $t\mapsto \frac{1}{2}+\epsilon(t)+it$ in the proof of Theorem \ref{th:densenessavalues}.}
\label{fig:densecurve} 
\end{figure}
It seems difficult to obtain a denseness result on curves $t\mapsto \frac{1}{2}+\epsilon(t)+it$ by adapting Bohr's method. This is due to the fact that, according to a result of Laurin\v{c}ikas \cite{laurincikas:1992}, there is a quantitative swap in the asymptotic behaviour of the mean-square near the critical line. 
\begin{theorem}[Laurin\v{c}ikas, 1991]
Let $\mu:[2,\infty)\rightarrow \R$ be a non-negative, (not necessarily strictly) monotonically increasing or decreasing function satisfying $\lim_{T\rightarrow\infty}\frac{\mu(T)}{\log T} = 0$. 
\begin{itemize}
 \item[(a)] If $\lim_{T\rightarrow\infty} \mu(T) =\infty$, then, as $T\rightarrow\infty$,
$$
\frac{1}{T}\int_2^{T} \left|\zeta\left(\tfrac{1}{2}+\frac{\mu(T)}{\log T}+it\right) \right|^2 \d t \sim  \frac{1}{2\mu(T)} \log T.
$$
\item[(b)] If $\lim_{T\rightarrow\infty} \mu(T) =c$ with some $c>0$, then, as $T\rightarrow\infty$,
$$
\frac{1}{T}\int_2^{T} \left|\zeta\left(\tfrac{1}{2}+\frac{\mu(T)}{\log T}+it\right) \right|^2 \d t \sim \frac{1}{2c}\left(1-e^{-2c} \right)\log T. 
$$
\item[(c)] If $\lim_{T\rightarrow\infty} \mu(T) =0$, then, as $T\rightarrow\infty$,
$$
\frac{1}{T}\int_2^{T} \left|\zeta\left(\tfrac{1}{2}+\frac{\mu(T)}{\log T}+it\right) \right|^2 \d t \sim \log T.
$$
\end{itemize}
\end{theorem}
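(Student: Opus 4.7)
The plan is to reduce the three regimes in the theorem to a single uniform asymptotic formula and then extract the three cases by inspecting how the factor $T^{2-2\sigma}=T\exp(-2\mu(T))$ behaves. The starting point I would use is a uniform version of the Atkinson--Ingham mean-square formula: for $\frac{1}{2}<\sigma<\frac{3}{4}$, as $T\to\infty$,
\begin{equation}\label{eq:atkinsonuniform}
\int_0^T \left|\zeta(\sigma+it)\right|^2 \d t = \zeta(2\sigma)\, T + \frac{(2\pi)^{2\sigma-1}\zeta(2-2\sigma)}{2-2\sigma}\, T^{2-2\sigma} + E_\sigma(T),
\end{equation}
with an error term $E_\sigma(T) \ll T^{1-\sigma}\log T$ uniformly in $\sigma$ on this range. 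Here \eqref{eq:atkinsonuniform} goes back to Atkinson and is proved by contour-shifting from the functional equation together with the approximate functional equation; the uniformity (with control up to the critical line) is the technically subtle point and will carry most of the weight of the argument.

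Specialize \eqref{eq:atkinsonuniform} to $\sigma = \sigma(T) := \frac{1}{2}+\frac{\mu(T)}{\log T}$, so that $2\sigma - 1 = \frac{2\mu(T)}{\log T} \to 0$ under the standing assumption $\mu(T)/\log T \to 0$. Then I would use the Laurent expansion $\zeta(1+z) = \frac{1}{z}+\gamma + O(z)$ near $z=0$ to write
\begin{align*}
\zeta(2\sigma(T)) &= \frac{\log T}{2\mu(T)} + \gamma + O\!\left(\frac{\mu(T)}{\log T}\right), \\
\zeta(2-2\sigma(T)) &= -\frac{\log T}{2\mu(T)} + \gamma + O\!\left(\frac{\mu(T)}{\log T}\right),
\end{align*}
together with the elementary identities
$$
T^{2-2\sigma(T)} = T\cdot \exp(-2\mu(T)), \qquad (2\pi)^{2\sigma(T)-1} = 1 + O\!\left(\frac{\mu(T)}{\log T}\right),
$$
and $2-2\sigma(T) = 1 + O(\mu(T)/\log T)$. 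Plugging these into \eqref{eq:atkinsonuniform} and dividing by $T$, I expect the leading terms to combine as
$$
\frac{1}{T}\int_0^T \left|\zeta(\sigma(T)+it)\right|^2 \d t
= \frac{\log T}{2\mu(T)}\Bigl(1 - e^{-2\mu(T)}\Bigr) + O\!\left(1 + \frac{E_{\sigma(T)}(T)}{T}\right),
$$
where the error $E_{\sigma(T)}(T)/T \ll T^{-1/2}\log T$ is negligible compared to the main term in all three regimes.

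The three cases of the theorem then fall out by simply examining the factor $\frac{1-e^{-2\mu(T)}}{2\mu(T)}$. In case (a) with $\mu(T)\to\infty$, $e^{-2\mu(T)}\to 0$, giving $\frac{\log T}{2\mu(T)}$; in case (b) with $\mu(T)\to c>0$, the factor tends to $\frac{1-e^{-2c}}{2c}$, giving $\frac{1-e^{-2c}}{2c}\log T$; and in case (c) with $\mu(T)\to 0$, the Taylor expansion $1-e^{-2\mu(T)} = 2\mu(T) + O(\mu(T)^2)$ yields $(1+o(1))\log T$. Replacing $\int_0^T$ by $\int_2^T$ only introduces an $O(1)$ error which is absorbed.

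\textbf{Main obstacle.} The genuinely hard part is establishing the uniform asymptotic \eqref{eq:atkinsonuniform} up to the critical line with an error term that dominates neither the first nor the second main term after the specialization $\sigma=\sigma(T)$. Both main terms individually blow up like $\log T/\mu(T)$, and one must keep enough explicit structure to see the cancellation in case (c), where the whole answer comes from the leading $2\mu(T)$ in $1-e^{-2\mu(T)}$. If one tried to use only the naive bound $\zeta(2\sigma)T$ as a main term (as in Carlson's theorem away from the line), the correct asymptotic in case (b) and (c) would be missed entirely; the functional-equation-driven second term is essential. The monotonicity of $\mu$ enters only to make the choice of $\sigma(T)$ behave well under the uniform estimate for the error $E_\sigma(T)$, and should not require additional work beyond the Atkinson input.
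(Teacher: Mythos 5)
Your approach is the right one and, as far as one can tell, matches the proof the paper points to. Note that the paper does not give a proof of this theorem at all: it simply says ``For a proof we refer to Laurin\v{c}ikas [Theorems 1, 2 and 3]'', and the cited reference is titled \emph{The Atkinson formula near the critical line}, which is precisely the uniform input you postulate as \eqref{eq:atkinsonuniform}. So your plan coincides with the source the author is relying on.

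Your formal reduction is correct. Writing $\sigma(T)=\tfrac12+\mu(T)/\log T$ and using $\zeta(1+z)=\tfrac1z+\gamma+O(|z|)$ and $\zeta(1-z)=-\tfrac1z+\gamma+O(|z|)$ with $z=2\mu(T)/\log T$, together with $T^{2-2\sigma(T)}=Te^{-2\mu(T)}$ and $(2\pi)^{2\sigma(T)-1}=1+O(\mu(T)/\log T)$, $(2-2\sigma(T))^{-1}=1+O(\mu(T)/\log T)$, the two singular main terms of the Ingham--Atkinson formula do combine into
$$
\frac{1}{T}\int_0^T\left|\zeta(\sigma(T)+it)\right|^2\d t
=\frac{\log T}{2\mu(T)}\bigl(1-e^{-2\mu(T)}\bigr)+O(1)+\frac{E_{\sigma(T)}(T)}{T},
$$
where the various $\gamma$-terms and the cross contributions from the $1+O(\mu/\log T)$ factors are all absorbed in the $O(1)$. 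Since $\mu(T)=o(\log T)$, the main term tends to $\infty$ in every regime, so the $O(1)$ is harmless; and any error bound of the shape $E_\sigma(T)\ll T^{1/2+o(1)}$ uniform up to $\sigma=\tfrac12$ would make $E_{\sigma(T)}(T)/T=o(1)$ as well. Cases (a), (b), (c) then fall out from the behaviour of $\bigl(1-e^{-2\mu(T)}\bigr)/(2\mu(T))$ exactly as you say. (Two cosmetic points: you need $\mu(T)>0$ for $T$ large for the Laurent expansion to make sense, but the factor $(1-e^{-2\mu})/(2\mu)$ extends continuously to $\mu=0$; and you should restrict $\sigma(T)$ to a right neighbourhood of $\tfrac12$, which your hypothesis $\mu(T)/\log T\to 0$ guarantees.)

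The genuine gap, which you explicitly flag, is the uniform Atkinson-type mean-square formula \eqref{eq:atkinsonuniform} with a controlled error term as $\sigma\downarrow\tfrac12$. The fixed-$\sigma$ version (Ingham) does not come with uniformity near the critical line for free, and the implied constants in the standard treatments blow up as $\sigma\to\tfrac12$; obtaining the uniform version is the whole substance of Laurin\v{c}ikas' cited paper. So your argument is a correct reduction, not a complete proof: the remaining work is precisely the uniform Atkinson formula, which you would need to either prove or cite. Given that the paper itself resolves this by citation, your proposal is consistent with the paper's (implicit) approach.
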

For a proof we refer to Laurin\v{c}ikas \cite[Theorems 1, 2 and 3]{laurincikas:1992}.\par
Nevertheless, we can use the $a$-point results from Chapter \ref{chapt:apoints} to deduce the existence of certain curves on which the values of the zeta-function lie dense in $\C$.
\begin{theorem}\label{th:densenessavalues}
There exists a continuous function $\epsilon:[2,\infty)\rightarrow \R$ with $\lim_{t\rightarrow\infty}\epsilon(t)=0$ such that
$$
\overline{\left\{ \zeta(\tfrac{1}{2}+\epsilon(t)+it) \, : \, t\in[2,\infty) \right\}}=\C.
$$
Here we can demand additionally that,
\begin{itemize}
 \item[(a)] unconditionally,
$$
|\epsilon(t)|\leq \frac{\mu(t)}{\log t}
$$ 
with any positive function $\mu$ satisfying $\lim_{t\rightarrow\infty}\mu(t)=\infty$.
 \item[(b)] by assuming the Riemann hypothesis,
$$
|\epsilon(t)| \leq \frac{\mu(t)(\log\log\log t)^3}{\log t \sqrt{\log\log t}}
$$
with any positive function $\mu$ satisfying $\lim_{t\rightarrow\infty}\mu(t)=\infty$.
\item[(c)] by assuming the generalized Riemann hypothesis for Dirichlet $L$-functions,\footnote{Very likely this can be proved by only assuming the Riemann hypothesis; see the remarks in Section \ref{sec:filling_zeta}.}
$$
|\epsilon(t)| \leq \frac{1}{(\log t)^{\delta}} 
$$
with any $\delta>0$.
 \item[(d)] by assuming the generalized Riemann hypothesis for Dirichlet $L$-functions,
$$
|\epsilon(t)| \leq \mu(t)\exp \left(-c_0 \frac{\log t}{\log\log t} \right)
$$
with any positive function $\mu$ satisfying $\lim_{t\rightarrow\infty}\mu(t)=\infty$ and any constant $0<c_0 < \frac{1}{\sqrt{2}}$. 
\end{itemize}
\end{theorem}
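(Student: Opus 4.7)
The plan is to build the curve $t\mapsto \tfrac{1}{2}+\epsilon(t)+it$ so that at a suitable strictly increasing sequence of heights $\gamma_n\to\infty$ it passes exactly through a non-trivial $a$-point of $\zeta$ for every $a$ in a prescribed countable dense subset of $\C$. The values of $\zeta$ along the curve then contain a dense set, so $\overline{\{\zeta(\tfrac{1}{2}+\epsilon(t)+it):t\geq 2\}}=\C$ follows at once. The four cases (a)--(d) are handled by the same construction and differ only in which of the items (c)--(f) of Corollary \ref{cor:summaryapoints} is invoked to locate the relevant $a$-points.

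I would fix a countable dense set $\{a_n\}_{n\in\N}\subset\C$, omitting the at most one exceptional value appearing in the relevant item of Corollary \ref{cor:summaryapoints} (in case (b) no exception occurs). Corollary \ref{cor:summaryapoints} then provides, for every $n$, infinitely many non-trivial $a_n$-points $\rho_{a_n}=\beta_{a_n}+i\gamma_{a_n}$ of $\zeta$ with $|\beta_{a_n}-\tfrac{1}{2}|<\tilde\lambda(\gamma_{a_n})$, where $\tilde\lambda$ is a strengthened version of the envelope $\lambda$ prescribed in the theorem: in case (a), $\tilde\lambda(t):=\tilde\mu(t)/\log t$ with $\tilde\mu(t):=\tfrac{1}{2}\inf_{|s-t|\leq 1}\mu(s)$, which still tends to infinity since $\mu\to\infty$; in cases (b)--(d), analogous monotonizations of the parameters $\mu$, $\delta$, or $c_0$. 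Inductively I would then select a strictly increasing sequence $\gamma_1<\gamma_2<\cdots$ tending to infinity, together with $a_n$-points $\rho_{a_n}=\beta_n+i\gamma_n$, subject to the spacing condition $\gamma_{n+1}\geq\gamma_n+2$; the spacing is achievable because infinitely many $a_n$-points in the thin strip are available at arbitrarily large heights.

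The function $\epsilon$ itself would be built by a piecewise linear ``spike'' construction: set $\epsilon(\gamma_n):=\beta_n-\tfrac{1}{2}$, set $\epsilon(t):=0$ on the middle piece $[\gamma_n+1,\gamma_{n+1}-1]$, and linearly interpolate on the two transition pieces $[\gamma_n,\gamma_n+1]$ and $[\gamma_{n+1}-1,\gamma_{n+1}]$. Extending $\epsilon$ by a suitable constant on $[2,\gamma_1]$ yields a continuous function on $[2,\infty)$ satisfying $\epsilon(t)\to 0$, since on the $n$-th spike $|\epsilon(t)|\leq|\beta_n-\tfrac{1}{2}|\leq\tilde\lambda(\gamma_n)$ and $\tilde\lambda(\gamma_n)\to 0$. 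By construction $\zeta(\tfrac{1}{2}+\epsilon(\gamma_n)+i\gamma_n)=\zeta(\rho_{a_n})=a_n$ for every $n$, so the density of $\{a_n\}$ in $\C$ forces the closure of the values of $\zeta$ along the curve to equal $\C$.

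The hard part will be to verify the pointwise bound $|\epsilon(t)|\leq\lambda(t)$ for all sufficiently large $t$, because in cases (a), (b), (d) the function $\mu$ need not be monotonic or even continuous and could have deep dips. This is precisely the purpose of the strengthened envelope $\tilde\lambda$: in case (a), the choice $\tilde\mu(t)=\tfrac{1}{2}\inf_{|s-t|\leq 1}\mu(s)$ gives $\tilde\lambda(\gamma_n)\leq\mu(t)/(2\log\gamma_n)\leq\mu(t)/\log t=\lambda(t)$ for every $t$ in the $n$-th spike, because each spike has width at most $1$ and thus $\log t\leq 2\log\gamma_n$ there. On the middle pieces the bound is trivial since $\epsilon$ vanishes. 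The analogous verification in cases (b)--(d) proceeds in the same spirit by quantitatively strengthening the corresponding parameter within the qualitative form permitted by Corollary \ref{cor:summaryapoints}, and this local monotonization step is expected to be the technically most delicate part of the full proof.
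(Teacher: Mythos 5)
Your construction is essentially the paper's: enumerate a countable dense subset of $\C$ (omitting the single possible exceptional value from Corollary~\ref{cor:summaryapoints}), select $a$-points $\rho_{a_n}=\beta_n+i\gamma_n$ with $\gamma_n\uparrow\infty$ inside the prescribed funnel-shaped region, and interpolate a continuous $\epsilon$ so that the curve $t\mapsto\tfrac{1}{2}+\epsilon(t)+it$ passes through every $\rho_{a_n}$; cases (b)--(d) then follow by swapping in the corresponding item of Corollary~\ref{cor:summaryapoints}. The one place where you genuinely diverge, and improve on the paper's exposition, is the verification of the pointwise bound $|\epsilon(t)|\leq\lambda(t)$. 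The paper simply ``connects $s_k$ to $s_{k+1}$ by straight line segments'' and asserts that the resulting $\epsilon$ stays inside the region $S_\lambda$; for a merely positive $\mu$ with $\mu(t)\to\infty$ the boundary curve $\mu(t)/\log t$ need not be monotone and can dip below the chord, so this assertion requires either the explicit preliminary monotonization you perform or an implicit replacement of $\mu$ by a smaller, monotone minorant (which is legitimate since Corollary~\ref{cor:summaryapoints} applies to any admissible $\mu$). Your spike construction, in which $\epsilon$ returns to $0$ between consecutive heights $\gamma_n$ and the envelope $\tilde\lambda$ is built from a local infimum of $\mu$, gives a clean, self-contained verification of the bound; the ingredient list and the conclusion are the same. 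One small point you should also note (the paper does not) is that the $a$-points need not all lie to the right of the critical line, so $\epsilon$ is genuinely real-valued and not of constant sign; the paper's claim that $\epsilon$ maps into $\R^+$ is a minor inaccuracy, but the theorem as stated only requires $\epsilon:[2,\infty)\to\R$ and the bound on $|\epsilon|$, which both your proof and the paper's deliver.
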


\begin{proof}
Let $(q_k)_k$ be an enumeration of $\mathbb{Q}+ i\mathbb{Q}$. According to Corollary \ref{cor:summaryapoints} (c), we find for every $q_k$, with at most one exception, infinitely many roots of the equation $\zeta(s)=q_k$ inside the strip $S$ defined by
$$
\tfrac{1}{2} - \frac{\mu(t)}{\log t} <\sigma < \tfrac{1}{2} + \frac{\mu(t)}{\log t}, \qquad t\geq 2,
$$
where $\mu$ is an arbitrarily positive function satisfying $\lim_{t\rightarrow\infty}\mu(t)$. Without loss of generality, we may suppose that the enumeration $(q_k)_k$ is arranged such that $q_1$ is the possibly existing value which is not assumed by the zeta-function in $S$. This agreement as well as the observation above assure that we find points $s_k =\sigma_k + it_k \in S$ such that $\zeta(s_k) = q_k$ and $t_k<t_{k+1}$ for every $k\in\N\setminus\{1\}$. By connecting the point $s_k$ with its successor $s_{k+1}$, respectively, by a straight line segment, we get a continuous function $\epsilon:[2,\infty)\rightarrow\R^+$ such that $|\epsilon(t)|\leq \frac{\mu(t)}{\log t}$ for all $t\in[2,\infty)$ and $\frac{1}{2}+\epsilon(t_k) + it_k = s_k$ for all $k\in\N$. Theorem \ref{th:densenessavalues} (a) follows then immediately as  $\mathbb{Q}+ i\mathbb{Q}$ (minus the possibly existent exceptional value $q_1$) is dense in $\C$.\par 
The statements (b), (c) and (d) follow from Corollary \ref{cor:summaryapoints} (d), (e) and (f) in an analogous manner.
\end{proof}
The curves of Theorem \ref{th:densenessavalues} can oscillate from the left to the right of the critical line. It would be nice to have a similar statement for curves which stay always either to the left or to the right of the critical line.\par

{\bf Curves approaching the critical line from the left.} From Selberg's conditional result concerning $a$-points to the left of the critical line, we deduce the following denseness statement.

\begin{theorem}
Assume that the Riemann hypothesis is true. Then, for arbitrary $0<c_1<c_2$, there exists a continuous function $\epsilon:[1,\infty)\rightarrow \R$ with 
$$
\frac{c_1 \sqrt{\log\log t}}{\log \log t} \leq \epsilon(t) \leq \frac{c_2 \sqrt{\log\log t}}{\log \log t}
$$
such that 
$$
\overline{\left\{ \zeta(\tfrac{1}{2}-\epsilon(t)+it) \, : \, t\in[1,\infty) \right\}}=\C.
$$
\end{theorem}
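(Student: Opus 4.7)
The plan is to adapt the proof of Theorem~\ref{th:densenessavalues}, replacing the unconditional $a$-point results of Corollary~\ref{cor:summaryapoints} by Selberg's conditional one-sided $a$-point distribution~\eqref{eq:selberg_aleft}. Under the Riemann hypothesis, Selberg's result places, for every $a \in \mathbb{C}\setminus\{0\}$, a positive proportion of non-trivial $a$-points $\rho_a = \beta_a + i\gamma_a$ of $\zeta$ inside any strip $\tfrac{1}{2} - \mu_2\sqrt{\log\log T}/\log T \leq \beta_a \leq \tfrac{1}{2} - \mu_1\sqrt{\log\log T}/\log T$ (with $0 < \mu_1 < \mu_2$) and $0 < \gamma_a \leq T$; more precisely, the count is asymptotic to $N_a(T) \cdot \frac{1}{\sqrt{2\pi}}\int_{-\mu_2'}^{-\mu_1'} e^{-x^2/2}\,dx > 0$, where $\mu_i' = \sqrt{2}\,\mu_i/n_{\zeta}$. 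This is the key input that Theorem~\ref{th:densenessavalues} was missing on the left of the critical line.

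Given $0 < c_1 < c_2$, I would first choose an inner buffer $c_1 < c_1' < c_2' < c_2$ and enumerate $(\mathbb{Q}+i\mathbb{Q}) \setminus \{0\} = (q_k)_{k \geq 1}$. For each $k$, applying Selberg's result above with $\mu_1 = c_1'$ and $\mu_2 = c_2'$ provides infinitely many non-trivial $q_k$-points $\rho_k = \sigma_k + it_k$ of $\zeta$ inside the narrower strip $\tfrac{1}{2} - c_2'\sqrt{\log\log \gamma}/\log \gamma \leq \beta \leq \tfrac{1}{2} - c_1'\sqrt{\log\log \gamma}/\log \gamma$, with ordinates tending to infinity. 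Inductively, I would pick one such $q_k$-point for each $k$ with $t_1 < t_2 < \cdots \to \infty$, set $\epsilon_k := \tfrac{1}{2} - \sigma_k$, and define $\epsilon\colon [1,\infty) \to \mathbb{R}^+$ as the continuous, piecewise-linear function with $\epsilon(t_k) = \epsilon_k$ (constantly extended on $[1, t_1]$). By construction $\zeta(\tfrac{1}{2} - \epsilon(t_k) + it_k) = q_k$ for every $k$, and since $(q_k)_k$ is dense in $\mathbb{C}$ (the excluded value $0$ being approximated arbitrarily well by nonzero rationals), the image of the curve $t \mapsto \zeta(\tfrac{1}{2} - \epsilon(t) + it)$ will then be dense in $\mathbb{C}$.

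The hard part will be verifying that the linearly interpolated $\epsilon$ stays inside the required strip on every segment $[t_k, t_{k+1}]$. Because $t \mapsto \sqrt{\log\log t}/\log t$ varies by a factor $1 + o(1)$ on any dyadic interval $[T, 2T]$, the positive buffer between $c_1 < c_1'$ and $c_2' < c_2$ will suffice to absorb the oscillation provided that $t_{k+1} \leq 2\, t_k$ holds for all sufficiently large $k$. Selberg's asymptotic guarantees a positive proportion of $q_k$-points in every dyadic window $(T, 2T]$ once $T$ is large enough (depending on $q_k$), so this dyadic growth condition on the breakpoints can be enforced by a standard inductive choice, after at most finitely many initial adjustments of the curve on a bounded starting interval.
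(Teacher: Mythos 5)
Your proposal matches the paper's own proof, which simply invokes Selberg's conditional result \eqref{eq:selberg_aleft} and repeats the enumeration-and-interpolation construction of Theorem \ref{th:densenessavalues}; your key input (a positive proportion of $q_k$-points in the left-of-centre strip between the $c_1'$- and $c_2'$-curves under RH) and the piecewise construction are exactly what is intended, and you even address the two-sided band constraint that the paper glosses over. One small caveat: insisting on $t_{k+1}\leq 2t_k$ can conflict with having to take $T$ large enough (depending on $q_{k+1}$) before Selberg's asymptotic applies, and this recurs at every step rather than only finitely often; the clean repair is to interpolate the normalized quantity $\epsilon(t)\log t/\sqrt{\log\log t}\in[c_1',c_2']$ linearly between the breakpoints (or to insert auxiliary midline breakpoints), after which the band condition holds automatically for arbitrarily spaced $t_k$.
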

\begin{proof}
Relying on Selberg's result \eqref{eq:selberg_aleft}, the proof follows along the lines of the proof of Theorem \ref{th:densenessavalues}. 
\end{proof}

{\bf Curves approaching the critical line from the right.}
Relying on Bohr's, resp. Voronin's denseness result, it is possible to prove the following theorem.
\begin{theorem}[Christ \cite{christ:2012}, 2012]\label{th:enumerationbohr}
For every $n\in\N_0$, there is a positive, piecewise-constant function $\epsilon:[1,\infty) \rightarrow \mathbb{R}^+$ with $\lim_{t\rightarrow \infty} \epsilon(t)=0$ such that
$$\textstyle
\overline{\left\{\Delta_n \zeta\left(\frac{1}{2}+\epsilon(t)+it\right) \, :\,t\in[1,\infty) \right\}} = \mathbb{C}^{n+1}.
$$
\end{theorem}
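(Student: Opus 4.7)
The plan is to construct $\epsilon$ via a diagonal argument based on Voronin's quantitative denseness result \eqref{eq:denseVoronin}. Fix $n\in\N_0$ and enumerate a countable dense subset $\{a_k\}_{k\in\N}$ of $\C^{n+1}$, for instance $a_k \in (\Q + i\Q)^{n+1}$. I would then choose a strictly decreasing sequence $(\sigma_k)_k \subset (\tfrac{1}{2},1]$ with $\sigma_k \searrow \tfrac{1}{2}$, for example $\sigma_k := \tfrac{1}{2} + \tfrac{1}{k}$.

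The central step uses Voronin's denseness statement \eqref{eq:denseVoronin}: for each $k\in\N$, the set
$$
E_k := \left\{ t\in[1,\infty) \; : \; \|\Delta_n \zeta(\sigma_k + it) - a_k\| < \tfrac{1}{k} \right\}
$$
has positive lower density in $[1,\infty)$, so in particular $E_k$ is unbounded. Recursively choose $t_1 \in E_1$ with $t_1 \geq 2$, and for $k\geq 2$ pick $t_k \in E_k$ with $t_k > t_{k-1} + 1$; this is possible because $E_k$ is unbounded. Now define the piecewise-constant function $\epsilon:[1,\infty) \to \R^+$ by
$$
\epsilon(t) := \sigma_k - \tfrac{1}{2} \qquad \text{for } t \in [t_k, t_{k+1}), \quad k\in\N,
$$
and $\epsilon(t) := \sigma_1 - \tfrac{1}{2}$ for $t\in[1,t_1)$. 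By construction $\epsilon$ is piecewise constant, strictly positive, and $\epsilon(t)\to 0$ as $t\to\infty$ since $\sigma_k \searrow \tfrac{1}{2}$ and $t_k \to \infty$.

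To verify the denseness, observe that for each $k\in\N$, $\epsilon(t_k) = \sigma_k - \tfrac{1}{2}$ and $t_k \in E_k$, so
$$
\bigl\| \Delta_n \zeta(\tfrac{1}{2} + \epsilon(t_k) + it_k) - a_k \bigr\| = \bigl\| \Delta_n \zeta(\sigma_k + it_k) - a_k \bigr\| < \tfrac{1}{k}.
$$
Thus the set $\{\Delta_n \zeta(\tfrac{1}{2}+\epsilon(t)+it) \, : \, t\in[1,\infty)\}$ contains, for every $k$, a point within distance $1/k$ of $a_k$. Since $\{a_k\}_k$ is dense in $\C^{n+1}$, the closure is all of $\C^{n+1}$, completing the argument.

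There is essentially no serious obstacle here: the whole construction reduces to a diagonal selection combined with the positive lower density of the approximating sets, which is precisely the strength of Voronin's \eqref{eq:denseVoronin}. The only point requiring mild care is ensuring the $t_k$ can be chosen both inside $E_k$ and tending to infinity, but this is immediate from positive lower density. Note also that by Corollary \ref{cor:nondensenesscurves}, the function $\epsilon$ produced in this way cannot decay to zero faster than roughly $t^{-\theta_\zeta(1/2)-\delta}$; our construction places no such quantitative restriction on the speed at which $\epsilon(t)\to 0$, so the result is consistent with that negative statement.
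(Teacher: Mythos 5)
Your proof is correct and uses the same underlying strategy as the paper — an enumeration argument driven by Voronin's denseness result \eqref{eq:denseVoronin} — but the decomposition is slightly different. The paper exhausts $\C^{n+1}$ by the compact balls $A_k=\{z:\|z\|\le k\}$ and, for each $k$, uses Voronin's denseness together with a compactness (finite covering) argument to find a \emph{single interval} $(T_{k-1},T_k]$ on which $\tau\mapsto\Delta_n\zeta(\tfrac12+\sigma_k+i\tau)$ comes within $\varepsilon_k$ of \emph{every} $a\in A_k$; the function $\epsilon$ is then constant on each such interval. You instead enumerate a countable dense set $\{a_k\}\subset\C^{n+1}$ and pick one time $t_k$ per step so that $\Delta_n\zeta(\sigma_k+it_k)$ is within $1/k$ of $a_k$; since every open set in $\C^{n+1}$ meets $\{a_k\}$ in infinitely many indices, one can find $k$ with both $\|a_k-b\|$ and $1/k$ small, so the triangle inequality closes the argument. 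Your route is more economical: it only needs unboundedness of each set $E_k$, not the uniform approximation on compact sets, whereas the paper's version carries a bit more quantitative structure (every compact region is approximated within a single $t$-window, which is useful if one wants explicit bounds on how fast $\epsilon(t)\to 0$). Both establish the stated theorem; the one minor point you should make explicit, as you implicitly do, is that denseness of $\{a_k\}$ guarantees infinitely many indices in any given neighbourhood, so large $k$ can always be chosen.
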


\begin{proof} To prove Theorem \ref{th:enumerationbohr} we use Voronin's denseness result in combination with a certain enumeration method. Let $(\varepsilon_k)_{k}$ and $(\sigma_k)_{k}$ be sequences with $\varepsilon_k>0$ and $\sigma_k\in(0,\frac{1}{2})$ for $k\in\mathbb{N}$ that tend to zero as $k\rightarrow\infty$. Furthermore, for a given $n\in\N_0$, we define compact sets
$$
A_k := \{z\in\mathbb{C}^{n+1} \,: \, \|z\|\leq k\},\qquad k\in\mathbb{N}.
$$ 
The sets $A_k$ form a countable covering of the $(n+1)$-dimensional complex plane, i.e.
$$
\bigcup_{k\in \mathbb{N}} A_k = \mathbb{C}^{n+1}.
$$
We set $T_0=1$ and choose, inductively for $k\in\mathbb{N}$, a positive real number $T_k>T_{k-1}+1$  such that for all $a\in A_k$ there is a $\tau \in (T_{k-1}, T_k]$ with 
$$\textstyle
\left\|\Delta_n \zeta\left(\frac{1}{2}+\sigma_k + i\tau\right)-a\right\|<\varepsilon_k.
$$
The existence of such a number $T_k$ is assured by Voronin's denseness result applied to the vertical line $\sigma = \frac{1}{2}+\sigma_k$ and basic properties of compact sets. The piecewise-constant function $\epsilon:[1,\infty)\rightarrow \mathbb{R}^+$ defined by
$$
\epsilon(t)=\sigma_k \qquad \mbox{for }t\in(T_{k-1},T_k] \mbox{ with }k\in\mathbb{N},
$$
satisfies $\lim_{t\rightarrow\infty}\epsilon(t)=0$. The construction of the function $\epsilon$ yields that the curve $t\mapsto \frac{1}{2}+\epsilon(t)+it$ has the desired property. \par
\end{proof}
As we are very flexible in choosing the zero-sequences $(\varepsilon_k)_{k}$, $(\sigma_k)_{k}$ and a proper countable covering of the complex plane $(A_k)_{k\in\mathbb{N}}$, the proof of Theorem \ref{th:enumerationbohr} yields the existence of uncountable many curves with the desired property.\par
By adjusting the sets $A_k$ in a proper way, we can use a quantitative version of Voronin's denseness result (see Karatsuba \& Voronin \cite[Chapt. VIII]{karatsubavoronin:1992}) to get very rough estimates on how `fast' these curves approach the critical line.\par

With respect to Theorem \ref{cor:nondensenesscurves}, it would be very nice to characterize the changeover of $\{\Delta_2\zeta(\frac{1}{2}+\epsilon(t)+it)\, : \, t\in[1,\infty)\}$ from denseness to non-denseness in terms of the speed of convergence of $\epsilon(t)\rightarrow 0$, as $t\rightarrow\infty$. \par

{\bf Small and large values on curves approaching the critical line.}
\begin{figure}
\centering
\subfigure[$|\zeta(\sigma+it)|$]{
\centering
\includegraphics[width=0.45\textwidth]{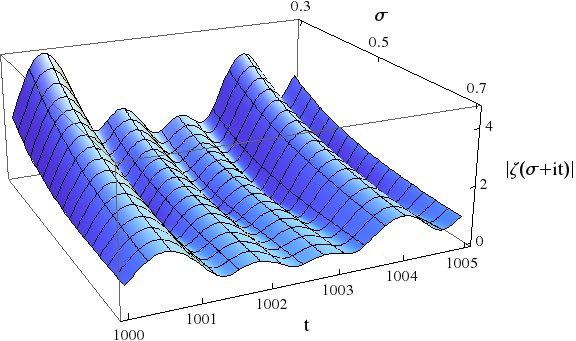}\label{fig:landscape1}%
}
\subfigure[$|\zeta(\sigma+it)|^{-1}$]{
\centering
\includegraphics[width=0.45\textwidth]{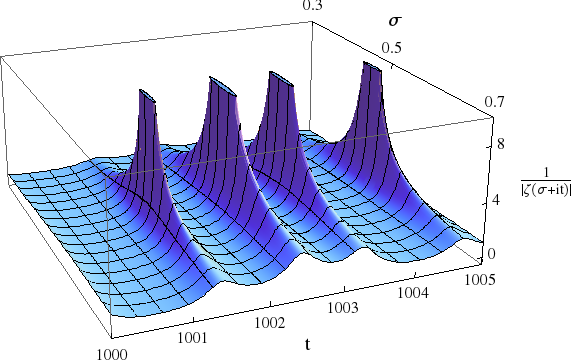}\label{fig:landscape2}%
}
\caption{The values $|\zeta(\sigma+it)|$ and $|\zeta(\sigma+it)|^{-1}$ on the rectangular region $0.3\leq \sigma \leq 0.7$ and $1000\leq t \leq 1005$.}
\end{figure}
From Corollary \ref{cor:selbergsmalllarge}, which we obtained by relying on Selberg's central limit law, we can deduce information on small and large values of the zeta-function on certain curves $t\mapsto \frac{1}{2}+\epsilon(t)+it$.
\begin{corollary}\label{cor:curvessmalllarge}
Let $c>0$ and $\epsilon:[1,\infty)\rightarrow\R$ be a function satisfying
$|\epsilon(t)|\leq \frac{c}{ \log t}$ for $t\in[1,\infty)$. 
Then, 
$$
\liminf_{t\rightarrow\infty }|\zeta(\tfrac{1}{2}+\epsilon(t)+it)| = 0 \qquad \mbox{ and }\qquad \limsup_{t\rightarrow\infty }|\zeta(\tfrac{1}{2}+\epsilon(t)+it)| = \infty.
$$
\end{corollary}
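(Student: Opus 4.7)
The plan is to apply Corollary \ref{cor:selbergsmalllarge} to $\zeta\in\Sc^{*}$, choosing the rectangle's aspect ratio sufficiently large so that each translated rectangle $\overline{\mathcal{R}}_\tau$ contains the point $\tfrac{1}{2}+\epsilon(\tau)+i\tau$ on the curve. The constraint $\lim_{\tau\to\infty}\mu(\tau)<\tfrac{1}{2}\kappa_\zeta=\pi/4$ in Corollary \ref{cor:selbergsmalllarge} caps the scaling factor $\mu(\tau)/\log\tau$, but since the horizontal half-width of $\overline{\mathcal{R}}_\tau$ equals $c^{*}\mu(\tau)/\log\tau$, we can freely enlarge it by choosing the rectangle shape parameter $c^{*}$ large relative to the given $c$.

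\textbf{Setup and covering.} Write $c$ for the constant appearing in the statement. Fix $\mu_\infty\in(0,\pi/4)$ and a monotonically decreasing function $\mu:[2,\infty)\to\R^{+}$ with $\lim_{\tau\to\infty}\mu(\tau)=\mu_\infty$, for instance $\mu(\tau):=\mu_\infty+1/\tau$. Choose $q\in(0,\tfrac12)$ with $\mu_\infty<\kappa_\zeta q=\pi q/2$, and pick $c^{*}>c/\mu_\infty$, so that $c^{*}\mu_\infty>c$. For any fixed $m>1$, apply Corollary \ref{cor:selbergsmalllarge} with $\L=\zeta$, rectangle $\mathcal{R}(c^{*},1)$, function $\mu$ and $\ell=1$. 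This yields subsets $\mathcal{W}_{1/m},\mathcal{W}_m\subset[2,\infty)$, each of positive lower density (in particular unbounded), such that $|\zeta(s)|\le 1/m$ on $\overline{\mathcal{R}}_\tau$ for every $\tau\in\mathcal{W}_{1/m}$ and $|\zeta(s)|\ge m$ on $\overline{\mathcal{R}}_\tau$ for every $\tau\in\mathcal{W}_m$. Since
$$
\overline{\mathcal{R}}_\tau=\left\{\sigma+it\,:\,\left|\sigma-\tfrac{1}{2}\right|\le \tfrac{c^{*}\mu(\tau)}{\log\tau},\ |t-\tau|\le\tfrac{\mu(\tau)}{\log\tau}\right\},
$$
the point $\tfrac{1}{2}+\epsilon(\tau)+i\tau$ lies in $\overline{\mathcal{R}}_\tau$ whenever $|\epsilon(\tau)|\le c^{*}\mu(\tau)/\log\tau$. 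As $|\epsilon(\tau)|\le c/\log\tau$ and $c^{*}\mu(\tau)\ge c^{*}\mu_\infty>c$, this inclusion does indeed hold for every $\tau\ge 2$.

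\textbf{Conclusion and main obstacle.} For each $m\in\N$ we may therefore pick $\tau_m^{-}\in\mathcal{W}_{1/m}$ and $\tau_m^{+}\in\mathcal{W}_m$ with $\tau_m^{\pm}\to\infty$, yielding $|\zeta(\tfrac{1}{2}+\epsilon(\tau_m^{-})+i\tau_m^{-})|\le 1/m$ and $|\zeta(\tfrac{1}{2}+\epsilon(\tau_m^{+})+i\tau_m^{+})|\ge m$. Letting $m\to\infty$ via a diagonal selection produces sequences $(t_n^{\pm})$ with $t_n^{\pm}\to\infty$, $|\zeta(\tfrac{1}{2}+\epsilon(t_n^{-})+it_n^{-})|\to 0$ and $|\zeta(\tfrac{1}{2}+\epsilon(t_n^{+})+it_n^{+})|\to\infty$, giving the two asserted identities. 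There is no substantial obstacle beyond invoking Corollary \ref{cor:selbergsmalllarge}; the only point requiring attention is that, since $\mu_\infty$ is constrained below $\pi/4$, the required horizontal room of the rectangle cannot be produced through $\mu$ and has to be absorbed into the shape parameter $c^{*}$.
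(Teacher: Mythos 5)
Your proof is correct and follows essentially the same route as the paper's own (very terse) proof: invoke Corollary~\ref{cor:selbergsmalllarge} for $\zeta$ with a rectangle whose width parameter $c^{*}$ is chosen large enough, relative to the cap $\mu_\infty<\kappa_\zeta/2$, so that $c^{*}\mu(\tau)/\log\tau\ge c/\log\tau$ and the curve is captured inside $\overline{\mathcal{R}}_\tau$. The paper simply makes the clean choice $\mu(\tau)\equiv\kappa_\zeta/4$ and $c^{*}=4c/\kappa_\zeta$ (so the half-width is exactly $c/\log\tau$), whereas you take a strictly decreasing $\mu$ and a strict inequality $c^{*}>c/\mu_\infty$, but these are inconsequential variations of the same argument.
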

\begin{proof}
The corollary follows directly from Corollary \ref{cor:selbergsmalllarge} by working with the conformal map $\varphi_{\tau}(z)=\frac{1}{2}+\frac{\kappa_{\zeta}/4}{\log \tau} z +i\tau$ and the rectangular domain $\mathcal{R}(\frac{4c}{\kappa_{\zeta}},1)$.
\end{proof}
Roughly speaking, Corollary \ref{cor:curvessmalllarge} states that the zeta-function assumes both arbitrarily small and arbitrarily large values on every path to infinity which lies inside the strip
$$
\tfrac{1}{2}-\frac{c}{\log t}<\sigma < \tfrac{1}{2}+\frac{c}{ \log t}, \qquad t\geq 1,
$$ 
with arbitrary $c>0$.\par

\section{Denseness results on curves approaching the line \texorpdfstring{$\sigma=1$}{}}\label{sec:densesigma1}
 By a slight refinement of Bohr's method, we can show that the values of the zeta-function taken on any curve $[1,\infty)\ni t\mapsto 1+ \epsilon(t)+it$ with $\lim_{t\rightarrow\infty}\epsilon(t)=0$ are dense in $\mathbb{C}$.
\begin{theorem}[Christ \cite{christ:2012}, 2012]\label{th1}
Let $\epsilon:[1,\infty)\rightarrow\R$ be a function with $\lim_{t\rightarrow\infty}\epsilon(t)=0$. Then, for every $a\in\mathbb{C}$ and $\varepsilon>0$,
$$
\liminf_{T\rightarrow \infty}\frac{1}{T}\meas\left\{t\in (0,T]: \left| \zeta(1+\epsilon(t)+it) -a \right|<\varepsilon \right\}  >0.
$$
\end{theorem}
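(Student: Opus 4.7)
The strategy is to establish a uniform-in-$\sigma$ version of Bohr's classical denseness result \eqref{eq:denseBohr}: for every $a\in\C$ and $\varepsilon>0$ there exists $\delta=\delta(a,\varepsilon)>0$ with
$$
\liminf_{T\to\infty}\frac{1}{T}\meas\left\{t\in(0,T]:\,\max_{|\sigma-1|\leq\delta}|\zeta(\sigma+it)-a|<\varepsilon\right\}>0. \qquad (\ast)
$$
Once $(\ast)$ is proved, the theorem follows immediately: since $\lim_{t\to\infty}\epsilon(t)=0$, there exists $T_0$ with $|\epsilon(t)|<\delta$ for all $t>T_0$, so the set in $(\ast)$ intersected with $(T_0,\infty)$ is contained in $\{t:|\zeta(1+\epsilon(t)+it)-a|<\varepsilon\}$, and removing the bounded initial segment $(0,T_0]$ does not destroy the positive lower density.

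To prove $(\ast)$, I would adapt Bohr's truncated-Euler-product machinery. Set $\zeta_N(s):=\prod_{p\leq N}(1-p^{-s})^{-1}$. First, a Kronecker/Weyl equidistribution argument applied to the $\Q$-linearly independent family $\{\log p\}_{p\leq N}$ produces a subset $A_N\subset[1,\infty)$ of positive lower density on which the phases $(-t\log p\bmod 2\pi)_{p\leq N}$ are steered so that $|\zeta_N(1+it)-a|<\varepsilon/3$; the possibility of hitting the target $a$ rests, as in Bohr--Jessen, on a convex-curves analysis showing that the closure of $\{\prod_{p\leq N}(1-p^{-1}e^{i\phi_p})^{-1}:\phi_p\in[0,2\pi)\}$ fills an annular region covering $a$ for $N$ large. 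Since, for fixed $N$, $\zeta_N(\sigma+it)$ is Lipschitz in $\sigma$ with a constant independent of $t$ (the $\sigma$-derivative is a finite sum uniformly bounded for $\sigma$ near $1$), the approximation persists uniformly for $|\sigma-1|\leq\delta_N$ with some $\delta_N>0$. Next, Carlson's theorem at a fixed auxiliary abscissa $\sigma_0\in(\tfrac12,1)$ gives $\tfrac{1}{T}\int_1^T|\zeta(\sigma_0+it)-\zeta_N(\sigma_0+it)|^2\,\d t\to 0$ as $N\to\infty$; and a Cauchy-integral estimate on small discs around $1+it$ (whose size depends only on $\delta$ and $\sigma_0$) transfers this into a uniform mean-square bound on the max of $|\zeta-\zeta_N|$ over $|\sigma-1|\leq\delta$. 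A Chebyshev argument then shows that, for $N$ sufficiently large, the set on which $\max_{|\sigma-1|\leq\delta}|\zeta(\sigma+it)-\zeta_N(\sigma+it)|\geq\varepsilon/3$ has lower density smaller than half that of $A_N$; the triangle inequality combines the two estimates to yield $(\ast)$.

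The main obstacle is the uniform-in-$\sigma$ mean-square approximation of $\zeta$ by $\zeta_N$ across a \emph{two-sided} neighborhood of $\sigma=1$. To the right of $\sigma=1$ this is harmless because of absolute convergence, but on the left — where the defining Dirichlet series no longer converges absolutely and where one must steer clear of the pole of $\zeta$ at $s=1$ — the passage from the $L^2$-control at $\sigma_0$ to a pointwise uniform control on $[1-\delta,1+\delta]$ requires the Cauchy-disc device above, together with the cosmetic step of discarding a bounded range of small $t$ to keep away from the pole. Once this uniform transfer across the line $\sigma=1$ is secured, Bohr's density-producing mechanism carries through unchanged and delivers $(\ast)$, completing the proof.
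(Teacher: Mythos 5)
Your proof takes essentially the same approach as the paper: Bohr's Kronecker--Weyl phase-steering for the truncated Euler product $\zeta_N$, a Carlson-type mean-square approximation of $\zeta$ by $\zeta_N$, and a continuity-in-$\sigma$ argument for $\zeta_N$, assembled by the triangle inequality. The remaining differences are cosmetic --- the paper works with $\log\zeta$ rather than $\zeta$ itself, and its $\sigma$-continuity step is an explicit estimate for $\log\zeta_{N_0}$ via Bertrand's postulate (valid for all large $t$) rather than your Lipschitz/Cauchy-disc device packaged as the uniform-in-$\sigma$ statement $(\ast)$.
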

We choose the function $\epsilon$ in Theorem \ref{th1} such that the curve $t\mapsto 1+\epsilon(t)+it$ lies completely in the half-plane $\sigma >1$. In this case, Theorem \ref{th1} cannot be deduced from Voronin's universality theorem. 
Moreover, in Theorem \ref{th1}, there are no restrictions on how fast $\epsilon(t)$ tends to zero as $t\rightarrow \infty$.\par

\begin{proof} To prove Theorem \ref{th1}, we rely on the methods of Bohr \& Courant \cite{bohrcourant:1914} and Bohr \& Jessen \cite{bohrjessen:1932} who proved a corresponding result for vertical lines inside the strip $\frac{1}{2}<\sigma \leq 1$. We will refine their methods by adding a certain continuity argument. As Bohr and his collaborators did, we will prove the result not for $\zeta(s)$, but for $\log \zeta(s)$. The result for $\zeta(s)$ is then an immediate consequence therefrom. We define $\log\zeta(s)$ for $\sigma>\frac{1}{2}$ in a standard way; for details we refer to Steuding \cite[Chapt. 1.2]{steuding:2007}\par
Let $\varepsilon>0$ and $a\in\mathbb{C}$. Let $(p_n)_{n}$ be an enumeration of all prime numbers in ascending order. For a positive integer $N$, we define the truncated Euler product
$$
\zeta_N(s)=\prod_{n=1}^{N} (1-p_n^{-s})^{-1}.
$$
Note that $\zeta_N(s)$ defines an analytic and non-vanishing function in the half-plane $\sigma>0$. Bohr showed that there is positive real number $d$ and a positive integer $N_1$ such that, for all $N\geq N_1$, we find a subset $\mathcal{I}(N)\subset(0,\infty)$ of lower density 
$$
\liminf_{T\rightarrow\infty} \frac{1}{T}\meas \left(\mathcal{I}(N)\cap (0,T]\right) > d
$$
with the property that
$$
|\log \zeta_N(1+it) - a | < \varepsilon/3 \qquad \mbox{ for }t\in\mathcal{I}(N).
$$
Furthermore, Bohr proved that, for any $\varepsilon'>0$ and any $\frac{1}{2}<\sigma_0 < 1$, there is a positive real numbers $N_2$ such that, for every $N\geq N_2$ and every $\sigma\geq \sigma_0$,
$$
\lim_{T\rightarrow\infty} \frac{1}{T}\int_1^{T} \left| \frac{\zeta(\sigma+it)}{\zeta_N(\sigma+it)}-1\right| \d t < \varepsilon'.
$$
The bounded convergence theorem assures that
\begin{equation*}
\lim_{T\rightarrow\infty} \int_{1}^T \int_{\sigma_0}^{2} \left|\frac{\zeta(\sigma+it)}{\zeta_N(\sigma+it)}-1 \right|^2  \d \sigma \d t <  2\eps'
\end{equation*}
By carefully adopting Bohr's reasoning, we can deduce from the estimate above that there exists a positive integer $N_3$ such that, for all $N\geq N_3$, we find a subset $\mathcal{J}(N)\subset(0,\infty)$ of lower density 
$$
\liminf_{T\rightarrow\infty}\frac{1}{T}\meas \left( \mathcal{J}(N)\cap (0,T] \right)> 1-d
$$ 
with the property that
$$
|\log \zeta_N(1+\epsilon(t)+it)- \log\zeta(1+\epsilon(t)+it)|<\varepsilon/3 
\qquad\mbox{for } t\in\mathcal{J}(N).
$$
Now, we apply a certain continuity argument. We fix $N_0\geq \max\{N_1,N_3\}$ and choose a sufficiently small $\delta<\log (1- \frac{\varepsilon}{6N_0})^{-1}/(N_0\log 2)$. As $\lim_{t\rightarrow\infty}\epsilon(t)=0$, we find a positive number $T_0$ such that $\epsilon(t)<\delta$ for $t\geq T_0$. Then, for $t\geq T_0$,
$$
\left| \log \zeta_{N_0} (1+it) - \log \zeta_{N_0}(1+\epsilon(t)+it) \right|=\left| \sum_{n=1}^{N_0 }\log\left(1+\frac{1-p_n^{-\epsilon(t)}}{p_n^{1+it}-1} \right)\right| 
$$
$$
\leq  \sum_{n=1}^{N_0 } 2\left|\frac{1-p_n^{-\epsilon(t)}}{p_n^{1+it}-1} \right| \leq 2 N_0 (1-p_{N_0}^{-\epsilon(t)}) <2 N_0 (1-2^{-N_0\delta })<\varepsilon/3.
$$
Here, we used Betrand's postulate which states that $p_{n}\leq 2^{n}$. Although the estimation via Bertrand's postulate is quite rough, it is completely sufficient for our purpose.\par 
Altogether, we can deduce that the set $\mathcal{M}:=\mathcal{I}(N_0)\cap \mathcal{J}(N_0)\cap (T_0,\infty)$ has positive lower density, i.e.
$$
\liminf_{T\rightarrow\infty}\frac{1}{T}\meas \left(\mathcal{M}\cap (0,T]\right)> 0,
$$
and enjoys the property that 
\begin{eqnarray*}
|\log \zeta(1+\epsilon(t)+it)-a|&\leq & 
|\log \zeta(1+\epsilon(t)+it)-\log\zeta_{N_0}(1+\epsilon(t)+it)|\\
&  &+\, |\log \zeta_{N_0}(1+\epsilon(t)+it)-\log \zeta_{N_0}(1+it)|\\
&  &+\, |\log \zeta_{N_0}(1+it)-a|<\varepsilon\\
\end{eqnarray*}
for $t\in\mathcal{M}$. The statement of the theorem follows. 
\end{proof}

\section{A limiting process to the right of the critical line}\label{sec:universalityoncurves}

In this final section of Part I, we briefly discuss what happens to the limiting process of Section \ref{sec:shiftingshrinking} if we adjust the underlying conformal mappings such that they map the unit disc to discs which lie completely inside the strip $\frac{1}{2}<\sigma<1$, but arbitrarily close to the critical line. By relying on Voronin's universality, we can prove the following statement.

\begin{theorem}\label{th:univsliding}
Let $0\leq\eta\leq\frac{1}{4}$ and let $(\epsilon_k)_k$, $(\lambda_k)_k$ be sequences with $\epsilon_k,\lambda_k\in\R$,  $0<\lambda_k<\epsilon_k<\frac{1}{4}$ for $k\in\N$ and $\lim_{k\rightarrow\infty}\epsilon_k = \lim_{k\rightarrow\infty}\lambda_k = \eta$. Further, let 
$$
\zeta_{k,\tau}(z) :=\zeta(\tfrac{1}{2} +\epsilon_k +\lambda_kz+i\tau)\qquad \mbox{for }\tau\geq 1,\, k\in\N  \mbox{ and }z\in\D.
$$
Then, there is a sequence $(\tau_k)_k$ with $\tau_k\in[1, \infty)$ and $\lim_{k\rightarrow\infty} \tau_k = \infty$ such that, for every continuous and non-vanishing function $g$ on $\overline{\mathbb{D}}$ which is analytic in $\D$, there is a subsequence of $(\zeta_{k,\tau_k})_k$ which converges uniformly on $\overline{\D}$ to $g$. 
\end{theorem}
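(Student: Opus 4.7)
The strategy is a standard diagonal argument built on top of Voronin's universality theorem (Theorem \ref{th:universality}). The key observation is that, for each $k \in \N$, the closed disc
$$\mathcal{K}_k := \overline{D_{\lambda_k}\!\bigl(\tfrac{1}{2} + \epsilon_k\bigr)}$$
is a compact subset of the strip $\tfrac{1}{2} < \sigma < 1$ with connected complement: its right edge lies at $\tfrac{1}{2}+\epsilon_k+\lambda_k < \tfrac{1}{2}+2\epsilon_k < 1$ and its left edge at $\tfrac{1}{2}+\epsilon_k-\lambda_k > \tfrac{1}{2}$, thanks to the hypothesis $0<\lambda_k<\epsilon_k<\tfrac{1}{4}$. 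Hence Voronin's theorem applies on each $\mathcal{K}_k$. We will build a single sequence $\tau_k \to \infty$ that, via a diagonal recipe, realises every admissible target function as a uniform limit of some subsequence of $(\zeta_{k,\tau_k})_k$.

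\textbf{Separability of the target class.} Let $\mathcal{A}^*$ denote the class of functions $g$ that are continuous and non-vanishing on $\overline{\D}$ and analytic in $\D$, equipped with the sup-norm. Since $\overline{\D}$ is simply connected, any $g\in\mathcal{A}^*$ admits a continuous logarithm $f$ on $\overline{\D}$ which is analytic in $\D$. Mergelyan's theorem provides polynomials $p_n$ with $\|p_n-f\|_{\infty}\to 0$ on $\overline{\D}$, whence $\|\exp(p_n)-g\|_{\infty}\to 0$. Perturbing the coefficients to lie in $\Q+i\Q$ yields a countable dense subset $\{g_n\}_{n\in\N}\subset\mathcal{A}^*$ consisting of functions of the form $\exp(p)$ with Gaussian-rational polynomials $p$; in particular every $g_n$ is non-vanishing on $\overline{\D}$ and analytic in the interior, so it is an admissible target for Voronin's theorem.

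\textbf{Construction of $(\tau_k)_k$ via Voronin, and extraction of a convergent subsequence.} Fix any surjection $h:\N\to\N$ that attains each value $n\in\N$ infinitely often. For each $k$, pull back $g_{h(k)}$ to $\mathcal{K}_k$ by the affine map $z\mapsto \tfrac{1}{2}+\epsilon_k+\lambda_k z$, defining
$$
\widetilde{g}_k(s):= g_{h(k)}\!\left(\frac{s-\tfrac{1}{2}-\epsilon_k}{\lambda_k}\right),\qquad s\in\mathcal{K}_k.
$$
Then $\widetilde{g}_k$ is continuous and non-vanishing on $\mathcal{K}_k$ and analytic in its interior. By Voronin's universality theorem, the set
$$
\left\{\tau>0\,:\,\max_{s\in\mathcal{K}_k}|\zeta(s+i\tau)-\widetilde{g}_k(s)|<\tfrac{1}{k}\right\}
$$
has positive lower density, hence is unbounded; choose $\tau_k$ in this set with $\tau_k>\tau_{k-1}+1$, so that $\tau_k\to\infty$. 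Substituting $s=\tfrac{1}{2}+\epsilon_k+\lambda_k z$ yields
$$
\sup_{z\in\overline{\D}}\bigl|\zeta_{k,\tau_k}(z)-g_{h(k)}(z)\bigr|<\tfrac{1}{k}.
$$
Now let $g\in\mathcal{A}^*$ be arbitrary. For each $m\in\N$ pick $n_m$ with $\|g_{n_m}-g\|_{\infty}<\tfrac{1}{2m}$, and then $k_m\in h^{-1}(n_m)$ with $k_m>k_{m-1}$ and $k_m>2m$. The triangle inequality gives
$$
\|\zeta_{k_m,\tau_{k_m}}-g\|_{\infty}\leq \|\zeta_{k_m,\tau_{k_m}}-g_{n_m}\|_{\infty}+\|g_{n_m}-g\|_{\infty}<\tfrac{1}{k_m}+\tfrac{1}{2m}<\tfrac{1}{m},
$$
so $(\zeta_{k_m,\tau_{k_m}})_m$ converges uniformly on $\overline{\D}$ to $g$, as required. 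The only non-routine step is the separability of $\mathcal{A}^*$ via the representation $g=\exp(f)$; everything else is a direct bookkeeping of Voronin's theorem through a diagonal construction.
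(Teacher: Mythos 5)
Your proof is correct and follows essentially the same route as the paper: reduce to a countable dense subset of the target class via Mergelyan's theorem, then combine Voronin's universality theorem with a diagonal enumeration to build the single sequence $(\tau_k)_k$. The one cosmetic difference is the parametrization of the dense class: you pass through $g=\exp(f)$ and approximate the logarithm $f$ by Gaussian-rational polynomials $p$, taking $\exp(p)$ as targets (so non-vanishing is automatic), whereas the paper works directly with non-vanishing polynomials with rational coefficients (non-vanishing is preserved under small perturbation since $|g|$ is bounded away from zero on $\overline{\D}$). Both choices are standard and equivalent here; your surjection $h$ hitting each index infinitely often is a clean formalization of the "enumeration method" the paper only sketches by reference.
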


\begin{proof}[Sketch of the proof]
Due to the theorem of Mergelyan, see for example \cite{rudin:1966}, it is sufficient to establish the assertion of Theorem \ref{th:univsliding} for polynomial target functions $g$ which have rational coefficients and do not vanish on $\overline{\mathbb{D}}$. The proof follows then directly from Voronin's universality theorem in combination with a similar enumeration method as the one that we used in the proof of Theorem \ref{th:enumerationbohr}. We refer to Christ \cite{christ:2012} for more details.
\end{proof}

\part{Discrete and continuous moments}

$\mbox{ }$\vspace{5cm}

In part II we aim at extending a result of to Tanaka \cite{tanaka:2008} who established a weak version of the Lindel\"of hypothesis for the Riemann zeta-function.\par

Recall that, according to Hardy \& Littlewood \cite{hardylittlewood:1923}, the Lindel\"of hypothesis is equivalent to the statement that, for every $\sigma>\frac{1}{2}$ and $k\in\N$,
\begin{equation}\label{Lind1}
\lim_{T\rightarrow \infty}\frac{1}{T}\int_1^T \left|\zeta(\sigma+it) \right|^{2k} \d t =\sum_{n=1}^{\infty}\frac{d_k(n)^2}{n^{2\sigma}},
\end{equation}
where $d_k$ denotes the generalized divisor function appearing in the Dirichlet series expansion of $\zeta^{k}$. Tanaka showed that \eqref{Lind1} holds if one neglects a certain set $A\subset [1,\infty)$ of density zero from the path of integration.\par

In the sequel, let $\pmb{1}_X$ denote the indicator function of a set $X\subset \R$ and $X^c := \R\setminus X$ its complement. So far, formula \eqref{Lind1} is proved only in the cases $k=1,2$, due to classical works of Hardy \& Littlewood \cite{hardylittlewood:1922} and Ingham \cite{ingham:1926}. This is sufficient to derive the following boundedness property of the Riemann zeta-function in $\sigma>\frac{1}{2}$: \textit{for every $\eps>0$ and $\alpha>\frac{1}{2}$, there exist a constant $M_{\eps}>0$ and a subset $B\subset [1,\infty)$ of upper density
$$
\limsup_{T\rightarrow\infty} \frac{1}{T} \int_1^{T} \pmb{1}_B (t) \d t < \eps
$$
such that
\begin{equation} \label{e}
\left|\zeta(\sigma+it) \right|\leq M_{\eps} \qquad\mbox{for }\sigma\geq \alpha \mbox{ and } t\in B^c.
\end{equation}}This follows from a standard method which we shall explain later on in the proof of Lemma \ref{th:suffconditionsN} (b).\par
In particular, we deduce from \eqref{e} that, \textit{for every $\eps>0$ and $\alpha>\frac{1}{2}$, there exist a constant $M_{\eps}>0$ and a subset $B\subset [1,\infty)$ of upper density
$$
\limsup_{T\rightarrow\infty} \frac{1}{T} \int_1^{T} \pmb{1}_B (t) \d t < \eps
$$
such that, for every $\sigma\geq \alpha$ and $k\in\N$,
\begin{equation}\label{limT}
\limsup_{T\rightarrow \infty}\frac{1}{T}\int_1^T \left|\zeta(\sigma+it) \right|^{2k}\pmb{1}_{B^c}(t) \d t \leq M_{\eps}^{2k}.
\end{equation}}This provides already a weak version of Tanaka's result.\par
Tanaka used some additional ergodic theoretical reasoning  to control the limit in \eqref{limT} as $\eps\rightarrow 0$. He obtained that, \textit{for every $\eps>0$ and $\alpha>\frac{1}{2}$, there exist a subset $A\subset [1,\infty)$ of density
$$
\lim_{T\rightarrow\infty} \frac{1}{T} \int_1^{T} \pmb{1}_A (t) \d t =0
$$
such that, for every $\sigma\geq \alpha$ and $k\in\N$,
$$
\lim_{T\rightarrow \infty}\frac{1}{T}\int_1^T \left|\zeta(\sigma+it) \right|^{2k}\pmb{1}_{A^c}(t) \d t =\sum_{n=1}^{\infty}\frac{d_k(n)^2}{n^{2\sigma}}.
$$}\par We extend Tanaka's result to a large class of functions which we shall denote by $\mathcal{N}$. We rely here essentially on the ideas and methods developed by Tanaka \cite{tanaka:2008}.\par
Two features of a function $\L$ are crucial in order to obtain an asymptotic expansion for the second power moment $(k=1)$ of $\L$ in the sense of Tanaka:
\begin{itemize}
 \item[(i)] In the half-plane $\sigma>1$, the function $\L$ is represented by a Dirichlet series
$$
\L(s)=\sum_{n=1}^{\infty} \frac{a(n)}{n^s}, \qquad \sigma>1,
$$ 
with coefficients $a(n)\in\C$ satisfying
$$
\sum_{n=1}^{\infty} \frac{|a(n)|^2}{n^{\sigma}}<\infty, \qquad \mbox{ for }\sigma>1.
$$
\item[(ii)] The function $\L$ satisfies a certain normality feature in the half-plane $\sigma>\frac{1}{2}$ which we shall define later on in Section \ref{sec:classN}. This normality feature is more or less equivalent to the boundedness property \eqref{e} stated above for the Riemann zeta-function. 
\end{itemize}
In order to obtain asymptotic expansions for the $2k$-th moment for $\L$ in the sense of Tanaka with $k\in\N$, it is necessary that both $\L$ and its $k$-th powers $\L^k$ satisfy property (i). For this purpose, we study in Chapter \ref{ch:coeff} the Dirichlet series expansions of $\L^k$ and other functions related to a given Dirichlet series $\L$. Here, we mainly work with Dirichlet series that satisfy the Ramanujan hypothesis.\par

Dirichlet series can be modeled by an ergodic flow on the infinite dimensional torus. We outline this concept in Chapter \ref{ch:poly} and focus especially on the results which we shall need later on to prove our extended version of Tanaka's result in Chapter \ref{ch:probmom}. \par

In Chapter \ref{ch:classN} we define the normality feature stated in property (ii) above and set up the class $\No$. We investigate basic properties of functions in $\mathcal{N}$ which we shall need later on to prove our main result.\par

In Chapter \ref{ch:probmom} we state and prove our main theorem, i.e. an extended version of Tanaka's result.

\chapter{Arithmetic functions and Dirichlet series coefficients}\label{ch:coeff}
In this chapter we study the Dirichlet series expansions of certain functions related to a given Dirichlet series $\L$. We start with some basic properties of certain arithmetic functions.

\section{Arithmetic functions connected to the Riemann zeta-function}
A function $a:\N\rightarrow\C$ is said to be an {\it arithmetic function}. According to the fundamental theorem of arithmetic, for every $n\in\N$ and every prime number $p\in\mathbb{P}$ with $p|n$, there exist uniquely determined quantities $\nu(n;p)\in\N$ such that
\begin{equation}\label{eq:fundamentaltheoremarith}
n = \prod_{\begin{subarray}{c}p\in\mathbb{P}\\ p|n\end{subarray}} p^{\nu(n;p)}.
\end{equation} 
We call an arithmetic function $a:\N\rightarrow\C$ {\it multiplicative} if
$$
a(n)=\prod_{\begin{subarray}{c}p\in\mathbb{P}\\ p|n\end{subarray}}a( p^{\nu(n;p)}) \qquad \mbox{for }n\in\N.
$$
If $a$ satisfies the stronger property
$$
a(n)=\prod_{\begin{subarray}{c}p\in\mathbb{P}\\ p|n\end{subarray}}a( p)^{\nu(n;p)} \qquad \mbox{for }n\in\N,
$$
we call the function $a$ {\it completely multiplicative}. For two arithmetic functions $a,b:\N\rightarrow\C$, the arithmetic function $a*b:\N\rightarrow\C$ defined by
$$
(a*b)(n):= \sum_{\begin{subarray}{c}(n_1,n_2)\in\N^2 \\ n_1n_2 = n\end{subarray}} a(n_1)b(n_2) \qquad \mbox{for }n\in\N
$$
is said to be the {\it Dirichlet convolution of $a$ and $b$}.\par 
We associate a given arithmetic function $a:\N\rightarrow\C$, $n\mapsto a(n)$ with the formal Dirichlet series
$$
\L(s)=\sum_{n=1}^{\infty}\frac{a(n)}{n^s}, 
$$
and vice versa. We say that the coefficients of a given Dirichlet series are (completely) multiplicative if its associated arithmetic function is (completely) multiplicative. If the Dirichlet series 
$$
\L_1(s)=\sum_{n=1}^{\infty}\frac{a(n)}{n^s}\qquad \mbox{and}\qquad \L_2(s)=\sum_{n=1}^{\infty}\frac{b(n)}{n^s}
$$
converge absolutely for a given $s\in\C$, then their product $\L_1(s)\cdot\L_2(s)$ is also an absolutely convergent Dirichlet series given by
\begin{equation}\label{eq:l1l2}
\L_1 (s)\cdot \L_2(s) =\sum_{n=1}^{\infty}\frac{(a*b)(n)}{n^s}.
\end{equation}
{\bf The generalized divisor function.} We fix $\kappa\in\R$ and define, for every prime number $p\in\mathbb{P}$ and every $\nu\in\N_0$,
$$
d_{\kappa}(p^{\nu}):=\binom{\kappa+\nu-1}{\nu},
$$
where the generalized binomial coefficient is defined in a standard way:
$$
\binom{x}{0} := 1 \quad\mbox{and} \quad \binom{x}{\nu}:=  \frac{1}{\nu!} \prod_{j=0}^{\nu-1} (x-j) \quad \mbox{for } x\in\R \mbox{ and } \nu\in\N.
$$
From the functional equation of the Gamma-function, we derive that
$$
d_{\kappa}(p^{\nu}) = \frac{\Gamma(\kappa+\nu)}{\Gamma(\kappa) \nu!}.
$$ 
We set
$$
d_{\kappa}(n):= \prod_{\begin{subarray}{c}p\in\mathbb{P}\\ p|n\end{subarray}}d_{\kappa}( p^{\nu(n;p)}).
$$
for every $n\in\N$ and refer to the arithmetic function $d_{\kappa}:\N\rightarrow\N$ defined by $n\mapsto d_{\kappa}(n)$ as generalized divisor function with parameter $\kappa$. By definition, the function $d_{\kappa}$ is multiplicative. The Dirichlet series associated with $d_{\kappa}$ can be identified with the Dirichlet series expansion of $\zeta(s)^{\kappa}:= \exp(\kappa \log\zeta(s))$ in $\sigma>1$, where $\log \zeta(s)$ is defined in a standard way. For details we refer to Tenenbaum \cite[Chapt. II.5.1]{tenenbaum:1995} or Heath-Brown \cite{heathbrown:1981}. In the following lemma, we gather some well-known properties of the generalized divisor function if its allied parameter is an integer. In this case, $d_{\kappa}$ has an important number theoretical interpretation: for $k\in\N$, the quantity $d_k(n)$ counts the representations of $n\in\N$ as a product of $k$ natural numbers; additionally, we have $d_{-k}(n)=0$ if and only if there exist a prime number $p$ with $p|n$ such that $\nu(n;p)\geq k+1$.
\begin{lemma}\label{lem:divisor} Let $k\in\N$.
\item[(a)] For $n\in\N$,
$$
d_0(n)=\begin{cases} 1 & \mbox{if }n=1, \\ 0 & \mbox{if $n\neq 1$,}\end{cases}
\qquad\mbox{and}\qquad
d_1(n)=1. 
$$
\item[(b)] The function $d_k$ is the $k$-fold Dirichlet convolution of $d_1$. In particular, for $n\in\N$,
$$
d_k(n)=\sum_{\begin{subarray}{c} (n_1,...,n_k) \in\N^k \\ n_1\cdots n_k = n \end{subarray}} 1.
$$ 
\item[(c)] For sufficiently large $n\in\N$,
$$
1\leq d_k(n) \leq \exp\left( (k-1) \log 2 \frac{\log n}{\log\log n}\left(1 + O\left(\frac{\log\log\log n}{\log\log n}\right) \right) \right).
$$
\item[(d)] The function $d_{-1}$ is the classical M\"obius function $\mu$. In particular, for $n\in\N$,
$$
d_{-1}(n) = \mu(n) = 
\begin{cases}
(-1)^r & \mbox{if } n=p_1\cdots p_r \mbox{ with pairwise distinct } p_1,...,p_r\in\mathbb{P}, \\ 
0 & \mbox{otherwise.} 
\end{cases}
$$
\item[(e)] The function $d_{-k}$ is the $k$-fold Dirichlet convolution of $d_{-1}$. In particular, for $p\in\mathbb{P}$ and $\nu\in\N$ with $\nu\geq k+1$, 
$$
d_{-k}(p^{\nu}) = 0.
$$
\item[(f)] For $n\in\N$, we have $|d_{-k}(n)|\leq d_{k}(n)$.
\end{lemma}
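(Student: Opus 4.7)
The plan is to prove the six statements in succession, exploiting throughout two pillars: the multiplicativity of $d_\kappa$ and the identification of the associated Dirichlet series with $\zeta(s)^\kappa$ in the half-plane $\sigma>1$, which combined with the convolution formula \eqref{eq:l1l2} will give the convolution identities (b) and (e) essentially for free.

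First, I would dispose of (a) by direct computation from the definition. For a prime power, $d_{0}(p^\nu)=\binom{\nu-1}{\nu}$, which equals $1$ if $\nu=0$ and vanishes otherwise (the product $\prod_{j=0}^{\nu-1}(-1-j)\cdot\text{(no factor)}$ -- more precisely, $\binom{\nu-1}{\nu}=0$ since the numerator contains the factor $(\nu-1)-(\nu-1)=0$); extending multiplicatively yields $d_0=\pmb 1_{\{1\}}$. Likewise $d_{1}(p^\nu)=\binom{\nu}{\nu}=1$ for every $\nu\geq 0$, so $d_1\equiv 1$.

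For (b), the cleanest route is to observe that the sum on the right-hand side,
$$
c_k(n):=\sum_{\substack{(n_1,\dots,n_k)\in\N^k \\ n_1\cdots n_k=n}}1,
$$
is exactly the $k$-fold Dirichlet convolution $d_1*\cdots*d_1$, so by \eqref{eq:l1l2} its associated Dirichlet series is $\zeta(s)^k$; on the other hand this is, by definition, the Dirichlet series of $d_k$. Comparing coefficients (or, alternatively, verifying directly the stars-and-bars identity $\binom{k+\nu-1}{\nu}=\#\{(\nu_1,\dots,\nu_k)\in\N_0^k:\nu_1+\cdots+\nu_k=\nu\}$ on prime powers and appealing to multiplicativity on both sides) gives $c_k=d_k$. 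Statement (c) is the classical Wigert-type estimate: the lower bound $d_k(n)\geq 1$ is immediate from the explicit formula in (b), while the upper bound reduces via multiplicativity to estimating $\prod_{p|n}\binom{k+\nu(n;p)-1}{\nu(n;p)}$; bounding each local factor by $2^{(k-1)\nu(n;p)}$ (say) and optimising over the prime factorisation \eqref{eq:fundamentaltheoremarith} using the standard fact that $\omega(n)=O(\log n/\log\log n)$ yields the claimed bound. I would merely cite Tenenbaum for the detailed optimisation rather than reproducing it.

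Finally, for (d)--(f) I would proceed as follows. For (d), evaluating $d_{-1}(p^\nu)=\binom{\nu-2}{\nu}$ for $\nu=0,1$ gives $1$ and $-1$, and for $\nu\geq 2$ the product $\prod_{j=0}^{\nu-1}(\nu-2-j)$ contains the factor $0$, so $d_{-1}(p^\nu)=0$. Extending multiplicatively reproduces the Möbius function. For (e), the identity $\zeta(s)^{-k}=(\zeta(s)^{-1})^k$ together with \eqref{eq:l1l2} shows that $d_{-k}=\mu*\cdots*\mu$ ($k$ factors). Evaluating this convolution on $p^\nu$ gives
$$
d_{-k}(p^\nu)=\sum_{\substack{(a_1,\dots,a_k)\in\N_0^k\\a_1+\cdots+a_k=\nu}}\mu(p^{a_1})\cdots\mu(p^{a_k}),
$$
and since $\mu(p^a)=0$ for $a\geq 2$, a non-zero term requires every $a_i\in\{0,1\}$, forcing $\nu\leq k$; hence $d_{-k}(p^\nu)=0$ for $\nu\geq k+1$. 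Statement (f) then drops out by estimating the same convolution in absolute value: $|d_{-k}(p^\nu)|\leq\sum_{a_1+\cdots+a_k=\nu}1=d_k(p^\nu)$ by (b), and multiplicativity of both $|d_{-k}|$ and $d_k$ upgrades this to $|d_{-k}(n)|\leq d_k(n)$ for all $n$. The only mildly delicate point in the whole lemma is the optimisation in (c); everything else is a bookkeeping exercise organised around the convolution identity and the basic identities for generalised binomial coefficients.
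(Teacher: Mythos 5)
Your proof is correct and follows the same overall strategy as the paper (multiplicativity of $d_\kappa$ plus identification of the associated Dirichlet series with $\zeta(s)^\kappa$), but you substitute a different, equally valid intermediate device in a few places. For (c), the paper establishes $d_k(n)\le d_{k-1}(n)d_2(n)$ from the convolution $d_k=d_{k-1}*d_1$ (using monotonicity of $d_{k-1}$ along divisors) and iterates to $d_k(n)\le d_2(n)^{k-1}$, then invokes the classical $d_2$ bound; your local estimate $d_k(p^\nu)\le 2^{(k-1)\nu}$ amounts to the same thing since $d_k(p^\nu)\le(\nu+1)^{k-1}\le 2^{(k-1)\nu}$, so the two routes converge. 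For (e) and (f) the paper argues entirely from the explicit product formula $d_\kappa(p^\nu)=\frac{1}{\nu!}\prod_{j=0}^{\nu-1}(\kappa+j)$ (their equation \eqref{product1}): setting $\kappa=-k$ produces a zero factor once $\nu\ge k+1$, and taking absolute values term-by-term gives $|d_{-k}(p^\nu)|\le d_k(p^\nu)$. You instead unfold the $k$-fold convolution $d_{-k}=\mu*\cdots*\mu$ on prime powers and use $\mu(p^a)=0$ for $a\ge2$ together with the triangle inequality. Your combinatorial route is a touch more hands-on and makes the vanishing in (e) visually obvious; the paper's route via \eqref{product1} is more uniform in $\kappa$ and is the one that generalises to non-integer $\kappa$ in the following Lemma \ref{lem:gendivisor}. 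Both are correct, and the multiplicativity step that upgrades the prime-power inequality to all $n$ in (f) is handled identically.
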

\begin{proof}[Sketch of the proof:]
The statements (a) and (d) follow directly from the definition of $d_k$ and a short computation. As the Dirichlet series associated with $d_k$ can be identified as the Dirichlet series expansion of $\zeta(s)^k$ in $\sigma>1$, we obtain that
$$
\zeta(s)^k = \left(\sum_{n=1}^{\infty} \frac{1}{n^s} \right)^k = \sum_{n=1}^{\infty}\frac{d_k(n)}{n^s},\qquad \sigma>1.
$$
Therefrom, we deduce that
\begin{equation}\label{eq:kfoldDirichlet}
d_k = d_{k-1}* d_1=\underbrace{d_1 * ...* d_1}_{k\scalebox{0.8}{\mbox{-times}}}.
\end{equation}
Hence, $d_k$ is the $k$-fold Dirichlet convolution of $d_1$. This implies that 
\begin{equation}\label{repdk}
d_k(n) = \sum_{\begin{subarray}{c} (n_1,...,n_k) \in\N^k \\ n_1\cdots n_k = n \end{subarray}} 1
\qquad \mbox{for }n\in\N .
\end{equation}
Alternatively, the latter identity can be derived from the fundamental theorem of arithmetics and a combinatorial argument. Altogether, statement (b) follows. By standard estimates, we obtain that, for sufficiently large $n\in\N$,
\begin{equation}\label{eq:d2n}
d_2(n) \leq \exp\left( \log 2 \frac{\log n}{\log\log n}\left(1 + O\left(\frac{\log\log\log n}{\log\log n}\right) \right) \right);
\end{equation}
see, for example, Steuding \cite[Chapt. 2.3]{steuding:2007} or Hardy \& Wright \cite[Chapt. 18.1]{hardywright:1979}. It follows from \eqref{eq:kfoldDirichlet} that, for $n\in\N$,
$$
d_k(n) = \sum_{\begin{subarray}{c} (n_1,n_2) \in\N^2 \\ n_1\cdot n_2 = n \end{subarray}} d_{k-1}(n_1) d_1(n_2)\leq \, d_{k-1}(n)\cdot \hspace{-0.3cm}\sum_{\begin{subarray}{c} (n_1,n_2) \in\N^2 \\ n_1\cdot n_2 = n \end{subarray}} \hspace{-0.1cm}1 =  d_{k-1}(n)\cdot d_2(n).
$$
Iteratively, we obtain that
$$
d_k(n) \leq d_2(n)^{k-1}\qquad \mbox{for }n\in\N.
$$
Together with \eqref{eq:d2n}, this proves the upper bound for $d_k(n)$ in statement (c). The lower bound for $d_k(n)$, i.e. 
$$
d_k(n) \geq 1 \qquad \mbox{for }n\in\N,
$$
follows immediately from the definition of $d_k$. The identity
$$
\zeta(s)^{-k} = \left(\sum_{n=1}^{\infty} \frac{d_{-1}(n)}{n^s}  \right)^{k} =  \sum_{n=1}^{\infty} \frac{d_{-k}(n)}{n^s}, \qquad \sigma>1. 
$$
implies that $d_{-k}$ is the $k$-fold Dirichlet convolution of $d_{-1}$. Moreover, we deduce from the definition of $d_{\kappa}$ that, for any $\kappa\in\R$, $p\in\mathbb{P}$ and $\nu\in\N$,
\begin{equation}\label{product1}
d_{\kappa}(p^{\nu}) = \frac{1}{\nu!} \prod_{j=0}^{\nu-1} (\kappa-\nu + 1 -j) =  \frac{1}{\nu!} \prod_{j=0}^{\nu-1}  (\kappa + j).
\end{equation}
If we set $\kappa=-k$ in \eqref{product1}, then zero occurs as a factor in the products on the righthand-side, whenever $\nu\geq k+1$. Statement (e) follows. Moreover, we deduce from \eqref{product1} that, for $p\in\mathbb{P}$ and $\nu\in\N$,
$$
\left| d_{-k}(p^{\nu}) \right|  = \left|  \frac{1}{\nu!} \prod_{j=0}^{\nu-1}  (-k + j)\right| \leq  \frac{1}{\nu!} \prod_{j=0}^{\nu-1}  (k + j) = d_{k}(p^{\nu}).
$$
Statement (f) follows then from the multiplicity of $d_{-k}$ and $d_{k}$.
\end{proof}
Now, we state some basic properties of $d_{\kappa}$ in the general situation if its allied parameter $\kappa$ is an arbitrary real number.

\begin{lemma}\label{lem:gendivisor}
Let $\kappa\in\R$. Then, the following statements are true.
\begin{itemize}
\item[(a)] If $\kappa \geq 0$, then $d_{\kappa}(n)\geq 0$ for $n\in\N$. If $\kappa\geq 1$, then $d_{\kappa}(n)\geq 1$ for $n\in\N$.
\item[(b)] For $n\in\N$ and $K\geq |\kappa|$, we have $\left| d_{\kappa}(n) \right|  \leq d_{K}(n)$.
 \item[(c)] There is an absolute constant $c>0$ such that, for every $p\in\mathbb{P}$ and every $\nu\in\N_0$,
$$
d_{\kappa}(p^{\nu}) < c|\kappa| \nu^{\kappa-1}.
$$
\item[(d)] For sufficiently large $n\in\N$, 
$$
\left| d_{\kappa}(n) \right| \leq \exp\left( c(\kappa) \log 2 \frac{\log n}{\log\log n}\left(1 + O\left(\frac{\log\log\log n}{\log\log n}\right) \right) \right).
$$
where $c(\kappa):= \min\{n\in\N \, : \, n\geq |\kappa|-1\}$.
\item[(e)] For $m\in\N$, the function $d_{m\kappa}$ is the $m$-fold Dirichlet convolution of $d_{\kappa}$. In particular, for $n\in\N$,
\begin{align*}
d_{m\kappa}(n)&= \sum_{\begin{subarray}{c}(n_1,...,n_m)\in\N^m\\ n_1\cdots n_m = n\end{subarray}} d_{\kappa}(n_1)\cdots d_{\kappa}(n_2) 
=  \prod_{\begin{subarray}{c} p\in\mathbb{P}\\ p|n\end{subarray}} \;
\sum_{\begin{subarray}{c} k_1,...,k_m \in \N_0 \\ k_1+...+k_m =\nu(n;p) \end{subarray}}
\prod_{j=1}^m d_\kappa(p^{k_j}).
\end{align*}

\end{itemize}
\end{lemma}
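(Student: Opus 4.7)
The plan is to handle the five statements in order, leaning heavily on the explicit product representation
\begin{equation*}
d_{\kappa}(p^{\nu}) = \frac{1}{\nu!} \prod_{j=0}^{\nu-1} (\kappa + j) = \frac{\Gamma(\kappa+\nu)}{\Gamma(\kappa)\,\nu!},
\end{equation*}
which was already established in the preceding proof of Lemma \ref{lem:divisor}, and on the multiplicativity of $d_{\kappa}$. For statement (a), I would note that for $\kappa\geq 0$ every factor $\kappa + j$ in the product is non-negative, so $d_{\kappa}(p^{\nu})\geq 0$; for $\kappa\geq 1$ the stronger bound $\kappa+j\geq 1+j$ yields $\prod_{j=0}^{\nu-1}(\kappa+j)\geq \nu!$ and hence $d_{\kappa}(p^{\nu})\geq 1$. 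Multiplicativity then lifts both inequalities from prime powers to arbitrary $n\in\N$.

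For statement (b), the bound $|\kappa+j|\leq |\kappa|+j\leq K+j$ for $K\geq |\kappa|$ gives the pointwise estimate $|d_{\kappa}(p^{\nu})|\leq d_{K}(p^{\nu})$ at prime powers, which I would extend to all $n$ via the multiplicativity of both $d_{\kappa}$ and $d_{K}$. Statement (d) is then immediate from (b) by choosing the integer $K := c(\kappa)+1$, which satisfies $K\geq |\kappa|$ by definition of $c(\kappa)$, and invoking the bound for $d_{K}(n)$ in Lemma \ref{lem:divisor}(c); the exponent $(K-1)\log 2$ there matches the required $c(\kappa)\log 2$ exactly.

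Statement (c) is the one that requires a little more care. Here I would rewrite the product as
\begin{equation*}
d_{\kappa}(p^{\nu}) = \frac{\kappa}{\Gamma(\kappa+1)}\cdot \frac{\Gamma(\kappa+\nu)}{\nu!} = \frac{\kappa}{\Gamma(\kappa+1)}\cdot \frac{\Gamma(\kappa+\nu)}{\Gamma(\nu+1)},
\end{equation*}
and then appeal to Stirling's formula (or, equivalently, the classical asymptotic $\Gamma(\nu+\kappa)/\Gamma(\nu+1)=\nu^{\kappa-1}(1+O(1/\nu))$) to obtain $\Gamma(\kappa+\nu)/\Gamma(\nu+1)\ll \nu^{\kappa-1}$ uniformly in $\nu\in\N$. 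Since $1/\Gamma(\kappa+1)$ is bounded on bounded sets of $\kappa$ and equals $1$ at $\kappa=0$, the factor $|\kappa|/\Gamma(\kappa+1)$ absorbs into the absolute constant, giving $d_{\kappa}(p^{\nu})<c|\kappa|\nu^{\kappa-1}$ as claimed.

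Finally, for statement (e), I would argue purely on the level of the formal Dirichlet series. Since the Dirichlet series associated with $d_{\kappa}$ is, in $\sigma>1$, nothing but $\zeta(s)^{\kappa}=\exp(\kappa\log\zeta(s))$, the elementary identity $\zeta(s)^{m\kappa}=(\zeta(s)^{\kappa})^{m}$ combined with the convolution formula \eqref{eq:l1l2} yields $d_{m\kappa} = d_{\kappa}*\cdots* d_{\kappa}$ ($m$ factors). Writing out this $m$-fold convolution gives the first displayed equality; the second, a product over primes dividing $n$, follows by grouping the decomposition $n=n_{1}\cdots n_{m}$ according to prime factors and using multiplicativity of $d_{\kappa}$, which reduces the count to distributing the exponent $\nu(n;p)$ into $m$ non-negative parts $k_{1},\dots,k_{m}$. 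The main technical obstacle in the whole lemma is really the uniformity in $\kappa$ in statement (c), where one must verify that the Stirling-type asymptotic is valid not only as $\nu\to\infty$ with $\kappa$ fixed but with a constant that can be chosen independently of $\kappa$; this will require a slightly careful bookkeeping of the error term, but no new ideas beyond Stirling's formula.
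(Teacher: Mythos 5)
Your treatments of (a), (b), (d), and (e) coincide with the paper's: (a) and (b) are the termwise bounds on the factors $\kappa+j$ lifted to all $n$ by multiplicativity; (d) follows by passing via (b) to the integer $K=c(\kappa)+1\ge|\kappa|$ and then quoting Lemma~\ref{lem:divisor}(c); and (e) reads the identity $\zeta(s)^{m\kappa}=\bigl(\zeta(s)^{\kappa}\bigr)^m$ through the Dirichlet-convolution formula and multiplicativity. In (c) you take a genuinely different route. The paper stays elementary: it rewrites $d_{\kappa}(p^{\nu})=\frac{\kappa}{\nu}\prod_{j=1}^{\nu-1}\bigl(1+\frac{\kappa}{j}\bigr)$, applies $1+x\le e^x$ factorwise, and uses the asymptotic of the harmonic sum to produce $\nu^{\kappa-1}$. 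You instead expose the Gamma quotient $\Gamma(\kappa+\nu)/\Gamma(\nu+1)$ and invoke Stirling directly. Both routes give the same bound. Your version is conceptually tidier, but be aware that for $\kappa$ a non-positive integer the representation $d_{\kappa}(p^{\nu})=\frac{\kappa}{\Gamma(\kappa+1)}\cdot\frac{\Gamma(\kappa+\nu)}{\Gamma(\nu+1)}$ passes through poles of the Gamma function and has to be read as a limit, or one simply falls back on the finite product $\frac{1}{\nu!}\prod_{j=0}^{\nu-1}(\kappa+j)$, which is always well-defined; Stirling still applies once $\nu$ is past the last pole, and the earlier $\nu$ are finitely many. (The paper's elementary route dodges the Gamma poles, though it has a blemish of its own: $1+x\le e^x$ only controls $\bigl|1+\kappa/j\bigr|$ when that factor is non-negative, so the written argument also needs a moment of care for $\kappa<0$ and small $j$.) Finally, the worry you flag about uniformity in $\kappa$ is real but ultimately harmless: $\kappa$ is fixed at the outset of the lemma, and both proofs --- yours through $1/\Gamma(\kappa+1)$, the paper's through the additive constant in $\sum_{j<\nu}1/j$ --- produce a constant depending on $\kappa$, so ``absolute'' should be read as ``independent of $p$ and $\nu$.''
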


\begin{proof}
Let $\kappa\in\R$. It follows immediately from \eqref{product1} and the multiplicativity of $d_{\kappa}$ that, for $n\in\N$,
$$
d_{\kappa}(n)\geq \begin{cases}0 & \mbox{if }\kappa\geq 0, \\ 1 & \mbox{if }\kappa\geq 1 .\end{cases}
$$
Statement (a) is proved. Similarly, we derive statement (b) from the observation that, for $p\in\mathbb{P}$, $\nu\in\N$ and any $K\geq |\kappa|$,
$$
\left|d_{\kappa}(p^{\nu})\right| \leq \frac{1}{\nu!}\prod_{j=0}^{\nu-1} (\left|\kappa\right| +j)\leq d_{K}(p^{\nu}).
$$
Now, we rewrite \eqref{product1} in the form
\begin{equation}\label{darstellung}
d_{\kappa}(p^{\nu}) = \frac{\kappa}{\nu}\cdot \prod_{j=1}^{\nu-1} \left( 1+\frac{\kappa}{j}\right).
\end{equation} 
By using the inequality $1+x \leq \exp(x)$, which is true for any real $x$, we obtain that, for $p\in\mathbb{P}$ and $\nu\in\N$,
$$
\left| d_{\kappa}(p^{\nu}) \right| \leq  \frac{|\kappa|}{\nu}\exp\left( \kappa \sum_{j=1}^{\nu-1}\frac{1}{j}\right).
$$
A standard asymptotic estimate for the partial sums of the harmonic series yields the existence of an absolute constant $c>0$ such that, for every $p\in\mathbb{P}$ and every $\nu\in\N_0$,
$$
d_{\kappa}(p^{\nu}) < c |\kappa| \nu^{\kappa-1}.
$$
Statement (c) is proved. Statement (d) follows by combining the estimates of statement (b) and Lemma \ref{lem:divisor} (c). The identity
$$
\sum_{n=1}^{\infty}\frac{d_{\kappa m}(n)}{n^s}=\zeta(s)^{\kappa m} = \left(\zeta(s)^{\kappa}\right)^m = \left(\sum_{n=1}^{\infty}\frac{d_{\kappa}(n)}{n^s}\right)^m
$$
implies that $d_{m\kappa}$ is the $m$-fold Dirichlet convolution of $d_{\kappa}$ and that
$$
d_{m\kappa}(n)=\sum_{\begin{subarray}{c}(n_1,...,n_m)\in\N^m\\ n_1\cdots n_m = n\end{subarray}} d_{\kappa}(n_1)\cdots d_{\kappa}(n_2) \qquad\mbox{for }n\in\N
$$
The multiplicativity of $d_{\kappa}$ assures that
$$
d_{m\kappa}(n)
=  \prod_{\begin{subarray}{c} p\in\mathbb{P}\\ p|n\end{subarray}} \;
\sum_{\begin{subarray}{c} k_1,...,k_m \in \N_0 \\ k_1+...+k_m =\nu(n;p) \end{subarray}}
\prod_{j=1}^m d_\kappa(p^{k_j})\qquad \mbox{for }n\in\N.
$$ 
Statement (e) is proved.
\end{proof}

{\bf Von Mangoldt function.} The von Mangoldt function $\Lambda:\N\rightarrow \R$ is defined by
$$
\Lambda(n)= 
\begin{dcases}
\log p & \mbox{if } n=p^{\nu} \mbox{with some $p\in\mathbb{P}$ and $\nu\in \N$},\\
\quad  0 & \mbox{otherwise}.
\end{dcases}
$$
The von Mangoldt function appears in the Dirichlet series expansion of the logarithmic derivative of the Riemann zeta-function. More precisely,
$$
\frac{\zeta'(s)}{\zeta(s)} = - \sum_{n=1}^{\infty} \frac{\Lambda(n)}{n^s},\qquad \sigma>1.
$$

\section{The coefficients of certain Dirichlet series}\label{sec:coeff}
Let 
\begin{equation}\label{dirichletseries}
\L(s)=\sum_{n=1}^{\infty}\frac{a(n)}{n^s}
\end{equation}
be a Dirichlet series with coefficients $a(n)\in \C$. In the sequel, we shall pose certain conditions on the coefficients of $\L(s)$. Here, we work basically with the Ramanujan hypothesis and a polynomial Euler product representation. For the convenience of the reader, we recall the definition of these two features of a Dirichlet series.\par
\textbf{\textit{Ramanujan hypothesis.}}
{\it \textbf{}} $\L(s)$ is said to satisfy the Ramanujan hypothesis if, for any $\eps>0$,
$$
a(n)\ll_{\eps} n^{\eps},
$$
as $n\rightarrow\infty$. Here, the constant inherent in the Vinogradov symbol may depend on $\eps$. \par
\textbf{\textit{Polynomial Euler product.}}
{\it \bf } $\L(s)$ is said to have a representation as a polynomial Euler product if there exist a positive integer $m$ and complex numbers $\alpha_1(p)$,...,$\alpha_m(p)$ such that
\begin{equation}\label{eulerproduct}
\L(s)=\prod_{p\in\mathbb{P}} \prod_{j=1}^m \left(1-\frac{\alpha_j(p)}{p^s}\right)^{-1}.
\end{equation}
We call the coefficients $\alpha_1(p)$,...,$\alpha_m(p)$ the local roots of $\L(s)$ at $p\in\mathbb{P}$.\par

The Ramanujan hypothesis regulates the growth behaviour of the coefficients $a(n)$, as $n\rightarrow\infty$. The polynomial Euler product representation implies a multiplicative structure of the coefficients $a(n)$.\par

In the following, we study the coefficients of Dirichlet series which are related to $\L(s)$ by means of certain analytic transformations. In particular, we investigate whether these related Dirichlet series satisfy the Ramanujan hypothesis under the presumption that the latter is true for $\L(s)$. We would like to stress that the theorems of this section are not to be considered as new. They occur in slightly modified versions in many papers and monographies dealing with Dirichlet series.\par

First, we observe that the set of Dirichlet series which converge absolutely in a given point $s\in\C$ and satisfy the Ramanujan hypothesis is closed under finite summation and multiplication.
\begin{theorem}\label{th:sumprodL} Let $k\in\N$ and 
$$
\L_j(s)=\sum_{n=1}^{\infty} \frac{a_j(n)}{n^s}, \qquad j=1,...,k
$$ 
be absolutely convergent Dirichlet series which satisfy the Ramanujan hypothesis. Then, the following statements are true.
\begin{itemize}
\item[(a)] The sum $\mathcal{B}(s):=\L_1(s) + \dots + \L_k(s)$ can be written as an absolutely convergent Dirichlet series which satisfies the Ramanujan hypothesis. In particular,
$$
\mathcal{B}(s)=\sum_{n=1}^{\infty} \frac{b(n)}{n^s}\qquad\mbox{with}\qquad
b(n)=\sum_{j=1}^k a_j(n).
$$
\item[(b)] The product $\mathcal{C}(s):=\L_1(s)\cdots \L_k(s)$ can be written as an absolutely convergent Dirichlet series which satisfies the Ramanujan hypothesis. In particular,
$$
\mathcal{C}(s)=\sum_{n=1}^{\infty} \frac{c(n)}{n^s} \qquad \mbox{with}\qquad c(n)= \sum_{\begin{subarray}{c} (n_1,...,n_k) \in\N^k \\ n_1\cdots n_k = n \end{subarray}} a_1(n_1)\cdots a_k(n_k ).
$$
\end{itemize}
\end{theorem}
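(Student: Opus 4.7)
The plan is to reduce both parts to completely routine manipulations with absolutely convergent Dirichlet series, combined with the standard divisor bound from Lemma \ref{lem:divisor} (c) to control the Ramanujan growth of the convolution coefficients.

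For part (a), I would first note that the formula $b(n)=\sum_{j=1}^{k}a_{j}(n)$ follows immediately by adding the Dirichlet series termwise, which is legitimate since each series converges absolutely at the given $s$. Absolute convergence of $\mathcal{B}(s)$ is then inherited from the triangle inequality $\sum_{n}|b(n)|n^{-\sigma}\leq\sum_{j=1}^{k}\sum_{n}|a_{j}(n)|n^{-\sigma}$. For the Ramanujan bound, fix $\varepsilon>0$; since each $a_{j}$ satisfies $a_{j}(n)\ll_{\varepsilon}n^{\varepsilon}$ and $k$ is a fixed integer, the triangle inequality gives $|b(n)|\leq k\max_{j}|a_{j}(n)|\ll_{\varepsilon}n^{\varepsilon}$.

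For part (b), I would argue by induction on $k$ and it therefore suffices to treat the case $k=2$. The identity
\[
\L_{1}(s)\L_{2}(s)=\sum_{n=1}^{\infty}\frac{(a_{1}*a_{2})(n)}{n^{s}},\qquad (a_{1}*a_{2})(n)=\sum_{n_{1}n_{2}=n}a_{1}(n_{1})a_{2}(n_{2}),
\]
is the standard product formula \eqref{eq:l1l2} for absolutely convergent Dirichlet series, whose absolute convergence at $s$ follows from Cauchy's theorem on double series applied to $\sum_{n_{1}}\sum_{n_{2}}|a_{1}(n_{1})a_{2}(n_{2})|n_{1}^{-\sigma}n_{2}^{-\sigma}=\bigl(\sum|a_{1}(n)|n^{-\sigma}\bigr)\bigl(\sum|a_{2}(n)|n^{-\sigma}\bigr)<\infty$. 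To verify the Ramanujan hypothesis for $c=a_{1}*a_{2}$, fix $\varepsilon>0$ and apply the Ramanujan bound to each factor with parameter $\varepsilon/2$: then
\[
|c(n)|\leq\sum_{n_{1}n_{2}=n}|a_{1}(n_{1})||a_{2}(n_{2})|\ll_{\varepsilon}\sum_{n_{1}n_{2}=n}n_{1}^{\varepsilon/2}n_{2}^{\varepsilon/2}=n^{\varepsilon/2}\,d_{2}(n),
\]
and by Lemma \ref{lem:divisor} (c) we have $d_{2}(n)\ll_{\varepsilon}n^{\varepsilon/2}$, so $|c(n)|\ll_{\varepsilon}n^{\varepsilon}$, as required. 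The inductive step from $k-1$ to $k$ is then immediate: write $\L_{1}\cdots\L_{k}=(\L_{1}\cdots\L_{k-1})\L_{k}$, apply the inductive hypothesis to the first factor to get an absolutely convergent Dirichlet series satisfying the Ramanujan hypothesis, and then apply the case $k=2$. Unfolding the iterated convolution yields the explicit formula
\[
c(n)=\sum_{\substack{(n_{1},\ldots,n_{k})\in\N^{k}\\ n_{1}\cdots n_{k}=n}}a_{1}(n_{1})\cdots a_{k}(n_{k}).
\]

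There is no genuine obstacle here; the only step that requires more than bookkeeping is the growth estimate for $c(n)$, and this is resolved in one line by the divisor-function bound $d_{2}(n)\ll_{\varepsilon}n^{\varepsilon}$ already stated in Lemma \ref{lem:divisor}. The small care needed is to absorb the fixed combinatorial factor $k$ in part (a) and to split the available $\varepsilon$ appropriately between the two (or more) factors in part (b), both of which are harmless since $k$ is fixed.
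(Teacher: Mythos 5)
Your proof is correct and follows essentially the same approach as the paper: termwise addition and Dirichlet convolution for the coefficient formulas, absolute convergence by Fubini/Cauchy, and the divisor-function bound to control the Ramanujan growth. The only structural difference is that you reduce (b) to $k=2$ and induct, using the $d_2(n)\ll_\varepsilon n^{\varepsilon/2}$ bound at each step, whereas the paper handles all $k$ at once, writing $|c(n)|\ll n^\varepsilon d_k(n)$ directly and then invoking the $d_k$ bound from the same lemma; both variants hinge on the same divisor estimate and neither buys anything essential over the other, though your explicit $\varepsilon/2$ bookkeeping is slightly more careful than the paper's.
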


\begin{proof}
Certainly, the sum of finitely many absolutely convergent Dirichlet series is again an absolutely convergent Dirichlet series. By Riemann's rearrangement theorem, we can write 
$$
\mathcal{B}(s)=\sum_{j=1}^k \sum_{n=1}^{\infty} \frac{a_j(n)}{n^s} = \sum_{n=1}^{\infty} \frac{b(n)}{n^s}\qquad\mbox{with}\qquad
b(n):=\sum_{j=1}^k a_j(n).
$$
As $\L_1(s),...,\L_k(s)$ satisfy the Ramanujan hypothesis for $j=1,...,k$, respectively, the same is true for $\mathcal{B}(s)$.\par

In a similar manner, we deduce that the product of finitely many absolutely convergent Dirichlet series is again an absolutely convergent Dirichlet series. By means of \eqref{eq:l1l2}, we obtain that
$$
\mathcal{C}(s)=\prod_{j=1}^k \L_j(s) = \sum \frac{c(n)}{n^s} \qquad \mbox{with}\qquad c(n)= \sum_{\begin{subarray}{c} (n_1,...,n_k) \in\N^k \\ n_1\cdots n_k = n \end{subarray}} a_1(n_1)\cdots a_k(n_k ).
$$
The Ramanujan hypothesis for the coefficients of the Dirichlet series $\L_1(s),...,\L_k(s)$ implies that, for any $\eps>0$,
$$
c(n)\ll n^{\eps}\cdot \sum_{\begin{subarray}{c} (n_1,...,n_k) \in\N^k \\ n_1\cdots n_k = n \end{subarray}} 1 = n^{\eps}\cdot d_k(n),
$$
as $n\rightarrow\infty$. It follows from the estimate for $d_k(n)$ in Lemma \ref{lem:gendivisor} (d) that the coefficients $\mathcal{C}(s)$ satisfies the Ramanujan hypothesis.
\end{proof}

Next, we study absolutely convergent Dirichlet series which possess a representation as a polynomial Euler product. Steuding \cite[Chapt. 2.3]{steuding:2007} revealed the following relation between the Dirichlet series coefficient $a(n)$ and the local roots $\alpha_1(p),...,\alpha_m(p)$ of $\L(s)$.

\begin{theorem}[Steuding, 2007]\label{coeffpoleuler}
Let $\L(s)$ be an absolutely convergent Dirichlet series of the form \eqref{dirichletseries} that can be written as a polynomial Euler product of the form \eqref{eulerproduct}. Then, the following statements are true.
\begin{itemize}
\item[(a)] The Dirichlet series coefficients of $\L(s)$ are multiplicative and satisfy
$$
a(1)=1 \qquad \mbox{ and }\qquad a(n) = \prod_{\begin{subarray}{c} p\in\mathbb{P}\\ p|n\end{subarray}} \quad
\sum_{\begin{subarray}{c} k_1,...,k_m \in \N_0 \\ k_1+...+k_m =\nu(n;p) \end{subarray}}
\prod_{j=1}^m \alpha_j(p)^{k_j}
$$
for $n\in\N\setminus\{1\}$, where $\alpha_1(p),...,\alpha_m(p)$ denote the local roots of $\L(s)$ at $p\in\mathbb{P}$. 
\item[(b)] $\L(s)$ satisfies the Ramanujan hypothesis if and only if
$$
\max_{j=1,...,m} |\alpha_j(p)| \leq 1 
$$
for every $p\in\mathbb{P}$.
\end{itemize}
\end{theorem}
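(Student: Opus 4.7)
For a fixed prime $p$, expand each Euler factor as a geometric series, valid because absolute convergence of the Euler product forces $|\alpha_j(p)/p^s|<1$:
$$
\left(1-\frac{\alpha_j(p)}{p^s}\right)^{-1} = \sum_{k=0}^{\infty} \frac{\alpha_j(p)^k}{p^{ks}}.
$$
Multiplying the $m$ such series together and gathering terms according to the total exponent of $p^{-s}$ gives
$$
\L_p(s) = \sum_{\nu=0}^{\infty} \frac{b_p(\nu)}{p^{\nu s}}, \qquad b_p(\nu):= \hspace{-0.2cm}\sum_{\begin{subarray}{c}k_1,\dots,k_m\in\N_0\\ k_1+\dots+k_m=\nu\end{subarray}} \hspace{-0.2cm}\prod_{j=1}^m \alpha_j(p)^{k_j}.
$$
In particular $b_p(0)=1$, so $a(1)=\prod_p b_p(0)=1$. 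Now take the product over all primes; by absolute convergence of the Euler product we may multiply out and rearrange freely, so that, for every $n\in\N$ with prime factorisation \eqref{eq:fundamentaltheoremarith}, the fundamental theorem of arithmetic yields
$$
a(n)=\prod_{p\mid n} b_p(\nu(n;p)).
$$
This simultaneously shows multiplicativity of $n\mapsto a(n)$ and gives the explicit formula claimed in (a); uniqueness of the Dirichlet series coefficients of $\L$ confirms that no other representation is possible.

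\textbf{Plan for part (b), ``if'' direction.} Assume $|\alpha_j(p)|\leq 1$ for all $p$ and all $j$. From the formula in (a),
$$
|a(p^{\nu})|\leq \hspace{-0.2cm}\sum_{\begin{subarray}{c}k_1,\dots,k_m\in\N_0\\ k_1+\dots+k_m=\nu\end{subarray}} \hspace{-0.2cm}1 = \binom{\nu+m-1}{\nu}=d_m(p^{\nu}),
$$
where $d_m$ is the generalised divisor function with integer parameter $m$. Multiplicativity of both $|a(\cdot)|$ and $d_m$ then upgrades this to $|a(n)|\leq d_m(n)$ for every $n\in\N$. The bound $d_m(n)\ll_{\eps} n^{\eps}$ from Lemma \ref{lem:divisor}~(c) delivers the Ramanujan hypothesis.

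\textbf{Plan for part (b), ``only if'' direction.} Suppose, for contradiction, that there exists a prime $p_0$ and an index $j_0$ with $|\alpha_{j_0}(p_0)|>1$. Set $R:=\max_{j}|\alpha_j(p_0)|>1$ and let $r\geq 1$ be the number of indices $j$ with $|\alpha_j(p_0)|=R$. Viewing
$$
\sum_{\nu=0}^{\infty} a(p_0^{\nu}) z^{\nu} = \prod_{j=1}^m \frac{1}{1-\alpha_j(p_0) z}
$$
as a rational generating function in $z$, a standard partial fraction decomposition shows that
$$
a(p_0^{\nu}) = P(\nu)\,\omega^{\nu} R^{\nu}+O\!\left((R-\delta)^{\nu}\right), \qquad \nu\to\infty,
$$
for some $0<\delta<R-1$, some unimodular constant $\omega$, and a polynomial $P$ of degree $r-1$ with leading coefficient determined by the residues at the poles of modulus $1/R$. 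In particular $|a(p_0^{\nu})|\gg \nu^{r-1} R^{\nu}$. Choosing $n=p_0^{\nu}$ and writing $\eta:=\log_{p_0} R>0$ turns this lower bound into $|a(n)|\gg n^{\eta}(\log n)^{r-1}$ along the subsequence of powers of $p_0$, contradicting the Ramanujan hypothesis for any $\eps<\eta$.

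\textbf{Main obstacle.} The genuinely delicate point is the partial fractions argument in the ``only if'' direction when the local roots $\alpha_j(p_0)$ of maximal modulus are repeated or have equal absolute value but distinct arguments: one must ensure that the dominant terms do not cancel and that the leading coefficient $P$ is non-zero. Potential cancellation among finitely many complex exponentials of the same modulus can be ruled out because, after grouping, the leading term has the form of a non-trivial trigonometric polynomial in $\nu$, whose $\limsup$ in absolute value is strictly positive; this passes to a subsequence $\nu_k\to\infty$ on which $|a(p_0^{\nu_k})|\gg R^{\nu_k}$, which is all that is needed to violate the Ramanujan hypothesis.
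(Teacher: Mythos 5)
Part (a) and the ``if'' direction of (b) are standard and match what Steuding does (Dirichlet convolution of $m$ geometric series, then bound $|a(p^\nu)|$ by $\binom{\nu+m-1}{\nu}=d_m(p^\nu)$ and invoke the divisor bound); no issues there.

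For the ``only if'' direction, however, you have chosen a noticeably heavier route than necessary, and it is also the spot where your write-up has a small imprecision. The cleanest argument is to observe that $\sum_{\nu\geq 0}a(p^\nu)z^\nu = \prod_{j=1}^m\bigl(1-\alpha_j(p)z\bigr)^{-1}$ is a rational function whose nearest singularity to the origin lies at distance $1/\max_j|\alpha_j(p)|$, so by the Cauchy--Hadamard formula
$$
\limsup_{\nu\to\infty}\bigl|a(p^\nu)\bigr|^{1/\nu}=\max_{j}|\alpha_j(p)|.
$$
If some $|\alpha_{j_0}(p_0)|=:R>1$, this gives $|a(p_0^\nu)|>(R-\eps)^\nu$ for infinitely many $\nu$, and with $n=p_0^\nu$ one has $|a(n)|>n^{\eta}$ for $\eta=\log_{p_0}(R-\eps)>0$, violating the Ramanujan hypothesis outright. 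This one-line radius-of-convergence argument is what Steuding's treatment amounts to, and it makes the partial-fraction asymptotics entirely unnecessary. Your version via partial fractions does work, but the displayed asymptotic $a(p_0^\nu)=P(\nu)\omega^\nu R^\nu+O\bigl((R-\delta)^\nu\bigr)$ with a \emph{single} unimodular $\omega$ and $\deg P=r-1$ is only correct when all $r$ maximal-modulus roots coincide; when they are distinct the leading term is $\sum_{j}c_j\nu^{m_j-1}\alpha_j^\nu$ with several distinct arguments, as you yourself note in the ``Main obstacle'' paragraph. The assertion that this has strictly positive $\limsup$ is true, but deserves a one-line justification (e.g.\ $\frac{1}{N}\sum_{\nu\leq N}\bigl|\sum_j c_j e^{i\theta_j\nu}\bigr|^2\to\sum_j|c_j|^2>0$ for distinct $\theta_j\bmod 2\pi$, and the residues $c_j$ are nonzero since the poles of a product of geometric series are genuine). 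In short: your argument can be completed, but the Cauchy--Hadamard shortcut both sidesteps the cancellation worry entirely and is what the cited source actually uses.
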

For a proof, we refer to Steuding \cite[Chapt. 2.3]{steuding:2007}.\par

Now, we turn our attention to the analytic function that is described by a Dirichlet series for which the Ramanujan hypothesis is true.\par 
Suppose that $\L(s)$ satisfies the Ramanujan hypothesis. Then, $\L(s)$ is absolutely convergent in the half-plane $\sigma>1$ and defines there an analytic function $\L$. For $\ell\in\N$, let $\L^{(\ell)}$ denote the $\ell$-th derivative of $\L$ in $\sigma>1$. We shall see that $\L^{(\ell)}$ has a Dirichlet series expansion in $\sigma>1$ which is related to the one of $\L$.
\begin{theorem}\label{th:Dirichletderivative}
Let $\ell\in\N$ and $\L$ be an analytic function in $\sigma>1$ that is given by a Dirichlet series of the form \eqref{dirichletseries} for which the Ramanujan hypothesis is true. Then, $\L^{(\ell)}$ has Dirichlet series expansion 
$$
\L^{(\ell)}(s) = \sum_{n=1}^{\infty}\frac{a_{{(\ell)}}(n)}{n^s}, \qquad \sigma>1,
$$
where
$$
a_{{(\ell)}}(n)= (-1)^{\ell}a(n)(\log n)^{\ell}.
$$
In particular, the Dirichlet series representing $\L^{(\ell)}$ satisfies the Ramanujan hypothesis.
\end{theorem}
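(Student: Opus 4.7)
The natural approach is induction on $\ell$, combined with term-by-term differentiation justified by locally uniform convergence. The base case $\ell=0$ is the hypothesis. For the inductive step, suppose the formula
$$
\L^{(\ell-1)}(s)=\sum_{n=1}^{\infty}\frac{(-1)^{\ell-1}a(n)(\log n)^{\ell-1}}{n^s}
$$
holds in $\sigma>1$. Term-wise differentiation of $n^{-s}$ gives $-n^{-s}\log n$, so that formally we obtain the claimed series for $\L^{(\ell)}$. The content of the step is to justify that the differentiated series actually represents $\L^{(\ell)}$ pointwise in $\sigma>1$.

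The key tool is Weierstrass' theorem on locally uniform convergence: once the differentiated series converges locally uniformly on $\sigma>1$, its sum automatically equals the derivative of the sum of the original series. Fix a compact set $K\subset\{\sigma>1\}$ and choose $\sigma_1>1$ with $\sigma_1<\inf_{s\in K}\Re s$; set $\delta:=\inf_{s\in K}\Re s - \sigma_1 > 0$. By the Ramanujan hypothesis applied with $\eps:=\delta/2$, we have $|a(n)|\ll n^{\delta/2}$, so uniformly on $K$
$$
\left|\frac{(-1)^{\ell}a(n)(\log n)^{\ell}}{n^s}\right| \ll \frac{(\log n)^{\ell}}{n^{\sigma_1+\delta/2}},
$$
and the right-hand side is the general term of a convergent series. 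This gives absolute and uniform convergence on $K$, hence locally uniform convergence on $\sigma>1$. Applying Weierstrass' theorem completes the inductive step and yields the Dirichlet series representation of $\L^{(\ell)}$.

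Finally, to check that the Dirichlet series representing $\L^{(\ell)}$ satisfies the Ramanujan hypothesis, fix an arbitrary $\eps>0$. Applying the Ramanujan hypothesis for $(a(n))$ with $\eps/2$ in place of $\eps$ gives $|a(n)|\ll_{\eps} n^{\eps/2}$. Since $(\log n)^{\ell}=o(n^{\eps/2})$ as $n\to\infty$, we obtain
$$
|a_{(\ell)}(n)| = |a(n)|(\log n)^{\ell} \ll_{\eps} n^{\eps/2}\cdot n^{\eps/2}= n^{\eps},
$$
which is the required bound. No step here is a genuine obstacle; the only thing to be careful about is the choice of $\sigma_1$ and $\delta$ so that the bound on $|a(n)|$ is strong enough to absorb the polylogarithmic factor $(\log n)^{\ell}$ while still leaving a convergent tail, and this is handled by taking $\eps$ strictly smaller than the distance from $K$ to the line $\sigma=1$.
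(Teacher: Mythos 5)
Your proof is correct and follows essentially the same route as the paper, which simply cites the standard term-by-term differentiation result for locally uniformly convergent series of analytic functions (referencing Busam \& Freitag); you have spelled out the uniform-convergence estimate and the Ramanujan check explicitly, and the induction you use is an unnecessary but harmless scaffolding for what is really a one-step application of Weierstrass' theorem.
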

\begin{proof}
The statement follows from the observation that
$$
\frac{\d^{\ell}}{\d s^{\ell}} \frac{a(n)}{n^s} = \frac{a(n)(\log n)^{\ell}}{n^s}, \qquad n\in\N, 
$$
and basic convergence properties of series of analytic functions; see Busam \& Freitag \cite[Theorem III.1.6]{busamfreitag:2009}.
\end{proof}

Suppose that $\L(s)$ satisfies the Ramanujan hypothesis and has a polynomial Euler product representation. Then, $\L(s)$ defines an analytic, non-vanishing function $\L$ in $\sigma>1$. Thus, there exists an analytic logarithm of $\L$ in $\sigma>1$. In the further course of our investigations, we define $\log \L$ as follows. Due to the polynomial Euler product representation, the leading Dirichlet series coefficient of $\L(s)$ is given by $a(1)=1$. By the absolute convergence of $\L(s)$ in $\sigma>1$, we find a $\sigma_{0} \geq 1 $ such that 
$$
\left| \L(s) - 1 \right|  \leq \tfrac{3}{4}, \qquad \mbox{for }\sigma> \sigma_{0}.
$$
For $s\in\C$, let $\mbox{Log}\ s$ denote the principal branch of the logarithm. If we set 
$$
(\log\L) (s) := \mbox{Log} (\L(s))\qquad \mbox{ for }\sigma>\sigma_0,
$$
then $(\log\L) (s)$ defines a uniquely determined analytic logarithm of $\L$ in the half-plane $\sigma> \sigma_0$ with the property that
$$
\Im \left( \log \L(s) \right) \in [-\pi,\pi) \qquad \mbox{ for }\sigma> \sigma_{0}.
$$ 
For all other simply connected domains $\Omega\subset \C$ which do not contain any zero of $\L$ and have a non-empty intersection $\mathcal{I}$ with the half-plane $\sigma>\sigma_{0}$, we define $\log\L$ by extending $\log\L |_{\mathcal{I}}$ analytically to $\Omega$. In particular, we obtain in this manner a uniquely determined analytic logarithm of $\L$ in $\sigma>1$ that satisfies the normalization
$$
\lim_{\sigma\rightarrow+\infty}  \log \L(\sigma) = 0.
$$

\begin{theorem}\label{th:Dirichletlog}
Let $\L$ be an analytic function in $\sigma>1$ that is given by a Dirichlet series of the form \eqref{dirichletseries} which satisfies the Ramanujan hypothesis and can be written as a polynomial Euler product of the form \eqref{eulerproduct}. Then, $\log \L$ has Dirichlet series expansion 
$$
\log \L(s) = \sum_{n=1}^{\infty} \frac{a_{\log \L}(n)}{n^s}, \qquad \sigma>1,
$$
where 
$$
a_{ \log \L}(n) = 
\begin{dcases}
-\frac{1}{\nu} \sum_{j=1}^{m}  \alpha_{j} (p)^{\nu} & \mbox{if } n=p^{\nu} \mbox{with some $p\in\mathbb{P}$ and $\nu\in \N$},\\
\quad 0 & \mbox{otherwise}.
\end{dcases}
$$
In particular, the Dirichlet series representing $\log \L$ satisfies the Ramanujan hypothesis.
\end{theorem}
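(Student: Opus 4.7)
The plan is to start from the polynomial Euler product representation and use absolute convergence to justify a term-by-term manipulation that converts the logarithm of the product into an ordinary Dirichlet series. First, since $\L(s)$ satisfies the Ramanujan hypothesis and has a polynomial Euler product, Theorem \ref{coeffpoleuler}(b) tells us that $|\alpha_j(p)| \leq 1$ for every $p \in \mathbb{P}$ and every $j = 1,\ldots,m$. Together with $\sigma > 1$, this gives $|\alpha_j(p)/p^s| < 1$, so the principal-branch Taylor expansion $-\log(1-z) = \sum_{\nu \geq 1} z^\nu/\nu$ applies to each local factor.

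Next, I would work first on the half-plane $\sigma > \sigma_0$ introduced before the theorem (where the normalization $\log \L(s) = \mathrm{Log}\, \L(s)$ was fixed): expanding each local factor and interchanging the resulting triple sum yields
\begin{equation*}
\log \L(s) \;=\; \sum_{p \in \mathbb{P}} \sum_{j=1}^{m} \sum_{\nu=1}^{\infty} \frac{\alpha_j(p)^\nu}{\nu\, p^{\nu s}}.
\end{equation*}
The interchange is legitimate because the triple sum converges absolutely: using $|\alpha_j(p)| \leq 1$,
\begin{equation*}
\sum_{p} \sum_{j=1}^{m} \sum_{\nu=1}^{\infty} \frac{|\alpha_j(p)|^\nu}{\nu\, p^{\nu \sigma}} \;\leq\; m \sum_{p} \sum_{\nu=1}^{\infty} \frac{1}{\nu\, p^{\nu\sigma}} \;=\; -m \sum_p \log\bigl(1 - p^{-\sigma}\bigr) \;<\; \infty
\end{equation*}
for every $\sigma > 1$. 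The key check is then that the branch of logarithm obtained by summing the principal branches of $-\log(1 - \alpha_j(p)/p^{-s})$ agrees with the branch of $\log \L$ fixed in the paragraph preceding the theorem; this is settled by letting $\sigma \to +\infty$, where both sides tend to $0$. Having pinned down the identity on $\sigma > \sigma_0$, the identity principle transports it to all of $\sigma > 1$, since both sides are analytic there.

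Finally, I would regroup the absolutely convergent triple sum according to $n = p^\nu$: the resulting Dirichlet series is supported on prime powers with the stated coefficient $a_{\log \L}(p^\nu) = \tfrac{1}{\nu}\sum_{j=1}^{m} \alpha_j(p)^\nu$ (up to the sign convention employed in the statement), and $a_{\log \L}(n) = 0$ otherwise. The Ramanujan hypothesis is then immediate and far stronger than required: $|\alpha_j(p)| \leq 1$ gives $|a_{\log \L}(n)| \leq m/\nu \leq m$ on prime powers, which is uniformly bounded in $n$ and hence certainly $O_\eps(n^\eps)$.

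The only genuine issue is the bookkeeping of the logarithmic branch: one must ensure that the construction $\mathrm{Log}\,\L(s)$ on $\sigma > \sigma_0$ used in the paper matches the series obtained by summing the local expansions. This is settled by the asymptotic normalization $\log \L(\sigma) \to 0$ as $\sigma \to +\infty$ and uniqueness of analytic continuation. Once this point is dispatched, the remainder of the argument is a routine absolute-convergence rearrangement.
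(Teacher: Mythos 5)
Your proposal is correct and follows essentially the same route as the paper: expand each local factor via the Taylor series for $\log(1-z)^{-1}$ (justified by the Ramanujan bound $|\alpha_j(p)|\le 1$ from Theorem \ref{coeffpoleuler}(b)), verify absolute convergence of the triple sum for $\sigma>1$, pin down the branch by letting $\sigma\to+\infty$, and read off the coefficient bound $|a_{\log\L}(n)|\le m$. Your remark about ``the sign convention employed in the statement'' is apt — the Euler factor $(1-\alpha_j(p)p^{-s})^{-1}$ gives $a_{\log\L}(p^\nu)=\tfrac{1}{\nu}\sum_j\alpha_j(p)^\nu$ with a \emph{plus} sign, and the minus sign in the theorem's display (and in the corresponding line of the paper's proof) appears to be a typographical slip rather than a deliberate convention.
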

\begin{proof}
Due to Theorem \ref{coeffpoleuler} (b), the Ramanujan hypothesis implies that
\begin{equation}\label{log1}
\max_{j=1,...,m} |\alpha_j(p)| \leq 1
\end{equation}
for $p\in\mathbb{P}$.
Consequently, we get that, for $p\in\mathbb{P}$ and $\sigma>1$,
$$
\sum_{j=1}^m \log \left( 1- \frac{\alpha_j(p)}{p^s}\right)^{-1} =  -  \sum_{j=1}^m \sum_{\nu=1}^{\infty} \frac{\alpha_{j}(p)^{\nu}}{\nu p^{\nu s}}
= - \sum_{\nu=1}^{\infty} \frac{\frac{1}{\nu}\sum_{j=1}^m\alpha_{j}(p)^{\nu}}{ p^{\nu s}}.
$$
This implies that, for every $p\in\mathbb{P}$ and $\sigma>1$,
$$
\left|\sum_{j=1}^m \log \left( 1- \frac{\alpha_j(p)}{p^s}\right)^{-1} \right| \leq \frac{2m}{p^{\sigma}}.
$$
Hence, the series 
\begin{equation*}\label{logbranch}
f(s):= \sum_{p\in\mathbb{P}} \sum_{j=1}^m \log \left( 1- \frac{\alpha_j(p)}{p^s}\right)^{-1}
\end{equation*}
converges absolutely in $\sigma>1$. By means of the polynomial Euler product representation, $f(s)$ defines an analytic logarithm of $\L$ in $\sigma>1$. According to the observation that, uniformly for $t\in\R$,
$$
 f(\sigma + it) = o(1), \qquad \mbox{as }\sigma\rightarrow +\infty,
$$ 
the branch of the logarithm $f$ and the branch chosen for $\log \L$ coincide. Hence, we derive that, for $\sigma>1$,
$$
\log \L(s) =  - \sum_{\nu=1}^{\infty} \frac{\frac{1}{\nu}\sum_{j=1}^m\alpha_{j}(p)^{\nu}}{ p^{\nu s}}
= \sum_{n=1}^{\infty} \frac{a_{\log \L}(n)}{n^s}.
$$
Due to \eqref{log1}, we obtain that $|a_{\log \L}(n)|\leq m$ for $n\in\N$. Hence, the Dirichlet series representing $\log \L$ satisfies the Ramanujan hypothesis and the theorem is proved.
\end{proof}
Next, we consider the logarithmic derivative of $\L$. We shall see that there appears a generalized form of the von Mangoldt function in the Dirichlet series expansion of $\L'/\L$.
\begin{theorem}\label{th:dirichletlogderivative}
Let $\L$ be an analytic function in $\sigma>1$ that is given by a Dirichlet series of the form \eqref{dirichletseries} which satisfies the Ramanujan hypothesis and can be written as a polynomial Euler product of the form \eqref{eulerproduct}. Then, the logarithmic derivative $\L'/\L$ has Dirichlet series expansion 
$$
\frac{\L'(s)}{\L(s)} = -\sum_{n=1}^{\infty} \frac{\Lambda_{ \L}(n)}{n^s}, \qquad \sigma>1,
$$
where 
$$
\Lambda_{ \L}(n) = 
\begin{dcases}
\left( \sum_{j=1}^{m}  \alpha_{j} (p)^{\nu} \right) \log p & \mbox{if } n=p^{\nu} \mbox{with some $p\in\mathbb{P}$ and $\nu\in \N$},\\
\quad 0 & \mbox{otherwise}.
\end{dcases}
$$
In particular, the Dirichlet series representing $\L'/\L$ satisfies the Ramanujan hypothesis.
\end{theorem}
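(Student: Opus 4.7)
My plan is to derive the Dirichlet series expansion of $\L'/\L$ by combining the results of Theorem \ref{th:Dirichletlog} and Theorem \ref{th:Dirichletderivative}, exploiting the identity $\L'/\L = (\log\L)'$, which holds on the half-plane $\sigma>1$ since $\L$ is analytic and non-vanishing there (the latter being a direct consequence of the polynomial Euler product representation together with $|\alpha_j(p)|\leq 1$).

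First, I would invoke Theorem \ref{th:Dirichletlog} to write
$$
\log\L(s) = \sum_{n=1}^\infty \frac{a_{\log\L}(n)}{n^s},\qquad \sigma>1,
$$
where $a_{\log\L}(n)=-\frac{1}{\nu}\sum_{j=1}^m \alpha_j(p)^\nu$ if $n=p^\nu$ with $p\in\mathbb{P}$, $\nu\in\N$, and $a_{\log\L}(n)=0$ otherwise. Since this Dirichlet series satisfies the Ramanujan hypothesis (cf.\ Theorem \ref{th:Dirichletlog}), Theorem \ref{th:Dirichletderivative} applies with $\ell=1$ and yields
$$
(\log\L)'(s) = -\sum_{n=1}^\infty \frac{a_{\log\L}(n)\log n}{n^s},\qquad \sigma>1.
$$

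Next, I would evaluate the coefficients $-a_{\log\L}(n)\log n$ at prime powers $n=p^\nu$: a direct computation gives
$$
-a_{\log\L}(p^\nu)\log(p^\nu) = \frac{1}{\nu}\Bigl(\sum_{j=1}^m \alpha_j(p)^\nu\Bigr)\cdot \nu\log p = \Bigl(\sum_{j=1}^m \alpha_j(p)^\nu\Bigr)\log p,
$$
while for all $n$ not of the form $p^\nu$ the coefficient vanishes. Matching this with the definition of $\Lambda_\L(n)$ and recalling that $\L'/\L = (\log\L)'$ yields the claimed Dirichlet series representation of $\L'/\L$.

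Finally, to verify the Ramanujan hypothesis for the series representing $\L'/\L$, I would use the bound $|\alpha_j(p)|\leq 1$ from Theorem \ref{coeffpoleuler}(b), which gives $|\Lambda_\L(n)|\leq m\log n$ for prime powers $n$ and $|\Lambda_\L(n)|=0$ elsewhere; since $\log n \ll_\eps n^\eps$ for any $\eps>0$, the coefficients satisfy $\Lambda_\L(n)\ll_\eps n^\eps$. No step here presents a real obstacle: the argument is essentially a legal interchange of logarithmic differentiation with a Dirichlet series, and both the convergence and the formulae have already been prepared by Theorems \ref{th:Dirichletlog} and \ref{th:Dirichletderivative}. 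The only point to be slightly careful about is ensuring that the chosen branch of $\log\L$ (as normalized in the paragraph preceding Theorem \ref{th:Dirichletlog}) really satisfies $(\log\L)'=\L'/\L$ on all of $\sigma>1$, but this follows at once from analytic continuation from the sub-half-plane $\sigma>\sigma_0$ where $\log\L$ is defined via the principal branch.
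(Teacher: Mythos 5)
Your overall route — express $\L'/\L$ as $(\log\L)'$, apply Theorem \ref{th:Dirichletlog} followed by Theorem \ref{th:Dirichletderivative} with $\ell=1$, and read off the coefficients at prime powers — is precisely the paper's approach, which says only ``By combining Theorem \ref{th:Dirichletderivative} and Theorem \ref{th:Dirichletlog}\ldots''; you supply the details that proof glosses over, and your treatment of the Ramanujan estimate at the end also matches the paper.

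However, your arithmetic actually exposes a sign mismatch that you then pass over without comment. Taking $a_{\log\L}(p^\nu)=-\frac{1}{\nu}\sum_{j}\alpha_j(p)^\nu$ exactly as Theorem \ref{th:Dirichletlog} states it, you obtain
$$-a_{\log\L}(p^\nu)\log(p^\nu)=\Bigl(\sum_{j=1}^m\alpha_j(p)^\nu\Bigr)\log p=\Lambda_\L(p^\nu),$$
so your computation yields $\L'/\L=(\log\L)'=\sum_n\Lambda_\L(n)\,n^{-s}$, with a \emph{plus} sign, whereas the theorem you set out to prove asserts $\L'/\L=-\sum_n\Lambda_\L(n)\,n^{-s}$. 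The closing sentence ``Matching this with the definition of $\Lambda_\L(n)$\ldots\ yields the claimed Dirichlet series representation'' is therefore not justified: what you derived and what is claimed differ by a sign. The culprit is a spurious minus sign in the stated formula for $a_{\log\L}$ in Theorem \ref{th:Dirichletlog}; since $\log(1-z)^{-1}=\sum_{\nu\ge1}z^\nu/\nu$, the correct coefficient is $a_{\log\L}(p^\nu)=+\frac{1}{\nu}\sum_j\alpha_j(p)^\nu$, as the sanity check with the Riemann zeta-function ($m=1$, $\alpha_1(p)=1$, $\log\zeta(s)=\sum_p\sum_\nu \frac{1}{\nu p^{\nu s}}$ with positive coefficients) confirms. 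With that sign corrected, the very same computation gives $-a_{\log\L}(p^\nu)\log(p^\nu)=-\Lambda_\L(p^\nu)$ and the theorem follows as stated. So your plan is sound, but a blind proof should have noticed that the formulas as quoted do not compose to the asserted conclusion, and corrected the sign in $a_{\log\L}$ rather than asserting the match.
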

\begin{proof}
By combining Theorem \ref{th:Dirichletderivative} and Theorem \ref{th:Dirichletlog}, we get that $\L'/\L$ has the stated Dirichlet series expansion in $\sigma>1$. Theorem \ref{coeffpoleuler} (b) assures that, for $n\in\N$,
$$
\left| \Lambda_{\L}(n) \right| \leq m\log n.
$$
Hence, the Dirichlet series representing $\L'/\L$ satisfies the Ramanujan hypothesis.
\end{proof}
For $\kappa\in\R$, we define $\L^{\kappa}$ by
$$
\L^{\kappa}(s):=\L(s)^{\kappa}:= \exp\left(\kappa \log \L(s) \right), \qquad \sigma>1.
$$
The next theorem deals with the Dirichlet series expansion of $\L^{\kappa}$.
\begin{theorem}\label{th:Lkappa}
Let $\L$ be an analytic function in $\sigma>1$ that is given by a Dirichlet series of the form \eqref{dirichletseries} which satisfies the Ramanujan hypothesis and can be written as a polynomial Euler product of the form \eqref{eulerproduct}. Then, $\L^{\kappa}$ has Dirichlet series expansion
$$
\L^{\kappa}(s) = \sum_{n=1}^{\infty} \frac{a_{\kappa}(n)}{n^s}, \qquad \sigma>1,
$$
where
$$
a_{\kappa}(1)=1 \qquad \mbox{ and }\qquad a_{\kappa}(n) = \prod_{\begin{subarray}{c} p\in\mathbb{P}\\ p|n\end{subarray}} \quad
\sum_{\begin{subarray}{c} (k_1,...,k_m) \in \N_0^m \\ k_1+...+k_m =\nu(n;p) \end{subarray}}
\prod_{j=1}^m d_{\kappa}(p^{k_j})\alpha_j(p)^{k_j}
$$
for $n\in\N\setminus\{1\}$. In particular, the coefficients $a_{\kappa}(n)$ are multiplicative and the Dirichlet series representing $\L^{\kappa}$ satisfies the Ramanujan hypothesis.
\end{theorem}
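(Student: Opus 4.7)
The strategy is to push the polynomial Euler product through the definition $\L^{\kappa}(s) = \exp(\kappa \log \L(s))$, and then expand each local factor by the generalized binomial series. First I would note that the Ramanujan hypothesis together with Theorem \ref{coeffpoleuler}~(b) gives $|\alpha_j(p)| \leq 1$ for all $j$ and $p$. Using Theorem~\ref{th:Dirichletlog} (or the direct expansion $\log(1-x)^{-1} = \sum_{\nu\geq 1} x^{\nu}/\nu$ for $|x|<1$), the double series
$$
\log \L(s) \;=\; \sum_{p\in\mathbb{P}}\sum_{j=1}^{m} \sum_{\nu=1}^{\infty} \frac{\alpha_j(p)^{\nu}}{\nu p^{\nu s}}
$$
converges absolutely in $\sigma>1$, so for $\sigma$ sufficiently large the continuous logarithm of $\L$ fixed in Section~\ref{sec:coeff} coincides with the branch obtained by summing the local logarithms $\log(1-\alpha_j(p)/p^s)^{-1}$. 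Exponentiating $\kappa$ times this local expansion recovers the natural branch $(1-\alpha_j(p)/p^s)^{-\kappa}$, and hence
$$
\L^{\kappa}(s)\;=\;\prod_{p\in\mathbb{P}} \prod_{j=1}^{m}\Bigl(1-\tfrac{\alpha_j(p)}{p^s}\Bigr)^{-\kappa}
$$
for $\sigma$ large. Since both sides are analytic in $\sigma>1$ (the absolute convergence of the double product being controlled by $|\alpha_j(p)|\leq 1$), the identity principle extends the equality to the whole half-plane $\sigma>1$.

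Next I would expand each local factor by the generalized binomial theorem
$$
\Bigl(1-\tfrac{\alpha_j(p)}{p^s}\Bigr)^{-\kappa} \;=\; \sum_{k=0}^{\infty} \binom{\kappa+k-1}{k}\alpha_j(p)^{k} p^{-ks} \;=\; \sum_{k=0}^{\infty} d_{\kappa}(p^{k})\,\alpha_j(p)^{k} p^{-ks},
$$
which is the defining identity of $d_{\kappa}(p^{k})$. Multiplying the $m$ local factors at a fixed prime $p$ and collecting powers of $p^{-s}$ yields the local Dirichlet series
$$
\prod_{j=1}^{m} \Bigl(1-\tfrac{\alpha_j(p)}{p^s}\Bigr)^{-\kappa}
\;=\; \sum_{\nu=0}^{\infty}\Biggl(\ \sum_{\substack{(k_1,\dots,k_m)\in\N_0^{m}\\ k_1+\cdots+k_m=\nu}}\prod_{j=1}^{m} d_{\kappa}(p^{k_j})\,\alpha_j(p)^{k_j}\Biggr) p^{-\nu s}.
$$
Multiplicativity of $n\mapsto a_{\kappa}(n)$ then drops out of the Euler product structure exactly as in the proof of Theorem~\ref{coeffpoleuler}~(a): writing $n=\prod_{p\mid n} p^{\nu(n;p)}$ and multiplying the local expansions, the coefficient of $n^{-s}$ is the product, over $p\mid n$, of the bracketed sums above. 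This gives the stated formula for $a_{\kappa}(n)$, and in particular $a_{\kappa}(1)=1$ (the empty product).

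Finally, to verify the Ramanujan hypothesis for the coefficients $a_{\kappa}(n)$, fix a positive integer $K\geq |\kappa|$. By Lemma~\ref{lem:gendivisor}~(b) we have $|d_{\kappa}(p^{k})|\leq d_{K}(p^{k})$, and combined with $|\alpha_j(p)|\leq 1$ this yields, for each prime power $p^{\nu}$,
$$
|a_{\kappa}(p^{\nu})|\;\leq\;\sum_{\substack{(k_1,\dots,k_m)\\ k_1+\cdots+k_m=\nu}}\prod_{j=1}^{m} d_{K}(p^{k_j})\;=\; d_{mK}(p^{\nu}),
$$
where the last equality is precisely the $m$-fold convolution formula of Lemma~\ref{lem:gendivisor}~(e). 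By the multiplicativity of both $|a_{\kappa}|$ and $d_{mK}$, this extends to $|a_{\kappa}(n)|\leq d_{mK}(n)$ for all $n\in\N$, and Lemma~\ref{lem:gendivisor}~(d) then gives $a_{\kappa}(n)\ll_{\eps} n^{\eps}$ for every $\eps>0$, establishing the Ramanujan hypothesis for $\L^{\kappa}$.

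\textbf{Main obstacle.} None of the steps is deep; the only genuine point of care is the bookkeeping of the branch of $\L^{\kappa}$, i.e.\ checking that the branch implicit in the local expansion $(1-\alpha_j(p)/p^s)^{-\kappa}$ is compatible with the globally chosen branch of $\log\L$ via the normalization $\lim_{\sigma\to+\infty}\log\L(\sigma)=0$. This is settled by comparing both sides in a half-plane $\sigma>\sigma_0$ large enough for every factor to be close to $1$, and then invoking analytic continuation, so the argument is clean once the domains of absolute convergence are tracked carefully.
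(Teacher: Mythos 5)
Your argument is correct and follows essentially the same route as the paper: bound $|\alpha_j(p)|\le 1$ via Theorem \ref{coeffpoleuler}~(b), expand each local Euler factor by the generalized binomial series to identify the $d_\kappa(p^k)$, multiply out, and then bound $|a_\kappa(n)|$ by $d_{mK}(n)$ via Lemma \ref{lem:gendivisor}~(b),(e) before applying Lemma \ref{lem:gendivisor}~(d) for the Ramanujan estimate. The only (minor) difference is that you devote explicit attention to the branch-compatibility of $\L^\kappa$ with the local factors $(1-\alpha_j(p)/p^s)^{-\kappa}$, which the paper passes over silently; this is a sound addition and does not change the substance of the proof.
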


\begin{proof}
Let $\kappa\in\R$. According to Theorem \ref{coeffpoleuler} (b), the Ramanujan hypothesis assures that, for $p\in\mathbb{P}$,
\begin{equation}\label{maxalphaj}
\max_{j=1,..,m}|\alpha_j(p)| \leq 1.
\end{equation}
The Taylor expansion 
$$
(1-z)^{-\kappa} = \sum_{\nu =0}^{\infty} \binom{\kappa+\nu-1}{\nu} z^{\nu} \qquad \mbox{for }z\in\C \mbox{ with }|z|<1,
$$
yields that, for $p\in\mathbb{P}$ and $\sigma>1$,
$$
\left(1-\frac{\alpha_j(p)}{p^s}\right)^{-\kappa}
=  1+\sum_{\nu=1}^{\infty}\frac{d_{\kappa}(p^{\nu})\alpha_j(p)^{\nu}}{p^{\nu s}}.
$$
From \eqref{maxalphaj} and the estimate of Lemma for $d_{\kappa}(n)$, we derive that the series
$$
\sum_{p\in\mathbb{P}} \sum_{j=1}^m \sum_{\nu=1}^{\infty}\frac{ d_{\kappa}(p^{\nu})\alpha_j(p)^{\nu}}{p^{\nu s}}
$$
converges absolutely in $\sigma>1$. Hence, we conclude that
\begin{equation*}
\L(s)^{\kappa}  = \prod_{p\in\mathbb{P}} \prod_{j=1}^m \left(1-\frac{\alpha_j(p)}{p^s}\right)^{-\kappa}
= \prod_{p\in\mathbb{P}} \prod_{j=1}^m \left(1+\sum_{\nu=1}^{\infty}\frac{d_{\kappa}(p^{\nu})\alpha_j(p)^k}{p^{\nu s}}\right),
\qquad \sigma>1,
\end{equation*}
where the infinite product and sum appearing in the latter expression converge absolutely in $\sigma>1$. By multiplying out and rearranging the terms, we obtain that
$$
\L(s)^{\kappa} = \sum_{n=1}^{\infty} \frac{a_{\kappa} (n)}{n^s}, \qquad \sigma>1,
$$
with $a_{\kappa}(1)=1$ and
$$
a_{\kappa}(n) = \prod_{\begin{subarray}{c} p\in\mathbb{P}\\ p|n\end{subarray}} \quad
\sum_{\begin{subarray}{c} (k_1,...,k_m) \in \N_0^m \\ k_1+...+k_m =\nu(n;p) \end{subarray}}
\prod_{j=1}^m d_{\kappa}(p^{k_j})\alpha_j(p)^{k_j} \qquad \mbox{for }n\in\N\setminus\{1\}.
$$
We deduce from \eqref{maxalphaj} and the properties of $d_{\kappa}(n)$ in Lemma \ref{lem:gendivisor} (b) and (e) that, for $n\in\N$,
$$
\left| a_{\kappa}(n) \right| \leq \prod_{\begin{subarray}{c} p\in\mathbb{P}\\ p|n\end{subarray}} \quad
\sum_{\begin{subarray}{c} (k_1,...,k_m) \in \N_0^m \\ k_1+...+k_m =\nu(n;p) \end{subarray}}
\prod_{j=1}^m d_{|\kappa|}(p^{k_j}) =d_{m|\kappa|}(n).
$$
Now, it follows from Lemma \ref{lem:gendivisor} (d) that the coefficients $a_{\kappa}(n)$ satisfy the Ramanujan hypothesis.
\end{proof}
Beyond our considerations, it would be interesting to investigate the situation if $\L(s)$ cannot be written as a polynomial Euler product representation, but as an Euler product of the general form used in the definition of the Selberg class. In this case, some additional obstacles occur. Especially, we get problems to control the growth behaviour of the coefficients appearing in the Dirichlet series expansion of $\zeta(s)^{\kappa}$, $\kappa< 0$, by means of the Ramanujan hypothesis; see de Roton \cite[Sect. 2]{deroton:2009} and
Kaczorowski \& Perelli \cite[Lemma 2]{kaczorowskiperelli:2003}.

\chapter{Dirichlet series and the infinite dimensional torus}\label{ch:poly}

It was an ingenious idea of Bohr \cite{bohr:1913-2} to model Dirichlet series as functions on the infinite dimensional torus. Meanwhile this approach was translated into the modern language of functional analysis and probability theory. Concerning the probabilistic approach, we refer to the pioneering work of Bagchi \cite{bagchi:1981} and the extensive work of Laurin\v{c}ikas, see for example \cite{laurincikas:1991-2}. Concerning the functional analytic point of view, we refer to the seminal papers of Helson \cite{helson:1967,helson:1969} and the recent works by Hedenmalm, Lindqvist \& Seip \cite{hedenmalmlindqvistseip:1997, hedenmalmlindqvistseip:1999} and Tanaka \cite{tanaka:2001, tanaka:2008}.\par

\section{The infinite dimensional torus, the compact group \texorpdfstring{$K$}{ } and a local product decomposition of \texorpdfstring{$K$}{ }}\label{sec:K}
{\bf The discrete group $\Gamma$.} Let $\Lambda$ be a countable set of real numbers which are linearly independent over $\Q$. Let $(\lambda_n)_{n\in\N}$ be a denumeration of the elements of $\Lambda$ in ascending order. Further, let $\Gamma$ be the additive subgroup of $\R$ that is generated by $\Lambda$. It follows from the linear independence of the elements in $\Lambda$ that, for every $\gamma\in\Gamma$, there exist uniquely determined quantities $\nu_n(\gamma) \in \Z$, indexed by $n\in\N$, such that
\begin{equation}\label{coordinates}
\gamma = \sum_{n=1}^{\infty} \nu_n(\gamma)\cdot \lambda_n.
\end{equation}
For given $\gamma\in\Gamma$, all but finitely many of the quantities $\nu_n(\gamma)$ are equal to zero. This implies, in particular, that the sum in \eqref{coordinates} is finite.\par

We endow $\Gamma$ with the discrete topology. In this way, $\Gamma$ becomes a locally compact abelian Hausdorff group (LCA-group). Moreover, as $\Gamma$ has only countable many elements, $\Gamma$ is separable as a topological space. For LCA-groups there is a generalized concept of Fourier analysis. We shall briefly sketch the very basics of this concepts. For details and more information the reader is referred to Deitmar \cite{deitmar:2002} and Hewitt \& Ross \cite{hewittross:1994}.\par

{\bf Excursus: abstract harmonic analysis.} Let ${\sf G}$ be an LCA-group and $\mathbb{T}:=\{z\in\C \, : \,|z|=1\}$ denote the circle group, endowed with the standard topology generated by open arcs. A continuous group homomorphism $\chi: {\sf G} \rightarrow \mathbb{T}$ is said to be a character of ${\sf G}$. Under pointwise multiplication, the set ${\sf G}^*$ of all characters of ${\sf G}$ forms a group, the so called dual group of ${\sf G}$. By endowing ${\sf G}^*$ with the compact-open topology, ${\sf G}^*$ becomes also an LCA-group. It is a fundamental observation that ${\sf G}^*$ is compact, if ${\sf G}$ is discrete and that ${\sf G}^*$ is discrete, if ${\sf G}$ is compact; see Deitmar \cite[Prop. 7.2.1]{deitmar:2002}. In fact, the Pontryagin duality theorem reveals that ${\sf G}$ can be identified group-theoretically and topologically with its bidual ${\sf G}^{**}$; see Hewitt \& Ross \cite[\S 24]{hewittross:1994}. A further observation that we shall use later on is that the dual group ${\sf G}^*$ of ${\sf G}$ is metrizable if ${\sf G}$ is a separable LCA-group. \par
On every LCA-group there exist a non-trivial, non-negative, regular and translation-invariant measure, called Haar-measure, which is unique up to scalar multiplication; see for example Hewitt \& Ross \cite[\S 15,16]{hewittross:1994}. Let ${\sf G}$ be an LCA-group and $\pmb{\sigma}$ a Haar-measure on ${\sf G}$, then we define, for $p\geq 1$, the space
$$
L^p_{\pmb{\sigma}}({\sf G}):= \left\{f:{\sf G}\rightarrow \C \, : \, \int_{{\sf G}} \left| f \right|^p \d \pmb{\sigma}<\infty \right\}.
$$
If $f\in L^1_{\pmb{\sigma}}({\sf G})$, we call $\hat{f}: {\sf G}^*\rightarrow \C$, defined by
\begin{equation}\label{fouriert}
\hat{f}(\chi) = \int_{{\sf G}} f\, \overline{\chi} \d\pmb{\sigma},
\end{equation}
where $\overline{\chi}$ denotes the complex conjugation of a character $\chi\in {\sf G}^*$, the Fourier transform of $f$. \par
The theorem of Plancherel connects the $L^2$-norm of $f$ with the $L^2$-norm of its Fourier transform; see Hewitt \& Ross \cite[\S 31]{hewittross:1994}. The theorem of Plancherel can be considered as an analogue of Parseval's theorem for Fourier series. For sake of simplicity, we assume that ${\sf G}$ is compact. Then, firstly, we find a uniquely determined Haar measure on ${\sf G}$ that satisfies the normalization $\pmb{\sigma}\left({\sf G}\right) =1$. Secondly, the Cauchy-Schwarz inequality implies that $L^2_{\pmb{\sigma}}({\sf G})\subset L^1_{\pmb{\sigma}}({\sf G})$. And thirdly, we know that the dual group ${\sf G}^*$ of ${\sf G}$ is discrete. In this special situation, the theorem of Plancherel states that
\begin{equation}\label{plancherel}
\int_{\sf G}\left| f \right|^2 \d \pmb{\sigma} = \sum_{\chi \in {\sf G}^*} \left| \hat{f}(\chi) \right|^2.
\end{equation}
Here, a central ingredient in the proof is the fact that, for every two characters $\chi,\psi\in {\sf G}^*$,
\begin{equation}\label{fourierorth}
\int_{{\sf G}} \chi\, \overline{\psi} \d \pmb{\sigma}= \begin{cases}1, & \mbox{if }\chi=\psi,\\ 0, &\mbox{otherwise.} \end{cases}
\end{equation}

{\bf The dual group $K$ of $\Gamma$.} In the following, let $K$ be the dual group of $\Gamma$. By the remarks above, we conclude that $K$ is a compact and metrizable group. Hence, there is a unique Haar-measure $\pmb{\sigma}$ on $K$ that satisfies the normalization $\pmb{\sigma}(K)=1$. In the sequel, we denote the elements of $K$, i.e. the characters of $\Gamma$, by
$$
x:\Gamma\rightarrow\mathbb{T}
$$
and the characters of $K$ by
$$
\chi : K \rightarrow \mathbb{T}.
$$

There is a natural identification of $K$ with the infinite-dimensional torus
$$
\mathbb{T}^{\infty}:= \mathbb{T}_1 \times \mathbb{T}_2 \times  ...,
$$
which is given as the direct product of countably many copies $\mathbb{T}_n$ of the unit circle $\mathbb{T}$. For given 
$$
\omega:=\left(e^{i\theta_n}\right)_{n\in\mathbb{N}} = \left(e^{i\theta_1},e^{i\theta_2},... \right)\in \mathbb{T}^{\infty},
$$
we define $x_{\omega}:\Gamma \rightarrow \mathbb{T}$ to be the character of $\Gamma$ that satisfies
$$
x_{\omega}(\lambda_n) = e^{i\theta_n}, \qquad n\in\N.
$$
As the elements of $\Lambda$ are both linearly independent over $\Q$ and generate the group $\Gamma$, the character $x_{\omega}$ is well-defined and uniquely determined. It is easy to see that the map
$$
h:\mathbb{T}^{\infty} \rightarrow K, \qquad \omega \mapsto x_{\omega},
$$ 
is a group isomorphism. If we endow $\mathbb{T}^{\infty}$ with the product topology, then $\mathbb{T}^{\infty}$ is compact, due to Tychonoff's theorem; see Loomis \cite{loomis:1953}. In this case, the map $h$ is also a homeomorphism between $K$ and $\mathbb{T}^{\infty}$ which allows us to identify $K$ with $\mathbb{T}^{\infty}$ in the sequel.\par 

The uniquely determined Haar-measure $\pmb{\sigma}'$ on $\mathbb{T}^{\infty}$ satisfying $\pmb{\sigma}'(\mathbb{T}^{\infty})=1$ coincides with the properly normalized product measure on $\mathbb{T}^{\infty}$: let $\lambda$ be the arc length measure on $\mathbb{T}$, normalized such that $\lambda(\mathbb{T})=1$. Then, for every set
$$
E:=E_1\times...\times E_{N}\times \mathbb{T} \times \mathbb{T}\times ... \subset\mathbb{T}^{\infty}
$$ 
with arbitrary Borel subsets $E_1,..., E_N\subset \mathbb{T}$, we have
$$
\pmb{\sigma}'(E) = \lambda(E_1)\cdots \lambda(E_N).
$$

{\bf A local product decomposition of $K$.} Local product decompositions of compact groups go back to Hoffman \cite{hoffman:1958}. They are important tools to study the structure of compact abelian groups which occur as dual groups of a subgroup of the discrete real line; see Gamlin \cite{gamelin:1969}. Roughly speaking, a local product decomposition of a compact group decomposes the latter into a compact subgroup and a real interval. Tanaka \cite{tanaka:2008} used a local product decomposition of $K$ to model the Riemann zeta-function in the right half of the critical strip. However, without this abstract background, this idea appeared already before in the theory of vertical limit functions for Dirichlet series.\par 
For $t\in\R$, let $e_t$ denote the element of $K$ which is given by
$$
e_t (\gamma) = e^{-it\gamma}, \qquad \gamma\in\Gamma.
$$
Let $\gamma\in\Gamma$. Then, we denote by $\chi_{\gamma}$ the character of $K$ that satisfies $\chi_{\gamma}(x)= x(\gamma)$ for $x\in K$. Let $l>0$ such that $\frac{2\pi}{l}\in\Gamma$. We define 
$$
K_{2\pi/l} := \left\{x\in K \, : \, \chi_{2\pi/l}(x)=1\right\}.
$$
It can be seen easily that $K_{2\pi/ l}$ is a compact subgroup of $K$. Let $\pmb{\tau}$ denote the uniquely determined Haar measure on $K_{2\pi/ l}$ that satisfies the normalization $\pmb{\tau}(K_{2\pi/l})=1$. Via the map
$$
h: K_{2\pi/ l}\times [0, l) \rightarrow K , \qquad (y,u)\mapsto y + e_u ,
$$ 
we can identify $K_{2\pi/ l}\times [0, l)$ with $K$ group theoretically, topologically (if we identify the left end point $0$ of the interval $[0,l)$ with $l$) and measure-theoretically. Here, the measure $\pmb{\sigma}$ on $K$ corresponds to the measure $\pmb{\tau}\times \frac{1}{l}\d t$ on $K_{2\pi/l}\times [0,l)$.

\section{An ergodic flow on \texorpdfstring{$K$}{} and a special version of the ergodic theorem}\label{sec:ergodicflow}
In this section we consider certain ergodic processes on $K_{2\pi/l}$ and $K$. Ergodic theory studies the long term average behaviour of dynamical systems. For basic definitions and results in ergodic theory, the reader is referred to Dajani \& Dirksin \cite{dajanidirksin:2008}, Steuding \cite{steuding:2010} and Cornfeld, Fomin \& Sinai \cite[Chapt. 3, \S 1]{cornfeldfominsinai:1982}. Here, we shall work essentially with the Birkhoff-Khinchine ergodic theorem.\par
We define the map $T:K_{2\pi/l}\rightarrow K_{2\pi/l}$ by
$$
Ty := y+e_l. 
$$
For $n\in \N$, we set 
$$
T^{n}y := \underbrace{(T\circ ... \circ T)}_{n\scalebox{0.7}{\mbox{-times}}} y = y +ne_l
$$
It is well-known that the system $(T,K_{2\pi/l})$ is uniquely ergodic where the unique $T$-invariant probability measure is given by $\pmb{\tau}$. For details, we refer to Cornfeld, Fomin \& Sinai \cite[Chapt. 3, \S 1]{cornfeldfominsinai:1982}. The proof relies essentially on a theorem of Kronecker which states the following:
\begin{theorem}[Theorem of Kronecker]\label{th:kronecker}
Let $\theta_1$,...,$\theta_M\in\R$ such that the numbers $1,\theta_1$,...,$\theta_M$ are linearly independent over $\Q$. Furthermore, let $\alpha_1$,...,$\alpha_M\in \R$, $\eps>0$ and $N\in\N$. Then, there exist $n, q_1,...,q_M\in\N$ with $n>N$ such that
$$
\left|n\theta_m - q_m -\alpha_m \right|< \eps, \qquad m=1,...,M.
$$
\end{theorem}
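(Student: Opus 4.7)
My strategy is to reduce Kronecker's theorem to Weyl's equidistribution criterion on the $M$-dimensional torus $\mathbb{T}^M$. Consider the sequence of points
$$\pmb{x}_n := (n\theta_1, \ldots, n\theta_M) \bmod 1 \in \mathbb{T}^M, \qquad n\in\N.$$
The assertion to be shown is precisely that $\{\pmb{x}_n \, : \, n>N\}$ lies $\eps$-dense in $\mathbb{T}^M$ for every $\eps>0$ and every $N\in\N$. It therefore suffices to establish that $(\pmb{x}_n)_{n\in\N}$ is equidistributed on $\mathbb{T}^M$, since equidistribution implies density and is unaffected by discarding finitely many initial terms.

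By Weyl's criterion, equidistribution is equivalent to
$$\lim_{N\to\infty} \frac{1}{N}\sum_{n=1}^{N} \exp\bigl(2\pi i \langle \pmb{k}, \pmb{x}_n\rangle\bigr) = 0$$
for every non-zero $\pmb{k} = (k_1, \ldots, k_M)\in\Z^M$. Setting $\beta_{\pmb{k}} := k_1\theta_1 + \cdots + k_M\theta_M$, this reduces to a geometric sum with ratio $e^{2\pi i \beta_{\pmb{k}}}$. The crux of the argument lies in verifying that $\beta_{\pmb{k}} \notin \Z$: were $\beta_{\pmb{k}} = m \in \Z$, then $k_1\theta_1 + \cdots + k_M\theta_M - m \cdot 1 = 0$ would furnish a non-trivial rational relation among $1, \theta_1, \ldots, \theta_M$, contradicting the linear independence hypothesis. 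Consequently $e^{2\pi i \beta_{\pmb{k}}} \neq 1$, and the geometric sum is bounded by $2/|e^{2\pi i\beta_{\pmb{k}}} - 1|$ uniformly in $N$, so the Cesàro mean tends to zero.

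With equidistribution in hand, density of $\{\pmb{x}_n \, : \, n>N\}$ in $\mathbb{T}^M$ is automatic. Given $\alpha_1, \ldots, \alpha_M\in\R$ and $\eps>0$, I would pick representatives $\tilde{\alpha}_m$ of $\alpha_m$ modulo $1$. Density then provides some $n>N$ and integers $\tilde{q}_m\in\Z$ with $|n\theta_m - \tilde{q}_m - \tilde{\alpha}_m|<\eps$ for each $m$; absorbing the integer shifts $\alpha_m - \tilde{\alpha}_m$ and, if necessary, passing to a sufficiently large index $n$ (permissible since the approximating indices form an infinite set) yields the desired $n, q_1,\ldots, q_M\in\N$.

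The main obstacle, should one wish to keep the argument self-contained, is the justification of Weyl's criterion itself. This is classical: one approximates indicator functions of boxes in $\mathbb{T}^M$ uniformly by trigonometric polynomials via Stone--Weierstrass, using that the characters of $\mathbb{T}^M$ separate points. In the language of Section~\ref{sec:K}, this is precisely the statement that the characters $\chi_\gamma$ span a dense subspace of $\mathcal{C}(K)$, so the abstract harmonic-analytic machinery already assembled in the paper could in principle supply this step directly. Since Weyl's criterion is standard and amply documented in the literature on uniform distribution modulo one, invoking it appears to be the cleanest route.
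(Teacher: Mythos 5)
The paper does not actually prove Theorem~\ref{th:kronecker}: it states the result and refers to Hardy~\& Wright \cite[Chapt.~23]{hardywright:1979} for a proof. Your Weyl-equidistribution argument is a correct and standard alternative to the route taken in that reference (which uses a more elementary inductive reduction rather than Weyl's criterion). The key step — that $k_1\theta_1+\cdots+k_M\theta_M\notin\Z$ for nonzero integer vectors $\pmb{k}$, precisely because $1,\theta_1,\ldots,\theta_M$ are $\Q$-linearly independent — is exactly what makes the geometric-sum estimate go through, and your appeal to Stone--Weierstrass for the reduction to exponential sums is the standard way to discharge Weyl's criterion. Your observation that this could also be phrased in the language of characters on $K$ already set up in Section~\ref{sec:K} is a nice remark, though not necessary.

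One small point of friction with the statement as given: equidistribution of $(n\theta_1,\ldots,n\theta_M)$ modulo~$1$ naturally delivers approximating \emph{integers} $q_m\in\Z$, not necessarily positive ones. Your final paragraph proposes to recover $q_m\in\N$ by taking $n$ large, but that works only when every $\theta_m>0$ (so that $n\theta_m-\alpha_m\to+\infty$); if some $\theta_m\le 0$, then $q_m\approx n\theta_m-\alpha_m$ is eventually negative and no choice of $n>N$ repairs this. This is best read as an imprecision in the theorem as stated in the thesis — the classical formulation, and the one Hardy~\& Wright prove, asks only for $q_m\in\Z$, and the sole use made of the theorem here (density of $\{(p_1^{-it},\ldots,p_m^{-it})\,:\,t\in\R\}$ in $\mathbb{T}^m$) needs nothing more. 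So you should either weaken the conclusion to $q_m\in\Z$, or record the implicit hypothesis $\theta_m>0$ under which your ``pass to large $n$'' step is valid.
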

A proof of Kronecker's theorem can be found in Hardy \& Wright \cite[Chapt. 23]{hardywright:1979}. \par
Let $f\in L^1_{\pmb{\tau}}(K_{2\pi/l})$. The Birkhoff-Khinchine ergodic theorem  implies that, for $\pmb{\tau}$-almost every $y\in K_{2\pi/l}$,
\begin{equation}\label{ergodic}
\lim_{N\rightarrow\infty}\frac{1}{N}\sum_{n=1}^{N} f\left( T^n y \right) = \int_K f \d\pmb{\sigma};
\end{equation}
see Cornfeld, Fomin \& Sinai \cite[Chapt. 1, \S 2]{cornfeldfominsinai:1982}.
Since $T$ is {\it uniquely} ergodic, the formula \eqref{ergodic} holds even for every $y\in K_{2\pi/l}$, if $f$ is continuous on $K_{2\pi/l}$; see Cornfeld, Fomin \& Sinai \cite[Chapt. 1, \S 8]{cornfeldfominsinai:1982}.\par

For $t\in\R$, we define the map $T_t : K \rightarrow K$ by
$$
T_t x = x + e_t.
$$
The set $\{T_t\}_{t\in\R}$ forms a one-parameter group of homeomorphisms of $K$. It is well-known that the flow $(\{T_t\}_{t\in \R}, K)$ is uniquely ergodic. Its associated unique invariant probability measure is given by $\pmb{\sigma}$; see Cornfeld, Fomin \& Sinai \cite[Chapt. 3, \S 1]{cornfeldfominsinai:1982}. The Birkhoff-Khinchine ergodic theorem implies that
\begin{equation}\label{ergodic2}
\lim_{T\rightarrow\infty}\frac{1}{T}\int_0^T f\left( T_t x \right) \d t = \int_K f \d\pmb{\sigma} .
\end{equation}
holds for $\pmb{\sigma}$-almost every $x\in K$, if $f\in L^1_{\pmb{\sigma}}(K)$, and for every $x\in K$, if $f$ is continuous on $K$.\par
The maps $T$ and $T_t$ are strongly connected to one another. If we identify $K$ with $K_{2\pi/l}\times [0,l)$ as described in the preceeding section, the map $T_t$ is represented on $K_{2\pi/l}\times [0,l)$ by 
$$\mathcal{T}_t:K_{2\pi/l}\times[0,l)\rightarrow K_{2\pi/l}\times[0,l),
\qquad (y,u)\mapsto (T^{N_t}y, t+u-N_t l) $$ 
where
$$
N_t := \left[\frac{t+u}{l} \right]
$$
with $[x]$ denoting the largest integer not exceeding $x\in\R$.\par

For $y\in K_{2\pi/ l}$, $l>0$ and $J\subset \N$, we define
\begin{equation}\label{EJ}
E_{l,y}(J):= \overline{\left\{T^n y \, : \, n\in J\right\}} \subset K_{2\pi/ l},
\end{equation}
where the closure is taken with respect to the topology of $K_{2\pi/l}$.
As every closed set in a compact space is compact, we deduce immediately that $E_{l,y}(J)$ is compact. \par

Further, for a subset $J\subset \N$, we define its upper density by
$$
\dens^* J = \limsup_{N\rightarrow\infty} \frac{\# \left(J\cap [1,N] \right) }{N} 
$$
and its lower density by
$$
\dens_* J = \liminf_{N\rightarrow\infty} \frac{\# \left(J\cap [1,N] \right) }{N}.
$$
If $\dens^* J = \dens_* J=d$, we say that $J$ has density $d$ and write $\dens\ J := d$. \par
The following lemma is due to Tanaka \cite[Lemma 3.1]{tanaka:2008} and relies essentially on the ergodicity of $T$. 
\begin{lemma}[Tanaka, 2008]\label{lem:tanaka1}
Let $l>0$, $y\in K_{2\pi/l}$ and $J\subset \N$. Then,
$$
\dens^* (J) \leq \pmb{\tau}\left( E_{l,y}(J)\right).
$$
\end{lemma}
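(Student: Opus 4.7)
The plan is to exploit the unique ergodicity of the translation $T$ on $K_{2\pi/l}$, which was emphasized in Section \ref{sec:ergodicflow}: for every continuous $f$ on $K_{2\pi/l}$ and every $y\in K_{2\pi/l}$,
$$
\lim_{N\rightarrow\infty}\frac{1}{N}\sum_{n=1}^N f(T^n y) = \int_{K_{2\pi/l}} f\,\d\pmb{\tau}.
$$
The idea is to pick a continuous majorant of the indicator function of $E:=E_{l,y}(J)$, apply this convergence, and compare the resulting time average with the lower-bound produced by counting the integers $n\in J$.

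First, I would fix $\eps>0$ and use the fact that $E$ is a compact subset of the compact metrizable group $K_{2\pi/l}$, together with the outer regularity of the Haar measure $\pmb{\tau}$, to find an open neighbourhood $U\supset E$ with $\pmb{\tau}(U) \leq \pmb{\tau}(E)+\eps$. By Urysohn's lemma (which applies since $K_{2\pi/l}$ is compact Hausdorff, hence normal), I would then choose a continuous function $f_{\eps}:K_{2\pi/l}\rightarrow[0,1]$ which is identically $1$ on $E$ and vanishes outside $U$, so that
$$
\pmb{1}_{E} \leq f_{\eps} \leq \pmb{1}_{U}, \qquad \int_{K_{2\pi/l}} f_{\eps}\,\d\pmb{\tau} \leq \pmb{\tau}(E)+\eps.
$$

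Next, I would observe that, by the very definition of $E=E_{l,y}(J)$, every orbit point $T^n y$ with $n\in J$ lies in $E$, whence $f_{\eps}(T^n y)=1$ for all $n\in J$. Consequently, for every $N\in\N$,
$$
\frac{1}{N}\sum_{n=1}^N f_{\eps}(T^n y) \geq \frac{1}{N}\,\#\bigl(J\cap[1,N]\bigr).
$$
Taking $\limsup$ as $N\rightarrow\infty$ on the right and applying the unique-ergodicity relation \eqref{ergodic} on the left (which holds at \emph{every} point $y$, not merely almost every $y$, precisely because $f_{\eps}$ is continuous) yields
$$
\int_{K_{2\pi/l}} f_{\eps}\,\d\pmb{\tau} \geq \dens^* J,
$$
and therefore $\dens^* J \leq \pmb{\tau}(E)+\eps$. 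Letting $\eps\rightarrow 0$ gives the desired inequality.

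I do not expect a genuine obstacle in this argument: the three ingredients (outer regularity of Haar measure, Urysohn's lemma, unique ergodicity of $T$) are all available in the setting of Section \ref{sec:ergodicflow}. The only subtlety worth double-checking is that one really has pointwise (and not just $\pmb{\tau}$-a.e.) convergence of the ergodic averages for continuous test functions; this is exactly the consequence of \emph{unique} ergodicity quoted after \eqref{ergodic}, and is the reason why the statement holds for \emph{every} $y\in K_{2\pi/l}$ rather than almost every.
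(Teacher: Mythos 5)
Your proof is correct. Note that the paper itself does not prove this lemma but defers entirely to Tanaka \cite[Lemma 3.1]{tanaka:2008}, so there is no in-text proof to compare against; your argument — combining (a) outer regularity of the finite Haar measure $\pmb{\tau}$ on the compact metrizable group $K_{2\pi/l}$, (b) Urysohn's lemma to produce a continuous $[0,1]$-valued function squeezed between $\pmb{1}_E$ and $\pmb{1}_U$, and (c) the pointwise convergence of ergodic averages for continuous test functions guaranteed by \emph{unique} ergodicity of $(T,K_{2\pi/l})$ — is a self-contained, valid derivation. Each step checks out: $T^n y\in E_{l,y}(J)$ for every $n\in J$ by definition of the orbit closure, so $f_\eps(T^n y)=1$ there, the time average dominates the counting density, and the a.e.\ caveat of Birkhoff--Khinchine is correctly replaced by everywhere convergence via unique ergodicity (the subtlety you flag). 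This is the natural route and almost certainly what Tanaka does; nothing is missing.
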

For a proof, we refer to Tanaka \cite[Lemma 3.1]{tanaka:2008}. The following refinement of Lemma \ref{lem:tanaka1} is also due to Tanaka \cite[Lemma 3.2]{tanaka:2008} and yields a modified version of the Birkhoff-Khinchine ergodic theorem.

\begin{lemma}[Tanaka, 2008]\label{lem:Tanakaergodic}
Let $l>0$, $y\in K_{2\pi/ l}$ and $J\subset \N$. Suppose that, for any $\eps>0$, there exists a subset $J_{\eps}\subset \N\setminus J$ such that
$$
\pmb{\tau} \left(E_{l,y}(J)\cap E_{l,y}(J_{\eps}) \right) = 0 \qquad \mbox{and }\qquad \dens^* (\N\setminus(J\cup J_{\eps})) < \eps.
$$
Let $p$ be a function on $K_{2\pi/ l}$ which is continuous on $E_{l,y}(J)$ and zero on $K_{2\pi/l}\setminus E_{l,y}(J)$. Then,
$$
\lim_{N\rightarrow\infty} \frac{1}{N}\sum_{n=0}^{N} p(T^n y) = \int_{E_{l,y}(J)} p(y)\d \pmb{\tau}
$$
and 
$$
\dens (J) = \pmb{\tau}(E_{l,y}(J)).
$$
\end{lemma}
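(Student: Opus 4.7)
Set $E_1:=E_{l,y}(J)$ and $E_2:=E_{l,y}(J_{\eps})$. The plan is to establish the density identity $\dens J=\pmb{\tau}(E_1)$ first, using only Lemma \ref{lem:tanaka1} together with elementary density arithmetic, and then to deduce the Birkhoff-type average for $p$ by sandwiching the (possibly boundary-discontinuous) function $p$ between continuous test functions and appealing to the unique ergodicity of $T$ on $K_{2\pi/l}$.

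\textbf{Density statement.} Lemma \ref{lem:tanaka1} immediately gives $\dens^* J\leq \pmb{\tau}(E_1)$ and $\dens^* J_{\eps}\leq \pmb{\tau}(E_2)$. Since $\pmb{\tau}(E_1\cap E_2)=0$ and $\pmb{\tau}(K_{2\pi/l})=1$, one has $\pmb{\tau}(E_2)\leq 1-\pmb{\tau}(E_1)$. The hypothesis $\dens^*(\N\setminus(J\cup J_{\eps}))<\eps$ rewrites as $\dens_*(J\cup J_{\eps})>1-\eps$, and for disjoint $A,B$ the elementary inequality $\dens_*(A\cup B)\leq \dens_* A+\dens^* B$ (a consequence of $\liminf(a_n+b_n)\leq \liminf a_n+\limsup b_n$) yields
$$
\dens_* J \;\geq\; \dens_*(J\cup J_{\eps})-\dens^* J_{\eps} \;>\; (1-\eps)-(1-\pmb{\tau}(E_1)) \;=\; \pmb{\tau}(E_1)-\eps.
$$
Letting $\eps\to 0$ gives $\dens_* J\geq \pmb{\tau}(E_1)$, and combined with Lemma \ref{lem:tanaka1} this forces $\dens J=\pmb{\tau}(E_1)$.

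\textbf{Average statement.} Fix $\eta>0$. Since $K_{2\pi/l}$ is a compact metrizable group, the Haar measure $\pmb{\tau}$ is regular, so one may choose open sets $E_1\subset U\subset \overline{U}\subset V$ with $\pmb{\tau}(V\setminus E_1)<2\eta$. By Tietze's extension theorem, extend $p|_{E_1}$ to a continuous function $\tilde p$ on $K_{2\pi/l}$ with $\|\tilde p\|_{\infty}=\|p|_{E_1}\|_{\infty}=:M$; by Urysohn, pick continuous $\phi,\psi\colon K_{2\pi/l}\to [0,1]$ with $\phi\equiv 1$ on $E_1$, $\phi\equiv 0$ off $U$, $\psi\equiv 1$ on $\overline U$, $\psi\equiv 0$ off $V$. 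Set $g:=\tilde p\phi$; then $g$ is continuous, coincides with $p$ on $E_1$, vanishes off $U$, and $|g|\leq M$. The unique ergodicity of $T$ makes Birkhoff-Khinchine applicable to every continuous test function at every $y$, so
$$
\frac{1}{N}\sum_{n=0}^{N}g(T^n y)\;\longrightarrow\;\int g\,d\pmb{\tau}, \qquad \Big|\int g\,d\pmb{\tau}-\int_{E_1}p\,d\pmb{\tau}\Big|\;\leq\; M\,\pmb{\tau}(U\setminus E_1)\;<\;2M\eta.
$$
The difference $p-g$ is supported on those $n$ with $T^n y\in U\setminus E_1$ and is bounded there by $2M$, so everything reduces to counting such visits. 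The pointwise bound $\pmb{1}_U\leq \psi$ combined with Birkhoff for $\psi$ gives $\limsup_N\tfrac{1}{N}\#\{n\leq N:T^n y\in U\}\leq \int\psi\,d\pmb{\tau}\leq \pmb{\tau}(V)<\pmb{\tau}(E_1)+2\eta$, while the density statement just proved yields $\liminf_N\tfrac{1}{N}\#\{n\leq N:T^n y\in E_1\}\geq \dens J=\pmb{\tau}(E_1)$ (since $n\in J$ implies $T^n y\in E_1$). Subtracting shows that $\{n:T^n y\in U\setminus E_1\}$ has upper density at most $2\eta$, so $\limsup_N\bigl|\tfrac{1}{N}\sum_{n=0}^{N}(p-g)(T^n y)\bigr|\leq 4M\eta$, and a triangle inequality bounds $\limsup_N\bigl|\tfrac{1}{N}\sum_{n=0}^{N}p(T^n y)-\int_{E_1}p\,d\pmb{\tau}\bigr|$ by $6M\eta$. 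Letting $\eta\to 0$ closes the argument.

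\textbf{Main obstacle.} The novel ingredient, as opposed to a routine use of unique ergodicity, is the control of visits to the ``annular'' set $U\setminus E_1$ enveloping the boundary of $E_1$; this forces one to use the density identity inside the proof of the average, so the two conclusions genuinely have to be proved in the stated order. The quantitative coupling that makes this work is $\pmb{\tau}(E_2)\leq 1-\pmb{\tau}(E_1)$, furnished by the near-disjointness hypothesis $\pmb{\tau}(E_1\cap E_2)=0$ together with the hypothesis on $\dens^*(\N\setminus(J\cup J_{\eps}))$.
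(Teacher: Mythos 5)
Your proof is correct. Note that the paper does not reproduce a proof of this lemma at all; it simply cites Tanaka's paper, so there is no internal argument in the thesis to compare against. Your argument is sound on its own terms: the density identity is derived cleanly from Lemma \ref{lem:tanaka1}, the near-disjointness hypothesis $\pmb{\tau}(E_1\cap E_2)=0$, and the elementary inequality $\dens_*(A\cup B)\le\dens_*A+\dens^*B$ for disjoint $A,B$; and the average statement is then obtained by a standard regularity/Tietze/Urysohn approximation of the boundary-discontinuous function $p$, with the frequency of visits to the thin set $U\setminus E_1$ controlled by combining unique ergodicity (applied to $\psi$) with the already-established lower bound $\liminf_N\frac{1}{N}\#\{n\le N:T^ny\in E_1\}\ge\dens J=\pmb{\tau}(E_1)$. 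The observation at the end — that the two conclusions are logically ordered, with the density identity being an ingredient in the proof of the ergodic average — is the key structural insight, and your quantitative coupling $\pmb{\tau}(E_2)\le 1-\pmb{\tau}(E_1)$ is precisely where the hypothesis $\pmb{\tau}(E_1\cap E_2)=0$ earns its keep.
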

For a proof, we refer again to Tanaka \cite[Lemma 3.2]{tanaka:2008}.

\section{Extension of Dirichlet series to functions on the infinite dimensional torus}
In this section we outline the basic principles to study Dirichlet series as a function on the infinite dimensional torus. In our notation, we follow mainly Tanaka \cite{tanaka:2008}.\par

Let $l>0$ be a fixed parameter and $\Lambda_P = \{\log p \, : \, p\in\mathbb{P}\}$. From now on, let $\Gamma$, $K$ and $K_{2\pi/l}$ be the groups of Section \ref{sec:K} that we obtain for the special choice of 
$$
\Lambda = 
\begin{dcases*}
\Lambda_P\cup\{\tfrac{2\pi}{l}\}   & if $l\notin \{2\pi k(\log \frac{n}{m})^{-1} \, : \, k,n,m\in\mathbb{N}, n\neq m\}$,\\
\Lambda_P & otherwise.
\end{dcases*}
$$
The fundamental theorem of arithmetic and the transcendence of $\pi$ assure that the elements of $\Lambda$ are linear independent over $\Q$.\par

In the sequel, let $\L(s)$ denote a Dirichlet series of the form 
\begin{equation}\label{dirichlet1}
\L(s) = \sum_{n=1}^{\infty} \frac{a(n)}{n^s}.
\end{equation}
To a given Dirichlet series $\L(s)$, we assign a set of allied series which we define formally by 
\begin{equation}\label{def:ext}
L(s,x):= \sum_{n=1}^{\infty} \frac{a(n)}{n^{s}}  \chi_{\log n} (x) \qquad \mbox{ with }x\in K.
\end{equation}
There are some fundamental relations between $\L(s)$ and $L(s,x)$. For $\sigma+it\in\C$ and $x\in K$, we have
$$
L(\sigma+it,x) = \sum_{n=1}^{\infty} \frac{a(n)}{n^{\sigma}} e^{-it\log n}  \chi_{\log n} (x)
= \sum_{n=1}^{\infty} \frac{a(n)}{n^{\sigma}}  \chi_{\log n}(e_t)\chi_{\log n} (x)
$$
$$
= \sum_{n=1}^{\infty} \frac{a(n)}{n^{\sigma}} \chi_{\log n}(x+e_t) = L(\sigma, x+e_t).
$$
Moreover, let $x_0\in K$ denote the principal character in $K$ which is given by
$$
x_0(\gamma)=1 \qquad  \mbox{ for }\gamma\in\Gamma.
$$
Then, the relation
$$
\L(\sigma+it) = L(\sigma+it,x_0) = L(\sigma,e_t)
$$
holds for every $\sigma+it\in\C$. Roughly speaking, these identities allow us to model
$$
\R\ni t\mapsto \L(\sigma+it),
$$
for a properly chosen $\sigma\in\R$, as an ergodic flow on $K$.\par
Formally, $L(s,x)$ defines a function on $\C\times K$ which we denote by $L$, i.e.
\begin{equation}\label{L}
L : (s,x) \mapsto L(s,x), \qquad (s,x)\in \C\times K.
\end{equation}
Later on, it will be convenient to fix $x\in K$ and to consider $L$ as a function on $\C$. For this purpose, we define formally the function $L_x$ on $\C$ via 
\begin{equation}\label{Lx}
L_x : s\mapsto L_x (s):=L(s,x), \qquad s\in \C,
\end{equation}
where we consider $x\in K$ as a fixed parameter. Similarly, it will be useful at some places to fix $s\in \C$ and to regard $L$ as a function on $K$. For this purpose, we define formally the function $L_s$ on $K$ by
\begin{equation}\label{Ls}
L_s : x\mapsto L_s (x):=L(s,x), \qquad x\in K,
\end{equation}
where $s\in\C$ is considered as a fixed parameter.\par

Firstly, we shall consider the Dirichlet series expansion of the functions $L_x$, $x\in K$, attached to given Dirichlet series $\L(s)$ by means of \eqref{Lx}.
\begin{lemma}\label{lem:ext1}
Let $\L(s)$ be a Dirichlet series, $\sigma_a$ its abscissa of absolute convergence and $\sigma_u$ its abscissa of uniform convergence. Then, the following statements are true.
\begin{itemize}
\item[(a)] For every $x\in K$, the abscissa of absolute convergence of the Dirichlet series expansion of $L_x$ coincides with $\sigma_a$.
\item[(b)] For every $x\in K$, the abscissa of uniform convergence of the Dirichlet series expansion of $L_x$ coincides with $\sigma_u$.
\end{itemize}
\end{lemma}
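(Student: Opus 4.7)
The factor $\chi_{\log n}(x)$ has modulus one for every $x\in K$ and every $n\in\N$, since $\chi_{\log n}$ is a character of $K$ with values in $\mathbb{T}$. Hence the majorant series
$$
\sum_{n=1}^\infty \left|\frac{a(n)\chi_{\log n}(x)}{n^s}\right| = \sum_{n=1}^\infty \frac{|a(n)|}{n^\sigma}
$$
converges for exactly the same values of $\sigma$ as the original Dirichlet series for $\L$, so the abscissae of absolute convergence coincide. This part requires no further input.

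\textbf{Part (b).} The starting point is the identity
$$
L_x(\sigma+it) = L(\sigma, x+e_t),
$$
which follows from the computation $\chi_{\log n}(e_t)=n^{-it}$ carried out at the beginning of the present section. By the Cauchy criterion, the series defining $L_x$ converges uniformly on the half-plane $\sigma\geq \sigma^*$ if and only if, for every $\varepsilon>0$, there exists $N_0\in\N$ such that the tails
$$
R_{M,N}(\sigma,y):=\sum_{n=M+1}^N \frac{a(n)\chi_{\log n}(y)}{n^\sigma}
$$
satisfy $|R_{M,N}(\sigma,y)|<\varepsilon$ for all $N,M\geq N_0$, $\sigma\geq \sigma^*$, and all $y$ in the coset $x+\{e_t:t\in\R\}\subset K$. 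The plan is to transfer this Cauchy condition from a single orbit to all of $K$, and in particular to the orbit $\{e_t:t\in\R\}$ of the principal character $x_0$, whose associated function is $L_{x_0}=\L$.

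\textbf{Finishing the argument.} The crucial ingredient is that $\{e_t:t\in\R\}$ is dense in $K$. This is a consequence of the unique ergodicity of the flow $\{T_t\}_{t\in\R}$ recorded in Section~\ref{sec:ergodicflow} (which entails minimality), and can also be read off directly from Pontryagin duality since the non-trivial characters $\chi_\gamma(e_t)=e^{-it\gamma}$, $0\neq\gamma\in\Gamma$, fail to be constant on $\R$; a continuous-parameter variant of Theorem~\ref{th:kronecker} gives the same conclusion. Translating by $x$ shows that $x+\{e_t:t\in\R\}$ is dense in $K$ as well. For fixed $M$, $N$ and $\sigma$, the function $y\mapsto R_{M,N}(\sigma,y)$ is continuous on $K$, since it is a finite linear combination of the continuous characters $\chi_{\log n}$. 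Hence any bound $|R_{M,N}(\sigma,y)|<\varepsilon$ valid on a dense subset extends to all $y\in K$. In particular, uniform convergence of $L_x$ on $\sigma\geq\sigma^*$ is equivalent to uniform convergence of $L_y$ on $\sigma\geq\sigma^*$ for every $y\in K$; specializing to $y=x_0$ gives $\sigma_u(L_x)=\sigma_u$. The only delicate point is the density of $\{e_t\}$ in $K$, but it is essentially built into the ergodic-flow setup of Section~\ref{sec:ergodicflow}, so the rest of the proof reduces to a routine Cauchy/continuity argument.
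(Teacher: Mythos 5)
Your proof is correct and follows essentially the same route as the paper: part~(a) is identical, and part~(b) rests on the same combination of ingredients — continuity of the finite partial sums in the group variable, density of the one-parameter orbit $\{e_t\}_{t\in\R}$, and transfer via the uniform Cauchy criterion. The paper's sketch reduces to the finite torus $\mathbb{T}^m$ so as to invoke the classical Kronecker theorem there, whereas you establish the density directly in $K$ via Pontryagin duality (or minimality of the flow), but the substance of the argument is the same.
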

\begin{proof}[Sketch of the proof:]
Statement (a) follows directly from the observation that $|\chi_{\log n}(x)|=1$ for $x\in K$ and $n\in\N$. We continue to sketch a proof of statement (b). For any $\sigma_0>\sigma_u$ and any $\eps>0$, we find an integer $N_0\in\N$ such that the inequality
$$
\left| \L(s)-\sum_{n=1}^{N}\frac{a(n)}{n^s} \right| < \eps
$$ 
holds for every $\Re\ s\geq \sigma_0$ and arbitrary $N\geq N_0$. Let $p_1,...,p_m$ denote the prime numbers less than or equal to $N$ in ascending order. Further, let $\mathbb{T}^{m}$ denote the direct product of $m$ copies of the unit circle. Let $S_m:\mathbb{T}^{m}\times \C\rightarrow\C$ be defined by
\begin{equation}\label{defS}
S_m(\theta_1,...,\theta_m,s) := \sum_{n=1}^{N}\frac{a(n)}{n^s}\phi_{\log n}(\theta_1,...,\theta_m),
\end{equation}
where
$$
\phi_{\log n}(\theta_1,...,\theta_m) = \theta_1^{\nu(n;p_1)}\cdots \theta_m^{\nu(n;p_m)}
$$
with $\nu(n;p_j)$ being the uniquely determined quantities for which
$$
n= \prod_{j=1}^m p_j^{\nu(n;p_j)}.
$$
Certainly, $S_m$ is continuous on $\mathbb{T}^{m}\times \C$. According to the theorem of Kronecker (Theorem \ref{th:kronecker}), the set
$$
\{e^{-it\log p} \, : \, t\in\R\}
$$ 
is dense in $\mathbb{T}^m$. From this observation and the continuity of $S_m$, we deduce that, for any $\sigma\in\R$
$$
\sup \left\{\sum_{n=1}^{N}\frac{a(n)}{n^{\sigma+it}} \, : \, t\in\R \right\}
= \sup  \left\{\sum_{n=1}^{N}\frac{a(n)\phi_{\log n}(\pmb{\theta})}{n^{\sigma}} \, : \, \pmb{\theta}\in \mathbb{T}^m \right\}.
$$
The latter inequality and the topological correspondence of $\mathbb{T}^{\infty}$ and $K$ allow us to conclude that, for every $x\in K$, $N\geq N_0$ and $\sigma\geq \sigma_0$,
$$
\left| L_x(s)-\sum_{n=1}^{N}\frac{a(n)}{n^s} \right| \leq \eps.
$$
Statement (b) follows.
\end{proof}

The following lemma gathers some fundamental analytic properties of the functions $L$ and $L_x$, $x\in K$, attached to a given Dirichlet series $\L(s)$.

\begin{lemma}\label{lem:analyticpropertiesofLinU}
Let $\L(s)$ be a Dirichlet series and $U$ its half-plane of uniform convergence. Then, the following statements are true
\begin{itemize}
\item[(a)] For every $x\in K$, the function $L_x$ is an analytic function in $U$. 
\item[(b)] The function $L$ is continuous on $U\times K$.
\end{itemize}
\end{lemma}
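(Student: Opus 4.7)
Both parts should be reduced to a single uniform-convergence statement for the Dirichlet partial sums
\[
S_N(s,x):=\sum_{n=1}^{N}\frac{a(n)}{n^{s}}\chi_{\log n}(x),\qquad (s,x)\in\C\times K,
\]
combined with the elementary observation that each $S_N$ is continuous on $\C\times K$. The continuity of $S_N$ is clear: $s\mapsto n^{-s}$ is entire, while $x\mapsto\chi_{\log n}(x)=x(\log n)$ is a continuous character of $K$, since $K$ carries the topology of pointwise convergence on the discrete group $\Gamma$.

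\textbf{Key step (uniform convergence on $C\times K$ for every compact $C\subset U$).} This is the crucial and non-trivial ingredient, and it is exactly the Kronecker-density mechanism already used in the sketched proof of Lemma~\ref{lem:ext1}(b). Fix $\sigma_{0}>\sigma_{u}$ and $N\in\N$. Let $p_{1},\dots,p_{m}$ be the primes not exceeding $N$, and let $S_{m}$ be the continuous function on $\mathbb{T}^{m}\times\C$ defined in \eqref{defS}. Kronecker's theorem (Theorem~\ref{th:kronecker}) gives that $\{(p_{1}^{-it},\dots,p_{m}^{-it}):t\in\R\}$ is dense in $\mathbb{T}^{m}$, so via the natural projection $K\to\mathbb{T}^{m}$ and continuity of $S_{m}$, one obtains
\[
\sup_{x\in K}\Bigl|L(s,x)-S_{N}(s,x)\Bigr|
\;\leq\;\sup_{t\in\R}\Bigl|\L(s+it)-\sum_{n=1}^{N}\frac{a(n)}{n^{s+it}}\Bigr|
\]
for every $s$ with $\Re s\geq\sigma_{0}$. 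The right-hand side tends to $0$ as $N\to\infty$, uniformly for $\Re s\geq\sigma_{0}$, by the very definition of $\sigma_{u}$. Given any compact $C\subset U$, choose $\sigma_{0}$ with $\sigma_{u}<\sigma_{0}\leq\min_{s\in C}\Re s$; then $S_{N}\to L$ uniformly on $C\times K$.

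\textbf{Conclusions.} For (a), fix $x\in K$. Each $s\mapsto S_{N}(s,x)$ is entire, and by the uniform convergence just established, $S_{N}(\,\cdot\,,x)\to L_{x}$ locally uniformly on $U$. Weierstrass' convergence theorem (Theorem~\ref{th:weierstrass}) yields that $L_{x}$ is analytic on $U$. For (b), each $S_{N}$ is continuous on $\C\times K$, and $S_{N}\to L$ uniformly on $C\times K$ for every compact $C\subset U$. A uniform limit of continuous functions on compacta is continuous, so $L$ is continuous on $U\times K$.

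\textbf{Main obstacle.} The only delicate point is upgrading the uniform convergence of the Dirichlet partial sums of $\L$ in $t\in\R$ (which is just the definition of $\sigma_{u}$) to a uniform bound in $x\in K$. This is exactly the content of the Kronecker-density argument recalled above; once it is in place, both statements follow from standard function-theoretic generalities about uniform limits.
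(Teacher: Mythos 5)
Your proposal is correct and follows essentially the same route as the paper: both parts are reduced to the uniform convergence of $S_N(s,x)$ to $L(s,x)$ on $C\times K$ for compact $C\subset U$, obtained by the Kronecker-density argument (which is the content of Lemma~\ref{lem:ext1}(b), through which the paper factors its own terse proof), and then standard facts about uniform limits — Weierstrass for analyticity in (a), continuity of uniform limits of continuous functions in (b) — finish the job. One minor informality: the displayed inequality compares the infinite tail of $L(\cdot,x)$ to that of $\L$ before the convergence of the former has been established; this should be phrased in terms of the Cauchy criterion for the partial sums $S_M-S_N$ (which are functions of finitely many torus coordinates and so genuinely amenable to Kronecker), but the repair is routine and the underlying idea is exactly right.
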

\begin{proof}
Statement (a) follows immediately from \ref{lem:ext1} (a). Statement (b) can be deduced from the continuity of the function $S_m$ on $\mathbb{T}^m\times \C$, defined by  \eqref{defS}, and the topological correspondence of $\mathbb{T}^{\infty}$ and $K$.
\end{proof}

The analytic and probabilistic relevance of the functions $L_x$, $x\in K$, lies in the fact that they appear as vertical limit functions of $\L$ in its half-plane $U$ of uniform convergence. This observation dates back to Bohr \cite{bohr:1922}. In particular, we have
$$
\overline{\left\{L_{e_{\tau}} \, : \, \tau\in\R \right\}} = \{L_x \, : \, x\in K\} \subset \mathcal{H}(U)
$$
and, for arbitrary $l>0$,
$$
\overline{\left\{L_{ne_{l}} \, : \, n\in\N \right\}} = \{L_y \, : \, y\in K_{2\pi/l}\} \subset \mathcal{H}(U).
$$
Here, $\mathcal{H}(U)$ denotes the set of analytic functions on $U$ and the closures are taken with respect to the topology of uniform convergence on compact subsets of $U$, respectively. Observe further that the relations
$$
L_{e_{\tau}}(s)= \L(s+i\tau) \qquad \mbox{and} \qquad L_{ne_l}(s) = \L(s+inl)
$$
hold for $s\in U$.\par
In the half-plane $U$ the analytic behaviour of $\L$ and its allied functions $L_x$, $x\in K$, is quite well-understood. Things are getting more interesting if $\L$ can be continued analytically beyond $U$.

\section{The space \texorpdfstring{$\mathscr{H}^2$}{} of Dirichlet series with square summable coefficients}\label{sec:DirichletH2}
Hedenmalm, Lindqvist \& Seip \cite{hedenmalmlindqvistseip:1997} investigated a Hilbert space $\mathscr{H}^2$ of Dirichlet series which have square-summable coefficients. This space can be considered as an analogue for Dirichlet series of the Hardy space $H^2(\mathbb{T})$ for Fourier series.\par

Since we work with the Ramanujan hypothesis, we normalize the space $\mathscr{H}^2$ in a slightly different way than it was done by Hedenmalm, Lindqvist \& Seip \cite{hedenmalmlindqvistseip:1997}. We define that a Dirichlet series 
$$
\L(s)=\sum_{n=1}^{\infty} \frac{a(n)}{n^{s}}
$$
belongs to the space $\mathscr{H}^2$ if and only if
$$
 \sum_{n=1}^{\infty} \frac{|a(n)|^2}{n^{\sigma}} <\infty \qquad \mbox{ for every }\sigma>1.
$$ 
If $\L(s)\in\mathscr{H}^2$, then also the Dirichlet series expansions of the functions $L_x$, $x\in K$, attached to $\L(s)$ via \eqref{def:ext}, are elements of $\mathscr{H}^2$. The Ramanujan hypothesis is a sufficient condition for $\L(s)$ to lie in $\mathscr{H}^2$. If $\L(s)$ satisfies the Ramanujan hypothesis, we deduce from our considerations in Section \ref{sec:coeff} that several Dirichlet series related to $\L(s)$ lie also in $\mathscr{H}^2$, for example $\L(s)^k$ and $\L^{(\ell)}(s)$ with $k,\ell\in\N_0$. If $\L(s)$ satisfies the Ramanujan hypothesis and, additionally, $\L(s)$ can be written as a polynomial Euler product in $\sigma>1$, then we find also $\log \L(s)$ and $\L(s)^{\kappa}$ with $\kappa\in\R$ in $\mathscr{H}^2$.

Next, we shall consider analytic properties of the functions $L_x$, $x\in K$, attached to a given Dirichlet series in $\mathscr{H}^2$ by \eqref{Lx}. The subsequent lemma follows directly from of the H\"older inequality
$$
\sum_{n=1}^{\infty} \frac{|a(n)|}{n^{\sigma}} \leq \left( \sum_{n=1}^{\infty} \frac{|a(n)|^2}{n^{\sigma}} \right)^{1/2} \cdot \left(\sum_{n=1}^{\infty} \frac{1}{n^{\sigma}}\right)^{1/2}, \qquad \sigma>1.
$$
\begin{lemma}\label{lem:analyticH2}
Let $\L(s)\in\mathscr{H}^2$. Then, the following statements are true.
\begin{itemize}
\item[(a)] For $x\in K$, the Dirichlet series expansion of $L_x$ converges absolutely in $\sigma>1$.
\item[(b)] For $x\in K$, the function $L_x$ is analytic in $\sigma>1$.
\end{itemize}
\end{lemma}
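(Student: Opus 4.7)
The plan is to apply the Cauchy--Schwarz (Hölder) inequality exactly as foreshadowed in the paragraph preceding the lemma, which reduces both claims to the defining condition of $\mathscr{H}^2$.

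First, for part (a), I would fix $x \in K$ and a real number $\sigma>1$. Since every $\chi_{\log n}$ is a character of $K$, its values lie on the unit circle, so $|\chi_{\log n}(x)|=1$. Consequently the $n$-th term of the Dirichlet expansion of $L_x(s)$ at $s=\sigma+it$ has absolute value $|a(n)|/n^\sigma$, independently of $t$ and of $x$. Splitting $|a(n)|/n^\sigma=(|a(n)|/n^{\sigma/2})(1/n^{\sigma/2})$ and applying Cauchy--Schwarz gives
$$
\sum_{n=1}^\infty \frac{|a(n)|}{n^\sigma}\le\Bigl(\sum_{n=1}^\infty\frac{|a(n)|^2}{n^\sigma}\Bigr)^{1/2}\Bigl(\sum_{n=1}^\infty\frac{1}{n^\sigma}\Bigr)^{1/2}.
$$
The first factor is finite because $\L(s)\in\mathscr{H}^2$ by hypothesis, and the second equals $\zeta(\sigma)^{1/2}<\infty$ since $\sigma>1$. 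Thus the Dirichlet expansion of $L_x$ converges absolutely throughout the half-plane $\sigma>1$, which proves (a).

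For part (b), I would upgrade this pointwise absolute convergence to local uniform convergence. Any compact set $\mathcal{K}\subset\{\sigma>1\}$ is contained in some half-plane $\sigma\ge\sigma_0$ with $\sigma_0>1$, and on $\mathcal{K}$ the absolute value of the $n$-th term of $L_x$ is bounded by the fixed summable majorant $|a(n)|/n^{\sigma_0}$, whose finiteness was established in (a). By the Weierstrass $M$-test the series converges uniformly on $\mathcal{K}$, and hence locally uniformly on $\{\sigma>1\}$. Since each individual term $a(n)\chi_{\log n}(x)/n^s$ is entire in $s$, Weierstrass's convergence theorem yields that $L_x$ is analytic on $\{\sigma>1\}$.

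There is no real obstacle here; the only point requiring a word of care is the observation $|\chi_{\log n}(x)|=1$, which decouples the dependence on $x\in K$ from the magnitude of each term and makes the bound uniform in $x$. Everything else is a direct application of Cauchy--Schwarz and the Weierstrass $M$-test.
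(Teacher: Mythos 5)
Your proof is correct and takes essentially the same route as the paper: the paper states that the lemma "follows directly from the Hölder inequality" displayed just before it, which is exactly your Cauchy--Schwarz bound combined with the observation $|\chi_{\log n}(x)|=1$. Your explicit Weierstrass $M$-test argument for part (b) fills in a step the paper leaves implicit.
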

The functions $L_{\sigma}$ with $\sigma>\frac{1}{2}$ which are attached to a given Dirichlet series $\L(s)\in\mathscr{H}^2$ by means of \eqref{Ls} lie in the space $L^2_{\pmb{\sigma}}(K)$ of the compact group $K$.\footnote{More precisely, the functions $L_{\sigma}$ with $\sigma>\frac{1}{2}$, are contained in the Hardy space $H^2_{\pmb{\sigma}}(K)\subset L^2_{\pmb{\sigma}}(K)$ of the compact group $K$; for a definition of Hardy spaces of compact groups with ordered duals, we refer to Tanaka \cite{tanaka:2008}.}
According to Plancherel's theorem we obtain that, for $\sigma>\frac{1}{2}$,
\begin{equation*}\label{plan}
\int_K \left|L_{\sigma}(x) \right|^2 \d\pmb{\sigma} = \sum_{n=1}^{\infty}\frac{|a(n)|^2}{n^{2\sigma}}.
\end{equation*}
By the unique ergodicity of the flow $\{T_t\}_{t\in\R}$ and the Birkhoff-Khinchine ergodic theorem, we get that, for every $\sigma>\frac{1}{2}$ and $\pmb{\sigma}$-almost every $x\in K$,
$$
\lim_{T\rightarrow\infty} \frac{1}{2T}\int^T_{-T} \left| L_{x}(\sigma+it)\right|^2 \d t = \int_K \left|L_{\sigma}(x) \right|^2 \d\pmb{\sigma}.
$$
These observations allow to retrieve information on the $\pmb{\sigma}$-almost sure behaviour of the functions $L_x$, $x\in K$, in the half-plane $\sigma>\frac{1}{2}$. The next theorem gathers fundamental results in this direction.

\begin{theorem}[Helson, Steuding]\label{th:almostsurebehaviour}
Let $\L(s)\in\mathscr{H}^2$. Then, there are subsets $E_1,E_2,E_3\subset K$ with $\pmb{\sigma}(E_1) = \pmb{\sigma}(E_2) = \pmb{\sigma}(E_3) = 1$ such that the following statements hold:
\begin{itemize}
 \item[(a)] For $x\in E_1$, the Dirichlet series expansion of $L_{x}$ converges in the half-plane $\sigma>\frac{1}{2}$.
 \item[(b)] For $x\in E_2$, the function $L_x$ can be continued analytically to the half-plane $\sigma>\frac{1}{2}$.
 \item[(c)] For $x\in E_3$, the mean-square of $L_x$ is given by
$$
\lim_{T\rightarrow\infty}\frac{1}{2T} \int_{-T}^T \left|L_x(\sigma+it) \right|^{2}\d t = \sum_{n=1}^{\infty} \frac{|a(n)|^2}{n^{2\sigma}} \qquad \mbox{for every }\sigma>\tfrac{1}{2}.
$$
\end{itemize}
Suppose, additionally, that $\L(s)\in\mathscr{H}^2$ satisfies the Ramanujan hypothesis. Then, there exists a subset $E_4 \subset K$ with $\pmb{\sigma}(E_4)=1$ such that the following statements hold:
\begin{itemize}
 \item[(d)] For $x\in E_4$,
$$
\lim_{T\rightarrow\infty}\frac{1}{2T} \int_{-T}^T \left|L_x(\sigma+it) \right|^{2k}\d t = \sum_{n=1}^{\infty} \frac{|a_k(n)|^2}{n^{2\sigma}} \qquad \mbox{for every }\sigma>\tfrac{1}{2} \mbox{ and }k\in\N ,
$$
where the $a_k(n)$ denote the coefficients of the Dirichlet series expansion of $\L^k$ in $\sigma>1$. 
\item[(e)] For $x\in E_4$ and $\sigma>\frac{1}{2}$, the asymptotic estimate  
$$
L_x(\sigma+it) \ll_{\sigma,\eps} |t|^{\eps}
$$
is true for any $\eps>0$, as $|t|\rightarrow\infty$.
\end{itemize}
Suppose that $\L(s)\in\mathscr{H}^2$ satisfies the Ramanujan hypothesis and can be written as a polynomial Euler product. Then, there exist subsets $E_5, E_6, E_7 \subset K$ with $\pmb{\sigma}(E_5) = \pmb{\sigma}(E_6)=1$ such that the following assertions hold.
\begin{itemize}
 \item[(f)] For $x\in E_5$, the function $L_x$ is analytic and non-vanishing in $\sigma>\frac{1}{2}$.
 \item[(g)] For $x\in E_6$ and $\kappa\in\R$, 
$$
\lim_{T\rightarrow\infty}\frac{1}{T} \int_{0}^T \left|L_x(\sigma+it) \right|^{2\kappa}\d t = \sum_{n=1}^{\infty} \frac{|a_{\kappa}(n)|^2}{n^{2\sigma}} \qquad \mbox{for every }\sigma>\tfrac{1}{2},
$$
where $a_{\kappa}(n)$ denote the coefficients of the Dirichlet series expansion of $\L^{\kappa}$ in $\sigma>1$. 
\end{itemize}
Suppose that $\L(s)\in\mathscr{H}^2$ satisfies the Ramanujan hypothesis, can be written as a polynomial Euler product and satisfies the prime mean-square condition (S.6). Then, there exist subsets $E_7, E_8 \subset K$ with $\pmb{\sigma}(E_7) = \pmb{\sigma}(E_8)=1$ such that the following holds.
\begin{itemize}
\item[(h)] For $x\in E_7$, the function $L_x$ is analytic in $\sigma>\frac{1}{2}$ and universal in the sense of Voronin inside the strip $\frac{1}{2}<\sigma<1$.
\item[(i)] For $x\in E_8$, the function $L_x$ has a convergent Dirichlet series expansion in $\sigma>\frac{1}{2}$ and the set $\{L_x \, : \, x\in E_8 \}$ lies dense in the set of all non-vanishing analytic functions in $\frac{1}{2}<\sigma<1$, with repsect to the topology of uniform convergence on compact subsets.
\end{itemize}
\end{theorem}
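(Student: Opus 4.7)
The proof proceeds in stages, with each successive assertion building upon earlier ones and requiring progressively stronger hypotheses on $\L$. For (a) and (b), the plan is to exploit the orthonormality of $\{\chi_{\log n}\}_{n\in\N}$ in $L^2_{\pmb{\sigma}}(K)$ coming from \eqref{fourierorth}. For each fixed $\sigma_0 > \tfrac{1}{2}$, the $\mathscr{H}^2$ condition gives $\sum |a(n)|^2/n^{2\sigma_0} < \infty$, so the random series $\sum a(n)\chi_{\log n}(x)/n^{\sigma_0}$ is an orthogonal series in $L^2_{\pmb{\sigma}}(K)$ with summable variances, and Kolmogorov's theorem for sums of independent (here orthogonal) centered random variables yields $\pmb{\sigma}$-almost sure convergence. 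Taking the intersection of the resulting full-measure sets over a countable dense collection of $\sigma_0 \in \Q \cap (\tfrac{1}{2}, \infty)$ and invoking a standard Abelian lemma for Dirichlet series produces a single set $E_1$ on which the series converges throughout $\sigma > \tfrac{1}{2}$; (b) is then immediate from Weierstrass's theorem.

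For (c), Plancherel on $K$ gives $\int_K |L_\sigma|^2\, \d\pmb{\sigma} = \sum_n |a(n)|^2/n^{2\sigma}$ for every $\sigma > \tfrac{1}{2}$, and the Birkhoff--Khinchine ergodic theorem applied to the continuous function $x \mapsto |L_\sigma(x)|^2$ on the uniquely ergodic flow $\{T_t\}_{t\in\R}$, as in \eqref{ergodic2}, yields the desired asymptotic for $\pmb{\sigma}$-almost every $x$; a diagonal argument over rational $\sigma > \tfrac{1}{2}$ combined with continuity in $\sigma$ produces a single set $E_3$ working simultaneously for all $\sigma > \tfrac{1}{2}$. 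For (d) and (e), the Ramanujan hypothesis and Theorem~\ref{th:sumprodL}(b) show that $\L^k \in \mathscr{H}^2$; after verifying the algebraic identity $(L_x)^k = (L^k)_x$, which reduces to the character relation $\chi_{\log n_1}(x)\chi_{\log n_2}(x) = \chi_{\log(n_1n_2)}(x)$, statement (d) follows by applying (c) to $\L^k$ for each $k \in \N$. Assertion (e) is then obtained from the mean-square bound of (c) by Carlson's theorem~\ref{th:carlson} together with a standard Phragm\'en--Lindel\"of convexity estimate, noting that $L_x$ is of finite order on every vertical strip thanks to (b) and the Ramanujan-type growth.

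For (f) and (g), when $\L$ has a polynomial Euler product, Theorem~\ref{th:Dirichletlog} furnishes an expansion $\log \L(s) = \sum_n a_{\log\L}(n)/n^s$ with coefficients supported on prime powers and uniformly bounded by $m$. Hence $\sum |a_{\log\L}(n)|^2/n^{2\sigma} \leq m^2 \sum_p \sum_{\nu \geq 1} p^{-2\nu\sigma} < \infty$ for every $\sigma > \tfrac{1}{2}$, and the Kolmogorov argument of (a) applied to this series produces an analytic function $\ell_x$ on $\sigma > \tfrac{1}{2}$ for $\pmb{\sigma}$-almost every $x$; on $\sigma > 1$ it coincides with $\log L_x$, so by analytic continuation $L_x = \exp(\ell_x)$ is non-vanishing throughout $\sigma > \tfrac{1}{2}$, giving (f). Statement (g) then follows by applying (c) to $\L^\kappa \in \mathscr{H}^2$, whose coefficients are Ramanujan-bounded by Theorem~\ref{th:Lkappa}, and by defining $L_x^\kappa := \exp(\kappa \ell_x)$ using (f).

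For (h) and (i), the plan is to adapt Bagchi's ergodic-theoretic proof of Voronin universality to the almost-sure setting. The three key steps are: identify the closed support of the $\mathcal{H}(\{\sigma > \tfrac{1}{2}\})$-valued random element $x \mapsto L_x$ with the closure of the range $\{L_x : x \in K\}$; show that this support coincides with the set of all non-vanishing analytic functions on $\tfrac{1}{2} < \sigma < 1$ by combining Kronecker's theorem~\ref{th:kronecker} with truncated-Euler-product approximations, exploiting the Euler product of $\L_x$ on $\sigma > 1$; and finally invoke the Birkhoff--Khinchine ergodic theorem once more to transfer this denseness from $K$ to vertical shifts along $\{T_t x\}_{t \geq 0}$ for $\pmb{\sigma}$-almost every $x$. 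The principal obstacle — and the unique point where the prime mean-square condition (S.6) is genuinely indispensable — is the second step, where one must upgrade the $L^2$-approximation of $L_x$ by truncated Euler products to uniform approximation on compact sets in probability; here (S.6) provides exactly the control over the prime contribution to the variance that makes Bagchi's argument go through.
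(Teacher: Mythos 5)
Your proposal goes considerably further than the paper, which simply reduces the claims to citations: (a)--(c) to Helson and Hedenmalm--Lindqvist--Seip, (e) to Titchmarsh \S 13.1, (h)--(i) to Steuding, and derives (d), (f), (g) from these by observing that $\L^k$, $\L^{-1}$ and $\L^\kappa$ lie in $\mathscr{H}^2$ (via Theorem~\ref{th:sumprodL} and Theorem~\ref{th:Lkappa}) and intersecting countably many full-measure sets. Where your route differs from the paper's is in (f): the paper notes that $\L^{-1}\in\mathscr{H}^2$, applies (b) to both $\L$ and $\L^{-1}$, and concludes that for a.e.\ $x$ both $L_x$ and $(L^{-1})_x = 1/L_x$ extend analytically to $\sigma>\tfrac{1}{2}$, whence $L_x$ is zero-free; you instead run the a.s.\ convergence argument on $\log\L\in\mathscr{H}^2$ and exponentiate. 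Both are valid, and the paper's variant is slightly slicker since it reuses (b) rather than repeating the convergence argument.

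There is one concrete error in your proof of (a). You invoke ``Kolmogorov's theorem for sums of independent (here orthogonal) centered random variables,'' but Kolmogorov's almost-sure convergence theorem (and the three-series theorem) requires genuine independence, and the variables $a(n)\chi_{\log n}(x)/n^{\sigma_0}$ are \emph{not} independent: for instance $\chi_{\log 4}=\chi_{\log 2}^2$, and more generally $\chi_{\log n}$ for composite $n$ is a polynomial in the $\chi_{\log p}$'s. The family is merely orthogonal. The correct tool in your framework is the Rademacher--Menshov theorem for orthogonal series, which yields a.s.\ convergence under the stronger condition $\sum_n |a(n)|^2 (\log n)^2 / n^{2\sigma_0} < \infty$; this does hold here, since choosing $\tfrac{1}{2}<\sigma_1<\sigma_0$ gives $\sum_n|a(n)|^2/n^{2\sigma_1}<\infty$ and $(\log n)^2/n^{2(\sigma_0-\sigma_1)}$ is bounded, but the substitution has to be made explicitly. (Helson's own argument in \cite{helson:1969} uses a different, martingale-flavored device; Hedenmalm--Lindqvist--Seip give yet another route.) Once this is corrected, the remainder of your outline --- the diagonal argument over rational abscissae, the Abelian lemma giving analyticity in the angular regions, and the reduction of (c) to Plancherel plus the uniquely ergodic flow --- is sound and aligns with the cited sources.
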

Statements (a)-(c) follow directly from Helson \cite{helson:1969}. We refer also to Hedenmalm, Lindqvist \& Seip \cite{hedenmalmlindqvistseip:1997} for a slightly different proof of statement (c). Statement (d) can be deduced from (c) by observing that $\L(s)^k\in\mathscr{H}^2$ for $k\in\N$, if $\L(s)$ satisfies the Ramanujan hypothesis (see Theorem \ref{th:sumprodL}), and that the union of countable many sets $E\subset K$ with $\pmb{\sigma}(E)=0$ is again a set of $\pmb{\sigma}$-measure zero. Statement (e) can be deduced from (d) by standard methods; see Titchmarsh \cite[\S 13.1]{titchmarsh:1986}. If $\L(s)$ satisfies the Ramanujan hypothesis and can be written as a polynomial Euler product, then both $\L(s)$ and $\L(s)^{-1}$ lie in $\mathscr{H}^2$; see Theorem \ref{th:Lkappa}. This together with (b) yields statement (f). Statement (g) follows from (c) and Theorem \ref{th:Lkappa}. Statements (h) and (i) were proved by Steuding \cite[Chapt. 5]{steuding:2007}.

We can interpret statement (e) and (f) as follows: under quite general assumptions on a Dirichlet series $\L(s)\in\mathscr{H}^2$, almost every of its attached functions $L_x$, $x\in K$, satisfy an analogue of the Lindel\"of hypothesis or an analogue of the Riemann hypothesis. However, for a particular function $L_x$, it seems very difficult to decide whether it lies in the exceptional zero-sets of Theorem \ref{th:almostsurebehaviour} or not. In fact, the zero-sets are not negligible at all: in the case of the Riemann zeta-function Tanaka \cite[\S 2]{tanaka:2008} showed that one can find $x\in K$ such that $\zeta_x$ has zeros and poles at prescribed points in $\frac{1}{2}<\sigma<1$. Moreover, for any $\frac{1}{2}<\sigma_0<1$, there exist $x\in K$ such that $\zeta_x$ is meromorphic in $\sigma>\sigma_0$ but does not extend meromorphically to a larger half-plane. \par

In the next section we shall transfer almost-sure properties of the family $\{L_x\}_{x\in K}$ to special functions in $\{L_x\}_{x\in K}$.\par
 
We conclude with a further observation. Let $\L(s)\in\mathscr{H}^2$. If $$l\notin\Gamma_{P}:= \{2\pi k(\log\tfrac{n}{m})^{-1} \, : \, k,n,m\in\N, n\neq m\},$$ then Plancherel's theorem \eqref{plancherel} together with \eqref{fouriert} and \eqref{fourierorth} yields that
\begin{equation}\label{plan1}
\int_{K_{2\pi/l}} \left|L_{\sigma}(y) \right|^2 \d\pmb{\tau} = \int_K \left|L_{\sigma}(y) \right|^2 \d\pmb{\sigma} =  \sum_{n=1}^{\infty}\frac{|a(n)|^2}{n^{2\sigma}}
\end{equation}
and 
\begin{equation}\label{plan2}
\int_{K_ {2\pi/l}} L_{\sigma}(y)  \d\pmb{\tau} = \int_K L_{\sigma}(y)  \d\pmb{\sigma} =  a(1),
\end{equation}
where the $a(n)$ are the coefficients of the Dirichlet series expansion of $\L$.
It is slightly more delicate to evaluate the integrals above if $l\in \Gamma_P$.  In the next lemma, we discuss the case if $l\in \Gamma_P$ has a very simple form. 
The works of Reich \cite{reich:1980-2} and Good \cite{good:1978} offer methods to handle the case of arbitrary $l\in \Gamma_P$.
\begin{lemma}\label{lem:lLambda}
Let $\L(s)$ be a Dirichlet series which satisfies the Ramanujan hypothesis and can be written as a polynomial Euler product. Let $l>0$ be of the form
$$
l= \frac{2\pi k}{\log p} \qquad \mbox{ with }p\in\mathbb{P} \mbox{ and } k\in\N .
$$
Then, for $\sigma>\frac{1}{2}$ and $\kappa\in\R$,
$$
\int_{K_{2\pi/l}}  \left|L_{\sigma}(y) \right|^{2\kappa} \d\ptau =
\left| \prod_{j=1}^m \left(1-\frac{\alpha_j(p)}{p^{\sigma}}\right)^{-2\kappa}   \right| \cdot
 \sum_{\begin{subarray}{c}n\in \N\\ p\nmid n \end{subarray}} \frac{|a_{\kappa}(n)|^2}{n^{2\sigma}}
$$
and 
$$
\int_{K_{2\pi/l}} L_{\sigma}(y)^{\kappa} \d\ptau = a_{\kappa}(1) \cdot
 \prod_{j=1}^m \left(1-\frac{\alpha_j(p)}{p^{\sigma}}\right)^{-\kappa},
$$
where the $a_{\kappa}(n)$ denote the Dirichlet series coefficients of $\L^{\kappa}$ and $\alpha_j(p)$ the local roots of the polynomial Euler product of $\L$.
\end{lemma}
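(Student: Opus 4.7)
The strategy is to use the polynomial Euler product to factor $L_\sigma(y)^\kappa$ into a piece at the prime $p$ (which turns out to be constant on $K_{2\pi/l}$) and a piece built from the remaining primes (which we integrate via Fourier orthogonality on the compact group $K_{2\pi/l}$). I shall first carry everything out for $\sigma>1$, where absolute convergence permits routine manipulations, and then extend to $\sigma>\tfrac{1}{2}$ by means of Plancherel's theorem on $L^2_{\ptau}(K_{2\pi/l})$, using that $\L^\kappa\in\mathscr{H}^2$ by Theorem~\ref{th:Lkappa}.

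For $\sigma>1$, the polynomial Euler product together with \eqref{def:ext} yields the absolutely convergent factorisation
\begin{equation*}
L(s,y)^{\kappa}\;=\;L_p(s,y)^{\kappa}\cdot L^{(p)}(s,y)^{\kappa},
\end{equation*}
where $L_p$ collects the factors at $p$ and $L^{(p)}$ the factors at the primes $q\neq p$. Since $l=2\pi k/\log p$ gives $\log p=k\cdot(2\pi/l)$, the defining relation $\chi_{2\pi/l}(y)=1$ of $K_{2\pi/l}$ forces $\chi_{\log p}(y)=1$, so $L_p(\sigma,y)^{\kappa}=\prod_{j=1}^m(1-\alpha_j(p)/p^\sigma)^{-\kappa}$ is a constant on $K_{2\pi/l}$ that may be pulled out of every integral. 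Expanding the remaining factor as in Theorem~\ref{th:Lkappa} gives the Fourier expansion
\begin{equation*}
L^{(p)}(s,y)^{\kappa}\;=\;\sum_{p\nmid n}\frac{a_\kappa(n)\,\chi_{\log n}(y)}{n^{s}}.
\end{equation*}
The annihilator of $K_{2\pi/l}$ inside $\Gamma$ is the cyclic subgroup $\Z\cdot(2\pi/l)=\Z\cdot(\log p)/k$; for $n\in\N$ coprime to $p$, the equation $\log n=\nu(\log p)/k$ forces $n^k=p^\nu$ and hence $n=1$. Orthogonality of characters on $K_{2\pi/l}$ therefore gives $\int_{K_{2\pi/l}}\chi_{\log n}(y)\,\d\ptau=\delta_{n,1}$ for every $n$ coprime to $p$, and term-by-term integration collapses the sum to $a_\kappa(1)$, yielding the second identity of the lemma.

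For the squared-modulus integral, I write $|L^{(p)}(\sigma,y)|^{2\kappa}=L^{(p)}(\sigma,y)^{\kappa}\,\overline{L^{(p)}(\sigma,y)^{\kappa}}$, multiply out the two Fourier expansions, and apply the same annihilator computation: the characters appearing are $\chi_{\log(n/m)}$ with $p\nmid nm$, and the relation $(n/m)^k=p^\nu$ forces $n=m$, so only the diagonal terms survive and yield $\sum_{p\nmid n}|a_\kappa(n)|^2/n^{2\sigma}$. Reinstating the constant $p$-factor $|\prod_{j=1}^m(1-\alpha_j(p)/p^\sigma)^{-2\kappa}|$ then gives the first identity for $\sigma>1$. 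The main technical obstacle lies in extending both formulas down to $\sigma>\tfrac{1}{2}$, where absolute convergence of the Dirichlet expansions fails and the term-by-term integration cannot be justified naively. This is handled by observing that Theorem~\ref{th:Lkappa} places $\L^\kappa$ in $\mathscr{H}^2$, so the coefficients $a_\kappa(n)/n^{\sigma}$ are square-summable for every $\sigma>\tfrac{1}{2}$; hence $L^{(p)}(\sigma,\cdot)^{\kappa}$ lies in $L^2_{\ptau}(K_{2\pi/l})$ and its Dirichlet series is its Fourier expansion on $K_{2\pi/l}$. Plancherel's theorem and the corresponding $L^2$-inner product identity then upgrade the formal computations above to rigorous equalities, while Theorem~\ref{th:almostsurebehaviour}(b),(g) guarantees that the pointwise $\ptau$-a.e.\ representations needed to identify the analytic functions with their Fourier series persist throughout the half-plane $\sigma>\tfrac{1}{2}$.
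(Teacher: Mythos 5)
Your proof is correct and follows essentially the same route as the paper's: pull out the $p$-Euler factor as a constant on $K_{2\pi/l}$ using $\chi_{\log p}(y)=1$, expand the remaining factor as a Dirichlet series over $n$ with $p\nmid n$, and apply Plancherel's theorem (respectively, Fourier orthogonality with the trivial character) to evaluate the two integrals for $\sigma>\tfrac12$ via membership of $\L^\kappa$ in $\mathscr{H}^2$. The paper simply cites Plancherel and moves on; you additionally spell out the annihilator computation showing that the restricted characters $\chi_{\log n}\big|_{K_{2\pi/l}}$ with $p\nmid n$ are pairwise distinct (since $(n/m)^k=p^\nu$ with $p\nmid nm$ forces $n=m$), which is a useful detail the paper leaves implicit, but it does not constitute a different approach.
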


\begin{proof}
By the definition of $K_{2\pi/l}$, we get that $\chi_{\log p}(y) = 1$ for $y\in K_{2\pi/l}$. Hence, due to the Euler product representation, we write
$$
L_{\sigma}(y) = \prod_{j=1}^m \left(1-\frac{\alpha_j(p)}{p^{\sigma}}\right)^{-1}\cdot
\sum_{\begin{subarray}{c}n\in \N\\ p\nmid n \end{subarray}} \frac{a(n)\chi_{\log n}(y)}{n^{\sigma}}, \qquad \sigma>1
$$
where the $a(n)$ denote the Dirichlet series coefficients of $\L$. Thus, in particular,
$$
L_{\sigma}^{\kappa}(y) = \prod_{j=1}^m \left(1-\frac{\alpha_j(p)}{p^{\sigma}}\right)^{-\kappa}\cdot
\sum_{\begin{subarray}{c}n\in \N\\ p\nmid n \end{subarray}} \frac{a_{\kappa}(n)\chi_{\log n}(y)}{n^{\sigma}}, \qquad \sigma>1. 
$$
By Theorem \ref{th:Lkappa} and Plancherel's theorem, we obtain that
$$
\int_{K_{2\pi/l}}  \left| \prod_{j=1}^m \left(1-\frac{\alpha_j(p)}{p^{\sigma}}\right)^{\kappa} \cdot L_{\sigma}(y)^{\kappa} \right|^2 \d\ptau = \sum_{\begin{subarray}{c}n\in \N\\ p\nmid n \end{subarray}} \frac{|a_{\kappa}(n)|^2}{n^{2\sigma}}, \qquad\sigma>\tfrac{1}{2},
$$
and
$$
\int_{K_{2\pi/l}}  \left( \prod_{j=1}^m \left(1-\frac{\alpha_j(p)}{p^{\sigma}}\right)^{\kappa} \cdot L_{\sigma}(y)^{\kappa} \right) \d\ptau = a_{\kappa}(1), \qquad\sigma>\tfrac{1}{2}.
$$
As the factor $\prod_{j=1}^m \left(1-\frac{\alpha_j(p)}{p^{\sigma}}\right)^{\kappa}$ does not depend on $y$ the statement of the lemma follows.
\end{proof}

\chapter{The class \texorpdfstring{$\No$}{} and vertical limit functions}\label{ch:classN}

In this section we define the class $\mathcal{N}(u)$ with $u\in[\frac{1}{2},1)$. Roughly speaking, the class $\No(u)$ gathers all function which have a Dirichlet series expansion in $\mathscr{H}^2$ and satisfy a certain normality feature in the half-plane $\sigma>u$. In Chapter \ref{ch:probmom} we shall see that, for every function $\L\in\No(u)$, its mean-square exists in a certain measure-theoretical sense on vertical lines in the half-plane $\sigma>u$.

\section{The class \texorpdfstring{$\No$}{} and its elements}\label{sec:classN}

In the sequel, we shall work with certain half-strips $Q_n(\alpha,l)$ and certain compact rectangular regions $\mathcal{R}_n(\alpha,l)$. 
For $n\in\Z$ and $\alpha,l \in\R$ with $\alpha,l>0$, we define $Q_n(\alpha,l)\subset \C$ to be the open horizontal half-strip
$$
Q_n(\alpha, l):= \left\{ \sigma + it \in \C \, : \, \sigma>\alpha, \, (n-1)l < t < (n+2) l   \right\}
$$
and $\mathcal{R}_n(\alpha, l)\subset \C$ to be the compact rectangular region
\begin{equation}\label{def:Rn}
\mathcal{R}_n(\alpha, l):= \left\{ \sigma + it \in \C \, : \, \alpha\leq \sigma \leq 2, \, nl \leq t \leq  (n+1)l    \right\}.
\end{equation}
Furthermore, we set
\begin{equation}\label{def:Q}
Q(\alpha, l ):= Q_0(\alpha,l) = \left\{ \sigma + it \in \C \, : \, \sigma>\alpha, \, -l < t < 2 l   \right\}.
\end{equation}
and
\begin{equation}\label{def:R}
\mathcal{R}(\alpha,l):=\mathcal{R}_0(\alpha, l):= \left\{ \sigma + it \in \C \, : \, \alpha\leq \sigma \leq 2, \, 0 \leq t \leq l    \right\}.
\end{equation}
For an illustration, we refer to Figure \ref{fig:RQ}

\begin{figure}\centering
\includegraphics[width=0.7\textwidth]{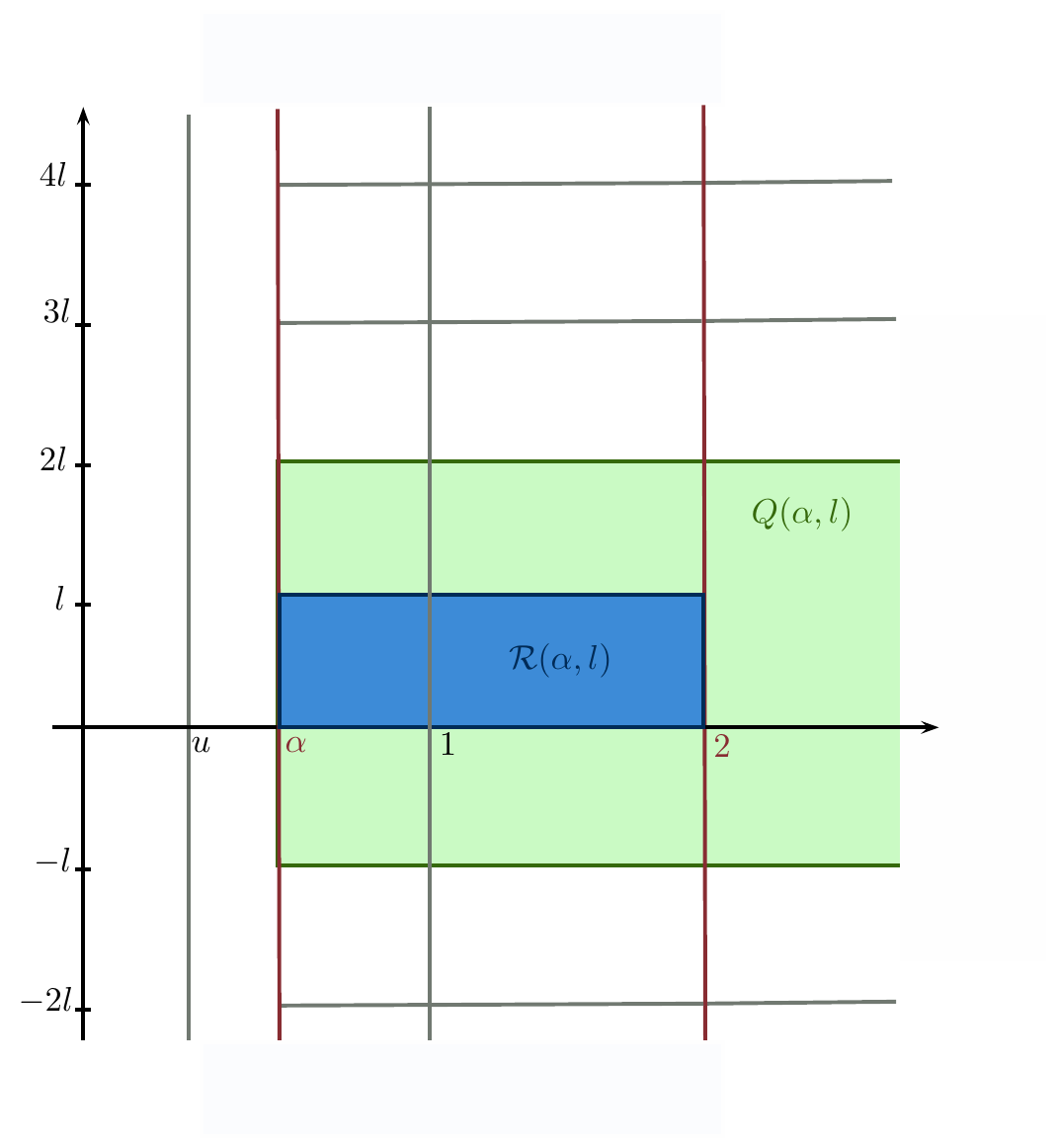}
\caption{The half-strip $Q(\alpha,l)$ and the compact rectangular set $\mathcal{R}(\alpha,l)$.}
\label{fig:RQ} 
\end{figure}

{\bf Definition of the class $\mathcal{N}(u)$.} Let $u\in[\frac{1}{2},1)$ and $\mathbb{H}_1$ denote the half-plane $\sigma>1$. A function $\L:\mathbb{H}_1\rightarrow \C$ belongs to the class $\mathcal{N}(u)$ if it satisfies the properties (N.1) and (N.2) stated below.
\begin{itemize}
 \item[(N.1)] {\it Dirichlet series expansion in $\mathscr{H}^2$.} In the half-plane $\sigma>1$, the function $\L$ has a Dirichlet series expansion that is an element of $\mathscr{H}^2$, i.e.
$$
\L(s) = \sum_{n=1}^{\infty} \frac{a(n)}{n^s}, \qquad \sigma>1.
$$
with coefficients $a(n)\in\C$ satisfying
$$
\sum_{n=1}^{\infty} \frac{|a(n)|^2}{n^{\sigma}}< \infty\qquad \mbox{for}\qquad \sigma>1.
$$

\item[(N.2)] {\it Analytic continuation and normality.} Let $L_x$, $x\in K$, denote the functions associated to $\L$ by means of \eqref{Lx}. For any real numbers $\alpha$, $l$ and $\eps$ with  $\alpha\in(u,1]$ and $\eps,l>0$, there exists a subset $J_{\eps}:=J(\alpha,l,\eps,\L)\subset\N$ with $\dens_* J_{\eps}>1-\eps$ such that the following holds.
\begin{itemize}
\item[(N.2a)] For every $n\in J_{\eps}$, the function $L_{ne_l}$ extends to an analytic function on the domain $ Q(\alpha,l)$.
\item[(N.2b)] The family $\{L_{ne_l}\}_{n\in J_{\eps}}$ is normal in $Q(\alpha,l)$.
\end{itemize}
\end{itemize}

Let $\L\in\No(u)$ with $u\in[\frac{1}{2},1)$. We start with some remarks on the analytic character of $\L$. By Lemma \ref{lem:analyticH2} (b), property (N.1) assures that $\L$ is analytic in the half-plane $\sigma>1$.\par
Property (N.2) implies that $\L$ can be continued analytically to a larger domain: for appropriately fixed $\alpha,l>0$, let $J_{\eps}\subset \N$ with $\eps>0$ be the sets defined in (N.2). We set $I:=\bigcup_{\eps>0} J_{\eps}$. Then, $\dens\ I = 1$ and, according to (N.2a), the functions $L_{ne_l}$ are analytic on $Q(\alpha,l)$ for every $n\in I$. From the relation 
\begin{equation*}
L_{ne_l}(s) = \L(s+inl),
\end{equation*}
which holds for $n\in\N$, $l>0$ and $s\in\C$, provided that $\L(s+inl)$ is well-defined, we deduce that $\L$ can be continued analytically to the domain 
$$
\bigcup_{n\in I} Q_n(\alpha,l)\cup \mathbb{H}_1.
$$
We proceed with some remarks on the normality feature. Roughly speaking, the normality feature of $\L$ assures that, for $\pmb{\sigma}$-almost every $x\in K$, the function $L_x$ appears as a vertical limit function of $\L$ in $\sigma>u$; see Section \ref{sec:verticallimit} for details. Due to this, certain properties which hold $\pmb{\sigma}$-almost surely for $L_x$, $x\in K$, in the half-plane $\sigma>\frac{1}{2}$ pass over to $\L$ in a certain measure-theoretical sense in the half-plane $\sigma>u$. \par
According to (N.2b), the families $\{L_{ne_l}\}_{n\in J_{\eps}}$ are normal in $Q(\alpha,l)$ for every $\eps>0$. Note, however, that the family $\{L_{ne_l}\}_{n\in I}$ with $I=\bigcup_{\eps>0} J_{\eps}$ is not necessarily normal in $Q(\alpha,l)$.\par
 
The Dirichlet series expansion of $\L$ in $\sigma>1$ assures that property (N.2b)  is equivalent to the local boundedness of $\{L_{ne_l}\}_{n\in J_{\eps}}$ in $Q(\alpha,l)$. This observation is fundamental for our further considerations and follows from the next lemma.
\begin{lemma}\label{lem:locboundednessNo}
Let $\L:\mathbb{H}_1\rightarrow \C$ be a function which satisfies (N.1). Let $\alpha<1$, $l>0$ and $Q:=Q(\alpha,l)$ be defined by \eqref{def:Q}. Suppose that $J\subset \N$ is such that, for $n\in J$, the functions $L_{ne_l}$ are analytic on $Q$. Then,  $\{L_{ne_l}\}_{n\in J}$ is normal in $Q$ if and only if  $\{L_{ne_l}\}_{n\in J}$ is locally bounded in $Q$.
\end{lemma}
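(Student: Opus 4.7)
The forward direction (local boundedness $\Rightarrow$ normality) is an immediate application of Montel's theorem for analytic functions, so I would dispose of it in one line. The substantive content is the reverse implication, and here the key obstacle is that normality of a family of analytic functions allows the constant limit function $g\equiv\infty$; this is precisely what needs to be ruled out using hypothesis (N.1).

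First, I would exploit the Dirichlet series representation afforded by (N.1) to produce a uniform bound for the family $\{L_{ne_l}\}_{n\in J}$ on a fixed compact subset of $Q(\alpha,l)$. Since $|\chi_{\log k}(ne_l)|=1$ for every $k\in\N$ and $n\in\N$, and since $\L\in\mathscr{H}^2$ forces the Dirichlet series of $L_{ne_l}$ to converge absolutely in the half-plane $\sigma>1$ (via a Cauchy--Schwarz estimate applied to $\sum|a(k)|^2/k^{\sigma}$), one obtains the uniform bound
\begin{equation*}
\sup_{n\in J}\sup_{\sigma\geq 2,\,t\in\R}\bigl|L_{ne_l}(\sigma+it)\bigr|\ \leq\ \sum_{k=1}^{\infty}\frac{|a(k)|}{k^{2}}\ =:\ M\ <\ \infty.
\end{equation*}
In particular, the compact set $K_{0}:=\{s\in\C:\sigma=2,\ -l/2\leq t\leq 3l/2\}$ lies in $Q(\alpha,l)$ (since $\alpha<1<2$) and $|L_{ne_l}|\leq M$ on $K_{0}$ uniformly in $n\in J$.

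Next, I would argue by contradiction. Suppose $\{L_{ne_l}\}_{n\in J}$ is normal in $Q(\alpha,l)$ but not locally bounded there. Then some compact set $K\subset Q(\alpha,l)$, a sequence $(n_{k})_{k}\subset J$, and points $s_{k}\in K$ satisfy $|L_{n_{k}e_l}(s_{k})|\to\infty$; passing to a subsequence we may assume $s_{k}\to s_{0}\in K$. By normality, extracting a further subsequence (still denoted $(n_{k})_{k}$), the sequence $(L_{n_{k}e_l})_{k}$ converges locally uniformly in $Q(\alpha,l)$ to some $g\in\mathcal{M}(Q(\alpha,l))\cup\{\infty\}$. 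Since local uniform convergence preserves blow-up, $g(s_{0})=\infty$; because the limit is analytic or identically $\infty$, we conclude $g\equiv\infty$ on $Q(\alpha,l)$.

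The final step is to derive a contradiction from $g\equiv\infty$. Local uniform convergence to $\infty$ on $Q(\alpha,l)$ forces $|L_{n_{k}e_l}|\to\infty$ uniformly on the compact set $K_{0}$; but on $K_{0}$ we already established the uniform upper bound $M$, which is the desired contradiction. The main conceptual obstacle is therefore pinned down to ruling out the $\infty$-limit, and the $\mathscr{H}^2$ assumption of (N.1) is precisely the tool that accomplishes this uniformly in $n$.
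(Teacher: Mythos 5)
Your proof is correct and takes essentially the same route as the paper: both directions rest on Montel's theorem, and the reverse implication is driven by the uniform bound coming from absolute convergence of the Dirichlet series at $\sigma=2$. The paper simply cites the remark following its statement of Montel's theorem — that a normal family of analytic functions that is bounded at a single point is locally bounded — whereas you unpack that remark into the explicit contradiction argument (a limit $g\equiv\infty$ would violate the uniform bound on a compact piece of the line $\sigma=2$). Same idea, with the paper's version slightly more economical because it delegates the work to a cited general principle.
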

\begin{proof}
If  $\{L_{ne_l}\}_{n\in J}$ is locally bounded in $Q$, then $\{L_{ne_l}\}_{n\in J}$ is normal in $Q$ due to Montel's theorem. In order to prove the converse, suppose that $\{L_{ne_l}\}_{n\in J}$ is normal in $Q$. In the half-plane $\sigma>1$, $\L$ can be written as an absolutely convergent Dirichlet series
$$
\L(s)=\sum_{n=1}^{\infty}\frac{a(n)}{n^s}, \qquad \sigma>1;
$$
see Lemma \ref{lem:analyticH2} (a). We set 
$$
M:= \sum_{n=1}^{\infty}\frac{|a(n)|}{n^{2}} <\infty.
$$
By observing that $2\in Q$ and that
$$
\left|L_{ne_l} (2)\right| = \left|\L(2+inl) \right|\leq M\qquad\mbox{ for every }n\in\N,
$$ 
the local boundedness follows immediately by means of Montel's theorem (see also the remark after Theorem \ref{th:montel1} in the appendix).
\end{proof}

\par

{\bf Sufficient conditions for the normality feature.} The following theorem provides sufficient conditions for a function $\L$ with property (N.1) to satisfy the normality feature (N.2).

\begin{theorem}\label{th:suffconditionsN}
Let $\L:\mathbb{H}_1\rightarrow \C$ be a function which satisfies property (N.1). Let $u\in[\frac{1}{2},1)$ and suppose that $\L$ can be continued meromorphically to the half-plane $\sigma>u$ with at most finitely many poles. Then, $\L\in \No(u)$ if at least one of the following conditions is true.
 \begin{itemize}
\item[(a)] \textbf{\textit{Boundedness.}} For every $\alpha>u$, there is a constant $M>0$ such that
$$
\left|\L(s) \right| \leq M \qquad \mbox{ for } \sigma\geq \alpha.
$$
\item[(b)] \textbf{\textit{Existence of the mean-square.}} The function $\L$ has finite growth in $\sigma>u$ and satisfies, for every $\sigma>u$,
$$
\limsup_{T\rightarrow\infty} \frac{1}{2T} \int_{-T}^T \left|\L(\sigma+it) \right|^2 \d t <\infty.
$$
\item[(c)] \textbf{\textit{$a$-point density estimate.}} There exist two distinct points $a,b\in\C$ such that, for $\sigma> u$, as $T\rightarrow\infty$,
$$
N_a(\sigma,T) = O_{\sigma}(T) \qquad \mbox{ and }\qquad N_b(\sigma, T) = o_{\sigma}(T).
$$
Here, $N_a(\sigma, T)$ denotes, as usual, the number of $a$-points $\rho_a=\beta_a + i\gamma_a$ of $\L$ with imaginary part $0<\gamma_a \leq T$ and real part $\beta_a > \sigma$.
\end{itemize}
\end{theorem}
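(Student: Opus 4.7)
The plan is to verify property (N.2) case by case. Throughout, since $L_{ne_l}(s)=\L(s+inl)$ and $\L$ is meromorphic in $\sigma>u$ with only finitely many poles, only finitely many $n\in\N$ give rise to an $L_{ne_l}$ with a pole in $Q(\alpha,l)$, so (N.2a) is essentially automatic for any dense enough $J_\eps$. By Lemma~\ref{lem:locboundednessNo}, (N.2b) reduces to exhibiting, for each $\alpha\in(u,1]$, $l>0$, and $\eps>0$, a subset $J_\eps\subset\N$ of lower density exceeding $1-\eps$ on which the translates $\{L_{ne_l}\}_{n\in J_\eps}$ are locally bounded on $Q(\alpha,l)$ (or, in case~(c), meet the hypothesis of the extended fundamental normality test). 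Case~(a) is immediate: the uniform bound $|\L(s)|\le M$ on $\sigma\ge\alpha$ rules out poles there and makes $\{L_{ne_l}\}_{n\in\N}$ uniformly bounded by $M$ on $Q(\alpha,l)$, so $J_\eps=\N$ suffices for every $\eps$.

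For case~(b), I fix $\sigma_0\in(u,\alpha)$ so that the mean-square hypothesis yields a constant $C>0$ with $\int_{-T}^T|\L(\sigma_0+it)|^2\,\d t\le 2CT$ for all sufficiently large $T$. I will control $L_{ne_l}$ through the discrete quantity
\[
A_n := \int_{(n-1)l}^{(n+2)l}\int_{\sigma_0}^{3}\bigl|\L(\sigma+it)\bigr|^2\,\d\sigma\,\d t;
\]
an interchange of the order of integration together with the fact that each $t\in[0,Nl]$ lies in at most three of the intervals $[(n-1)l,(n+2)l]$ gives $\sum_{n\le N}A_n=O(N)$, and a Chebyshev-type argument then shows that, for $M_\eps$ sufficiently large, $J_\eps:=\{n\in\N : A_n\le M_\eps^2\}$ has lower density exceeding $1-\eps$ (removing the finitely many exceptional $n$ with a pole in $Q_n(\alpha,l)$ does not affect this density). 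For $n\in J_\eps$ and $s$ in a compact subset $K$ of $Q(\alpha,l)$ with $\Re s\le 2$, choosing a radius $r>0$ small enough that the disc $D_r(s)$ stays inside $[\sigma_0,3]\times((n-1)l,(n+2)l)$ after translation by $inl$, the sub-mean-value property of the subharmonic function $|L_{ne_l}|^2$ yields
\[
|L_{ne_l}(s)|^2 \le \frac{1}{\pi r^2}\iint_{D_r(s)}|L_{ne_l}|^2\,\d A \le \frac{A_n}{\pi r^2} \le \frac{M_\eps^2}{\pi r^2};
\]
on $\Re s\ge 2$, the absolutely convergent Dirichlet series from (N.1) provides a universal bound $\sum_k|a(k)|k^{-2}$. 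Together these establish the required local boundedness on $K$.

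For case~(c), I will invoke the extended Montel fundamental normality test (Theorem~\ref{th:FNTextension}(b)). From $N_b(\alpha,T)=o_\alpha(T)$ it follows that the set of $n\in\N$ for which $L_{ne_l}$ has a $b$-point in $Q(\alpha,l)$ has density zero; from $N_a(\alpha,T)=O_\alpha(T)$ and a Chebyshev argument applied to the distribution of $a$-points across the overlapping intervals $((n-1)l,(n+2)l)$, one may choose $M_\eps$ so large that the set of $n$ for which $L_{ne_l}$ takes $a$ more than $M_\eps$ times on $Q(\alpha,l)$ has upper density less than $\eps$. Taking $J_\eps$ to be the intersection of the complements of these two bad sets, minus the finite pole exception, achieves $\dens_* J_\eps > 1-\eps$; the family $\{L_{ne_l}\}_{n\in J_\eps}$ then consists of analytic functions on $Q(\alpha,l)$ omitting $b$ and taking $a$ with multiplicity at most $M_\eps$, and is normal by the extended FNT. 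The main obstacle I anticipate is the careful bookkeeping in case~(b)---ensuring that the sub-mean-value estimate applies on discs which stay inside $Q_n(\alpha,l)$ for every $s$ in an arbitrary compact subset of $Q(\alpha,l)$, which is why I enlarged the $\sigma$-range to $[\sigma_0,3]$ in the definition of $A_n$ and isolated the $\Re s\ge 2$ region via the trivial Dirichlet series bound.
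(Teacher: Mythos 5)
Your treatment of cases (a) and (c) matches the paper's proof in structure and substance: (a) is immediate from Montel's theorem, and (c) follows the same route of bounding the upper density of the set of $n$ for which $L_{ne_l}$ has any $b$-point or too many $a$-points on the translated half-strip, and then invoking the extended fundamental normality test (Theorem~\ref{th:FNTextension}(b)). Your case (b) is also in the spirit of the paper's argument (area-integral control of the translates followed by pointwise bounds; your explicit sub-mean-value inequality for $|L_{ne_l}|^2$ plays the role the paper delegates to Theorem~\ref{th:sufflocalbound}, and your thickened window $[(n-1)l,(n+2)l]$ replaces the paper's triple intersection over $n-1,n,n+1$). However, there is a gap in the step claiming $\sum_{n\le N}A_n = O(N)$. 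Your $A_n$ integrates $|\L|^2$ over $\sigma\in[\sigma_0,3]$ as well as over $t$, but you only record a mean-square bound at the single abscissa $\sigma_0$. The hypothesis of (b) is pointwise in $\sigma$: for each $\sigma>u$ there is \emph{some} $C_\sigma$ and some threshold $T_\sigma$, and there is no a priori uniformity in $\sigma$. Interchanging integrals and counting the overlap of the intervals $[(n-1)l,(n+2)l]$ reduces $\sum_{n\le N}A_n$ to $3\int_{-l}^{(N+2)l}\int_{\sigma_0}^{3}|\L|^2\,\d\sigma\,\d t$, and bounding this by $O(N)$ genuinely requires control over all of $[\sigma_0,3]$, not just at $\sigma_0$.

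The paper closes exactly this step by first applying Carlson's theorem (Theorem~\ref{th:carlson}), which under the finite-growth hypothesis upgrades the $\limsup$ bound to a genuine limit $f(\sigma)=\sum_n|a(n)|^2/n^{2\sigma}$, a continuous function bounded on $[\alpha,2]$, and then applies the bounded convergence theorem to pass to the $\sigma$-integral. Alternatively you could invoke the Hardy--Ingham--Polya convexity theorem (Theorem~\ref{th:convexity_meanvalue}) together with the trivial Dirichlet-series bound at $\sigma=3$ to interpolate a uniform bound for $\frac{1}{2T}\int_{-T}^T|\L(\sigma+it)|^2\,\d t$ over $\sigma\in[\sigma_0,3]$ and all large $T$. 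Either way, as written your argument uses only the bound at $\sigma_0$ to justify a two-dimensional estimate, and this intermediate step needs to be supplied; note also that this is precisely where the finite-growth part of hypothesis (b), which you do not otherwise touch, enters.
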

It is an immediate consequence of Montel's theorem that condition (a) implies that $\L\in\No(u)$. In fact, condition (a) implies even more. We deduce from (a) that $\L$ is analytic in the half-plane $\sigma>u$ and that the Dirichlet series representing $\L$ converges uniformly in any half-plane $\sigma\geq \alpha >u$. In particular, we obtain that $\sigma_{unif,\L} \leq u$, where $\sigma_{unif,\L}$ denotes the abscissa of uniform convergence of the Dirichlet series connected to $\L$. Most of our subsequent results are trivial in this case. We included condition (a) for sake of completeness. In the following, however, we shall focus on the more interesting situation if $\L\in\No(u)$ and $u<\sigma_{unif,\L}$.\par
It follows essentially from an integrated version of Cauchy's integral formula (see Titchmarsh \cite[\S 11.8]{titchmarsh:1986} or Theorem \ref{th:sufflocalbound} in the appendix) that the mean-square condition (b) is sufficient for $\L$ to lie in $\No(u)$. In fact, the existence of the mean-square is a standard tool used in the theory of vertical limit functions of Dirichlet series and appears already in the works of Bohr; see for example Bohr \cite{bohr:1913-2} and Bohr \& Jessen \cite{bohrjessen:1932}. \par

By means of a generalized version of Montel's fundamental normality test (Theorem \ref{th:FNTextension}), we deduce that the $a$-point density estimate (c) implies that $\L\in\No(u)$. As far as the author knows, except for Lee \cite{lee:2012} who derived universality for Hecke $L$-functions in $\sigma>\frac{1}{2}$ by assuming the truth of Selberg's zero-density hypothesis, condition (c) or something similar did not appear in the context of vertical limit functions yet. In particular, condition (c) is one reason why we set up the class $\No(u)$ by means of the normality feature (N.2) and not, as common, by demanding finite growth for $\L$ and the existence of the mean-square value in $\sigma>u$.  \par

Conditions (a), (b) and (c) are not completely independent from one another. If $\L$ satisfies (a), then also (b) is true for $\L$. Moreover, if $\L$ satisfies (b), we deduce that, for every $a\in\C$ and $\sigma>u$, as $T\rightarrow\infty$,
$$
N_a(\sigma, T) = O_{a,\sigma}(T);
$$
see Section \ref{subsec:meansquare}. However, (b) does not necessarily imply (c) and the latter not necessarily (b). In Chapter \ref{ch:probmom}, we shall see that (c) implies (b) in a certain measure-theoretical sense. We shall now start to prove Theorem \ref{th:suffconditionsN}  \par

\begin{proof}
We fix arbitrary real numbers $l,\eps>0$ and $\alpha\in(u,1]$. Let $Q:=Q(\alpha, l)$ be defined by \eqref{def:Q}. 

(a): From condition (a), we deduce immediately that, for every $n\in\N$, the functions $L_{ne_l}$ are analytic on $Q$ and that the family $\mathcal{F}:=\{L_{ne_l}\}_{n\in\N}$ is bounded on $Q$. According to Montel's theorem, $\mathcal{F}$ is normal in $Q$. Altogether, as $\alpha\in(u,1]$ and $l>0$ were chosen arbitrarily, we conclude that $\L$ satisfies property (N.2).\par
(b): We follow Tanaka \cite[\S 4]{tanaka:2008} to prove the sufficiency of condition (b). According to Carlson's theorem (Theorem \ref{th:carlson}), we have for $\sigma>u$
$$
\lim_{T\rightarrow\infty} \frac{1}{2T} \int_{-T}^T \left|\L(\sigma+it) \right|^2 \d t = \sum_{n=1}^{\infty} \frac{|a(n)|^2}{n^{2\sigma}} =: f(\sigma).
$$
Due to the Ramanujan hypothesis, the Dirichlet series $f(\sigma)$ is absolutely convergent for $\sigma>u\geq \frac{1}{2}$. Moreover, $f(\sigma)$ defines a positive, monotonically decreasing, continuous function on the interval $(u,\infty)$. The bounded convergence theorem assures that
\begin{equation}\label{eq1}
\lim_{T\rightarrow\infty} \int_{\alpha}^{2}\left(  \frac{1}{2T} \int_{-T}^T  \left|\L(\sigma+it) \right|^2  \d t \right) \d \sigma
=  \int_{\alpha}^{2} f(\sigma) \d \sigma =: L < \infty.
\end{equation}
Let $\mathcal{R}:=\mathcal{R}(\alpha,l)$ be defined by \eqref{def:R}. We set
$$
B(n) := \iint_{\mathcal{R}} \left|L_{ne_l}(\sigma+it) \right|^2  \mbox{d} \sigma \mbox{d} t .
$$
By Fubini's theorem and the identity $L_{ne_l}(s)=\L(s+inl)$, we can write
$$
\frac{1}{2N l}\sum_{n=-N}^N B(n) = \frac{1}{2N l}  \int_{\alpha}^{2}\left(  \int_{-N l}^{N l}  \left|\L(\sigma+it) \right|^2  \d t \right) \d \sigma .
$$ 
Thus, we obtain by \eqref{eq1} that
\begin{equation}\label{eq2}
\lim_{N\rightarrow\infty}\frac{1}{2N l}\sum_{n=-N}^N B(n) = L
\end{equation}
We set $L' := 6L/\eps$ and define 
$$
I : = \left\{ n\in\N \, : \, B(n) \leq L' \right\}\qquad \mbox{and}\qquad
I^c : = \N\setminus I =  \left\{ n\in\N \, : \, B(n) > L' \right\}.
$$
We deduce from \eqref{eq2} that 
$\dens^* I^c< \frac{\eps}{3}$ and obtain consequently that $\dens_* I >1-\frac{\eps}{3}$.
Now, we set
$$
J : = \left\{ n\in\N \, : \, \max\{B(n-1),B(n),B(n+1)\} \leq L' \right\} \quad \mbox{ and } \quad
J^c : = \N\setminus J
$$
As $\dens^* J^c \leq 3\cdot \dens^* I^c$, we get that
$$
\dens^* J^c < \eps \qquad \mbox{and} \qquad \dens_* J > 1- \eps.
$$
We define $Q'$ to be the rectangular domain which consists of all points that lie in $Q$ but not in $\sigma\geq 2$, i.e.
$$
Q' := \left\{\sigma + it \in\C \, : \, \alpha < \sigma < 2 \, , - l < t < 2 l \right\}.
$$
Then, the construction of the sets $J$ and $Q'$ assures that, for every $n\in J$,
$$
\iint_{Q'} \left|L_{ne_l}(\sigma+it) \right|^2  \mbox{d} \sigma \mbox{d} t \leq 3 L'.
$$
This implies that the family $\{L_{ne_l}\}_{n\in J}$ is locally bounded on $Q'$ (see Theorem \ref{th:sufflocalbound}). Moreover, it follows from the Dirichlet series expansion of $\L$ in $\sigma>1$ that $\L$ is bounded in the half-plane $\sigma\geq \frac{3}{2}$. Altogether, we obtain that $\{L_{ne_l}\}_{n\in J}$ is locally bounded on $Q$. Montel's theorem implies that $\{L_{ne_l}\}_{n\in J}$ is normal in $Q$. As $\alpha\in(u,1]$ and $l,\eps>0$ can be chosen arbitrarily, we conclude that $\L$ satisfies property (N.2)..\par
(c): For $c\in\C$, let $D_c(n)$ denote the number of $c$-points of $L_{ne_l}$ in 
$$
\mathcal{R}' := \left\{\sigma + it \in\C \, : \,  \sigma \geq \alpha, \, 0 \leq t <  l \right\}.
$$
According to our assumption, there are two distinct $a,b\in\C$ and a constant $L>0$ such that
\begin{equation}\label{eq:densitiesab}
\limsup_{T\rightarrow\infty} \frac{1}{T}N_a(\alpha,T) \leq L \qquad \mbox{and }\qquad \limsup_{T\rightarrow\infty} \frac{1}{T}N_b(\alpha,T) = 0.
\end{equation}
We set $L':= 3L/\eps$ and define
$$
I_a:= \left\{n\in\N \, : \,  D_a(n) \leq L' \right\}
\qquad \mbox{and}\qquad
I_b :=  \left\{n\in\N \, : \,  D_b(n) =0 \right\}.
$$
It follows from \eqref{eq:densitiesab} that
\begin{equation*}\label{IaIb}
\dens_* I_a > 1 - \frac{\eps}{3} \qquad \mbox{ and } \qquad \dens\ I_b =1.
\end{equation*}
Now, let 
$$
J_a := \left\{n\in\N \, : \,  \max\{D_a(n-1),D_a(n),D_a(n+1)\} \leq L' \right\}
$$
and
$$
J_b := \left\{n\in\N \, : \,  D_b(n-1) = D_b(n) = D_b(n+1) = 0\} \leq L' \right\}.
$$
By a similar argumentation as in (b), we deduce from \eqref{IaIb} that
$$
\dens_* J_a > 1 - \eps \qquad \mbox{ and } \qquad \dens\ J_b =1.
$$
As $\L$ has only finitely many poles in the half-plane $\sigma>u$, we find a positive integer $N_{p}$ such that, for every $n\in J_{p}=\N \setminus\{1,...,N_{\infty}\}$, the function $L_{ne_l}$ is analytic in $Q$. \par
We set $J:= J_a \cap J_b \cap J_{p}$. Then, by the construction of $J$, we get that $\dens_* J >1-\eps$ and that the functions in the family $\mathcal{F}:=\{L_{ne_l}\}_{n\in J}$ are analytic on $Q$, omit the value $b$ on $Q$ and do not assume the value $a\in\C\setminus\{b\}$ at more than $L'$ points of $Q$. An extension of Montel's fundamental normality test (Theorem \ref{th:FNTextension} (b)) yields that the family $\{L_{ne_l}\}_{n\in J}$ is normal in $Q$. Since $\alpha\in(u,1]$ and $l,\eps>0$ can be chosen arbitrarily, we conclude that $\L$ satisfies property (N.2).
\end{proof}

{\bf Basic structure of the class $\No$(u).} For distinct $u_1,u_2\in[\frac{1}{2},1)$, the classes $\No(u_1)$ and $\No(u_2)$ are related as follows.
\begin{lemma}
Let $u_1,u_2 \in[\frac{1}{2},1)$ with $u_1 \leq u_2$. Then,
$$
\No(u_1) \subset \No(u_2).
$$ 
\end{lemma}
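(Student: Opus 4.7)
The inclusion is essentially a matter of comparing the quantifiers in the definition of $\No(u)$, so I would proceed by verifying properties (N.1) and (N.2) for $\No(u_2)$ directly from the corresponding properties for $\No(u_1)$.

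First, I would observe that property (N.1) refers only to the Dirichlet series expansion of $\L$ in the half-plane $\sigma>1$, and makes no reference to the parameter $u$ at all. Hence any $\L\in\No(u_1)$ automatically satisfies (N.1) in its role as a candidate element of $\No(u_2)$.

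For property (N.2), the point to exploit is that the condition $u_1\leq u_2$ yields the inclusion $(u_2,1]\subset (u_1,1]$. Thus, given arbitrary real numbers $\alpha\in(u_2,1]$, $l>0$ and $\eps>0$, the same triple $(\alpha,l,\eps)$ is admissible for the class $\No(u_1)$ because $\alpha\in(u_1,1]$ as well. Applying the definition of $\No(u_1)$ to this triple furnishes a subset $J_{\eps}:=J(\alpha,l,\eps,\L)\subset\N$ with $\dens_{*}J_{\eps}>1-\eps$ such that, for every $n\in J_{\eps}$, the function $L_{ne_{l}}$ is analytic on $Q(\alpha,l)$ and the family $\{L_{ne_{l}}\}_{n\in J_{\eps}}$ is normal in $Q(\alpha,l)$. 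This is precisely what is required by property (N.2) for $\L\in\No(u_2)$.

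Since no step involves any real work beyond observing the set-theoretic inclusion $(u_2,1]\subset (u_1,1]$ and the fact that (N.1) is $u$-independent, there is no main obstacle; the lemma is a straightforward consequence of the definition. I would therefore expect the written-out proof to consist of only a few lines.
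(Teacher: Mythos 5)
Your proof is correct and is the only natural argument for this lemma: (N.1) is independent of $u$, and since $u_1\leq u_2$ gives $(u_2,1]\subset(u_1,1]$, property (N.2) for the class $\No(u_2)$ is a weaker requirement than for $\No(u_1)$, so it is inherited directly. The paper itself states this lemma without proof, evidently treating it as an immediate consequence of the definition, which is exactly what your argument shows.
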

In the sequel, we set
$$
\No := \bigcup_{u\in[\frac{1}{2},1)} \No(u).
$$

{\bf Elements of the class $\mathcal{N}$.} The class $\mathcal{N}$ contains functions from the extended Selberg class. Suppose that $\L\in\Sc^{\#}$ has degree $d_{\L}=0$. Then, according to Kaczorowski \& Perelli \cite{kaczorowskiperelli:1999}, $\L$ is given by a Dirichlet polynomial and we conclude immediately by Theorem \ref{th:suffconditionsN} (a) that
$$
\L\in\No(\tfrac{1}{2}).
$$
Now suppose that $\L\in\Sc^{\#}$ has degree $d_{\L}>0$ and satisfies the Ramanujan hypothesis. Then, according to Theorem \ref{th:suffconditionsN} (b), 
$$
\L\in \No(u_m) \qquad \mbox{where}\qquad u_m:= \max\{\tfrac{1}{2},\sigma_m\}
$$
and $\sigma_m$ denotes, as usual, the abscissa of bounded mean-square of $\L$. Recall that, for any $\L\in\Sc^{\#}$ which satisfies the Ramanujan hypothesis and has degree $d_{\L}>0$,
$$
\sigma_m \leq \max\{ \tfrac{1}{2},\tfrac{1}{2} - \tfrac{1}{d_{\L}}\}<1.
$$
If $\L\in\Sc$ satisfies the Lindel\"of hypothesis or the Riemann hypothesis, we know that $\sigma_m \leq \tfrac{1}{2};$ see Section \ref{subsec:meansquare} for details. In particular, the truth of the Grand Lindel\"of hypothesis or the Grand Riemann hypothesis implies that $\Sc\subset \No(\frac{1}{2})$. However, we would like to stress that a given function $\L\in\Sc$ does not necessarily have to satisfy the Lindel\"of hypothesis or the Riemann hypothesis necessarily, in order to lie in $\No(\frac{1}{2})$. According to Theorem \ref{th:suffconditionsN} (b) and (c), it is sufficient that $\L\in\Sc$ fulfills the weaker condition $\sigma_m\leq \frac{1}{2}$ or the density estimates
$$
N_a(\sigma,T) = O_{\sigma}(T) \qquad \mbox{ and }\qquad N_0(\sigma, T) = o_{\sigma}(T),
$$
for every $\sigma>\frac{1}{2}$ and a particular $a\in\C\setminus\{0\}$, as $T\rightarrow\infty$.\par
Besides, the class $\mathcal{N}$ contains many functions that do not lie in $\mathcal{S}^{\#}$ at all. Dirichlet $L$-functions attached to non-primitive characters, for example, lie in $\No(\frac{1}{2})$, but they are not contained in $\mathcal{S}^{\#}$ as they lack an appropriate functional equation. Note further that a function $\L\in\No$ does not necessarily extend to a meromorphic function on the whole complex plane. In the next lemma we shall see that, if $\L$ lies in $\No(u)$ and its Dirichlet series expansion satisfies the Ramanujan hypothesis, then many functions related to $\L$ are also elements of $\No(u)$. 

\begin{lemma}\label{lem:classNrelfct}
Let $u\in[\frac{1}{2},1)$. Suppose that $\L,\L_1,...,\L_n\in\No(u)$ and that the Dirichlet series expansions of $\L,\L_1,...,\L_n$ satisfy the Ramanujan hypothesis, respectively. Then,
\begin{itemize}
 \item[(a)] $\L_1+...+\L_n\in\No(u)$.
 \item[(b)] $\L_1\cdots\L_n\in\No(u)$.
 \item[(c)] $\L^k \in \No(u)$ for any $k\in\N_0$.
 \item[(d)] $\L^{(\ell)}\in\No(u)$ for any $\ell\in\N_0$.
\end{itemize}
\end{lemma}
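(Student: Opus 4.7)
\textbf{Proof proposal for Lemma \ref{lem:classNrelfct}.}

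The plan is to verify axioms (N.1) and (N.2) separately for each operation, exploiting the fact that both axioms behave well under finite algebraic manipulations.

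First I would check that (N.1) is preserved in each case. For sums and products this follows directly from Theorem \ref{th:sumprodL}: the finite sum or product of absolutely convergent Dirichlet series satisfying the Ramanujan hypothesis again has this form with multiplicative/additive coefficients, and the Ramanujan hypothesis ensures that the resulting coefficients lie in $\mathscr{H}^2$. Part (c) follows as a special case of (b). For (d) I would invoke Theorem \ref{th:Dirichletderivative}, which yields that $\L^{(\ell)}$ has Dirichlet series coefficients $(-1)^\ell a(n)(\log n)^\ell$ in $\sigma > 1$; these trivially satisfy the Ramanujan hypothesis as well. The next observation is that the assignment $\L \mapsto L_x$ commutes with these operations: inserting the factor $\chi_{\log n}(x)$ into each Dirichlet coefficient yields $(\L_1+\L_2)_x = L_{1,x}+L_{2,x}$ and $(\L_1\L_2)_x = L_{1,x}\cdot L_{2,x}$ (by Riemann rearrangement, since all series involved are absolutely convergent in $\sigma > 1$), and differentiating the series termwise gives $(\L^{(\ell)})_x = (L_x)^{(\ell)}$.

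Next I would verify (N.2). Fix $\alpha \in (u,1]$, $l>0$ and $\eps>0$, and set $Q := Q(\alpha,l)$. For cases (a)--(c), pick for each factor $\L_j$ a set $J^{(j)}_{\eps/n} \subset \N$ supplied by (N.2) for $\L_j$ with $\dens_* J^{(j)}_{\eps/n} > 1-\eps/n$, and let $J_\eps := \bigcap_{j=1}^n J^{(j)}_{\eps/n}$. Complements of lower densities add up under unions, so $\dens_* J_\eps > 1 - \eps$. For every $n \in J_\eps$ each function $L_{j,ne_l}$ extends analytically to $Q$; hence so does the sum or product $L_{ne_l} = \sum_j L_{j,ne_l}$ respectively $\prod_j L_{j,ne_l}$. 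By Lemma \ref{lem:locboundednessNo}, normality of each family $\{L_{j,ne_l}\}_{n\in J_\eps}$ is equivalent to local boundedness on $Q$, and local boundedness is preserved under finite sums and products via the triangle and submultiplicative inequalities. Another application of Lemma \ref{lem:locboundednessNo} then gives normality of $\{L_{ne_l}\}_{n\in J_\eps}$, proving (N.2) for (a), (b), and (c).

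For part (d), take the set $J_\eps$ provided by (N.2) for $\L$ at parameters $\alpha' := (u+\alpha)/2$ and $l$. By construction each $L_{ne_l}$, $n\in J_\eps$, is analytic on $Q(\alpha',l) \supset \overline{Q(\alpha,l)}$, and hence so is its derivative $(L_{ne_l})^{(\ell)} = (\L^{(\ell)})_{ne_l}$. Because $\{L_{ne_l}\}_{n\in J_\eps}$ is locally bounded on $Q(\alpha',l)$ by Lemma \ref{lem:locboundednessNo}, Cauchy's integral formula applied to discs of a fixed radius contained in $Q(\alpha',l)$ around points of $Q(\alpha,l)$ yields that $\{(L_{ne_l})^{(\ell)}\}_{n\in J_\eps}$ is locally bounded, and therefore normal, on $Q(\alpha,l)$. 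This establishes (N.2) for $\L^{(\ell)}$. The only delicate point in the whole argument is the bookkeeping between the sets $J^{(j)}_{\eps/n}$ and the ambient $\eps$, together with the passage $\alpha' \to \alpha$ in part (d); none of the individual steps requires more than standard normal-family reasoning and the coefficient computations from Chapter \ref{ch:coeff}.
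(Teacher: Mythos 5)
Your proposal is correct and follows essentially the same route as the paper: you verify (N.1) via Theorems \ref{th:sumprodL} and \ref{th:Dirichletderivative}, verify (N.2) by intersecting the exceptional sets $J^{(j)}_{\eps/n}$, and use Lemma \ref{lem:locboundednessNo} to reduce normality to local boundedness, which is stable under finite sums, products, powers, and (via Cauchy's integral formula) differentiation. One small slip: the inclusion $Q(\alpha',l)\supset\overline{Q(\alpha,l)}$ you assert in part (d) is false, since both half-strips have the same horizontal bounds $-l<t<2l$ and the closure picks up $t=-l,\,2l$; moreover the widening $\alpha\to\alpha'$ is unnecessary, because local boundedness is a local property and Cauchy's estimate on a small disc $D_{2r}(z_0)\subset Q(\alpha,l)$ already bounds $f^{(\ell)}$ on $D_r(z_0)$. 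A further remark: your verification that $\L\mapsto L_x$ commutes with the algebraic operations for general $x\in K$ is more than is needed here — for the relevant characters one has directly $L_{ne_l}(s)=\L(s+inl)$, so the commutation is trivial — though proving it for all $x$ does no harm.
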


\begin{proof} Let $k,\ell\in\N_0$, $u\in[\frac{1}{2},1)$ and $\L,\L_1,...,\L_n\in\No(u)$. We observe the following:
\begin{itemize}
\item[(i)]  In the half-plane $\sigma>1$, the functions $\L,\L_1,...,\L_n$ are given by a Dirichlet series which satisfies the Ramanujan hypothesis. It follows from Theorems \ref{th:sumprodL} and \ref{th:Dirichletderivative} that, in the half-plane $\sigma>1$, each of the functions $\sum_{j=1}^n \L_j$, $\prod_{j=1}^n \L_j$, $\L^k$ and $\L^{(\ell)}$ is also given by a Dirichlet series which satisfies the Ramanujan hypothesis; thus, in particular by a Dirichlet series that is an element of $\mathscr{H}^2$.
\item[(ii)] If the functions $\L,\L_1,...,\L_n$ are analytic on a domain $Q\subset\C$, then the functions $\sum_{j=1}^n \L_j$, $\prod_{j=1}^n \L_j$, $\L^k$ and $\L^{(\ell)}$ are also analytic on $Q$.
\item[(iii)] Suppose that the families $\mathcal{F}, \mathcal{F}_1,..., \mathcal{F}_n$  of analytic functions on a domain $Q\subset\C$ are locally bounded on $Q$. Then, we deduce immediately that the families
$$\textstyle
\mathcal{F}_+ := \left\{\sum_{j=1}^n f_j \, : \, f_j\in \mathcal{F}_j\right\}, \qquad
\mathcal{F}_{\times} := \left\{\prod_{j=1}^n f_j \, : \, f_j\in \mathcal{F}_j\right\},
$$
$$
\mathcal{F}^{k} := \left\{f^k \, : \, f\in \mathcal{F}\right\}\qquad \mbox{and}\qquad
\mathcal{F}^{(\ell)} := \left\{f^{(\ell)} \, : \, f\in \mathcal{F}\right\}
$$
are also locally bounded on $Q$. Here, the statement for $\mathcal{F}^{(\ell)}$ follows from Cauchy's integral formula. 
\item[(iv)] Let $\eps>0$ and $J_1,...,J_n\subset \N$ with $\dens_* J_1, ..., \dens_*J_n> 1- \eps$. Then, 
$$
\dens_* \left(J_1\cap ... \cap J_n \right) > 1 - n\eps.
$$
\end{itemize}
The statement of the lemma follows from (i)-(iv), the definition of $\No(u)$ and Lemma \ref{lem:locboundednessNo}.
\end{proof}

In fact, Lemma \ref{lem:classNrelfct} was another motivation for us to define the class $\No(u)$ by the normality feature (N.2) and not by the mean-square condition of Theorem \ref{th:suffconditionsN} (b): we know that the Riemann zeta-function is an element of $\No(\tfrac{1}{2})$ and satisfies the Ramanujan hypothesis. By means of Lemma \ref{lem:classNrelfct} (c), we conclude immediately, that
$$
\zeta^k \in\No(\tfrac{1}{2}) \qquad \mbox{ for }k\in\N.
$$
However, as $\zeta^k$ is an element of the Selberg class of degree $k$, we only know for $\sigma>\max\{\frac{1}{2},1-\frac{1}{k}\}$, that
$$
\limsup_{T\rightarrow\infty} \frac{1}{2T} \int_{-T}^T \left|\zeta^k(\sigma+it) \right|^2 \d t <\infty.
$$
Thus, we would only get $\zeta^k \in \mathcal{N}(\max\{\frac{1}{2},1-\frac{1}{k}\})$ for $k\in\N$, if we replace  the normality feature (N.2) in the definition of $\mathcal{N}(u)$ by the mean-square condition of Theorem \ref{th:suffconditionsN} (b).\par
Let $\zeta_K$ be the Dedekind zeta-function of an abelian number field $K$. Then, $\zeta_K$ can be written as a finite product of Dirichlet $L$-functions; see for example Neukirch \cite[Chapt. VII, \S 5]{neukirch:2010}. As every Dirichlet $L$-functions is an element of $\No(\frac{1}{2})$ and satisfies the Ramanujan hypothesis, we deduce from Lemma \ref{lem:classNrelfct} (b) that
$$
\zeta_K \in \No(\tfrac{1}{2}).
$$ 
We mention a further implication of Lemma \ref{lem:classNrelfct}. Let $u\in[\frac{1}{2},1)$ and $\L\in\No(u)$. Suppose that $\L$ has Dirichlet series expansion
$$
\L(s)=\sum_{n=1}^{\infty} \frac{a(n)}{n^s}, \qquad \sigma>1.
$$
Then, it follows from Lemma \ref{lem:classNrelfct} (a) that, for $N\in\N$, the function $f_N$ defined by
$$
f_N(s):= \L(s) - \sum_{n=1}^{N} \frac{a(n)}{n^s}, \qquad \sigma>1,
$$
lies also in $\No(u)$.\par
We conclude with a possible extension of Lemma \ref{lem:classNrelfct} which we postpone to future works. Let $u\in[\frac{1}{2},1)$ and $\L\in\No(u)$. Further, let $\mathcal{H}(\mathbb{H}_1)$ denote the set of all analytic functions in the half-plane $\sigma>1$. Is there a nice way to describe axiomatically the set of all operators $T:\mathcal{H}(\mathbb{H}_1) \rightarrow \mathcal{H}(\mathbb{H}_1)$ for which $T(\L) \in \No(u)$.

\section{Normal families related to a function of the class \texorpdfstring{$\No$}{}}
In this section we discuss fundamental properties of the families $\{L_{ne_l}\}_{n\in J_{\eps}}$ related to $\L\in\No(u)$ by means of the normality feature (N.2). Several ideas that we use appear in Tanaka \cite{tanaka:2008} and in a slightly different language in the theory of probabilistic limit theorems, see for example Laurin\v{c}ikas \cite{laurincikas:1991-2}. It is our claim to rely  strictly on the normality feature (N.2) in our argumentation and not to use the mean-square condition of Theorem \ref{th:suffconditionsN} directly. We shall need the following lemmas to establish our main theorem in Chapter \ref{ch:probmom}. \par

\begin{lemma}\label{lem:normalityNo}
Let $u\in[\frac{1}{2},1)$ and $\L\in\No(u)$. Let $\alpha\in(u,1]$, $l>0$ and $Q:=Q(\alpha,l)$ be defined by \eqref{def:Q}. Further, let $J\subset \N$. 
\begin{itemize}
 \item[(a)] Suppose that $d_*:= \dens_* J>0$. Then, for every $\eps>0$, there is a subset $I_{\eps}\subset J$ with $\dens_* I_{\eps}>d_* -\eps$ such that the functions $L_{ne_l}$ are analytic on $Q$ for $n\in I_{\eps}$ and the family $\{L_{ne_l}\}_{n\in I_{\eps}}$ is normal in $Q$.
 \item[(b)] Suppose that $d^*:= \dens^* J>0$. Then, for every $\eps>0$, there is a subset $I_{\eps}\subset J$ with $\dens^* I_{\eps}>d^* -\eps$ such that the functions $L_{ne_l}$ are analytic on $Q$ for $n\in I_{\eps}$ and the family $\{L_{ne_l}\}_{n\in I_{\eps}}$ is normal in $Q$.
\end{itemize}
\end{lemma}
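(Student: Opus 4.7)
The plan is to set $I_\eps := J \cap J_\delta$ for a suitable $\delta \in (0,\eps)$, where $J_\delta$ is the set of indices furnished by the normality feature (N.2) in the definition of $\No(u)$. First I would invoke (N.2) for $\L \in \No(u)$ with parameters $\alpha$, $l$, and $\delta$: this supplies a subset $J_\delta := J(\alpha,l,\delta,\L)\subset\N$ with $\dens_* J_\delta > 1-\delta$ such that every $L_{ne_l}$, $n\in J_\delta$, is analytic on $Q$ and the family $\{L_{ne_l}\}_{n\in J_\delta}$ is normal in $Q$. Since $I_\eps \subset J_\delta$, the analyticity on $Q$ of each $L_{ne_l}$ with $n\in I_\eps$ is inherited, and the family $\{L_{ne_l}\}_{n\in I_\eps}$ is normal in $Q$ as a subfamily of a normal family (every sequence drawn from a subfamily is already a sequence in the full normal family and therefore admits a subsequence converging locally uniformly in $Q$ in the spherical sense).

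What remains is the density arithmetic. For part (a), I would use the identity $\N\setminus I_\eps = (\N\setminus J)\cup(\N\setminus J_\delta)$, combined with subadditivity of the upper density and the duality $\dens_* A = 1 - \dens^*(\N\setminus A)$, to obtain
\[
\dens_* I_\eps \;\geq\; 1 - \dens^*(\N\setminus J) - \dens^*(\N\setminus J_\delta) \;>\; 1 - (1-d_*) - \delta \;=\; d_* - \delta.
\]
Choosing $\delta < \eps$ (for instance $\delta = \eps/2$) produces the required bound $\dens_* I_\eps > d_* - \eps$.

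For part (b), the symmetric argument does not work, and one must fix a specific witnessing subsequence for the upper density of $J$ \emph{before} subtracting off the exceptional set. Concretely, I would choose a sequence $(N_k)_k$ of positive integers with $\lim_{k\to\infty}\#(J\cap[1,N_k])/N_k = d^*$. Since $\dens^*(\N\setminus J_\delta) = 1 - \dens_* J_\delta < \delta$, there exists an $N_0$ such that $\#((\N\setminus J_\delta)\cap[1,N])/N < \delta$ for every $N\geq N_0$. Together with the trivial inequality
\[
\#(I_\eps\cap[1,N_k]) \;\geq\; \#(J\cap[1,N_k]) - \#((\N\setminus J_\delta)\cap[1,N_k]),
\]
dividing by $N_k$ and passing to the limit along $(N_k)_k$ gives $\dens^* I_\eps \geq d^* - \delta > d^* - \eps$, again by choosing $\delta < \eps$.

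The lemma is really bookkeeping about densities: the analytic content is entirely contained in the normality feature (N.2), and the conclusion is extracted by intersection and restriction. The only mild obstacle is the asymmetry between parts (a) and (b); in (a) the subadditivity of $\dens^*$ on complements handles the two exceptional sets in one stroke, whereas in (b) one has to commit to a particular witnessing subsequence for $d^*$ before discarding the uniformly sparse complement of $J_\delta$. The normality of the resulting subfamily needs no further work beyond the observation that normality is hereditary under passing to subfamilies.
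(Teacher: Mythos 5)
Your proof is correct. Part (a) follows essentially the same route as the paper: intersect $J$ with the set $J_\delta$ supplied by (N.2), note that analyticity and normality pass to subfamilies, and do the density bookkeeping via subadditivity of $\dens^*$ together with the duality $\dens_* A = 1 - \dens^*(\N\setminus A)$. The extra cushion $\delta<\eps$ is harmless but unnecessary, since the bound $\dens_* J_\delta > 1-\delta$ coming from (N.2) is already strict.

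For part (b) you correctly observe that the direct dual computation $\dens^* I_\eps = 1 - \dens_*(\N\setminus I_\eps)$ breaks down because lower density is not subadditive, and you repair this with an explicit witnessing-subsequence argument. This works, but note that the paper finds a shorter one-line fix that stays within the subadditivity framework: applying subadditivity of $\dens^*$ to the decomposition $J = I_\eps \cup \bigl(J\cap(\N\setminus J_\eps)\bigr)$ gives
\[
d^* = \dens^* J \;\leq\; \dens^* I_\eps + \dens^*\bigl(J\cap(\N\setminus J_\eps)\bigr) \;\leq\; \dens^* I_\eps + \bigl(1 - \dens_* J_\eps\bigr) \;<\; \dens^* I_\eps + \eps,
\]
and rearranging yields $\dens^* I_\eps > d^* - \eps$ directly. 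Your approach commits to a subsequence realizing $d^*$ and passes the sparse complement $\N\setminus J_\delta$ through an explicit $\eps$-$N_0$ estimate; it is more hands-on and perhaps more transparent about why the bound holds, while the paper's argument trades that explicitness for brevity. Both are sound.
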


\begin{proof}
Suppose that $J\subset \N$. Let $\eps>0$. Then, due to the normality feature of $\L$, we find a subset $J_{\eps}\subset \N$ with $\dens_* J_{\eps} > 1- \eps$ such that the functions $L_{ne_l}$ are analytic on $Q$ for $n\in J_{\eps}$ and the family $\{L_{ne_l}\}_{n\in J_{\eps}}$ is normal in $Q$. We set $I_{\eps}:= J\cap J_{\eps}$. Suppose that $d_*>0$. Then, we find immediately that 
$$
\dens_* I_{\eps} \geq 1- \dens^* (\N\setminus I_{\eps}) >  d_* -\eps. 
$$
Suppose that $d^*<0$. In view of
$$
d^* \leq \dens^* I_{\eps} + \dens^* (J \cap (\N\setminus J_{\eps})) 
\leq \dens^* I_{\eps} + (1 - \dens_* J_{\eps})
$$
it follows that
$$
\dens^* I_{\eps} > d^*-  \eps. 
$$ 

\end{proof}
The next lemma reflects the fact that a family $\{L_{ne_l}\}_{n\in J}$ is normal on a set $Q(\alpha,l)$ if and only if $\{L_{ne_l}\}_{n\in J}$ is locally bounded on $Q(\alpha,l)$; see Lemma \ref{lem:locboundednessNo}.

\begin{lemma}\label{lem:JM}
Let $u\in[\frac{1}{2},1)$ and $\L\in\No(u)$. Let $\alpha\in(u,1]$, $l>0$ and $Q:=Q(\alpha,l)$ be defined by \eqref{def:Q}. Let $\mathcal{R}$ be a compact subset of $Q$. For $M>0$, let $J(M):=J(M,l,\alpha,\mathcal{R},\L)$ be the set of all $n\in\N$ that satisfy the following properties:
\begin{itemize}
\item[(i)] The function $L_{ne_l}$ is analytic on $Q$.
\item[(ii)] The inequality $\max_{s\in\mathcal{R}} \left|L_{ne_l}(s) \right| \leq M$ holds.
\end{itemize}
Then, for any $\eps>0$, there exists a constant $M_{\eps}>0$ such that
$$
\dens_* J(M_{\eps}) > 1 - \eps.
$$

\end{lemma}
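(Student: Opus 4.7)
The plan is to read off the conclusion almost directly from the normality feature (N.2) in the definition of $\No(u)$, together with Lemma~\ref{lem:locboundednessNo}. Concretely, since $\L\in\No(u)$, property (N.2) applied to the fixed parameters $\alpha\in(u,1]$, $l>0$ and the given $\eps>0$ produces a subset $J_{\eps}\subset \N$ with $\dens_* J_{\eps}>1-\eps$ such that each $L_{ne_l}$ with $n\in J_{\eps}$ is analytic on $Q=Q(\alpha,l)$ and such that the family $\mathcal{F}_{\eps}:=\{L_{ne_l}\}_{n\in J_{\eps}}$ is normal in $Q$.

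The next step is to convert normality of $\mathcal{F}_{\eps}$ into a uniform bound on the compact set $\mathcal{R}\subset Q$. Here I would invoke Lemma~\ref{lem:locboundednessNo}: since $\L$ satisfies (N.1) and the functions $L_{ne_l}$, $n\in J_{\eps}$, are analytic on $Q$, normality of $\mathcal{F}_{\eps}$ in $Q$ is equivalent to local boundedness of $\mathcal{F}_{\eps}$ on $Q$. Covering the compact set $\mathcal{R}\subset Q$ by finitely many of the discs on which $\mathcal{F}_{\eps}$ is uniformly bounded, I obtain a constant $M_{\eps}>0$ (depending on $\eps$, $\alpha$, $l$, $\mathcal{R}$ and $\L$) such that
\begin{equation*}
\max_{s\in\mathcal{R}}\left|L_{ne_l}(s)\right|\leq M_{\eps}\qquad\mbox{for every }n\in J_{\eps}.
\end{equation*}

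Finally, by construction of the set $J(M_{\eps})$, every $n\in J_{\eps}$ satisfies both (i) and (ii) from the definition of $J(M_{\eps})$, so $J_{\eps}\subset J(M_{\eps})$. Lower densities are monotone with respect to inclusion, whence
\begin{equation*}
\dens_* J(M_{\eps})\geq \dens_* J_{\eps}>1-\eps,
\end{equation*}
which is the assertion. There is no real obstacle in this argument; the only subtle point is that $\mathcal{F}_{\eps}$ is only guaranteed to be normal in $Q$ (not on the larger strip we might be tempted to consider) — but this is already built into the setting, since $\mathcal{R}$ is assumed to be a compact subset of $Q$ and the finite subcovering by discs of local boundedness stays inside $Q$.
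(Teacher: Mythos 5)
Your argument is correct and is essentially identical to the paper's: invoke (N.2) for the given $\eps,\alpha,l$ to obtain $J_\eps$, pass from normality to local boundedness via Lemma~\ref{lem:locboundednessNo}, use compactness of $\mathcal{R}\subset Q$ to extract a uniform bound $M_\eps$, and then conclude. If anything, your final step (noting $J_\eps\subset J(M_\eps)$ and using monotonicity of lower density) is slightly more carefully phrased than the paper's, which simply writes ``by setting $J(M_\eps):=J_\eps$''.
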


\begin{proof}
The normality feature of $\L$ assures that, for any $\eps>0$, we find a subset $J_{\eps}\subset \N$ with $\dens_* J_{\eps} > 1- \eps$ such that, for $n\in J_{\eps}$, the functions $L_{ne_l}$ are analytic on $Q$ and, additionally, the family $\{L_{ne_l}\}_{n\in J_{\eps}}$ is normal in $Q$. According to Lemma \ref{lem:locboundednessNo}, the family $\{L_{ne_l}\}_{n\in J_{\eps}}$ is locally bounded on $Q$. As $\mathcal{R}$ is a compact subset of $Q$, there exist a constant $M_{\eps}>0$ such that
$$
\max_{s\in\mathcal{R}}|L_{nl}(s)|\leq M_{\eps} \qquad \mbox{ for } n\in J_{\eps}.
$$
Thus, the statement of the lemma follows by setting $J(M_{\eps}):=J_{\eps}$. 
\end{proof}
For functions which satisfy the mean-square condition of Theorem \ref{th:suffconditionsN} (b), the statement of Lemma \ref{lem:JM} was essentially already known to Bohr (see for example Bohr \cite{bohr:1915}) and can also be found in Tanaka \cite[Lemma 4.1]{tanaka:2008}. Our motivation was to work out that the statement of Lemma \ref{lem:JM} does not only hold for functions which satisfy the mean-square condition but in general for those which satisfy the normality feature (N.2); thus, for example, for functions which satisfy the $a$-density estimate of Theorem \ref{th:suffconditionsN} (c).\par

In the subsequent lemma, we gather important properties of a family $\{L_{ne_l}\}_{n\in J}$ if it is normal in a certain half-strip.\par
For a subset $J\subset \N$ and a fixed $l>0$, let $E(J)\subset K_{2\pi/l}$ denote from now on the closure of the set $\{ne_l \, : \, n\in J \}$, with repsect to the topology of $K_{2\pi/l}$.

\begin{lemma}\label{lem:normalNproperties}
Let $u\in[\frac{1}{2},1)$ and $\L\in\No(u)$. Let $\ell,l>0$, $\alpha\in(u,1]$  and $Q:=Q(\alpha,\ell)$ be defined by \eqref{def:Q}. Let the set $\mathcal{H}(Q)$ of all analytic functions on $Q$ be endowed with the topology of uniform convergence on compact subsets of $Q$. Suppose that $J$ is a subset of $\N$ such that the functions $L_{ne_l}$ are analytic on $Q$ for $n\in J$ and, additionally, the family $\{L_{ne_l}\}_{n\in J}$ is normal in $Q$. Then, the following statements are true.
\begin{itemize}
\item[(a)] For every $y\in E(J)$, the function $L_y$ is analytic on $Q$. Moreover,
$$
\overline{\{L_{ne_l}\,:\, n\in J  \}} = \{L_y\, : \, y\in E(J)\} \subset \mathcal{H}(Q),
$$
where we regard $\{L_{ne_l}\,:\, n\in J  \}$ and $\{L_y\, : \, y\in E(J)\}$ as subsets of $\mathcal{H}(Q)$ and take the closure of $\{L_{ne_l}\,:\, n\in J  \}$ with respect to the topology of $\mathcal{H}(Q)$ chosen above.
\item[(b)] The function $L:(s,y)\mapsto L_y(s)$ is continuous on $Q\times E(J)$.

\item[(c)] Let $\mathcal{R}$ be a compact subset of $Q$ and suppose that there are constants $m,M>0$ such that
$$
m \leq \max_{s\in\mathcal{R}} \left| L_{ne_l}(s) \right| \leq M \qquad \mbox{for every }n\in J.
$$
Then, 
$$
m \leq \max_{s\in\mathcal{R}} \left| L_{y}(s) \right| \leq M \qquad \mbox{for every }y\in E(J).
$$
\item[(d)] Let $\mathcal{R}$ be a compact subset of $Q$ and suppose that, for every $n\in J$, the function $L_{ne_l}$ has at least one zero in $\mathcal{R}$. Then, for every $y\in E(J)$, the function $L_{y}$ has at least one zero in $\mathcal{R}$.
\end{itemize}
\end{lemma}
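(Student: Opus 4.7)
The plan is to establish (a) as the central structural result and then derive (b), (c), (d) as consequences of the locally uniform convergence obtained in proving (a). The key analytic input is that the restriction of $L$ to $\mathbb{H}_1 \times K$ is continuous: by Lemma \ref{lem:analyticH2} the Dirichlet series expansion of $L_x$ converges absolutely in $\sigma>1$, and since $|\chi_{\log n}(x)|=1$ uniformly in $x$, it converges uniformly on compacta of $\mathbb{H}_1$ uniformly in $x$. This continuity, married with the normality of $\{L_{ne_l}\}_{n\in J}$ in $Q$, will force every convergent subsequence to converge to a specific limit.

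\textbf{Step 1: forward inclusion of (a).} Fix $y\in E(J)$. Since $K_{2\pi/l}$ is metrizable, there is a sequence $(n_k)\subset J$ with $n_k e_l\to y$. By normality of $\{L_{ne_l}\}_{n\in J}$ in $Q$, every subsequence of $(L_{n_k e_l})$ admits a further subsequence converging locally uniformly on $Q$ to some $f$ which, by the theorem of Weierstrass, is either analytic on $Q$ or identically $\infty$. For each $s\in Q$ with $\Re s>1$, continuity of $L$ on $\mathbb{H}_1\times K$ yields $L_{n_k e_l}(s)=L(s,n_k e_l)\to L(s,y)=L_y(s)$, so $f$ must agree with $L_y$ on the non-empty open set $Q\cap\mathbb{H}_1$. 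The latter contains the substrip $\{\sigma\geq 2,\,-\ell<t<2\ell\}$ on which $L_y$ is bounded by $\sum_n |a(n)|/n^2<\infty$; hence $f\not\equiv\infty$, so $f$ is analytic on $Q$ and, by the identity principle, is the unique analytic continuation of $L_y|_{Q\cap\mathbb{H}_1}$ to $Q$. Identifying $L_y$ with this extension, we see that all subsequential limits coincide, so the full sequence $(L_{n_k e_l})$ converges locally uniformly on $Q$ to $L_y$. In particular $L_y\in\overline{\{L_{ne_l}:n\in J\}}$.

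\textbf{Step 2: reverse inclusion of (a) and proof of (b).} Conversely, if $g=\lim_k L_{n_k e_l}$ locally uniformly on $Q$ for some $(n_k)\subset J$, compactness of $K_{2\pi/l}$ yields a subsequence with $n_{k_j}e_l\to y\in E(J)$, and Step 1 forces $g=L_y$. For (b), the family $\{L_y:y\in E(J)\}$, being the closure in $\mathcal{H}(Q)$ of the normal family $\{L_{ne_l}\}_{n\in J}$, is itself normal in $Q$. Given $y_k\to y_0$ in $E(J)$, any subsequence of $(L_{y_k})$ has a sub-subsequence converging locally uniformly on $Q$; continuity of $L$ on $\mathbb{H}_1\times K$ and the identity-principle argument of Step 1 force the limit to equal $L_{y_0}$, so $L_{y_k}\to L_{y_0}$ in $\mathcal{H}(Q)$. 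Joint continuity of $(s,y)\mapsto L_y(s)$ on $Q\times E(J)$ follows from the splitting
\[
|L_{y_k}(s_k)-L_{y_0}(s_0)|\leq \max_{s\in\mathcal{R}_0}|L_{y_k}(s)-L_{y_0}(s)|+|L_{y_0}(s_k)-L_{y_0}(s_0)|
\]
with $\mathcal{R}_0$ a compact neighborhood of $s_0$ in $Q$ eventually containing $(s_k)$; both terms vanish in the limit.

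\textbf{Step 3: proofs of (c) and (d).} Both are immediate from the locally uniform convergence of Step 1. Fix $y\in E(J)$ and $(n_k)\subset J$ with $n_k e_l\to y$, so $L_{n_k e_l}\to L_y$ uniformly on the compact set $\mathcal{R}\subset Q$. For (c), the continuous functional $f\mapsto \max_{\mathcal{R}}|f|$ yields $\max_{\mathcal{R}}|L_{n_k e_l}|\to \max_{\mathcal{R}}|L_y|$, and the bounds $m$ and $M$ pass to the limit. For (d), pick zeros $\rho_k\in\mathcal{R}$ of $L_{n_k e_l}$; by compactness of $\mathcal{R}$ a subsequence $\rho_{k_j}\to\rho\in\mathcal{R}$, and uniform convergence on a compact neighborhood of $\rho$ in $Q$ gives $L_y(\rho)=\lim_j L_{n_{k_j}e_l}(\rho_{k_j})=0$, so $\rho\in\mathcal{R}$ is a zero of $L_y$. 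The main obstacle throughout is Step 1: ruling out the limit $f\equiv\infty$ and matching $f$ with $L_y$ on $Q\cap\mathbb{H}_1$, which couples the abstract normality from the defining property of $\No$ with the pointwise convergence coming from continuity of $L$ on $\mathbb{H}_1\times K$.
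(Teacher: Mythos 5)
Your proof is correct and follows essentially the same route as the paper: continuity of $L$ on $\mathbb{H}_1\times K$ pins down the pointwise limit on $Q\cap\mathbb{H}_1$, normality supplies locally uniform subsequential limits, and the identity principle forces the limit to be the analytic continuation of $L_y$, with (b)--(d) following as consequences. The only cosmetic difference is in (d), where you argue by passing to a convergent subsequence of zeros rather than invoking the theorem of Hurwitz; both are standard and equivalent here.
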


\begin{proof}
(a): Let $y\in E(J)$. Then, according to Lemma \ref{lem:analyticH2} (b) and property (N.1) of $\L$, the function $L_y$ is analytic on the intersection $Q(1,l)$ of the domain $Q$ with the half-plane $\sigma>1$. Moreover, due to the definition of $E(J)$, we find natural numbers $n_k \in J$, indexed by $k\in\N$, such that the sequence $(n_ke_l)_k$ converges to $y$, with respect to the topology of $K_{2\pi/l}$. According to Lemma \ref{lem:analyticpropertiesofLinU} (b) and Lemma \ref{lem:analyticH2} (a), the function 
$$
L: (s,y') \mapsto L_{y'}(s)
$$ 
is continuous on $Q(1,l)\times K_{2\pi/l}$. Thus, for every $s\in Q(1,l)$, we have
$$
\lim_{k\rightarrow\infty} L_{n_{k}e_l}(s) = \lim_{k\rightarrow\infty} L(s,n_{k}e_l) = L(s,y)=L_{y}(s).
$$ 
Since the family $\mathcal{F}:=\{L_{ne_l}\}_{n\in J}$ is normal in $Q(\alpha,l)$, there exists a subsequence of $(L_{n_ke_l})_{k}$ which converges locally uniformly on $Q(\alpha,l)$ to a function $f\in\mathcal{H}(Q)$. It follows from the uniqueness of the limit function that $f$ is the analytic continuation of $L_{y}$ to $Q(\alpha,l)$. We have proved that $L_y$ is analytic on $Q$ and that 
$$\{L_y\, : \, y\in E(J)\} \subset \overline{\{L_{ne_l}\,:\, n\in J  \}}.$$ 
Now, suppose that the natural numbers $n_k\in J$ are chosen such that the sequence $(L_{n_ke_l})_k$ converges locally uniformly on $Q$ to a function $f\in\mathcal{H}(Q)$. We note that the set $E(J)\subset K_{2\pi/ l}$ is compact. Thus, we find a subsequence of $(n_ke_l)_{k}$ which converges to a certain element $y\in E(J)$. By the continuity of $L:(s,y')\mapsto L_{y'}(s)$ on $Q(1,l)\times K_{2\pi/l}$, we obtain that
$$
f(s)=L_y(s) \qquad \mbox{for }s\in Q(1,l).
$$
The uniqueness of the limit function implies that $f$ is the analytic continuation of $L_y$ to $Q(\alpha,l)$. This proves that $$\overline{\{L_{ne_l}\,:\, n\in J  \}} \subset \{L_y\, : \, y\in E(J)\}.$$ \par
(b): Let $(s_0,y_0)\in Q\times E(J)$ and $(s_k,y_k)_k$ be a sequence of points $(s_k,y_k)\in Q\times E(J)$ with 
$$
\lim_{k\rightarrow\infty} (s_k,y_k) = (s_0,y_0).
$$ 
We choose an arbitrary $\eps>0$. It follows from (a) that the function $L_{y_0}$ is analytic and, thus, continuous on $Q$. Hence, we find a disc $D_{\delta}(s_0)$ with $\delta>0$ such that $$\overline{D_{\delta}(s_0)}\subset Q$$ and
\begin{equation}\label{C1}
\left|L_{y_0}(s)-L_{y_0}(s_0) \right|<\frac{\eps}{2} \qquad \mbox{for }s\in D_{\delta}(s_0).
\end{equation}
Statement (a) together with Lemma \ref{lem:locboundednessNo} implies that the family $\{L_y\}_{y\in E(J)}$ is locally bounded on $Q$ and, in particular, normal in $Q$. Similarly as in the proof of statement (a), we deduce from the continuity of $L$ in $Q(1,l)\times E(J)$, that the sequence $(L_{y_k})_k$ converges locally uniformly on $Q$ to $L_{y_0}$. Hence, for sufficiently large $k$,
\begin{equation}\label{C2}
\left|L_{y_k}(s)-L_{y_0}(s) \right|<\frac{\eps}{2}\qquad \mbox{for }s\in D_{\delta}(s_0).
\end{equation}
By combining \eqref{C1} and \eqref{C2}, we obtain that, for sufficiently large $k$,
$$
\left|L(s_k,y_k)-L(s_0,y_0) \right| <\eps.
$$
The assertion follows.\par
(c): Statement (c) follows directly from (a).\par
(d): Statement (d) follows from (a) by means of the theorem of Hurwitz (Theorem \ref{th:hurwitz}).\par
\end{proof}

\section{The class \texorpdfstring{$\No$}{} and a polynomial Euler product representation}
In this section we suppose that $\L\in\No(u)$ satisfies the Ramanujan hypothesis and can be written as a polynomial Euler product of the form \eqref{eulerproduct}. We proceed to investigate the properties of the families $\{L_{ne_l}\}_{n\in J_{\eps}}$ attached to $\L\in\No(u)$ by means of the normality feature (N.2). Our main aim of this section is to establish Lemma \ref{lem:classNrelfct2} which states, roughly speaking, that the Ramanujan hypothesis together with a polynomial Euler product representation for $\L\in\No(u)$ implies that also $\log\L$ and $\L^{\kappa}$ with $\kappa\in\R$ are elements of $\No(u)$. \par

If $\L\in\No(u)$ satisfies the Ramanujan hypothesis and has a polynomial Euler product representation in $\sigma>1$, then $\L$ is free of zeros in $\sigma>1$. The next lemma states that possible zeros of $\L$ in the strip $u<\sigma\leq 1$ cannot lie too dense. For functions $\L\in\No(u)$ which satisfy the mean-square condition of Theorem \ref{th:suffconditionsN} (b), the following lemma is well-known and can be found in a slightly modified version for the peculiar case of the Riemann zeta-function, for example, in the paper of Tanaka \cite[Proposition 2.1]{tanaka:2008}.

\begin{lemma}\label{lem:Jzero}
Let $u\in[\frac{1}{2},1)$ and $\L\in\No(u)$. Suppose that $\L$ satisfies the Ramanujan hypothesis and can be written as a polynomial Euler product in $\sigma>1$. Let $\alpha\in[u,1)$, $l>0$ and $\mathcal{R}$ be a compact subset of the half-strip $Q:=Q(\alpha,l)$ defined by $\eqref{def:Q}$. Then, there exists a subset $J_{zf}:=J_{zf}(\alpha,l,\mathcal{R},\L)\subset \N$ with the following properties:
\begin{itemize}
 \item[(i)] $L_{ne_l}$ is analytic in $Q$ for $n\in J_{zf}$.
 \item[(ii)] $L_{ne_l}$ non-vanishing in $\mathcal{R}$ for $n\in J_{zf}$.
 \item[(iii)] The set $J_{zf}$ has density $\dens \ J_{zf} = 1$.
\end{itemize}
\end{lemma}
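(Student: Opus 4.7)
The plan is to combine the $\pmb{\sigma}$-almost sure analyticity and non-vanishing of $L_x$ in $\sigma>\tfrac{1}{2}$ provided by Theorem \ref{th:almostsurebehaviour}(f) with Tanaka's density estimate from Lemma \ref{lem:tanaka1} applied to the orbit $\{ne_l\}_{n\in\N}$ in $K_{2\pi/l}$. Property (i) will be immediate: the set $I:=\{n\in\N : L_{ne_l}\text{ is analytic on }Q\}$ contains the sets $J_\eps$ furnished by (N.2) for every $\eps>0$, so $\dens_* I\geq 1-\eps$ for all $\eps$ and hence $\dens I = 1$. It therefore suffices to show that $J_0:=\{n\in I : L_{ne_l}\text{ has a zero in }\mathcal{R}\}$ has upper density zero, because then $J_{zf}:=I\setminus J_0$ will satisfy (i)--(iii).

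The heart of the argument is the measure-theoretic claim $\pmb{\tau}(B_\mathcal{R})=0$, where $B_\mathcal{R}\subset K_{2\pi/l}$ is the set of $y$ for which $L_y$ fails to be analytic and non-vanishing throughout $\mathcal{R}$. I would obtain this by transferring the $\pmb{\sigma}$-full set $E_5\subset K$ from Theorem \ref{th:almostsurebehaviour}(f) down to $K_{2\pi/l}$. The identity $L_{x+e_t}(s)=L_x(s+it)$ shows that the property defining $E_5$ is invariant under the flow $\{T_t\}_{t\in\R}$, so $E_5$ is $T_t$-invariant. Under the local product decomposition $K\cong K_{2\pi/l}\times[0,l)$ with $\pmb{\sigma}=\pmb{\tau}\times \tfrac{1}{l}\d t$, the $T_u$-invariance for $u\in[0,l)$ identifies all fibres of $E_5$ with the single slice $E_5\cap K_{2\pi/l}$, so Fubini yields $\pmb{\tau}(E_5\cap K_{2\pi/l})=\pmb{\sigma}(E_5)=1$. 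Since $\mathcal{R}\subset\{\sigma>\alpha\}\subset\{\sigma>\tfrac{1}{2}\}$, this forces $B_\mathcal{R}\subset K_{2\pi/l}\setminus E_5$, whence $\pmb{\tau}(B_\mathcal{R})=0$.

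With this in hand I argue by contradiction. Assume $d:=\dens^* J_0 > 0$. Lemma \ref{lem:normalityNo}(b) applied with $\eps=d/2$ provides a subset $I'\subset J_0$ with $\dens^* I' > d/2$ on which $\{L_{ne_l}\}_{n\in I'}$ is analytic and normal in $Q$. Lemma \ref{lem:normalNproperties}(d), which is Hurwitz's theorem in disguise, then forces every $L_y$ with $y\in E(I')=\overline{\{ne_l : n\in I'\}}\subset K_{2\pi/l}$ to carry at least one zero in $\mathcal{R}$; consequently $E(I')\subset B_\mathcal{R}$. The set $E(I')$ is closed, hence $\pmb{\tau}$-measurable, and Lemma \ref{lem:tanaka1} gives $\pmb{\tau}(E(I'))\geq \dens^* I' > d/2 > 0$, contradicting $\pmb{\tau}(B_\mathcal{R})=0$.

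The main obstacle is precisely the transfer of the full-measure set from $(K,\pmb{\sigma})$ to $(K_{2\pi/l},\pmb{\tau})$, since $K_{2\pi/l}$ is a proper closed subgroup of $K$ and therefore a $\pmb{\sigma}$-null set, so a bare Fubini argument would be useless. What rescues the argument is the vertical-shift invariance of the property "analytic and non-vanishing in $\sigma>\tfrac{1}{2}$": this invariance makes every slice $\{y:(y,u)\in E_5\}$ coincide with $E_5\cap K_{2\pi/l}\times\{0\}$, so the $\pmb{\sigma}$-measure of $E_5$ is literally equal to the $\pmb{\tau}$-measure of the fibre we care about.
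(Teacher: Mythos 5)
Your argument is correct and yields the lemma, but the way you handle the measure transfer between $K$ and $K_{2\pi/l}$ is genuinely different from the paper's. The paper first assumes $\dens^* J^c_{zf}>0$, passes to a normal subfamily $\{L_{ne_l}\}_{n\in I_\eps}$, concludes $\ptau(E(I_\eps))>0$ via Lemma \ref{lem:tanaka1}, and only then lifts this positive-$\ptau$-measure set \emph{up} to $K$: it chooses an intermediate open set $Q'$ with $\mathcal{R}\subset Q'$ and $\overline{Q'}\subset Q$, and an interval $\mathcal{I}\subset[0,l)$ of positive length so small that the zeros of $L_{y+e_u}$ remain in $Q'$ for $(y,u)\in E(I_\eps)\times\mathcal{I}$; the product $E(I_\eps)\times\mathcal{I}$ then gives a subset $G\subset K$ of positive $\pmb{\sigma}$-measure of parameters $x$ for which $L_x$ has a zero in $Q'$, contradicting Theorem \ref{th:almostsurebehaviour}(f). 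You instead perform the transfer once and for all at the start, pushing the full-$\pmb{\sigma}$-measure statement \emph{down} to a full-$\ptau$-measure statement on $K_{2\pi/l}$ via the vertical-shift invariance of the defining property and Fubini for the decomposition $\pmb{\sigma}=\ptau\times\tfrac{1}{l}\,\d t$; after that, the contradiction lives entirely in $K_{2\pi/l}$ and no geometric $Q'$-and-$\mathcal{I}$ construction is needed. This is a legitimate and arguably cleaner shortcut using the same core ingredients (Theorem \ref{th:almostsurebehaviour}(f), Lemmas \ref{lem:normalityNo}, \ref{lem:normalNproperties}(d) and \ref{lem:tanaka1}). One care point: Theorem \ref{th:almostsurebehaviour}(f) only asserts the existence of \emph{some} full-measure set $E_5$, and such a set need not itself be $T_t$-invariant, so the step ``$E_5$ is $T_t$-invariant'' as written is not quite justified. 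You should replace $E_5$ by the maximal set $\tilde E_5$ of all $x\in K$ for which $L_x$ is analytic and non-vanishing in $\sigma>\tfrac{1}{2}$; this set \emph{is} invariant under the flow (because $\Re(s+it)=\Re s$, so the half-plane is stable under vertical shifts), contains $E_5$ and hence is $\pmb{\sigma}$-measurable of full measure in the completed measure, and your Fubini computation then gives $\ptau(\tilde E_5\cap K_{2\pi/l})=1$ exactly as you need.
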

\begin{proof}
According to the normality feature of $\L$, we find a subset $J\subset \N$ with $\dens \ J =1$ such that $L_{ne_l}$ is analytic on $Q$ for $n\in J$. Suppose that there is a subset $J^c_{zf} \subset J$ with $d:=\dens^* J^c_{zf}>0$ such that, for every $n\in J_{zf}^c$, the function $L_{ne_l}$ has at least one zero in $\mathcal{R}\subset Q$. We choose $0<\eps<d$. Then, according to Lemma \ref{lem:normalityNo}, there is a subset $I_{\eps}\subset J_{zf}$ with $\dens^*>1-\eps$ such that the family $\{L_{ne_l}\}_{n\in I_{\eps}}$ is normal in $Q$. It follows from Lemma \ref{lem:normalNproperties} (d) that every function of the family $\{L_{y}\}_{y\in E(I_{\eps})}$ is analytic on $Q$ and has at least one zero in $\mathcal{R}$. Moreover, due to Lemma \ref{lem:tanaka1}, we have
\begin{equation} \label{Jzf}
\ptau(E(I_{\eps})) \geq d-\eps >0.
\end{equation}
Since $\mathcal{R}$ is a compact subset of $Q$, we find an open set $Q'$ with
$$
\mathcal{R} \subset Q' \qquad \mbox{and} \qquad \overline{Q'} \subset Q.
$$
Moreover, due to the relation
$$
L_{y+e_{\tau}}(s)=L_{y}(s+i\tau),
$$
there exists an interval $\mathcal{I}\subset [0,l)$ of positive Lebesgue measure  such that, for $(y,u)\in E(I_{\eps})\times\mathcal{I}$, the function $L_{y+e_u}$ is analytic on $Q'$ and has at least one zero in $Q'$. By our identification of $K_{2\pi/l}\times [0,l)$ with $K$, this and \eqref{Jzf} imply that there is a subset $G\subset K$ with $\pmb{\sigma}(G)>0$ such that, for $x\in G$, the function $L_{x}$ is analytic on $Q'$ and has zeros in $Q'$. This is in contradiction to statement (f) of Theorem \ref{th:almostsurebehaviour}. Thus, $\dens\ J^c_{zf} = 0$ and, consequently, $\dens\ J_{zf}=1$. 
\end{proof}

The next lemma deals with the reciprocals $L_{ne_l}^{-1}$ of the functions $L_{ne_l}$ attached to $\L$.

\begin{lemma}\label{lem:boundIM}
Let $u\in[\frac{1}{2},1)$ and $\L\in\No(u)$. Suppose that $\L$ satisfies the Ramanujan hypothesis and can be written as a polynomial Euler product in $\sigma>1$. Let $\alpha\in[u,1)$, $l>0$ and $Q:=Q(\alpha,l)$ be defined by \eqref{def:Q}. Then, for any $\eps>0$, there exists a subset $J_{\eps}\subset \N$ with $\dens_* J_{\eps} >1-\eps$ such that the functions $L_{ne_l}^{-1}$ are analytic in $Q$ for $n\in J_{\eps}$ and the family $\{L_{ne_l}\}_{n\in J_{\eps}}$ is locally bounded in $Q$. 
\end{lemma}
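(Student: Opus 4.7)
The aim is to combine Lemma~\ref{lem:Jzero}, which gives zero-freeness on a single compact set along a density-one set of shifts, with a uniform quantitative strengthening via a normality/ergodic argument, and then to exhaust $Q$ by compacta. The interesting content is the uniform lower bound $|L_{ne_l}|\geq 1/M$ on compacta (equivalently, local boundedness of the reciprocal family), since the normality feature (N.2) together with Lemma~\ref{lem:Jzero} already yields analyticity and non-vanishing for $n$ in a set of lower density arbitrarily close to $1$.

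\textbf{Step 1: Quantitative lower bound on a fixed compact.} Fix a compact $\mathcal{R}\subset Q$ and, for $M>0$, set
$$
J(M,\mathcal{R}):=\bigl\{n\in\N\, : \, L_{ne_l}\ \text{is analytic on $Q$ and}\ \min_{s\in\mathcal{R}}|L_{ne_l}(s)|\geq \tfrac{1}{M}\bigr\}.
$$
The claim is that, for every $\delta>0$, there is an $M$ with $\dens_*J(M,\mathcal{R})>1-\delta$. Assume for contradiction that some $\delta_0>0$ satisfies $\dens^*(\N\setminus J(M,\mathcal{R}))\geq \delta_0$ for every $M\in\N$. Applying Lemma~\ref{lem:normalityNo}(b) to $\N\setminus J(M,\mathcal{R})$ yields a subset $I_M\subset \N\setminus J(M,\mathcal{R})$ with $\dens^*I_M>\delta_0/2$ such that $\{L_{ne_l}\}_{n\in I_M}$ is normal in $Q$. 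Lemma~\ref{lem:tanaka1} gives $\ptau(E(I_M))\geq \delta_0/2$, and Lemma~\ref{lem:normalNproperties}(a),(c) shows that every $y\in E(I_M)$ produces an analytic continuation of $L_y$ to $Q$ with $\min_{s\in\mathcal{R}}|L_y(s)|\leq 1/M$. Hence $E(I_M)\subset G_M$, where
$$
G_M:=\bigl\{y\in K_{2\pi/l}\, : \, L_y\ \text{extends analytically to $Q$ and}\ \min_{s\in\mathcal{R}}|L_y(s)|\leq \tfrac{1}{M}\bigr\}.
$$

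\textbf{Step 2: Passing to a positive-measure bad set and contradiction via Theorem~\ref{th:almostsurebehaviour}(f).} The sets $G_M$ are decreasing in $M$, so $\ptau\bigl(\bigcap_M G_M\bigr)=\lim_M\ptau(G_M)\geq \delta_0/2$, and for every $y$ in $G:=\bigcap_M G_M$ the function $L_y$ has a zero on $\mathcal{R}$. Using the local product decomposition $K\cong K_{2\pi/l}\times[0,l)$ with $\pmb{\sigma}\cong \ptau\times \tfrac{1}{l}\d t$, the set $G\times[0,l)\subset K$ has $\pmb{\sigma}$-measure $\geq \delta_0/2$; and by the shift relation $L_{y+e_t}(s)=L_y(s+it)$, each $x=y+e_t$ in this set produces a zero of $L_x$ at a point with real part equal to that of some $s_0\in\mathcal{R}$, hence in the half-plane $\sigma>u\geq \tfrac{1}{2}$. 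This contradicts statement (f) of Theorem~\ref{th:almostsurebehaviour}, which guarantees that $L_x$ is non-vanishing in $\sigma>\tfrac{1}{2}$ for $\pmb{\sigma}$-almost every $x\in K$. The claim of Step~1 is therefore established.

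\textbf{Step 3: Diagonal exhaustion of $Q$.} Choose an increasing sequence of compact subsets $\mathcal{R}_1\subset\mathcal{R}_2\subset\cdots$ of $Q$ with $\bigcup_{k}\mathcal{R}_k=Q$. Given $\eps>0$, apply Step~1 with $\delta=\eps/2^{k}$ to find $M_k>0$ with $\dens_*J(M_k,\mathcal{R}_k)>1-\eps/2^k$, and set
$$
J_{\eps}:=\bigcap_{k\in\N} J(M_k,\mathcal{R}_k).
$$
Subadditivity of $\dens^*$ on the complements yields $\dens_*J_{\eps}\geq 1-\sum_k \eps/2^k = 1-\eps$. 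For every $n\in J_{\eps}$ the function $L_{ne_l}$ is analytic and non-vanishing on $Q$, so $L_{ne_l}^{-1}$ is analytic on $Q$, and $|L_{ne_l}^{-1}(s)|\leq M_k$ for $s\in\mathcal{R}_k$. Since the $\mathcal{R}_k$ exhaust $Q$, this gives local boundedness of $\{L_{ne_l}^{-1}\}_{n\in J_{\eps}}$ on $Q$, completing the proof.

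\textbf{Main obstacle.} The delicate point is Step~2: one needs the bad set $G_M\subset K_{2\pi/l}$ produced by the failure assumption to genuinely correspond to a positive-measure set of $x\in K$ whose attached function $L_x$ has a zero in $\{\sigma>\tfrac{1}{2}\}$. The local product decomposition together with the horizontal-shift identity $L_{y+e_t}(s)=L_y(s+it)$ ensures that the real part of the zero is preserved, so Theorem~\ref{th:almostsurebehaviour}(f) can be invoked cleanly; this transfer from $K_{2\pi/l}$ to $K$ is the conceptual core of the argument.
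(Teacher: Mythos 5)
Your Step~3 does not go through: upper density is only finitely subadditive, not countably subadditive. For instance, $A_k:=\{2^k,\ldots,2^{k+1}-1\}$ has $\dens^*A_k=0$ for every $k$, while $\dens^*\bigl(\bigcup_k A_k\bigr)=1$; thus $\dens^*\bigl(\N\setminus J(M_k,\mathcal{R}_k)\bigr)<\eps/2^k$ for each $k$ does \emph{not} yield $\dens^*\bigl(\bigcup_k(\N\setminus J(M_k,\mathcal{R}_k))\bigr)\leq\eps$, and your set $J_{\eps}=\bigcap_k J(M_k,\mathcal{R}_k)$ could have lower density far below $1-\eps$, even zero. Since Step~3 is exactly the passage from one compact to all of $Q$, this is a genuine gap, not a technicality. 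There is a smaller measurability problem in Step~2 as well: you pass from $\ptau(E(I_M))\geq\delta_0/2$ to $\ptau(G_M)\geq\delta_0/2$ and then take a decreasing limit without ever showing that $G_M$ is $\ptau$-measurable. This one is repairable: fix a single normal set $J'$ from (N.2) with $\dens_*J'>1-\delta_0/4$, set $I_M:=J'\cap\bigl(\N\setminus J(M,\mathcal{R})\bigr)$, which is now decreasing in $M$, and work directly with the closed, nested sets $E(I_M)$, each of $\ptau$-measure $\geq\delta_0/2$ by Lemma~\ref{lem:tanaka1}.

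The countable-union problem is precisely what the paper's proof is built to avoid. From Lemma~\ref{lem:Jzero} (non-vanishing on one fixed compact rectangle for a density-one set of $n$) together with (N.2), one obtains, for any $\eps>0$, a set $I_{\eps}$ with $\dens_*I_{\eps}>1-\eps/3$ on which the $L_{ne_l}$ are analytic, non-vanishing on $\overline{Q'}$ with $Q':=Q(\alpha,\tfrac34 l)$, and form a normal family in a slightly wider half-strip $Q^*$. The polynomial Euler product forces $1/M\leq|L_{ne_l}(s)|\leq M$ uniformly in $n$ for $\sigma\geq\sigma_0>1$, so no subsequence of this family can converge locally uniformly to $0$ or to $\infty$; by Hurwitz, every normal limit is therefore analytic and non-vanishing on $Q'$. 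Compactness of the closure of $\{L_{ne_l}\}_{n\in I_{\eps}}$ then delivers, for \emph{every} compact $\mathcal{K}\subset Q'$, a uniform bound $\min_{\mathcal{K}}|L_{ne_l}|\geq 1/M_{\mathcal{K}}$ over $n\in I_{\eps}$ in a single stroke — no countable intersection of density sets. Finally, $Q'$ is narrower than $Q$ in the $t$-direction, and the paper pads this out using the shift identity $L_{(n\pm1)e_l}(s)=L_{ne_l}(s\pm il)$: restricting to $n\in I_{\eps}$ with $\{n-1,n,n+1\}\subset I_{\eps}$ still has lower density $>1-\eps$, and $Q'$, $Q'+il$, $Q'-il$ together cover $Q$. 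That padding step, together with the normality/Hurwitz compactness argument it rests on, is what your diagonal exhaustion was meant to replace but cannot.
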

\begin{proof}
Due to the polynomial Euler product and the Dirichlet series representation of $\L$ in $\sigma>1$, the functions $L_{ne_l}$ are analytic and non-vanishing in $\sigma>1$ for every $n\in\N$; see Theorem \ref{th:Lkappa}. Moreover, for any $\sigma_0>1$, we find a constant $M\geq 1$ such that, for every $n\in\N$,
\begin{equation}\label{Lnelbound}
 |L_{ne_l}(\sigma+it)| \geq \frac{1}{M} \qquad \mbox{and} \qquad
 |L_{ne_l}(\sigma+it)| \leq M \qquad \mbox{for }\sigma\geq\sigma_0. 
\end{equation}
We choose $\alpha^*\in (u,1]$ with $\alpha^*<\alpha$ and set $Q^*:=Q(\alpha^*,l)$. Furthermore, let $Q'$ denote the half-strip
$$
Q' := Q(\alpha, \tfrac{3}{4}l) \subset Q^*
$$
and $\mathcal{R}'$ its closure, i.e.
$$
\mathcal{R}'=  \left\{\sigma+it\in\C \, : \, \sigma\geq \alpha,\ -\tfrac{3}{4} l \leq t \leq \tfrac{3}{2} l\right\} \subset Q^*.
$$
It follows from Lemma \ref{lem:normalityNo}, Lemma \ref{lem:Jzero} and \eqref{Lnelbound} that, for any $\eps>0$, there exist a subset $I_{\eps}\subset \N$ with $\dens_* I_{\eps}>1-\frac{\eps}{3}$ such that the following holds:
\begin{itemize}
 \item[(i)] For $n\in I_{\eps}$, the functions $L_{ne_l}$ are analytic in $Q^*$ and non-vanishing in $\mathcal{R}'\subset Q^*$.
 \item[(ii)] The family $\{L_{ne_l}\}_{n\in I_{\eps}}$ is normal in $Q^*$.
\end{itemize}
From $(i)$ we deduce immediately that the functions $L_{ne_l}^{-1}$ are analytic on the domain $Q'$ for $n\in I_{\eps}$.\par
Moreover, the observation \eqref{Lnelbound} implies that the family $\{L_{ne_l}\}_{n\in I_{\eps}}$ contains no sequence that converges locally uniformly on $Q'$ to $f\equiv 0$. With respect to this, it follows from the theorem of Hurwitz and the non-vanishing property (i) that, for every compact subset $\mathcal{K}\subset Q'$, we find a constant $M>0$ such that
$$
\min_{s\in\mathcal{K}}|L_{ne_l}(s)| \geq \frac{1}{M}.
$$
Hence, the family $\{L_{ne_l}^{-1}\}_{n\in I_{\eps}}$ is locally bounded in $Q'$. Let $J_{\eps}$ be the set of all $n\in I_{\eps}$ for which $\{n-1,n,n+1\}\subset I_{\eps}$. It is easy to show that $\dens_* J_{\eps} > 1-\eps$ and that $J_{\eps}$ fulfills the assertions of the lemma.
\end{proof}

From Lemma \ref{lem:boundIM} we derive that, if $\L\in\No(u)$ has a polynomial Euler product representation in $\sigma>1$, many functions related to $\L$ lie also in $\No(u)$, in addition to the ones provided by Lemma \ref{lem:classNrelfct}.

\begin{lemma}\label{lem:classNrelfct2}
Let $u\in[\frac{1}{2},1)$ and $\L\in\No(u)$. Suppose that $\L$ satisfies the Ramanujan hypothesis and can be written as a polynomial Euler product in $\sigma>1$. Then,
\begin{itemize}
 \item[(a)] $\L^{\kappa} \in \No(u)$ for any $\kappa\in\R$.
 \item[(b)] $\log \L \in \No(u)$.
 \item[(b)] $\L'/\L\in\No(u)$.
\end{itemize}
\end{lemma}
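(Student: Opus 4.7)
Each of the three assertions will be proved by verifying properties (N.1) and (N.2) separately. Property (N.1) is immediate from the Dirichlet coefficient analysis of Section \ref{sec:coeff}: Theorems \ref{th:Lkappa}, \ref{th:Dirichletlog}, and \ref{th:dirichletlogderivative} furnish Dirichlet series representations in $\sigma>1$ for $\L^{\kappa}$, $\log\L$, and $\L'/\L$, respectively, whose coefficients satisfy the Ramanujan hypothesis and therefore define elements of $\mathscr{H}^2$. The real content lies in verifying the normality feature (N.2).

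To verify (N.2), I would fix $\alpha\in(u,1]$, $l>0$, $\eps>0$, and set $Q:=Q(\alpha,l)$. Intersecting the density-$(1-\eps/2)$ set provided by the normality feature of $\L$ with the density-$(1-\eps/2)$ set furnished by Lemma \ref{lem:boundIM}, I would extract a set $I_{\eps}\subset\N$ with $\dens_{*} I_{\eps}>1-\eps$ on which every $L_{ne_l}$ is analytic and non-vanishing on $Q$ and on which both families $\{L_{ne_l}\}_{n\in I_{\eps}}$ and $\{L_{ne_l}^{-1}\}_{n\in I_{\eps}}$ are locally bounded on $Q$. Cauchy's integral formula then also renders $\{L'_{ne_l}\}_{n\in I_{\eps}}$ locally bounded on $Q$. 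For (c), the attached function of $\L'/\L$ at $ne_l$ coincides on $\sigma>1$ with $L'_{ne_l}/L_{ne_l}$ by the uniqueness of Dirichlet coefficients and hence extends analytically to $Q$; as a product of two locally bounded families it is itself locally bounded, and Lemma \ref{lem:locboundednessNo} delivers normality. For (a), the non-vanishing of $L_{ne_l}$ on the simply connected domain $Q$ allows me to define $\log L_{ne_l}$ as the analytic logarithm on $Q$ whose restriction to $\sigma>1$ agrees with the $\mathscr{H}^2$-branch from Section \ref{sec:coeff}, and then to put $L_{ne_l}^{\kappa}:=\exp(\kappa\log L_{ne_l})$; local boundedness is immediate from $|L_{ne_l}^{\kappa}|=|L_{ne_l}|^{\kappa}$ combined with simultaneous upper and lower bounds on $|L_{ne_l}|$ on compact subsets of $Q$. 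For (b), I would bound $\log L_{ne_l}$ on a compact $\mathcal{K}\subset Q$ by writing
$$\log L_{ne_l}(s) \;=\; \log L_{ne_l}(2) \;+\; \int_{2}^{s}\frac{L'_{ne_l}(w)}{L_{ne_l}(w)}\,dw,$$
where the anchor value $\log L_{ne_l}(2)=\log\L(2+inl)$ is uniformly bounded in $n$ by absolute convergence of the Dirichlet series for $\log\L$ in $\sigma>1$, and where the integrand has been controlled uniformly in the argument for (c); the integration path is chosen inside $Q$ with length uniformly bounded on $\mathcal{K}$.

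\textbf{The main obstacle.} The principal subtlety is the identification of the function $L_{ne_l}^{\kappa}$ obtained above by analytic continuation on $Q$ with the attached function $(\L^{\kappa})_{ne_l}$ produced via the Dirichlet series construction \eqref{def:ext} applied to $\L^{\kappa}$ (and similarly for $\log\L$). This identification must be verified directly in the half-plane $\sigma>1$ by comparing polynomial Euler product expansions: for $x\in K$ one has $L_{x}(s)=\prod_{p}\prod_{j=1}^{m}\bigl(1-\alpha_{j}(p)\chi_{\log p}(x)p^{-s}\bigr)^{-1}$, and raising this product to the $\kappa$-th power and expanding as in the proof of Theorem \ref{th:Lkappa} recovers the Dirichlet expansion $\sum_{n}a_{\kappa}(n)\chi_{\log n}(x)n^{-s}$. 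Once the equality holds on $\sigma>1$, the identity principle transfers it to all of $Q$, and only at this point is it legitimate to combine the normality and boundedness arguments above to conclude that $\{(\L^{\kappa})_{ne_l}\}_{n\in I_{\eps}}$ is normal on $Q$, thereby verifying (N.2) for $\L^{\kappa}$; the argument for $\log\L$ is analogous.
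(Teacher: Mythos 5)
Your proof is correct and follows essentially the same outline as the paper's: verify (N.1) from the Dirichlet-coefficient results of Section 7.2, then verify (N.2) by combining the normality feature of $\L$ with Lemma \ref{lem:boundIM} to get simultaneous local boundedness of $\{L_{ne_l}\}$ and $\{L_{ne_l}^{-1}\}$ on a set of density close to one, from which local boundedness of $\{L_{ne_l}^{\kappa}\}$ and $\{L'_{ne_l}/L_{ne_l}\}$ follows quickly.

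The one genuine methodological difference concerns $\log L_{ne_l}$. The paper controls $\mathcal{F}_{\log}$ via the Borel--Carath\'eodory theorem, using the normalization $\lim_{\sigma\to\infty}\log L_{ne_l}(\sigma)=0$ together with an upper bound on $\Re\log L_{ne_l}=\log|L_{ne_l}|$; the advantage of this route is that it needs only local boundedness of $\{L_{ne_l}\}$ itself (a two-sided bound on $|L_{ne_l}|$ is not required). You instead integrate $L'_{ne_l}/L_{ne_l}$ along a path of uniformly bounded length from the anchor $s=2$, having first established a uniform bound on the logarithmic derivative. Both work; your version makes the logical dependence on the bound for $L'/L$ more explicit, at the cost of requiring that bound before the bound on $\log L$, whereas the paper gets $\mathcal{F}_{\log}$ and $\mathcal{F}_{L'/L}$ independently.

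One remark about your ``main obstacle'' paragraph: the identification you worry about is in fact immediate for the specific elements $x=ne_l$ relevant to (N.2). Since $\chi_{\log m}(ne_l)=e^{-inl\log m}$, the attached series \eqref{def:ext} at $x=ne_l$ is just $\sum_m a(m) m^{-s-inl}$, so $L_{ne_l}(s)=\L(s+inl)$ is a literal vertical shift, and consequently $(\L^{\kappa})_{ne_l}(s)=\L^{\kappa}(s+inl)=L_{ne_l}(s)^{\kappa}$ holds trivially on $\sigma>1$; no Euler-product comparison is needed. The Euler-product matching you describe is only required if one wants the identity $(\L^\kappa)_x=(L_x)^\kappa$ for \emph{general} $x\in K$, which is genuine content but is not invoked in verifying (N.2). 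So the obstacle you flag, while correctly resolvable, was not actually an obstacle for this lemma.
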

\begin{proof}
Let $\kappa\in\R$, $u\in[\frac{1}{2},1)$ and $\L\in\No(u)$. We observe the following:
\begin{itemize}
\item[(i)]  In the half-plane $\sigma>1$, the function $\L$ is given by a Dirichlet series which satisfies the Ramanujan hypothesis and can be written as a polynomial Euler product. It follows from Theorem \ref{th:Lkappa}, \ref{th:Dirichletlog} and \ref{th:dirichletlogderivative} that, in the half-plane $\sigma>1$, the functions $\L^{\kappa}$, $\log \L$ and $\L'/\L$ are also given by Dirichlet series which satisfy the Ramanujan hypothesis and, thus, lie in $\mathscr{H}^2$.
\item[(ii)] Let $\alpha\in[u,1)$, $l,\eps>0$ and $Q:=Q(\alpha,l)$. For $n\in\N$, we define $\log L_{ne_l}$ by
$$
\log L_{ne_l}(s):= \log \L(s+inl), \qquad \sigma>1.
$$ 
This choice assures that, uniformly for $n\in\N$, 
\begin{equation}\label{log}
\lim_{\sigma\rightarrow\infty} \log L_{ne_l}(\sigma)=0.
\end{equation}
Lemma \ref{lem:boundIM} implies that, for any $\eps>0$, there is a set $J_{\eps}\subset \N$ with $\dens_* J_{\eps} >1-\eps$ such that, for every $n\in J$, the functions $\log L_{ne_l}$, $L^{\kappa}_{ne_l}$ and $L'_{ne_l}/ L_{ne_l}$ are well-defined and analytic on $Q$ and such that the families
$$
\mathcal{F}_{\log} := \left\{\log L_{ne_l}\right\}_{n\in J_{\eps}}, \qquad
\mathcal{F}_{\kappa} := \left\{ L^{\kappa}_{ne_l}\right\}_{n\in J_{\eps}}
$$
and
$$
\mathcal{F}_{L'/L} := \left\{ \frac{L'_{ne_l}}{L_{ne_l}}\right\}_{n\in J_{\eps}}
$$
are locally bounded on $Q$. Note that the local boundedness of $\mathcal{F}_{\log}$ can be deduced from the local boundedness of $\{L_{ne_l}\}_{n\in J_{\eps}}$ on $Q$ by means of \eqref{log} and the Borel-Carath\'{e}dory theorem; see Titchmarsch \cite[\S 5.5]{titchmarsh:1939}.
\end{itemize}
The statement of the lemma follows from (i), (ii) and the definition of $\No(u)$.
\end{proof}
We know that the Riemann zeta-function is an element of $\No(\frac{1}{2})$. By means of Lemma \ref{lem:classNrelfct2} we obtain that also its logarithm $\log \zeta$, its logarithmic derivative $\zeta'/\zeta$ and its $\kappa$-th power $\zeta^{\kappa}$ with any $\kappa\in\R$ lie in $\No(\frac{1}{2})$.

\section{Vertical limit functions }\label{sec:verticallimit} 
Let $\L\in\No(u)$ with $u\in[\frac{1}{2},)$. By our considerations of the preceeding section, we find that, for $\pmb{\sigma}$-almost every $x\in K$, the function $L_x$ occurs as a vertical limit functions of $\L$ in $\sigma>u$. 
\begin{corollary}\label{cor:limitfctu}
Let $u\in[\frac{1}{2},1)$ and $\L\in\No(u)$. Let $\alpha\in(u, 1]$, $l>0$ and $Q:=Q(\alpha,l)$ be defined by \eqref{def:Q}. Then, there is a subset $G\subset K$ with $\pmb{\sigma}(G)=1$ and a subset $A\subset \R^+$ such that
\begin{equation}\tag{$a$}
\{L_x \, : \, x\in G\} \subset \overline{\left\{L_{e_{\tau}} \, : \, \tau\in A\right\}}  \subset \mathcal{H}(Q)
\end{equation}
Moreover, there is a subset $E\subset K$ with $\pmb{\tau}(E)=1$ and a subset $J\subset \N$ such that
\begin{equation}\tag{$b$}
\{L_y \, : \, y\in K_{2\pi/l}\} \subset \overline{\left\{L_{ne_{l}} \, : \, n\in J \right\}} \subset  \mathcal{H}(Q).
\end{equation}
Here, $\mathcal{H}(Q)$ denotes the set of analytic functions on $Q$ and the closures above are taken with respect to the topology of uniform convergence on compact subsets of $Q$.
\end{corollary}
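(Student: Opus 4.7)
The plan is to extract the set $G$ (hence $E$) and the index set $A$ (hence $J$) via a countable exhaustion scheme driven by the normality feature (N.2), together with two identifications proved earlier: Lemma \ref{lem:normalNproperties}(a), which converts normality of $\{L_{ne_l}\}_{n\in J}$ on a half-strip into the set-theoretic equality $\overline{\{L_{ne_l}:n\in J\}}=\{L_y:y\in E(J)\}$, and Lemma \ref{lem:tanaka1}, which bounds $\ptau(E(J))$ from below by $\dens^{*}J$. Part (b) will be a direct application; part (a) will then be reduced to (b) via the local product decomposition $K\cong K_{2\pi/l}\times[0,l)$ after first enlarging the strip on which normality is available.

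For part (b) I would apply (N.2) with parameters $(\alpha,l,1/k)$ for each $k\in\N$, yielding $J_{1/k}\subset\N$ with $\dens_{*}J_{1/k}>1-1/k$ such that every $L_{ne_l}$, $n\in J_{1/k}$, is analytic on $Q$ and the family $\{L_{ne_l}\}_{n\in J_{1/k}}$ is normal in $Q$. Lemma \ref{lem:normalNproperties}(a) gives $\overline{\{L_{ne_l}:n\in J_{1/k}\}}=\{L_y:y\in E(J_{1/k})\}$ as subsets of $\mathcal{H}(Q)$, and Lemma \ref{lem:tanaka1} gives $\ptau(E(J_{1/k}))\geq\dens^{*}J_{1/k}>1-1/k$. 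Setting $J:=\bigcup_{k}J_{1/k}$ and $E:=\bigcup_{k}E(J_{1/k})\subset K_{2\pi/l}$, one obtains $\ptau(E)=1$, and every $L_y$ with $y\in E$ lies in $\overline{\{L_{ne_l}:n\in J\}}\subset\mathcal{H}(Q)$, which is (b) (I read the displayed inclusion with $y\in E$, parallel to the form of (a)).

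For part (a), I would work on the enlarged half-strip $Q_{\mathrm{ext}}:=\{\sigma+it:\sigma>\alpha,\ -l<t<3l\}=Q(\alpha,l)\cup(Q(\alpha,l)+il)$, chosen so that $Q+iu\subset Q_{\mathrm{ext}}$ for every $u\in[0,l)$. Applying (N.2) with $\eps=1/(2k)$ and intersecting to form $J_k^{*}:=J_{1/(2k)}\cap(J_{1/(2k)}-1)$ yields $\dens_{*}J_k^{*}>1-1/k$; the shift identity $L_{(n+1)e_l}(s)=L_{ne_l}(s+il)$ together with $J_k^{*}+1\subset J_{1/(2k)}$ forces $\{L_{ne_l}\}_{n\in J_k^{*}}$ to be normal on $Q(\alpha,l)+il$, and it is already normal on $Q(\alpha,l)$, so a diagonal extraction upgrades this to normality on $Q_{\mathrm{ext}}$. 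Running the argument of (b) with $Q_{\mathrm{ext}}$ in place of $Q$ produces $E\subset K_{2\pi/l}$ with $\ptau(E)=1$ and $J\subset\N$ such that, for every $y\in E$, some sequence $(L_{n_je_l})_j$ with $n_j\in J$ converges to $L_y$ locally uniformly on $Q_{\mathrm{ext}}$. I then set $G:=E\times[0,l)\subset K$, so that $\pmb{\sigma}(G)=\ptau(E)=1$, and $A:=\bigcup_{n\in J}[nl,(n+1)l)\subset\R^{+}$. For $x=y+e_u\in G$ with $u\in[0,l)$, the identity $L_{e_{n_jl+u}}(s)=L_{n_je_l}(s+iu)$ combined with $Q+iu\subset Q_{\mathrm{ext}}$ gives $L_{e_{n_jl+u}}\to L_{y+e_u}=L_x$ locally uniformly on $Q$, proving (a).

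The only real obstacle is the mismatch between the canonical half-strip $Q(\alpha,l)$ appearing in (N.2) and the wider strip needed in part (a) to accommodate continuous shifts by $u\in[0,l)$; this is resolved cheaply by the intersection trick $J_k^{*}=J_{1/(2k)}\cap(J_{1/(2k)}-1)$, which costs only a factor of $2$ in the density parameter. A minor bookkeeping point is that Lemma \ref{lem:normalNproperties}(a) is formally stated for canonical half-strips $Q(\alpha,\ell)$, but its proof relies only on the analyticity of the $L_y$ in $\mathbb{H}_1$, the continuity of $L:(s,y)\mapsto L_y(s)$ on $\mathbb{H}_1\times K_{2\pi/l}$, and the normality hypothesis on the given domain; these hypotheses are all available on $Q_{\mathrm{ext}}$, so the lemma transfers verbatim.
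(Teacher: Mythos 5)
Your proof is correct and uses the same ingredients as the paper: the normality feature (N.2), Lemma \ref{lem:tanaka1}, Lemma \ref{lem:normalNproperties}(a), and the local product decomposition $K\cong K_{2\pi/l}\times[0,l)$. The difference is the order of deduction. The paper's two-line proof asserts (a) first and derives (b) from it via the product identification; you establish the discrete statement (b) first and then lift it to (a). Your order is arguably the more natural one, since Lemma \ref{lem:tanaka1}, Lemma \ref{lem:normalityNo} and Lemma \ref{lem:normalNproperties}(a) all live in the discrete setting of $K_{2\pi/l}$ and $\N$, so the continuous version (a) cannot really follow ``directly'' without passing through the product decomposition. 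More substantively, you supply the strip-widening step---replacing $Q(\alpha,l)$ by $Q_{\mathrm{ext}}$ via the intersection $J_k^{*}:=J_{1/(2k)}\cap(J_{1/(2k)}-1)$ together with the shift identity $L_{(n+1)e_l}(s)=L_{ne_l}(s+il)$---which is genuinely needed so that the shifted functions $L_{ne_l}(\cdot+iu)$, $u\in[0,l)$, remain analytic and convergent on $Q$; the paper's terse proof does not address this at all, so your argument fills a real gap. Two minor remarks: since normality is a local property, the family is automatically normal on the union $Q(\alpha,l)\cup\bigl(Q(\alpha,l)+il\bigr)=Q_{\mathrm{ext}}$ once it is normal on each piece, so the diagonal extraction is superfluous; and your reading of the displayed inclusion in (b) with $y\in E$ rather than the printed $y\in K_{2\pi/l}$ (and $E\subset K_{2\pi/l}$ rather than $E\subset K$) correctly fixes what is evidently a typo in the corollary's statement.
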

If $\L$ satisfies the Ramanujan hypothesis and the mean-square condition of Theorem \ref{th:suffconditionsN} (b) in $\sigma>u$, the statement of the lemma is well-known both for $\L$ and several functions related to $\L$; for example for $\L^k$ with $k\in\N$,  $\L^{(\ell)}$ with $\ell\in\N$ and, if $\L$ has a polynomial Euler product, also for $\log \L$ and $\L^{-1}$. By the definition of the normality feature, it is natural that the statement persists for all functions in $\No(u)$. Here, as far as the author knows, it may be considered as new that the statement of Corollary \ref{cor:limitfctu} holds for functions $\L$ which satisfy the $a$-point density estimate of Theorem \ref{th:suffconditionsN} (c).
\begin{proof}[Proof of Corollary \ref{cor:limitfctu}]
Statement (a) follows directly from Lemma \ref{lem:tanaka1}, Lemma \ref{lem:normalityNo} and Lemma \ref{lem:normalNproperties} (a). Statement (b) follows from (a) by observing that we can identify every $x\in K$ with an element $(y,u)\in K_{2\pi/l}\times [0,l)$.
\end{proof}

\chapter[Discrete and continuous moments to the right of the critical line]{Discrete and continuous moments  }\label{ch:probmom}

\section{An extension of a theorem due to Tanaka to the class \texorpdfstring{$\No(u)$}{ }}\label{sec:probmom}

In this chapter we extend a result of Tanaka \cite{tanaka:2008}, which he obtained for the Riemann zeta-function, to functions in the class $\No$. Building on the preliminary works of the preceeding sections, we strongly rely on his methods and ideas to prove our result.\par

We introduce the following notation. For a given $l>0$, a set $A\subset [1,\infty)$ is said to be an {\it $l$-set of density zero} if there exists a subset $J\subset \N$ with $\dens \ J = 0$ such that
$$
A= \bigcup_{n\in J} [nl,(n+1)l).
$$
It is easy to see that an $l$-set $A$ of density zero satisfies
\begin{equation*}\label{eq:setA}
\lim_{T\rightarrow\infty} \frac{1}{T} \int_1^T \pmb{1}_A(t) \d t = 0\qquad \mbox{and}\qquad\lim_{T\rightarrow\infty} \frac{1}{T} \int_1^T \pmb{1}_{A^c}(t) \d t = 1,
\end{equation*}
where $\pmb{1}_X$ denotes the indicator function of a set $X\subset \R$ and $X^c:=\R\setminus X$ its complement.\par 

Let $u\in[\frac{1}{2},1)$ and $\L\in\No(u)$. Further, let $p:\C\rightarrow\C$ be a continuous function with $p(z)\ll |z|^2$, as $|z|\rightarrow\infty$. We shall establish asymptotic formulas for moments of the form
$$
\frac{1}{T} \int_1^T p\left( \L(\sigma + it) \right) \pmb{1}_{A^c}(t) \d t, \qquad \sigma>u, \qquad \mbox{as }T\rightarrow\infty,
$$
where we omit a certain $l$-set $A\subset [1,\infty)$ of density zero from the path of integration. Moreover, for $l>0$, we shall derive asymptotic formulas for discrete moments of the form
$$
\frac{1}{N} \sum_{n=1}^N p\left(\L(\sigma+i\lambda + inl) \right)\pmb{1}_{A^c}(nl) , 
\qquad \sigma>u, \qquad 0\leq\lambda\leq l,\qquad \mbox{as }N\rightarrow\infty, 
$$\par
where we neglect, by the definition of $A$, a certain set $J\subset\N$ of density zero in the summation. \par

The next theorem is the main theorem of Part II of this thesis.
\begin{theorem}\label{th:probmom}
Let $u\in[\frac{1}{2},1)$. Let $(\L_j,p_j)_j$ be a sequence of pairs which consist of a function $\L_j\in\No(u)$ and a continuous function $p_j:\C\rightarrow\C$ satisfying
\begin{equation}\label{condC}\tag{C}
p_j(z) \ll_j |z|^2 ,\qquad \mbox{ as } |z|\rightarrow\infty.
\end{equation}
Let $L_{j}$ denote the extension of $\L_j$ to $\C\times K$ defined by \eqref{Ls}. Then, for any $\alpha\in(u, 1]$ and $l>0$, there exist an $l$-set $A\subset [1,\infty)$ of density zero and a sequence $(N_j)_j$ of positive integers such that the following holds: 
\begin{itemize}
\item[(i)] For every $j\in\N$, as $T\rightarrow\infty$,
$$
\frac{1}{T}  \int_{N_j}^{T}\int_2^{\alpha} p_j\bigl(\L_j(\sigma+it)\bigr)\cdot \pmb{1}_{A^c}(t) \d\sigma \d t = \int_{\alpha}^{2}\int_{K}  p_j\bigl(L_{j}(\sigma,x) \bigr)  \d \pmb{\sigma} \d\sigma + o_{j}(1).
$$
\item[(ii)] For every $j\in\N$, uniformly for $\alpha\leq \sigma\leq 2$, as $T\rightarrow\infty$,
\begin{equation*}\label{tanakastar}
\frac{1}{T}  \int_{N_j}^{T}  p_j\bigl(\L_j(\sigma+it)\bigr) \cdot \pmb{1}_{A^c}(t)\d t = \int_{K}  p_j\bigl(L_{j}(\sigma,x) \bigr)  \d \pmb{\sigma} + o_{j}(1).
\end{equation*}
\item[(iii)] Suppose that $l\notin\Gamma_{P}:= \{2\pi k(\log\frac{n}{m})^{-1} \, : \, k,n,m\in\N, n\neq m\}$. Then, for every $j\in\N$, uniformly for $\alpha\leq \sigma\leq 2$ and $0\leq \lambda \leq l$, as $N\rightarrow\infty$,
$$
 \frac{1}{N }  \sum_{n=N_j}^N \, p_j\bigl( \L_j(\sigma+i\lambda+inl) \bigr)\cdot \pmb{1}_{A^c}(nl)  = \int_{K_{2\pi/l}}  p_j\bigl(L_{j}(\sigma,x) \bigr)  \d \pmb{\tau} + o_{j}(1).
$$
 \end{itemize}
\end{theorem}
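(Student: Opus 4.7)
The strategy would mimic Tanaka's argument for the Riemann zeta-function \cite{tanaka:2008}, using the normality feature (N.2) in place of an explicit mean-square bound. I would fix $\alpha\in(u,1]$ and $l>0$ and set $Q:=Q(\alpha,l)$. For every $j,m\in\N$, (N.2) supplies a subset $J_{j,m}\subset\N$ with $\dens_* J_{j,m}>1-1/m$ such that each $L_{j,ne_l}$ with $n\in J_{j,m}$ is analytic on $Q$ and $\{L_{j,ne_l}\}_{n\in J_{j,m}}$ is normal in $Q$; Lemma \ref{lem:JM} then yields uniform bounds on every compact subrectangle of $Q$. The set $A$ would be produced by a diagonal construction: picking inductively $1=N_1<N_2<\cdots$ so that, setting $I_j:=J_{1,1}\cap\cdots\cap J_{j,j}$, one has $\#(I_j\cap[1,N])\geq(1-2/j)N$ for all $N\geq N_{j+1}$, and putting
$$
J:=\bigcup_{j\geq 1} I_j\cap[N_j,N_{j+1}),\qquad A:=\bigcup_{n\notin J}[nl,(n+1)l).
$$
Then $\dens J=1$, so $A$ is an $l$-set of density zero, and $J\cap[N_j,\infty)\subset J_{j,j}$ for each $j$.

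For (iii), fix $j$ and let $E:=\overline{\{ne_l:n\in J\}}\subset K_{2\pi/l}$. The inclusion $J\cap[N_j,\infty)\subset J_{j,j}$ preserves normality, so Lemma \ref{lem:normalNproperties} gives that $L_j:Q\times E\to\C$ is continuous and $\{L_{j,y}\}_{y\in E}$ is uniformly bounded, by some $M_j$, on the compact rectangle $[\alpha,2]\times[0,l]\subset Q$. Since $\dens J=1$, Lemma \ref{lem:Tanakaergodic} applies with $J_{\eps}=\emptyset$, giving $\pmb{\tau}(E)=1$. For fixed $(\sigma,\lambda)\in[\alpha,2]\times[0,l]$, I would set $\phi(y):=p_j(L_{j,y}(\sigma+i\lambda))$ on $E$ and $\phi\equiv 0$ off $E$; by continuity of $L_j$ and the growth hypothesis \eqref{condC} combined with the uniform bound, $\phi$ is continuous on $E$ and globally bounded by some $C_j$. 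Lemma \ref{lem:Tanakaergodic} applied at the identity then yields
$$
\frac{1}{N}\sum_{n=0}^{N-1}\phi(ne_l)\longrightarrow\int_E\phi\,\d\pmb{\tau}=\int_{K_{2\pi/l}}p_j\bigl(L_{j}(\sigma+i\lambda,y)\bigr)\,\d\pmb{\tau}(y),
$$
and since $\dens(\N\setminus J)=0$, this Ces\`aro sum differs from $\frac{1}{N}\sum_{n\leq N,\,n\in J}\phi(ne_l)$ by $O\bigl(C_j\#\{n\leq N:n\notin J\}/N\bigr)=o_j(1)$. Uniformity in $(\sigma,\lambda)\in[\alpha,2]\times[0,l]$ would follow from equicontinuity of $\{(\sigma,\lambda)\mapsto L_{j,y}(\sigma+i\lambda)\}_{y\in E}$ on the compact parameter set (a consequence of Lemma \ref{lem:normalNproperties}) together with uniform continuity of $p_j$ on the bounded range. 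Finally, when $l\notin\Gamma_P$ the orthogonality of the characters $\chi_{\log n-\log m}$ restricted to $K_{2\pi/l}$ (trivial only for $n=m$) would force the limiting integral to be $\lambda$-independent, matching the form stated in (iii).

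Statement (ii) would then be derived from (iii) by partitioning the integration interval into pieces of length $l$: with $M:=\lfloor T/l\rfloor$ and the relation $L_{j,ne_l+e_u}(\sigma)=L_{j,ne_l}(\sigma+iu)$, one writes
$$
\frac{1}{T}\int_{N_j l}^T p_j(\L_j(\sigma+it))\pmb{1}_{A^c}(t)\,\d t=\frac{1}{M}\sum_{\substack{N_j\leq n<M\\ n\in J}}\frac{1}{l}\int_0^l p_j\bigl(L_{j,ne_l}(\sigma+iu)\bigr)\,\d u+o_j(1),
$$
and applies Lemma \ref{lem:Tanakaergodic} to the continuous bounded test function $\tilde\phi(y):=\frac{1}{l}\int_0^l p_j(L_{j,y}(\sigma+iu))\,\d u$; by the product decomposition $K\cong K_{2\pi/l}\times[0,l)$ with $\d\pmb{\sigma}=\d\pmb{\tau}\times\frac{1}{l}\,\d u$, the limit equals $\int_K p_j(L_j(\sigma,x))\,\d\pmb{\sigma}$---an identification holding without any restriction on $l$, which is why (ii) is free of the hypothesis $l\notin\Gamma_P$. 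Statement (i) would then follow from (ii) by integrating over $\sigma\in[\alpha,2]$ and exchanging limit and integral via the bounded convergence theorem, the required uniform bound being supplied by the normality feature together with \eqref{condC}. \textbf{The main obstacle} will be the simultaneous diagonal construction of a single exceptional $l$-set $A$ serving every $\L_j$, together with upgrading the pointwise ergodic convergence to uniform convergence in the continuous parameters $\sigma$ and $\lambda$ using only the qualitative normality feature rather than an explicit mean-square estimate.
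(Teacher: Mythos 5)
Your overall strategy — Tanaka's ergodic lemma plus the normality feature (N.2) — is the right one, and many of the local steps (interchanging ergodic sums and integrals, uniform continuity on compact rectangles, the $\lambda$-independence of $\int_{K_{2\pi/l}}p_j(L_j(\sigma,x))\,\mathrm{d}\pmb{\tau}$ via translation-invariance of the Haar measure when $l\notin\Gamma_P$) are sound. The gap is in the construction of the exceptional set $A$, and it is fatal.

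First a local bug: with $I_j=J_{1,1}\cap\cdots\cap J_{j,j}$, one only has $\dens_* I_j>1-\sum_{k\le j}1/k$, which is negative for $j\ge 2$; so no choice of $N_{j+1}$ can enforce $\#(I_j\cap[1,N])\ge(1-2/j)N$. One can repair this by using $J_{k,m_k^{(j)}}$ with $\sum_{k\le j}1/m_k^{(j)}\to 0$.

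But even after that repair the plan cannot work, and this is the structural obstruction your argument misses. You need simultaneously (a) $\dens J=1$ (so that $A$ has density zero) and (b) $J\cap[N_j,\infty)\subset J_{j,\cdot}$ (so that $\{L_{j,ne_l}\}_{n\in J}$ is normal in $Q$ via (N.2)). Since $J$ and $J\cap[N_j,\infty)$ have the same density, (a)+(b) force $\dens_* J_{j,\cdot}=1$, hence $\dens J_{j,\cdot}=1$. But a subset of density one has $E(J_{j,\cdot})=K_{2\pi/l}$ (by unique ergodicity, for every nonempty open $U$ the set $\{n:ne_l\in U\}$ has positive lower density, so any density-one set must meet it), and Lemma~\ref{lem:normalNproperties}(a) would then give that $L_{j,y}$ is analytic on $Q$ for \emph{every} $y\in K_{2\pi/l}$. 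This is false in general: for $\L_j=\zeta$ and $\alpha<1$, Tanaka exhibits $x\in K$ for which $\zeta_x$ has poles at prescribed points in $\tfrac12<\sigma<1$, and such pathologies persist in $K_{2\pi/l}$. So a density-one $J$ on which $\{L_{j,ne_l}\}_n$ is normal in $Q$ simply does not exist, and the single application of Lemma~\ref{lem:Tanakaergodic} with $J_\eps=\emptyset$ cannot be made.

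What the paper does instead is precisely to avoid ever working with a density-one normal family. It first fixes, via Lemma~\ref{lem:JM}, a nested scale $J(M_1)\subset J(M_2)\subset\cdots$ with $\pmb{\tau}(E(J(M_k)))\uparrow 1$ but each $<1$, decomposes into the annuli $I_k:=J(M_k)\setminus J(M_{k-1})$, and proves in Lemma~\ref{lem:probmom} that $\pmb{\tau}(E(I_j)\cap E(I_k))=0$ for $j\neq k$. On each $I_k$ the family is \emph{uniformly bounded} (not merely normal), so Lemma~\ref{lem:Tanakaergodic} applies with $J=I_k$ and $J_\eps$ the union of the finitely many remaining annuli; the resulting ergodic limits on the pieces $E(I_k)$, which have positive but strictly sub-unit measure, are then summed — the dominating function $q(z)=c_1|z|^2+c_2$ and the finite sets $\Delta_k$ controlling one-sided error are what make the rearrangement and passage to the limit legitimate. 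The exceptional set $A$ is built out of these $\Delta_k$ together with the density-zero complement of $\bigcup_k I_k$, and the diagonal argument over $j$ is performed only at the very end. Your proposal skips this decomposition entirely, and that is the missing idea.
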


\par
Tanaka established statement (ii) of Theorem \ref{th:probmom} for the Riemann zeta-function $\zeta$, its $\kappa$-th power $\zeta^{\kappa}$, $\kappa\in\R$, with the special choices of $p$ given by $p(z)=z$ and $p(z)=|z|^2$. We extend Tanaka's result to the quite general class $\No$. Moreover, we provide with (i) and (ii) an integrated and a discrete version of (ii). Some remarks to Theorem \ref{th:probmom} are in order: 
\begin{itemize}
\item[1.] Suppose that $\L\in\No(u)$ has Dirichlet expansion
$$
\L(s)=\sum_{n=1}^{\infty} \frac{a(n)}{n^s} , \qquad \sigma>1
$$
and that $l\notin \Gamma_P=\{2\pi k(\log\frac{n}{m})^{-1} \, : \, k,n,m\in\N\}$. Then, it follows from our considerations in Section \ref{sec:DirichletH2}, in particular from the identities \eqref{plan1} and \eqref{plan2}, that
$$
\int_{K_ {2\pi/l}} \left| L(\sigma,y) \right|^2 \d\pmb{\tau} =  \int_{K}  \left| L(\sigma,x) \right|^2  \d \pmb{\sigma} = \sum_{n=1}^{\infty} \frac{|a(n)|^2}{n^{2\sigma}}, \qquad \sigma>u,
$$
and
$$
\int_{K_ {2\pi/l}} L(\sigma,y)  \d\pmb{\tau} = \int_{K}   L(\sigma,x)   \d \pmb{\sigma} = a(1), \qquad \sigma>u.
$$
\item[2.] The proof of Theorem \ref{th:probmom} shall show that statement (iii) remains valid for $l\in \Gamma_{P}$, if the function defined by
$$
f(\sigma)=\int_{K_{2\pi/l}}  \bigl|L_{j}(\sigma,x) \bigr|^2  \d \pmb{\tau} 
$$
is continuous for $\sigma>u$. The results of Reich \cite{reich:1980-2} and Lemma \ref{lem:lLambda} assert that this is the case, if $\L\in\No(u)$ has a polynomial Euler product representation. 
\item[3.] Let $\L_j$, $u$, $p_j$, $N_j$ and $A$ be as in Theorem \ref{th:probmom}. Let $\overline{A^c}$ denote the closure of $A^c = \R\setminus A$. If $\L_j$ is analytic in the region
$$
\left\{ s\in\C \, : \, u<\sigma\leq 2, \, t\in \overline{A^c}\cap [0,N_j] \right\},
$$
then the limits in (i), (ii) and (iii) are not affected by replacing $N_j$ by $1$.

\item[4.] The statements of Theorem \ref{th:probmom} can be formulated in an analogous manner for the lower half-plane: there exists an $l$-set $A\subset[-1,-\infty)$ of density zero and a sequence of negative integers $(N_j)_j$ such that the statement (i)-(iii) hold as $T\rightarrow -\infty$.
\item[5.] By a diagonal argument (see Tanaka \cite[\S 5]{tanaka:2008}), similarly to the one that we shall use in the last step of our proof, we find a common $l$-set $A$ of density zero in Theorem \ref{th:probmom} such that the limits of statements (i)-(iii) hold for every $\sigma>u$. In this case, however, we loose the uniformity in $\sigma$, resp. the uniformity in $\sigma$ and $\lambda$.

\item[6.] In the half-plane where the Dirichlet series expansion of $\L\in\No$ converges uniformly, the statements (i)-(iii) hold trivially for $\L$ with $A=\emptyset$. This follows essentially from its almost periodic behaviour. Thus, the statements of Theorem \ref{th:probmom} are especially of interest if $\alpha$ is less than the abscissa of uniform convergence $\sigma_u$ of $\L$ or if the exact value of $\sigma_u$ is not known. We recall here the difficulties to determine $\sigma_u$ for functions in the extended Selberg class; see Section \ref{sec:charconvabs}. 

\item[7.] In the mean-square half-plane $\sigma>\sigma_m$ of $\L\in\No$, i.e. in the half-plane where the classical continuous mean-square of $\L$ is bounded, statement (ii) of Theorem \ref{th:probmom} holds trivially with $A=\emptyset$ and $p(z)=|z|^2$, due to Carlson's theorem (Theorem \ref{th:carlson}); for any other admissible choice of $p$ the limit superior of the left-hand side of (ii) is at least bounded. By the dominate convergence theorem, the same applies to (i). Additionally, in the half-plane $\sigma>\sigma_m$, there are methods available that allow to establish asymptotic expansions for discrete mean-values in (iii) with $A=\emptyset$; see, for example, Montgomery \cite[Chapt. 1]{montgomery:1971}, Reich \cite{reich:1980-2} and Good \cite{good:1978}. Thus, statements (i)-(iii) are especially of interest if $\alpha<\sigma_m$ or if the exact value of $\sigma_m$ is not known.

\item[8.] Tanaka's method is strongly related to a method of Reich \cite{reich:1980-2}. For functions $\L$ with polynomial Euler product representation of order two, Reich showed that the discrete and continuous mean-square value of $\L$ coincide in its mean-square half-plane, provided that $l\notin\Gamma_P$, i.e., for $l\notin\Gamma_P$, $\sigma>\sigma_m$,
$$
\lim_{N\rightarrow\infty} \frac{1}{N} \sum_{n=1}^N\left|\L(\sigma+inl)\right|^2
= \lim_{T\rightarrow\infty}\frac{1}{T}\int_0^{T} \left| \L(\sigma+it)\right|^2 \d t =
\sum_{n=1}^{\infty} \frac{|a(n)|^2}{n^{2\sigma}}.
$$
Reich derived also asymptotic expansions for the case $l\in\Gamma_P$. Reich relied on a uniform distribution result
and the existence of the classical continuous square-mean of $\L$. By loss of a set of density zero, Tanaka used the {\it uniqueness} of the ergodic system $(K_{2\pi/l}, T_t)$ and, instead of working with the continuous mean-square value directly, relied on a property of $\L$ which we revealed as the normality feature (N.2) in Section \ref{sec:classN}.

\item[9.] By definition, an $l$-set $A\subset [1,\infty)$ of density zero satisfies
\begin{equation*}\label{erg}
\frac{1}{T}\int_1^{\infty} \pmb{1}_{A^c}(t) \d t = o(1), \qquad \mbox{as }T\rightarrow\infty.
\end{equation*}
It would be interesting if the statements of Theorem \ref{th:probmom} are also true for $l$-sets $A$ whose density can be bounded asymptotically in a better way than above. Here, however, some additional reasoning seems to be necessary. 
\end{itemize}

We state some immediate corollaries of Theorem \ref{th:probmom}. 
\begin{corollary}\label{cor1}
Let $u\in[\frac{1}{2},1)$ and $\L\in \No(u)$. Suppose that the Dirichlet series expansion of $\L$ satisfies the Ramanujan hypothesis. Let $\alpha\in(u,1]$ and $l>0$. Then, there exist an $l$-set $A\subset[1,\infty)$ of density zero such that, for every $k\in\N$ and uniformly for $\sigma\in[ \alpha,2]$,
$$
\lim_{T\rightarrow\infty} \frac{1}{T} \int_{1}^T \left|\L(\sigma + it) \right|^{2k} \pmb{1}_{A^c} (t) \d t = \sum_{n=1}^{\infty} \frac{|a_k(n)|^2}{n^{2\sigma}}
$$
and 
$$
\lim_{T\rightarrow\infty} \frac{1}{T} \int_{1}^T \L^k(\sigma + it) \ \pmb{1}_{A^c} (t) \d t = a_k(1),
$$
where the $a_k(n)$ denote the coefficients of the Dirichlet series expansion of $\L^k$.
If $\L$ can be written additionally as a polynomial Euler product in $\sigma>1$, then we find an $l$-set $A\subset[1,\infty)$ of density zero such that, for every $k\in\N$, uniformly for $\sigma\in[\alpha,2]$,
\begin{equation}\label{ww}
\lim_{T\rightarrow\infty} \frac{1}{T} \int_{1}^T \left| \L(\sigma+it)\right|^{-2k} \pmb{1}_{A^c}(t) \d t = \sum_{n=1}^{\infty} \frac{|a_{-k}(n)|^2}{n^{2\sigma}},
\end{equation}
\begin{equation}\label{dw}
\lim_{T\rightarrow\infty} \frac{1}{T}\int_1^T \left| \log \L(\sigma+it)\right|^{2} \pmb{1}_{A^c}(t) \d t = \sum_{n=1}^{\infty} \frac{|a_{\log\L}(n)|^2}{n^{2\sigma}}
\end{equation}
and
\begin{equation}\label{www}
\lim_{T\rightarrow\infty} \frac{1}{T}\int_1^T \left| \frac{\L'(\sigma+it)}{\L(\sigma+it)}\right|^2 \pmb{1}_{A^c}(t) \d t = \sum_{n=1}^{\infty} \frac{|\Lambda_{\L}(n)|^2}{n^{2\sigma}},
\end{equation}
where the $a_{-k}(n)$, $a_{\log\L}(n)$ and $\Lambda_{\L}(n)$ denote the coefficients of the Dirichlet series expansion of $\L^{-k}$, $\log\L$ and $\L'/\L$, respectively.
\end{corollary}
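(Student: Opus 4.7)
The plan is to deduce Corollary \ref{cor1} directly from Theorem \ref{th:probmom}(ii) by feeding it appropriately chosen sequences of pairs $(\L_j,p_j)$. First I would verify the class memberships. Since $\L\in\No(u)$ and its Dirichlet series satisfies the Ramanujan hypothesis, Lemma \ref{lem:classNrelfct}(c) gives $\L^k\in\No(u)$ for every $k\in\N$; the associated Dirichlet series of $\L^k$ satisfies the Ramanujan hypothesis by Theorem \ref{th:sumprodL}(b). If, in addition, $\L$ admits a polynomial Euler product in $\sigma>1$, then Lemma \ref{lem:classNrelfct2}(a)--(c) provide $\L^{-k},\log\L,\L'/\L\in\No(u)$, and Theorems \ref{th:Lkappa}, \ref{th:Dirichletlog} and \ref{th:dirichletlogderivative} identify their Dirichlet coefficients as $a_{-k}(n)$, $a_{\log\L}(n)$ and $-\Lambda_{\L}(n)$, respectively, all satisfying the Ramanujan hypothesis.

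Next I would choose the test functions $p_j$. The admissible pair with $p(z)=|z|^{2}$ satisfies \eqref{condC} trivially and the pair with $p(z)=z$ satisfies $p(z)\ll|z|^{2}$ as $|z|\to\infty$, so both meet the hypothesis of Theorem \ref{th:probmom}. For the first assertion of the corollary I would enumerate the pairs
\begin{equation*}
(\L^{1},|\cdot|^{2}),\ (\L^{1},\mathrm{id}),\ (\L^{2},|\cdot|^{2}),\ (\L^{2},\mathrm{id}),\ \dots
\end{equation*}
and apply Theorem \ref{th:probmom}(ii) to obtain a single $l$-set $A$ of density zero and a sequence $(N_j)_j$ such that the stated asymptotics hold uniformly in $\sigma\in[\alpha,2]$. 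For the statements \eqref{ww}--\eqref{www}, valid when $\L$ has a polynomial Euler product, I would apply Theorem \ref{th:probmom}(ii) a second time to the (countable) family
\begin{equation*}
\bigl\{(\L^{-k},|\cdot|^{2})\bigr\}_{k\in\N}\cup\bigl\{(\log\L,|\cdot|^{2}),\ (\L'/\L,|\cdot|^{2})\bigr\},
\end{equation*}
producing a second $l$-set $A$ of density zero on which all the identities hold simultaneously and uniformly.

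The right-hand sides of the conclusions now emerge from standard harmonic analysis on the compact group $K$. For every $\mathcal{M}\in\No(u)$ with Dirichlet coefficients $b(n)$, Plancherel's theorem applied to $L_{\sigma}\in L^{2}_{\pmb{\sigma}}(K)$ (see Theorem \ref{th:almostsurebehaviour}(c) and the discussion around \eqref{plan1}) yields
\begin{equation*}
\int_{K}\bigl|L_{\mathcal{M}}(\sigma,x)\bigr|^{2}\,\d\pmb{\sigma}=\sum_{n=1}^{\infty}\frac{|b(n)|^{2}}{n^{2\sigma}},\qquad \sigma>\tfrac{1}{2},
\end{equation*}
while the orthogonality relation \eqref{fourierorth} applied to the constant character gives
\begin{equation*}
\int_{K}L_{\mathcal{M}}(\sigma,x)\,\d\pmb{\sigma}=b(1),\qquad\sigma>\tfrac{1}{2}.
\end{equation*}
Specialising $\mathcal{M}$ successively to $\L^{k},\L^{-k},\log\L,\L'/\L$ reproduces the five explicit right-hand sides of the corollary.

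Finally I would address the two cosmetic discrepancies between Theorem \ref{th:probmom}(ii) and the form of Corollary \ref{cor1}: the lower limit of integration ($1$ instead of $N_j$) and analyticity issues. Remark 3 following Theorem \ref{th:probmom} permits the replacement of $N_j$ by $1$ whenever $\L_j$ is analytic on the relevant region. Since each $\L_j$ above is meromorphic with at most finitely many poles in $\sigma>u$ (by our standing hypothesis, plus Lemma \ref{lem:Jzero} in the Euler product case, which ensures that zeros of $\L$ in $u<\sigma<1$ form a set whose ordinates can be absorbed into an $l$-set of density zero), one can enlarge $A$ by a finite union of intervals of total bounded length containing these ordinates; this does not affect the density-zero property. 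The main technical point is precisely this enlargement step for $\L^{-k}$, $\log\L$ and $\L'/\L$, whose possible poles arise from zeros of $\L$ in the strip $u<\sigma<1$: I would invoke Lemma \ref{lem:Jzero} to each compact rectangle exhausting $Q(\alpha,l)$ so that the set of indices $n$ with $L_{ne_{l}}$ vanishing in that rectangle has density zero, then incorporate the corresponding intervals into $A$. After this adjustment, the integrands are bounded on $\overline{A^{c}}\cap[1,N_{j}]$, the contribution of $[1,N_{j}]$ is $O_j(1)$ and hence negligible after division by $T$, and the corollary follows.
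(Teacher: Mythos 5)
Your proof follows exactly the same route the paper takes: use Lemma~\ref{lem:classNrelfct} (resp.\ Lemma~\ref{lem:classNrelfct2}) to place $\L^{k}$ (resp.\ $\L^{-k},\log\L,\L'/\L$) into $\No(u)$, feed the countable family of pairs into Theorem~\ref{th:probmom}(ii) with $p(z)=|z|^{2}$ and $p(z)=z$, and invoke Remarks~1 and~3 to identify the right-hand sides via Plancherel/orthogonality and to shift the lower limit from $N_{j}$ to $1$. The paper's own proof is a three-line citation of exactly these ingredients; yours spells them out, which is an improvement, particularly in your attempt to justify the hypothesis of Remark~3 (the paper simply cites the remark). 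One small overstatement: you write that $\L$ is ``meromorphic with at most finitely many poles in $\sigma>u$ (by our standing hypothesis)''; that hypothesis belongs to Theorem~\ref{th:suffconditionsN}, not to the definition of $\No(u)$, and is not assumed in Corollary~\ref{cor1}. It is also not needed: the analyticity of $L_{ne_{l}}$ on $Q(\alpha,l)$ for $n$ in a density-one index set is already built into axiom (N.2), and for the derived functions the correct tool is, as you then say, Lemma~\ref{lem:Jzero} (absorb the zero-carrying strips into the exceptional $l$-set). If you drop the ``finitely many poles'' aside and rest the analyticity claim squarely on (N.2) plus Lemma~\ref{lem:Jzero}, your argument becomes a cleaner version of the paper's.
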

\begin{proof}
According to Lemma \ref{lem:classNrelfct}, we have that $\L^k\in \No(u)$ for any $k\in \N$. It follows from Lemma \ref{lem:classNrelfct2} that $\L^{-k}$ with $k\in\N$, $\log \L$ and $\L'/\L$ are elements of $\No(u)$, if $\L$ can be written additionally as a polynomial Euler product in $\sigma>1$. The statement follows directly from Theorem \ref{th:probmom} by respecting Remark 1 and 3 stated after Theorem \ref{th:probmom}. 
\end{proof}
Let $\L\in\Sc$ have positive degree $d_{\L}$. The function $\L$ satisfies the Lindel\"of hypothesis if and only if, for every $\sigma>\frac{1}{2}$ and $k\in \N$,
\begin{equation}\label{int}
\lim_{T\rightarrow\infty} \frac{1}{2T}\int_{-T}^{T} \left|\L(\sigma+it) \right|^{2k} \d t= \sum_{n=1}^{\infty}\frac{|a_k(n)|^2}{n^{2\sigma}},
\end{equation}
where the $a_k(n)$ denote the Dirichlet series coefficients of $\L^k$. This follows essentially from classical methods due to Hardy \& Littlewood \cite{hardylittlewood:1923}, who settled the case of the Riemann zeta-function; see also Steuding \cite[Chapt. 6]{steuding:2007}. For given $k\in\N$, we know so far only for
\begin{equation}\label{q}
 \sigma> \max\{\tfrac{1}{2},1-\tfrac{1}{kd_{\L}}\}
\end{equation}
that \eqref{int} is true; see Section \ref{subsec:meansquare} for details. Let $\L_1,...,\L_n$ be primitive functions in the Selberg class of degree $d_{\L_1},...,d_{\L_n}$ such that
$$
\L=\L_1 \cdot \dots \cdot \L_n.
$$ 
We set $d_{\L}^*=\max\{d_{\L_1},...,d_{\L_n}\}$. We deduce from Lemma \ref{lem:classNrelfct} and Theorem \ref{th:suffconditionsN} (b) that 
$$\L\in \No(\max\{\tfrac{1}{2}, 1- \tfrac{1}{d_{\L}^*}\}).$$ 
By Theorem \ref{th:probmom} and Remark 5 after Theorem \ref{th:probmom}, we know that, for any $l>0$, there is an $l$-set $A\subset [1,\infty)$ of density zero such that, for every  $k\in\N$ and $\sigma>\max\{\tfrac{1}{2}, 1- \tfrac{1}{d_{\L}^*}\}$,
$$
\lim_{T\rightarrow\infty} \frac{1}{T} \int_{1}^{T} \left|\L(\sigma+it) \right|^{2k}\pmb{1}_{A^c}(t)\d t = \sum_{n=1}^{\infty}\frac{|a_k(n)|^2}{n^{2\sigma}}.
$$
Thus, in a certain measure-theoretical sense, \eqref{int} is true in the half-plane
$$\sigma>\max\{\tfrac{1}{2}, 1- \tfrac{1}{d_{\L}^*}\}.$$
Let $\L\in\Sc$. Due to possible zeros of $\L$ in $\sigma>\frac{1}{2}$, it is difficult to obtain unconditional asymptotic expansions for the moments in \eqref{ww}, \eqref{dw} and \eqref{www} with $A=\emptyset$. We refer to Selberg \cite{selberg:1992} for certain conditional results.  \par

Next, we state a discrete version of Corollary \ref{cor1}.

\begin{corollary}
Let $u\in[\frac{1}{2},1)$ and $\L\in \No(u)$. Suppose that the Dirichlet series expansion of $\L$ satisfies the Ramanujan hypothesis. Let $\alpha\in(u,1]$ and $l>0$. 
\begin{itemize}
\item[(a)] If $l\notin\Gamma_{P}:= \{2\pi k(\log\frac{n}{m})^{-1} \, : \, k,n,m\in\N, n\neq m\}$, then there exist an $l$-set $A\subset[1,\infty)$ of density zero such that, for every $k\in\N$, uniformly for $\sigma\in[ \alpha,2]$ and $\lambda\in[0,l]$,
\begin{equation}\label{dis}
\lim_{N\rightarrow\infty}\frac{1}{N} \sum_{n=1}^N  \bigl|\L(\sigma + i\lambda + inl) \bigr|^{2k} \pmb{1}_{A^c} (nl)  = \sum_{n=1}^{\infty} \frac{|a_k(n)|^2}{n^{2\sigma}}
\end{equation}
and 
$$
\lim_{N\rightarrow\infty}\frac{1}{N} \sum_{n=1}^N \L(\sigma+ i\lambda + inl)^k \, \pmb{1}_{A^c} (nl)  = a_k(1).
$$
\item[(b)] Suppose additionally that $\L$ can be written as a polynomial Euler product. If $l=2\pi k / \log p$ with some $k\in\N$ and $p\in\mathbb{P}$, then there exist an $l$-set $A\subset[1,\infty)$ of density zero such that, for every $k\in\N$, uniformly for $\sigma\in[ \alpha,2]$ and $\lambda\in[0,l]$,
$$
\lim_{N\rightarrow\infty}\frac{1}{N} \sum_{n=1}^N  \bigl|\L(\sigma + i\lambda + inl) \bigr|^{2k} \cdot \pmb{1}_{A^c} (nl)  = \left| \prod_{j=1}^m \left(1-\frac{\alpha_j(p)}{p^{\sigma}}\right)^{-2k}   \right| \cdot
 \sum_{\begin{subarray}{c}n\in \N\\ p\nmid n \end{subarray}} \frac{|a_{k}(n)|^2}{n^{2\sigma}}
$$
and 
$$
\lim_{N\rightarrow\infty}\frac{1}{N} \sum_{n=1}^N \L(\sigma+ i\lambda + inl)^k \, \pmb{1}_{A^c} (nl)  = a_{k}(1) \cdot
 \prod_{j=1}^m \left(1-\frac{\alpha_j(p)}{p^{\sigma}}\right)^{-k}.
$$
\end{itemize}
Here, the $a_k(n)$ denote the coefficient of the Dirichlet series expansion of $\L^k$ and $\alpha_j(p)$ the local roots of the polynomial Euler product representation of $\L$.
\end{corollary}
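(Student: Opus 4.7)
The plan is to apply Theorem~\ref{th:probmom}~(iii) to a suitably chosen countable family of pairs $(\L_j,p_j)_j$, after first noting that the powers $\L^k$ lie in the class $\No(u)$ by Lemma~\ref{lem:classNrelfct}~(c) (they inherit the Ramanujan hypothesis from $\L$ via Theorem~\ref{th:sumprodL}~(b)). More precisely, enumerate the pairs $(\L^k,|\cdot|^2)$ and $(\L^k,\cdot)$ for $k\in\N$ as a single sequence $(\L_j,p_j)_j$, each of which satisfies the growth condition (C). For these choices we have $p_j\bigl(\L^k(s)\bigr) = |\L(s)|^{2k}$ or $p_j\bigl(\L^k(s)\bigr) = \L(s)^k$, so Theorem~\ref{th:probmom}~(iii) produces an $l$-set $A\subset[1,\infty)$ of density zero and a sequence $(N_j)_j$ of positive integers such that the desired discrete limits hold uniformly for $\sigma\in[\alpha,2]$ and $\lambda\in[0,l]$, with integrals over $K_{2\pi/l}$ on the right-hand side. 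Replacing $N_j$ by $1$ is permitted by Remark~3 after Theorem~\ref{th:probmom}, since $\L$ has at most finitely many poles in $\sigma\geq\alpha$.

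For part~(a), where $l\notin\Gamma_P$, Remark~1 after Theorem~\ref{th:probmom} immediately identifies the integrals:
\begin{equation*}
\int_{K_{2\pi/l}}\bigl|L_{\L^k}(\sigma,y)\bigr|^2\,\d\ptau = \sum_{n=1}^{\infty}\frac{|a_k(n)|^2}{n^{2\sigma}},\qquad \int_{K_{2\pi/l}}L_{\L^k}(\sigma,y)\,\d\ptau = a_k(1),
\end{equation*}
where we used that the Dirichlet series coefficients of $\L^k$ are exactly the $a_k(n)$ from the statement. Combining this with Theorem~\ref{th:probmom}~(iii) yields the two asserted identities of part~(a).

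For part~(b), where $l=2\pi k/\log p$ with $p\in\mathbb{P}$ and $k\in\N$, the set $l\in\Gamma_P$ forces us to invoke Remark~2, which says that statement~(iii) of Theorem~\ref{th:probmom} remains valid provided the functions $\sigma\mapsto\int_{K_{2\pi/l}}|L_{\L^k}(\sigma,x)|^2\,\d\ptau$ are continuous on $\sigma>u$. But $\L^k$ again satisfies the Ramanujan hypothesis and inherits a polynomial Euler product from $\L$, so Lemma~\ref{lem:lLambda} applies, giving the explicit (hence continuous in $\sigma$) formulas
\begin{equation*}
\int_{K_{2\pi/l}}\bigl|L_{\L^k}(\sigma,y)\bigr|^2\,\d\ptau = \Bigl|\prod_{j=1}^m\bigl(1-\alpha_j(p)p^{-\sigma}\bigr)^{-2k}\Bigr|\cdot\sum_{\substack{n\in\N\\ p\nmid n}}\frac{|a_k(n)|^2}{n^{2\sigma}}
\end{equation*}
and the analogous expression for $\int L_{\L^k}\,\d\ptau$. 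Substituting these evaluations into the (extended) statement of Theorem~\ref{th:probmom}~(iii) yields the claims of part~(b).

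The main technical point, hidden in the black-box application, is ensuring that a \emph{single} exceptional $l$-set $A$ works simultaneously for every $k\in\N$ while preserving uniformity in $\sigma$ and $\lambda$; this is precisely what the countable-family formulation of Theorem~\ref{th:probmom} (with its diagonal-style construction of $A$ and $(N_j)_j$) is designed to deliver, so no additional diagonal argument is needed here. The only subtlety specific to part~(b) is the validity of Remark~2 for $l\in\Gamma_P$, but as noted this reduces to the continuity of $\sigma\mapsto\int_{K_{2\pi/l}}|L_{\L^k}|^2\,\d\ptau$, which is transparent from the closed form supplied by Lemma~\ref{lem:lLambda}.
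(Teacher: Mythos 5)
Your proof is correct and follows essentially the same route as the paper's (one-line) proof: use Lemma~\ref{lem:classNrelfct}~(c) and Theorem~\ref{th:sumprodL} to get $\L^k\in\No(u)$ with the Ramanujan hypothesis, feed the countable family $(\L^k,|\cdot|^2)$ and $(\L^k,\cdot)$ into Theorem~\ref{th:probmom}~(iii), then evaluate the resulting integrals over $K_{2\pi/l}$ via Remark~1 (part~(a)) or Remark~2 together with Lemma~\ref{lem:lLambda} (part~(b)), and finally invoke Remark~3 to start the sum at $n=1$. One small imprecision: your justification for Remark~3, namely that ``$\L$ has at most finitely many poles in $\sigma\geq\alpha$,'' is not formally guaranteed by membership in $\No(u)$; the cleaner argument is that the normality feature (N.2) already forces the $n$ for which $L_{ne_l}$ fails to be analytic on $Q(\alpha,l)$ into a density-zero set, which can be absorbed into $A$, so that every surviving term with $nl\in A^c$ is finite. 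With that adjustment the argument matches the paper's intent exactly.
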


\begin{proof}
The statement follows directly from Lemma \ref{lem:classNrelfct} and Theorem \ref{th:probmom} (ii) by respecting the Remarks 1,2,3 and Lemma \ref{lem:lLambda}.
\end{proof}

For certain functions $\L$ with polynomial Euler product of order two, Reich \cite{reich:1980-2} proved that \eqref{dis} holds for $k=2$ and $A=\emptyset$ in the mean-square half-plane of $\L$. 
Good \cite{good:1978} used a different method to establish \eqref{dis} for certain functions in their mean-square half-plane with $A=\emptyset$. Besides a polynomial Euler product of order two, he assumed additionally the existence of an approximate functional equation and got better bounds in the asymptotic expansion \eqref{dis} than the ones provided by Reich \cite{reich:1980-2}. Both Reich \cite{reich:1980-2} and Good \cite{good:1978} studied also the case $l\in \Gamma_P$.\par

The next corollary shows that the $k$-th power $\L^k$ of a function $\L\in\No(u)$ which satisfies the Ramanujan hypothesis can be approximated in mean-square by certain Dirichlet polynomials in $\sigma>u$. 
\begin{corollary}\label{cor:Dirichletpol}
Let $u\in[\frac{1}{2},1)$ and $\L\in \No(u)$. Suppose that $\L$ satisfies the Lindel\"of hypothesis. Let $\alpha\in(u,1]$ and $l>0$. Then, there exist an $l$-set $A\subset[1,\infty)$ of density zero such that, for every $k\in\N$,
$$
\lim_{N\rightarrow\infty}\lim_{T\rightarrow\infty} \frac{1}{T} \int_1^T \int_{\alpha}^2 \left|\L^k(\sigma + it) - \sum_{n=1}^{N} \frac{a_k(n)}{n^{\sigma+it}}\right|^{2} \pmb{1}_A(t) \d t \d \sigma = 0.
$$
Here, the $a_k(n)$ denote the coefficient in the Dirichlet series expansion of $\L^k$.
\end{corollary}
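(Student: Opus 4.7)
The strategy is to apply Theorem \ref{th:probmom} (i) to the differences $\mathcal{B}_{N,k}(s) := \L^k(s) - P_{N,k}(s)$, where $P_{N,k}(s) := \sum_{n=1}^{N} a_k(n) n^{-s}$ denotes the $N$-th truncation of the Dirichlet expansion of $\L^k$ in $\sigma > 1$. Under the Lindel\"of hypothesis (interpreted, as is standard in this setting, together with the Ramanujan hypothesis so that the coefficients of $\L^k$ lie in $\mathscr{H}^2$), $\L^k$ has bounded mean-square on every line $\sigma > u$, so Theorem \ref{th:suffconditionsN} (b) combined with Lemma \ref{lem:classNrelfct} (c) puts $\L^k$ in $\No(u)$. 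Since $P_{N,k}$ is a Dirichlet polynomial and therefore trivially an element of $\No(u)$, Lemma \ref{lem:classNrelfct} (a) gives $\mathcal{B}_{N,k} \in \No(u)$ with Dirichlet expansion $\sum_{n>N} a_k(n) n^{-s}$ in $\sigma > 1$.

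Next, enumerate the countable family of pairs $\{(\mathcal{B}_{N,k}, p_0)\}_{N, k \in \mathbb{N}}$ with $p_0(z) := |z|^2$, which trivially satisfies hypothesis (C), and invoke Theorem \ref{th:probmom} (i). This produces a \emph{common} $l$-set $A \subset [1,\infty)$ of density zero together with integers $N_{N,k}$ such that, as $T \to \infty$,
$$
\frac{1}{T}\int_{N_{N,k}}^T \int_\alpha^2 \bigl|\mathcal{B}_{N,k}(\sigma+it)\bigr|^2 \pmb{1}_{A^c}(t) \d\sigma \d t \longrightarrow \int_\alpha^2\!\int_K \bigl|L^k(\sigma,x) - P_{N,k}(\sigma,x)\bigr|^2 \d\pmb{\sigma}\,\d\sigma.
$$
Since $\mathcal{B}_{N,k}$ is of finite order in $\sigma > u$ with at most finitely many poles, the initial segment $[1, N_{N,k}]$ contributes $O_{N,k}(T^{-1})$, so by Remark 3 following Theorem \ref{th:probmom} the lower limit may be replaced by $1$. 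For the right-hand side, the characters $\{\chi_{\log n}\}_{n \in \mathbb{N}}$ are pairwise orthonormal in $L^2_{\pmb{\sigma}}(K)$, and $L^k(\sigma,\cdot) - P_{N,k}(\sigma,\cdot) = \sum_{n>N} a_k(n) n^{-\sigma}\chi_{\log n}$ converges in $L^2_{\pmb{\sigma}}(K)$ for $\sigma > \tfrac{1}{2}$ by Theorem \ref{th:almostsurebehaviour} (c), hence Plancherel yields
$$
\int_K \bigl|L^k(\sigma,x) - P_{N,k}(\sigma,x)\bigr|^2 \d\pmb{\sigma} = \sum_{n>N} \frac{|a_k(n)|^2}{n^{2\sigma}}.
$$

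To conclude, integrate the tail over $\sigma \in [\alpha, 2]$ and send $N \to \infty$: since $\sum_{n=1}^\infty |a_k(n)|^2 n^{-2\alpha} < \infty$ (Ramanujan together with $2\alpha > 1$), dominated convergence forces $\int_\alpha^2 \sum_{n>N} |a_k(n)|^2 n^{-2\sigma}\d\sigma \to 0$, which establishes the claim. The order of the iterated limits is legitimate precisely because Theorem \ref{th:probmom} delivers a single $l$-set $A$ valid uniformly for the entire countable family $\{\mathcal{B}_{N,k}\}_{N,k}$, so the inner limit in $T$ is taken first for each fixed $N$ and the outer tail is handled afterwards. The main obstacle is the very first step: confirming $\L^k \in \No(u)$ from the Lindel\"of hypothesis alone---this relies on the classical equivalent formulation of Lindel\"of in terms of bounded $2k$-th power moments on vertical lines $\sigma > \tfrac{1}{2}$, together with Ramanujan to guarantee $a_k \in \mathscr{H}^2$; after this, the remainder of the argument is a direct combination of Theorem \ref{th:probmom} with the Plancherel identity on $K$.
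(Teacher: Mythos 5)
Your argument is essentially the paper's own: reduce to the tail $\L^k - P_{N,k}$, establish that it lies in $\No(u)$ by Lemma~\ref{lem:classNrelfct}, apply Theorem~\ref{th:probmom} to get a common $l$-set $A$ for the countable family, evaluate the limit via Plancherel on $K$, and finish with the convergent tail. You correctly read the displayed formula with $\pmb{1}_{A^c}$ in place of the printed $\pmb{1}_A$ (the latter is a misprint, as the proof via Theorem~\ref{th:probmom} makes clear), and you correctly flag that the stated ``Lindel\"of hypothesis'' must in effect be accompanied by the Ramanujan hypothesis so that the Dirichlet series of $\L^k$ is in $\mathscr{H}^2$ (without Ramanujan, neither (N.1) for $\L^k$ nor Lemma~\ref{lem:classNrelfct} applies); comparing with the sibling Corollary~\ref{cor1}, the intended hypothesis is Ramanujan, and the paper's proof silently uses it.

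The one place where you take a slightly different (and slightly longer) route is in establishing $\L^k\in\No(u)$: you pass through the Lindel\"of growth bound and Theorem~\ref{th:suffconditionsN}~(b), whereas the paper invokes Lemma~\ref{lem:classNrelfct}~(c) directly, which already gives $\L^k\in\No(u)$ from $\L\in\No(u)$ plus Ramanujan with no need of Lindel\"of at all. Both routes are valid given the (corrected) hypotheses; the paper's is shorter and makes clear that Lindel\"of plays no role in this corollary. Everything else — invoking Theorem~\ref{th:probmom}~(i) with $p_0(z)=|z|^2$ for the countable family $\{\mathcal{B}_{N,k}\}$, the Plancherel identity $\int_K|L^k_\sigma - P_{N,k,\sigma}|^2\,\d\pmb{\sigma}=\sum_{n>N}|a_k(n)|^2 n^{-2\sigma}$, the observation that the single $l$-set $A$ works uniformly in $(N,k)$ so the iterated limit is legitimate, and the dominated-convergence step — matches the paper.
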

\begin{proof}
Let $\L\in\No(u)$. Then, according to Lemma \ref{lem:classNrelfct}, the function defined by
$$
\L^k_N(s):= \L(s)- \sum_{n=1}^{N} \frac{a_k(n)}{n^{s}} =\sum_{n=N+1}^{\infty}\frac{a_k(n)}{n^{s}}, \qquad \sigma>1,
$$
with $k, N\in \N$, lies also in $\No(u)$. By observing that
$$
\lim_{N\rightarrow\infty} \sum_{n=N+1}^{\infty}\frac{|a_k(n)|^2}{n^{2\sigma}} = 0 ,\qquad \sigma>\frac{1}{2},
$$
the assertion can be derived from Theorem \ref{th:probmom} (a).
\end{proof}

In the mean-square half-plane of $\L^k$ the statement of Corollary \ref{cor:Dirichletpol} can be established for $A=\emptyset$ by standard methods relying on the residue theorem; see for example Steuding \cite[Chapt. 4.4]{steuding:2007}. We refer here also to Lee \cite{lee:2012} who proved that the logarithm of Hecke $\L$-functions can be approximated by certain Dirichlet polynomials in $\sigma>\frac{1}{2}$, under the assumption of a certain zero-density conjecture.\par

\section{Proof of the main theorem}
\subsection*{Auxiliary lemmas}
We start with some lemmas.
\begin{lemma}\label{lem:arithmeticmean}
Let $I\subset \N$ and $(a_n)_n$ be a sequence of complex numbers such that
$$
\lim_{N\rightarrow\infty} \frac{1}{N}\sum_{\begin{subarray}{c} n\in I \\ n\leq N \end{subarray}} a_n = a 
$$
with some $a\in\C$. 
\begin{itemize}
 \item[(a)] Suppose that, for $n\in\N$, the quantities $a_n$ are non-negative real number. Then, the limit $a$ is real and, for any $\delta>0$, there exist an integer $N_{\delta}\in\N$ such that, for every $N\in \N$ and every set $J \subset \N$ with $\{1,...,N_{\delta}\}\subset J$, the inequality
$$
\frac{1}{N}\sum_{\begin{subarray}{c} n\in I\setminus J  \\ n\leq N \end{subarray}} | a_n | < a +\delta
$$
is true.
 \item[(b)]  Suppose that there is a constant $C>0$ such that $|a_n|\leq C$ for $n\in\N$. Then, for any subset $J\subset I$ with $\dens \ J = 0$, 
$$
\lim_{N\rightarrow \infty} \frac{1}{N}\sum_{\begin{subarray}{c} n\in I\setminus J \\ n\leq N \end{subarray}} a_n = a.
$$
\end{itemize}
\end{lemma}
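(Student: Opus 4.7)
The lemma is elementary and both parts follow directly from the definition of the limit of arithmetic means combined with straightforward estimates, so the proof will be short.

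For part (a), I would first observe that since every $a_n$ is non-negative real, each partial sum $\frac{1}{N}\sum_{n\in I, n\leq N} a_n$ lies in $\R^+_0$, so its limit $a$ is real and non-negative. The key step is then to unpack the limit hypothesis: choose $N_\delta\in\N$ such that
$$
\frac{1}{N}\sum_{\substack{n\in I\\ n\leq N}} a_n < a+\delta \qquad \text{for all } N\geq N_\delta,
$$
which exists by the definition of convergence. Given any $J\subset\N$ with $\{1,\dots,N_\delta\}\subset J$, I would split on $N$: if $N< N_\delta$ then $\{n\in I\setminus J : n\leq N\}=\emptyset$ because every such $n$ lies in $J$, so the truncated sum is $0<a+\delta$; if $N\geq N_\delta$, then $I\setminus J\subset I$ together with the non-negativity of $a_n$ gives
$$
\frac{1}{N}\sum_{\substack{n\in I\setminus J\\ n\leq N}} |a_n| \;\leq\; \frac{1}{N}\sum_{\substack{n\in I\\ n\leq N}} a_n \;<\; a+\delta.
$$

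For part (b), I would exploit the additive decomposition
$$
\frac{1}{N}\sum_{\substack{n\in I\\ n\leq N}} a_n \;-\; \frac{1}{N}\sum_{\substack{n\in I\setminus J\\ n\leq N}} a_n \;=\; \frac{1}{N}\sum_{\substack{n\in J\\ n\leq N}} a_n,
$$
valid since $J\subset I$. The left-hand first term tends to $a$ by hypothesis, and the right-hand side is bounded in absolute value by $C\cdot \#(J\cap[1,N])/N$, which tends to $0$ since $\dens\ J=0$. Rearranging yields the claim.

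There is no serious obstacle; the only point requiring attention is in (a), where one must treat the regime $N<N_\delta$ separately (the hypothesis $\{1,\dots,N_\delta\}\subset J$ forces the truncated sum to vanish there) from the regime $N\geq N_\delta$ (where the limit hypothesis supplies the estimate directly). The uniformity of the bound in $J$ is automatic once this split is made.
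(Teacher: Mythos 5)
Your proposal is correct and complete; both parts are handled by exactly the standard convergence arguments that the paper invokes without spelling them out (the paper's "proof" is one sentence: \emph{``The assertions of the lemma follow by standard convergence arguments, respecting the conditions posed on $a_n$ and $J$, respectively.''}). Your treatment of part (a) correctly identifies the one point that needs care—the regime $N\leq N_\delta$, where the containment $\{1,\dots,N_\delta\}\subset J$ forces the truncated sum to be empty, so that the bound $0<a+\delta$ relies on $a\geq 0$—and your part (b) uses the obvious decomposition valid since $J\subset I$ together with the boundedness and zero-density hypotheses. Nothing is missing.
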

\begin{proof}
The assertions of the lemma follow by standard convergence arguments, respecting the conditions posed on $a_n$ and $J$, respectively.
\end{proof}

The next lemma is crucial for the proof of theorem and extends a lemma of Tanaka \cite[Lemma 5.2]{tanaka:2008}.

\begin{lemma}\label{lem:probmom}
Let $u\in[\frac{1}{2},1)$ and $\L\in\No(u)$. Let $L$ be the function connected to $\L$ by means of \eqref{L}. Furthermore, let $\alpha'',\alpha',\alpha\in(u,1]$ with $\alpha''<\alpha'<\alpha$ and $l>0$. For $M>0$, let $J(M):=J(M,l,\alpha'',\mathcal{R}',\L)\subset \N$ be defined as in Lemma \ref{lem:JM}, where we choose $\mathcal{R}'$ to be the compact rectangular set 
$$
\mathcal{R}':=\left\{\sigma+it\in\C \, : \, \alpha' \leq \sigma \leq 2, \, -\tfrac{3}{4}l \leq t \leq \tfrac{3}{2}l  \right\}.
$$ 
Then, either statement (A) or statement (B) is true:
\begin{itemize}
 \item[(A)] There is a real number $M_1\geq 1$ such that $\ptau(E(J(M_{1}))) = 1$. In this case, we set $\varTheta=\{1\}$.
 \item[(B)] There are real numbers $M_k\geq 1$ with $k\in \N$ such that
\begin{itemize}
\item[ ]$\qquad \pmb{\tau}(E(J(M_1)))>0$,\vspace{0.1cm}
\item[ ]$\qquad \pmb{\tau}(E(J(M_k))) < \pmb{\tau} (E(J(M_{k+1}))) \mbox{ for }k\in\N 
$\vspace{0.1cm}
\item[ ]$\qquad \displaystyle
\mbox{and }\lim_{k\rightarrow\infty} \pmb{\tau} (E(J(M_k))) =1.$
\end{itemize}
In this case, we set $\varTheta=\N$.
\end{itemize}
In both cases, the following holds:
\begin{itemize}
\item[(i)] Let $M_0:=0$ and $I_k:= J(M_k)\setminus J(M_{k-1})$ for $k\in\varTheta$. Then, for any $j,k\in\varTheta$ with $j\neq k$, 
$$
\pmb{\tau}(E(I_j)\cap E(I_k))) = 0.
$$
\item[(ii)] Let $p:\C\rightarrow\R^+_0$ be a non-negative, continuous function, $k\in\varTheta$ and
$$
G_k := \left\{ y+e_t \, : \, (y,t)\in E(I_k)\times [0,l) \right\} \subset K.
$$
Then, for any $\delta>0$, there exists a finite subset $\Delta_k\subset I_k$ such that, for every $\sigma\in [\alpha,2]$, every $\lambda\in[0,l]$, every $N\in\N$ and every $J \subset \N$ with $\Delta_k \subset J$,
\begin{equation}\tag{$\spadesuit$}\label{spade1}
\frac{1}{N l} \sum_{\begin{subarray}{c} n\in I_k\setminus J \\ n\le N  \end{subarray}} 
\int_{n l}^{(n+1) l} \int_{\alpha}^2  p\bigl(\L(\sigma+it)\bigr)  d t \ d \sigma \leq \int_{\alpha}^2 \int_{G_k} p\bigl(L(\sigma,x)\bigr)  d \pmb{\sigma} \d \sigma + \delta,
\end{equation}
\begin{equation}\tag{$\diamondsuit$}\label{diamond1}
\frac{1}{N l}  \sum_{\begin{subarray}{c} n\in I_k\setminus J \\ n\le N  \end{subarray}} 
\int_{n l}^{(n+1) l} p\bigl(\L(\sigma+it)\bigr)  \d t \leq \int_{G_k} p\bigl(L(\sigma,x)\bigr)  \d \pmb{\sigma} 
+ \delta
\end{equation}
and 
\begin{equation}\tag{$\divideontimes$}\label{divide1}
\frac{1}{N}\sum_{\begin{subarray}{c} n\in I_k\setminus J \\ n\le N  \end{subarray}}
 p\bigl(\L(\sigma+i\lambda+inl)\bigr) 
\leq \int_{E(I_k)} p\bigl(L(\sigma,y)\bigr)  \d \pmb{\tau} + \delta.
\end{equation}

\item[(iii)] Let $p:\C\rightarrow\C$ be a continuous function, $k\in\varTheta$ and $G_k$ be defined as above. Further, let $J\subset\N$ with $\dens\ J = 0$. Then, uniformly for $\sigma\in[\alpha,2]$ and $\lambda\in[0,l]$, as $N\rightarrow\infty$,
\begin{equation}\tag{$\clubsuit$}\label{spade2}
\frac{1}{N l} \sum_{\begin{subarray}{c} n\in I_k\setminus J \\ n\le N  \end{subarray}} 
\int_{0}^{ l} \int_{\alpha}^2  p\bigl(\L(\sigma+it)\bigr) \d t \d \sigma = \int_{\alpha}^2 \int_{G_k} p\bigl(L(\sigma,x)\bigr) \d \pmb{\sigma} \d \sigma + o(1),
\end{equation}
\begin{equation}\tag{$\triangledown$}\label{diamond2}
\frac{1}{N l}  \sum_{\begin{subarray}{c} n\in I_k\setminus J \\ n\le N  \end{subarray}} \int_{0}^{ l}  p\bigl(\L(\sigma+it)\bigr) \d t = \int_{G_k} p\bigl(L(\sigma,x)\bigr) \d \pmb{\sigma} + o(1)
\end{equation}
and
\begin{equation}\tag{$\times$}\label{divide2}
\frac{1}{N}\sum_{\begin{subarray}{c} n\in I_k\setminus J \\ n\le N  \end{subarray}}  p\bigl(\L(\sigma+i\lambda+inl)\bigr) = \int_{E(I_k)} p\bigl(L(\sigma,y)\bigr)  \d \pmb{\tau} + o(1).
\end{equation}
\end{itemize}
\end{lemma}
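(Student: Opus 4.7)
My strategy is to adapt Tanaka's approach to the abstract normality framework of $\No(u)$, using the continuous functional $\phi(y) := \max_{s \in \mathcal{R}'} |L_y(s)|$ as a device to distinguish the closures $E(I_k)$. The key novelty compared with Tanaka is that Lemma \ref{lem:JM} (the abstract normality input) replaces his mean-square condition, and the sequence $(M_k)$ must be chosen so that the boundaries of the associated sublevel sets of $\phi$ are $\pmb{\tau}$-negligible.

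First I construct $(M_k)$: Lemma \ref{lem:JM} combined with Lemma \ref{lem:tanaka1} yields, for every $\eps > 0$, an $M_\eps$ with $\pmb{\tau}(E(J(M_\eps))) > 1 - \eps$. By Lemma \ref{lem:normalNproperties}(b)--(c), $\phi$ is well-defined and continuous on every $E(J(M))$, so the monotone distribution $M \mapsto \pmb{\tau}(\{y : \phi(y) \leq M\})$ has at most countably many jumps; I pick the $M_k$'s at its continuity points. This yields either case (A) or a strictly increasing sequence with $\pmb{\tau}(\{y : \phi(y) = M_k\}) = 0$ for every $k$. Statement (i) then follows: by continuity, $\phi(y) \in [M_{k-1}, M_k]$ for every $y \in E(I_k)$, so for $j < k - 1$ the intervals $[M_{j-1}, M_j]$ and $[M_{k-1}, M_k]$ are disjoint giving $E(I_j) \cap E(I_k) = \emptyset$, while for $j = k - 1$ the intersection lies in $\{\phi = M_{k-1}\}$, a $\pmb{\tau}$-null set.

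With (i) in hand I deduce (ii) and (iii) from Lemma \ref{lem:Tanakaergodic} applied to $J = I_k$. The hypothesis is verified by choosing $J_\eps := J(M_{k_\eps}) \setminus I_k$ with $\dens_* J(M_{k_\eps}) > 1 - \eps$: the density condition is automatic, while $\pmb{\tau}(E(I_k) \cap E(J_\eps)) = 0$ follows from (i) together with $E(J_\eps) = E(J(M_{k-1})) \cup \bigcup_{j=k+1}^{k_\eps} E(I_j)$ (in case (A) one uses directly that $\{\phi > M_1\}$ is a $\pmb{\tau}$-null set, hence so is its closure). For each of the three test-integrands appearing in $(\spadesuit)$, $(\diamondsuit)$, $(\divideontimes)$ I define a function on $K_{2\pi/l}$ equal to that integrand on $E(I_k)$ (continuous there by Lemma \ref{lem:normalNproperties}(b) and Fubini) and equal to zero elsewhere; Lemma \ref{lem:Tanakaergodic} then gives the desired ergodic identity. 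The sum over $\{n : ne_l \in E(I_k)\}$ that this identity naturally produces differs from the sum over $I_k$ by an exceptional set of density zero (by (i) and Lemma \ref{lem:tanaka1}), so (iii) follows via Lemma \ref{lem:arithmeticmean}(b) (decomposing $p$ into real/imaginary and positive/negative parts if need be), and (ii) follows from Lemma \ref{lem:arithmeticmean}(a) exploiting the non-negativity of $p$. Uniformity in $(\sigma, \lambda) \in [\alpha, 2] \times [0, l]$ is upgraded from pointwise to uniform by compactness together with the joint continuity of $(s, y) \mapsto p(L_y(s))$ on the relevant compact product containing $\mathcal{R} \times E(I_k)$.

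The main obstacle will be that $\pmb{\tau}(E(I_k) \cap E(J_\eps)) = 0$ in the hypothesis of Lemma \ref{lem:Tanakaergodic} looks circular, since that lemma is usually invoked precisely to deduce such disjointness identities. What makes the argument go through is that (i) is proved first, using only the construction of $(M_k)$ together with continuity of $\phi$ (hence only Lemma \ref{lem:normalNproperties}), without any appeal to equidistribution. A secondary technical nuisance is the small discrepancy between summing over $I_k$ and summing over $\{n : ne_l \in E(I_k)\}$, which is reconciled by the density-zero observation above.
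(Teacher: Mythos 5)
Your proposal follows essentially the same route as the paper: build a sequence $(M_k)$ with appropriate continuity, establish statement (i) by bounding $\max_{s\in\mathcal{R}'}|L_y(s)|$ on each $E(I_k)$ via Lemma \ref{lem:normalNproperties}(c), verify the hypothesis of Lemma \ref{lem:Tanakaergodic} from (i), and push the resulting ergodic identity through the three test-integrands using a uniform continuity/compactness upgrade together with Lemma \ref{lem:arithmeticmean}. You also correctly identify, and make more explicit than the paper does, the small discrepancy between summing over $I_k$ and summing over the visits to $E(I_k)$.

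There is, however, a genuine (small) difference in how you construct $(M_k)$, and one false step to flag. You pick the $M_k$ at continuity points of the distribution function $\Phi(M)=\pmb{\tau}(\{y:\phi(y)\le M\})$, whereas the paper picks them at continuity points of $F(M)=\pmb{\tau}(E(J(M)))$. These are not the same monotone function: $E(J(M))$ is the closure of a specific orbit segment, not the sublevel set of $\phi$, so $F\le\Phi$ with inequality and possibly different jump sets. Both choices do achieve (i)—yours via the inclusion $E(I_k)\cap E(I_{k-1})\subset\{\phi=M_{k-1}\}$, the paper's via $E(I_k)\cap E(I_{k-1})\subset E(J(M_{k-1}+\delta))\setminus E(J(M_{k-1}-\delta))$ and continuity of $F$—so this is a legitimate alternative, but you should additionally note that the $M_k$ must be chosen so that $F(M_k)<F(M_{k+1})$ in case (B), which is achievable (since $F\nearrow 1$) but not automatic from continuity of $\Phi$ alone. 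The false step is the parenthetical in case (A): ``$\{\phi>M_1\}$ is a $\pmb{\tau}$-null set, hence so is its closure'' is not a valid inference (closures of null sets need not be null). The intended conclusion is nevertheless correct: since $M_1$ is a continuity point of $\Phi$ and $\Phi(M_1)\ge F(M_1)=1$, one gets $\pmb{\tau}(\{\phi\ge M_1\})=0$, and $E(J_\eps)\subset\{\phi\ge M_1\}$ by Lemma \ref{lem:normalNproperties}(c), which yields $\pmb{\tau}(E(I_1)\cap E(J_\eps))=0$ directly—exactly as in the general case, with no need for the unjustified closure claim.
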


\begin{proof}[Proof of Lemma \ref{lem:probmom}] We divide the proof into several steps. First we shall figure out that either statement (A) or statement (B) is true.\par
{\bf The behaviour of $\pmb{\tau}(E(J(M)))$ as $M\rightarrow \infty$.} We consider the function $F:\R^+\rightarrow [0,1]$ defined by
$$
F(M):= \pmb{\tau}(E(J(M))) \qquad\mbox{ for } M>0.
$$
It follows from the definition of $J(M)$ that
$$
J(M) \subset J(M') \qquad \mbox{ for }0<M<M'
$$ 
and, consequently, that
$$
E(J(M)) \subset E(J(M'))\qquad \mbox{ for }0<M<M'.
$$
This implies that the function $F$ is monotonically increasing. Lemma \ref{lem:JM} together with Lemma \ref{lem:tanaka1} yields that 
\begin{equation}\label{pmf1}
\lim_{M\rightarrow\infty} F(M)=1.
\end{equation}
If we find a real number $M_1>0$ such that 
$$
F(M)=1\qquad \mbox{ for } M\geq M_1,
$$ 
then statement (A) is true. Otherwise, if $F(M)<1$ for every $M>0$, we find, according to \eqref{pmf1}, a sequence $(M_k)_k$ of real numbers $M_k\geq 1$ such that 
$$
F(M_1)>0, \qquad F(M_k)<F(M_{k+1}) \; \mbox{ for }k\in\N \qquad \mbox{ and } \qquad \lim_{k\rightarrow\infty} M_k =\infty.
$$ 
In this case, statement (B) is true. In the following, we focus on situation (B). In fact, if statement (A) is true, then the assertions (i), (ii) and (iii) follow easily from the subsequent consideration by just regarding the case $k=1$.\par

{\bf Properties of the sets $I_k\subset \N$.} Since $F$ is a monotonically increasing function, $F$ is discontinuous in at most countably many points. This observation allows us to adjust the sequence $(M_k)_k$ such that $F$ is continuous at every point $M_k$ with $k\in\N$. We set $M_0:=0$ and define $I_k:= J(M_k)\setminus J(M_{k-1})$ for $k\in\N$. Observe that the sets $I_1,...,I_k$ provide a disjoint decomposition of $J(M_k)$. Moreover, it follows immediately from the definitions of $I_k$ and $J(M_k)$, that for every $n\in I_k$,
\begin{equation}\label{1}
M_{k-1} \leq \max_{s\in\mathcal{R}'}\left|L_{ne_l}(s) \right| \leq M_k.
\end{equation}
We deduce from Montel's theorem and the local boundedness of the functions $L_{ne_l}$, $n\in\N$, in $\sigma>1$ that the family $\{L_{ne_l}\}_{n\in I_k}$ is normal in the half-strip $Q':=Q(\alpha',\frac{3}{4}l)$ defined by \eqref{def:Q}. The compact set $\mathcal{R}:=\mathcal{R}(\alpha,l)$ defined by \eqref{def:R} is a subset both of $Q'$ and $\mathcal{R}'$.
According to Lemma \ref{lem:normalNproperties} (c) and \eqref{1}, this implies that, for every $y\in E(I_k)$,
\begin{equation}\label{eq:boundLys}
M_{k-1} \leq \max_{s\in\mathcal{R}}\left|L_y(s) \right| \leq M_k.
\end{equation}
Consequently, $E(I_j)\cap E(I_k)=\emptyset$ for $j=1,...,k-2$. It remains to consider the case $j=k-1$. For any $\delta>0$, we derive that
$$
E(I_k)\cap E(I_{k-1}) \subset E(J(M_{k-1} + \delta))\setminus E(J(M_{k-1} - \delta)).
$$
By the additivity of the measure $\ptau$, this implies that
$$
\ptau\left( E(I_k)\cap E(I_{k-1}) \right) \leq F(M_k+\delta) - F(M_k - \delta).
$$
The continuity of $F$ at $M_k$ assures that 
$$
\lim_{\delta\rightarrow 0+} F(M_k + \delta) - F(M_k - \delta) =0.
$$
Consequently, we obtain that $\ptau\left( E(I_k)\cap E(I_{k-1}) \right)  =0$. Altogether, we proved that
$$ 
\ptau(E(I_k)\cap E(I_{j}) ) = 0 \qquad \mbox{ for }j,k\in \N \mbox{ with }j\neq k. 
$$ 
Statement (i) follows.\par

{\bf Applicability of Tanaka's ergodic theorem.} Now, we shall figure out that the sets $E(I_k)$ are constructed in a suitable way such that Tanaka's modified version of the ergodic theorem (Lemma \ref{lem:Tanakaergodic}) can be applied. We fix an arbitrary $k\in\N$. For any given $\eps>0$, we find according to Lemma \ref{lem:JM} an integer $\nu_0\in\N$ such that 
$$
\dens_* (J(M_{\nu_0})) > 1- \eps.
$$
We set
$$
S:= \bigcup_{\begin{subarray}{c} \nu=1,...,\nu_0,\\ \nu\neq k \end{subarray}} I_{\nu}.
$$
From the observation that
$$
S\cup I_k =  \bigcup_{\nu=1}^{\nu_0} I_{\nu} = J(M_{\nu_0}),
$$
we deduce that 
$$
\dens^* \left(\N\setminus(S_{\eps}\cup I_k) \right) < \eps.
$$
Moreover, by statement (i), we have 
$$
\pmb{\tau}(E(S_{\eps}) \cap E(I_k))=0.
$$ 
It follows from Lemma \ref{lem:Tanakaergodic} that, for any continuous function $p^{\triangledown}:E(I_k)\rightarrow\C$,
\begin{equation}\label{eq:tanakapmb}
\lim_{N\rightarrow\infty} \frac{1}{N}  \sum_{\begin{subarray}{c} n\in I_k \\ n\le N  \end{subarray}} p^{\triangledown}(ne_l) = \int_{E(I_k)} p^{\triangledown}(y) \d \pmb{\tau}
\end{equation}
This observation is quite central in our further considerations. In fact, by choosing $p^{\triangledown}$ in a proper way, the statement above implies already a weak version of (ii) and (iii). However, some further work is necessary to establish (ii) and (iii) in full extent. We proceed with a continuity consideration. \par

From now on, let $p:\C\rightarrow\C$ be a continuous function. We keep $k\in\N$ fixed and choose an arbitrary $\delta>0$. Moreover, we set
$$
V_k := \max_{\begin{subarray}{c} z\in \C \\ |z|\leq M_k \end{subarray}} \left|p(z) \right|.
$$

{\bf A uniform continuity argument.} We define the function $p^*: \mathcal{R}(\alpha,l)\times E(I_k)\rightarrow\C$ by
$$
p^* (s,y) := p\bigl( L(s,y)\bigr).
$$
According to Lemma \ref{lem:normalNproperties} (b), the function $p^*$ is continuous on  $H:=\mathcal{R}(\alpha,l)\times E(I_k)$. The compactness of $E(I_k)\subset K_{2\pi/l}$ and $\mathcal{R}\subset \C$ imply that $H\subset \C \times K_{2\pi/l}$ is compact. Thus, $p^*$ is uniformly continuous on $H$. Consequently, we find a partition 
$$
\alpha=\sigma_1<\sigma_2< ... <\sigma_M=2
$$
of the interval $[\alpha,2]$ and a partition
$$
0=\lambda_1 < \lambda_2 < ... < \lambda_D = l
$$
of the interval $[0,l]$ with the following properties: 
\begin{itemize}
\item[\textreferencemark] For any $\sigma\in[\alpha,2]$, there is an $m\in\{1,...,M\}$ such that, for every $(y,t)\in E(I_k)\times [0,l]$, 
\begin{equation}\label{eq:uc1}
\left| p^{*}(\sigma +it,y) - p^{*}(\sigma_m + it ,y) \right| < \frac{\delta}{3}.
\end{equation}
\item[\textreferencemark] For any $(\sigma,t)\in[\alpha,2]\times [0,l]$, there is an $(m,d)\in\{1,...,M\}\times\{1,...,D\}$ such that, for every $y\in E(I_k)$,
\begin{equation}\label{eq:uc2}
\left| p^{*}(\sigma +it,y) - p^{*}(\sigma_m + i\lambda_d ,y) \right| < \frac{\delta}{3}.
\end{equation}
\end{itemize}
Observe further that, according to \eqref{eq:boundLys}, we have
\begin{equation}\label{pstar}
\max_{(s,y)\in H} \left| p^*(s,y)\right| \leq V_k.
\end{equation}
We are ready to establish statement (ii) and (iii).\par
{\bf Continuous functions on $E(I_k)$.} For $\sigma\in[\alpha,2]$ and $\lambda\in[0,l]$, the functions $p^{\spadesuit},p_{\sigma}^{\diamondsuit},p_{\sigma,\lambda}^{\divideontimes}:E(I_k)\rightarrow\C$ defined by
$$
\begin{array}{cc}
\displaystyle{p^{\spadesuit}(y)}:= \frac{1}{l} \int_0^l\int_{\alpha}^2  p(L(\sigma+it,y)) \d\sigma \d t, 
\\[2em]

\displaystyle{ p_{\sigma}^{\diamondsuit}(y):=\frac{1}{l}\int_0^l  p(L(\sigma+it,y)) \d t  },
\\[2em]

\displaystyle{p_{\sigma,\lambda}^{\divideontimes}(y):=  p(L(\sigma+i\lambda,y))}
\end{array}
$$
are continuous on $E(I_k)$. Thus, \eqref{eq:tanakapmb} applies to them. In fact, the functions above are special cases of functions $P_{\pmb{\mu}}:E(I_k)\rightarrow\C$ defined by
$$
P_{\pmb{\mu}}(y):= \int_{[0,l]\times[\alpha,2]}  p(L(\sigma+it,y)) \d\pmb{\mu}(\sigma,t),
$$
where $\pmb{\mu}$ is an appropriate measure on $[0,l]\times[\alpha,2]$. It might be reasonable to establish Lemma \ref{lem:probmom} and Theorem \ref{th:probmom} for $P_{\pmb{\mu}}$. However, this general approach bears some further technical obstacles which we want to omit here.\par

{\bf Statement (ii).} First, we establish statement (ii). We suppose that $p$ is non-negative. Consequently, 
$$
p^{\spadesuit}(y)\geq 0,\qquad p^{\diamondsuit}(y)\geq 0, \qquad p^{\divideontimes}(y)\geq 0
$$ 
for every $y\in E(I_k)$. By means of \eqref{eq:tanakapmb} and Lemma \ref{lem:arithmeticmean} (a), we find a finite subset $\Delta_k\subset I_k$ such that, for every $J\subset \N$ with $\Delta_k\subset J$ and every $N\in\N$, the inequality
\begin{equation}\label{spadsuit1}
\frac{1}{N}  
\sum_{\begin{subarray}{c} n\in I_k\setminus J \\ n\le N  \end{subarray}} p^{\spadesuit}(ne_l) 
\leq \int_{E(I_k)} p^{\spadesuit}(y) \d \pmb{\tau}+\delta
\end{equation}
holds. First, we consider the right-hand side of the inequality \eqref{spadsuit1}. According to \eqref{pstar}, the function $p^*$ is bounded on $\mathcal{R}(\alpha,l)\times E(I_k)$. By a general version of Fubini's theorem (see for example Deitmar \cite[\S 8.2]{deitmar:2002}) and the topological equivalence of $K_{2\pi/l}\times[0,l)$ and $K$, we obtain that
\begin{align}\label{spadsuit2}
\int_{E(I_k)} p^{\spadesuit}(y) \d \pmb{\tau} 
&= \frac{1}{l} \int_{E(I_k)}  \int_0^l\int_{\alpha}^2  p(L(\sigma+it,y)) \d\sigma \mbox{d} t \mbox{d} \pmb{\tau}\\
&= \int_{\alpha}^2 \int_{E(I_k)\times[0,l)}  p(L(\sigma,y+e_t)) \, (\mbox{d}\ptau\times\tfrac{1}{l}\mbox{d} t) \d \sigma \notag \\
& =\int_{\alpha}^2 \int_{G_k}  p(L(\sigma,x)) \d\pmb{\sigma} \d \sigma, \notag
\end{align}
where $G_k:=\{y+e_t \, : \, (y,t)\in E(I_k)\times [0,l)\}\subset K$. Next, we regard the left-hand side of \eqref{spadsuit1}. We get that
\begin{align}\label{spadsuit3}
\frac{1}{N}  
\sum_{\begin{subarray}{c} n\in I_k\setminus J \\ n\le N  \end{subarray}} p^{\spadesuit}(ne_l) 
&=\frac{1}{Nl}  
\sum_{\begin{subarray}{c} n\in I_k\setminus J \\ n\le N  \end{subarray}}
\int_0^l\int_{\alpha}^2  p\bigl(\L(\sigma+it+inl)\bigr) \d\sigma \d t\\
&=\frac{1}{Nl}  
\sum_{\begin{subarray}{c} n\in I_k\setminus J \\ n\le N  \end{subarray}}
\int_{nl}^{(n+1)l}\int_{\alpha}^2 p\bigl(\L(\sigma+it)\bigr) \d\sigma \d t \notag
\end{align}
Now, the inequality $(\spadesuit)$ of statement (ii) follows by combining \eqref{spadsuit1}, \eqref{spadsuit2} and \eqref{spadsuit3}.\par

To establish $(\diamondsuit)$ of statement (ii), we proceed in an analogous manner. Here, however, we use additionally that $p^*$ is uniform continuous on $\mathcal{R}(\alpha,l)\times E(I_k)$. Again, \eqref{eq:tanakapmb} and Lemma \ref{lem:arithmeticmean} (a) assure that we find a finite subset $\Delta'_k\subset I_k$ such that, for every $m\in\{1,...,M\}$ and every $N\in\N$, 
\begin{equation}\label{diamondsuit1}
\frac{1}{Nl}  
\sum_{\begin{subarray}{c} n\in I_k\setminus J \\ n\le N  \end{subarray}} p_{\sigma_m}^{\diamondsuit}(ne_l) 
\leq \int_{E(I_k)} p_{\sigma_m}^{\diamondsuit}(y) \d \pmb{\tau}+\frac{\delta}{3},
\end{equation}
where $J$ is an arbitrary subset of $\N$ with $\Delta'_k\subset J$. According to the choice of the partition $\alpha=\sigma_1<...<\sigma_M=2$, for every $\sigma\in[\alpha,2]$, we find an $m'\in\{1,...,M\}$ such that \eqref{eq:uc1} holds. Then, we deduce by means of the triangle inequality that, for every $N\in\N$ 
\begin{equation}\label{diamondestimate1}
 \frac{1}{Nl}  
\sum_{\begin{subarray}{c} n\in I_k\setminus J \\ n\le N  \end{subarray}} p_{\sigma}^{\diamondsuit}(ne_l)  \leq  \frac{1}{Nl}  
\sum_{\begin{subarray}{c} n\in I_k\setminus J \\ n\le N  \end{subarray}} p_{\sigma_{m'}}^{\diamondsuit}(ne_l)  +\frac{\delta}{3}
\end{equation}
and
\begin{equation}\label{diamondestimate2}
\int_{E(I_k)} p_{\sigma_{m'}}^{\diamondsuit}(y) \d \pmb{\tau} \leq \int_{E(I_k)} p_{\sigma}^{\diamondsuit}(y) \d \pmb{\tau}  + \frac{\delta}{3} \pmb{\tau}(E(I_k)).
\end{equation}
Statement $(\diamondsuit)$ follows by combining \eqref{diamondsuit1}, \eqref{diamondestimate1} and \eqref{diamondestimate2} and by rewriting the appearing sums and integrals in an appropriate way as described in details for $p^{\spadesuit}$.\par
We can easily repeat the arguments above to prove \eqref{divide1}: firstly, by means of \eqref{eq:tanakapmb} and Lemma \ref{lem:arithmeticmean} (a) and (b), we establish \eqref{divide1} for $p_{\sigma_m,\lambda_m}^{\divideontimes}$ with $(m,d)\in\{1,...,M\}\times\{1,...,D\}$. Then, we argue via uniform continuity. We omit further details here.\par

{\bf Statement (iii).} We proceed to establish statement (iii). Thus, we drop the restriction that $p$ is non-negative. Observe that 
$$
|p^{\spadesuit}(y)|\leq V_k, \qquad |p^{\diamondsuit}(y)|\leq V_k, \qquad 
|p^{\divideontimes}(y)|\leq V_k
$$ 
for $y\in E(I_k)$. By \eqref{eq:tanakapmb} and Lemma \ref{lem:arithmeticmean} (b), we get that, for any $J\subset \N$ with $\dens\ J=0$, 
$$
\frac{1}{N}  
\sum_{\begin{subarray}{c} n\in I_k\setminus J \\ n\le N  \end{subarray}} p^{\spadesuit}(ne_l) =
\int_{E(I_k)} p^{\spadesuit}(y) \d \pmb{\tau} + o(1)
$$ 
as $N\rightarrow\infty$. By similar arguments as above, this translates to
$$
\frac{1}{Nl}  
\sum_{\begin{subarray}{c} n\in I_k\setminus J \\ n\le N  \end{subarray}}
\int_{nl}^{(n+1)l}\int_{\alpha}^2  p\bigl(\L(\sigma+it)\bigr) \d\sigma \d t 
= \int_{\alpha}^2 \int_{G_k}  p\bigl( L(\sigma,x) \bigr)  \d\pmb{\sigma} \d \sigma + o(1),
$$
as $N\rightarrow\infty$. We proved $(\clubsuit)$ of statement (iii).\par

Moreover, for any $J\subset \N$ with $\dens \ J = 0$, we find according to \eqref{eq:tanakapmb} and Lemma \ref{lem:arithmeticmean} (b) a positive integer $N_0$ such that 
\begin{equation}\label{heartestimate1}
\Biggl| \frac{1}{Nl}  
\sum_{\begin{subarray}{c} n\in I_k\setminus J \\ n\le N  \end{subarray}} p_{\sigma_m}^{\diamondsuit}(ne_l)  - 
\int_{E(I_k)} p_{\sigma_m}^{\diamondsuit}(y) \d \pmb{\tau} 
\Biggr|  < \frac{\delta}{3}
\end{equation}
holds for every $m\in\{1,...,M\}$ and $N\geq N_0$. By means of our choice of the partition $\alpha=\sigma_1<...<\sigma_M$, a simple application of the triangle inequality yields that, for every $\sigma\in[\alpha,2]$ and $N\geq N_0$,
\begin{equation}\label{heartestimate2}
\Biggl| \frac{1}{Nl}  
\sum_{\begin{subarray}{c} n\in I_k\setminus J \\ n\le N  \end{subarray}} p_{\sigma}^{\diamondsuit}(ne_l)  - 
\int_{E(I_k)} p_{\sigma}^{\diamondsuit}(y) \d \pmb{\tau} 
 \Biggr|  < \delta
\end{equation}
holds. Since
$$
\int_{E(I_k)} p_{\sigma}^{\diamondsuit}(y) \d \pmb{\tau} 
= \int_{G_k} p\left( L(\sigma,y)\right) \d \pmb{\sigma} 
$$
and our choice of $\delta>0$ was arbitrary, statement \eqref{diamond2} is proved.\par 
It is now clear how to prove \eqref{divide2}. First, by means of \eqref{eq:tanakapmb} and Lemma \ref{lem:arithmeticmean} (a) and (b), we establish \eqref{divide2} for $p_{\sigma_m,\lambda_d}^{\divideontimes}$ with $(m,d)\in\{1,...,M\}\times\{1,...,D\}$. Then, we argue via uniform continuity. Again, we omit further details here.
\end{proof}

\subsection*{Proof of Theorem \ref{th:probmom}}
We shall now derive Theorem \ref{th:probmom} from Lemma \ref{lem:probmom}. 
First, we establish Theorem \ref{th:probmom} for a single function $\L$ and a single function $p$.\par

{\bf Theorem \ref{th:probmom} for a single pair $(\L,p)$.} Let $u\in[\frac{1}{2},1)$, $\L\in \No(u)$ with Dirichlet series expansion
$$
\L(s) = \sum_{n=1}^{\infty} \frac{a(n)}{n^s} \in \mathscr{H}^2
$$
and $p:\C\rightarrow \C$ be a continuous function such that condition \eqref{condC} of Theorem \ref{th:probmom} is satisfied. The latter implies that we find constants $c_1,c_2 \geq 0$ such that
\begin{equation}\label{qqq}
\left| p(z) \right| \leq c_1 |z|^2 + c_2, \qquad z\in\C.
\end{equation}
We define $q:\C\rightarrow \R_0^+$ by
$$
q(z)= c_1 |z|^2 + c_2, \qquad z\in \C.
$$
Observe that $q$ is a non-negative, continuous function on $\C$. The theorem of Plancherel, in particular \eqref{plan1}, asserts that, for $\sigma>\frac{1}{2}$,
$$
\mathcal{Z}(\sigma):=\int_K q\left( L(\sigma, x) \right) \d \pmb{\sigma} 
= c_1\cdot\sum_{n=1}^{\infty} \frac{|a(n)|^{2}}{n^{2\sigma}} + c_2  < \infty
$$
Hence, due to \eqref{qqq}, we have, for $\sigma>\frac{1}{2}$,
$$
\left| \int_K p\left( L(\sigma, x) \right) \d \pmb{\sigma} \right| 
\leq \mathcal{Z}(\sigma)  < \infty
$$
and, as $l\in\Gamma_P$,
$$
\left| \int_{K_{2\pi/l}} p\left( L(\sigma, x) \right) \d \pmb{\tau} \right|
\leq   \int_{K_{2\pi/l}} q\left( L(\sigma, x) \right)  \d \pmb{\tau} =   \mathcal{Z}(\sigma)  < \infty.
$$

Let $(M_k)_{k\in\varTheta}$ be a sequence of positive real numbers $M_k\geq 1$ such that statements of Lemma \ref{lem:probmom} hold for $\L$ and the continuous, non-negative function $q$. Again, as in the proof of Lemma \ref{lem:probmom}, we may suppose without loss of generality that $\varTheta=\N$. For $M_k$, let $I_k$ denote the corresponding subset of $\N$ as defined in Lemma \ref{lem:probmom}. We fix positive numbers $\delta_k$, $k\in \N$, such that
$$
\sum_{k=1}^{\infty} \delta_k <\infty.
$$
For $k\in\N$ and fixed $\alpha\in(u,1]$, let $\Delta_k$ be a finite subset of $I_k$ such that the statements of Lemma \ref{lem:probmom} (ii) hold with the special choice of $\delta = \delta_k$, respectively. We set 
$$
\Upsilon:= \left( \bigcup_{k=1}^{\infty} \Delta_k \right) \cup \left(\N \setminus \bigcup_{k=1}^{\infty} I_k \right).
$$
Let $J(M)$ be defined as in Lemma \eqref{lem:probmom}. Then, according to Lemma \ref{lem:JM}, for any $\eps>0$, we find a natural number $\nu$ such that 
$$
\dens_* J(M_{\nu})>1-\eps
$$
and, consequently,
\begin{equation}\label{2}
\dens^* \left( \N\setminus J(M_{\nu}) \right) \leq \eps.
\end{equation}
By the definition of $I_k$, we obtain that the set $\N\setminus J(M_{\nu})$ contains both $\bigcup_{k=\nu+1}^{\infty} I_k$ and $\N \setminus \bigcup_{k=1}^{\infty} I_k$. Since the set $\bigcup_{k=1}^{\nu} \Delta_k$ has only finitely many elements, we conclude by means of \eqref{2} that
$$
\dens^* \Upsilon \leq \eps.
$$
Since this holds for any $\eps>0$, we derive that
$$
\dens \ \Upsilon = 0.
$$
From now on, let $J$ be an arbitrary subset of $\N$ with 
$$\dens\ J = 0 \qquad \mbox{and} \qquad \Upsilon \subset J.$$
Due to $\Upsilon \subset J$, we have in particular that $\Delta_k\subset J$ for $k\in\N$. We shall see later on why it is reasonable not to work with $\Upsilon$ but with an arbitrary set $J$ with the properties stated above. Roughly speaking, the set $J$ shall enable us to establish Theorem \ref{th:probmom} for countable many pairs $(\L_j,p_j)$, $j\in\N$, of functions $\L_j$ and $p_j$ simultaneously.\par
First, we establish statement (i) of Theorem \ref{th:probmom} for $(\L,p)$. We set 
$$
A_k := \int_{\alpha}^2 \int_{G_k} p(L(\sigma,x)) \d\pmb{\sigma} \d \sigma,
\qquad \qquad
A_k' := \int_{\alpha}^2 \int_{G_k} q(L(\sigma,x))  \d\pmb{\sigma} \d \sigma
$$
$$
Z:= \int_{\alpha}^2 \int_{K} p(L(\sigma,x)) \d\pmb{\sigma} \d \sigma
\qquad \mbox{and} \qquad
Z':= \int_{\alpha}^2 \int_{K} q(L(\sigma,x)) \d\pmb{\sigma} \d \sigma
$$
where $G_k$ is defined as in Lemma \ref{lem:probmom}. Relation \eqref{qqq} assures that 
$$\left| Z\right| \leq Z' = \int_{\alpha}^2 \mathcal{Z}(\sigma) \d \sigma <\infty.$$ 
From the properties of $E(I_k)$ in Lemma \ref{lem:probmom}, we deduce that
$$
\pmb{\tau}\left( \bigcup_{k=1}^{\infty} E(I_k)\right) = \lim_{k\rightarrow\infty} \ptau (E(I_k)) =1.
$$
This implies that $$\pmb{\sigma}\left(\bigcup_{k=1}^{\infty} G_k\right) =1.$$
Consequently, we get that
\begin{equation}\label{AAA}
\sum_{k=1}^{\infty}A_k = Z \qquad \mbox{and} \qquad \sum_{k=1}^{\infty}A'_k = Z'.
\end{equation}
We observe that
\begin{equation}\label{A}
\sum_{k=1}^{\infty}|A_k| \leq \sum_{k=1}^{\infty}A'_k = Z' < \infty.
\end{equation}
Furthermore, we set, for $n\in \N\setminus J$,
$$
B_p(n):= \frac{1}{l}\int_{\alpha}^2\int_{n l}^{(n+1) l}  p\bigl(\L(\sigma+it)\bigr)  \d t\d\sigma
$$
and
$$
B_q(n): =\frac{1}{l}\int_{\alpha}^2\int_{n l}^{(n+1) l}  q\bigl(\L(\sigma+it)\bigr)  \d t\d\sigma
$$
It is immediately clear that $|B_p(n)|\leq B_q(n)$ for $n\in\N\setminus J$. Since $\Delta_k\subset J$ for $k\in\N$, we get by applying statement \eqref{spade1} of Lemma \ref{lem:probmom} (ii) to $q$ that, for $k\in \N$,
\begin{equation}\label{BpBq}
|B_p(n)|\leq B_q(n) \leq A_k+\delta_k,
\end{equation}
provided that $n\in I_k\setminus J$. Hence, for every $N\in \N$,
\begin{equation}\label{Bp}
\frac{1}{N l} \sum_{k=1}^{\infty} \, \, \sum_{\begin{subarray}{c} n\in I_k\setminus J \\ n\le N  \end{subarray}} \left| B_p(n)\right| \leq 
\frac{1}{N l} \sum_{k=1}^{\infty} \, \, \sum_{\begin{subarray}{c} n\in I_k\setminus J \\ n\le N  \end{subarray}}  B_q(n) \leq 
\sum_{k=1}^{\infty} A_k' + \sum_{k=1}^{\infty}\delta_k < \infty.
\end{equation}
The estimates \eqref{A} and \eqref{Bp} imply, in particular, that both the double series 
$$
\frac{1}{N } \sum_{k=1}^{\infty} \, \, \sum_{\begin{subarray}{c} n\in I_k\setminus J \\ n\le N  \end{subarray}} B(n) \qquad \mbox{and} \qquad
\sum_{k=1}^{\infty} A_k
$$
converge absolutely for every $N\in\N$. Thus, we may rearrange their terms, respectively. \par
Now, let $\eps>0$. Then, we find a number $D\in\N$ such that
\begin{equation}\label{eq:D}
\sum_{k=D+1}^{\infty} A_k' <\frac{\eps}{3} \qquad \mbox{and} \qquad
\sum_{k=D+1}^{\infty} \delta_k <\frac{\eps}{3}.
\end{equation}
By the rearrangement theorem and the construction of the set $\Upsilon$, we obtain that, for $N\in \N$,
$$
\Bigl| \, \frac{1}{N } \, \, \sum_{\begin{subarray}{c} n\in \N \setminus J \\ n\le N  \end{subarray}} B_p(n) - Z \Bigr|
\leq \sum_{k=1}^{\infty} \Bigl| \frac{1}{N } \, \, \sum_{\begin{subarray}{c} n\in  I_k \setminus J \\ n\le N  \end{subarray}} B_p(n) - A_k \Bigr|
$$
By means of \eqref{BpBq} and \eqref{eq:D}, we get that the inequality
$$
\Bigl| \frac{1}{N } \, \, \sum_{\begin{subarray}{c} n\in \N \setminus J \\ n\le N  \end{subarray}} B_p(n) - Z \Bigr| \leq \sum_{k=D+1}^{\infty} (A_k' + \delta_k)
+  \sum_{k=1}^{D} \Bigl| \frac{1}{N } \, \, \sum_{\begin{subarray}{c} n\in I_k \setminus J \\ n\le N  \end{subarray}} B_p(n) - A_k \Bigr|
+ \sum_{k=D+1} A_k' \leq
$$
$$
\leq \sum_{k=1}^{D} \Bigl| \frac{1}{N } \, \, \sum_{\begin{subarray}{c} n\in I_k \setminus J \\ n\le N  \end{subarray}} B_p(n) - A_k \Bigr| + \eps 
$$
holds for every $N\in\N$. From statement \eqref{spade2} of Lemma \ref{lem:probmom} (iii), we derive that
\begin{equation}\label{limessup}
\limsup_{N\rightarrow\infty} \Bigl| \frac{1}{N } \, \, \sum_{\begin{subarray}{c} n\in I_k \setminus J \\ n\le N  \end{subarray}} B_p(n) - Z \Bigr| 
\leq \eps.
\end{equation}
Since this is true for any $\eps>0$, we conclude that, for any $J\subset \N$ with $\Upsilon\subset J$ and $\dens \ J = 0$, as $N\rightarrow\infty$,
\begin{equation}\label{eqq}
\frac{1}{N l} \sum_{\begin{subarray}{c}n\in\N\setminus J \\ n\leq N \end{subarray}} \int_{n l}^{(n+1) l}\int_{\alpha}^{2} p\bigl(\L(\sigma+it)\bigr) \d\sigma \d t = \int_{\alpha}^{2}\int_{K}  p\bigl(L(\sigma,x) \bigr)  \d \pmb{\sigma} \d\sigma + o(1).
\end{equation}
To establish statement (ii) of Theorem \ref{th:probmom} for the couple $(\L,p)$, we proceed in an analogous manner: for $\sigma\in[\alpha,2]$, we set
$$
\dot{A}_{k}(\sigma) = \int_{G_k} p(L(\sigma,x) \d\pmb{\sigma} ,
\qquad \qquad
\dot{A}'_{k}(\sigma):= \int_{G_k}  q(L(\sigma,x))  \d\pmb{\sigma} ,
$$
$$
\dot{Z}(\sigma):= \int_{K} p(L(\sigma,x)) \d\pmb{\sigma} ,
\qquad  \qquad
\dot{Z}'(\sigma) := \mathcal{Z}(\sigma) = \int_{K} q(L(\sigma,x)) \d\pmb{\sigma} ,
$$
$$
\dot{B}_{p}(\sigma,n):= \frac{1}{l}\int_{n l}^{(n+1) l}  p\bigl(\L(\sigma+it)\bigr) \d t
$$
and
$$
\dot{B}_{q}(\sigma,n):= \frac{1}{l}\int_{n l}^{(n+1) l}  q\bigl(\L(\sigma+it)\bigr) \d t .
$$
By replacing the quantities $A_k$, $A_k'$, $Z$, $Z'$, $B_p(n)$, $B_q(n)$ by $\dot{A}_{k}(\sigma)$, $\dot{A}'_{k}(\sigma)$, $\dot{Z}_{k}(\sigma)$, $\dot{Z}'_{k}(\sigma)$, $\dot{B}_{p}(n,\sigma)$, $\dot{B}_{q}(n,\sigma)$, respectively, and by using \eqref{diamond1} and \eqref{diamond2} of Lemma \ref{lem:probmom} instead of \eqref{spade1} and \eqref{spade2}, we can follow basically step by step the argumentation above in order to prove that, for any $J\subset \N$ with $\Upsilon\subset J$ and $\dens \ J = 0$, uniformly for $\sigma\in[\alpha,2]$, as $N\rightarrow\infty$,
\begin{equation}\label{eqqq}
\frac{1}{N l} \sum_{\begin{subarray}{c}n\in\N\setminus J \\ n\leq N \end{subarray}} \int_{n l}^{(n+1) l} p\bigl(\L(\sigma+it)\bigr) \d t = \int_{K}  p\bigl(L(\sigma,x) \bigr)  \d \pmb{\sigma} + o(1).
\end{equation}
Additional arguments are necessary in order to establish the uniformity in $\sigma$:\par
First, we consider the choice of $D$ in \eqref{eq:D}. We regard the sequence $(f_K)_K$ of functions $f_K:[\alpha,2]\rightarrow \R_0^+$ defined by
$$
f_K(\sigma)=\sum_{k=1}^{K} \dot{A}_k'(\sigma).
$$
Analogously to \eqref{AAA}, we have, for $\sigma\in[\alpha,2]$,
$$
\lim_{K\rightarrow\infty} f_K(\sigma) = \sum_{k=1}^{\infty}\dot{A}_k'(\sigma)
= \dot{Z}' (\sigma) =\mathcal{Z}(\sigma)<\infty.
$$
The function $\mathcal{Z}(\sigma)$ is continuous on $[\alpha,2]$ and, for every $\sigma\in[\alpha,2]$, the sequence $(f_K(\sigma))_K$ is monotonically increasing, since the function $q$ is real-valued and non-negative. This implies that the sequence $(f_K)_K$ converges uniformly on $[\alpha,2]$ to $\mathcal{Z}$. Hence, we find, for any given $\eps>0$, a positive integer $D$
such that
$$
\sum_{k=D+1}^{\infty} \dot{A}_k'(\sigma) <\frac{\eps}{3}
$$
holds for every $\sigma\in[\alpha,2]$.\par
Secondly, we consider the limit superior in \eqref{limessup}. Here, it is immediately clear from the statement \eqref{diamond2} of Lemma \ref{lem:probmom} (iii) that
$$
\limsup_{N\rightarrow\infty} \Bigl| \frac{1}{N } \, \, \sum_{\begin{subarray}{c} n\in \N \setminus J \\ n\le N  \end{subarray}} \dot{B}_p(n,\sigma) - \dot{Z}(\sigma) \Bigr| \leq \limsup_{N\rightarrow\infty} \sum_{k=1}^D  \Bigl| \frac{1}{N }  \sum_{\begin{subarray}{c} n\in I_k \setminus J \\ n\le N  \end{subarray}} \dot{B}_p(n,\sigma) - \dot{A_k}(\sigma) \Bigr| + \eps
\leq \eps
$$
holds uniformly for $\sigma\in[\alpha,2]$.\par

Statement (iii) of Theorem \ref{th:probmom} follows also along the lines of our considerations above: this time, we set for $\sigma\in[\alpha,2]$ and $\lambda\in[0,l]$,
$$
\ddot{A}_k(\sigma) := \int_{E(I_k)} p(L(\sigma,x) \d\pmb{\tau} ,
\qquad \qquad
\ddot{A}'_k(\sigma):= \int_{E(I_k)} q(L(\sigma,x))  \d\pmb{\tau} ,
$$
$$
\ddot{Z}_{k}(\sigma):=\int_{K_{2\pi/l}} p(L(\sigma,x)) \d\pmb{\tau} ,
\qquad  \qquad
\ddot{Z}'_k(\sigma) := \int_{K_{2\pi/l}} q(L(\sigma,x)) \d\pmb{\tau}, 
$$
$$
\ddot{B}_p(n,\sigma,\lambda):=   p\bigl(\L(\sigma+i\lambda + inl)\bigr) 
$$
and
$$
\ddot{B}_q(n,\sigma,\lambda):=   q\bigl(\L(\sigma+i\lambda + inl)\bigr).
$$

By replacing $A_k$, $A_k'$, $Z$, $Z'$, $B_p(n)$, $B_q(n)$ by $\ddot{A}_{k}(\sigma)$, $\ddot{A}'_{k}(\sigma)$, $\ddot{Z}_{k}(\sigma)$, $\ddot{Z}'_{k}(\sigma)$, $\ddot{B}_{p}(n,\sigma,\lambda)$, $\ddot{B}_{q}(n,\sigma,\lambda)$, respectively, in our argumentation above and by relying on \eqref{divide1} and \eqref{divide2} of Lemma \ref{lem:probmom}, we obtain that, for any $J\subset \N$ with $\Upsilon\subset J$ and $\dens \ J = 0$, uniformly for $\sigma\in[\alpha,2]$ and $\lambda\in[0,l]$, as $N\rightarrow\infty$,
\begin{equation}\label{eqqqq}
\frac{1}{N } \sum_{\begin{subarray}{c}n\in\N\setminus J \\ n\leq N \end{subarray}}  p\bigl(\L(\sigma+i\lambda + inl)\bigr)  = \int_{K_{2\pi/l}}  p\bigl(L(\sigma,x) \bigr)  \d \pmb{\sigma} + o(1).
\end{equation}
Additionally arguments are necessary in two steps of our argumentation in order to obtain uniformity in $\sigma$ and $\lambda$.\par

Firstly, we consider the choice of $D$ in \eqref{eq:D}. Since $l\in \Gamma_P$, we have
$$
\ddot{A}'_{k}(\sigma) = \dot{A}'_{k}(\sigma) \qquad \mbox{for }k\in\N. 
$$ 
Due to our analysis above, for given $\eps>0$, we find a positive integer $D$ such that, for every $\sigma\in[\alpha,2]$,
$$
\sum_{k=D+1}^{ \infty}\ddot{A}_{k}(\sigma)  < \eps.
$$

Secondly, it follows again from statement \eqref{diamond2} of Lemma \ref{lem:probmom} (iii) that the limit superior involved in \eqref{limessup} is uniform in $\sigma$ and $\lambda$. \par

In fact, a close look at the proof reveals that we can transfer our arguments to the case $l\in \Gamma_P$, if the function
$$
f(\sigma):= \int_{K_{2\pi/l}} q(L(\sigma,x)) \d\ptau
$$
is continuous for $\sigma>u$. \par
{\bf Theorem \ref{th:probmom} for countable many pairs $(\L_j,p_j)$.} For given $u\in[\frac{1}{2},1)$, let $(\L_j)_j$ be a sequence of functions $\L_j \in \No(u)$ and let $(p_j)_j$ be a sequence of continuous functions $p_j:\C\rightarrow \C$ that satisfy condition \eqref{condC} of Theorem \ref{th:probmom}. Then, we find for every pair $(p_j,\L_j)$ a set $\Upsilon_j \subset \N$ with $\dens \ \Upsilon_j = 0$ such that \eqref{eqq}, \eqref{eqqq} and \eqref{eqqqq} hold, respectively, for $p_j$ and $\L_j$ with $\Upsilon$ being replaced by $\Upsilon_j$.\par

We construct a common set $J$ with $\dens \ J = 0$ such that \eqref{eqq}, \eqref{eqqq} and \eqref{eqqqq} hold simultaneously for every pair $(p_j,L_j)$.\par

Let $(\eps_k)_k$ be a sequence of positive real numbers with $\lim_{k\rightarrow\infty} \eps_k = 0$. We set $N_1:=1$ and $J_1:= \Upsilon_1$.
Inductively for $k\in \N$ with $k\geq 2$, we fix a positive integer $N_k\geq N_{k-1}$ such that, for $N\geq N_k$, both
$$
\frac{\# \bigl( J_{k-1} \cap [1,N ] \bigr)}{N}  < \frac{\eps_k}{2} \qquad \mbox{and} \qquad
\frac{\# \bigl( \Upsilon_k \cap [1,N ] \bigr)}{N}  < \frac{\eps_k}{2} 
$$
and set
$$
J_{k} := \bigcup_{j=1}^{k} \bigl( \Upsilon_j\setminus[1,N_j) \bigr)
$$
By the construction of $J_k$, we have
$$
\frac{\#\bigl( J_k \cap [1,N ] \bigr)}{N}  < \eps_k \qquad \mbox{for }N\geq N_k. 
$$
This implies that the set
$$
J':= \bigcup_{j=1}^{\infty} \bigl( \Upsilon_j\setminus[1,N_j) \bigr)
$$
has density zero. Now, it is easy to see that for the special choice of $J=J'$, we obtain that for every $j\in \N$, as $N\rightarrow\infty$, 
$$
\frac{1}{N l} \sum_{\begin{subarray}{c}n\in\N\setminus J' \\ N_j\leq n\leq N \end{subarray}} \int_{n l}^{(n+1) l}\int_{\alpha}^{2} p_j\bigl(\L_j(\sigma+it)\bigr) \d\sigma \d t = \int_{\alpha}^{2}\int_{K}  p_j\bigl(L_j(\sigma,x) \bigr)  \d \pmb{\sigma} \d\sigma + o_j(1);
$$
for every $j\in \N$, uniformly for $\sigma\in[\alpha,2]$, as $N\rightarrow\infty$,
$$
\frac{1}{N l} \sum_{\begin{subarray}{c}n\in\N\setminus J' \\ N_j \leq n\leq N \end{subarray}} \int_{n l}^{(n+1) l} p_j\bigl(\L_j(\sigma+it)\bigr) \d t = \int_{K}  p_j\bigl(L_j(\sigma,x) \bigr)  \d \pmb{\sigma} + o_j(1);
$$
and for every $j\in \N$, uniformly for $\sigma\in[\alpha,2]$ and $\lambda\in[0,l]$, as $N\rightarrow\infty$,
$$
\frac{1}{N l} \sum_{\begin{subarray}{c}n\in\N\setminus J' \\ N_j\leq n\leq N \end{subarray}}  p_j\bigl(\L_j(\sigma+i\lambda + inl)\bigr)  = \int_{K_{2\pi/l}}  p_j\bigl(L_j(\sigma,x) \bigr)  \d \pmb{\tau} + o_j(1).
$$
If we set
$$
A:= \bigcup_{n\in J'} [nl, (n+1)l),
$$
then $A$ is an $l$-set of density zero and the statements above can be translated easily into the form given in Theorem \eqref{th:probmom} (i), (ii) and (iii). The theorem is proved.

\section{Applications to the value-distribution of the Riemann zeta-function}
The asymptotic expansions for the moments in Theorem \ref{th:probmom} yield information on the value-distribution of the functions $\L\in\No(u)$ under consideration. Exemplarily, we discuss the case of the Riemann zeta-function. We can retrieve some unconditional information on the growth behaviour of the Riemann zeta-function in $\sigma>\frac{1}{2}$.
\begin{corollary}\label{cor:app1}
Let $\alpha>\frac{1}{2}$ and $l>0$. Then, there exist an $l$-set $A\subset [1,\infty)$ of density zero such that, for any $\eps>0$, uniformly for $\sigma\geq \alpha$
\begin{equation}\label{app1}
\zeta(\sigma+it)\cdot \pmb{1}_{A^c}(t) \ll_{\eps} t^{\eps},  \qquad \mbox{as }t\rightarrow\infty
\end{equation}
and
\begin{equation}\label{app2}
\frac{1}{\zeta(\sigma+it)}\cdot \pmb{1}_{A^c}(t)  \ll_{\eps} t^{\eps},  \qquad \mbox{as }t\rightarrow\infty. 
\end{equation}
\end{corollary}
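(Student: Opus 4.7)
\medskip

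\noindent\textbf{Proof proposal.} Since $\zeta\in\mathcal{N}(\tfrac{1}{2})$ satisfies the Ramanujan hypothesis and admits a polynomial Euler product, both $\zeta$ and $\zeta^{-1}$ lie in $\mathcal{N}(\tfrac{1}{2})$ by Lemma~\ref{lem:classNrelfct2}. Fix $\alpha'\in(\tfrac{1}{2},\alpha)$. Corollary~\ref{cor1} then yields an $l$-set $A_0\subset[1,\infty)$ of density zero such that, for every $k\in\mathbb{N}$, uniformly for $\sigma\in[\alpha',2]$,
$$
\frac{1}{T}\int_1^T\bigl|\zeta(\sigma+it)\bigr|^{\pm 2k}\pmb{1}_{A_0^c}(t)\,\d t = O_k(1),\qquad T\to\infty.
$$
Enlarging $A_0$ by the two neighbouring $l$-blocks of each block in $A_0$ gives an $l$-set $A_1\supset A_0$, still of density zero, with the property that, for every $n$ with $[nl,(n+1)l)\subset A_1^c$, the thickened strip $[\alpha',2]\times[(n-1)l,(n+2)l)$ lies entirely over $A_0^c$.

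\medskip

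The key step is a Chebyshev--diagonal argument to knock out further exceptional blocks. For each $k\in\N$ set
$$
\Phi_k^{\pm}(n):=\int_{(n-1)l}^{(n+2)l}\!\int_{\alpha'}^{2}\bigl|\zeta(\sigma+it)\bigr|^{\pm 2k}\pmb{1}_{A_0^c}(t)\,\d\sigma\,\d t,
$$
so that by Fubini and the moment bound, $\sum_{n\le N}\Phi_k^{\pm}(n)=O_k(N)$. A standard Chebyshev estimate shows that $J_k^{\pm}:=\{n:\Phi_k^{\pm}(n)>n^{1/k}\}$ has density zero, and a diagonal construction (cf.\ the final step in the proof of Theorem~\ref{th:probmom}) produces an $l$-set $A\supseteq A_1$, of density zero, built by adjoining the blocks indexed by $J_k^{\pm}\cap[N_k,\infty)$ for a sufficiently rapidly growing sequence $N_k$. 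Then, for every $k\in\mathbb{N}$ and every $n$ sufficiently large with $[nl,(n+1)l)\subset A^c$, we have $\Phi_k^{\pm}(n)\leq n^{1/k}$.

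\medskip

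Finally, the pointwise bounds: pick $r=\tfrac{1}{2}\min(\alpha-\alpha',l)$. For any $s_0=\sigma_0+it_0$ with $\sigma_0\in[\alpha,2]$ and $t_0\in A^c\cap[nl,(n+1)l)$ with $n$ large, the disc $D_r(s_0)$ lies in $[\alpha',2]\times[(n-1)l,(n+2)l)\subset[\alpha',2]\times A_0^c$. Subharmonicity of $|\zeta|^{2k}$ then gives
$$
\bigl|\zeta(s_0)\bigr|^{2k}\leq \frac{1}{\pi r^{2}}\iint_{D_r(s_0)}\bigl|\zeta(s)\bigr|^{2k}\pmb{1}_{A_0^c}(\Im s)\,\d A(s)\leq \frac{\Phi_k^{+}(n)}{\pi r^{2}}\leq \frac{n^{1/k}}{\pi r^{2}},
$$
hence $|\zeta(\sigma_0+it_0)|\ll_k t_0^{1/(2k^2)}$ uniformly for $\sigma_0\in[\alpha,2]$. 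For $\sigma_0>2$ the bound is trivial from the Dirichlet series. Given $\eps>0$, choosing $k$ with $1/(2k^{2})<\eps$ yields \eqref{app1}. The proof of \eqref{app2} is identical, with $\Phi_k^{+}$ replaced by $\Phi_k^{-}$. The main obstacle, as throughout Tanaka-type arguments, is the bookkeeping in the diagonal construction: one must combine the density-zero contributions from the countable family of sets $J_k^{\pm}$ (which grow in size with $k$) with those from the moment-approximation set $A_0$, while keeping the final $A$ of density zero and still an $l$-set.
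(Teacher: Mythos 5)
Your proposal follows the same route as the paper: obtain the moment estimates for $\zeta^{\pm k}$ on $A_0^c$ from the machinery of Part~II, then convert them to pointwise bounds. The paper's own proof is essentially a citation --- it invokes Theorem~\ref{th:probmom} for the moments and then says ``the assertions follow by standard techniques (see Titchmarsh~\cite[\S 13.2]{titchmarsh:1986})'' --- whereas you actually carry out those techniques in the $l$-set setting. Your elaboration is the right one: the classical Titchmarsh argument does not transplant verbatim, because the moment estimate restricted to $A_0^c$ is only an \emph{average} and leaves the local contribution $\Phi_k^{\pm}(n)$ uncontrolled on individual blocks; your Chebyshev step, which discards the blocks where $\Phi_k^{\pm}(n)>n^{1/k}$ and packages them into the density-zero exceptional set via the diagonal construction, is precisely what is needed to make the subharmonicity argument close. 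Two points deserve a sentence more than you give them. First, for $\sigma_0$ near $2$ the disc $D_r(s_0)$ protrudes beyond $\sigma=2$, so one should either widen the $\sigma$-window in $\Phi_k^{\pm}$ to $[\alpha',3]$ (harmless, $\zeta$ is bounded there) or split off $\sigma_0\in(2-r,\infty)$ and use the Dirichlet series; ``for $\sigma_0>2$'' is not quite the right threshold. Second, for \eqref{app2} subharmonicity of $|\zeta|^{-2k}$ requires $D_r(s_0)$ to be zero-free for $\zeta$, which is not immediate; it does follow because a zero of $\zeta$ in the thickened rectangle would make the double integral $\Phi_k^{-}(n)$ diverge (for $k\geq1$), contradicting $\Phi_k^{-}(n)\leq n^{1/k}<\infty$, but ``the proof of \eqref{app2} is identical'' elides this. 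As a side remark, an alternative and slightly slicker route to the same corollary stays entirely inside the normality framework: Lemma~\ref{lem:JM} already gives, for each $\varepsilon>0$, a constant $M_\varepsilon$ and a block-set of lower density $>1-\varepsilon$ on which $|\zeta(\sigma+it)|\le M_\varepsilon$ uniformly for $\sigma\in[\alpha',2]$; since Lemma~\ref{lem:JM} imposes no relation between $M_\varepsilon$ and $\varepsilon$, a diagonal construction with $N_k$ chosen large enough that $M_{\varepsilon_k}\leq N_k^{1/k}$ directly yields an $l$-set of density zero with the $t^{\eps}$ bounds, without reassembling the moment integrals. Both arguments ultimately rest on the same Chebyshev mechanism; yours makes it explicit, the other delegates it to the proof of Theorem~\ref{th:suffconditionsN}~(b).
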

Under the assumption of the Lindel\"of hypothesis, it is known that \eqref{app1} holds for $A=\emptyset$; see Titchmarsh \cite[\S 13.2]{titchmarsh:1986}. If we assume the truth of the Riemann hypothesis, then \eqref{app2} holds for $A=\emptyset $; see Titchmarsh \cite[\S 14.2]{titchmarsh:1986}. In view of Lemma \ref{lem:JM} and Lemma \ref{lem:classNrelfct2}, however, the results of Corollary \ref{cor:app1} are not too surprising and can also be derived by other techniques.

\begin{proof}
We know that $\zeta^k\in\No(\frac{1}{2})$ for any $k\in\Z$. According to Theorem \ref{th:probmom}, there exist an $l$-set $A$ of density zero such that, uniformly for $\sigma\geq \alpha$, as $T\rightarrow\infty$,
$$
\lim_{T\rightarrow \infty}\frac{1}{T}\int_1^T \left|\zeta(\sigma+it) \right|^{2k} \pmb{1}_{A^c}(t) \d t =\sum_{n=1}^{\infty}\frac{d_k(n)^2}{n^{2\sigma}}.
$$
The assertions follows by standard techniques (see Titchmarsh \cite[\S 13.2]{titchmarsh:1986}).
\end{proof}

Corollary \ref{cor:app1} can be used to obtain a certain non-denseness result for the Riemann zeta-function in $\sigma<\frac{1}{2}$.

\begin{corollary}
Let $\alpha<\frac{1}{2}$ and $l>0$. Then, there exist an $l$-set $A\subset [1,\infty)$ of density zero such that, for every $\sigma\leq \alpha$,
\begin{equation}\label{app3}
\overline{\left\{\zeta(\sigma+it) \, : \, t\in A^c \right\}} \neq \C.
\end{equation}
\end{corollary}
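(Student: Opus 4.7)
The strategy is to show that $|\zeta(\sigma+it)|$ stays bounded away from zero on $A^c$ for every $\sigma\leq\alpha$; this alone places $\{\zeta(\sigma+it):t\in A^c\}$ in the complement of a disc around $0$, so its closure cannot exhaust $\C$. The plan is to produce $A$ by combining the $l$-set provided by Corollary \ref{cor:app1}, applied to $\zeta^{-1}$ in a half-plane to the right of the critical line, with a modification that pads around the imaginary parts of those non-trivial zeros of $\zeta$ that the functional equation forces us to avoid.

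First I would apply Corollary \ref{cor:app1} with parameter $\alpha':=1-\alpha>\tfrac{1}{2}$, obtaining an $l$-set $A_0\subset[1,\infty)$ of density zero such that for every $\varepsilon>0$, uniformly in $\sigma'\geq 1-\alpha$, $|\zeta(\sigma'+it)|\gg_{\varepsilon}t^{-\varepsilon}$ as $t\to\infty$ on $A_0^c$. Then I would enlarge the underlying index set $J_0$ to absorb the zeros: by Selberg's zero-density estimate \eqref{selbergzerodensity}, the number of non-trivial zeros $\beta+i\gamma$ of $\zeta$ with $\beta\geq 1-\alpha$ and $0<\gamma\leq T$ is $O(T^{1-\eta})$ for some $\eta=\eta(\alpha)>0$. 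Setting
$$J:=J_0\cup\bigl\{\lfloor\gamma/l\rfloor+j:\zeta(\beta+i\gamma)=0,\ \beta\geq 1-\alpha,\ \gamma\geq 1,\ j\in\{-1,0,1\}\bigr\},$$
the new index set still has density zero, and the associated $l$-set $A:=\bigcup_{n\in J}[nl,(n+1)l)$ has the property that every such $\gamma$ lies in the interior of a block of three consecutive $A$-intervals; consequently $\overline{A^c}$ stays at distance at least $l$ from every such $\gamma$, so $\zeta(1-\sigma+it)\neq 0$ for all $\sigma\leq\alpha$ and all $t\in\overline{A^c}$.

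Now fix $\sigma\leq\alpha$. The functional equation \eqref{fct-eq} combined with the reflection principle gives $|\zeta(\sigma+it)|=|\Delta(\sigma+it)|\cdot|\zeta(1-\sigma+it)|$ for real $t$, and by \eqref{Delta} $|\Delta(\sigma+it)|\asymp t^{1/2-\sigma}$ as $t\to\infty$. Applying the lower bound from the first step with the choice $\varepsilon:=(1/2-\alpha)/2$ gives, for $t\in A^c$ sufficiently large,
$$|\zeta(\sigma+it)|\gg t^{1/2-\sigma-\varepsilon}\geq t^{(1/2-\alpha)/2}\longrightarrow\infty,$$
so there exists a threshold $T_\sigma$ with $|\zeta(\sigma+it)|\geq 1$ on $A^c\cap[T_\sigma,\infty)$. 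On the compact set $\overline{A^c}\cap[1,T_\sigma]$ the continuous function $|\zeta(\sigma+it)|$ is strictly positive by the construction of $A$, hence attains a positive minimum $c_\sigma>0$. Combining, $|\zeta(\sigma+it)|\geq\min(1,c_\sigma)>0$ for all $t\in A^c$, which places $\{\zeta(\sigma+it):t\in A^c\}$ in $\{z\in\C:|z|\geq\min(1,c_\sigma)\}$ and proves $\overline{\{\zeta(\sigma+it):t\in A^c\}}\neq\C$.

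The main obstacle is the enlargement step: one must arrange for $\overline{A^c}$, and not merely $A^c$, to avoid the zeros of $\zeta(1-\sigma+i\,\cdot)$, since the compactness argument in the bounded range requires a closed set on which $|\zeta|$ does not vanish. The three-interval padding around each zero handles this cleanly, while the zero-density estimate keeps the added density at zero; this is the point where the unconditional hypothesis on $\alpha$ enters in an essential way.
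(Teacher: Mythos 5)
Your proof is correct and follows the same route the paper intends: apply Corollary \ref{cor:app1} (the lower bound \eqref{app2}) with parameter $1-\alpha>\tfrac12$, transfer it through the functional equation $\zeta(s)=\Delta(s)\zeta(1-s)$, and use $|\Delta(\sigma+it)|\asymp t^{1/2-\sigma}$ with $\tfrac12-\sigma\geq\tfrac12-\alpha>0$ to get $|\zeta(\sigma+it)|\gg t^{(1/2-\alpha)/2}\to\infty$ on $A^c$. All the implications you draw from the functional equation and the reflection principle (including the equivalence $\zeta(\sigma+it)=0\iff\zeta(1-\sigma+it)=0$ for $\sigma\leq\alpha$, $t\geq 1$) check out.

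The one place where you do more work than is needed is the zero-padding step. You insist on the stronger conclusion $\inf_{t\in A^c}|\zeta(\sigma+it)|>0$, which indeed requires that $\overline{A^c}$ avoid the imaginary parts of any zeros with $\beta\geq 1-\alpha$, and the Selberg density estimate \eqref{selbergzerodensity} does control the number of added intervals. That is a legitimate construction and keeps the total density zero. But the corollary only asks for $\overline{\{\zeta(\sigma+it):t\in A^c\}}\neq\C$, and for that the growth $|\zeta(\sigma+it)|\to\infty$ on $A^c$ already suffices without any padding: for each fixed $R$, the values with $|\zeta(\sigma+it)|\leq R$ arise only from $t$ in a bounded set, whose closure in $\overline{A^c}$ is compact; the image of that compact $t$-set under $t\mapsto\zeta(\sigma+it)$ is a curve of Lebesgue measure zero in $\C$, so $\overline{V}\cap D_R(0)$ is nowhere dense and $\overline{V}\neq\C$. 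In short, your argument buys the nicer quantitative statement ``$V$ avoids a disc around $0$'' at the cost of the extra density-zero enlargement, whereas the paper's terse one-line proof gets by with the weaker, padding-free observation. Both are valid.
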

Under the assumption of the Riemann hypothesis, Garunk\v{s}tis \& Steuding showed that \eqref{app3} holds for $A=\emptyset$.
\begin{proof}
This follows directly from \eqref{app2}, the functional equation of the Riemann zeta-function and the asymptotic expansion for $\Delta(s)$.
\end{proof}

It would be nice to find further applications of the moments in Theorem \ref{th:probmom} to the value-distribution of the Riemann zeta-function.

\setcounter{section}{0}
\renewcommand{\theequation}{A.\arabic{equation}}
\renewcommand{\thesection}{A.\arabic{section}}
\renewcommand{\thetheorem}{A.\arabic{theorem}}

\chapter*{Appendix}
\addcontentsline{toc}{chapter}{Appendix: Normal families of meromorphic functions}
\markboth{Normal families}{Normal families}

\section*{Normal families of meromorphic functions}
The theory of normal families provides a powerful tool to study the value-distribution of meromorphic functions in the neighbourhood of an essential singularity. We shall use this section to outline the basic ideas of this concept. For a thorough account of the theory of normal families and their connection to value-distribution theory, the reader is referred to a monography by Schiff \cite{schiff:1993} and a nice survey paper of Zalcman \cite{zalcman:1998} which we took as a guideline for most of the following introductory outline.\par

{\bf Sequences of analytic functions.} Let $\mathcal{H}(\Omega)$ denote the set of all analytic functions on a domain $\Omega\subset\C$. We start with some classical theorems on sequences $(f_n)_n$ of functions $f_n\in\mathcal{H}(\Omega)$. The locally uniform convergence of $(f_n)_n$ implies already that the limit function is also analytic and that the limiting process is stable with respect to complex differentiation.
\begin{theorem}[Theorem of Weierstrass]\label{th:weierstrass}
Let $(f_n)_n$ be a sequence of functions $f_n\in\mathcal{H}(\Omega)$ that converges locally uniformly on $\Omega$ to a function $f$. Then, $f$ is analytic in $\Omega$ and the sequence of derivatives $(f^{(k)}_n)_n$ converges locally uniformly on $\Omega$ to $f^{(k)}$, $k\in\N$.
\end{theorem}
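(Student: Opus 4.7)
The plan is to combine Morera's theorem with Cauchy's integral formula for the derivatives; this is the standard route and exploits the fact that locally uniform convergence interacts well with contour integration.

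First I would establish that $f$ is continuous on $\Omega$, which follows immediately from the locally uniform convergence of continuous functions $f_n$. To upgrade continuity to analyticity, I would fix an arbitrary point $z_0\in\Omega$ and a closed disc $\overline{D_r(z_0)}\subset \Omega$. On this compact disc the convergence $f_n\to f$ is uniform, so for any closed triangle $\Delta\subset D_r(z_0)$ the analyticity of each $f_n$ combined with Goursat's theorem gives $\oint_{\partial \Delta} f_n(\zeta)\,\d\zeta = 0$, and uniform convergence on $\partial \Delta$ lets me pass to the limit to conclude $\oint_{\partial \Delta} f(\zeta)\,\d\zeta = 0$. Morera's theorem then yields analyticity of $f$ on $D_r(z_0)$, and since $z_0$ was arbitrary, $f\in\mathcal{H}(\Omega)$.

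Next I would treat the derivatives. Fix $k\in\N$ and a compact set $K\subset \Omega$. I would choose $r>0$ small enough that the compact ``thickening'' $K_r:=\{z\in\C : \mathrm{dist}(z,K)\leq r\}$ still lies in $\Omega$; this is possible because $\mathrm{dist}(K,\partial\Omega)>0$. For each $z\in K$, Cauchy's integral formula for the $k$-th derivative gives
$$
f_n^{(k)}(z)-f^{(k)}(z) = \frac{k!}{2\pi i}\oint_{|\zeta-z|=r}\frac{f_n(\zeta)-f(\zeta)}{(\zeta-z)^{k+1}}\,\d\zeta.
$$
Estimating trivially, $|f_n^{(k)}(z)-f^{(k)}(z)|\leq \frac{k!}{r^{k}}\,\sup_{w\in K_r}|f_n(w)-f(w)|$, and since $f_n\to f$ uniformly on the compact set $K_r$, the right-hand side tends to zero independently of $z\in K$. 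This gives uniform convergence of $f_n^{(k)}$ to $f^{(k)}$ on $K$, hence locally uniform convergence on $\Omega$.

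There is no real obstacle here; the only subtlety worth highlighting is the choice of the thickened compact set $K_r\subset\Omega$, which is what allows the $(\zeta-z)^{-(k+1)}$ factor in the Cauchy kernel to be controlled uniformly in $z\in K$. Once that geometric detail is in place, both conclusions fall out of the same mechanism, namely exchanging a uniform limit with an integral against the Cauchy kernel.
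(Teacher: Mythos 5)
Your proof is correct and follows the standard route of Morera's theorem for analyticity of the limit plus Cauchy's integral formula with a uniformly thickened compact set for the derivatives. The paper does not supply a proof of this theorem but refers to Schiff, and your argument is precisely the canonical one found there, so there is nothing substantive to compare.
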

For a proof, we refer to Schiff \cite[p. 9]{schiff:1993}.
It follows from the analyticity of the limit function $f$ and the theorem of Rouch\'{e} that, for sufficiently large $n$, the functions $f_n$ and $f$ have essentially the same number of zeros in $\Omega$.
\begin{theorem}[Theorem of Hurwitz]\label{th:hurwitz}
Let $(f_n)_n$ be a sequence of functions $f_n\in\mathcal{H}(\Omega)$ converge locally uniformly on $\Omega$ to a non-constant function $f$. If $f$ has a zero of order $m$ at $z_0$, then there exists an $r>0$ such that for sufficiently large $n\in\N$, $f_n$ has exactly $m$ zeros (counting multiplicities) in the disc $D_r(z_0)$.
\end{theorem}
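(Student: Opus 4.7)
The plan is to reduce the statement to a direct application of Rouch\'{e}'s theorem, using the theorem of Weierstrass to secure the analyticity of the limit function $f$ on $\Omega$.

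First I would invoke the theorem of Weierstrass (Theorem \ref{th:weierstrass}) to conclude that $f \in \mathcal{H}(\Omega)$. Since $f$ is analytic and non-constant on the domain $\Omega$, the identity principle guarantees that the zeros of $f$ are isolated. In particular, since $f$ has a zero of order $m$ at $z_0 \in \Omega$, I can choose $r>0$ small enough that $\overline{D_r(z_0)} \subset \Omega$ and such that $f(z) \neq 0$ for every $z \in \overline{D_r(z_0)} \setminus \{z_0\}$. By the compactness of the boundary circle $\partial D_r(z_0)$ and the continuity of $|f|$, there exists some $\delta>0$ with $|f(z)| \geq \delta$ for all $z \in \partial D_r(z_0)$.

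Next I would exploit the locally uniform convergence of $(f_n)_n$ to $f$. Since $\partial D_r(z_0)$ is a compact subset of $\Omega$, there exists an integer $N \in \mathbb{N}$ such that for every $n \geq N$,
$$
\max_{z \in \partial D_r(z_0)} |f_n(z) - f(z)| < \delta \leq |f(z)|.
$$
Consequently, for $n \geq N$, the inequality $|f_n(z) - f(z)| < |f(z)|$ holds on $\partial D_r(z_0)$, and Rouch\'{e}'s theorem implies that $f_n$ and $f$ have the same number of zeros (counting multiplicities) inside $D_r(z_0)$. Since $f$ has exactly $m$ zeros in $D_r(z_0)$ by construction, so does $f_n$ for every $n \geq N$.

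There is no substantial obstacle in this argument; the proof is essentially a textbook application of Rouch\'{e}'s theorem. The only point requiring a little care is justifying the existence of the radius $r$ on whose boundary $f$ is bounded away from zero, which relies crucially on the hypothesis that $f$ is non-constant so that the identity principle applies. Without this hypothesis the zeros of $f$ might accumulate at $z_0$ (if $f \equiv 0$, for instance), and the conclusion would fail.
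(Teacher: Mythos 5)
Your proof is correct, and it follows the same Rouch\'{e}-based approach the paper itself indicates in the sentence preceding the theorem (the paper merely cites Schiff rather than writing out the details). The key points — invoking Weierstrass for analyticity of $f$, using non-constancy so zeros are isolated, obtaining $\delta>0$ on the compact boundary circle, and then applying uniform convergence plus Rouch\'{e} — are exactly the standard argument and are all handled correctly.
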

A proof can be found in Schiff \cite[p. 9]{schiff:1993}.\par

{\bf Sequences of meromorphic functions.} Let $\mathcal{M}(\Omega)$ denote the set of all meromorphic functions on a domain $\Omega\subset\C$. To define convergence for sequences of meromorphic functions, one usually relies on the chordal metric on the Riemann sphere $\widehat{\C}$. Recall that the chordal metric $\chi(\cdot, \cdot)$ on $\widehat{\C}$ is defined by
\begin{align*}
\chi(z_1,z_2) & := \frac{|z_1-z_2|}{\sqrt{1+|z_1|^2} \sqrt{1+|z_2|^2}}, \qquad z_1,z_2\in\C,  \\
\chi(z_1,\infty) &= \chi(\infty,z_1) :=  \frac{1}{\sqrt{1+|z_1|^2}}, \quad \;\; z_1\in\C,\\
\chi(\infty, \infty) & := 0.
\end{align*} 
Suppose that a sequence $(f_n)_n$ of functions $f_n\in\mathcal{M}(\Omega)$ converges locally uniformly on $\Omega$ to a limit function $f$, with respect to the chordal metric. Then, it follows from the theorem of Weierstrass, that $f$ is either meromorphic on $\Omega$ or $f\equiv\infty$. Note that $(f_n)_n$ converges locally uniformly on $\Omega$ to a limit function $f\not\equiv\infty$ in the chordal metric if and only if $(f_n)_n$ converges locally uniformly on $\Omega$ to $f$ in the Euclidean one. Moreover, $(f_n)_n$ converges locally uniformly on $\Omega$ to $f\equiv \infty$ in the chordal metric if and only if $(1/f_n)_n$ converges locally uniformly to $f\equiv 0$ in the Euclidean one. 

{\bf Normal families.} The concept of normal families dates back to Montel \cite{montel:1911, montel:1946}. A family $\mathcal{F}\subset\mathcal{M}(\Omega)$ is called {\it normal in $\Omega$} if every sequence of functions in $\mathcal{F}$ contains a subsequence that converges locally uniformly with respect to the chordal metric on $\Omega$. The family $\mathcal{F}$ is said to be {\it normal in a point $z_0\in\Omega$} if $\mathcal{F}$ is normal in a neighbourhood of $z_0$. Normality is a local property: one can show that $\mathcal{F}$ is normal in $\Omega$ if and only if it is normal in every point $z_0\in\Omega$.\par
Normality yields a concept for compactness in the space $\widehat{\mathcal{M}}(\Omega)$ which we obtain by adding $f\equiv\infty$ to $\mathcal{M}(\Omega)$. The space $\widehat{\mathcal{M}}(\Omega)$ be endowed with the topology of uniform convergence on compact subsets of $\Omega$ and a metric generating the latter such that $\widehat{\mathcal{M}}(\Omega)$ becomes a complete metric space. Then, a family $\mathcal{F}$ of meromorphic functions is normal in $\Omega$ if and only if $\mathcal{F}$ is relatively compact in $\widehat{\mathcal{M}}(\Omega)$. \par

{\bf Characterizations of normality.} In terms of equicontinuity and local boundedness, the Theorem of Arzel\`{a}-Ascoli provides necessary and sufficient conditions for subsets of certain function spaces to be relatively compact. In the case of normal families of meromorphic functions this translates to the following.
\begin{theorem}[Theorem of Arzel\`{a}-Ascoli for families of meromorphic functions] \label{th:montel2}
A family $\mathcal{F}\subset\mathcal{M}(\Omega)$ is normal in $\Omega$ if and only if $\mathcal{F}$ is equicontinuous on $\Omega$ with respect to the chordal metric.
\end{theorem}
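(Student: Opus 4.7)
The plan is to view each $f\in\mathcal{F}$ as a continuous map $\Omega\to\widehat{\mathbb{C}}$, where the Riemann sphere carries the chordal metric and thereby becomes a compact metric space. With this point of view, the statement is a version of the classical Arzel\`{a}-Ascoli theorem, once one verifies that the chordal-uniform limits stay inside $\widehat{\mathcal{M}}(\Omega)$.

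For the direction \emph{normal $\Rightarrow$ equicontinuous}, I would argue by contradiction: if equicontinuity failed at some $z_0\in\Omega$, there would exist $\varepsilon>0$, a sequence $(f_n)\subset\mathcal{F}$ and points $z_n\to z_0$ with $\chi(f_n(z_n),f_n(z_0))\geq\varepsilon$. By normality, pass to a subsequence along which $f_n$ converges locally uniformly (in the chordal metric) to some $f\in\widehat{\mathcal{M}}(\Omega)$. Since $f$ is continuous as a map into $\widehat{\mathbb{C}}$ (whether meromorphic or $\equiv\infty$), choosing a small disc $D$ around $z_0$ on which $f_n\to f$ uniformly and using continuity of $f$ at $z_0$ yields $\chi(f_n(z_n),f_n(z_0))\to 0$ via the triangle inequality, contradicting the choice of $(z_n)$.

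For the converse \emph{equicontinuous $\Rightarrow$ normal}, I would first exhaust $\Omega$ by a sequence of compact sets $K_1\subset K_2\subset\cdots$ and, on each $K_j$, apply the classical Arzel\`{a}-Ascoli theorem for continuous maps into the compact metric space $(\widehat{\mathbb{C}},\chi)$: equicontinuity on $K_j$ together with automatic pointwise relative compactness (since $\widehat{\mathbb{C}}$ is already compact) produces a subsequence converging uniformly on $K_j$ in the chordal metric. A standard diagonal argument then extracts a subsequence $(f_{n_k})$ converging locally uniformly on $\Omega$ to some continuous $f:\Omega\to\widehat{\mathbb{C}}$.

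The main obstacle, and the step on which I would spend the bulk of the effort, is verifying that this limit $f$ actually lies in $\widehat{\mathcal{M}}(\Omega)$, so that the convergence witnesses normality in the stated sense. Here I would argue locally: fix $z_0\in\Omega$. If $f(z_0)\neq\infty$, continuity gives a disc $D\subset\Omega$ on which $|f|$ is bounded, hence (by the relation between chordal and Euclidean convergence away from $\infty$) $f_{n_k}\to f$ uniformly on a slightly smaller disc in the Euclidean metric for all sufficiently large $k$; the $f_{n_k}$ are then analytic on that disc (no poles, by Hurwitz applied to $1/f_{n_k}$ versus the nonzero limit $1/f$), and Weierstrass's theorem (Theorem~\ref{th:weierstrass}) makes $f$ analytic there. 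If $f(z_0)=\infty$, apply the same reasoning to $1/f_{n_k}\to 1/f$ near $z_0$ to conclude that either $1/f$ is analytic with an isolated zero at $z_0$ (so $f$ is meromorphic with a pole at $z_0$) or $1/f\equiv 0$ on a neighbourhood, which by connectedness of $\Omega$ forces $f\equiv\infty$. Assembling these local conclusions shows $f\in\widehat{\mathcal{M}}(\Omega)$, completing the proof.
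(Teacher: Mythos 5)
Your proposal is correct. The paper does not give a proof of this theorem; it refers to Schiff for one, and your argument follows that standard route: normality implies equicontinuity by contradiction via a chordal-uniformly convergent subsequence and the continuity of the limit, while the converse combines the classical Arzel\`{a}-Ascoli theorem for maps into the compact space $(\widehat{\C},\chi)$ with a local pole/no-pole dichotomy and a connectedness argument to place the limit in $\widehat{\mathcal{M}}(\Omega)$. In particular you correctly identified and resolved the only delicate point, namely that a chordal-uniform limit of meromorphic functions need not obviously be meromorphic, and your verification (bounding $|f_{n_k}|$ near points where $f\neq\infty$ to rule out poles, invoking Weierstrass, and passing to $1/f_{n_k}$ near points where $f=\infty$) is exactly the standard resolution.
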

A proof can be found in Schiff \cite[p. 74]{schiff:1993}. Montel \cite{montel:1907} observed that for a family of analytic functions locally boundedness implies equicontinuity. We call a family $\mathcal{F}\subset\mathcal{H}(\Omega)$ {\it locally bounded on $\Omega$} if for every $z_0\in\Omega$ there exists a neighbourhood $U(z_0)$ and a positive real number $M$ such that $|f(z)|\leq M$ for all $z\in U(z_0)$ and all $f\in\mathcal{F}$.
\begin{theorem}[Montel's theorem] \label{th:montel1}
A family $\mathcal{F}\subset\mathcal{H}(\Omega)$ that is locally bounded on $\Omega$ is normal in $\Omega$.
\end{theorem}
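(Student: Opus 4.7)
My plan is to deduce Montel's theorem from the Arzelà-Ascoli-type characterization of normality (Theorem A.3) by reducing everything to a local equicontinuity estimate coming out of Cauchy's integral formula.

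First, because normality is a local property, it suffices to show that every point $z_0\in\Omega$ has a neighbourhood on which $\mathcal{F}$ is normal. By hypothesis we may choose $r>0$ small enough that the closed disc $\overline{D_{2r}(z_0)}$ lies inside $\Omega$ and a constant $M>0$ such that $|f(\zeta)|\leq M$ for every $\zeta\in\overline{D_{2r}(z_0)}$ and every $f\in\mathcal{F}$. For any two points $z,w\in D_{r/2}(z_0)$ and any $f\in\mathcal{F}$, Cauchy's integral formula applied to the circle $\{|\zeta-z_0|=r\}$ gives
\[
f(z)-f(w)=\frac{z-w}{2\pi i}\oint_{|\zeta-z_0|=r}\frac{f(\zeta)\,\d\zeta}{(\zeta-z)(\zeta-w)}.
\]
Since $|\zeta-z|,|\zeta-w|\geq r/2$ on the contour, a trivial estimate yields
\[
\bigl|f(z)-f(w)\bigr|\leq \frac{|z-w|}{2\pi}\cdot\frac{M}{(r/2)^2}\cdot 2\pi r=\frac{4M}{r}\,|z-w|
\]
uniformly in $f\in\mathcal{F}$. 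This is precisely Euclidean equicontinuity of $\mathcal{F}$ on $D_{r/2}(z_0)$, uniform across the family.

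Next I would pass from Euclidean to chordal equicontinuity. For any $u,v\in\C$ one has the pointwise bound $\chi(u,v)\leq |u-v|$, so the inequality above immediately gives
\[
\chi\bigl(f(z),f(w)\bigr)\leq \bigl|f(z)-f(w)\bigr|\leq \frac{4M}{r}\,|z-w|
\]
for all $f\in\mathcal{F}$ and all $z,w\in D_{r/2}(z_0)$. Hence $\mathcal{F}$ is equicontinuous on $D_{r/2}(z_0)$ with respect to the chordal metric. Since $z_0\in\Omega$ was arbitrary, $\mathcal{F}$ is equicontinuous on $\Omega$ in the sense of Theorem A.3, and that theorem identifies this with the normality of $\mathcal{F}$ as a subset of $\mathcal{M}(\Omega)$.

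The only subtlety, and really the only step where one could slip, is making sure that the Arzelà-Ascoli criterion is applied on the correct ambient space: the limit of a locally uniformly convergent subsequence is a priori only meromorphic, or identically $\infty$, but the local bound $M$ rules out the constant $\infty$ limit, so every convergent subsequence actually converges in the Euclidean sense to an analytic function on $\Omega$. Apart from this remark, no further work is required; the proof is essentially just the quantitative Cauchy estimate above plus the chordal-versus-Euclidean comparison.
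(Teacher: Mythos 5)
Your proof is correct, and the paper itself gives no proof of Montel's theorem (it only refers to Schiff), so there is no in-text argument to compare against. Your route — a Cauchy-integral estimate to get a uniform local Lipschitz bound, the elementary inequality $\chi(u,v)\leq|u-v|$ to pass to chordal equicontinuity, and then Theorem \ref{th:montel2} to conclude normality — is exactly the standard proof, phrased to make maximal use of the Arzel\`a-Ascoli characterization the appendix already provides; the closing remark about excluding the constant $\infty$ limit is correct and a nice extra, though the theorem as stated in the paper does not actually require it.
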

For a proof we refer to Schiff \cite[p. 35]{schiff:1993}. The local boundedness of $\mathcal{F}$ in Montel's theorem implies that $\mathcal{F}$ has no subsequence that converges locally uniformly to $f\equiv\infty$. Thus, one can easily see that the converse of Montel's theorem is not true in general. However, if a family $\mathcal{F}\subset\mathcal{H}(\Omega)$ is normal in $\Omega$ and, additionally, there is a point $z_0\in\Omega$ and a positive real number $M$ such that
$$
|f(z_0)| \leq M \qquad \mbox{ for all }f\in\mathcal{F},
$$ 
then $\mathcal{F}$ is locally bounded.\par
A sufficient condition for the local boundedness of a family  $\mathcal{F}\subset \mathcal{H}(\Omega)$ can be formulated by means of an area integral.
\begin{theorem}\label{th:sufflocalbound}
Let $\mathcal{F}\subset\mathcal{H}(\Omega)$. Suppose that there exists a positive real number $M$ such that, for all $f\in\mathcal{F}$,
$$
\iint_{\Omega} \left|f(x+iy) \right|^2 \d x \d y \leq M.
$$ 
Then, $\mathcal{F}$ is locally bounded in $\Omega$. 
\end{theorem}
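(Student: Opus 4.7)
The plan is to derive pointwise bounds for functions in $\mathcal{F}$ on compact subsets of $\Omega$ from the uniform $L^2$ bound, via a standard area mean-value argument for analytic functions.

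First I would fix an arbitrary point $z_0 \in \Omega$ and choose $r > 0$ small enough that the closed disc $\overline{D_{2r}(z_0)}$ is contained in $\Omega$. The key observation is that for any $f \in \mathcal{H}(\Omega)$ and any disc $D_\rho(w) \subset \Omega$, the mean-value property $f(w) = \frac{1}{2\pi}\int_0^{2\pi} f(w + te^{i\theta})\, d\theta$ holds for each $0 < t \leq \rho$; multiplying by $t$ and integrating in $t$ from $0$ to $\rho$ produces the area version
\[
f(w) = \frac{1}{\pi \rho^2} \iint_{D_\rho(w)} f(x+iy)\, dx\, dy.
\]
Applying the Cauchy--Schwarz inequality then yields
\[
|f(w)|^2 \leq \frac{1}{\pi \rho^2} \iint_{D_\rho(w)} |f(x+iy)|^2\, dx\, dy.
\]

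Next I would apply this inequality with $\rho = r$ at every point $w \in D_r(z_0)$: since $D_r(w) \subset D_{2r}(z_0) \subset \Omega$, we obtain
\[
|f(w)|^2 \leq \frac{1}{\pi r^2}\iint_{D_r(w)} |f|^2 \, dx\, dy \leq \frac{1}{\pi r^2}\iint_{\Omega} |f|^2 \, dx\, dy \leq \frac{M}{\pi r^2}
\]
uniformly for $w \in D_r(z_0)$ and $f \in \mathcal{F}$. Setting $M' := \sqrt{M/(\pi r^2)}$ gives $|f(w)| \leq M'$ for all $w$ in the neighbourhood $D_r(z_0)$ of $z_0$ and all $f \in \mathcal{F}$. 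As $z_0 \in \Omega$ was arbitrary, this is precisely local boundedness of $\mathcal{F}$ in $\Omega$.

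There is no real obstacle here; the whole argument reduces to the area mean-value identity plus Cauchy--Schwarz, both of which are entirely standard. The only point of care is to ensure the auxiliary disc used in the mean-value step remains inside $\Omega$, which is handled by the choice of the factor of two in the radius.
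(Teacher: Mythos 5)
Your argument is correct and is exactly the standard "integrated Cauchy formula" approach the paper alludes to (and defers to Titchmarsh \S 11.8 / Schiff p.~39 for): the area mean-value identity combined with Cauchy--Schwarz, with a shrink-the-disc step to stay inside $\Omega$. Nothing further is needed.
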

The proof of Theorem \ref{th:sufflocalbound} relies essentially on an integrated version of Cauchy's integral formula; see Titchmarsh \cite[\S 11.8]{titchmarsh:1986} or Schiff \cite[p. 39]{schiff:1993}.

Schiff \cite[p. 39]{schiff:1993} also mentions the following variation of Montel's theorem by Mandelbrojt \cite{mandelbrojt:1929}.
\begin{theorem}[Mandelbrojt's variation of Montel's theorem]
Let $\mathcal{F}\subset\mathcal{H}(\Omega)$ be a family of zero-free analytic functions. Then, $\mathcal{F}$ is normal in $\Omega$ if and only if the correspoonding family of functions given by 
$$
F(z,w) = \frac{f(z)}{f(w)}
$$
is locally bounded on $\Omega\times\Omega$.
\end{theorem}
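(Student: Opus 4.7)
The plan is to prove both directions via Montel's theorem, using a normalization trick that exploits the zero--freeness of $\mathcal{F}$ through Hurwitz's theorem.

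For sufficiency, I would fix an arbitrary base point $w_0\in\Omega$ and consider the sliced family $\mathcal{G}:=\{g_f(z):=f(z)/f(w_0)\,:\,f\in\mathcal{F}\}$. By hypothesis, $\mathcal{G}$ is a locally bounded family of zero--free analytic functions on $\Omega$ satisfying the pointwise normalization $g_f(w_0)=1$. Theorem~\ref{th:montel1} then yields normality of $\mathcal{G}$, and Theorem~\ref{th:hurwitz} forces every subsequential limit $g$ to be zero--free analytic with $g(w_0)=1$. Given any sequence $(f_n)\subset\mathcal{F}$, I would first extract $g_{f_n}\to g$ locally uniformly, and then pass to a further subsequence with the scalars $f_n(w_0)\to\lambda\in\widehat{\C}$. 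If $\lambda\in\C$ we obtain $f_n\to\lambda g$ in $\mathcal{H}(\Omega)$, while if $\lambda=\infty$ the factorization $|f_n(z)|=|f_n(w_0)|\cdot|g_{f_n}(z)|$ combined with the zero--freeness of $g$ gives $|f_n|\to\infty$ locally uniformly, i.e.\ $f_n\to\infty$ chordally; either way $\mathcal{F}$ is normal in $\mathcal{M}(\Omega)$.

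For necessity, I would argue by contradiction: assuming $\mathcal{F}$ is normal but $\{F_f\}$ fails to be locally bounded at some $(z_0,w_0)\in\Omega\times\Omega$, pick $f_n\in\mathcal{F}$ and $(z_n,w_n)\to(z_0,w_0)$ with $|f_n(z_n)/f_n(w_n)|\to\infty$, and extract a chordal limit $f_n\to\phi$ via normality. If $\phi$ is analytic and not identically zero, then Hurwitz (Theorem~\ref{th:hurwitz}), applied to the zero--free sequence $(f_n)$, forces $\phi$ to be zero--free on $\Omega$; hence $f_n(z_n)/f_n(w_n)\to\phi(z_0)/\phi(w_0)\in\C\setminus\{0\}$, contradicting the blow--up. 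The case $\phi\equiv\infty$ reduces symmetrically to the case $\phi\equiv 0$ by passing to the reciprocal family $\{1/f_n\}$, which is still zero--free, analytic and normal. The delicate remaining case $\phi\equiv 0$ is handled by renormalizing $h_n:=f_n/f_n(w_n)$ so that $h_n(w_n)=1$, and invoking Hurwitz to conclude that any chordal subsequential limit $h$ of $\{h_n\}$ must be zero--free analytic with $h(w_0)=1$, so that $h_n(z_n)\to h(z_0)\in\C\setminus\{0\}$ --- contradicting $|h_n(z_n)|=|f_n(z_n)/f_n(w_n)|\to\infty$.

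The main obstacle is making this renormalization step rigorous when $\phi\equiv 0$: one has to verify that the renormalized family $\{h_n\}$ admits a chordally convergent subsequence with a non--degenerate limit. The anchoring equality $h_n(w_n)=1$ prevents chordal convergence to $0$ or $\infty$ in a neighbourhood of $w_0$, and Hurwitz --- applied to the zero--free $\{h_n\}$ with the fixed nonzero value $1$ at $w_n$ --- rules out the global limit $h\equiv 0$; extracting the chordal subsequential limit itself then proceeds through the equicontinuity guaranteed by Theorem~\ref{th:montel2}. It is precisely the zero--freeness of $\mathcal{F}$ that makes this argument go through, since without it the relevant limits could collapse to $0$ or $\infty$ and the bound on $F_f(z,w)=f(z)/f(w)$ would genuinely fail.
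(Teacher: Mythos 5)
The paper does not prove this theorem (it only cites Schiff and Mandelbrojt), so I evaluate your argument on its own terms. Your sufficiency direction is correct: the slice family $\{f/f(w_0)\}$ is locally bounded, hence normal by Montel, its subsequential limits are zero-free by Hurwitz because they take the value $1$ at $w_0$, and factoring out the scalar $f_n(w_0)$ cleanly covers the finite and infinite chordal cases.

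Your necessity direction has a genuine gap in the case $\phi\equiv 0$, and in fact that implication is false as the theorem is stated. You assert that the renormalized family $h_n := f_n/f_n(w_n)$ admits a chordally convergent subsequence ``through the equicontinuity guaranteed by Theorem~\ref{th:montel2},'' but you never verify that $\{h_n\}$ is equicontinuous; normality of $\{f_n\}$ is not preserved under multiplication by the unbounded constants $1/f_n(w_n)$, and the anchor $h_n(w_n)=1$ only forbids a degenerate limit \emph{once a limit exists} --- it does not produce one. Concretely, on $\Omega=\D$ put $f_n(z)=e^{n(z-1)}$. Each $f_n$ is analytic and zero-free, and $|f_n(z)|=e^{n(\Re z-1)}\to 0$ locally uniformly (indeed $|f_n|<1$ on all of $\D$), so $\mathcal{F}=\{f_n\}_{n\in\N}$ is normal; yet $F_n(z,w)=e^{n(z-w)}$ takes the value $e^n$ at $(\tfrac12,-\tfrac12)$, so $\{F_n\}$ is not locally bounded on $\D\times\D$. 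With $w_n\equiv 0$ your renormalization gives $h_n(z)=e^{nz}$, whose spherical derivative at $0$ is $n/2\to\infty$, so by Marty's theorem (Theorem~\ref{th:marty}) $\{h_n\}$ is not normal --- exactly the step your argument needs and cannot supply. A correct form of Mandelbrojt's criterion must exclude the degenerate chordal limits $0$ and $\infty$ (e.g.\ by imposing a normalization at a base point), which is precisely the extra control your sufficiency slice enjoys and which this counterexample destroys.
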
\par

Marty \cite{marty:1931} succeeded to connect the chordal equicontinuity of $\mathcal{F}$ in Theorem \ref{th:montel2} with the boundedness of a suitable derivative. For a meromorphic function $f$ on a domain $\Omega$, we define the {\it spherical derivative} by
$$
f^{\#} (z) := \lim_{h\rightarrow 0} \frac{\chi(f(z+h), f(z))}{|h|} = \frac{|f'(z)|}{1+|f(z)|^2}, \qquad z\in\Omega.
$$
\begin{theorem}[Marty's theorem]\label{th:marty} 
A family $\mathcal{F}\subset\mathcal{M}(\Omega)$ is normal in $\Omega$ if and only if the corresponding family 
$$
\mathcal{F}^{\#}:= \left\{f^{\#} \, : \, f\in\mathcal{F}\right\}
$$
of spherical derivatives is locally bounded on $\Omega$.
\end{theorem}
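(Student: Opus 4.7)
The plan is to prove the two directions of Marty's theorem separately, both reducing to the characterization of normality via chordal equicontinuity provided by Theorem A.2 (the Arzel\`a-Ascoli-type criterion). Throughout, I will exploit the geometric interpretation of $f^{\#}(z)|dz|$ as the infinitesimal element of spherical arc length of the image curve $f \circ \gamma$ on the Riemann sphere.

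\textbf{From local boundedness of $\mathcal{F}^{\#}$ to normality of $\mathcal{F}$.} Fix $z_0 \in \Omega$ and choose $r > 0$ small enough so that $\overline{D_{2r}(z_0)} \subset \Omega$. By local boundedness, there is $M > 0$ with $f^{\#}(z) \leq M$ for every $z \in \overline{D_{2r}(z_0)}$ and every $f \in \mathcal{F}$. For any two points $z_1,z_2 \in D_r(z_0)$ I will use the line segment $\gamma(t) = z_1 + t(z_2-z_1)$, $t\in[0,1]$, which remains in $\overline{D_{2r}(z_0)}$. Since the chordal distance between two points of $\widehat{\C}$ is bounded above by the spherical (arc-length) distance, I get
\[
\chi\bigl(f(z_1),f(z_2)\bigr) \;\leq\; \int_\gamma f^{\#}(z)\,|dz| \;\leq\; M\,|z_1 - z_2|.
\]
This is precisely chordal equicontinuity of $\mathcal{F}$ at $z_0$, and because $z_0$ was arbitrary, $\mathcal{F}$ is equicontinuous on $\Omega$ with respect to the chordal metric. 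Theorem A.2 then yields normality of $\mathcal{F}$ in $\Omega$.

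\textbf{From normality of $\mathcal{F}$ to local boundedness of $\mathcal{F}^{\#}$.} I will argue by contradiction. Suppose $\mathcal{F}^{\#}$ is not locally bounded at some $z_0 \in \Omega$. Then I can extract a sequence $(f_n) \subset \mathcal{F}$ and a sequence $z_n \to z_0$ with $f_n^{\#}(z_n) \to \infty$. By normality, after passing to a subsequence, $(f_n)$ converges locally uniformly on $\Omega$ with respect to the chordal metric to some $\varphi \in \mathcal{M}(\Omega) \cup \{\infty\}$. I will treat two cases. If $\varphi(z_0) \neq \infty$, then in a neighborhood of $z_0$ the convergence $f_n \to \varphi$ is locally uniform in the Euclidean sense, the theorem of Weierstrass gives $f_n' \to \varphi'$ locally uniformly, and hence $f_n^{\#} \to \varphi^{\#}$ uniformly on a neighborhood of $z_0$; in particular $f_n^{\#}(z_n) \to \varphi^{\#}(z_0) < \infty$, a contradiction. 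If $\varphi(z_0) = \infty$ (including the case $\varphi \equiv \infty$), I switch to reciprocals: near $z_0$, $1/f_n \to 1/\varphi$ locally uniformly in the Euclidean sense (with $1/\varphi(z_0) = 0$), and the identity $(1/f_n)^{\#} = f_n^{\#}$, valid wherever $f_n(z) \neq 0$, reduces this case to the previous one and yields again $f_n^{\#}(z_n) \to (1/\varphi)^{\#}(z_0) < \infty$.

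\textbf{Expected obstacles.} The main technical point is the legitimacy of the reciprocal trick in the second direction: I must ensure that $1/f_n$ is defined and meromorphic near $z_0$ for all large $n$, which is automatic once $f_n(z_0)$ is eventually close to $\infty$ on the sphere (so the $f_n$ have no zeros near $z_0$ for large $n$, by Hurwitz applied to $1/f_n \to 1/\varphi$). The identity $(1/f)^{\#} = f^{\#}$ away from zeros and poles of $f$ is a direct calculation that I will verify once, and it is the conceptual reason why spherical derivatives are the correct invariant to measure normality in $\mathcal{M}(\Omega)$ rather than in $\mathcal{H}(\Omega)$. No further obstacle is anticipated; the rest is bookkeeping.
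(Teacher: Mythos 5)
Your proof is correct and follows the standard argument; note that the paper does not prove Marty's theorem but simply refers to Schiff \cite{schiff:1993}, p.~75, where the same two-pronged strategy is used (a spherical arc-length estimate for the sufficiency direction, Weierstrass together with a reciprocal reduction for the necessity direction). Both of your directions are sound as written, and your first direction cleanly reduces to the chordal equicontinuity characterization stated in the appendix.

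One small inaccuracy occurs in your closing remark on the reciprocal trick, though it does not affect the validity of the argument. Hurwitz applied to $1/f_n \to 1/\varphi$ speaks about zeros of $1/f_n$ (that is, poles of $f_n$), not about zeros of $f_n$, so the parenthetical justification is mislabelled. In fact no appeal to Hurwitz is needed: since $\varphi$ has a pole at $z_0$, the function $1/\varphi$ is holomorphic and bounded, say by $C$, on some closed disc $\overline{D}$ around $z_0$, and the locally uniform chordal convergence of $1/f_n$ to $1/\varphi$ then forces $\bigl|1/f_n\bigr|\le C'$ on $\overline{D}$ for all large $n$. Hence $1/f_n$ is eventually pole-free, and therefore holomorphic, on $\overline{D}$ — which is exactly what applying the first case to $1/f_n \to 1/\varphi$ requires, and is established by the very same bounding step used there. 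It would also be slightly cleaner to observe once that $w\mapsto 1/w$ is a chordal isometry of $\widehat{\C}$, so that $(1/f)^{\#}=f^{\#}$ holds identically (at zeros and poles of $f$ included), removing the caveat ``valid wherever $f_n(z)\neq0$''; this is the conceptual reason the spherical derivative transforms correctly under the reduction to reciprocals.
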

For a proof, we refer to Schiff \cite[p. 75]{schiff:1993}. Montel \cite{montel:1912} revealed a gainful connection of normality to value-distribution theory. We say that a family $\mathcal{F}\subset\mathcal{M}(\Omega)$ {\it omits a value $a\in\widehat{\C}$ on $\Omega$}, if no function $f\in\mathcal{F}$ assumes the value $a$ on $\Omega$. 
\begin{theorem}[Montel's fundamental normality test (FNT)]\label{th:FNT1} A family $\mathcal{F}\subset\mathcal{M}(\Omega)$ that omits three pairwise distinct values $a,b,c\in\widehat{\C}$ is normal in $\Omega$.
\end{theorem}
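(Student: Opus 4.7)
The plan is to follow the classical route via the elliptic modular function. First I would reduce to a standard setting. Since normality is a local property (a family is normal on $\Omega$ if and only if it is normal at every point of $\Omega$), it suffices to prove the statement on a small disc around any $z_0 \in \Omega$; in particular I may assume $\Omega$ is simply connected. Next, I choose a Möbius transformation $\phi$ of $\widehat{\C}$ mapping $\{a, b, c\}$ to $\{0, 1, \infty\}$. Since $\phi$ is a homeomorphism of $(\widehat{\C}, \chi)$, composition with $\phi$ preserves chordal equicontinuity, so by Theorem \ref{th:montel2} the family $\mathcal{F}$ is normal if and only if $\phi \circ \mathcal{F} = \{\phi \circ f : f \in \mathcal{F}\}$ is normal. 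The reduced family consists of analytic functions on $\Omega$ (since $\infty$ is omitted) omitting the values $0$ and $1$.

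Second, I would invoke the elliptic modular function, that is, the universal holomorphic covering map $\pi : \D \to \C \setminus \{0, 1\}$. Fix a base point $z_0 \in \Omega$; since $\Omega$ is simply connected and $\pi$ is a covering, the monodromy theorem lifts each $f \in \phi \circ \mathcal{F}$ to a holomorphic $\tilde f : \Omega \to \D$ with $\pi \circ \tilde f = f$ (the lift depending on the preimage of $f(z_0)$ chosen for the basepoint). Every such lift takes values in $\D$, so the lifted family $\tilde{\mathcal{F}} = \{\tilde f\}$ is uniformly bounded by $1$ on $\Omega$ and hence normal in $\Omega$ by Montel's theorem (Theorem \ref{th:montel1}).

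Third, I would transfer the normality back to $\phi \circ \mathcal{F}$. Given a sequence $(f_n) \subset \phi \circ \mathcal{F}$, pass to lifts $(\tilde f_n) \subset \tilde{\mathcal{F}}$ and extract a subsequence $\tilde f_{n_k} \to g$ converging locally uniformly on $\Omega$; the limit $g$ is analytic with $g(\Omega) \subset \overline{\D}$. Two cases arise. If $g(\Omega) \subset \D$, then by continuity of $\pi$ on $\D$ we have $f_{n_k} = \pi \circ \tilde f_{n_k} \to \pi \circ g$ locally uniformly on $\Omega$, which is a meromorphic limit. Otherwise, the maximum modulus principle forces $g$ to be the constant function $g \equiv c$ for some $c \in \partial \D$, and I must show that in this case $f_{n_k}$ converges locally uniformly in the chordal metric to one of the omitted values $0$, $1$, or $\infty$.

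The main obstacle is precisely this boundary case. The modular function $\pi$ does not extend continuously to $\partial \D$ — indeed, in every punctured neighbourhood of any boundary point $\pi$ attains every value in $\C \setminus \{0, 1\}$ — so one cannot simply pass to the limit. To conclude chordal convergence of $f_{n_k}$, one must exploit the cuspidal asymptotics of $\pi$: the boundary decomposes under the deck transformation group into orbits of parabolic fixed points, and within horocyclic regions at any such cusp $\pi$ genuinely tends to one of $0$, $1$, or $\infty$. The crucial step is to show that uniform convergence $\tilde f_{n_k} \to c$ on compacta, combined with the analyticity (hence connectedness of images) of the lifts, forces each set $\tilde f_{n_k}(K)$ for compact $K \subset \Omega$ to lie eventually inside such a horocyclic neighbourhood. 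A clean way to obtain this is via the Ahlfors–Schwarz lemma, which pushes the complete Poincaré metric of $\D$ down to a hyperbolic metric on $\C \setminus \{0, 1\}$ with respect to which the maps $f_n$ are distance–decreasing; this uniformizes the asymptotic behaviour near the cusp and yields the desired chordal convergence, completing the proof.
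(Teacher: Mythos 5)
The paper does not actually prove this theorem; it simply records it as background and cites Schiff~\cite{schiff:1993}, noting five known approaches, of which the elliptic modular function route you chose is one. So there is no paper proof to compare against, and the question is whether your sketch is internally complete. It is not quite, for two reasons concentrated in exactly the boundary case you flag as the crux.

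First, you never pin down \emph{which} lift $\tilde f$ of $f$ you take. The covering $\pi:\D\to\C\setminus\{0,1\}$ has infinite fibres, and the lift depends on the choice of $\tilde f(z_0)$ over $f(z_0)$. If this choice is left arbitrary, there is no reason the limiting constant $c\in\partial\D$ should be a cusp, and then the cuspidal asymptotics you appeal to are simply unavailable. The standard and necessary fix is to normalise so that $\tilde f(z_0)$ lies in a fixed closed fundamental domain $\overline F\cap\D$ for the deck group; then $\overline F$ meets $\partial\D$ only at the three cusps, so any boundary accumulation point of $\tilde f_{n_k}(z_0)$ must be a cusp, and since $\pi$ extends continuously on $\overline F\cap\D$ up to each cusp with value in $\{0,1,\infty\}$, one gets $f_{n_k}(z_0)\to\pi(c)\in\{0,1,\infty\}$. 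Without this normalisation the boundary case cannot even begin. Second, your statement that ``the boundary decomposes under the deck transformation group into orbits of parabolic fixed points'' is false: those orbits form a countable dense subset of $\partial\D$, not all of it. This misstatement is not a cosmetic slip --- it is precisely what hides the need to choose the lift carefully.

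Once the normalisation is in place, the hyperbolic-metric idea does close the argument, but it is cleaner to run it downstairs rather than inside the lift: the maps $f_{n_k}:\Omega\to\C\setminus\{0,1\}$ are distance-decreasing from the Poincar\'{e} metric of $\Omega$ to the complete hyperbolic metric $\rho$ of the twice-punctured plane (Schwarz--Pick / Ahlfors--Schwarz), so for any compact $K\ni z_0$ the $\rho$-diameter of $f_{n_k}(K)$ is bounded by a constant depending only on $K$; since $\rho$ is complete and the $\rho$-distance from any fixed compact set in $\C\setminus\{0,1\}$ to a shrinking neighbourhood of a puncture tends to infinity, $f_{n_k}(z_0)\to\pi(c)$ forces $f_{n_k}\to\pi(c)$ chordally and locally uniformly. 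This avoids having to show the lifted images $\tilde f_{n_k}(K)$ land inside a horoball, which is awkward to extract from mere Euclidean closeness of the images to $c$ in $\overline{\D}$.
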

Schiff \cite{schiff:1993} presents five different proofs of Montel's FNT: via the elliptic modular function (which provides a mapping from the unit disc to the twice punctered plane), via Schottky's theorem, via Ahlfor's five islands theorem, via the rescaling lemma of Zalcman and via Nevanlinna theory. There are several extensions of Montel's FNT: assertion (a) of the next theorem deals with the case that all functions in $\mathcal{F}$ omit three values, but not necessarily the same. Assertion (b) treats families that omit just two value.
\begin{theorem}[Extensions of Montel's FNT]\label{th:FNTextension}
Let $\mathcal{F}\subset\mathcal{M}(\Omega)$.
\begin{itemize}
\item[(a)] Suppose that there exists a real number $\eps>0$ such that each $f\in\mathcal{F}$ omits three pairwise distinct values $a_f, b_f, c_f\in\widehat{\C}$ with
$$
\chi(a_f,b_f), \chi(a_f,c_f),\chi(b_f,c_f) \geq \eps.
$$
Then, $\mathcal{F}$ is normal in $\Omega$.
\item[(b)] 
Suppose that there are three pairwise distinct values $a,b,c\in\widehat{\C}$ such that $\mathcal{F}$ omits the values $a,b$ on $\Omega$ and that no function in $\mathcal{F}$ assumes the value $c$ at more than $m\in\N_0$ points. Then, $\mathcal{F}$ is normal in $\Omega$. 
\end{itemize}
\end{theorem}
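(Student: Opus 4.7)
The plan is to reduce both statements to Montel's fundamental normality test (Theorem \ref{th:FNT1}), relying on a normalization by M\"obius transformations for part (a) and on Zalcman's rescaling lemma for part (b).

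For (a), I would associate to each $f \in \mathcal{F}$ the unique M\"obius transformation $\phi_f$ of $\widehat{\C}$ sending the triple $(a_f, b_f, c_f)$ to $(0, 1, \infty)$. The family $\mathcal{G} := \{\phi_f \circ f : f \in \mathcal{F}\}$ then consists of meromorphic functions on $\Omega$ which omit the common triple $\{0, 1, \infty\}$, so $\mathcal{G}$ is normal by Theorem \ref{th:FNT1}. To pull normality back to $\mathcal{F}$, observe that the hypothesis $\chi(a_f,b_f), \chi(a_f,c_f), \chi(b_f,c_f) \geq \eps$ together with a cross-ratio computation on the Riemann sphere furnishes a chordal-Lipschitz bound for the inverse $\phi_f^{-1}$ that depends only on $\eps$ (and not on $f$). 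Consequently the family $\{\phi_f^{-1}\}$ of self-maps of $\widehat{\C}$ is uniformly chordally equicontinuous. By the Arzel\`a-Ascoli characterization of normality (Theorem \ref{th:montel2}), $\mathcal{G}$ is chordally equicontinuous on every compact subset of $\Omega$, and composing a chordally equicontinuous family with a uniformly chordally equicontinuous family preserves chordal equicontinuity; hence $\mathcal{F} = \{\phi_f^{-1} \circ (\phi_f \circ f)\}$ is chordally equicontinuous on compact sets of $\Omega$, and Theorem \ref{th:montel2} yields normality.

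For (b), I would argue by contradiction. Assume $\mathcal{F}$ is not normal at some $z_0 \in \Omega$. Zalcman's rescaling lemma (Theorem \ref{th:zalcman}) then produces sequences $f_n \in \mathcal{F}$, $z_n \to z_0$ in $\Omega$ and $\rho_n \to 0^+$ such that
$$
g_n(z) := f_n(z_n + \rho_n z)
$$
converges locally uniformly on $\C$, in the chordal metric, to a non-constant meromorphic function $g : \C \to \widehat{\C}$. Because every $f_n$ omits both $a$ and $b$, Hurwitz's theorem (Theorem \ref{th:hurwitz}, applied in its meromorphic formulation to $g_n - a$ and $g_n - b$, or to reciprocals when $a$ or $b$ equals $\infty$) forces $g$ also to omit $a$ and $b$.

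Now I exploit the third value $c$. Since $g$ is non-constant meromorphic on $\C$ and omits the two distinct values $a$ and $b$, it cannot be rational (every non-constant rational map is surjective on $\widehat{\C}$), so $g$ is transcendental meromorphic. Applied at the essential singularity $\infty$, the great Picard theorem shows that $g$ attains every value in $\widehat{\C} \setminus \{a, b\}$ infinitely often; in particular $g$ possesses pairwise distinct $c$-points $w_1, \dots, w_{m+1} \in \C$. Choose $R > 0$ so large that all $w_j$ lie in $D_R(0)$, and enclose each $w_j$ in a small disc on which $g$ attains the value $c$ only at $w_j$. Applying Hurwitz's theorem in each of these discs to $g_n - c$ (respectively to $1/g_n$ if $c = \infty$), we conclude that, for all sufficiently large $n$, $g_n$ assumes $c$ at least once in each of the $m+1$ discs, hence at least $m+1$ times in $D_R(0)$. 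Translating back, $f_n$ assumes $c$ at least $m+1$ times on $D_{\rho_n R}(z_n) \subset \Omega$ for all large $n$, contradicting the hypothesis and completing the proof.

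The main obstacle is technical rather than conceptual: one must carefully handle the case where one of the three values $a,b,c$ equals $\infty$, which requires replacing the classical Hurwitz argument for zeros by the analogous pole-counting statement (via reciprocals). The uniform chordal-Lipschitz estimate for the M\"obius maps $\phi_f^{-1}$ in part (a) is also a point requiring explicit verification, but it reduces to a routine compactness argument on the space of ordered triples in $\widehat{\C}$ with pairwise chordal distances $\geq \eps$.
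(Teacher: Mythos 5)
The paper does not prove Theorem~\ref{th:FNTextension}; it only refers the reader to Schiff for part~(a) and to Carath\'eodory for part~(b), so there is no in-text proof to compare against. Your overall strategy is sound and matches the standard modern treatment: M\"obius normalization plus a compactness/uniform Lipschitz bound for~(a), and Zalcman rescaling plus Picard-type counting for~(b). For~(a), the compactness of the set of ordered triples in $\widehat{\C}^3$ with pairwise chordal distance $\geq\eps$ does indeed yield a uniform chordal bi-Lipschitz constant for the maps $\phi_f^{-1}$, and the equicontinuity transfer through the composition $f=\phi_f^{-1}\circ(\phi_f\circ f)$ then goes through as you describe.

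There is one step in~(b) where the phrasing, taken literally, does not quite work. You invoke ``the great Picard theorem applied at the essential singularity $\infty$'' of the Zalcman limit $g$. If $g$ has infinitely many poles, then $\infty$ is not an \emph{isolated} singularity of $g$, and the classical isolated-singularity form of the great Picard theorem cannot be cited verbatim. The fact you actually need -- that a non-constant meromorphic function $g$ on $\C$ omitting two values of $\widehat{\C}$ takes every other value infinitely often -- is true, but deserves a direct justification: for example, choose a M\"obius map $\mu$ with $\mu(a)=0$, $\mu(b)=\infty$; then $\psi:=\mu\circ g$ omits $0$ and $\infty$, hence is a zero-free entire function, so $\psi=e^{\phi}$ for some entire $\phi$, and a $c$-point of $g$ corresponds to $\phi(z)\in\log\gamma+2\pi i\Z$ with $\gamma:=\mu(c)\neq 0$, which has infinitely many solutions by little Picard (or by surjectivity of non-constant polynomials). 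Alternatively one can cite Nevanlinna's defect relation. With this substitution, your argument for~(b) is complete; the Hurwitz counting and the final contradiction (``at least $m+1$ $c$-points in $D_{\rho_nR}(z_n)$'') are handled correctly.
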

Assertion (b) was proved by Carath\'{e}odory \cite[p. 202]{caratheodory:1960}. We mention here also Grahl \& Nevo \cite{grahlnevo:2010} who generalized assertion (b) to `omitted functions' instead of `omitted values'. Assertion (a) was basically known by Montel \cite{montel:1912}. For a proof, the reader is referred to Schiff \cite[p. 56]{schiff:1993}. \par

{\bf The rescaling lemma of Zalcman.} Bloch \cite{bloch:1926} observed an analogy between normal families of meromorphic functions and the value distribution of meromorphic functions in $\C$: certain properties which are responsible for a family $\mathcal{F}\subset \mathcal{M}(\Omega)$ to be normal, seem to force a function $f\in\mathcal{M}(\C)$ to be constant.\footnote{Compare for example Liouville's theorem with Montel's theorem; and Picard's great theorem with Montel's FNT.} Bloch summarized this observation in the words {\it ``Nihil est in infinito quod non prius fuerit in finito''}. Building on works of Lohwater \& Pomerenke \cite{lohwaterpommerenke:1973}, Bloch's heuristic principle was made rigorous by Zalcman \cite{zalcman:1975}. 
\begin{theorem}[Rescaling Lemma of Zalcman] \label{th:zalcman}
Let $\mathcal{F}\subset\mathcal{M}(\mathbb{D})$. Then, $\mathcal{F}$ is not normal in zero if and only if there exist
\begin{itemize}
 \item[(i)] a sequence $(f_n)_n $ of functions $f_n\in \mathcal{F}$,
 \item[(ii)] a sequence $(z_n)_n$ of numbers $z_n\in \D$ with $\lim_{n\rightarrow\infty} z_n = 0$,
 \item[(iii)] a sequence $(\rho_n)_n$ of numbers $\rho_n\in\R^+$ with $\lim_{n\rightarrow\infty} \rho_n = 0$,
\end{itemize}
such that 
$$
g_n(z):= f_n(z_n + \rho_n z)
$$ 
converges locally uniformly to a non-constant function $g\in\mathcal{M}(\C)$. The function $g$ may be taken to satisfy the normatlization
$$
g^{\#}(z) \leq g^{\#} (0)=1, \qquad z\in\C.
$$
\end{theorem}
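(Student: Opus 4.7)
The plan is to prove the two implications separately, beginning with the easier direction. Suppose a rescaled sequence $(g_n)_n$ as described exists and converges locally uniformly on $\C$ to a non-constant $g\in\mathcal{M}(\C)$. Since $g$ is non-constant, $g^{\#}\not\equiv 0$, so we may pick $z_0\in\C$ with $g^{\#}(z_0)>0$; by Weierstrass-type stability of spherical derivatives under chordal locally uniform convergence, $g_n^{\#}(z_0)\to g^{\#}(z_0)>0$. On the other hand, $g_n^{\#}(z_0)=\rho_n f_n^{\#}(z_n+\rho_n z_0)$. If $\mathcal{F}$ were normal at $0$, Marty's theorem (Theorem~\ref{th:marty}) would supply a neighborhood $U$ of $0$ and an $M>0$ with $f^{\#}(z)\leq M$ for all $f\in\mathcal{F}$, $z\in U$; as $z_n+\rho_n z_0\to 0$, we would get $g_n^{\#}(z_0)\leq \rho_n M\to 0$, a contradiction. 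This proves non-normality at $0$.

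For the converse, assume $\mathcal{F}$ is not normal at $0$. By Marty's theorem, $\mathcal{F}^{\#}$ fails to be locally bounded at $0$, i.e.\ for every $n\in\N$ there exist $f_n\in\mathcal{F}$ and $w_n$ with $|w_n|<1/n$ and $f_n^{\#}(w_n)\geq n^2$. Set $r_n:=2/n$ and consider the continuous function $h_n(z):=(r_n-|z|)f_n^{\#}(z)$ on the closed disc $\overline{D_{r_n}(0)}$. Because $h_n$ vanishes on $|z|=r_n$, it attains its maximum at an interior point $z_n\in D_{r_n}(0)$, and
\[
M_n:=h_n(z_n)\;\geq\; h_n(w_n)\;=\;(r_n-|w_n|)f_n^{\#}(w_n)\;\geq\; \tfrac{1}{n}\cdot n^2\;=\;n.
\]
In particular $|z_n|<r_n\to 0$ and $f_n^{\#}(z_n)=M_n/(r_n-|z_n|)\geq M_n/r_n\to\infty$, so $\rho_n:=1/f_n^{\#}(z_n)\to 0$. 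This gives (i)--(iii).

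Set $g_n(z):=f_n(z_n+\rho_n z)$, which is well-defined on the disc $|z|<R_n$ with $R_n:=(r_n-|z_n|)/\rho_n=M_n$. A direct computation using the maximality of $h_n$ at $z_n$ yields, for $|z|<R_n$,
\[
g_n^{\#}(z)\;=\;\rho_n f_n^{\#}(z_n+\rho_n z)\;\leq\;\rho_n\frac{M_n}{r_n-|z_n+\rho_n z|}\;\leq\;\frac{M_n}{\rho_n(R_n-|z|)\cdot \rho_n^{-1}}\;=\;\frac{R_n}{R_n-|z|},
\]
together with $g_n^{\#}(0)=\rho_n f_n^{\#}(z_n)=1$. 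Consequently, on every fixed disc $D_K(0)$ and for $n$ large enough that $R_n>2K$, the family $\{g_n\}$ has spherical derivatives bounded by $2$, and Marty's theorem together with a diagonal argument over $K=1,2,\dots$ furnishes a subsequence (again denoted $(g_n)_n$) converging locally uniformly on $\C$ with respect to the chordal metric to some $g\in\widehat{\mathcal{M}}(\C)$.

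It remains to check the properties of $g$. The case $g\equiv\infty$ is excluded as follows: invariance of the spherical derivative under $w\mapsto 1/w$ gives $(1/g_n)^{\#}=g_n^{\#}$, and $g_n\to\infty$ locally uniformly would force $1/g_n\to 0$ in the Euclidean sense, hence $g_n^{\#}(0)=(1/g_n)^{\#}(0)\to 0$, contradicting $g_n^{\#}(0)=1$. Therefore $g\in\mathcal{M}(\C)$ and the spherical derivatives converge, so $g^{\#}(0)=1$, showing $g$ is non-constant, while passing to the limit in $g_n^{\#}(z)\leq R_n/(R_n-|z|)$ yields $g^{\#}(z)\leq 1$ on all of $\C$, proving the normalization. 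The main technical obstacle is the simultaneous achievement of $z_n\to 0$ and $M_n\to\infty$, which is why $r_n$ must be coupled to the choice of $w_n$ (here $r_n=2/n$ with $|w_n|<1/n$ and $f_n^{\#}(w_n)\geq n^2$); with this calibration the ``Ahlfors trick'' of maximizing $(r_n-|z|)f_n^{\#}(z)$ automatically produces a rescaling that simultaneously localizes at $0$ and normalizes the spherical derivative.
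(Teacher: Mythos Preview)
Your proof is correct. The paper itself does not prove this theorem; it simply refers to Schwick (1989) and Schiff (p.~102) for a proof, so there is nothing to compare against directly. Your argument is precisely the standard one found in those references: the easy direction via Marty's theorem, and the hard direction via the ``Ahlfors trick'' of maximizing $(r_n-|z|)\,f_n^{\#}(z)$ on a shrinking sequence of discs to simultaneously localize $z_n\to 0$ and normalize $g_n^{\#}(0)=1$. The calibration $r_n=2/n$, $|w_n|<1/n$, $f_n^{\#}(w_n)\geq n^2$ is exactly what is needed to force $M_n\to\infty$, and your exclusion of $g\equiv\infty$ via $(1/g_n)^{\#}=g_n^{\#}$ is clean.

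One cosmetic remark: in the displayed chain
\[
\rho_n\,\frac{M_n}{r_n-|z_n+\rho_n z|}\;\leq\;\frac{M_n}{\rho_n(R_n-|z|)\cdot \rho_n^{-1}}\;=\;\frac{R_n}{R_n-|z|},
\]
the middle expression is correct but oddly written; it would read more transparently as $\rho_n\,\dfrac{M_n}{r_n-|z_n|-\rho_n|z|}=\dfrac{M_n}{R_n-|z|}$, using $r_n-|z_n|=\rho_n R_n$ directly.
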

For a proof of this version of Zalcman's lemma, we refer to Schwick \cite{schwick:1989} and also to Schiff \cite[p. 102]{schiff:1993}. There are several generalizations of Zalcman's lemma known, see for example Pang \cite{pang:1989}. For a given family $\mathcal{F}\subset\mathcal{M}(\D)$ which is not normal in zero, it seems in general very difficult to get more detailed information about the sequences $(f_n)_n$, $(z_n)_n$ and $(\rho_n)_n$ satisfying the assertion of the rescaling lemma. By using the rescaling lemma one can deduce, for example, Montel's FNT and its extensions from Picard-type theorems on the value distribution of functions in $\mathcal{M}(\C)$.

{\bf Schottky's Theorem.} By combining Montel's fundamental normality test with Montel's theorem, one can easily deduce a well-known theorem of Schottky \cite{schottky:1904}.
\begin{theorem}[Schottky's Theorem]
Let $f\in\mathcal{H}(\D)$. Suppose that $f$ omits the values $0$ and $1$ on $\D$ and that $|f(0)| \leq \alpha$. Then, for every $0<r<1$, there exists a positive constant $M(r,\alpha)$, which does only depend on $r$ and $\alpha$, such that
$$
\left|f(z)\right| \leq M(r,\alpha) \qquad \mbox{for all } z\in D_r(0).
$$
\end{theorem}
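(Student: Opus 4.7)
The strategy is to argue by contradiction using the family
$$
\mathcal{F}_{\alpha} := \left\{ f \in \mathcal{H}(\D) \, : \, f \text{ omits } 0 \text{ and } 1 \text{ on } \D, \; |f(0)| \leq \alpha \right\}.
$$
Since every $f \in \mathcal{F}_{\alpha}$ is analytic on $\D$, it trivially omits $\infty$ as well, so $\mathcal{F}_{\alpha}$ omits the three pairwise distinct values $0, 1, \infty \in \widehat{\C}$. Montel's fundamental normality test (Theorem \ref{th:FNT1}) therefore implies that $\mathcal{F}_{\alpha}$ is normal in $\D$.

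Now assume, for contradiction, that the claim fails for some $0 < r < 1$ and some $\alpha > 0$. Then there exist sequences $(f_n)_n$ in $\mathcal{F}_{\alpha}$ and $(z_n)_n$ in $\overline{D_r(0)}$ with $|f_n(z_n)| \to \infty$, as $n \to \infty$. By the normality of $\mathcal{F}_{\alpha}$, we may pass to a subsequence (which we again denote by $(f_n)_n$) converging locally uniformly on $\D$ with respect to the chordal metric to some limit function $f$. By the remarks following the theorem of Weierstrass (Theorem \ref{th:weierstrass}), either $f \in \mathcal{M}(\D)$ or $f \equiv \infty$ on $\D$. However, the bound $|f_n(0)| \leq \alpha$ assures that $\chi(f_n(0), \infty) \geq (1+\alpha^2)^{-1/2} > 0$ for every $n$, which rules out $f \equiv \infty$. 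Hence $f$ is meromorphic on $\D$; moreover, by the theorem of Hurwitz (Theorem \ref{th:hurwitz}), applied to the non-vanishing sequences $(f_n)_n$ and $(f_n - 1)_n$, the limit function $f$ is either identically $0$, identically $1$, or analytic and non-vanishing (and avoiding $1$) on $\D$. In any case $f$ is analytic on $\D$.

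Since $f$ is analytic on $\D$, it is bounded on the compact disc $\overline{D_r(0)}$, say $|f(z)| \leq C$ there. By passing to a further subsequence, we may assume $z_n \to z_0 \in \overline{D_r(0)}$. Uniform convergence on $\overline{D_r(0)}$ (which follows from the locally uniform convergence on $\D$, as $\overline{D_r(0)}$ is a compact subset of $\D$) together with the continuity of $f$ yields $f_n(z_n) \to f(z_0)$, contradicting $|f_n(z_n)| \to \infty$. Hence the desired constant $M(r,\alpha)$ exists. The main (and only delicate) point in the argument is ensuring that the normal limit $f$ cannot be $\equiv \infty$, which is exactly what the hypothesis $|f(0)| \leq \alpha$ is designed to prevent; everything else is a routine compactness extraction.
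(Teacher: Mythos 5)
Your argument follows precisely the route the paper only hints at: Montel's fundamental normality test makes the family $\mathcal{F}_{\alpha}$ normal, and the pointwise bound $|f(0)|\le\alpha$ then forces local boundedness. Indeed, this is exactly the remark recorded in the appendix after Theorem \ref{th:montel1} — a normal family in $\mathcal{H}(\Omega)$ that is uniformly bounded at a single point $z_0\in\Omega$ is locally bounded — which you are re-proving from scratch via a compactness-and-contradiction extraction. So the overall strategy is the intended one.

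One step, however, is not airtight. After concluding that the chordal limit $f$ is meromorphic (not $\equiv\infty$), you invoke Hurwitz on $(f_n)_n$ and $(f_n-1)_n$ to deduce that $f$ is analytic on $\D$. But the version of Hurwitz at your disposal (Theorem \ref{th:hurwitz}) concerns a sequence of analytic functions converging locally uniformly \emph{in the Euclidean metric} to an \emph{analytic} non-constant limit; invoking it therefore presupposes the very thing you are trying to establish, namely that $f$ has no poles and that the convergence is Euclidean near every point of $\D$. What genuinely needs to be ruled out is a pole of $f$, and the clean way is to apply Hurwitz to $1/f_n$ instead: since each $f_n$ omits $0$, each $1/f_n$ is analytic on $\D$. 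If $f$ had a pole at some $z_0\in\D$, then $1/f$ would be analytic in a small disc around $z_0$ with an isolated zero at $z_0$, the chordal convergence $1/f_n\to 1/f$ would be Euclidean locally uniform there (because $1/f$ is bounded near $z_0$), and Hurwitz would force $1/f_n$ to vanish near $z_0$ for large $n$ — impossible, since $1/f_n$ is nowhere zero. With this repair, $f$ is analytic, hence bounded on $\overline{D_r(0)}$, and the contradiction with $|f_n(z_n)|\to\infty$ goes through; the additional Hurwitz discussion of whether $f$ omits $0$ and $1$ is not actually needed for the boundedness conclusion.
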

To obtain explicit expressions for the bounds $M(r,\alpha)$ in terms of $r$ and $\alpha$, further methods are necessary. We refer to Burckel \cite[Chapt. XII, \S 2]{burckel:1979}. Hempel \cite{hempel:1980} derived explicit bounds $M(r,\alpha)$  for Schottky's theorem which are, in a certain sense, best possible. According to his investigations, one can take
$$
M(r,\alpha) =  \frac{1}{16} \Bigl( \min\bigl\{ 16\alpha + 8 ; e^{\pi}\cdot \max\{\alpha;1\}\bigr\} \Bigr)^{\frac{1+r}{1-r}} .
$$
Moreover, if $\alpha<1$, the choice
$$
M(r,\alpha) =  \frac{1}{16} \exp\left( \frac{\pi^2}{\log(16/\alpha)}\cdot \frac{1+r}{1-r}\right)
$$
is admissible, too.

\end{document}